\definecolor{marin}{rgb}   {0.,   0.1,   0.5} 
\definecolor{rouge}{rgb}   {0.8,   0.,   0.} 
\definecolor{sepia}{rgb}   {0.4,   0.25,   0.} 
\definecolor{mag}{rgb}   {0.3,   0,   0.3} 
\newtheorem{theorem}{Theorem}[section]
\newtheorem{corollary}[theorem]{Corollary}
\newtheorem{lemma}[theorem]{Lemma}
\newtheorem{proposition}[theorem]{Proposition}
\newtheorem{definition}[theorem]{Definition}
\newtheorem{remark}[theorem]{Remark}
\newcommand{\ic}{\mathrm{i}}
\newcommand{\dd}{\mathrm{d}}
\newcommand{\Z}{\mathbb{Z}}
\newcommand{\cS}{\mathcal{S}}
\newcommand{\cSc}{\mathcal{S}^c}
\newcommand{\N}{\mathbb{N}}
\newcommand{\R}{\mathbb{R}}
\newcommand{\T}{\mathbb{T}}
\newcommand{\eps}{\varepsilon}
       \renewcommand{\S}{{\mathcal S}}
\begin{document}
\title[Infinite dimensional invariant tori for NLS]{Infinite dimensional invariant tori for nonlinear Schr\"odinger equations}

\author{Joackim Bernier}

\address{\small{Nantes Universit\'e, CNRS, Laboratoire de Math\'ematiques Jean Leray, LMJL,
F-44000 Nantes, France
}}

\email{joackim.bernier@univ-nantes.fr}

\author{Beno\^it Gr\'ebert}

\address{\small{Nantes Universit\'e, CNRS, Laboratoire de Math\'ematiques Jean Leray, LMJL,
F-44000 Nantes, France
}}

\email{benoit.grebert@univ-nantes.fr}

\author{Tristan Robert}

\address{\small{Institut  \'Elie Cartan, Universit\'e de Lorraine, B.P. 70239, F-54506 Vand\oe uvre-l\`es-Nancy Cedex, France
}}

\email{tristan.robert@univ-lorraine.fr}

\keywords{Infinite dimensional tori, KAM theory, regularizing normal form}

\subjclass[2020]{35B15, 35Q55, 37K55}

\begin{abstract} We prove that nonlinear Schr\"odinger equations on the circle, without external parameters, admit plenty of almost periodic solutions. 
Indeed, we prove that arbitrarily close to most of the  finite dimensional KAM tori constructed by Kuksin--Poschel in \cite{KP96}, there exist infinite dimensional  non resonant Kronecker  tori, i.e. rotational invariant tori. 
This result answers a natural and longstanding question, well identified by the Hamiltonian PDE community since the first KAM-type result for PDEs in \cite{Kuk87}.
\end{abstract} 
\maketitle

\setcounter{tocdepth}{1} 
\tableofcontents
\setcounter{tocdepth}{2}

\section{Introduction}

 We consider the  nonlinear Schr\"odinger equation on the circle $\mathbb T:=\R/2\pi\Z$
\begin{equation}
\tag{NLS}
\label{eq:NLS}
\ic \partial_t u + \partial_x^2 u = f(|u|^2)u, \quad x\in \mathbb{T},\ t\in\R
\end{equation}
where $f$ is a real entire function such that $f(0)=0$, $f'(0)\neq0$  (i.e. the cubic term is present) and $z\mapsto f(z^2)$ grows at most exponentially fast\footnote{i.e. there exists $C>0$ such that for all $z\in \mathbb{C}$, $|f(z)|\leq C \exp(C\sqrt{|z|})$.}. For instance, $f(|u|^2)u = (\sinh \, |u|)^2 u$ or $f(|u|^2)u = |u|^2 u + |u|^4 u$
are possible choices of nonlinearity. We point out that \eqref{eq:NLS} is a classical and emblematic Hamiltonian system whose structure is recalled in Subsection \ref{sub:ham_form} below.

\medskip

We are looking for  solutions with Sobolev or analytic regularity in the space variable, i.e. we consider solutions living in the phase space
$$
\ell^2_{\varpi} := \big\{ u \in L^2(\mathbb{T}) \ | \ \| u\|_{\ell_\varpi^2}^2 := \sum_{k\in \mathbb{Z}} |u_k|^2 \varpi_k^2  < \infty\big \}
$$
where the sequence of weights $(\varpi_k)_{k\in\Z}$ is given by 
$$
\varpi_k = \langle k \rangle^{s} e^{\mathfrak a|k|} \quad \mathrm{with} \quad \mathfrak a\geq 0 \quad \mathrm{and} \quad s\geq 1 
$$ 
and we always identify functions $ u \in L^2(\mathbb{T})$ with their Fourier coefficients 
\begin{equation}
\label{eq:def_Fourier}
 u_k:=(2\pi)^{-1}\int_{\mathbb{T}}u(x)e^{ikx}\dd x, \quad k\in \mathbb{Z}.
 \end{equation}
The phase space $\ell^2_{\varpi}$ is foliated by invariant tori for the linear part of the equation \eqref{eq:NLS}. Namely
for all $\xi\in (\mathbb{R}_+)^\mathbb{Z}$ the torus
\begin{equation}
\label{eq:def_T_droit}
\mathrm T_\xi :=\{ u\in \ell^2_{\varpi}\mid \forall k\in\mathbb{Z}, \quad  |u_k|^2=\xi_k\, \},
\end{equation}
is an invariant set for the system
\begin{equation}
\label{eq:LS}
\ic \partial_t u = \partial_x^2 u .
\end{equation}
The dynamics on these tori is trivial, it consists of parallel translations. Such tori play an important role and are often called Kronecker tori or rotational tori  (see for instance \cite{KP03}).  In this linear case the velocities of translation, or frequencies, are $\omega_j=j^2$, $j\in\Z$. This set of frequencies is totally resonant : the solutions are periodic.

\medskip

When we consider small solutions of \eqref{eq:NLS}, the nonlinear part is perturbative.
Then a natural question is:
\begin{center}
\emph {Which linear tori persist, slightly deformed, as invariant sets of \eqref{eq:NLS}?}
 \end{center}
 Kuksin--P\"oschel  proved in \cite{KP96} that most of the finite dimensional tori, close to the origin, persist as KAM tori (i.e. Kronecker tori that are linearly stable). Unfortunately, their result suffers a limitation : the larger the dimension of the torus is, the closer to the origin it has to be. This limitation is not specific to the work of Kuksin--P\"oschel, it appears in all the papers about existence of finite dimensional invariant tori. Thus, up to now, there have been no results concerning the existence of infinite dimensional invariant tori to  \eqref{eq:NLS} and, more generally, for (non-integrable) Hamiltonian PDEs without external parameter. Let us define the kind of tori we are going to prove the persistency of:

\begin{definition}[Non resonant infinite dimensional Kronecker tori] A subset $\mathcal{T}$ of $\ell^2_\varpi$ is a \emph{non resonant infinite dimensional Kronecker torus} (for \eqref{eq:NLS}) if there exists a homeomorphism\footnote{Naturally, we always equip $\mathbb{T}^\mathbb{N}$ with the product topology.} $\Psi : \mathbb{T}^\mathbb{N} \to \mathcal{T}$ and a sequence of rationally independent numbers $\omega \in \mathbb{R}^{\mathbb{N}}$ such that
 \begin{center}
 $\forall \theta\in \mathbb{T}^{\mathbb{N}}$, $t\mapsto \Psi( \omega t +\theta)$ is a global  solution to  \eqref{eq:NLS}.
\end{center}
\end{definition}
Since the frequencies are rationally independent (or non resonant), i.e. $k\cdot\omega\neq 0$ for all $0\neq k\in\Z^\N$ of finite support, on these tori each orbit is dense and the flow is ergodic (for the product measure). Our main result can be summarized in a very short theorem:
\begin{theorem} \label{thm:0} There exists non resonant infinite dimensional Kronecker tori for \eqref{eq:NLS}.
\end{theorem}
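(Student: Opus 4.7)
The plan is to construct such a torus as a perturbation of a finite dimensional KAM torus of Kuksin-P\"oschel, adding infinitely many excited modes by an iterative procedure. Choose an action profile $\xi^\star \in (\R_+)^\Z$ with $\xi^\star_k > 0$ decaying super-exponentially in $|k|$, so that the formal candidate torus $\mathrm T_{\xi^\star}$ lies well inside $\ell^2_\varpi$. Organize the modes in an increasing exhaustion $\Lambda_0 \subset \Lambda_1 \subset \cdots$ of $\Z$, where $\Lambda_0$ supports a finite-dimensional KAM torus with actions $(\xi^\star_k)_{k \in \Lambda_0}$ given by \cite{KP96}, and at each subsequent stage $n$ enlarge the invariant torus to be supported on $\Lambda_n$.

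Near $\mathrm T_{\xi^\star}$, pass to symplectic polar coordinates $u_k = \sqrt{\xi^\star_k + y_k}\, e^{\ic \theta_k}$ in the tangential directions and apply a Birkhoff-type normal form to bring the Hamiltonian of \eqref{eq:NLS} to the form
\[
H = \langle \omega(\xi^\star), y\rangle + Z(y) + R(\theta, y),
\]
where $Z$ collects the action-dependent integrable part and $R$ is the genuinely angle-dependent remainder. The structural input — the \emph{regularizing normal form} alluded to in the keywords — is that after such a normalization $R$ enjoys super-polynomial decay in the maximal Fourier mode appearing in each resonant monomial, reflecting the fact that the non-trivial resonances of $j \mapsto j^2$ force the indices to spread out. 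Since there are no external parameters, non-resonance must moreover be self-generated: exploiting the twist estimates of Kuksin-P\"oschel for the frequency map $\xi \mapsto \omega(\xi)$, adjust $\xi^\star$ within a positive-measure family so that Melnikov-type inequalities
\[
|k \cdot \omega(\xi^\star)| \geq \gamma (1 + |k|_w)^{-\tau(k)}
\]
hold for all $0 \neq k$ of finite support, with a weighted norm $|k|_w$ and a mode-dependent exponent $\tau(k)$ chosen to accommodate infinitely many conditions.

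Run a KAM-type iteration on the scales $\Lambda_n$: at stage $n$, solve a homological equation to kill the resonant monomials supported on $\Lambda_n$, producing a Hamiltonian with an exactly invariant torus on $\Lambda_n$ and a smaller error, and iterate. The regularizing estimates on $R$ are essential both to close each homological equation in the weighted topology of $\ell^2_\varpi$ and to control the cumulative effect of infinitely many transformations. Passing to the limit produces a homeomorphism $\Psi : \T^\N \to \mathcal{T}$ with $\mathcal{T} \subset \ell^2_\varpi$, on which the dynamics of \eqref{eq:NLS} is conjugated to translation by the rationally independent vector $\omega(\xi^\star)$.

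\textbf{Main obstacle.} The hard part is the combination of small divisors in infinite dimensions with the absence of external parameters. One must propagate the Diophantine conditions of the second paragraph through all KAM steps; this requires quantitative control of the frequency shifts at each iteration and a careful choice of the mode-dependent exponent $\tau(k)$ so that the set of ``bad'' actions excluded at all scales remains small within the family where $\xi^\star$ is allowed to vary. The technical work lies in tuning all three ingredients — the regularity of the normal form, the non-degeneracy of the frequency map, and a well-chosen scale of weighted norms — so that the iteration converges in a topology strong enough to realize the limit set as a genuine subset of $\ell^2_\varpi$ and the limit flow as the actual NLS flow.
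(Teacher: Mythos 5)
Your broad architecture --- regularizing normal form plus an iterated KAM/Birkhoff scheme attaching modes one at a time and passing to the limit --- does match the paper's strategy, which follows the idea of P\"oschel \cite{Pos02} now made applicable because of Theorem~\ref{thm:reg}. But there are three places where your sketch, as written, would not close, and each corresponds to a key structural choice in the paper that you have missed or contradicted.

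First, you propose a profile $\xi^\star \in (\mathbb{R}_+)^{\mathbb{Z}}$ with $\xi^\star_k>0$ for all $k$ and an exhaustion $\Lambda_0\subset\Lambda_1\subset\cdots$ of $\mathbb{Z}$. That is, you aim at a full-dimensional torus. The paper cannot do this, and explicitly says so: the support of the torus must be \emph{sparse} in $\mathbb{Z}$ (see Section~\ref{sec:sparse}, where $\mathcal{S}_\infty=\{i_p\}$ with $i_p \sim 2^{5^p}$). The reason is quantitative: the regularizing normal form gives only a degree-one gain with loss (i.e.\ $\delta=1$ with $s_0=2\delta$, giving \eqref{estim:omeg}), not the ``super-polynomial decay'' you assert, and with such a partial smoothing the twist condition $\sup_i\sum_k \Gamma_{k,i}<\infty$ in Lemma~\ref{lem:sites} holds only if the index set is lacunary. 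Without that, the Melnikov estimates in your second paragraph cannot be propagated, and your iteration halts at a finite stage.

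Second, you fix $\xi^\star$ in advance (with prescribed super-exponential decay) and then want to adjust it inside a positive-measure family to satisfy all Diophantine conditions at once. The paper does not, and cannot, prescribe the action profile a priori: the radius $r_{p+1}$ at stage $p+1$, and with it the allowed range $(r_{p+1}^{2\nu},2r_{p+1}^{2\nu})$ of the new action $\xi_{i_{p+1}}$, is taken ``as small as needed with respect to everything already constructed'' (see~\eqref{eq:rp_decroit_bien_trop}), with no a priori decay rate. This ``infinite reservoir of smallness'' is what lets one impose smallness assumptions like $r^{4000}$ in the KAM step and is essential to the convergence argument; prescribing $\xi^\star_k$ beforehand would require you to already know every constant produced by every future iteration.

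Third, and most importantly, you describe each stage as ``solve a homological equation to kill the resonant monomials'' after a Birkhoff normalization, as if the standard KAM loop (remove the $2$-jet, then Birkhoff out orders $3$ and $4$) would work. The paper explains in the introduction why this fails: the Birkhoff step that kills the cubic remainder produces Poisson brackets $\{P_3,\chi_3\}$ with $O(1)$ integrable quartic coefficients $a_{k,j}|u_k|^2|u_j|^2$, and after opening the site $k$ these contribute frequency shifts with Lipschitz constant of order $1$, destroying the twist condition at the next stage. The paper's fix is a non-standard KAM theorem (Theorem~\ref{thm:KAM}) that removes an \emph{adapted jet} --- roughly the $3$-jet and part of the $4$-jet, at the cost of solving six nested cohomological equations and a much stronger smallness requirement --- followed by a Birkhoff theorem (Theorem~\ref{thm:Birk}) that prepares the next opening. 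Your sketch omits this obstruction, and filling it in would require precisely the paper's main technical innovation.

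There is also a more minor point worth flagging: even after fixing $\xi$ so that the ``natural'' frequencies $\omega^{(\mathrm{nat})}$ are rationally independent, the genuine frequencies $\omega^{(\infty)}$ may satisfy one rational relation coming from the gauge shift; the paper handles this (Proposition~\ref{prop:ultimate}) by passing to a $\operatorname{codim}$-$1$ subtorus via an $\mathrm{SL}(\mathbb{Z})$ change of angles, which also needs an argument. Your proposal implicitly assumes the constructed frequencies are already non-resonant.

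In summary: right skeleton (regularizing normal form, iterated KAM/Birkhoff, limit of finite tori), but the proposal as written would fail on the twist condition because the support is not sparse, on convergence because the action profile is fixed a priori, and on the normal form structure because the standard $2$-jet KAM loop produces bad integrable terms.
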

Actually, in Theorem \ref{thm:main} below, we construct a whole family of non resonant infinite dimensional Kronecker tori which accumulates on the finite dimensional Kronecker tori constructed by Kuksin--P\"oschel in \cite{KP96}.

\medskip

Another classical way of considering this kind of problems, consists in looking for almost periodic solution (which are not quasi periodic). Up to now, there have been no results concerning the existence of such solutions to  \eqref{eq:NLS} and more generally for not completely integrable Hamiltonian PDEs without external parameter.
\begin{definition}[Quasi-periodic  function] A continuous function $q:\R\to\R$ is called quasi-periodic with frequencies $\omega=(\omega_1,\cdots,\omega_n)$, if there exists a continuous function, $Q:\mathbb T^n\to \R$, such that $q(t)=Q(\omega t)$ for all $t\in\R$.
\end{definition}
Then following H. Bohr (see \cite{Boh47}) we  recall the definition of almost-periodic functions
\begin{definition}[Almost-periodic function] A continuous function  is called almost-periodic if it is a uniform limit of quasi-periodic functions.
\end{definition}
Of course a function leaving on an infinite dimensional torus could be quasi-periodic or even periodic, depending on resonances. In Corollary \ref{cor:ap_functions} of the Appendix, we prove that each orbit on a non resonant infinite dimensional Kronecker torus corresponds to an almost periodic solution which is not quasi-periodic. Therefore, as a corollary of Theorem \ref{thm:0}, we get:
\begin{corollary}
There exist almost periodic solutions to \eqref{eq:NLS} which are not quasi periodic.
\end{corollary}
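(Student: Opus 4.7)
The plan is to deduce this corollary directly from Theorem \ref{thm:0} together with standard facts on continuous functions on the compact product group $\mathbb{T}^\mathbb{N}$. Fix a non-resonant infinite dimensional Kronecker torus $\mathcal{T}$ with parametrizing homeomorphism $\Psi : \mathbb{T}^\mathbb{N} \to \mathcal{T}$ and rationally independent frequency $\omega = (\omega_j)_{j\in\mathbb{N}}$; pick any $\theta \in \mathbb{T}^\mathbb{N}$ and consider the global solution $u(t) := \Psi(\omega t + \theta)$. Two things have to be shown: that $u : \mathbb{R} \to \ell^2_\varpi$ is almost-periodic, and that it is not quasi-periodic.

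For almost-periodicity, I would exploit the product topology on $\mathbb{T}^\mathbb{N}$: by Stone--Weierstrass the continuous map $\Psi$ is a uniform limit of cylindrical continuous functions $\Psi_n$ depending on only the first $n$ coordinates $\theta_1, \dots, \theta_n$. Each approximant $u_n(t) := \Psi_n(\omega t + \theta)$ is then genuinely quasi-periodic with frequencies $(\omega_1, \dots, \omega_n)$, and
\[
\sup_{t \in \mathbb{R}} \| u(t) - u_n(t) \|_{\ell^2_\varpi} \le \sup_{\vartheta \in \mathbb{T}^\mathbb{N}} \| \Psi(\vartheta) - \Psi_n(\vartheta) \|_{\ell^2_\varpi} \xrightarrow[n \to \infty]{} 0,
\]
so $u$ is a uniform limit of quasi-periodic functions and hence almost-periodic in the sense of Bohr.

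For the non-quasi-periodicity part, I would argue by contradiction. If $u$ were quasi-periodic with $m$ frequencies $\eta \in \mathbb{R}^m$, its Bohr spectrum would lie in the finitely generated $\mathbb{Z}$-module $\mathbb{Z}\eta_1 + \cdots + \mathbb{Z}\eta_m$. On the other hand, Kronecker's theorem combined with the rational independence of $\omega$ forces the orbit $\{\omega t + \theta : t \in \mathbb{R}\}$ to be dense in $\mathbb{T}^\mathbb{N}$, so the closure of $u(\mathbb{R})$ in $\ell^2_\varpi$ is homeomorphic, via $\Psi$, to $\mathbb{T}^\mathbb{N}$. Expanding $\Psi$ in the Pontryagin dual basis of $\mathbb{T}^\mathbb{N}$ identifies the Bohr spectrum of $u$ with the set of $k \cdot \omega$, where $k$ ranges over the finitely supported $k \in \mathbb{Z}^{\mathbb{N}}$ such that $\widehat{\Psi}(k) \ne 0$; since $\Psi$ is injective it cannot be constant in any block of coordinates, which forces this spectrum to generate a free $\mathbb{Z}$-module of infinite rank, contradicting the finite-rank upper bound.

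\textbf{Main obstacle.} The delicate step is clearly the non-quasi-periodicity: translating the qualitative injectivity of $\Psi$ into a quantitative statement about the Bohr spectrum requires some care with Pontryagin duality on $\mathbb{T}^\mathbb{N}$ (whose dual is the countable direct sum $\bigoplus_{\mathbb{N}} \mathbb{Z}$), and it is here that the non-resonance hypothesis on $\omega$ enters crucially---without rational independence the orbit could collapse onto a finite-dimensional sub-torus and produce an honestly quasi-periodic solution. This is essentially the content of Corollary \ref{cor:ap_functions} in the Appendix, to which I would refer for the clean technical execution.
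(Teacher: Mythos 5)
Your overall reduction to the existence of non-resonant infinite dimensional Kronecker tori is exactly the paper's: the corollary follows from Theorem~\ref{thm:0} together with the abstract Lemma~\ref{lem:ap} (from which Corollary~\ref{cor:ap_functions} is drawn). For the almost-periodicity half, your Stone--Weierstrass route (approximate $\Psi$ by cylindrical functions $\Psi_n$ and observe $u_n(t)=\Psi_n(\omega t+\theta)$ is quasi-periodic) and the paper's route (truncate the frequency to $\omega^{(N)}_n=\mathbbm{1}_{n\le N}\omega_n$ and use uniform continuity of $\Psi$ on the compact $\mathbb{T}^{\mathbb{N}}$) are essentially interchangeable; both deliver a uniformly convergent sequence of quasi-periodic approximants.

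Where you genuinely depart is the non-quasi-periodicity half. You propose a Bohr-spectrum/Pontryagin-duality argument: identify the Bohr spectrum of $u$ with $\{k\cdot\omega : \widehat\Psi(k)\neq 0\}$, note that injectivity of $\Psi$ forbids $\Psi$ from being a function of finitely many coordinates (so the supports of the $k$ with $\widehat\Psi(k)\neq 0$ are unbounded), use rational independence of $\omega$ to see that this spectrum cannot sit in any finitely generated $\mathbb{Z}$-module, and then contradict the finite-rank upper bound that quasi-periodicity imposes. This is sound in principle, but you are implicitly invoking nontrivial facts about Banach-valued almost periodic functions: that Bohr--Fourier coefficients $a(\lambda)=\lim_{T\to\infty}\frac{1}{2T}\int_{-T}^T u(t)e^{-i\lambda t}\,\mathrm{d}t$ exist and characterize the spectrum, that unique ergodicity of the translation flow on $\mathbb{T}^{\mathbb{N}}$ lets one compute $a(k\cdot\omega)$ from $\widehat\Psi(k)$, and that a finite-dimensional $\mathbb{Q}$-subspace of $\mathbb{Q}^{(\mathbb{N})}$ has uniformly bounded support (which is what converts ``Bohr spectrum in a rank-$m$ module'' into ``$\Psi$ depends on only finitely many angles''). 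None of this is wrong, but it carries more machinery than the paper's Lemma~\ref{lem:ap}, which sidesteps Bohr--Fourier analysis entirely: it picks $N$ with $\omega_N\notin\mathrm{Span}_{\mathbb{Q}}\lambda$, uses density of orbits (Kronecker) to find times $t_\varepsilon$ with $\lambda t_\varepsilon\to 0$ in $\mathbb{T}^n$ while $\omega_N t_\varepsilon\to\pi$, and derives a contradiction directly from the uniform continuity of $\Psi^{-1}$. Your argument buys a more conceptual picture of the spectrum; the paper's buys elementarity and self-containment. You correctly flag the delicate step and defer to Corollary~\ref{cor:ap_functions}, so there is no gap in the proposal as written, just more unproved supporting theory than the paper chooses to rely on.
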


\subsection{Context}\label{sec:context} For several decades now, many mathematicians have been interested in KAM theory for nonlinear Hamiltonian PDEs. The aim is to show the existence of invariant sets for such PDEs, which is quite natural for conservative dynamical systems.  Typically, the flow of the linear part of the PDE foliates the phase space (of infinite dimension) into invariant tori of different dimensions - finite or infinite. As for \eqref{eq:NLS}, the general question is whether these tori persist, slightly deformed, in the presence of nonlinear perturbations. From a more PDE point of view, finite-dimensional invariant tori correspond to quasi-periodic in time solutions, while infinite-dimensional tori correspond to almost periodic in time solutions.

\medskip 

For finite dimensional Hamiltonian system, standard KAM theory ensures that, under some non-degeneracy assumptions, most tori survive and thus most of the solutions are quasi-periodic (see e.g. \cite{KP03}). 
 Naturally, in an infinite-dimensional phase space, infinite-dimensional invariant sets are expected. In other words, for PDEs, \emph{we expect almost periodic solutions to be typical}.

\medskip 
 
The construction of invariant tori of infinite dimension  was first achieved for certain models with short range interaction (see \cite{FSW86, Pos90}). But clearly  the interactions in nonlinear PDEs (in Fourier variables) are long range. Then Cherchia--Perfetti  \cite{CP95}  constructed invariant tori of infinite dimension for Hamiltonian systems with long range interactions but they require a very rapidly increasing frequencies. Again this does not correspond to standard PDEs.
On the other hand, there exists some completely integrable Hamiltonian PDEs (such as Korteweg de-Vries,  cubic nonlinear Schr\"odinger  or  Benjamin--Ono equations) for which it has been proven that all the linear tori survive (see \cite{KP03, GK14, GK21}). Nevertheless, although very interesting, these systems are very specific and, as for finite dimensional systems, this property is not expected to be true in presence of perturbations.

\medskip

Concerning non integrable Hamiltonian PDEs, many results have been obtained, but mainly concerning the existence of finite-dimensional invariant tori (see e.g. \cite{Kuk87, Way90, CW93, Bou94, Pos96, KP96} for the earliest results for 1d PDEs, \cite{Bou98, EK10, BB13, PP15,  EGK16, GP16} for some results in higher dimension, \cite{BBHM18, FG24, BHM23} for some results about quasi-linear equations and \cite{BKM18} for large amplitude solutions) and often at the cost of adding external parameters to the equation to simplify the treatment of the small denominators inherent in KAM theory. Most of these results allow to get invariant tori of arbitrarily large dimension. Nevertheless, in the proofs, the larger the dimension is, the smaller the perturbation parameter has to be. Therefore, they say nothing about the existence of infinite dimensional invariant tori.

\medskip

 The desire to show the existence of almost periodic solutions of \eqref{eq:NLS} or other Hamiltonian PDEs is obviously not new. The grail would be to show that these almost periodic solutions represent the typical behavior of small solutions. We are still a long way from that at best, with the notable exception of the recent  paper \cite{BCGP24}, it has been proved the existence of some almost periodic solutions and until now only for PDEs with external parameters. The first result in a PDE context is due to Bourgain in \cite{Bou96} who proved the existence of almost periodic solutions to the wave equation in presence of a random potential. Then, few years later, in \cite{Bou05}, the same author considered \eqref{eq:NLS} but with an extra random convolution potential $V$ as external parameter:
\begin{equation}
\label{eq:NLSconv}
\ic \partial_t u + \partial_x^2 u+ V*u = f(|u|^2)u, \quad x\in \mathbb{T},\ t\in\R.
\end{equation}
Again he proved the existence of almost periodic solutions but, this time, with more reasonable decrease  of the Fourier coefficients of the solution. This result has been revisited and improved by a series of paper (in particular \cite{BMP20,Con23} and  
\cite{CY21} for the wave equation). Further, using sparsity arguments, Biasco--Masetti--Procesi proved in \cite{BMP24} the existence of almost periodic solutions of finite regularity. In all these results the Fourier coefficients of the potential $V$ are bounded and does not converge to $0$. This is a critical ingredient of these proofs which is used to deal with the small divisors (since the linear frequencies for \eqref{eq:NLSconv}  are $\omega_j=j^2-V_j,\ j\in\Z$). 
This assumption on $V$ seems to be the major obstruction to get similar results for other models or just to remove $V$ in \eqref{eq:NLSconv} (and so to get \eqref{eq:NLS}).  Indeed, following an observation due to  Kuksin--P\"oschel in \cite{KP96}, it can be proved that \eqref{eq:NLS} is somehow equivalent to \eqref{eq:NLSconv} where $V$ would depend nonlinearly on $u$ through the relation $V_j = |u_j|^2$, $j\in \mathbb{Z}$. It therefore makes sense to try to match the regularity of $V$ and $u$ in \eqref{eq:NLSconv} in order to get rid of the external parameters. In this direction, introducing a new approach based on tree expansions and renormalization groups, Corsi--Gentile--Procesi (see \cite{CGP24})  succeeded in greatly improving the regularity of $V$, indeed they can consider $V$ of any finite regularity $C^n$. Unfortunately, their almost periodic solutions are still Gevrey in the space variable. Very recently, Biasco--Corsi--Gentile--Procesi extended this result (in \cite{BCGP24}) by proving that their set of almost periodic solutions is asymptotically of full measure.

\medskip

Without external parameters, the only known results prove that most of the linear infinite dimensional tori are almost preserved for very long times (see e.g. \cite{Bou00,BFG20,LX24,BC24} for \eqref{eq:NLS} and \cite{BG21} for perturbations of KdV or Benjamin--Ono equations). The proofs of these results, somewhere between Birkhoff normal forms and KAM, deal with many similar difficulties (in particular the degeneracy of small divisors) but make crucial use of the fact that time, although potentially very long, is finite.

\subsection{About the approach} 
  First, we present our approach without going into technical details.
\begin{itemize}
\item Our approach is different from that adopted in the above-mentioned articles which concern infinite-dimensional tori (in particular in \cite{Bou05, BMP20}): we don't try to construct an infinite-dimensional torus directly, but, iteratively,
a convergent sequence of invariant tori whose dimension goes to $+\infty$.
 This was the scheme of proof proposed by P\"oschel in \cite{Pos02} but then rarely used because it requires the nonlinearity to be regularizing which is not the case for \eqref{eq:NLS} neither for most of the Hamiltonian PDEs (see also \cite{GX13}).

\item However, revisiting techniques from the world of dispersive PDEs, and in particular techniques used to get nonlinear smoothing \cite{ErTz1,ErTz2,ErTz3,ErTz4,McConnell}, unconditional uniqueness \cite{GuoKwonOh,Kishimoto}, or invariant renormalized Gibbs measures \cite{BO96,DNY}, we succeed in Theorem \ref{thm:reg} below,  by a canonical change of variable,  in making the nonlinearity of \eqref{eq:NLS} regularizing up to a gauge transform\footnote{Roughly speaking, it means that the only obstruction to transform \eqref{eq:NLS} into a Hamiltonian PDE with a regularizing nonlinearity is the existence of terms depending on the mass $\| u\|_{L^2}^2$}.  This step is fundamental in our approach and deeply uses dispersive properties of \eqref{eq:NLS}. We think that Theorem \ref{thm:reg}, although probably not surprising in the dispersive community (where this kind of result are obtained without preserving symplecticity), could be  useful for other purpose  in the Hamiltonian PDEs world.  Note, however, that this regularization modulo a gauge transform has already been observed in the integrable case (see Theorem 2.1 in \cite{KST17}). 
In the preamble to section \ref{sec:reg}, we give more details about this dispersive effect. 

\item With this "regularizing normal form",  the P\"oschel's scheme makes sense but it is not the end of the story since, first, the regularization is just partial, but also P\"oschel was considering external parameters (namely the Fourier coefficients of a potential) and we don't. Nevertheless,
as in \cite{Pos02}, we iterate a loop that consists in adding one mode to the finite-dimensional tori already constructed. The central tool for achieving this loop is a KAM theorem. So we iterate an infinite number of KAM theorems. The main difficulty lies in preserving the so called \emph{twist condition} which provides small divisor estimates at the cost of excluding few internal parameters (which are essentially the squared moduli of the initial datum's Fourier coefficients in the final coordinates).


\item To do this, we revisit the classical KAM theorem: we prove a normal-form theorem  (see Theorem \ref{thm:KAM}) where we eliminate much more than, as usual in KAM scheme, the 2-jet of the perturbation. Roughly speaking we eliminate something between  the 3-jet and the 4-jet, see Definition \ref{def:jet}. This forces us to have a stronger smallness condition on this perturbation and leads us, between each application of KAM (each loop), to apply a Birkhoff procedure (see Theorem \ref{thm:Birk}).
\end{itemize}

  We expect that our approach is quite robust and not too specific to \eqref{eq:NLS}. Indeed, on the one hand, there exist many Hamiltonian PDEs without external parameters for which it is possible to modulate similarly the frequencies using initial data (perturbations of KdV and Benjamin--Ono equations \cite{BG21}, \eqref{eq:NLS} on non rectangular flat tori \cite{BC24}, the pure gravity water waves \cite{FG24}...). 
Moreover, in general, the way in which frequencies depend on external parameters is rarely as favorable as for NLS with a convolutional potential \eqref{eq:NLSconv} while the kind of dependency with respect to the internal parameters is more universal.
   On the other hand, the fact that dispersion provides regularizing effects is a quite general idea which has been applied in many contexts. So we do not expect our regularizing normal form to be specific to \eqref{eq:NLS}, but to be equally valid for other one-dimensional periodic dispersive PDEs (see e.g. \cite{ErTz2,ErTz3}).

  \subsection{Extended version of the main result}

Now, we give a more precise version of our main result which includes some information about the shape of the infinite dimensional tori.
Actually, we prove that they accumulate on the finite dimensional Kronecker tori constructed by Kuksin--P\"oschel in \cite{KP96} for the Hausdorff distance\footnote{it is indeed a distance on the set of the compact sets of $\ell^2_\varpi$.} 
$$
\mathrm{dist}(T_1,T_2) := \max\big( \sup_{u\in T_1} \inf_{v\in T_2} \| u-v\|_{\ell^2_\varpi} ,  \sup_{v\in T_2} \inf_{u\in T_1} \| u-v\|_{\ell^2_\varpi}  \big), \quad T_1,T_2 \subset \ell^2_\varpi.
$$
\begin{theorem}\label{thm:main}
Let $\mathcal{S}_1\subset \mathbb{Z}$ be a non empty finite set  and let $\Xi_0$ be a compact set included in $(0,1)^{\mathcal{S}_1}$. For all $\varepsilon \leq 1$, there exist a Borel set   $\Xi_\varepsilon \subset \Xi_0$, a family $(\mathcal{T}_{\xi}^\varepsilon)_{\xi \in\Xi_\varepsilon }$ of subsets of $\ell^2_\varpi$, a family of homeomorphisms $\Psi_{\xi}^\varepsilon : \mathbb{T}^{\mathcal{S}_1} \to \mathcal{T}_{\xi}^\varepsilon$ indexed by $\xi \in \Xi_\varepsilon$ and an injective map $\omega^{(\varepsilon)} : \Xi_\varepsilon \to \mathbb{R}^{{\mathcal{S}_1}}$
 such that 
\begin{itemize}
\item[(i)] for all $\xi \in \Xi_\varepsilon$, $\mathcal{T}_{\xi}^\varepsilon$ is a Kronecker torus of frequencies vector $\omega^{(\varepsilon)} $, i.e. 
\begin{center}
 $\forall \theta\in \mathbb{T}^{\mathcal{S}_1}$, $t\mapsto \Psi_{\xi}^\varepsilon(\omega^{(\varepsilon)}  t +\theta)$ is a global  solution to  \eqref{eq:NLS}.
\end{center}
\item[(ii)]  for all $\xi \in \Xi_\varepsilon$, $\mathcal{T}^\eps_{\xi}$ is close to $\varepsilon \mathrm T_{\xi}$ for the Hausdorff distance induced by the $\ell^2_\varpi$ norm
\begin{equation}
\label{eq:Hauss}
\mathrm{dist}(\mathcal{T}_{\xi}^\varepsilon,\varepsilon \mathrm T_{\xi})\leq \varepsilon^{1+\frac1{10}},
\end{equation}
\item[(iii)]  the set $\Xi_\varepsilon$ is asymptotically of full measure, i.e.
$$
\lim_{\epsilon\to 0}\mathrm{Leb}(\Xi_\epsilon)=\mathrm{Leb}(\Xi_0),
$$
\item[(iv)]  for all $j\in \mathcal{S}_1$,  there exists a $1$-Lipschitz function\footnote{for the $\ell^1$ norm.} $\lambda_j^{(\varepsilon)}: \Xi_\varepsilon \to \mathbb{R}$, bounded by $1$, such that for all $\xi \in \Xi_\varepsilon$,
\begin{equation}
\label{eq:freq_thm}
\omega_{j}^{(\varepsilon)}(\xi) = j^2 - \varepsilon^2 f'(0) \big( \xi_j -2 \sum_{i \in \mathcal{S}_1} \xi_i \big) + \varepsilon^{5/2} \lambda_j^{(\varepsilon)}(\xi).
\end{equation}
\item[(v)] for all $\xi \in \Xi_\varepsilon$, $\mathcal{T}_{\xi}^\varepsilon$ is accumulated by infinite dimensional tori: for all $ \rho \leq1$, there exists a non resonant infinite dimensional Kronecker torus $\mathcal{T}_{\xi}^{\varepsilon,\rho}  \subset \ell^2_\varpi$ such that
$$
\mathrm{dist}(\mathcal{T}_{\xi}^{\varepsilon,\rho} , \mathcal{T}_{\xi}^{\varepsilon})\leq \rho.
$$

\end{itemize}
\end{theorem}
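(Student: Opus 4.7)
The plan is to combine the regularizing normal form of Theorem \ref{thm:reg} with an iterative KAM scheme in the spirit of P\"oschel \cite{Pos02}. The key idea is not to construct an infinite dimensional torus in one stroke, but rather a Cauchy sequence, for the Hausdorff distance, of finite dimensional KAM tori whose dimensions go to infinity and which all accumulate on the initial finite dimensional torus $\mathcal{T}_\xi^\varepsilon$.

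First I would apply Theorem \ref{thm:reg} to put \eqref{eq:NLS} into a Hamiltonian form whose nonlinearity is regularizing modulo a mass-dependent gauge. After introducing action-angle variables $(I,\theta) \in \mathbb{R}^{\mathcal{S}_1}\times \mathbb{T}^{\mathcal{S}_1}$ on the tangential modes and performing the natural scaling $u \mapsto \varepsilon u$, the Hamiltonian takes the form $H = \langle \omega^{(0)}(\xi), I\rangle + \tfrac{1}{2}\langle \Omega^{(0)}(\xi)z,\bar z\rangle + P$, where the internal frequencies $\omega^{(0)}(\xi)$ already have the shape predicted by \eqref{eq:freq_thm} and $P$ is suitably small. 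A direct application of the KAM theorem (Theorem \ref{thm:KAM}) then persists most of the unperturbed tori $\varepsilon\mathrm{T}_\xi$ into genuine invariant tori $\mathcal{T}_\xi^\varepsilon$, which yields (i), (ii) and (iv), while (iii) comes from the standard Lebesgue estimate on the set of non-resonant parameters.

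The substance of the theorem lies in property (v). To obtain it I would iterate a loop that adds one normal mode at a time to the tangential set. Fixing an enumeration of $\mathbb{Z}\setminus\mathcal{S}_1$, I would build an exhausting sequence $\mathcal{S}_1\subset \mathcal{S}_2\subset \cdots$ with $\bigcup_n \mathcal{S}_n = \mathbb{Z}$, and at step $n$ excite the new mode with a very small action $\eta_n$ chosen to decay super-exponentially. Each loop would first apply the Birkhoff normal form of Theorem \ref{thm:Birk} around the current KAM torus in order to eliminate enough low-homogeneity terms to satisfy the stronger smallness assumption required by Theorem \ref{thm:KAM}, and then apply Theorem \ref{thm:KAM} itself to produce the next invariant torus of dimension $|\mathcal{S}_n|$. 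The regularizing character of the nonlinearity provided by Theorem \ref{thm:reg} is essential here: each time a new mode is moved into the tangential set, the remaining perturbation gains a new smoothing factor that compensates for the unavoidable loss in the small divisor estimates.

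The main obstacle will be to propagate the twist condition uniformly in $n$. Since the internal frequencies \eqref{eq:freq_thm} depend on the actions $\xi$ through the nonlinearity, each KAM and Birkhoff step perturbs both the map $\xi\mapsto \omega(\xi)$ and the asymptotic normal frequencies, and enough of this dependence must survive to control the first and second Melnikov divisors at every scale. I would handle this by tracking precise Lipschitz estimates on the frequencies, choosing the sequence $(\eta_n)$ so that the accumulated drift is only a small fraction of the initial twist, and excising at step $n$ a Borel subset of parameters whose Lebesgue measure decays fast enough to remain summable. Passing to the limit is then comparatively routine: the successive symplectic conjugations compose to a map that converges on any compact set of angles, the finite dimensional tori form a Cauchy sequence for the Hausdorff distance, and the limit $\mathcal{T}_\xi^{\varepsilon,\rho}$ is a non resonant infinite dimensional Kronecker torus because the Diophantine conditions imposed at each scale pass to the limit on finitely supported integer vectors, lying at Hausdorff distance at most $\rho$ from $\mathcal{T}_\xi^\varepsilon$ by construction.
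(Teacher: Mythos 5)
Your overall strategy — regularizing normal form, first KAM step to produce the finite-dimensional Kuksin--P\"oschel tori, then a P\"oschel-style loop of Birkhoff plus KAM adding one tangential mode at a time, with geometrically decaying new actions — is exactly the paper's architecture. However, there are two genuine gaps.

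The first is the most serious: you propose to build an \emph{exhausting} sequence $\mathcal{S}_1\subset\mathcal{S}_2\subset\cdots$ with $\bigcup_n\mathcal{S}_n=\mathbb{Z}$, and to rescue the twist condition by shrinking the new actions $\eta_n$ rapidly. This will not work, and the reason is discussed explicitly in the paper's introduction (see the discussion around \eqref{estim:omeg}). The Lipschitz derivatives of the modulated frequencies satisfy only $|\partial_{\xi_k}\tilde\omega_j|\lesssim 1\wedge \langle j\rangle^{-\delta}\langle k\rangle^{4\delta}\wedge \langle k\rangle^{-\delta}\langle j\rangle^{4\delta}$, and the twist condition (in the form $\sup_i\sum_k\Gamma_{k,i}<\infty$, cf.\ Lemma \ref{lem:sites}) is a structural property of the \emph{support} set, not of the amplitudes. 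Making $\eta_n$ arbitrarily small does not reduce the size of $\partial_{\xi_k}\tilde\omega_j$ when $k$ is small and $j$ is large, because the dangerous off-diagonal contributions come from integrable sixtic-and-higher terms that do not shrink with the actions. The paper resolves this by choosing a \emph{sparse} family of sites $i_p=2^{5^p+n_0}$ (Section \ref{sec:sparse}), so that $\mathcal{S}_\infty\subsetneq\mathbb{Z}$ and the constant $C_{\mathcal{S}_\infty}$ is finite. The constructed tori are therefore not maximal, a limitation the authors acknowledge but cannot remove. Your exhausting sequence is incompatible with the method as it stands.

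The second gap concerns non-resonance of the limit frequencies. You assert that the Diophantine conditions imposed at each finite scale ``pass to the limit on finitely supported integer vectors,'' but this is not automatic. Because the regularizing change of variables is only symplectic modulo a gauge transform, the limit frequencies split as $\omega^{(\infty)}_j=\omega^{(\mathrm{nat})}_j+a(\xi)$ with $a(\xi)$ independent of $j$ (see \eqref{eq:def_omeganat_et_a}), and while $\omega^{(\mathrm{nat})}$ can be shown to be $\mathbb{Q}$-free for almost all $\xi$ (Lemma \ref{lem:full_meas_sharp}), $\omega^{(\infty)}$ itself may admit a single rational relation. The paper then extracts a genuinely non-resonant subtorus by a linear-algebra argument (Proposition \ref{prop:ultimate}) using a matrix in $\mathrm{SL}_M(\mathbb{Z})$ built from the B\'ezout identity. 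This step is not in your outline and is needed to obtain the non-resonance claimed in item (v).
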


Let us comment our result, in particular comparing it with Kuksin--P\"oschel result \cite{KP96}.
\begin{itemize}
\item items (i)-(iv) are consequences of the result by Kuksin--P\"oschel \cite{KP96}. In particular items (ii)-(iv) ensures that, up to an asymptotically zero measure set, we consider the same invariant tori of finite dimension that Kuksin--P\"oschel. More precisely, items (iii) and (iv) imply that we could parametrize finite dimensional tori by frequencies living in the same set as Kuksin--P\"oschel tori (up to an asymptotic zero measure set) while item {\rm(ii)} ensures that the tori we consider indeed bifurcate from the linear ones.
\item item {\rm(v)} is the new part: it says that we are able to construct {\it infinite dimensional} invariant tori (as stated in Theorem \ref{thm:0}) but furthermore, that Kuksin--P\"oschel tori, which are in a sense exceptional since of finite dimension, are not isolated in the phase space: they are {\it accumulated} by  infinite dimensional invariant tori. 
\item All the finite dimensional invariant tori we construct are KAM tori, i.e. with linear flow and linearly stable, but we cannot conserve the linear stability when we pass to the limit (essentially because the normal form that we construct is only defined on the torus). Thus our infinite dimensional invariant tori are Kronecker tori.
\item 
The infinite dimensional tori that we construct are not maximal in the sense that they are built on a set $\mathcal{S}_\infty$ of Fourier modes which is sparse in $\Z$ and contains $\mathcal{S}_1$. To be more specific, in assertion (v), the frequencies vector $\omega^{\eps,\rho}(\xi)\in \R^\N$ of the tori $\mathcal{T}_{\xi}^{\varepsilon,\rho}$  satisfies $\omega_p^{\eps,\rho}(\xi)=i_p^2+o(1)$ where the integers $i_p$ are distinct and belong to a sparse set. Ideally we would like a full dimensional torus, i.e. $\mathcal{S}_\infty:=\{ i_p \ | \ p\in \mathbb{N} \} =\mathbb{Z}$.
This restriction, although technical, does not seem easy to remove. Nevertheless, in some sense we are not that far; we have only used it to ensure the invertibility of the twist matrix. In particular our sparsity condition is simple and explicit:
\begin{equation}
\label{eq:def_sparcity_condition_intro}
\sup_{k\in \mathcal{S}_\infty} \sum_{j\in \mathcal{S}_\infty} \Big( 1\wedge \frac{\langle  k \rangle^{4}}{\langle j \rangle}  \wedge  \frac{\langle  j \rangle^{4}}{\langle k \rangle}   \Big) \vee  \frac1{\langle j\rangle} < \infty.
\end{equation}
 Note that in particular, it does not require the sequence $(i_p)_{p\in \mathbb{N}}$ to be fast increasing.

\end{itemize}

\subsection{About the proof}\label{sec:scheme}

 A detailed scheme of proof would be very long and maybe too technical, here we just want to give the principal keys in an informal way. Note that each section has a short introduction that attempts to clarify the section's place in the overall proof. 
 Now, we assume that the reader is familiar with the techniques of KAM theory and Birkhoff normal form theory.
 
 \medskip
 
 \noindent \underline{$\triangleright$ Modulation of frequencies and twist condition.} The crucial hypothesis in any KAM theorem is the twist condition: assume that the frequencies vector $\omega=(\omega_j)_{j\in\mathcal S}$, $\mathcal S$ being finite or infinite subset of $\Z$, depends smoothly on parameters $\xi\in\R^{\mathcal S}$, we want to be able to modulate the frequencies when moving the parameters. So we want the derivative of $\omega$, denoted $\mathrm{d}\omega$, to be  invertible in some sense depending on the topology we choose on the set of frequencies and on the set of parameters. 
In the case of \eqref{eq:NLS} with $f'(0)=-1$ the first correction to $\omega_j$ is $\xi_j$ (it comes from the term $\frac12|u_j|^4$ in the Hamiltonian which, once you recenter around $\xi_j$ writing $|u_j|^2=\xi_j+y_j$, shows up a quadratic term $\xi_j y_j$). Let us write $\omega_j=\xi_j+\eps\tilde\omega_j$ where $\eps$ is a small parameter linked to the size of the perturbation (i.e. of the solution here). The twist condition results in invertibility of $\text{Id}+\eps \mathrm{d}\tilde{\omega}$ and, if we want a diagonally dominant matrix\footnote{which is equivalent to consider  $\omega:\ell^1(\mathcal S)\to\ell^1(\mathcal S)$ and to apply a Neumann series argument}, 
can be rewritten as
\begin{equation}
\label{eq:bound_l1}
\sup_{k\in\mathcal S}\sum_{j\in\mathcal S} |\partial_{\xi_k}\tilde\omega_j |\lesssim 1.
\end{equation}
The non-linearity regularization improves the estimate on $\tilde\omega$.  By $\delta$-regularizing Hamiltonian we mean an analytic Hamiltonian $P$ that satisfies (locally) $\nabla P: \ \ell^1_{s_0}\to \ell^1_{s_0+\delta}$ where 
$$
\ell^1_{s_0}:=\{ u \in \mathbb{C}^{\mathbb{Z}} \mid \sum_{k\in \mathbb{Z}} \langle k\rangle^{s_0}|u_k|<\infty  \}, \quad \mathrm{for\ some\ } s_0\geq0.
$$ 
Without regularization ($\delta=0$), we  expect estimates on $\tilde\omega$ of the type $|\partial_{\xi_k}\tilde\omega_j |\lesssim 1$ and we cannot expect to prove the twist condition except if $\mathcal S$ is finite (this is actually the case in \cite{KP96}).
When $\mathcal S$ is infinite, we need regularization ($\delta>0$) and we can still distinguish two cases:
\begin{itemize}
\item with a regularization without loss ($\delta>0$ and  $s_0=0$) we  expect estimates on $\tilde\omega$ of the type $|\partial_{\xi_k}\tilde\omega_j |\lesssim \frac1{\langle j\rangle^\delta}$ uniformly in $k$ and we can  prove the twist condition for $\delta>1$ or $S$ sparse.
\item with a regularization with loss ($\delta>0$ and  $s_0>0$), which is actually our case with $s_0=2\delta=2$, we prove estimates on $\tilde\omega$ of the type\footnote{in fact $\xi\mapsto\tilde\omega$ will be only Lipschitz and we have to adapt the twist condition but we omit this problem here}
\begin{equation}
\label{estim:omeg}
\left|\partial_{\xi_k}\tilde\omega_j\right|\lesssim  (1\wedge \langle j \rangle^{-\delta} \langle k\rangle^{2s_0} \wedge  \langle k \rangle^{-\delta} \langle  j \rangle^{2s_0}  ) \vee  \langle j\rangle^{-\delta}
\end{equation}
 and we prove the twist condition for $\delta>0$ and $S$ sparse (see section \ref{sec:sparse} for a proof). Note that putting together \eqref{eq:bound_l1} and \eqref{estim:omeg}, we get the sparsity condition \eqref{eq:def_sparcity_condition_intro} with $s_0=2\delta=2$.
\end{itemize}

%
%

\medskip

 \noindent \underline{$\triangleright$ About the sparsity condition \eqref{eq:def_sparcity_condition_intro} on $\mathcal S_\infty$.} As explained previously, the explicit condition \eqref{eq:def_sparcity_condition_intro} is the only one we impose on the sites.  We can try to explain why estimate \eqref{estim:omeg} (implying \eqref{eq:def_sparcity_condition_intro}) is natural and thus  why, for the meantime, sparsity seems unavoidable without a significant improvement in our regularizing normal form theorem (section \ref{sec:reg}). In fact the problem comes from integrable terms in the Hamiltonian.  Theorem \ref{thm:reg} allows terms of the form $c_{k,\ell,j}|u_j|^2|u_\ell|^2 |u_k|^2$ in the nonlinear term with, if we assume $\langle k\rangle \gg \langle\ell\rangle\geq \langle j\rangle$,  the estimate\footnote{Indeed we can compute explicitly all the sixtic terms and we obtain (see for instance \cite{BFG20} where these terms are very useful) $Z_6=\sum_{k\neq\ell}\frac{|u_k|^4 |u_\ell|^2}{(k-\ell)^2}$ which leads to a quite better estimates of $c_{k,\ell,j}$, but it is difficult to expect such "convolution" structure to be stable at higher degree. So in general we cannot expect better estimate than the one given by Theorem \ref{thm:reg}. } $|c_{k,\ell,j}|\leq \langle k\rangle^{-\delta}\langle \ell\rangle^{4\delta}\langle j\rangle^{4\delta}$. After opening modes $j$ and $\ell$ this term generates a contribution to the frequency of index $k$: $\tilde\omega_k(\xi)=c_{k,\ell,j}\xi_\ell\xi_j$ and we have $\partial_{\xi_\ell} \tilde\omega_k =c_{k,\ell,j}\xi_j$. If $j$ is very small, this term could be of order $\langle k\rangle^{-\delta}\langle \ell\rangle^{4\delta}$.

\medskip
 
 \noindent \underline{$\triangleright$ KAM with internal parameters.} To perform a KAM theorem using internal parameters we know, after Kuksin-P\"oschel \cite{KP96}, that we first need to perform a Birkhoff normal form up to order four which brings out the 4th-order integrable terms, $\sum_k |u_k|^4$, from the cubic term in the original equation. As we have seen above these terms are used to satisfy the twist condition.  Actually, after our regularization step, we obtain a normal form ready for the first KAM step which essentially constructs the Kuksin--P\"oschel invariant tori. Then our strategy consists in adding a new internal site and to apply again a KAM theorem, and then to iterate this loop. So we need, at the loop output, a Hamiltonian in normal form up to order four. We have two possibilities:
\begin{itemize}
\item Following the standard KAM theorem we remove the 2-jet of the perturbation during the KAM step and then we apply a Birkhoff normal form procedure to eliminate the order 3 terms and order 4 terms that are not integrable. This strategy seems to be the most reasonable, just mimicking \cite{KP96}, but it faces an obstruction. In fact when eliminating order 3 terms by Birkhoff, we will create terms of order 4 by taking the Poisson bracket between terms of order 3 (typically $\{P_3,\chi_3\}$ where $P_3$ is an order 3 term of the perturbation and $\chi_3$ is an order 3 term coming from the resolution of the cohomological equation). Among them we can create $a_{k,j}|u_k|^2 |u_j|^2 $ where, a priori\footnote{Here $P_3$ and $\chi_3$, as part of the residual terms of KAM,  do not have  small coefficients.  }, $a_{k,j}$ is of size $1$. Then, when we open the site $k$, $|u_k|^2$ becomes $\xi_k+y_k$ and  this  generates the term $a_{k,j}\xi_k |u_j|^2$. In other words, we have created a contribution to $\omega_j$ equals to $a_{k,j}\xi_k$ whose $C^1$ (or Lipschitz) norm is, a priori, of size $1$. Thus, as seen above, we cannot satisfy the twist condition at the next step. We note that this obstruction does not appear when eliminating order 4 terms.
\item We remove all the 3-jet (and actually a part of the 4-jet) during the KAM step. This is very expensive in terms of small divisors. In fact to perform such KAM theorem, we have to solve successively 6 cohomological equations and at the end we have to assume that this part of the jet of our perturbation is very small ($r^{4000}$ where $r$ is the size of the neighborhood of the current torus). 
Then after each KAM step and before each new opening step we will apply a Birkhoff procedure to make much smaller the terms of the perturbation that will appear in this jet when we open the new site. Then for the next KAM step this jet will be small enough to eliminate it by applying our KAM theorem.
\end{itemize}
Clearly, although less natural, we adopt the second solution in this paper.

\medskip

 \noindent \underline{$\triangleright$ Internal versus external parameters.} To try to be more explicit about how we handle small divisors with internal parameters, let us return to a simpler model that already contains the essential difficulties.  As explain in section \ref{sec:context}, following an observation due to  Kuksin--P\"oschel in \cite{KP96}, very roughly  \eqref{eq:NLS} is  equivalent to \eqref{eq:NLSconv} where $V$ would depend nonlinearly on $u$ through the relation $V_j = |u_j|^2$, $j\in \mathbb{Z}$. This leads, at step $p$ of our construction, to frequencies of the form $\omega_j=j^2+\xi_j,\ j\in \mathcal{S}_p = \{ i_1,\cdots,i_p\}$ (assuming here that $\sharp \mathcal{S}_1 = 1$ to lighten the notations), where the parameters $\xi_j$ satisfy $\xi_{i_q} \sim r_q^{2\nu}$, $\nu$ is an exponent very close but less than $1$ and $r_p$ is the size of the neighborhood on which we have built our normal form around the finite dimensional invariant torus we have constructed at this step. 
 Note that of course we have a constraint of the kind $r_p^{2\nu} \lesssim \varpi_{i_p}^2$ to ensure that the solution remains in the phase space $\ell^2_\varpi$ (and so that all the changes of variables are well defined).
 Then, at step $p+1$, we open a new site and get $\omega_j=j^2+\xi_j,\ j\in S_{p+1}$ with  $\xi_{i_{p+1}}\sim r_{p+1}^{2\nu}$ and $r_{p+1}$ to be determined. Then we want to construct an invariant torus based on $\mathcal{S}_{p+1}$. Forgetting the linear part (coming from the Laplacian) which at this stage no longer plays a role, we are therefore led to control small divisors whose internal part has the form $\sum_{j\in \mathcal{S}_{p+1}}\boldsymbol{k}_j \xi_j$. By homogeneity arguments we can expect that the term we want to remove by a KAM procedure have  size $O(r_{p+1}^5)$.  There are two possible cases
\begin{itemize}
\item Either there exists $j\in \mathcal{S}_p$ such that $|\boldsymbol{k}_j| \gtrsim |\boldsymbol{k}_{i_{p+1}}|$ and thus we expect to control the small divisors by $r_p^2$. It is then sufficient to take $r_{p+1}$ to be sufficiently small compared to $r_p$. By the way, that is why we have opted for a  iterative process \`a la P\"oschel, giving us complete freedom in our choice of $r_{p+1}$.
\item Either $|\boldsymbol{k}_{i_{p+1}}|\gg |\boldsymbol{k}_j|$ for all $j\in \mathcal{S}_p$  and thus we expect to control the small divisors by $r_{p+1}^2$. This case is much less favorable because, as explained above, we want to eliminate many more terms than usual from the Hamiltonian and therefore solve a cascade of cohomological equations, which means paying the price of the small divisor several times over. The size $O(r_{p+1}^5)$ of the terms to be removed is not sufficient. A Birkhoff normal form step is then required to reduce the size of these terms before they can be eliminated by KAM. Here we take advantage of opening the sites one by one, so that we only have to consider very specific terms in this Birkhoff step.
\end{itemize}

\medskip
 
 \noindent \underline{$\triangleright$ About our choice of  Hamiltonian class.} As explain previously, in order to be able to get the twist condition, we need to propagate (in our normal form construction) sharp Lipschitz estimates on the modulated frequencies (like \eqref{estim:omeg}). In particular, it requires to have sufficiently good estimates on the Hamiltonians appearing in our construction (implying in particular vector field estimates from $\ell^1$ to $\ell^1$).
Since these estimates are quite involved, we have chosen the most direct option: we simply propagate estimates on the coefficients of the Hamiltonians and we do not modify the phase space and its topology. It generates spaces of analytic functions (see Definition \ref{def:class}) but the drawback of such approach is that the stability of such spaces with respect to the composition by Lie transforms is not automatic. Nevertheless, we prove it (in Lemma \ref{lem:flow}) by using an ad-hoc Cauchy Kowaleskaya theorem (see Lemma \ref{lem:CK}). 
\medskip
 
 \noindent \underline{$\triangleright$ About our small divisor estimates.} To prove our KAM theorem (see Theorem \ref{thm:KAM}), but also our Birkhoff theorem (see Theorem \ref{thm:Birk}), we need to control more small divisor than usual (e.g. to remove the $4-$jet in the KAM theorem). Therefore, at each step, we impose a fourth Melnikov condition on the frequencies (see \eqref{eq:cequonveut_sd}). We prove that they are typically satisfied using arguments similar to those developed by two of the authors in \cite{BG22} to perform Birkhoff normal forms for PDEs in low regularity\footnote{Note that the situation is actually quite similar here in the sense that the estimates we propagate implies that the Hamiltonians enjoys vector fields estimate from $\ell^1$ into $\ell^1$.}.
%
%
%
\medskip
 
 \noindent \underline{$\triangleright$ About the amplitudes of the tori.} Very schematically, if we denote by $(r_p)_{p\in\N}$ the sequence of sizes of the circles that make up the infinite dimensional tori we are constructing, we strongly use, as in \cite{Pos02}, that $r_{p+1}$ can be chosen much smaller than $r_p$. In a way we have an infinite reservoir of smallness and this enables us to make assumptions such as "the adapted jet is of size $r^{4000}$" in our KAM theorem. The downside is that we relinquish all reasonable control over the decay of $r_p$ (contrary to \cite{BMP20} or \cite{Con23}). More generally, we are not trying to optimize sizes.

\medskip
 
 \noindent \underline{$\triangleright$ About the geometry of our invariant sets.} For simplicity, we only focused on topological considerations about the geometry of the Kronecker tori we constructed. In particular, we do not think there is any serious obstruction to prove that the associated embeddings $\Psi$ are in fact analytic (as it is usually done for finite dimensional tori, see e.g. \cite{KP96}). However, it would generate technicalities that we preferred to avoid to lighten the proofs.

\medskip

 \noindent \underline{$\triangleright$ On the smoothness assumption on $f$.} We require $f$ to be an entire function such that $z\mapsto f(z^2)$ grows at most exponentially fast. It is clearly more than just analytic in a neighborhood of the origin (i.e. as \cite{KP96}). Actually, by a Cauchy estimate, it can be seen as a smoothness assumption : we require bounds of the form $|f^{(p)}(0)| \leq (p!)^{-1} C^p$ instead of the classical bounds $|f^{(p)}(0)| \leq p ! C^p$ (for a constant $C>0$ independent of $p$). We use this extra $(p!)^{-2}$ factor to perform the \emph{Wick resummation} of the nonlinearity in order to put the equation in regularizing normal form (see Section \ref{sec:reg}). 

\subsection{Notations, constants and Hamiltonian formalism} 
\subsubsection{Global constants} Three global constants are omnipresent in the paper :  
$$
\delta := 1, \quad \eta_0= 10 \quad \mathrm{and} \quad \eta_{\max} = 100.
$$
It is convenient to fix them once for all to  avoid to generate irrelevant dependencies.
The first one, $\delta$, correspond to the smoothing effect we get from the regularizing normal form. It could be any number between $0$ and $1$ (but of course, it would influence a lot the numerology of most of our exponents).
The two others, $\eta_0$ and $\eta_{\max}$ encode respectively the minimal and maximal radii of analyticity of the Hamiltonians we consider from Section \ref{sec:Ham_form}. These are just normalisation constants.

\subsubsection{Notations} We use some quite standard notations. For all $a,b\in \mathbb{R}$, $a\wedge b:=\min(a,b)$ and $a\vee b:=\max(a,b)$. The Japanese bracket is defined by $\langle a \rangle := \sqrt{1+a^2}$ for $a\in \mathbb{R}$.  We shall use the notation $A\lesssim B$ to denote $A\le C B$ where $C$ is a positive constant
depending on  parameters fixed once for all. 
We will emphasize by writing $\lesssim_{\alpha}$ 
when the constant $C$ 
depends on some other parameter $\alpha$. The phase space $\ell^2_\varpi$ and the nonlinearity $f$ are also considered as fixed once for all. 
Finally, all along the paper, we will use the following notation to rearrange the indices. If $E$ is a set $\mathbbm{1}_E$ denotes the indicator function of this set. If $P$ denotes a property $\mathbbm{1}_P$ is equal to $1$ is $P$ is true and it is equal to $0$ is $P$ is false.
\begin{definition}[Rearrangement $\boldsymbol{\ell}_j^*$] Given  $q\geq 0$ and a vector $\boldsymbol{\ell}\in \mathbb{Z}^q$,  $\boldsymbol{\ell}^*_1,\cdots,\boldsymbol{\ell}^*_q$ denotes a rearrangement of $\boldsymbol{\ell}_1,\cdots,\boldsymbol{\ell}_q$ satisfying $|\boldsymbol{\ell}^*_1|\geq\cdots\geq|\boldsymbol{\ell}^*_q|$.
\end{definition}

\subsubsection{Hamiltonian formalism} \label{sub:ham_form} As usual, discrete Lebesgue spaces are defined, for $1\leq p<\infty$, by
 $$
 \ell^{p} := \{ u\in \mathbb{C}^{\mathbb{Z}} \ | \ \|u\|_{\ell^p}^p := \sum_{k\in \mathbb{Z}}  |u_k|^p <\infty \}, \quad \ell^{\infty} := \{ u\in \mathbb{C}^{\mathbb{Z}} \ | \ \|u\|_{\ell^\infty} := \sup_{k\in \mathbb{Z}}  |u_k| <\infty \}.
 $$
 We extend these definitions for sequences indexed by subsets of $\mathbb{Z}$. Recalling that we identify functions with their sequence of Fourier coefficients (defined by \eqref{eq:def_Fourier}), the Fourier--Plancherel isometry writes
$$
\forall u \in \ell^2, \quad \|u\|_{\ell^2}^2 = \int_{\mathbb{T}} |u(x)|^2 \frac{\mathrm{d}x}{2\pi} =: \| u\|_{L^2}^2.
$$ 
We equip $\ell^2$ of the standard real scalar product $(u,v)_{\ell^2}$  defined by
$$
\forall u,v\in \ell^2(\mathbb{T}), \quad  (u,v)_{\ell^2} :=  \Re \sum_{k\in \mathbb{Z}} u_k \overline{v_k} = \Re \int_{\mathbb{T}} u(x) \overline{v(x)} \frac{\mathrm{d}x}{2\pi}.
$$
Given an open subset $\mathcal{U} \subset \ell^2_\varpi$, a smooth function $P :\mathcal{U} \to \mathbb{R}$ and $u\in \mathcal{U}$, its gradient $\nabla P(u)$ is the unique element of $\ell^{2}_{\varpi^{-1}}(\mathbb{Z})$\footnote{where $(\varpi^{-1})_k:= \varpi^{-1}_k $ for all $k\in \mathbb{Z}$}  satisfying
$$
\forall v\in \ell^2_\varpi, \ (\nabla P(u),v)_{\ell^2} = \mathrm{d}P(u)(v).
$$
It can be easily checked that
\begin{equation}
\label{eq:formula_grad}
\forall k \in \mathbb{Z}, \quad (\nabla P(u))_k = 2\partial_{\overline{u_k}} P(u):= (\partial_{\Re u_k} +\ic \partial_{\Im u_k} )P(u).
\end{equation}
We always equip $\ell^2_\varpi$ with the usual symplectic form $(\ic\cdot,\cdot)_{\ell^2}$.  Therefore \eqref{eq:NLS} is a Hamiltonian system as it writes
\begin{equation}
\label{eq:sympl_struct}
\ic\partial_t u = \nabla H^{\eqref{eq:NLS}}(u), \quad \mathrm{with} \quad H^{\eqref{eq:NLS}}(u) := \frac12 \int_{\mathbb{T}} |\partial_x u(x)|^2 + F(|u(x)|^2) \frac{\mathrm{d}x}{2\pi} 
\end{equation}
where $F$ is the primitive of $f$ vanishing at the origin.
Finally, we recall that a $C^1$ map $\tau : \mathcal{U} \to \ell^2_\varpi$ is \emph{symplectic} if
$$
\forall u \in \mathcal{U}, \forall v,w \in \ell^2_\varpi, \quad (\ic v,w)_{\ell^2} = (i\mathrm{d}\tau(u)(v),\mathrm{d}\tau(u)(w))_{\ell^2}.
$$
Moreover, if  $H,K :  \mathcal{U}  \to \mathbb{R}$ are two smooth functions such that $\nabla H$ (or $\nabla K$) is  $ \ell^2_\varpi$ valued then the \emph{Poisson bracket} of $H$ and $K$ is defined by
$$
\{  H,K\}(u):= (\ic \nabla H(u),\nabla K(u))_{\ell^2}.
$$
Note that, as usual, it also satisfies
\begin{equation}
\label{eq:Poisson_formula}
\{  H,K\}
  =2\ic \sum_{k\in \mathbb{Z}} \partial_{\overline{u_k}}H \partial_{u_k} K - \partial_{u_k}H \partial_{\overline{u_k}} K.
\end{equation}

\subsection{Organization of the paper} 
Section \ref{sec:reg} is independent of the rest of the paper and provides a regularizing normal form result, Theorem \ref{thm:reg}, that is crucial in the proof of Theorem \ref{thm:main} and that is interesting in itself. The other sections are all related and focus on proving our main result, Theorem \ref{thm:main}. The heart of the proof is in section \ref{sec:loop}, where we show how to go from a Hamiltonian in normal form with $p$ internal modes to a Hamiltonian in normal form with $p+1$ internal modes, and therefore how to go from $p$-dimensional invariant torus to a $p+1$-dimensional invariant torus. Then in section \ref{sec:proof} we implement the iterative scheme that allows us to take the limit $p\to+\infty$. 
Sections \ref{sec:Ham_form} \ref{sec:PD} \ref{sec:KAM} \ref{sec:Birk} \ref{sec:open} are preparatory, providing all the necessary tools, in particular a KAM theorem (Theorem \ref{thm:KAM}) and a Birkhoff theorem (Theorem \ref{thm:Birk}) adapted to our purpose. Each section has a short introduction that attempts to clarify the section's place in the overall proof.

\subsection{Acknowledgments} During the preparation of this work the authors benefited from the support of the Centre Henri Lebesgue ANR-11-LABX-0020-0 and J.B. was also supported by the region "Pays de la Loire" through the project "MasCan".
T.R. was partially supported by the ANR project Smooth ANR-22-CE40-0017. J.B. and B.G. were partially supported by the ANR project KEN ANR-22-CE40-0016.

\section{Regularizing normal form}\label{sec:reg}
As explained in the introduction, the first step towards the proof of Theorem~\ref{thm:main} is to put the NLS equation \eqref{eq:NLS} into regularizing normal form. Indeed, it is known in the dispersive PDE community \cite{ErTz1,ErTz3,McConnell} that by using a \emph{Poincar\'e-Dulac} normal form reduction after a suitable gauge transformation, one can show a nonlinear smoothing effect for the periodic NLS equation with polynomial nonlinearity. Namely, after transformation of the equation, the nonlinear part of the Duhamel formula can be controlled at a higher regularity than the initial data. In this section, our goal is to establish a similar result for \eqref{eq:NLS}, but through a kind of convergent \emph{Birkhoff} normal form transformation. While the Poincar\'e-Dulac transformation works well to show nonlinear smoothing at the level of the equation, it does not preserve symplecticity. Here we instead work directly at the level of the Hamiltonian.

The first step is to remove trivial resonances. At the level of the equation, this is usually done by a suitable gauge transformation. Here we instead expand our Hamiltonian in terms of Wick products, which are known in the quantum field theory literature to be the suitable renormalization of usual power nonlinearities obtained by removing single pairings (leading to the so-called UV divergencies in EQFT). Then we remove non-resonant interactions through a convergent Birkhoff procedure.

\subsection{Statement}

In the following theorem, which is the main result of this section, we conjugate the Hamiltonian of \eqref{eq:NLS}, $H^{\eqref{eq:NLS}}$, defined by \eqref{eq:sympl_struct}, to a $1$-smoothing Hamiltonian.

\begin{theorem} \label{thm:reg} There exist some constants $\rho,C >0$  and a $C^1$ symplectic map $\tau : B_{\ell^2_\varpi}(0,\rho) \to \ell^2_\varpi$ such that for all $u\in B_{\ell^2_\varpi}(0,\rho)$
$$
H^{\eqref{eq:NLS}} \circ \tau =  H^{(0)}_{1}(u)   + \sum_{2n+q \geq 6} \sum_{\substack{\boldsymbol{\sigma} \in \{-1,1\}^q \\ \boldsymbol{\sigma}_1+ \cdots + \boldsymbol{\sigma}_q = 0}} \sum_{ \boldsymbol{\sigma}_1 \boldsymbol{\ell}_1+\cdots+  \boldsymbol{\sigma}_q \boldsymbol{\ell}_q =0} H_{n}^{\boldsymbol{\ell},\boldsymbol{\sigma}} \|u\|_{L^2}^{2n} \prod_{1\leq j \leq q} u_{\boldsymbol{\ell}_j}^{\boldsymbol{\sigma}_j} 
$$
with the usual convention $ u_{\ell}^{-1} :=  \overline{u_{\ell}}$ and where
$$
 H^{(0)}_{1}(u)  := \frac12 \sum_{k\in \mathbb{Z}} \big(  |k|^2 - f'(0) \frac{|u_k|^2}2 \big) |u_k|^2 + \frac{f'(0)}2\| u\|_{L^2}^4
$$
and the coefficients $ H_{n}^{\boldsymbol{\ell},\boldsymbol{\sigma}} \in \mathbb{C}$ satisfy the bound
$$
| H_{n}^{\boldsymbol{\ell},\boldsymbol{\sigma}}| \lesssim C^{q+2n} \big( 1 \wedge \frac{  \langle \boldsymbol{\ell}_3^* \rangle^2  }{  \langle \boldsymbol{\ell}_1^* \rangle } \big).
$$
Moreover, the map $\tau$ satisfies the following properties for all $u\in B_{\ell^2_\varpi}(0,\rho)$:
\begin{itemize}
\item it preserves the $L^2$ norm and commutes with the gauge transform : $\| \tau(u) \|_{L^2} = \| u\|_{L^2}$ and $\tau(e^{i \theta} u) = \tau( u)  $ for all $\theta \in \mathbb{T}$,
\item it is close to the identity : $\| \tau(u) - u\|_{\ell_\varpi^2} \lesssim \|u\|_{\ell_\varpi^2}^3$,
\item its derivative is close enough to the identity : $\| \mathrm{d}\tau(u) - \mathrm{Id}\|_{\ell^2_\varpi \to \ell^2_\varpi} \leq \frac12$.
\end{itemize}
\end{theorem}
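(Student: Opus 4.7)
The plan is to split the proof into three stages: a Wick expansion of $H^{\eqref{eq:NLS}}$ that isolates the integrable skeleton $H^{(0)}_1$, a single symplectic Birkhoff-type transformation that produces the $1$-smoothing, and a verification of the structural properties of the resulting map.

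\medskip

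\textbf{Step 1: Wick expansion.} First, I would write $F(|u|^{2})=\sum_{p\geq 2}a_{p}|u|^{2p}$, where the extra smoothness of $f$ announced in the introduction gives, by a Cauchy estimate applied to $z\mapsto f(z^2)$, the sharp bound $|a_{p}|\lesssim C^{p}/(p!)^{2}$. Using the combinatorial identity
\[
\int_{\mathbb{T}}|u|^{2p}\frac{dx}{2\pi}\;=\;\sum_{k=0}^{p}\frac{(p!)^{2}}{k!\,((p-k)!)^{2}}\,\|u\|_{L^{2}}^{2k}\,W_{2(p-k)}(u),
\]
where $W_{2m}(u)$ denotes the zero-momentum sum over configurations with no pair of equal indices carrying opposite signs (the combinatorial Wick product), and reorganising the double series in $p$ and $m$, the $(p!)^{-2}$ gain from $a_p$ exactly cancels the combinatorial factor $(p!)^{2}$, so the expansion converges absolutely on a ball of radius $\rho$ with coefficients of size $O(C^{q+2n})$. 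The $|u|^{4}$ contribution produces precisely the quartic part of $H^{(0)}_1$, which absorbs all monomials of degree $\leq 4$; the remainder $R$ collects the terms of degree $\geq 6$ and is gauge- and mass-invariant by construction.

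\medskip

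\textbf{Step 2: Regularizing Birkhoff transformation.} Next, I would set $\tau:=\Phi_\chi^{1}$, the time-one flow of the Hamiltonian vector field generated by
\[
\chi(u):=\sum_{n,\boldsymbol{\ell},\boldsymbol{\sigma}}\frac{R_{n}^{\boldsymbol{\ell},\boldsymbol{\sigma}}}{\ic\,\Omega_{\boldsymbol{\ell},\boldsymbol{\sigma}}}\,\|u\|_{L^{2}}^{2n}\prod_{j=1}^{q}u_{\boldsymbol{\ell}_{j}}^{\boldsymbol{\sigma}_{j}},\qquad \Omega_{\boldsymbol{\ell},\boldsymbol{\sigma}}:=\sum_{j}\boldsymbol{\sigma}_{j}|\boldsymbol{\ell}_{j}|^{2}.
\]
Wick ordering forbids self-contractions, so $\Omega_{\boldsymbol{\ell},\boldsymbol{\sigma}}\neq 0$ on $\mathrm{supp}(\chi)$, and the sharp Schr\"odinger small divisor estimate
\[
|\Omega_{\boldsymbol{\ell},\boldsymbol{\sigma}}|\;\gtrsim\; \frac{\langle\boldsymbol{\ell}_{1}^{*}\rangle}{\langle\boldsymbol{\ell}_{3}^{*}\rangle^{2}}
\]
--- deduced for $q=4$ from the factorisation $\ell_{1}^{2}-\ell_{2}^{2}+\ell_{3}^{2}-\ell_{4}^{2}=2(\ell_{1}-\ell_{2})(\ell_{1}-\ell_{4})$ combined with momentum conservation, and for $q\geq 6$ by isolating the quartet carrying the two largest indices and absorbing the remaining $q-4$ modes as spectators --- endows $\chi$ itself with the claimed $1\wedge\langle\boldsymbol{\ell}_{3}^{*}\rangle^{2}/\langle\boldsymbol{\ell}_{1}^{*}\rangle$ coefficient bound. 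By construction $\{H_{\mathrm{lin}},\chi\}=-R$, hence the Lie series for $H^{\eqref{eq:NLS}}\circ\Phi_\chi^1$ reduces to $H_{\mathrm{lin}}+H^{(0)}_1$ plus an infinite sum of iterated Poisson brackets each involving at least one factor of $\chi$. A careful but routine bookkeeping shows that every such bracket contracts only one internal pair of Fourier modes without destroying the two largest indices, so the smoothing factor $\langle\boldsymbol{\ell}_{1}^{*}\rangle^{-1}$ is inherited by every monomial of the resulting expansion.

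\medskip

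\textbf{Step 3: Properties of $\tau$ and main obstacle.} Gauge invariance (the constraint $\sum\boldsymbol{\sigma}_j=0$) is preserved by the whole Wick decomposition, so $\chi$ commutes with the gauge action and Poisson-commutes with $\|u\|_{L^{2}}^{2}$, yielding both $\tau(e^{\ic\theta}u)=e^{\ic\theta}\tau(u)$ and $\|\tau(u)\|_{L^{2}}=\|u\|_{L^{2}}$. Since $\chi$ starts at total degree $\geq 4$ in $u$, its Hamiltonian flow satisfies $\|\tau(u)-u\|_{\ell^{2}_\varpi}\lesssim\|u\|_{\ell^{2}_\varpi}^{3}$ and $\|d\tau-\mathrm{Id}\|\leq 1/2$ on a small enough ball, through a Cauchy--Kovalevskaya-type fixed-point argument for $\Phi_\chi^t$. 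The main obstacle I foresee is quantitative: the $(p!)^{-2}$ gain from the smoothness of $f$ must simultaneously absorb the combinatorial cost of the Wick expansion, survive the division by $\Omega$, and tame the infinite Lie series of iterated Poisson brackets, uniformly in the total degree $q$ and on a ball of radius $\rho$ independent of that degree. It is precisely this compounded combinatorics that explains why mere analyticity of $f$ around the origin (as in \cite{KP96}) is insufficient here, and why the construction remains strictly convergent and symplectic rather than formal, in contrast with the Poincar\'e--Dulac approach of \cite{ErTz1,ErTz3,McConnell}.
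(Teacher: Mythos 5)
Your Steps~1 and 3 track the paper reasonably well: the Wick/Laguerre expansion using the $(p!)^{-2}$ decay of the Taylor coefficients of $f$ is precisely Proposition~\ref{prop:PNLS}, and the structural properties of $\tau$ (gauge and mass invariance, closeness to the identity) are verified essentially as you describe. However, Step~2 contains two genuine gaps, and they undermine the proof.

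First, the claim that ``Wick ordering forbids self-contractions, so $\Omega_{\boldsymbol{\ell},\boldsymbol{\sigma}}\neq 0$ on $\mathrm{supp}(\chi)$'' is false. Wick renormalization only removes \emph{single} pairings (Lemma~\ref{lem:W}); over-paired configurations survive, and they can be exactly resonant. For example $|u_N|^4|u_m|^2$, with $\boldsymbol\ell=(N,N,N,N,m,m)$, $\boldsymbol\sigma=(1,1,-1,-1,1,-1)$, has $\Omega=0$ yet a non-vanishing Wick coefficient. More importantly, non-paired non-trivial resonances exist at degree $\geq 6$, and the normal form condition \eqref{eq:normal_form} deliberately retains not only $\mathbf P_q$ terms but also the small-divisor band $|\Omega|\leq 4^{-10}q^9$ --- these cannot be divided out. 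Your $\chi$ is thus ill-defined, and no single small-divisor bound of the type you state can substitute for the case split the paper makes between $\mathbf P_q$ terms, small-$|\Omega|$ terms, and the rest (Lemmas~\ref{lem:nf_is_smoothing} and~\ref{lem:cohom_is_smooting}).

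Second, and more fundamentally, a single time-one flow $\Phi_\chi^1$ does not yield the claimed form. Even granting the existence of $\chi$, the Lie series produces iterated brackets $\{\{Z_2,\chi\},\chi\}, \{P,\chi\}$, etc., and these are \emph{not} automatically smoothing. The key Poisson bracket estimate in the paper (Lemma~\ref{lem:bracket_reg}) sends $\mathcal P_{\eta,r}\times\mathcal R_{\eta,r}\to\mathcal P_{\eta',r}$, i.e.\ a smoothing $\chi$ bracketed against a general Hamiltonian lands back in the \emph{non}-smoothing class $\mathcal P$, not in $\mathcal R$. Only the normal-form (near-resonant or over-paired) part of a $\mathcal P$-Hamiltonian is smoothing (Lemma~\ref{lem:nf_is_smoothing}); the non-normal-form part must be removed by a further cohomological step. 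This is exactly why the paper iterates, degree by degree, with an infinite sequence $\chi^{(4)},\chi^{(5)},\dots$ and decreasing analyticity radii $\eta_q$: the iterative lemma removes the degree-$q$ non-normal-form terms, the iterated brackets reappear at degree $\geq q+1$, and the process converges. Calling the propagation of the smoothing factor through the full Lie series ``routine bookkeeping'' glosses over what is in fact the central difficulty that the iterative scheme was designed to overcome.

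Smaller point: you write that the $|u|^4$ Wick contribution ``produces precisely the quartic part of $H_1^{(0)}$,'' but the Wick-ordered quartic $\Pi_{\mathrm{deg}\leq 4}P^{\eqref{eq:NLS}}\propto\|u\|_{L^4}^4$ still contains all non-resonant quartic monomials. Reducing it to $H_1^{(0)}$ requires the $q=4$ Birkhoff step (and the elementary observation about quartic resonances of $-\partial_x^2$), not just the Wick expansion; this is handled at Step~4 of the paper's proof.
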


\begin{remark} A quite direct extension of our construction would allow to prove that $\tau- \mathrm{Id}$ is actually smoothing up to a gauge transform, i.e. that for all $u\in B_{\ell^2_\varpi}(0,\rho)$, there exists $\theta(u)\in \mathbb{T}$ such that $\| \tau(u) - e^{i\theta(u)} u\|_{\ell_{\varpi_\sharp}^2} \lesssim \|u\|_{\ell_\varpi^2}^3$ where $(\varpi_\sharp)_k: =\varpi_k \langle k\rangle $. It would make Theorem \ref{thm:reg} more similar to \cite{KST17} (dealing with the cubic \eqref{eq:NLS} case) but since this is not useful for our application, we did not include it.
\end{remark}

This theorem is proven in subsection \ref{sub:proof_thm_reg}. We point out that all the notions and notations introduced in this section are specific to it and will not be used in the rest of the paper. Nevertheless, to make the paper more consistent, we use a formalism very close to the one we use in the other sections (and which, at first glance, may seem surprising for a kind of Birkhoff normal form procedure).

\subsection{Classes of analytic functions and pairing}
Let us introduce some further notations and basic definitions to prove Theorem~\ref{thm:reg}.

\medskip

First, we define a special set of indices which will correspond to monomial whose highest index is associated with an action.
\begin{definition}[Set of indices $\mathbf{P}_{q}$] Being given $q\geq 0$, $\mathbf{P}_{q}$ denotes the set of indices $(\boldsymbol{\ell},\boldsymbol{\sigma}) \in\mathbb{Z}^{q} \times \{-1,1\}^q$ such that there is a pairing between the two highest indices: there exist $j_1,j_2\in\{1,\dots,q\}$ such that 
$$
j_1 \neq j_2, \quad  \boldsymbol{\ell}_{j_1}=\boldsymbol{\ell}_{j_2}, \quad \boldsymbol{\sigma}_{j_1}=-\boldsymbol{\sigma}_{j_2}, \quad |\boldsymbol{\ell}_{j_1}| = |\boldsymbol{\ell}_{1}^*|  .
$$
\end{definition}

\medskip

Then, we define two spaces of homogeneous polynomials.
\begin{definition}[Spaces $\mathcal{P}_{\eta,r}$ and $\mathcal{R}_{\eta,r}$] Given $\eta \geq \eta_0$ and $r \in (0,1)$,  we denote by $\mathcal{P}_{\eta,r}$ the set of the formal series $P$ of the form
$$
P(\mu, v) =   \sum_{q\geq 0} \sum_{ \boldsymbol{\sigma} \in \{-1,1\}^q }  \sum_{ \boldsymbol{\ell} \in \mathbb{Z}^q } \sum_{n\geq 0} P_{n}^{\boldsymbol{\ell},\boldsymbol{\sigma}} \mu^{n} \prod_{1\leq j \leq q} v_{\boldsymbol{\ell}_j}^{\boldsymbol{\sigma}_j} 
$$
whose coefficients $ P_{n}^{\boldsymbol{\ell},\boldsymbol{\sigma}} \in \mathbb{C}$ satisfy the following conditions
\begin{enumerate}[(i)]
\item reality condition  : $  P_{n}^{\boldsymbol{\ell},-\boldsymbol{\sigma}}   = \overline{  P_{n}^{\boldsymbol{\ell},\boldsymbol{\sigma}}  }$ ,
\item preservation of the mass : if $ P_{n}^{\boldsymbol{\ell},\boldsymbol{\sigma}} \neq 0$ then $\boldsymbol{\sigma}_1 +\cdots+  \boldsymbol{\sigma}_q =0$,
\item zero momentum condition :  if $ P_{n}^{\boldsymbol{\ell},\boldsymbol{\sigma}} \neq 0$ then $\boldsymbol{\sigma}_1 \boldsymbol{\ell}_1+\cdots+  \boldsymbol{\sigma}_q \boldsymbol{\ell}_q=0$,
\item bound :
$$
\| P \|_{\mathcal{P}_{\eta,r}} :=  \sup_{\substack{q\geq 0, \  \boldsymbol{\ell} \in \mathbb{Z}^q  \\  \boldsymbol{\sigma} \in \{-1,1\}^q } } \sum_{n\geq 0} |P_{n}^{\boldsymbol{\ell},\boldsymbol{\sigma}}| r^{2n+q} e^{\eta q + 2\eta_0 n  } A_{\boldsymbol{\ell},\boldsymbol{\sigma}}^{-1} <\infty \quad \mathrm{where} \quad A_{\boldsymbol{\ell},\boldsymbol{\sigma}}^{-1} := 1 \vee  e^{-q}  \frac{  \langle \boldsymbol{\ell}_1^* \rangle }{  \langle \boldsymbol{\ell}_3^* \rangle^2  }  \mathbbm{1}_{(\boldsymbol{\ell},\boldsymbol{\sigma}) \in \mathbf{P}_{q}}.
$$
\end{enumerate}
We denote by $\mathcal{R}_{\eta,r}$, the subspace of $\mathcal{P}_{\eta,r}$ of the formal series $P$ satisfying the extra bound
$$
\| P \|_{\mathcal{R}_{\eta,r}} :=  \sup_{\substack{q\geq 0, \  \boldsymbol{\ell} \in \mathbb{Z}^q  \\  \boldsymbol{\sigma} \in \{-1,1\}^q } } \sum_{n\geq 0}   |P_{n}^{\boldsymbol{\ell},\boldsymbol{\sigma}}| r^{2n+q} e^{\eta q + 2\eta_0 n  }  B_{\boldsymbol{\ell}}^{-1}<\infty \quad \mathrm{where} \quad B_{\boldsymbol{\ell}}^{-1} := 1 \vee  e^{-q}  \frac{  \langle \boldsymbol{\ell}_1^* \rangle }{  \langle \boldsymbol{\ell}_3^* \rangle^2  }.
$$
\end{definition}
\begin{remark} The definition of these quite unusual norms are motivated by Lemma \ref{lem:cohom_is_smooting} and Lemma \ref{lem:bracket_reg} which are crucial in our proof.
\end{remark}

\begin{lemma}[Formal series define functions] \label{lem:ouf_cest_des_fonctions_reg} Let $r\in (0,1)$, $\eta \geq \eta_0$ and $P\in  \mathcal{P}_{\eta,r} $. Then 
$$
u\mapsto P(\| u\|_{L^2}^2,u )=:P(u)
$$
 defines a smooth function from $ B_{\ell^2_\varpi}(0,3r) $ into $\mathbb{R}$ and enjoys the estimate
$$
\sup_{u\in  B_{\ell^2_\varpi}(0,3r) }|P(u)| \lesssim \| P\|_{\mathcal{P}_{\eta,r}}.
$$
\end{lemma}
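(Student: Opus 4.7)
The plan is to show absolute convergence of the series defining $P(u)$ on $B_{\ell^2_\varpi}(0,3r)$, and then deduce both the pointwise bound and smoothness. The argument is an elementary convergence estimate relying on three ingredients: the definition of $\|\cdot\|_{\mathcal{P}_{\eta,r}}$, the embedding $\ell^2_\varpi \hookrightarrow \ell^1$ (which uses $s\geq 1$), and the fact that the constants $\eta_0 = 10$ and $\eta \geq \eta_0$ have been chosen large enough to absorb combinatorial factors.

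First I would observe that $A_{\boldsymbol{\ell},\boldsymbol{\sigma}}^{-1} \geq 1$, hence $A_{\boldsymbol{\ell},\boldsymbol{\sigma}} \leq 1$, so the defining bound of $\|\cdot\|_{\mathcal{P}_{\eta,r}}$ yields the crude coefficient estimate
\[
|P_n^{\boldsymbol{\ell},\boldsymbol{\sigma}}| \leq \|P\|_{\mathcal{P}_{\eta,r}}\, r^{-(2n+q)} e^{-\eta q - 2\eta_0 n}.
\]
Then, by the triangle inequality applied term-by-term, for $u\in B_{\ell^2_\varpi}(0,3r)$,
\[
|P(u)| \leq \|P\|_{\mathcal{P}_{\eta,r}} \sum_{n,q\geq 0} r^{-(2n+q)} e^{-\eta q - 2\eta_0 n} \|u\|_{L^2}^{2n} \sum_{\boldsymbol{\sigma}\in\{-1,1\}^q} \sum_{\boldsymbol{\ell}\in\mathbb{Z}^q} \prod_{j=1}^{q} |u_{\boldsymbol{\ell}_j}|.
\]
The sum over $\boldsymbol{\sigma}$ contributes a factor $2^q$, while the sum over $\boldsymbol{\ell}$ factorises as $\|u\|_{\ell^1}^q$. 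Since $s\geq 1$, the weights satisfy $\sum_{k\in\mathbb{Z}} \varpi_k^{-2} < \infty$, so by Cauchy--Schwarz $\|u\|_{\ell^1} \leq C_\varpi \|u\|_{\ell^2_\varpi}$ for a constant $C_\varpi$ depending only on $\varpi$. Combined with the trivial bound $\|u\|_{L^2} \leq \|u\|_{\ell^2_\varpi} \leq 3r$, this yields
\[
|P(u)| \leq \|P\|_{\mathcal{P}_{\eta,r}} \sum_{n\geq 0} (9 e^{-2\eta_0})^n \sum_{q\geq 0} (6 C_\varpi e^{-\eta})^q.
\]
Since $\eta_0 = 10$ gives $9 e^{-2\eta_0} < 1$ and $\eta \geq \eta_0 = 10$ gives $6C_\varpi e^{-\eta} < 1$ (taking $\varpi$ fixed once for all), both geometric series converge, proving the estimate $\sup |P(u)| \lesssim \|P\|_{\mathcal{P}_{\eta,r}}$.

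It remains to check smoothness. I would argue that the absolute convergence of the above series is preserved under differentiation: each partial derivative $\partial_{u_k}$, $\partial_{\overline{u_k}}$ or derivative in the mass variable $\mu = \|u\|_{L^2}^2$ produces at most a polynomial factor in the degree $2n+q$, which is easily absorbed by shrinking $r$ or $\eta$ slightly (for instance by estimating the derivatives on $B_{\ell^2_\varpi}(0, 2r)$ instead of $B_{\ell^2_\varpi}(0, 3r)$, so that each differentiation costs a geometric factor strictly less than one thanks to the gap between $2r$ and $3r$). Hence the series is $C^\infty$ on $B_{\ell^2_\varpi}(0, 3r)$, and the reality of $P(u)$ follows from the reality condition (i) on the coefficients, combined with the zero-mass condition (ii) which guarantees that complex conjugation pairs monomials with their conjugates.

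The estimate is entirely elementary; there is no real obstacle beyond bookkeeping. The only point that might look mildly delicate is verifying that the numerical constants $(\eta_0, \eta)$ dominate the combinatorial factor $2^q$ and the embedding constant $C_\varpi$, but this is precisely why $\eta_0 = 10$ is fixed large from the outset.
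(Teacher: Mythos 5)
Your proof is correct and takes essentially the same approach as the paper: drop the weight $A_{\boldsymbol{\ell},\boldsymbol{\sigma}}^{-1}\geq 1$ to get crude coefficient bounds, use the embedding $\ell^2_\varpi\hookrightarrow\ell^1$ (available because $s\geq1$), and observe that the remaining sums in $(n,q)$ are geometric with ratio $<1$ thanks to $\eta\geq\eta_0=10$. One small inaccuracy: reality of $P(u)$ follows from the reality condition (i) alone, since $\mu=\|u\|_{L^2}^2$ is real and conjugation pairs $(\boldsymbol{\ell},\boldsymbol{\sigma})$ with $(\boldsymbol{\ell},-\boldsymbol{\sigma})$; the zero-mass condition (ii) plays no role in that step.
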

\begin{proof} It suffices to prove that the formal series defining $P(u)$ converges uniformly for $u\in B_{\ell^2_\varpi}(0,3r) $. By definition of the norm $\| P \|_{\mathcal{P}_{\eta,r}} $,  it suffices to prove that
$$
\sum_{n \in \mathbb{N}}\sum_{q\in \mathbb{N}}r^{-2n} e^{-\eta q - 2\eta_0 n  } (3r)^{2n} \sum_{\substack{ \boldsymbol{\ell} \in \mathbb{Z}^q,\  \boldsymbol{\sigma} \in  \{-1,1\}^q  }}  w_{\boldsymbol{\ell}_1} \cdots w_{\boldsymbol{\ell}_q}    \, \lesssim 1
$$
where $w_j= |u_j|r^{-1}$ for $j\in\Z$. Noticing that, since $s\geq1 $,  $w\in \ell^1$ with $\| w\|_{\ell^1} \leq 5r^{-1}\| u\|_{\ell_\varpi^2}\leq15$ for $u\in B_{\ell^2_\varpi}(0,3r) $, we deduce  
$$\sum_{ \boldsymbol{\ell} \in \Z^q,\  \boldsymbol{\sigma} \in  \{-1,1\}^q  }  w_{\boldsymbol{\ell}_1} \cdots w_{\boldsymbol{\ell}_q}    \, \lesssim 30^q.$$
On the other hand the summability with respect to $(q,n)$ is ensured by the exponential $e^{-\eta q - 2\eta_0 n  }$ with $\eta\geq \eta_0$.
\end{proof}

\begin{definition}[Projection $\Pi_{a}$ and $\Pi_{\leq a}$] Let $a\geq 0$, $r\in (0,1)$ and $\eta \geq \eta_0$. We denote by $\Pi_{a}$ the projection defined for all $P\in \mathscr{P}_{\eta,r}$ by
$$
\Pi_{a} P(\mu,v):=   \sum_{ \boldsymbol{\sigma} \in \{-1,1\}^a }  \sum_{ \boldsymbol{\ell} \in \mathbb{Z}^a } \sum_{n\geq 0} P_{n}^{\boldsymbol{\ell},\boldsymbol{\sigma}} \mu^{n} \prod_{1\leq j \leq q} v_{\boldsymbol{\ell}_j}^{\boldsymbol{\sigma}_j} .
$$
Moreover, we set 
$\Pi_{\leq a} = \Pi_0 +\cdots + \Pi_a.$
If $a< 0$ we set $\Pi_a = 0$.
\end{definition}

\begin{definition}[Degree and projections $\Pi_{\mathrm{deg}\leq d }$] Let $d\geq 0$. Some coefficients $(n,\boldsymbol{l},\boldsymbol{\sigma})\in \mathbb{N} \times \mathbb{Z}^q \times \{-1,1\}^q$, with $q\geq 0$, have degree $d$ if $2n+q = d.$
 We denote by $\Pi_{\mathrm{deg}=d}$ the associated projection defined by restriction of the coefficients to the indices of degree $d$. Moreover, we set $\Pi_{\mathrm{deg}\leq d } = \Pi_{\mathrm{deg}=0} + \cdots +\Pi_{\mathrm{deg}=d-1}$.
\end{definition}

\subsection{Laguerre polynomials and Wick renormalization}

This subsection is devoted to the proof of Proposition \ref{prop:PNLS} below in which we prove that the nonlinearity of \eqref{eq:NLS} belongs to some spaces $\mathcal{P}_{2\eta_0,r_0}$ for some $r_0>0$. Actually, the algebraic decomposition we prove is much stronger but not useful in this paper. Before stating the proposition, we recall that $F$ denotes the primitive of the nonlinearity $f$ vanishing at the origin and that  we only have assumed that, $F'(0) = f(0) =0$ and that it is a real entire function such that $z\mapsto F(z^2)$ grows at most exponentially fast\footnote{and that $F''(0) = f'(0) \neq 0$ but it is not useful in the section}.

\begin{proposition}\label{prop:PNLS} There exists $r_{0} \in (0,1)$ and $P^{\eqref{eq:NLS}}\in \mathcal{P}_{3\eta_0,r_0}$,  such that the nonlinear part of the Hamiltonian of \eqref{eq:NLS} satisfies
$$
\forall u\in B_{\ell^2_\varpi}(0,3r_0), \quad  \frac12 \int_{\mathbb{T}} F(|u(x)|^2) \frac{\mathrm{d}x}{2\pi} = P^{\eqref{eq:NLS}}(u) \quad \mathrm{with} \quad \Pi_{2} P^{\eqref{eq:NLS}} = \Pi_{\mathrm{deg}\leq 3}P^{\eqref{eq:NLS}}  = 0.
$$
\end{proposition}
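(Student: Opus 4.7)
The plan is to expand the Hamiltonian as a power series in $|u|^2$, pass to Fourier coefficients, and then perform an algebraic Wick renormalization that absorbs every coincidence between a $\bar u$-index and a $u$-index into powers of the mass $\mu = \|u\|_{L^2}^2$. After this procedure, the surviving monomials will have the property that no index in the $\bar u$-part coincides with any index in the $u$-part, so in particular the two highest indices can never be paired; hence $(\boldsymbol\ell, \boldsymbol\sigma)\notin \mathbf P_q$ on the support and $A_{\boldsymbol\ell, \boldsymbol\sigma}^{-1}=1$, which will reduce the required coefficient bound in the $\mathcal P_{3\eta_0, r_0}$-norm to a purely scalar estimate.

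The first input is the hypothesis that $z\mapsto F(z^2)$ is entire with at most exponential growth. Writing $F(z) = \sum_{n\geq 2} a_n z^n$ and applying the Cauchy estimate on $F(z^2)$ optimized in the radius (the optimum being $R_n \sim n$), one gets the super-geometric decay $|a_n| \lesssim K^n/(n!)^2$ for some $K > 0$; this is exactly the extra $(n!)^{-1}$ factor that the introduction advertises. For each $n$, I will then expand $\int_\mathbb T |u|^{2n} \,\mathrm dx/(2\pi) = \sum_{(L, M) \in \mathbb Z^n \times \mathbb Z^n,\ \sum L = \sum M} \prod \bar u_{L_i} \prod u_{M_j}$, classify each configuration by the bipartite coincidence pattern $\{(i, j) : L_i = M_j\}$, and perform an inclusion-exclusion over maximal matchings to obtain an identity of the form
\[
\int_\mathbb T |u|^{2n} \frac{\mathrm dx}{2\pi} = \sum_{k=0}^n \mathfrak c_{n,k}\, \mu^k\, W_{n-k}(u),
\]
where $W_m(u)$ is the fully Wick-ordered monomial (no coincidence between any $\bar u$-index and any $u$-index) and $\mathfrak c_{n,k} \lesssim (n!)^2/(k!(n-k)!^2)$. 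Multiplying by $a_n/2$, summing in $n$, and reindexing by $(n, m) := (k, n-k)$ will express $\frac12 \int F(|u|^2) \,\mathrm dx/(2\pi) = P^{\eqref{eq:NLS}}(\mu, u)$ as a formal series of the required shape.

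The three properties should then follow directly. Because $F$ vanishes to order $2$, every contribution has total degree $2k+2(n-k) = 2n \geq 4$, giving $\Pi_{\mathrm{deg}\leq 3} P^{\eqref{eq:NLS}} = 0$. The vanishing $\Pi_2 P^{\eqref{eq:NLS}} = 0$ is automatic: a nonzero $q=2$ Wick monomial would need two indices $\boldsymbol\ell_1, \boldsymbol\ell_2$ satisfying simultaneously the zero-momentum/mass-conservation condition $\boldsymbol\ell_1 = \boldsymbol\ell_2$ and the no-coincidence condition $\boldsymbol\ell_1 \neq \boldsymbol\ell_2$. For the coefficient bound, combining $|a_{n+m}| \lesssim K^{n+m}/((n+m)!)^2$ with $\mathfrak c_{n+m, n} \lesssim (n+m)!^2/(n! (m!)^2)$ gives $|P_n^{\boldsymbol\ell, \boldsymbol\sigma}| \lesssim K^{n+m}/(n!(m!)^2)$ when $q = 2m$, so that with $A_{\boldsymbol\ell, \boldsymbol\sigma}^{-1} \equiv 1$ on the support, the $\mathcal P_{3\eta_0, r_0}$-norm factorizes and is controlled by $\sup_m (K r_0^2 e^{6\eta_0})^m/(m!)^2 \cdot \exp(K r_0^2 e^{2\eta_0})$, which is finite for $r_0$ small enough.

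The main obstacle is combinatorial: when several indices coincide (e.g.\ $L_1 = L_2 = M_1 = M_2$), the naive pairing count double-counts matchings, so the exact coefficients $\mathfrak c_{n,k}$ require a careful M\"obius/Laguerre-style inclusion-exclusion (which is presumably the reason for the name of the subsection). However this only affects the numerical values, not the orders of magnitude: $\mathfrak c_{n,k}$ remains polynomially dominated by $(n!)^2/(k!(n-k)!^2)$, which is absorbed by the $(n!)^{-2}$ decay of $a_n$ and leaves the final estimate intact.
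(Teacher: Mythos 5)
Your overall strategy — Laguerre/Wick expansion in powers of the mass $\mu$, combined with the Cauchy estimate $|a_n|\lesssim K^n/(n!)^2$ coming from the exponential-growth hypothesis on $F(z^2)$ — is exactly the paper's. But there is a genuine gap in your description of the Wick-ordered monomials.

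You claim that after the renormalization, "no index in the $\bar u$-part coincides with any index in the $u$-part," hence $(\boldsymbol\ell,\boldsymbol\sigma)\notin\mathbf P_q$ on the support and $A_{\boldsymbol\ell,\boldsymbol\sigma}^{-1}\equiv 1$. That is false. Already for $p=2$, a direct computation from $:\!|u|^4\!:\,=|u|^4-4\mu|u|^2+2\mu^2$ gives
\[
W_4(u)=\int_{\mathbb T}\,:\!|u|^4\!:\,\frac{\mathrm dx}{2\pi}
=\sum_{\substack{\{k,l\}\cap\{m,n\}=\emptyset\\ k+l=m+n}}u_ku_l\overline{u_m}\,\overline{u_n}
\;-\;\sum_{k\in\mathbb Z}|u_k|^4,
\]
so the diagonal term $-\sum_k|u_k|^4$ survives the Wick subtraction, and its index vector $\boldsymbol\ell=(k,k,k,k)$, $\boldsymbol\sigma=(+,+,-,-)$ lies in $\mathbf P_4$. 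The Wick ordering kills \emph{single} pairings, not all pairings. The correct structural fact — proved in the paper's Lemma~\ref{lem:W} via a telescoping cancellation in the Laguerre-coefficient sum when $\boldsymbol\alpha_\ell=\boldsymbol\beta_\ell=1$ — is that any pairing in the support of $W_{2p}$ is \emph{over-paired}: if $\boldsymbol\ell_i=\boldsymbol\ell_j$ with $\boldsymbol\sigma_i\boldsymbol\sigma_j=-1$, then that same index appears at least once more. This over-pairing is what makes the weight harmless: it forces $\langle\boldsymbol\ell_1^*\rangle=\langle\boldsymbol\ell_3^*\rangle$ whenever $(\boldsymbol\ell,\boldsymbol\sigma)\in\mathbf P_{2p}$, so that $\frac{\langle\boldsymbol\ell_1^*\rangle}{\langle\boldsymbol\ell_3^*\rangle^2}=\frac1{\langle\boldsymbol\ell_1^*\rangle}\le1$ and $A_{\boldsymbol\ell,\boldsymbol\sigma}^{-1}=1$ on the support anyway. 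Your final scalar estimate is therefore salvageable, but the mechanism is different from the one you describe, and without the over-pairing lemma the argument is incomplete. Similarly, your argument for $\Pi_2 P^{\eqref{eq:NLS}}=0$ invokes the same false "no-coincidence" property; the paper simply observes $W_2\equiv0$ directly from $:\!|u|^2\!:\,=|u|^2-\mu$ (and one can alternatively see it from over-pairing, since a $q=2$ pairing can never be over-paired).
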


The proof of this proposition (given at the end of this subsection) requires one more definition and a preparatory lemma.

\begin{definition}[Laguerre polynomials, Wick-ordered polynomials]~

\begin{enumerate}[(i)]
\item Given $p\geq 0$, the $p$-th Laguerre polynomial is defined as
$$L_p(x)=\sum_{n=0}^p \frac{(-1)^n}{n!}{\binom p n}x^n.$$
Monomials can then be expanded as
\begin{align}\label{monomL}
x^q=(q!)\sum_{m=0}^q(-1)^m{\binom q m}L_m(x).
\end{align}
\item Given $p\geq 0$, we define the Wick-ordered homogeneous polynomials of degree $2p$ by
\begin{align}\label{Wick}
\,:\!|u|^{2p}\!:\,= (-1)^p(p!)\|u\|_{L^2}^{2p}L_p\Big(\frac{|u|^2}{\|u\|_{L^2}^2}\Big)=\sum_{q=0}^p(-1)^{p-q}\frac{p!}{ q!}{\binom p q}\|u\|_{L^2}^{2(p-q)}|u|^{2q}.
\end{align}
\end{enumerate}

\end{definition}

Our interest in these homogeneous polynomials comes from (the proof of) \cite[Proposition 2.2]{DNY} that we recall in the next lemma. For convenience of the reader we also repeat the proof in our context.
\begin{lemma}\label{lem:W}
Given $p\in\N$, we have
\begin{align}\label{WickH}
W_{2p}(u):=\int_{\T}\,:\!|u|^{2p}\!:\,(x)\frac{dx}{2\pi} = \sum_{ \substack{\boldsymbol{\sigma} \in \{-1,1\}^{2p} } }  \sum_{ \substack{\boldsymbol{\ell} \in \mathbb{Z}^{2p} } } (W_{2p})^{\boldsymbol{\ell},\boldsymbol{\sigma}} \prod_{1\leq j \leq 2p} u_{\boldsymbol{\ell}_j}^{\boldsymbol{\sigma}_j} 
\end{align}
where the coefficients $(W_{2p})^{\boldsymbol{\ell},\boldsymbol{\sigma}} \in \mathbb{R}$ satisfy 
$$
(W_{2p})^{\boldsymbol{\ell},\boldsymbol{\sigma}} = 0 \quad \mathrm{if} \quad \boldsymbol{\sigma}_1 + \cdots + \boldsymbol{\sigma}_{2p} \neq0 \quad \mathrm{or} \quad \boldsymbol{\sigma}_1 \boldsymbol{\ell}_1  + \cdots +  \boldsymbol{\sigma}_{2p} \boldsymbol{\ell}_{2p}  \neq 0
$$
and
\begin{align}\label{Wpinfty}
|(W_{2p})^{\boldsymbol{\ell},\boldsymbol{\sigma}}| \leq p! 2^p.
\end{align}
Furthermore for any $(\boldsymbol{\ell},\boldsymbol\sigma)\in\Z^{2p}\times\{-1,1\}^{2p}$, if $(W_{2p})^{\boldsymbol{\ell},\boldsymbol\sigma} \neq 0$ then any pairing in $(\boldsymbol{\ell},\boldsymbol\sigma)$ is over-paired: if there exist $1\leq i<j\leq 2q$ such that $\boldsymbol{\ell}_i=\boldsymbol{\ell}_j$ and $\boldsymbol{\sigma}_i\boldsymbol{\sigma}_j=-1$ then there exits $1\leq k\leq 2p$ with $k\notin\{i,j\}$ such that $\boldsymbol{\ell}_k=\boldsymbol{\ell}_i=\boldsymbol{\ell}_j$. 
\end{lemma}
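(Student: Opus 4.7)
The plan has three parts corresponding to the three claims; the first two are largely mechanical and the third (over-pairing) is the substantive algebraic point.

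\textbf{Fourier expansion, symmetries, and bound.} For \eqref{WickH} and the mass/momentum conditions, the plan is to substitute the Wick expansion \eqref{Wick} into $\int_{\T} :\!|u|^{2p}\!:\,\tfrac{\dd x}{2\pi}$ and expand each $|u(x)|^{2q}$ and $\|u\|_{L^2}^{2(p-q)}$ using $u(x) = \sum_{k\in\Z} u_k e^{-\ic kx}$: integration in $x$ of $|u(x)|^{2q}$ enforces $\sum_j \boldsymbol\sigma_j\boldsymbol\ell_j = 0$, while each factor $\|u\|_{L^2}^{2(p-q)}$ is a polynomial in the $|u_k|^2$ and thus automatically satisfies both mass and momentum conservation. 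For the bound \eqref{Wpinfty}, the coefficient $(W_{2p})^{\boldsymbol\ell,\boldsymbol\sigma}$ decomposes over $q$ as a weighted signed count of partitions of the $2p$ positions into $p-q$ matching pairs (from $\|u\|_{L^2}^{2(p-q)}$) and $2q$ free slots (the $\alpha$'s and $\beta$'s of $|u|^{2q}$), weighted by the Laguerre coefficient $(-1)^{p-q}(p!/q!)\binom{p}{q}$. A careful counting combined with the fact that the individual Laguerre coefficients sum to $O(2^p)$ then yields the uniform bound $p!\,2^p$.

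\textbf{Over-pairing (the main point).} The plan is to use the exponential generating function for the Laguerre polynomials, which gives
\begin{equation*}
\Phi(s,u)(x) \;:=\; \sum_{p\geq 0}\frac{:\!|u|^{2p}\!:\,(x)}{p!}\,s^p \;=\; \frac{1}{1+sv}\exp\!\Big(\frac{s|u(x)|^2}{1+sv}\Big), \qquad v:=\|u\|_{L^2}^2.
\end{equation*}
Fix $a\in\Z$ and decompose $u(x) = u_a e^{-\ic ax} + u'(x)$ with $u'_a = 0$, so that $v = |u_a|^2 + V'$ with $V' := \|u'\|_{L^2}^2$, and
\[
|u(x)|^2 = |u_a|^2 + |u'(x)|^2 + u_a e^{-\ic ax}\,\overline{u'(x)} + \overline{u_a}\,e^{\ic ax}\,u'(x).
\]
The over-pairing claim reduces to showing that $\partial_{u_a}\partial_{\overline{u_a}}\Phi\big|_{u_a=\overline{u_a}=0}$ vanishes pointwise in $x$ as a polynomial in $u'$: extracting the $s^p/p!$-coefficient and integrating in $x$ then gives the vanishing of the part of $W_{2p}(u)$ linear in $|u_a|^2$ with no other $u_a$-dependence, which is exactly the claim for monomials with an isolated pair at index $a$. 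Setting $A_0 := 1+sV'$ and $\rho_0 := s/A_0$, a short logarithmic differentiation yields
\begin{equation*}
\partial_{u_a}\partial_{\overline{u_a}}\Phi\big|_{u_a=\overline{u_a}=0} \;=\; A_0^{-1}\,e^{\rho_0|u'(x)|^2}\,|u'(x)|^2\,\big(\rho_0^2 - s^2 A_0^{-2}\big) \;=\; 0,
\end{equation*}
since $\rho_0 = s/A_0$. The two contributions cancel exactly: one from the exponential (cross terms $u_a e^{-\ic ax}\overline{u'}$ paired with their conjugates, yielding $\rho_0^2 |u'|^2$) and one from the prefactor $1/(1+sv)$ (via $v = |u_a|^2 + V'$, yielding $-s^2 A_0^{-2}|u'|^2$). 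This cancellation is the algebraic heart of Wick renormalization, and letting $a$ range over $\Z$ completes the argument.

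The hard part is the over-pairing: while the generating-function identity makes the cancellation transparent once set up, translating the pointwise vanishing of $\Phi$ back to the concrete tuple-level statement requires careful bookkeeping of the Fourier expansion. The combinatorial counting for the bound \eqref{Wpinfty} likewise requires some care to extract the precise $p!\,2^p$ constant, but presents no genuine difficulty once the Laguerre expansion is in hand.
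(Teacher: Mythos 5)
Your argument divides cleanly into the same three parts as the paper's, and two of the three are solid; the bound is where your plan is under-specified.

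\emph{Expansion, mass, and momentum.} This matches the paper: substitute \eqref{Wick}, expand $|u(x)|^{2q}$ in Fourier, integrate in $x$ to enforce zero momentum, and observe that the $\|u\|_{L^2}^{2(p-q)}$ factors contribute only balanced pairs $u_k\overline{u_k}$. Fine.

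\emph{Over-pairing.} This is where you genuinely diverge, and your argument is correct and arguably cleaner. Using $\Phi(s,u)(x)=\sum_{p\ge 0}\tfrac{:|u|^{2p}:(x)}{p!}s^p=\tfrac{1}{1+sv}\exp\!\bigl(\tfrac{s|u(x)|^2}{1+sv}\bigr)$ (this is the ordinary, not exponential, generating function of $L_p$, evaluated at $t=-sv$, but the formula is right), and decomposing $u=u_ae^{-\ic ax}+u'$, you compute $\partial_{\overline{u_a}}\log\Phi=\rho\, e^{\ic ax}u'-s^2 w u_a/(1+sv)^2$ with $\rho=s/(1+sv)$, $w=|u|^2$, after the exact cancellation of the $\pm\rho u_a$ contributions; then at $u_a=\overline{u_a}=0$ the mixed derivative reduces to $A_0^{-1}e^{\rho_0|u'|^2}\,|u'|^2\,(\rho_0^2-s^2 A_0^{-2})=0$. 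Since $\partial_{u_a}\partial_{\overline{u_a}}(\cdot)|_{u_a=\overline{u_a}=0}$ extracts exactly the degree-$(1,1)$ part in $(u_a,\overline{u_a})$, this shows the coefficient of any monomial in which $a$ appears exactly once as $u_a$ and exactly once as $\overline{u_a}$ is zero, which is precisely the over-pairing claim. The paper instead writes an explicit formula $(W_{2p})^{\boldsymbol\ell,\boldsymbol\sigma}\propto\sum_q(-1)^q\sum_{\boldsymbol\gamma}\prod_\ell\tfrac{1}{\boldsymbol\gamma_\ell!(\boldsymbol\alpha_\ell-\boldsymbol\gamma_\ell)!(\boldsymbol\beta_\ell-\boldsymbol\gamma_\ell)!}$ and observes that when $\boldsymbol\alpha_\ell=\boldsymbol\beta_\ell=1$ the $\boldsymbol\gamma_\ell=0$ and $\boldsymbol\gamma_\ell=1$ contributions telescope across adjacent $q$'s. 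Your generating-function cancellation is a global and coordinate-free version of that telescoping; it buys conceptual transparency at the cost of having to justify the formal-power-series-in-$s$ manipulation, which is routine.

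\emph{The bound \eqref{Wpinfty}.} Here your plan is too vague to compile into a proof. You describe $(W_{2p})^{\boldsymbol\ell,\boldsymbol\sigma}$ as a ``weighted signed count of partitions'' and assert that ``careful counting'' plus the size of the Laguerre coefficients gives $p!\,2^p$. But a raw count of partitions of $2p$ positions is not bounded by $1$, so summing the absolute Laguerre coefficients would overshoot badly. The point — which the paper makes explicitly and which is the actual content of this part of the lemma — is that the explicit formula \eqref{eq:les_vrai_coeffs} is not useful for a bound (there the paper even says so), and one must rewrite $(W_{2p})^{\boldsymbol\ell,\boldsymbol\sigma}=\sum_{q=0}^p(-1)^{p-q}\tfrac{p!}{q!}\binom{p}{q}S_{\boldsymbol\ell,\boldsymbol\sigma,p,q}$, where $S_{\boldsymbol\ell,\boldsymbol\sigma,p,q}$ is the symmetrized average $\tfrac{1}{(2p)!}\sum_{\varphi\in\mathfrak S_{2p}}\mathbbm{1}_{(\varphi\boldsymbol\ell,\varphi\boldsymbol\sigma)\in E_{p,q}}$, which is manifestly in $[0,1]$. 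Only after this normalization does $\sum_{q}\tfrac{p!}{q!}\binom pq\le p!\,2^p$ close the estimate. Your plan never articulates this normalization, and without it the ``careful counting'' you invoke does not produce a quantity bounded by $1$. This is the one genuine gap; it is fixable once the symmetrization device is introduced, but as written the bound does not follow.
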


\begin{proof} 
First, we note that, for any $q\geq 0$,  by elementary combinatorial calculus, we have the expansion 
$$
\|u\|_{L^{2q}}^{2q} = \sum_{1\leq n\leq 2q} \sum_{\boldsymbol{j}_1<\cdots < \boldsymbol{j}_n} \sum_{\substack{\boldsymbol\alpha_1+\cdots+\boldsymbol\alpha_n=q \\ \boldsymbol\beta_1+\cdots+\boldsymbol\beta_n=q \\ \boldsymbol{j} \cdot (\boldsymbol \alpha -\boldsymbol\beta ) =0 }} \frac{q!}{\prod_{\ell=1}^n\boldsymbol\alpha_\ell !}\frac{q!}{\prod_{\ell=1}^n\boldsymbol\beta_\ell !} \prod_{\ell=1}^nu_{\boldsymbol{j}_\ell}^{\boldsymbol\alpha_\ell}\overline{u_{\boldsymbol{j}_\ell}}^{\boldsymbol\beta_\ell}.
$$
Then  using \eqref{Wick}, we deduce that
$$
W_{2p}(u) =  \sum_{1\leq n\leq 2q} \sum_{\boldsymbol{j}_1<\cdots < \boldsymbol{j}_n} \sum_{\substack{\boldsymbol\alpha_1+\cdots+\boldsymbol\alpha_n=p \\ \boldsymbol\beta_1+\cdots+\boldsymbol\beta_n=p \\ \boldsymbol{j} \cdot (\boldsymbol\alpha -\boldsymbol\beta ) =0 }} W_{2p}^{(n,\boldsymbol{\alpha},\boldsymbol{\beta})} \prod_{\ell=1}^nu_{\boldsymbol{j}_\ell}^{\boldsymbol\alpha_\ell}\overline{u_{\boldsymbol{j}_\ell}}^{\boldsymbol\beta_\ell}
$$
where
\begin{equation*}
\begin{split}
W_{2p}^{(n,\boldsymbol{\alpha},\boldsymbol{\beta})} &= \sum_{q=0}^{p}(-1)^{p-q}{\binom {p} q}\frac{p!}{q!}\sum_{\substack{\boldsymbol\gamma_1+\dots+\boldsymbol\gamma_n=p-q\\ 0\leq \boldsymbol\gamma_\ell \leq \boldsymbol\alpha_\ell \wedge \boldsymbol\beta_\ell}}\frac{(p-q)!}{\prod_{\ell=1}^n \boldsymbol\gamma_\ell!}\frac{(q!)^2}{\prod_{\ell=1}^n(\boldsymbol\alpha_\ell-\boldsymbol\gamma_\ell) !(\boldsymbol\beta_\ell-\boldsymbol\gamma_\ell) !}\\
&=(-1)^{p}(p!)^2\sum_{q=0}^{p}(-1)^{q}\sum_{\substack{\boldsymbol\gamma_1+\dots+\boldsymbol\gamma_n=p-q\\ 0\leq \boldsymbol\gamma_\ell \leq \boldsymbol\alpha_\ell  \wedge \boldsymbol\beta_\ell}}\frac1{\prod_{\ell=1}^n\boldsymbol\gamma_\ell!(\boldsymbol\alpha_\ell-\boldsymbol\gamma_\ell) !(\boldsymbol\beta_\ell-\boldsymbol\gamma_\ell) !}.
\end{split}
\end{equation*}
As a consequence, coming back in the original variables, we have
$$
 W_{2p}(u) = \sum_{ \substack{\boldsymbol{\sigma} \in \{-1,1\}^{2p}  \\  \boldsymbol{\sigma}_1 + \cdots + \boldsymbol{\sigma}_{2p} = 0  } }  \sum_{ \substack{\boldsymbol{\ell} \in \mathbb{Z}^{2p} \\ \boldsymbol{\sigma}_1 \boldsymbol{\ell}_1  + \cdots +  \boldsymbol{\sigma}_{2p} \boldsymbol{\ell}_{2p}  = 0 } } (W_{2p})^{\boldsymbol{\ell},\boldsymbol{\sigma}} \prod_{1\leq j \leq 2p} u_{\boldsymbol{\ell}_j}^{\boldsymbol{\sigma}_j} 
$$
where 
\begin{equation}
\label{eq:les_vrai_coeffs}
(W_{2p})^{\boldsymbol{\ell},\boldsymbol{\sigma}} := 2^{-p} (-1)^{p}\sum_{q=0}^{p}(-1)^{q}\sum_{\substack{\boldsymbol\gamma_1+\dots+\boldsymbol\gamma_n=p-q\\ 0\leq \boldsymbol\gamma_\ell \leq \boldsymbol\alpha_\ell  \wedge \boldsymbol\beta_\ell}}\prod_{\ell=1}^n \frac{\boldsymbol\alpha_\ell ! \boldsymbol\beta_\ell !}{\boldsymbol\gamma_\ell!(\boldsymbol\alpha_\ell-\boldsymbol\gamma_\ell) !(\boldsymbol\beta_\ell-\boldsymbol\gamma_\ell) !},
\end{equation}
$n\geq 1,\boldsymbol{\alpha},\boldsymbol{\beta} \in \mathbb{N}^n$ denoting indices such that the monomial  $\prod_{1\leq j \leq 2p} u_{\boldsymbol{\ell}_j}^{\boldsymbol{\sigma}_j}$ can be written under the form $\prod_{\ell=1}^nu_{\boldsymbol{j}_\ell}^{\boldsymbol\alpha_\ell}\overline{u_{\boldsymbol{j}_\ell}}^{\boldsymbol\beta_\ell} $ for some $\boldsymbol{j}_1<\cdots < \boldsymbol{j}_n$.

\medskip

With these notations, if for some $\ell$, $\boldsymbol\alpha_\ell=\boldsymbol\beta_\ell=1$, then $\boldsymbol\gamma_\ell\in\{0,1\}$ and moreover the $q$-th coefficient in the sum for $\boldsymbol\gamma_\ell=0$ exactly cancels out the $(q+1)$-th term for $\boldsymbol\gamma_\ell=1$. This shows that $(W_{2p})^{\boldsymbol{\ell},\boldsymbol{\sigma}} = 0$, i.e. that any pairing in $W_{2p}$ is always over-paired.

\medskip

It only remains to estimate $(W_{2p})^{\boldsymbol{\ell},\boldsymbol{\sigma}}$. It is not clear how to get a good estimate from \eqref{eq:les_vrai_coeffs}, so we prove another formula.
First, we note that for all $p\geq q$ there exists a set $E_{p,q}$ (whose definition is simple but long and not useful for us) such that
$$
\| u \|_{L^2}^{2(p-q)} \| u\|_{L^{2q}}^{2q} = \sum_{\boldsymbol{\ell} \in \mathbb{Z}^{2p}} \sum_{\boldsymbol{\sigma} \in \{-1,1\}^{2p}} \mathbbm{1}_{(\boldsymbol{\ell} ,\boldsymbol{\sigma} ) \in E_{p,q}} \prod_{1\leq j \leq 2p} u_{\boldsymbol{\ell}_j}^{\boldsymbol{\sigma}_j} .
$$
Thus, by symmetrizing this coefficients, it follows of \eqref{Wick} that
$$
(W_{2p})^{\boldsymbol{\ell},\boldsymbol{\sigma}} = \sum_{q=0}^p(-1)^{p-q}\frac{p!}{ q!}{\binom p q} S_{\boldsymbol{\ell},\boldsymbol{\sigma},p,q}
$$
where 
$$
S_{\boldsymbol{\ell},\boldsymbol{\sigma},p,q} = \frac1{(2p)!} \sum_{\varphi \in \mathfrak{S}_{2p}} \mathbbm{1}_{(\varphi\boldsymbol{\ell} ,\varphi\boldsymbol{\sigma} ) \in E_{p,q}} 
$$
So using that $|S_{\boldsymbol{\ell},\boldsymbol{\sigma},p,q}| \leq 1$, it comes
$$
|(W_{2p})^{\boldsymbol{\ell},\boldsymbol{\sigma}} | \leq \sum_{q=0}^p \frac{p!}{ q!}{\binom p q}\Big|\leq p! 2^p.
$$
\end{proof}

\begin{proof}[Proof of Proposition \ref{prop:PNLS}]
From \eqref{monomL}-\eqref{Wick} and denoting $\mu=\|u\|_{L^2}^{2}$ we formally expand
\begin{align}
\nonumber  \int_\T F(|u(x)|^2)dx &= \sum_{p\ge 2}\frac{F^{(p)}(0)}{p!}\sum_{q=0}^p \mu^{p-q} \frac{p!}{q!}{\binom p q}W_{2q}(u)\\ \label{eq:intF}
&=\sum_{q\ge 0}\sum_{ \boldsymbol{\sigma} \in \{-1,1\}^{2q} }  \sum_{ \boldsymbol{\ell} \in \mathbb{Z}^{2q} } \sum_{n\geq0}(W_{2q})^{\boldsymbol{\ell},\boldsymbol{\sigma}}\frac{ F^{(q+n)}(0)}{q!}{\binom {q+n} q}\ \prod_{1\leq j \leq 2q} u_{\boldsymbol{\ell}_j}^{\boldsymbol{\sigma}_j} \mu^{n}.
\end{align}
Let us denote $2P^{\eqref{eq:NLS}}(\mu,u)$ the formal series \eqref{eq:intF}.
It remains to prove that for a convenient choice of $r_0$
\begin{align*}
\sup_{q\geq 0} \sup_{ \substack{ \boldsymbol{\sigma} \in \{-1,1\}^{2q}  \\ \boldsymbol{\ell} \in \mathbb{Z}^{2q} } } \sum_{n\geq 0} |(W_{2q})^{\boldsymbol{\ell},\boldsymbol{\sigma}}|\frac{ |F^{(q+n)}(0)|}{q!}{\binom {q+n} q} r_0^{-2n-q} e^{3\eta_0 q + 2\eta_0 n  } \Big( 1 \vee  e^{-q}  \frac{  \langle \boldsymbol{\ell}_1^* \rangle }{  \langle \boldsymbol{\ell}_3^* \rangle^2  }  \mathbbm{1}_{(\boldsymbol{\ell},\boldsymbol{\sigma}) \in \mathbf{P}_{2q}} \Big) <\infty
\end{align*}
First we note that, taking into account the over-pairing property satisfied by $W_{2q}$ (see Lemma \ref{lem:W}) we deduce that if $(\boldsymbol{\ell},\boldsymbol{\sigma}) \in \mathbf{P}_{2q}$ and $(W_{2q})^{\boldsymbol{\ell},\boldsymbol{\sigma}}\neq0$ then $ \langle \boldsymbol{\ell}_1^* \rangle =  \langle \boldsymbol{\ell}_3^* \rangle$. As a consequence it suffices to prove that
\begin{equation}\label{true}
\sup_{q\geq 0} \sup_{ \boldsymbol{\sigma} \in \{-1,1\}^q }  \sup_{ \boldsymbol{\ell} \in \mathbb{Z}^q } \sum_{n\geq 0} |(W_{2q})^{\boldsymbol{\ell},\boldsymbol{\sigma}}|\frac{ |F^{(q+n)}(0)|}{q!}{\binom {q+n} q} r_0^{-2n-q} e^{3\eta_0 q + 2\eta_0 n  }  <\infty
\end{equation}

Let $p\geq2$. From the assumption on $F$ together with Cauchy's integral formula, we get that there exist $c>0$ such that for any $r\ge 1$,
\begin{align*}
|F^{(p)}(0)|&\lesssim p!\frac{e^{cr^\frac12}}{r^{p+1}},
\end{align*}
which, after optimizing in $r$, gives
\begin{align*}
|F^{(p)}(0)|\lesssim p!\big(\frac{ ce}{2p}\big)^{2p+1}\lesssim c^{2p} (p!)^{-1}.
\end{align*}
Using the bound \eqref{Wpinfty} on $|(W_{2q})^{\boldsymbol{\ell},\boldsymbol{\sigma}}|$, this gives
 \begin{align*}|(W_{2q})^{\boldsymbol{\ell},\boldsymbol{\sigma}}|\frac{ |F^{(q+n)}(0)|}{q!}{\binom {q+n} q} r_0^{-2n-q} e^{3\eta_0 q + 2\eta_0 n  } \lesssim
 2^q c^{2(q+n)} \frac1{n!} r_0^{-2n-q} e^{3\eta_0 q + 2\eta_0 n  }.
\end{align*}
Therefore, setting $r_0=4c^2e^{-3\eta_0}$, \eqref{true} holds true and thus 
 $P\in \mathcal{P}_{3\eta_0,r_0}$.\\

Finally in view of \eqref{Wick}-\eqref{WickH} we have  $W_2 \equiv 0$  and then, using \eqref{eq:intF}, we get $\Pi_{2} P^{\eqref{eq:NLS}} = 0$. Furthermore since \eqref{eq:intF} contains only monomials with even degree and $F(0) = F'(0)=0$, it follows that $\Pi_{\mathrm{deg}\leq 3}P^{\eqref{eq:NLS}}  = 0$

\end{proof}

\subsection{Vector field estimates, Poisson brackets and flows}

\begin{lemma}[Vector field estimates]\label{lem:gradient_reg} Let $r\in (0,1)$, $\eta \geq \eta_0$ and $\chi \in \mathscr{P}_{\eta,r}$. Then $\nabla \chi$ defines a smooth function from $B_{\ell^2_\varpi}(0,3r) $ into $\ell^2_\varpi$ and enjoys the estimates
\begin{equation}
\label{eq:lestimee_quon_veut}
\forall j \geq0, \quad \sup_{u\in B_{\ell^2_\varpi}(0,3r)}\| \mathrm{d}^j\nabla \chi(u)\|_{(\ell^2_\varpi)^j \to \ell^2_\varpi} \lesssim_j r^{-1-j}  \| \chi\|_{\mathscr{P}_{\eta,r}}.
\end{equation}
\end{lemma}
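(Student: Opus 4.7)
The plan is to differentiate the formal series defining $\chi$ term by term, bound each partial contribution via the algebra property of $\ell^2_\varpi$, and close the estimate using the strong exponential decay of coefficients encoded in the norm $\|\chi\|_{\mathscr{P}_{\eta,r}}$; the higher order derivatives are then recovered by a Cauchy integral argument.

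By \eqref{eq:formula_grad}, $(\nabla\chi(u))_k = 2\partial_{\overline{u_k}}\chi(u)$, and differentiating the series term by term produces two kinds of contributions: a ``mass'' part from differentiating $\mu=\|u\|_{L^2}^2$ (producing the factor $n\mu^{n-1}u_k$ while leaving the monomial $\prod_j u_{\ell_j}^{\sigma_j}$ intact), and a ``mode'' part from differentiating one factor $\overline{u_{\ell_j}}$ (requiring $\sigma_j=-1$ and $\ell_j=k$). I group the contributions by $(n,q,\boldsymbol{\sigma})$. Since $A_{\boldsymbol{\ell},\boldsymbol{\sigma}}\leq 1$, the uniform bound $|\chi_n^{\boldsymbol{\ell},\boldsymbol{\sigma}}|\leq \|\chi\|_{\mathscr{P}_{\eta,r}}\,r^{-2n-q}e^{-\eta q-2\eta_0 n}$ holds, and combined with the zero-momentum constraint $\sum_j\sigma_j\ell_j=0$, each grouped partial sum is dominated in modulus by a convolution of $q-1$ copies of the sequence $(|u_\cdot|)$, evaluated at $k$. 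Now, since $s\geq 1>1/2$ and $\varpi$ is sub-multiplicative ($\varpi_{a+b}\leq \varpi_a\varpi_b$), $\ell^2_\varpi$ is a Banach algebra under convolution (equivalently, the corresponding space of functions on $\mathbb T$ is an algebra for the pointwise product), so such an $(q-1)$-fold convolution has $\ell^2_\varpi$ norm bounded by $C^{q-2}\|u\|_{\ell^2_\varpi}^{q-1}$ for a universal constant $C$. Summing over $(n,q,\boldsymbol{\sigma})$ with $\|u\|_{\ell^2_\varpi}\leq 3r$ then reduces the total estimate to a double geometric series in powers of $6Ce^{-\eta}$ and $9e^{-2\eta_0}$, which converges comfortably thanks to $\eta\geq\eta_0=10$, and yields $\|\nabla\chi(u)\|_{\ell^2_\varpi}\lesssim r^{-1}\|\chi\|_{\mathscr{P}_{\eta,r}}$. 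The mass contribution is handled the same way, after noting that the $u_k$ factor provides the weight via $\sum_k \varpi_k^2|u_k|^2\leq 9r^2$.

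For the higher derivatives, I would extend $\chi$ analytically to a complex neighborhood of $B_{\ell^2_\varpi}(0,3r)$; the exponential cushion from $\eta,\eta_0\geq 10$ in fact ensures that the series defining $\chi$ converges on a polydisc strictly larger than $B(0,3r)$, so analyticity at the boundary is not an issue. For a small constant $\kappa>0$, Cauchy's integral formula on a complex polydisc of radius $\kappa r$ around any $u\in B_{\ell^2_\varpi}(0,(3-\kappa)r)$ yields
\[
\|\mathrm{d}^j\nabla\chi(u)\|_{(\ell^2_\varpi)^j\to\ell^2_\varpi}\lesssim_j (\kappa r)^{-j}\sup_{u'\in B_{\ell^2_\varpi}(u,\kappa r)}\|\nabla\chi(u')\|_{\ell^2_\varpi},
\]
and combining with the $j=0$ bound applied on the slightly enlarged ball (valid by applying the previous paragraph with $3r$ replaced by $3r+\kappa r$, or equivalently after a preliminary shrinking of the ambient radius) produces the claimed estimate $\lesssim_j r^{-1-j}\|\chi\|_{\mathscr{P}_{\eta,r}}$.

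The main technical point is the identification, in the first step, of each grouped partial sum as (the absolute value of) a convolution of $|u_\cdot|$ sequences, since this is precisely what lets the output weight $\varpi_k$ be absorbed by the algebra property and hence forces $\nabla\chi(u)$ to land in $\ell^2_\varpi$ rather than merely in its dual $\ell^2_{\varpi^{-1}}$. The zero-momentum condition is essential for this reduction; by contrast, the finer pairing refinement $A_{\boldsymbol{\ell},\boldsymbol{\sigma}}$ plays no role here (though it will be crucial in the subsequent cohomological estimates where one can no longer afford the naive bound $A_{\boldsymbol{\ell},\boldsymbol{\sigma}}\leq 1$).
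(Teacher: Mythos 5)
Your argument for the $j=0$ estimate is sound and is essentially the same mechanism as in the paper: the zero-momentum condition reorganizes $\partial_{\overline{u_k}}\chi$ as a convolution, and the weight $\varpi_k$ is absorbed because $\ell^2_\varpi$ is a convolution algebra for $s\geq 1$. The paper does not invoke the algebra property as a black box; it splits the weight $\varpi_k$ by Jensen's inequality and then applies Young and Cauchy--Schwarz, which is precisely a proof of that algebra estimate tailored to keep the $q$-dependence explicit so that the factor absorbs into $e^{-\eta q/2}$. Either phrasing is fine; your $C^{q-2}$ hides a polynomial factor in $q$ (the algebra constant grows with the number of factors), but this is harmless against $e^{-\eta q}$ with $\eta\geq 10$.

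Where you genuinely diverge is in the passage to $j\geq 1$. The paper decomposes $\nabla\chi=\sum_d\nabla\Pi_{\mathrm{deg}=d}\chi$, observes that each summand is a homogeneous polynomial of degree $d-1$ on the Hilbert space $\ell^2_\varpi$ whose value is controlled by $5^{-d}r^{-d}\|u\|^{d-1}$, and then invokes the standard fact (quoted from [BS71]) that a degree-$m$ homogeneous polynomial bounded by $C\|u\|^m$ has $\|\mathrm d^j\|\leq \frac{m!}{(m-j)!}C\|u\|^{m-j}$; the factorial is absorbed into a geometric gain and the series in $d$ is summed. Your alternative is a complexification plus Cauchy. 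This does work, but it requires a touch more care than your write-up gives it: $\chi(u)$ is not holomorphic in $u$ (it depends on both $u$ and $\bar u$), so the complex extension must treat $u$ and $\bar u$ as independent variables $(v,w)$, then $\nabla\chi$ becomes (a combination of) the holomorphic partials $\partial_v,\partial_w$ of a holomorphic map on an open set of $\ell^2_\varpi\times\ell^2_\varpi$, and Cauchy on a polydisc in $(v,w)$ bounds all real Fréchet derivatives of $u\mapsto\nabla\chi(u)$. You also need the $j=0$ bound on the complexified polydisc, not merely the real ball, but this follows by the same convergence argument you already ran, thanks to the exponential cushion from $\eta\geq\eta_0$. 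The paper's degree-by-degree route avoids both the complexification and the domain enlargement: the homogeneity of each piece gives the derivative bound directly on the closed ball $\overline{B}(0,3r)$ without shrinking or enlarging. Both proofs are correct; yours trades the polynomial-algebra lemma for Cauchy at the cost of a domain-management step and an implicit identification $u\leftrightarrow(u,\bar u)$ that should be made explicit.
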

\begin{proof} We assume without loss of generality that the coefficients of $\chi$ are symmetric and that $\| \chi\|_{\mathscr{P}_{\eta,r}} = 1$.  First, we note that we have, for all  $u\in B_{\ell^2_\varpi}(0,3r)$
$$
\nabla \chi(u) = \sum_{d\geq 1} \nabla \Pi_{\mathrm{deg} =d } \chi(u) = \sum_{d\geq 1} \underbrace{2 u \partial_{\mu} \Pi_{\mathrm{deg} =d } \chi(u)}_{=:g^{(d)}(u)} +  \underbrace{ \nabla_v  \Pi_{\mathrm{deg} =d }  \chi(u)}_{=:h^{(d)}(u)}.
$$
On the one hand, proceed as in the proof of Lemma \ref{lem:ouf_cest_des_fonctions_reg}, we have that for all $u\in \ell^2_\varpi$
\begin{equation}
\label{eq:ca_commence_doucement}
\| g^{(d)}(u) \|_{\ell^2_\varpi} \lesssim (4r)^{-d} \| u \|_{\ell^2_\varpi}^{d-1}. 
\end{equation}
On the other hand, by symmetry of the coefficients, we have for all $u\in \ell^2_\varpi$ and $k\in \mathbb{Z}$
$$
h^{(d)}_k(u) = \sum_{q+2n = d} 2q \sum_{ \boldsymbol{\sigma} \in \{-1,1\}^{q-1} }  \sum_{ \boldsymbol{\ell} \in \mathbb{Z}^{q-1} }  \chi_{n}^{(\boldsymbol{\ell},k),(\boldsymbol{\sigma},-1)} \|u\|_{L^2}^{2n}  \prod_{1\leq j \leq q-1} u_{\boldsymbol{\ell}_j}^{\boldsymbol{\sigma}_j}.
$$
By definition of $\| \chi\|_{\mathscr{P}_{\eta,r}}$ and using the zero momentum condition, it implies that 
\begin{equation*}
\begin{split}
|h^{(d)}_k(u)| &\leq \sum_{\substack{q+2n=d\\ q\geq 2}}   2q \sum_{ \boldsymbol{\sigma} \in \{-1,1\}^{q-1} }  \sum_{ \boldsymbol{\sigma}_1 \boldsymbol{\ell}_1  + \cdots + \boldsymbol{\sigma}_{q-1} \boldsymbol{\ell}_{q-1} =k}  \| u\|_{\ell^2_\varpi}^{2n} r^{-q-2n} e^{-\eta q - 2\eta_0 n  } \prod_{1\leq j \leq q-1} |u_{\boldsymbol{\ell}_j}| \\
&\leq e^{-\frac{\eta}2 d} \sum_{q=2}^d    \| u\|_{\ell^2_\varpi}^{d-q}  r^{-d}  2q  e^{-\frac{\eta}2 q}   \sum_{ \boldsymbol{\sigma} \in \{-1,1\}^{q-1} }  \underbrace{\sum_{ \boldsymbol{\sigma}_1 \boldsymbol{\ell}_1  + \cdots + \boldsymbol{\sigma}_{q-1} \boldsymbol{\ell}_{q-1} =k}    \prod_{1\leq j \leq q-1} |u_{\boldsymbol{\ell}_j}| }_{=:f_k^{(q)}(u)}.
\end{split}
\end{equation*}
Now, we note that by Jensen, if $\boldsymbol{\sigma}_1 \boldsymbol{\ell}_1  + \cdots + \boldsymbol{\sigma}_{q-1} \boldsymbol{\ell}_{q-1} =k$ then
$$
\varpi_k \leq (q-1)^{s-1} ( \sum_{j=1}^{q-1} \langle  \boldsymbol{\ell}_j \rangle^s ) e^{\mathfrak{a} ( |\boldsymbol{\ell}_1|  + \cdots +  |\boldsymbol{\ell}_{q-1}|)   }.
$$
Thus applying the Young convolutional inequality and the Cauchy--Schwarz inequality, it comes
$$
\| f^{(q)}(u)\|_{\ell^2_\varpi} \leq  (q-1)^{s} \| u\|_{\ell^2_\varpi} \big( \sum_{k\in \mathbb{Z}} |u_k| e^{\mathfrak{a} |k|} \big)^{q-2} \leq  (q-1)^{s}  4^q  \| u\|_{\ell^2_\varpi}^{q-1}.
$$
As a consequence, applying the triangular inequality, it comes (since $\eta \geq \eta_0$)
\begin{equation}
\label{eq:cest_deja_moins_doux}
\| h^{(d)}(u)\|_{\ell^2_\varpi}  \leq 5^{-d} r^{-d}  \| u\|_{\ell^2_\varpi}^{d-1} \sum_{q=2}^d     2q e^{-\frac{\eta_0}2 q}   8^q (q-1)^{s} \lesssim  5^{-d}  r^{-d}  \| u\|_{\ell^2_\varpi}^{d-1}.
\end{equation}
Putting \eqref{eq:ca_commence_doucement} and \eqref{eq:cest_deja_moins_doux} together, we have proven that 
$$
\forall u\in \ell^2_\varpi, \quad  \| \nabla \Pi_{\mathrm{deg} =d } \chi(u)\|_{\ell^2_\varpi} \lesssim  5^{-d}  r^{-d}  \| u\|_{\ell^2_\varpi}^{d-1}.
$$
Then, noticing that $ \nabla \Pi_{\mathrm{deg} =d } \chi$ is a homogeneous polynomial of degree $q-1$ defined on a Hilbert space (here $\ell^2_\varpi$), this last estimate implies\footnote{We refer to \cite{BS71} for basic properties of polynomials on Banach spaces.} that $\nabla \Pi_{\mathrm{deg} =d } \chi$ is smooth map from $\ell^2_\varpi$ to $\ell^2_\varpi$ satisfying for all $u\in \ell^2_\varpi$ and all $ 0\leq j\leq d-1$,
\begin{equation}
\label{eq:toutcapourca}
\| \mathrm{d}^j \nabla \Pi_{\mathrm{deg} =d } \chi(u)\|_{(\ell^2_\varpi)^j \to \ell^2_\varpi}  \lesssim \frac{(d-1)!}{(d-1-j)!} 5^{-d}  r^{-d}  \| u\|_{\ell^2_\varpi}^{d-1-j} 
  \lesssim_j  4^{-d-1-j}  r^{-d}  \| u\|_{\ell^2_\varpi}^{d-1-j}.
\end{equation}
 Moreover, we also note that $ \mathrm{d}^j \nabla \Pi_{\mathrm{deg} =d } \chi = 0$ for $j\geq d$. It follows of \eqref{eq:toutcapourca} that the series $\sum_d \mathrm{d}^j\nabla \Pi_{\mathrm{deg} =d } \chi$ converge uniformly on $B_{\ell^2_\varpi}(0,3r)$ and so that $\nabla  \chi$ is a smooth map from $B_{\ell^2_\varpi}(0,3r)$ to $\ell^2_\varpi$ enjoying the estimate \eqref{eq:lestimee_quon_veut}.
\end{proof}

As a corollary of the previous lemma a result about the existence of Hamiltonian flows generated by Hamiltonian in $\mathscr{P}_{\eta,r}$. This kind of corollary is standard in Hamiltonian PDEs. Moreover, in Lemma \ref{lem:flow} of the next section, we prove a very similar result in a context slightly more complex. So we omit the proof.
\begin{lemma}[Hamiltonian flow]\label{lem:flow_of_chi}
Let $r\in (0,1)$, $\eta > \eta_0$ and $\chi \in    \mathcal{P}_{\eta,r} $ be such that
\begin{equation}
\label{eq:chi_assez_petit}
\| \chi\|_{\mathscr{P}_{\eta,r}}\lesssim  r^{2}.
\end{equation}
Then the flow of the equation $\ic \partial_t v = \nabla \chi(v)$ defines a smooth map $\Phi^t_\chi(\cdot)\equiv\Phi_\chi^t(\cdot):(t,u)\in[-1,1]\times B_{\ell^2_\varpi}(0,2r)\to \ell^2_\varpi$ enjoying the following properties:
 \begin{itemize} 
 \item[(i)] for each $t\in[-1,1]$,  $\Phi_\chi^t$ defines a symplectic change of variable from $B_{\ell^2_\varpi}(0,2r)$ onto an open set of $\ell^2_\varpi$  included in $B_{\ell^2_\varpi}(0,3r)$,
 \item[(ii)] $\Phi_\chi^t$ and $\dd\Phi_\chi^t$ are close to the identity:  for all $u\in B_{\ell^2_\varpi}(0,2r)$, all $t\in[0,1]$
$$
 \| \Phi_\chi^t(u)-u\|_{\ell^2_\varpi}+r\|\dd \Phi_\chi^t(u)-{\rm Id}\|_{\ell^2_\varpi\to\ell^2_\varpi}\lesssim   r^{-1} \| \chi\|_{\mathscr{P}_{\eta,r}},
 $$ 
\item[(iii)]  $\dd^2\Phi_\chi^t$ is bounded: for all $u\in B_{\ell^2_\varpi}(0,2r)$, all $t\in[0,1]$
$$
\|\dd^2 \Phi_\chi^t(u)\|_{\ell^2_\varpi\times\ell^2_\varpi\to\ell^2_\varpi}\lesssim r^{-3}\| \chi\|_{\mathscr{P}_{\eta,r}}.
$$
 \end{itemize}
\end{lemma}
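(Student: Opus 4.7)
The plan is to run a standard Cauchy--Lipschitz construction in the Banach space $\ell^2_\varpi$, reading off all quantitative bounds from Lemma \ref{lem:gradient_reg}. First I would apply that lemma with $j=0,1,2$, and then use the smallness hypothesis $\|\chi\|_{\mathscr{P}_{\eta,r}} \lesssim r^2$ to get, uniformly on $B_{\ell^2_\varpi}(0,3r)$, the three bounds
$$
\|\nabla\chi\|_{\ell^2_\varpi}\lesssim r,\qquad \|\dd\nabla\chi\|_{\ell^2_\varpi\to\ell^2_\varpi}\lesssim 1,\qquad \|\dd^2\nabla\chi\|_{(\ell^2_\varpi)^2\to\ell^2_\varpi}\lesssim r^{-1}.
$$
Since $-\ic\nabla\chi$ is then a uniformly Lipschitz vector field on $B_{\ell^2_\varpi}(0,3r)$, Cauchy--Lipschitz in Banach spaces will deliver a unique maximal flow $\Phi_\chi^t$, smooth jointly in $(t,u)$.

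Next I would promote local existence to global existence on $[-1,1]$ for initial data in $B_{\ell^2_\varpi}(0,2r)$. Integrating the flow equation gives $\|\Phi_\chi^t(u)-u\|_{\ell^2_\varpi}\leq |t|\sup\|\nabla\chi\|\lesssim r^{-1}\|\chi\|_{\mathscr{P}_{\eta,r}}$, which is strictly less than $r$ once the implicit constant in the smallness hypothesis is taken small enough. Hence trajectories never leave $B_{\ell^2_\varpi}(0,3r)$ within time $1$, the Lipschitz bounds remain valid throughout, and this already proves the first half of (ii). Symplecticity of $\Phi_\chi^t$ should then come for free from the Hamiltonian origin of the flow: differentiating $(\ic\,\dd\Phi_\chi^t v,\dd\Phi_\chi^t w)_{\ell^2}$ in $t$ and using the symmetry of the Hessian of $\chi$ (via \eqref{eq:formula_grad}) shows this quantity is constant in $t$, hence equal to $(\ic v,w)_{\ell^2}$. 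Injectivity on $B_{\ell^2_\varpi}(0,2r)$ follows by composing with $\Phi_\chi^{-t}$, and openness of the image will follow from the inverse function theorem once the bound on $\dd\Phi_\chi^t-\mathrm{Id}$ proved below is available.

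For the derivative estimates, I would differentiate the flow equation to obtain the two variational equations
$$
\ic\partial_t\dd\Phi_\chi^t = \dd\nabla\chi(\Phi_\chi^t)\,\dd\Phi_\chi^t,\qquad \ic\partial_t\dd^2\Phi_\chi^t = \dd^2\nabla\chi(\Phi_\chi^t)(\dd\Phi_\chi^t,\dd\Phi_\chi^t) + \dd\nabla\chi(\Phi_\chi^t)\,\dd^2\Phi_\chi^t,
$$
with initial data $\dd\Phi_\chi^0 = \mathrm{Id}$ and $\dd^2\Phi_\chi^0 = 0$. Gr\"onwall against $\|\dd\nabla\chi\|\lesssim 1$ gives $\|\dd\Phi_\chi^t\|\lesssim 1$ on $[-1,1]$; then integrating the first variational equation yields $\|\dd\Phi_\chi^t-\mathrm{Id}\|\lesssim r^{-2}\|\chi\|_{\mathscr{P}_{\eta,r}}$, completing (ii). Feeding the bound $\|\dd^2\nabla\chi\|\lesssim r^{-3}\|\chi\|_{\mathscr{P}_{\eta,r}}$ together with $\|\dd\Phi_\chi^t\|\lesssim 1$ into the second variational equation, one further application of Gr\"onwall will deliver (iii).

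The whole argument should be essentially routine once the vector field bounds of Lemma \ref{lem:gradient_reg} are in hand; the smallness threshold $\|\chi\|\lesssim r^2$ has been calibrated precisely so that $\|\nabla\chi\|_{\ell^2_\varpi}\lesssim r$, which is exactly what traps trajectories inside $B_{\ell^2_\varpi}(0,3r)$. The only mildly delicate point will be this trapping argument, since the quantitative bounds on derivatives of $\nabla\chi$ are only available inside that ball, so the implicit constant in the smallness hypothesis must be fixed small enough at the very beginning, before invoking any of the subsequent flow estimates.
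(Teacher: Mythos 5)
Your argument is correct and is precisely the standard Cauchy--Lipschitz construction, bootstrap to keep trajectories in $B_{\ell^2_\varpi}(0,3r)$, and Gr\"onwall on the first and second variational equations, with all quantitative inputs taken from Lemma~\ref{lem:gradient_reg}. The paper in fact omits the proof of this lemma, noting it is standard and referring to the parallel but more elaborate Lemma~\ref{lem:flow}; the proof given there follows exactly the route you lay out (local existence, a continuation/trapping argument, Gr\"onwall for $\dd\Phi_\chi^t$ and $\dd^2\Phi_\chi^t$, and symplecticity read off from the Hamiltonian structure).
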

\begin{corollary} \label{cor:flow_hom} In the setting of Lemma \eqref{lem:flow_of_chi}. If moreover $\Pi_{\leq 3} \chi =0$ then for all $u\in B_{\ell^2_\varpi}(0,2r)$
$$ 
\| \Phi_\chi^t(u)-u\|_{\ell^2_\varpi} \lesssim   \| u \|_{\ell^2_\varpi}^3 r^{-4}  \| \chi\|_{\mathscr{P}_{\eta,r}}.
$$
\end{corollary}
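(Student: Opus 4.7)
The plan is to exploit the vanishing of $\Pi_{\le 3}\chi$ to upgrade the vector field estimate of Lemma~\ref{lem:gradient_reg} to a cubic bound, then bootstrap it through the flow equation $\partial_t \Phi_\chi^t(u) = -\ic\nabla\chi(\Phi_\chi^t(u))$.

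\medskip

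First I would revisit the intermediate bound from the proof of Lemma~\ref{lem:gradient_reg}. There, each homogeneous piece is controlled by
$$
\| \nabla \Pi_{\mathrm{deg}=d} \chi(v) \|_{\ell^2_\varpi} \lesssim 5^{-d} r^{-d} \| v\|_{\ell^2_\varpi}^{d-1}\|\chi\|_{\mathscr{P}_{\eta,r}}.
$$
Since $\Pi_{\leq 3}\chi = 0$, the sum defining $\nabla\chi(v)$ starts at $d=4$. Factoring out $\|v\|_{\ell^2_\varpi}^{3}$ and writing $\|v\|^{d-1}/r^{d} = r^{-4}\|v\|^{3}(\|v\|/r)^{d-4}$, the remaining series is geometric of ratio bounded by $3/5$ for $v\in B_{\ell^2_\varpi}(0,3r)$. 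Hence
\begin{equation}\label{eq:grad_cubic}
\forall v \in B_{\ell^2_\varpi}(0,3r), \qquad \| \nabla \chi(v) \|_{\ell^2_\varpi} \lesssim \|v\|_{\ell^2_\varpi}^{3} \, r^{-4} \| \chi\|_{\mathscr{P}_{\eta,r}}.
\end{equation}

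\medskip

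Next, with $u \in B_{\ell^2_\varpi}(0,2r)$ fixed, set $\varphi(t) := \|\Phi_\chi^t(u)\|_{\ell^2_\varpi}$. By Lemma~\ref{lem:flow_of_chi}(i) the trajectory stays in $B_{\ell^2_\varpi}(0,3r)$, so combining \eqref{eq:grad_cubic} with the flow equation yields the one-dimensional differential inequality
$$
|\dot\varphi(t)| \;\leq\; \|\nabla\chi(\Phi_\chi^t(u))\|_{\ell^2_\varpi} \;\leq\; C\,\varphi(t)^{3} r^{-4}\|\chi\|_{\mathscr{P}_{\eta,r}}
$$
for some absolute constant $C>0$. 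This is the heart of the argument, and the step where the smallness of $\|\chi\|_{\mathscr{P}_{\eta,r}}$ must be used. Differentiating $\varphi^{-2}$ and integrating gives $\varphi(t)^{-2}\geq \varphi(0)^{-2}-2Ct\,r^{-4}\|\chi\|_{\mathscr{P}_{\eta,r}}$. Since $\varphi(0)=\|u\|_{\ell^2_\varpi}\leq 2r$, we have $\varphi(0)^{-2}\geq r^{-2}/4$, and the hypothesis $\|\chi\|_{\mathscr{P}_{\eta,r}}\lesssim r^2$ (understood with a sufficiently small implicit constant, as already required in Lemma~\ref{lem:flow_of_chi}) ensures $2Ct\,r^{-4}\|\chi\|_{\mathscr{P}_{\eta,r}}\leq \varphi(0)^{-2}/2$ for all $t\in[0,1]$. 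Consequently
$$
\forall t\in[0,1], \qquad \varphi(t) \leq \sqrt{2}\,\|u\|_{\ell^2_\varpi}.
$$

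\medskip

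Finally, plugging this uniform bound back into \eqref{eq:grad_cubic} and integrating the flow equation produces
$$
\|\Phi_\chi^t(u)-u\|_{\ell^2_\varpi} \;\leq\; \int_0^t \|\nabla\chi(\Phi_\chi^s(u))\|_{\ell^2_\varpi}\,\dd s \;\lesssim\; \|u\|_{\ell^2_\varpi}^{3}\,r^{-4}\|\chi\|_{\mathscr{P}_{\eta,r}},
$$
which is the desired estimate. The main (and really only) obstacle is the bootstrap step: the naive estimate $\varphi(t)\lesssim \|u\|+r^{-1}\|\chi\|$ from Lemma~\ref{lem:flow_of_chi}(ii) is too crude when $\|u\|\ll r$, and replacing it by the Riccati-type control above is what allows the cubic gain to survive after integration.
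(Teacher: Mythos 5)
Your proof is correct, but it takes a genuinely different route from the paper's. The paper's proof is a one-line scaling argument: if $\rho:=\|u\|_{\ell^2_\varpi}<r$, then $\Pi_{\le 3}\chi=0$ and the homogeneity of the norm give $\|\chi\|_{\mathscr{P}_{\eta,\rho}}\le(\rho/r)^4\|\chi\|_{\mathscr{P}_{\eta,r}}$, after which one simply re-applies Lemma~\ref{lem:flow_of_chi}(ii) with $r$ replaced by $\rho$ to land directly on the claimed bound. You instead redo the flow analysis at the scale of $\|u\|$ explicitly: sharpening the gradient estimate to a cubic one, running a Riccati bootstrap to show the trajectory remains at scale $\|u\|$, and integrating. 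Both approaches require the smallness of $\|\chi\|_{\mathscr{P}_{\eta,r}}/r^2$ (you to close the Riccati bound, the paper implicitly to verify that Lemma~\ref{lem:flow_of_chi} applies at the smaller radius $\rho$), and both are valid. The paper's version is shorter and exploits the built-in scaling of the $\mathscr{P}_{\eta,r}$ norm; yours is self-contained once the gradient estimate is in hand and has the minor virtue of isolating the a priori fact $\sup_{t\in[0,1]}\|\Phi_\chi^t(u)\|\lesssim\|u\|$, which the paper never states explicitly. One small point worth noting: in the Riccati step you implicitly need $\varphi(t)>0$ (so that $\varphi^{-2}$ makes sense and is differentiable); this holds because $\nabla\chi(0)=0$ makes the origin a fixed point, hence $\Phi_\chi^t(u)\ne 0$ for $u\ne 0$, and the case $u=0$ is trivial.
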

\begin{proof}[Proof of Corollary \ref{cor:flow_hom}] Let $u\in B_{\ell^2_\varpi}(0,2r)$. If $r^{-1}\| u \|_{\ell^2_\varpi} \geq 1$ there is nothing new to prove. So we assume that $\rho := \| u \|_{\ell^2_\varpi} < r$. Since $\Pi_{\leq 3} \chi =0$, by homogeneity, we have that 
$$
\| \chi\|_{\mathscr{P}_{\eta,\rho}} \leq \big(\frac{\rho}{r} \big)^4 \| \chi\|_{\mathscr{P}_{\eta,r}}.
$$
Thus, applying Lemma \ref{lem:flow_of_chi} with $r \leftarrow \rho$, we get as expected that 
$$
 \| \Phi_\chi^t(u)-u\|_{\ell^2_\varpi} \lesssim   \rho^{-1} \| \chi\|_{\mathscr{P}_{\eta,\rho}} \lesssim \rho^3 r^{-4} \| \chi\|_{\mathscr{P}_{\eta,r}}.
$$
\end{proof}

Now we estimate the Poisson bracket between an  elements of $\mathcal{P}_{\eta,r}$ and an element of $\mathcal{R}_{\eta,r}$.
\begin{lemma}[Poisson bracket]\label{lem:bracket_reg}
Let $q'\geq 4$, $r\in (0,1)$, $\eta > \eta_0$. There exists a bilinear map $b( \cdot,\cdot ) :\mathcal{P}_{\eta,r} \times \Pi_{q'}   \mathcal{R}_{\eta,r} \to \bigcap_{ \eta_0 \leq\eta'<\eta} \mathcal{P}_{\eta',r} $ such that for all $P\in \mathcal{P}_{\eta,r}$ and all $\chi\in  \Pi_{q'} \mathcal{R}_{\eta,r}$, we have
\begin{equation}
\label{eq:poisson_triv}
\forall u\in B_{\ell^2_\varpi}(0,3r), \quad  b(P,\chi)(u) = \{P,\chi\}(u),
\end{equation}
\begin{equation}\label{estim:fish}
\forall q'' \geq 0, \forall \eta' \in [\eta_0,\eta), \quad \| \Pi_{q''}  b(P,\chi) \|_{\mathcal{P}_{\eta',r}} \lesssim r^{-2} q'' e^{- q''(\eta-\eta')}   \|  \Pi_{ q''-q'+2 } P\|_{ \mathcal{P}_{\eta,r}} (q')^5 \| \chi\|_{ \mathcal{R}_{\eta,r}} .
\end{equation}
From now, in view of \eqref{eq:poisson_triv}, by a slight abuse of notations, we set 
$$
\{P,\chi\} := b(P,\chi).
$$
\end{lemma}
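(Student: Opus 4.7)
I would define $b(P,\chi)$ at the level of formal series by the ``purely $v$-part'' Poisson bracket
\[
b(P,\chi) := 2\ic \sum_{k\in\mathbb{Z}} (\partial_{v_{-k}} P)(\partial_{v_k} \chi) - (\partial_{v_k} P)(\partial_{v_{-k}} \chi),
\]
with the convention that $\partial_{v_{\pm k}}$ differentiates only the monomial factors $\prod_j v_{\boldsymbol\ell_j}^{\boldsymbol\sigma_j}$ and leaves every $\mu^n$ untouched. This is bilinear, sends a pair of monomials of $v$-degrees $q_P$ and $q'$ to a sum of monomials of $v$-degree $q_P+q'-2$, and preserves conditions (i)--(iii) by direct inspection. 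In particular, $\Pi_{q''} b(P,\chi)$ involves only $\Pi_{q''-q'+2} P$.

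To prove \eqref{eq:poisson_triv} I would expand the genuine Poisson bracket via the chain rule $\partial_{\bar u_k}[P(\|u\|_{L^2}^2,u)] = u_k\,\partial_\mu P + \partial_{v_{-k}} P$ (and similarly for $\partial_{u_k}$) and obtain four groups of terms depending on whether each derivative hits $\mu$ or the monomial part. The two $(\partial_\mu,\partial_\mu)$-contributions cancel antisymmetrically within the bracket, while the two mixed $\mu$--$v$ groups factor, after summing in $k$, as
\[
(\partial_\mu P)\sum_k \bigl(u_k\partial_{v_k} - \bar u_k\partial_{v_{-k}}\bigr)\chi + (\partial_\mu \chi)\sum_k \bigl(\bar u_k\partial_{v_{-k}} - u_k\partial_{v_k}\bigr)P.
\]
The operator $\sum_k (u_k\partial_{v_k} - \bar u_k\partial_{v_{-k}})$ acts on a monomial as multiplication by its total signed degree $\sum_j \boldsymbol\sigma_j$, which vanishes on mass-preserving series by (ii); hence both mixed groups vanish and only the pure $v$--$v$ bracket remains, proving \eqref{eq:poisson_triv}.

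For the estimate \eqref{estim:fish} I would enumerate, for each target monomial of $v$-degree $q''$, the source pairs contributing to its coefficient: a monomial $(\boldsymbol\ell^P,\boldsymbol\sigma^P,n_P)$ of $P$ with $q_P=q''-q'+2$ and a removed slot $j\in\{1,\dots,q_P\}$, together with a monomial $(\boldsymbol\ell^\chi,\boldsymbol\sigma^\chi,n_\chi)$ of $\chi$ and removed slot $j'$, subject to $n_P+n_\chi=n$, $\boldsymbol\ell^P_j=\boldsymbol\ell^\chi_{j'}$ and $\boldsymbol\sigma^P_j=-\boldsymbol\sigma^\chi_{j'}$, the remaining indices concatenating to $(\boldsymbol\ell^{\rm new},\boldsymbol\sigma^{\rm new})$. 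The number of such source pairs per target is $\lesssim q''q'$. Plugging the coefficient bounds $|P^{\boldsymbol\ell^P,\boldsymbol\sigma^P}_{n_P}|\leq \|\Pi_{q''-q'+2}P\|_{\mathcal{P}_{\eta,r}}\,r^{-2n_P-q_P}e^{-\eta q_P-2\eta_0 n_P}A_{\boldsymbol\ell^P,\boldsymbol\sigma^P}$ and its analogue for $\chi$ with weight $B_{\boldsymbol\ell^\chi}$, the algebraic scalings yield $r^{q''-q_P-q'}=r^{-2}$ and $e^{\eta'q''-\eta(q_P+q')}=e^{-2\eta}e^{-(\eta-\eta')q''}$, while the $(n_P,n_\chi)$-sum is absorbed by $e^{-2\eta_0(n_P+n_\chi)}$.

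The main technical hurdle is to control uniformly the weight ratio
\[
\frac{A_{\boldsymbol\ell^{\rm new},\boldsymbol\sigma^{\rm new}}^{-1}}{A_{\boldsymbol\ell^P,\boldsymbol\sigma^P}^{-1}\,B_{\boldsymbol\ell^\chi}^{-1}} \lesssim (q')^4.
\]
Recall $A^{-1}$ equals $1$ unless the two largest indices of $\boldsymbol\ell^{\rm new}$ are paired (in which case it is $\langle\boldsymbol\ell_1^*\rangle/\langle\boldsymbol\ell_3^*\rangle^2$), whereas $B^{-1}$ allows the same ratio \emph{without} any pairing requirement. I would perform a case analysis according to the origin (in $P$ vs.\ $\chi$) of the three largest rearranged indices of $\boldsymbol\ell^{\rm new}$, as well as of the common removed index. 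When the top lies entirely on the $P$-side, its pairing in $\boldsymbol\ell^{\rm new}$ is inherited from a pairing in $\boldsymbol\ell^P$ up to $(q')^2$ reorderings of the removed slot; when some top indices migrate to the $\chi$-side, I absorb the resulting ratio into $B_{\boldsymbol\ell^\chi}^{-1}$ (using precisely the absence of the pairing hypothesis there), at the cost of a polynomial factor $(q')^4$ counting the choices of up to three top positions among the $q'$ slots of $\chi$ and of the matching slot $j'$. Combined with the $q''q'$ combinatorial count, this produces the final factor $q''(q')^5$ and closes \eqref{estim:fish} for every $\eta'\in[\eta_0,\eta)$.
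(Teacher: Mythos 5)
Your setup and the verification of \eqref{eq:poisson_triv} match the paper's: you expand the Poisson bracket via the chain rule through $\mu=\|u\|_{L^2}^2$, observe that the $(\partial_\mu,\partial_\mu)$ pieces cancel antisymmetrically and that the mixed $\mu$--$v$ pieces vanish because the Euler operator $\sum_k(u_k\partial_{v_k}-\bar u_k\partial_{v_{-k}})$ kills mass-preserving series, and you are left with the pure $v$--$v$ bracket; the homogeneity bookkeeping $q_P=q''-q'+2$, the radius factor $r^{-2}$, the $e^{-q''(\eta-\eta')}$, and the combinatorial count $\lesssim q''q'$ are all correct.

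The gap is in the weight-ratio estimate, which is the substance of the lemma. You announce a bound
\[
\frac{A^{-1}_{\boldsymbol\ell^{\rm new},\boldsymbol\sigma^{\rm new}}}{A^{-1}_{\boldsymbol\ell^P,\boldsymbol\sigma^P}\,B^{-1}_{\boldsymbol\ell^\chi}}\lesssim (q')^4
\]
and justify the $(q')$-powers by ``counting the choices of up to three top positions among the $q'$ slots of $\chi$''. But this is a pointwise scalar inequality for one fixed pair of source monomials: there is nothing to count. The $(q')$-powers must come from a \emph{quantitative} comparison of the indices, and the mechanism the paper uses for this is the zero-momentum condition on the $\chi$-factor, namely $-j=\sum_{i}\boldsymbol\sigma'_i\boldsymbol\ell'_i$, whence $\langle j\rangle\leq q'\langle(\boldsymbol\ell')_1^*\rangle$, where $j$ is the repeated (contracted) index. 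This is indispensable in the one genuinely delicate configuration, which your sketch does not isolate: the two top indices of $\boldsymbol\ell^{\rm new}$ both come from $\boldsymbol\ell^P$ (so the pairing of $\boldsymbol\ell^{\rm new}$ lives on the $P$-side), while the contracted index $j$ sits strictly between the third and first largest entries of $\boldsymbol\ell^P$. There the $\chi$-side top index $(\boldsymbol\ell',j)_1^*$ is \emph{smaller} than $(\boldsymbol\ell^{\rm new})_1^*$, so you cannot ``absorb the ratio into $B^{-1}_{\boldsymbol\ell^\chi}$''; and although the $P$-side pairing survives, $(\boldsymbol\ell^P,j)_3^*$ may be as large as $j$, so you need $\langle j\rangle\lesssim q'\langle(\boldsymbol\ell^{\rm new})_3^*\rangle$ to compare the denominators — which is exactly the momentum bound combined with the observation that, since the pair of top indices of $\boldsymbol\ell^{\rm new}$ are both in $\boldsymbol\ell^P$, one has $(\boldsymbol\ell')_1^*\leq(\boldsymbol\ell^{\rm new})_3^*$. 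No amount of reindexing or slot-counting yields this. As written, your case analysis would not close in this sub-case, and the factor $(q')^4$ has no derivation. The correct and sharper factor is $(q')^2$, and the $(q')^5$ in the statement already has slack; see the paper's trichotomy (Cases 1.1--1.3, 2, 3), where the momentum argument is invoked precisely in sub-case 1.2 and in Case 2.
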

\begin{proof} We divide the proof in two steps.

\medskip

\noindent $\bullet$ \underline{\emph{Step 1: Setting.}}
By definition, using that $P$ and $\chi$ commute with $\| \cdot \|_{L^2}^2$ (conservation of the mass),  we have
$$
\{P,\chi\}=
2\ic\sum_{j\in \Z} \partial_{\overline{v_j}} P\partial_{v_j} \chi-\partial_{\overline{v_j}} \chi\partial_{v_j} P.
$$
By uniform convergence of the series defining $P\in \mathcal{P}_{\eta,r}$ (and its derivatives) on $ B_{\ell^2_\varpi}(0,3r) $, it suffices to consider the case when $P= \Pi_{q} P \in \mathcal{P}_{\eta,r}$ is homogeneous (in $v$) of degree $q\geq 2$. In that case, using the zero momentum condition we get 
$$\{P,\chi\}=L$$
where for each $\boldsymbol{\ell}''=(\boldsymbol{\ell},\boldsymbol{\ell}')\in \Z^{q-1}\times\Z^{q'-1}$, $\boldsymbol{\sigma}''=(\boldsymbol{\sigma},\boldsymbol{\sigma}')\in \{-1,1\}^{q-1}\times\{-1,1\}^{q'-1}$, $n''\geq0$
$$
(2\ic )^{-1} L_{n''}^{\boldsymbol{\ell}'',\boldsymbol{\sigma}''}=\sum_{\substack{1\leq i\leq q,\ 1\leq i'\leq q'\\
 n''=n+n'}}  \left(P_{n}^{(\boldsymbol{\ell},j)^i,(\boldsymbol{\sigma},+1)^i} \, \chi_{n'}^{(\boldsymbol{\ell}',j)^{i'},(\boldsymbol{\sigma}',-1)^{i'}}-P_{n}^{(\boldsymbol{\ell},j)^i,(\boldsymbol{\sigma},-1)^i} \, \chi_{n'}^{(\boldsymbol{\ell}',j)^{i'},(\boldsymbol{\sigma}',+1)^{i'}}\right)
$$
where $(\boldsymbol{\ell},j)^i$ denotes the multi-index $(\boldsymbol{\ell}_1,\cdots,\boldsymbol{\ell}_{i-1},j,\boldsymbol{\ell}_{i+1},\cdots,\boldsymbol{\ell}_{q-1})$ (and similarly for $(\boldsymbol{\sigma},+1)^i$) and  as a consequence of the zero momentum condition,
$$ - j= \sum_{i=1}^{q-1} \boldsymbol\sigma_i \boldsymbol\ell_i.$$

By definition of the norms $\|\cdot\|_{ \mathcal{R}_{\eta,r}}$ and $\|\cdot\|_{ \mathcal{R}_{\eta,r}}$, since $A_{\boldsymbol{\ell},\boldsymbol{\sigma}}$ and $B_{\boldsymbol{\ell}}$ do not depend on the order of the multi-indices, we deduce that
$$
(2 qq' )^{-1} \sum_{n''\geq 0}r^{-2n''-q''} e^{\eta' q'' + 2\eta_0 n''  } |L_{n''}^{\boldsymbol{\ell}'',\boldsymbol{\sigma}''}|\leq2r^{-2}e^{-2\eta}e^{-q''(\eta-\eta')}  A_{(\boldsymbol{\ell},j),(\boldsymbol{\sigma},1)}\|P\|_{ \mathcal{P}_{\eta,r}}B_{\boldsymbol{\ell}',j}\|\chi\|_{ \mathcal{R}_{\eta,r}}.
$$
Thus, to prove \eqref{estim:fish} it suffices to prove that 
\begin{equation}\label{mu}
A_{(\boldsymbol{\ell},j),(\boldsymbol{\sigma},1)}B_{\boldsymbol{\ell}',j}\lesssim (q')^2A_{\boldsymbol{\ell}'',\boldsymbol{\sigma}''}.
\end{equation}

\medskip

\noindent $\bullet$ \underline{\emph{Step 2: Core of the proof.}} Now, we aim at proving \eqref{mu}. First we note that \eqref{mu} is trivial when $(\boldsymbol{\ell}'',\boldsymbol{\sigma}'')\notin \mathbf{P}_{q''}$. So from now we consider the case $(\boldsymbol{\ell}'',\boldsymbol{\sigma}'')\in \mathbf{P}_{q''}$ and we are going to prove
 \begin{equation}
 \label{mubis} 
 (q')^{-2}\frac{\langle( \boldsymbol{\ell}'')_1^* \rangle }{    \langle (\boldsymbol{\ell}'')_3^* \rangle^2  } \leq  \frac{\langle (\boldsymbol{\ell},j)_1^* \rangle }{    \langle (\boldsymbol{\ell},j)_3^* \rangle^2  }\mathbbm{1}_{((\boldsymbol{\ell},j),(\boldsymbol{\sigma},1)) \in \mathbf{P}_{q}} + \frac{\langle (\boldsymbol{\ell}',j)_1^* \rangle }{    \langle (\boldsymbol{\ell}',j)_3^* \rangle^2  }
  \end{equation}
which clearly implies \eqref{mu}. In order to prove \eqref{mubis} we distinguish 3 cases .
\begin{itemize}
\item \underline{Case $1$ : $\langle( \boldsymbol{\ell}'')_1^* \rangle=\langle \boldsymbol{\ell}_1^* \rangle>\langle( \boldsymbol{\ell}')_1^* \rangle$.} We decompose this case in 3 sub-cases.
\begin{itemize}
\item  \underline{Sub-case $1.1$ : $\langle j\rangle\leq \langle \boldsymbol{\ell}_3^* \rangle$.} In this case $((\boldsymbol{\ell},j),(\boldsymbol{\sigma},1))\in \mathbf{P}_{q}$, $ \langle (\boldsymbol{\ell},j)_1^* \rangle=\langle \boldsymbol{\ell}_1^* \rangle=  \langle (\boldsymbol{\ell}'')_1^* \rangle$ and $ \langle (\boldsymbol{\ell},j)_3^* \rangle=\langle \boldsymbol{\ell}_3^* \rangle\leq  \langle (\boldsymbol{\ell}'')_3^* \rangle$. Thus 
$$\frac{\langle( \boldsymbol{\ell}'')_1^* \rangle }{    \langle (\boldsymbol{\ell}'')_3^* \rangle^2  } \leq  \frac{\langle (\boldsymbol{\ell},j)_1^* \rangle }{    \langle (\boldsymbol{\ell},j)_3^* \rangle^2  }\mathbbm{1}_{((\boldsymbol{\ell},j),(\boldsymbol{\sigma},1)) \in \mathbf{P}_{q}}.$$
\item \underline{Sub-case $1.2$ : $ \langle \boldsymbol{\ell}_3^* \rangle\leq \langle j\rangle<  \langle \boldsymbol{\ell}_1^* \rangle$.} In this case we still have $((\boldsymbol{\ell},j),(\boldsymbol{\sigma},1))\in \mathbf{P}_{q}$ and $ \langle (\boldsymbol{\ell},j)_1^* \rangle=  \langle (\boldsymbol{\ell}'')_1^* \rangle$. Furthermore, since $(\boldsymbol{\ell}'',\boldsymbol{\sigma}'')$ and $((\boldsymbol{\ell},j),(\boldsymbol{\sigma},1))$ satisfies the zero momentum condition, $((\boldsymbol{\ell}',j),(\boldsymbol{\sigma}',-1))$ has also zero momentum and thus $ \langle j\rangle\leq q'\langle (\boldsymbol{\ell}')_1^* \rangle\leq q' \langle (\boldsymbol{\ell}'')_3^* \rangle$. On the other hand we also have $\langle (\boldsymbol{\ell},j)_3^* \rangle\leq \langle j\rangle$. All together we get 
$$\frac{\langle( \boldsymbol{\ell}'')_1^* \rangle }{    \langle (\boldsymbol{\ell}'')_3^* \rangle^2  } \leq  (q')^2\frac{\langle (\boldsymbol{\ell},j)_1^* \rangle }{    \langle (\boldsymbol{\ell},j)_3^* \rangle^2  }\mathbbm{1}_{((\boldsymbol{\ell},j),(\boldsymbol{\sigma},1)) \in \mathbf{P}_{q}}.$$
\item \underline{Sub-case $1.3$ : $ \langle j\rangle\geq  \langle \boldsymbol{\ell}_1^* \rangle$.} In this case $ \langle (\boldsymbol{\ell}',j)_1^* \rangle\geq  \langle (\boldsymbol{\ell}'')_1^* \rangle$ and $ \langle (\boldsymbol{\ell}',j)_3^* \rangle=\langle (\boldsymbol{\ell}')_2^* \rangle\leq \langle (\boldsymbol{\ell}'')_3^* \rangle$ and thus 
$$\frac{\langle( \boldsymbol{\ell}'')_1^* \rangle }{    \langle (\boldsymbol{\ell}'')_3^* \rangle^2  } \leq \frac{\langle (\boldsymbol{\ell}',j)_1^* \rangle }{    \langle (\boldsymbol{\ell}',j)_3^* \rangle^2  }.$$
\end{itemize}

\item  \underline{Case $2$ : $\langle( \boldsymbol{\ell}'')_1^* \rangle=\langle( \boldsymbol{\ell}')_1^* \rangle>\langle \boldsymbol{\ell}_1^* \rangle$. }
Since $(\boldsymbol{\ell}'',\boldsymbol{\sigma}'')\in \mathbf{P}_{q''}$ we know that $(\boldsymbol{\ell}',\boldsymbol{\sigma}')\in \mathbf{P}_{q'}$ and in particular $\langle (\boldsymbol{\ell}')_2^* \rangle=\langle (\boldsymbol{\ell}')_1^* \rangle$. 
 Let us denote $ \boldsymbol{\ell}'_\flat$ the multi-index $\boldsymbol{\ell}'$ from which we removed the two largest indices and $ \boldsymbol{\sigma}'_\flat$ the corresponding part of $\boldsymbol{\sigma}'$ . By construction, the momentum of $(( \boldsymbol{\ell}'_\flat,  \boldsymbol{\sigma}'_\flat),(j,-1))$ is still zero and thus 
$$
\langle (\boldsymbol{\ell}',j)_3^* \rangle\leq \langle ( \boldsymbol{\ell}'_\flat,j)_1^* \rangle\leq q'
\langle ( \boldsymbol{\ell}'_\flat,j)_2^* \rangle
=q' \langle ({ \boldsymbol{\ell}'},j)_4^*\rangle\leq q' \langle (\boldsymbol{\ell}')_3^* \rangle\leq q' \langle (\boldsymbol{\ell}'')_3^* \rangle.
$$
Therefore
$$\frac{\langle( \boldsymbol{\ell}'')_1^* \rangle }{    \langle (\boldsymbol{\ell}'')_3^* \rangle^2  } \leq (q')^2 \frac{\langle (\boldsymbol{\ell}',j)_1^* \rangle }{    \langle (\boldsymbol{\ell}',j)_3^* \rangle^2  }.$$
\item  \underline{Case $3$ : $\langle( \boldsymbol{\ell}'')_1^* \rangle=\langle \boldsymbol{\ell}_1^* \rangle=\langle( \boldsymbol{\ell}')_1^* \rangle$.} Then 
$ \langle (\boldsymbol{\ell}',j)_3^* \rangle\leq \langle (\boldsymbol{\ell}')_2^* \rangle\leq  \langle (\boldsymbol{\ell}'')_3^* \rangle$ and thus
$$\frac{\langle( \boldsymbol{\ell}'')_1^* \rangle }{    \langle (\boldsymbol{\ell}'')_3^* \rangle^2  } \leq \frac{\langle (\boldsymbol{\ell}',j)_1^* \rangle }{    \langle (\boldsymbol{\ell}',j)_3^* \rangle^2  }.$$

\end{itemize}

\end{proof}

Optimizing the Poisson bracket estimate with respect to $q''$, we get the following useful result.
\begin{corollary}
\label{cor:bracket_reg}
In the setting of Lemma \ref{lem:bracket_reg}, we have that
$$
\| \{P,\chi \} \|_{\mathcal{P}_{\eta',r}} \lesssim r^{-2} (\eta-\eta')^{-1}   \|   P\|_{ \mathcal{P}_{\eta,r}} (q')^5 \| \chi\|_{ \mathcal{R}_{\eta,r}} .
$$
\end{corollary}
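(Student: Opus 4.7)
The plan is very short: since the $\mathcal{P}_{\eta',r}$ norm is a supremum over multi-indices of all lengths, and since for a fixed $q''$ only the length-$q''$ coefficients contribute, we have
\[
\| \{P,\chi\}\|_{\mathcal{P}_{\eta',r}} \;=\; \sup_{q''\geq 0} \| \Pi_{q''} \{P,\chi\}\|_{\mathcal{P}_{\eta',r}}.
\]
So the strategy is to apply Lemma \ref{lem:bracket_reg} term-by-term in $q''$ and then take the supremum.

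Concretely, I would write
\[
\| \{P,\chi\}\|_{\mathcal{P}_{\eta',r}} \;\leq\; \sup_{q''\geq 0}\; r^{-2}\, q''\, e^{-q''(\eta-\eta')}\, \|\Pi_{q''-q'+2}P\|_{\mathcal{P}_{\eta,r}}\, (q')^5 \|\chi\|_{\mathcal{R}_{\eta,r}},
\]
using Lemma \ref{lem:bracket_reg}. Bounding the projected norm trivially by the full one, $\|\Pi_{q''-q'+2}P\|_{\mathcal{P}_{\eta,r}}\leq \|P\|_{\mathcal{P}_{\eta,r}}$, all the $q''$-dependence is concentrated in the factor $q''\, e^{-q''(\eta-\eta')}$.

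The only calculation is the standard one-variable optimization
\[
\sup_{q''\geq 0} q''\, e^{-q''(\eta-\eta')} \;=\; \frac{1}{e(\eta-\eta')} \;\lesssim\; (\eta-\eta')^{-1},
\]
which follows from differentiation (the maximum is attained at $q''=(\eta-\eta')^{-1}$). Combining these, we obtain the claimed bound. There is no real obstacle here; the only thing to check is that the sup over $q''$ does not generate further factors, which is immediate because the estimate of Lemma \ref{lem:bracket_reg} was already uniform in the multi-indices of length $q''$ and the $q''$-dependent prefactors are integrable (in $\ell^\infty$) against the exponential decay in $q''$.
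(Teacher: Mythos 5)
Your proof is correct and matches the paper's (implicit) argument: the paper simply states the corollary is obtained by "optimizing the Poisson bracket estimate with respect to $q''$," which is precisely your decomposition $\|\cdot\|_{\mathcal{P}_{\eta',r}} = \sup_{q''} \|\Pi_{q''}\cdot\|_{\mathcal{P}_{\eta',r}}$ combined with the elementary bound $\sup_{x\geq 0} x e^{-\alpha x} = (e\alpha)^{-1}$ after discarding the projection $\Pi_{q''-q'+2}$ on $P$.
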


We end this section with a result on the action of the Hamiltonian flow $\Phi_\chi^t$ by composition.
\begin{lemma}\label{lem:compo_reg} 
Let $r\in (0,1)$, $\eta > \eta_0$, $\alpha\in(0,\eta-\eta_0]$, $q'\geq 4$ and $\chi\in \Pi_{q'}\mathscr{R}_{\eta,r}$. Assume that $\chi$ satisfies
\begin{equation}\label{chi3_reg}
(q')^5 \| \chi\|_{\mathscr{R}_{\eta,r}}\lesssim \alpha r^2.
\end{equation}
Then for any $P\in \mathscr{P}_{\eta,r}$ and any $t\in[0,1]$, there exits $P_t \in \mathscr{P}_{\eta-\alpha,r}$ such that 
$$
P\circ\Phi^t_\chi=P_t \quad \mathrm{on} \quad  B_{\ell^2_\varpi}(0,2r)
$$ 
 and, identifying the formal series and the functions\footnote{i.e. identifying $P\circ\Phi^t_\chi$ and $P_t$}, we have 
\begin{equation}\label{estim:flow_reg}
 \| P\circ\Phi^t_\chi\|_{\mathscr{P}_{\eta-\alpha,r}} \leq 2\ \| P\|_{\mathscr{P}_{\eta,r}}.
 \end{equation}
 Moreover, if $\Pi_2 P =0$, we have that 
 \begin{equation}
 \label{eq:info_relou_sur_les_degre}
 \Pi_{\leq q'}  P\circ\Phi^t_\chi = \Pi_{\leq q'} P.
 \end{equation}
\end{lemma}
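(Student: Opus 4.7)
The plan is to realize $P \circ \Phi^t_\chi$ as a convergent Lie series and to control it term by term via the Poisson bracket estimate of Corollary \ref{cor:bracket_reg}. Setting $P_0 := P$ and $P_k := \{P_{k-1}, \chi\}$ formally, the candidate for $P_t$ is $\sum_{k \geq 0} (t^k/k!)\, P_k$. I will show that this formal series belongs to $\mathscr{P}_{\eta-\alpha, r}$, satisfies \eqref{estim:flow_reg}, and coincides with $P \circ \Phi^t_\chi$ on $B_{\ell^2_\varpi}(0, 2r)$.

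For the main bound, the strategy is to iterate Corollary \ref{cor:bracket_reg} exactly $k$ times along a \emph{$k$-dependent} uniform staircase $\eta^{(k)}_j := \eta - j\alpha/k$, $j=0,\dots,k$, using $\|\chi\|_{\mathscr{R}_{\eta^{(k)}_j, r}} \leq \|\chi\|_{\mathscr{R}_{\eta, r}}$ by monotonicity of the norm in $\eta$. Each of the $k$ applications costs a factor $C(q')^5 \|\chi\|_{\mathscr{R}_{\eta, r}} k / (\alpha r^2)$, whence
$$
\|P_k\|_{\mathscr{P}_{\eta-\alpha, r}} \leq \Big(\frac{C(q')^5 \|\chi\|_{\mathscr{R}_{\eta, r}}}{\alpha r^2}\, k\Big)^k \|P\|_{\mathscr{P}_{\eta,r}}.
$$
By the smallness hypothesis \eqref{chi3_reg} (with the implicit constant chosen small enough), the parenthesized factor is at most $\epsilon k$ with $\epsilon \ll 1$; Stirling's bound $k^k \leq e^k k!$ then turns the Lie series into a geometric sum $\sum_k (|t|\, e \epsilon)^k \|P\|_{\mathscr{P}_{\eta,r}} \leq 2 \|P\|_{\mathscr{P}_{\eta,r}}$, yielding both convergence in $\mathscr{P}_{\eta-\alpha,r}$ and \eqref{estim:flow_reg}.

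To identify this formal series with the composition $P \circ \Phi^t_\chi$ on $B_{\ell^2_\varpi}(0, 2r)$, I would argue pointwise: fixing $u$ in this ball, Lemma \ref{lem:flow_of_chi} gives smoothness of $t \mapsto P \circ \Phi^t_\chi(u)$, and the Lie-derivative identity $\tfrac{d}{dt} P \circ \Phi^t_\chi = \{P, \chi\} \circ \Phi^t_\chi$ iterates into $\tfrac{d^k}{dt^k}\big|_{t=0} P \circ \Phi^t_\chi(u) = P_k(u)$. Combining this with Lemma \ref{lem:ouf_cest_des_fonctions_reg}, which reads off the function from its $\mathscr{P}$-coefficients, and the uniform convergence of $\sum (t^k/k!) P_k(u)$ on $B_{\ell^2_\varpi}(0,3r)$ already established, Taylor's formula with remainder yields the pointwise equality.

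For \eqref{eq:info_relou_sur_les_degre}, I would use the $v$-degree structure. If $\Pi_2 P = 0$, then $P = \Pi_0 P + \sum_{q \geq 3} \Pi_q P$ (note $\Pi_1 P = 0$ automatically from mass conservation). Since $\chi$ preserves mass, $\Pi_0 P$ (a function of $\mu = \|u\|_{L^2}^2$ alone) Poisson-commutes with $\chi$; and for $q \geq 3$, the Leibniz rule together with $\{F(\mu), \chi\} = 0$ reduces $\{\Pi_q P, \chi\}$ to terms of $v$-degree exactly $q + q' - 2 \geq q' + 1$, so $\Pi_{\leq q'}\{\Pi_q P, \chi\} = 0$. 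Induction then gives $\Pi_{\leq q'} P_k = 0$ for every $k \geq 1$ and therefore $\Pi_{\leq q'} P_t = \Pi_{\leq q'} P$. The delicate point of the whole argument is the $k$-dependent staircase: any fixed geometric decrease of analyticity loss would cost a $(k!)^2$ factor and fail to sum against $1/k!$, whereas the linear staircase replaces this by $k^k$ --- the largest growth still tamed by $1/k!$ via Stirling, and exactly what the smallness condition \eqref{chi3_reg} is calibrated to absorb.
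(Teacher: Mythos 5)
Your proposal is correct and complete, but it takes a genuinely different route from the paper. The paper packages the analyticity-loss management into an abstract Cauchy--Kowaleskaya lemma (Lemma~\ref{lem:CK}): it solves the transport equation $\partial_t P_t = \{P_t,\chi\}$, $P_0 = P$, by a Picard fixed-point argument in the weighted scale $(E_\beta)_\beta$, then identifies $P_t$ with $P\circ\Phi^t_\chi$ via the observation that $t\mapsto P_t\circ\Phi_\chi^{-t}(u)$ is constant. The key input there is the projected Poisson-bracket estimate~\eqref{estim:fish}, whose factor $q''e^{-q''(\eta-\eta')}$ is exactly what makes the abstract Lemma~\ref{lem:CK} applicable with $C_L\sim r^{-2}(q')^5\|\chi\|$. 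You instead sum the Lie series $\sum_k (t^k/k!)\,\mathrm{ad}_\chi^k P$ explicitly with the classical $k$-dependent uniform staircase, trading the projected estimate for the coarser optimized bound of Corollary~\ref{cor:bracket_reg}; the $k^k$ growth from the staircase is killed by the $1/k!$ via Stirling, and the smallness condition~\eqref{chi3_reg} makes the resulting geometric series converge. The two methods are, at bottom, the same majorant principle (the paper's CK lemma is itself proved by the analogous iteration), but yours is more self-contained and elementary, while the paper's abstraction lets the same Lemma~\ref{lem:CK} be reused verbatim in Section~\ref{sec:Ham_form} (Lemma~\ref{lem:compo}). For the identification step, your Taylor-with-remainder argument works once you bound the remainder by $(e\epsilon|t|)^{K+1}\|P\|$ uniformly on $B_{\ell^2_\varpi}(0,2r)$ via Lemma~\ref{lem:ouf_cest_des_fonctions_reg}, though the paper's transport-equation identity $\tfrac{d}{dt}\bigl(P_t\circ\Phi^{-t}_\chi(u)\bigr)=0$ is a slightly cleaner way to say the same thing and is worth adopting. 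Your treatment of~\eqref{eq:info_relou_sur_les_degre} by degree counting ($v$-degree $q+q'-2\geq q'+1$ when $q\geq 3$, together with $\{\Pi_0 P,\chi\}=0$ from gauge invariance of $\chi$) matches the paper's argument in substance and is fine.
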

\begin{proof} Without loss of generality, we assume that $\Pi_0 P = 0$. Indeed, since $\chi$ commutes with the $L^2$ norm, we have $(\Pi_0 P) \circ\Phi^t_\chi = \Pi_0 P$.

\medskip

Then, we want to apply the Cauchy--Kowaleskaya lemma proven in the appendix (see Lemma  \ref{lem:CK}). So we set, for $p\geq 1$,
$E^{(p)} = \Pi_p \mathcal{P}_{\eta_0,r}$ and $\| \cdot \|_{E^{(p)}} := \|\cdot \|_{\mathcal{P}_{\eta_0,r}}$.
We note that, with the notations of Lemma \ref{lem:CK}, we have for any $\beta \geq0$
$$
E_\beta  = (\mathrm{Id} - \Pi_0)\mathscr{P}_{\eta_0+\beta,r} \quad \mathrm{ and } \quad \| \cdot \|_\beta=  \| \cdot\|_{\mathscr{P}_{\eta_0+\beta,r}} .
$$ 
By Lemma \ref{lem:bracket_reg}, we have for all $0\leq \beta\leq \eta-\eta_0$, all $p\geq 1$, all $P \in E_\beta$
$$
  \| \Pi_p\{P,\chi\}\|_{E^{(p)}} \lesssim \ p e^{-\beta p} \| P\|_{\beta} r^{-2}  (q')^5 \|  \chi\|_{\mathscr{P}_{\eta,r}}
  $$
and thus the linear map $L:=\{\cdot,\chi\}$ satisfies \eqref{CL}  with $C_L \sim r^{-2}  (q')^5 \| \chi\|_{\mathscr{P}_{\eta,r}}$. As a consequence of  \eqref{chi3_reg} we have $2C_L\leq\alpha$.

\medskip

 Now let $P \in \mathscr{P}_{\eta,r}$. Applying Lemma \ref{lem:CK}, there exists  $t\mapsto P_t\in C^1([0,1];\mathscr{P}_{\eta-\alpha,r})$  solution of the transport equation
$$
\left\{\begin{array}{ll} \partial_t P_t=\{P_t,\chi\},\quad t\in[0,1]\\
P_0=P. \end{array}\right. \quad \mathrm{such \ that} \quad \sup_{0\leq t\leq 1} \quad \|P_t \|_{\mathscr{P}_{\eta-\alpha,r}} \leq 2 \| P \|_{\mathscr{P}_{\eta,r}}.
$$
Now consider $g(t)=P_t\circ \Phi_\chi^{-t}(u)$ for some $u\in  B_{\ell^2_\varpi}(0,2r)$. We have
\begin{align*}
\frac{\dd}{\dd t} g(t)&=\partial_t P_t\circ \Phi_\chi^{-t}(u)+\dd P_t(\Phi_\chi^{-t}(u))(\partial_t\Phi_\chi^{-t}(u))\\
&=\{ P_t,\chi\}\circ \Phi_\chi^{-t}(u)- (\ic\nabla P_t( \Phi_\chi^{-t}(u)), \nabla \chi(\Phi_\chi^{-t}(u)))_{\ell^2} =0
\end{align*}
and thus $P_t\circ \Phi_\chi^{-t}(u) = g(t)=g(0)=P(u)$. Therefore $P_t=P\circ\Phi_\chi^t$ and we deduce that $P \circ\Phi_\chi^t$ belongs to $\mathscr{P}_{\eta-\alpha,r}$ for all $t\in[0,1]$ and satisfies \eqref{estim:flow_reg}.

\medskip

Finally it remains to prove \eqref{eq:info_relou_sur_les_degre}. First, we note that by Lemma \ref{lem:bracket_reg} and preservation of the mass $\Pi_{\leq q'-1} \{P_t,\chi\} =0$. Since $q' \geq 4$, it follows that $\partial_t \Pi_{\leq 2} P_t= \Pi_{\leq 2} \{P_t,\chi\} =0$. Since $\Pi_2 P=0$ (and $\Pi_1 P=0$ by preservation of the mass) it follows that $\Pi_{\leq 2} P_t =\Pi_{\leq 2} P=\Pi_0 P$. Therefore, applying again Lemma \ref{lem:bracket_reg} and using the preservation of the mass,  we get $\Pi_{\leq q'-1} \{P_t,\chi\} =0$ and it follows that $\partial_t \Pi_{\leq q'} P_t = \Pi_{\leq q'} \{P_t,\chi\} =0$.  Thus, we have that $\Pi_{\leq q'}P\circ\Phi_\chi^t \equiv \Pi_{\leq q'} P_t =  \Pi_{\leq q'} P$ .

\end{proof}

\subsection{Terms in normal form and cohomological equations}

In this subsection, we first define a notion of normal form and introduce a notation for the quadratic part of the Hamiltonian of \eqref{eq:NLS}. Then we justify it by two useful lemmas.

\begin{definition}[$Z_2$] We define the quadratic part of the Hamiltonian of \eqref{eq:NLS} by
$$
\forall u\in \ell^2_\varpi, \quad Z_2(u) = \frac12 \sum_{k \in \mathbb{Z}} k^2 |u_k|^2.
$$
\end{definition}
\begin{definition}[Terms in normal form] Let $r\in(0,1)$ and $\eta \geq 0$.
A Hamiltonian $P \in \mathscr{P}_{\eta,r}$ is in normal form if its non zero coefficients $P_{n}^{\boldsymbol{\ell},\boldsymbol{\sigma}}\neq 0$ satisfy 
\begin{equation}
\label{eq:normal_form}
 (\boldsymbol{\ell},\boldsymbol{\sigma}) \in \mathbf{P}_q  \quad \mathrm{or} \quad \big|  \boldsymbol{\sigma}_1 \boldsymbol{\ell}_1^2 + \cdots +  \boldsymbol{\sigma}_q \boldsymbol{\ell}_q^2  \big| \leq 4^{-10} q^9.
\end{equation}
 We denote by $\Pi_{\mathrm{nf}}$ the associated projection defined by restriction of the coefficients.
\end{definition}

Then, we prove that the terms in normal form are actually smoothing (up to a gauge transform).
\begin{lemma}[Terms in normal form are smoothing] \label{lem:nf_is_smoothing} For all $r\in (0,1)$ and $\eta \geq \eta_0$, we have that
$$
\Pi_{\mathrm{nf}} \mathscr{P}_{\eta,r} \subset  \mathscr{R}_{\eta,r}.
$$
\end{lemma}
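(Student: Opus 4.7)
The plan is to unfold the definitions of the norms $\|\cdot\|_{\mathscr{P}_{\eta,r}}$ and $\|\cdot\|_{\mathscr{R}_{\eta,r}}$ and observe that they coincide except for the replacement of the per-monomial factor $A_{\boldsymbol{\ell},\boldsymbol{\sigma}}^{-1}$ by $B_{\boldsymbol{\ell}}^{-1}$. Since the only difference between these two weights is the indicator $\mathbbm{1}_{(\boldsymbol{\ell},\boldsymbol{\sigma})\in \mathbf{P}_{q}}$, we automatically have $B_{\boldsymbol{\ell}}^{-1} \geq A_{\boldsymbol{\ell},\boldsymbol{\sigma}}^{-1}$ pointwise. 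Hence the non-trivial direction to establish is precisely that, \emph{for every nonzero coefficient $P_n^{\boldsymbol{\ell},\boldsymbol{\sigma}}$ satisfying the normal form condition} \eqref{eq:normal_form}, one has $B_{\boldsymbol{\ell}}^{-1} \lesssim A_{\boldsymbol{\ell},\boldsymbol{\sigma}}^{-1}$ uniformly in the indices. When $(\boldsymbol{\ell},\boldsymbol{\sigma}) \in \mathbf{P}_{q}$ there is nothing to do, the two weights are equal; so the heart of the proof is the complementary case, in which $A_{\boldsymbol{\ell},\boldsymbol{\sigma}}^{-1} = 1$ and the normal form condition reduces to the resonance estimate $|\boldsymbol{\sigma} \cdot \boldsymbol{\ell}^2| \leq 4^{-10} q^9$.

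In this remaining case, I would combine the resonance estimate with the zero-momentum identity $\boldsymbol{\sigma} \cdot \boldsymbol{\ell} = 0$ to derive an algebraic bound of the form $\langle \boldsymbol{\ell}_1^* \rangle \lesssim q^{9}\,\langle \boldsymbol{\ell}_3^* \rangle^2$. The exponential factor $e^{-q}$ inside $B_{\boldsymbol{\ell}}^{-1}$ then absorbs this polynomial growth and yields $B_{\boldsymbol{\ell}}^{-1} \lesssim 1$ uniformly in $q$. Concretely, let $i_1, i_2$ realise the two largest moduli among the $|\boldsymbol{\ell}_i|$ and set $L_\nu := \boldsymbol{\ell}_{i_\nu}$, $\alpha_\nu := \boldsymbol{\sigma}_{i_\nu}$. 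Isolating these two terms in the two structural constraints yields
$$
|\alpha_1 L_1 + \alpha_2 L_2| \leq (q-2)\,\langle \boldsymbol{\ell}_3^* \rangle, \qquad |\alpha_1 L_1^2 + \alpha_2 L_2^2| \leq (q-2)\,\langle \boldsymbol{\ell}_3^* \rangle^2 + 4^{-10}q^9.
$$

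I would then run a short case analysis on the signs $\alpha_\nu$ and on the relative moduli of $L_1, L_2$. When at least three indices attain $\langle \boldsymbol{\ell}_1^* \rangle$ the bound is trivial, since then $\langle \boldsymbol{\ell}_3^* \rangle = \langle \boldsymbol{\ell}_1^* \rangle$. When $|L_1| = |L_2|$ and only two indices attain the maximum, the configuration $L_1 = L_2, \alpha_1 = -\alpha_2$ is precisely the defining condition of $\mathbf{P}_q$, hence excluded by assumption, and the three remaining sign/orientation patterns force either $|L_1| \lesssim q\,\langle \boldsymbol{\ell}_3^* \rangle$ from the zero-momentum estimate or $|L_1|^2 \lesssim q\,\langle \boldsymbol{\ell}_3^* \rangle^2 + q^9$ from the resonance estimate (the latter working because $\alpha_1 L_1^2 + \alpha_2 L_2^2$ then has a definite sign). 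When $|L_1| > |L_2|$ strictly, either $\alpha_1 = \alpha_2$ and the resonance estimate directly bounds the positive sum $L_1^2 + L_2^2$, or $\alpha_1 = -\alpha_2$ in which case the key step is to factorise $L_1^2 - L_2^2 = (L_1 - L_2)(L_1 + L_2)$; the integrality of $L_1, L_2$ combined with $|L_1| > |L_2|$ forces $|L_1 - L_2| \geq 1$, which allows dividing through to control $|L_1 + L_2|$, and adding the two bounds returns the desired control on $|L_1|$.

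The main obstacle is this last factorisation subcase: both constraints only control the difference-type quantities $L_1 - L_2$ and $L_1^2 - L_2^2$, so the integer-arithmetic trick $|L_1 - L_2| \geq 1$ is essential to extract a bound on $|L_1 + L_2|$ and hence on $|L_1|$. Conceptually, the consistency of the whole argument rests on the observation that the problematic symmetric configuration $L_1 = L_2, \alpha_1 = -\alpha_2$ is exactly the one that the definition of $\mathbf{P}_q$ sets apart, so removing it from the normal form condition is precisely what makes the normal form projection land in the stronger smoothing class $\mathscr{R}_{\eta,r}$.
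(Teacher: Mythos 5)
Your proof is correct and follows essentially the same strategy as the paper: reduce the inclusion to the arithmetic estimate $\langle\boldsymbol{\ell}_1^*\rangle\lesssim e^q\langle\boldsymbol{\ell}_3^*\rangle^2$ for indices satisfying the normal form condition but not in $\mathbf{P}_q$, isolate the two largest entries in the zero-momentum and resonance constraints, use the $\mathbf{P}_q$ exclusion to rule out the dangerous pairing, and absorb the $q^9$ factor into $e^q$. The paper organizes the same calculation into just two cases (whether or not $(\boldsymbol{\ell}_1^*,\boldsymbol{\sigma}_1^*)=-(\boldsymbol{\ell}_2^*,\boldsymbol{\sigma}_2^*)$) and in the second observes directly that $|\boldsymbol{\ell}_1^*|\leq|\boldsymbol{\sigma}_1^*(\boldsymbol{\ell}_1^*)^2+\boldsymbol{\sigma}_2^*(\boldsymbol{\ell}_2^*)^2|$ via the factorization, whereas you split further by relative moduli and signs; the content is the same.
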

\begin{proof}
We have to prove that if $q\geq 0$, $\boldsymbol{\ell} \in \mathbb{Z}^q$ and $\boldsymbol{\sigma} \in \{-1,1\}^q$ are such that, $\boldsymbol{\sigma}_1\boldsymbol{\ell}_1+ \cdots+ \boldsymbol{\sigma}_q\boldsymbol{\ell}_q=0$, $(\boldsymbol{\ell} , \boldsymbol{\sigma}) \notin \mathbf{P}_q$  and $\big| \boldsymbol{\sigma}_1 \boldsymbol{\ell}_1^2 + \cdots +  \boldsymbol{\sigma}_q \boldsymbol{\ell}_q^2 \big| \leq 4^{-10} q^9$ then  $\langle \boldsymbol{\ell}_1^* \rangle \lesssim e^q \langle \boldsymbol{\ell}_3^* \rangle^2 $.

\medskip

The cases $q\leq 2$ are implied by the case $q=3$.  So we assume from now that $q\geq 3$ and, moreover, without loss of generality, that $|\boldsymbol{\ell}_1|\geq \cdots \geq |\boldsymbol{\ell}_q|$.  We distinguish two cases.
\begin{itemize}
\item \underline{Case $(\boldsymbol{\ell}_1,\boldsymbol{\sigma}_1)=-(\boldsymbol{\ell}_2,\boldsymbol{\sigma}_2)$.} Using the zero momentum condition, we have that
$$
2|\boldsymbol{\ell}_1| = |\boldsymbol{\sigma}_3\boldsymbol{\ell}_3 +\cdots+\boldsymbol{\sigma}_q\boldsymbol{\ell}_q| \leq (q-2) |\boldsymbol{\ell}_3|.
$$
\item \underline{Case $(\boldsymbol{\ell}_1,\boldsymbol{\sigma}_1)\neq-(\boldsymbol{\ell}_2,\boldsymbol{\sigma}_2)$.} Since by assumption, we also have that $(\boldsymbol{\ell}_1,\boldsymbol{\sigma}_1)\neq (\boldsymbol{\ell}_2,-\boldsymbol{\sigma}_2)$, we deduce that
$$
|\boldsymbol{\ell}_1| \leq |  \boldsymbol{\sigma}_1 \boldsymbol{\ell}_1^2  +  \boldsymbol{\sigma}_2 \boldsymbol{\ell}_2^2 | \leq 4^{-10} q^9 + (q-2)  \boldsymbol{\ell}_3^2.
$$
\end{itemize}
We conclude by noticing that both estimates imply that  $\langle \boldsymbol{\ell}_1 \rangle \lesssim e^q \langle \boldsymbol{\ell}_3 \rangle^2 $.

\end{proof}

Finally, we prove that the solutions of the cohomological equations are also smoothing (up to a gauge transform). 
\begin{lemma}[Solutions to the cohomological equations are smoothing] \label{lem:cohom_is_smooting} For all $r\in (0,1)$, $\eta \geq \eta_0$, $q\geq 3$ and $P\in \mathcal{P}_{\eta,r}$ 
 there exists $\chi \in \Pi_q \mathcal{R}_{\eta,r}$ such that
\begin{equation}
\label{eq:coho_reg}
\forall u\in B_{\ell^2_\varpi}(0,3r), \quad \{Z_2,\chi \}(u) = - \Pi_q (\mathrm{Id} - \Pi_{\mathrm{nf}}) P(u) 
\end{equation}
and
\begin{equation}
\label{eq:est_coho_reg}
 \| \chi \|_{ \mathscr{R}_{\eta,r}} \lesssim q^{-9}  \| P \|_{ \mathscr{P}_{\eta,r}}.
\end{equation}
\end{lemma}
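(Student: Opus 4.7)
My plan is to solve \eqref{eq:coho_reg} by termwise division against the divisor produced by $\{Z_2,\cdot\}$, and then to propagate the strengthened norm $\|\cdot\|_{\mathcal{R}_{\eta,r}}$ via the same case distinction as in the proof of Lemma \ref{lem:nf_is_smoothing}.

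First I would compute the Poisson bracket explicitly. Since $Z_2$ commutes with the $L^2$-norm, the factors $\|u\|_{L^2}^{2n}$ are left untouched, and a direct calculation gives
\begin{equation*}
\{Z_2,\,\|u\|_{L^2}^{2n}\textstyle\prod_{j=1}^q u_{\boldsymbol{\ell}_j}^{\boldsymbol{\sigma}_j}\} \;=\; \ic\,\Omega(\boldsymbol{\ell},\boldsymbol{\sigma})\,\|u\|_{L^2}^{2n}\textstyle\prod_{j=1}^q u_{\boldsymbol{\ell}_j}^{\boldsymbol{\sigma}_j},\qquad \Omega(\boldsymbol{\ell},\boldsymbol{\sigma}):=\sum_{j=1}^q \boldsymbol{\sigma}_j\boldsymbol{\ell}_j^2.
\end{equation*}
Accordingly I would set $\chi_n^{\boldsymbol{\ell},\boldsymbol{\sigma}}:=\ic\,P_n^{\boldsymbol{\ell},\boldsymbol{\sigma}}/\Omega(\boldsymbol{\ell},\boldsymbol{\sigma})$ on the support of $\Pi_q(\mathrm{Id}-\Pi_{\mathrm{nf}})P$ (namely $(\boldsymbol{\ell},\boldsymbol{\sigma})\notin\mathbf{P}_q$ and $|\Omega|>4^{-10}q^9$), and $0$ elsewhere. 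Reality, preservation of mass, and zero momentum transfer immediately from $P$ to $\chi$, so that \eqref{eq:coho_reg} holds formally, hence pointwise on $B_{\ell^2_\varpi}(0,3r)$ by Lemma \ref{lem:ouf_cest_des_fonctions_reg}, once $\chi$ is shown to lie in $\mathcal{R}_{\eta,r}$.

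The main work is the bound \eqref{eq:est_coho_reg}. Since $A_{\boldsymbol{\ell},\boldsymbol{\sigma}}^{-1}=1$ on the support of $\chi$, the required bound reduces to the pointwise estimate $B_{\boldsymbol{\ell}}^{-1}/|\Omega|\lesssim q^{-9}$, i.e.\ $1\vee e^{-q}\frac{\langle\boldsymbol{\ell}_1^*\rangle}{\langle\boldsymbol{\ell}_3^*\rangle^2}\lesssim q^{-9}|\Omega|$. The ``$1$'' part is immediate from $|\Omega|>4^{-10}q^9$. For the remaining part I would order $|\boldsymbol{\ell}_1|\geq\cdots\geq|\boldsymbol{\ell}_q|$ and mimic the dichotomy of Lemma \ref{lem:nf_is_smoothing}. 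If $(\boldsymbol{\ell}_1,\boldsymbol{\sigma}_1)=-(\boldsymbol{\ell}_2,\boldsymbol{\sigma}_2)$, the zero momentum condition yields $\langle\boldsymbol{\ell}_1\rangle\lesssim q\langle\boldsymbol{\ell}_3\rangle$, so $\langle\boldsymbol{\ell}_1\rangle/\langle\boldsymbol{\ell}_3\rangle^2\lesssim q$ and together with $|\Omega|^{-1}\lesssim q^{-9}$ and the $e^{-q}$ factor the conclusion follows. Otherwise, since $(\boldsymbol{\ell},\boldsymbol{\sigma})\notin\mathbf{P}_q$ also rules out $(\boldsymbol{\ell}_1,\boldsymbol{\sigma}_1)=(\boldsymbol{\ell}_2,-\boldsymbol{\sigma}_2)$, the elementary identity $|\boldsymbol{\sigma}_1\boldsymbol{\ell}_1^2+\boldsymbol{\sigma}_2\boldsymbol{\ell}_2^2|\geq\langle\boldsymbol{\ell}_1\rangle$ holds after a short sign-analysis; splitting further according to whether $\langle\boldsymbol{\ell}_1\rangle\geq 2(q-2)\langle\boldsymbol{\ell}_3\rangle^2$ or not, in the first subcase the tail of $\Omega$ is dominated so that $|\Omega|\gtrsim\langle\boldsymbol{\ell}_1\rangle$ and $\langle\boldsymbol{\ell}_3\rangle^{-2}\lesssim 1$, while in the second $\langle\boldsymbol{\ell}_1\rangle\lesssim q\langle\boldsymbol{\ell}_3\rangle^2$; in both subcases the bound follows from $|\Omega|^{-1}\lesssim q^{-9}$ together with a tame $e^{-q}\cdot q^{O(1)}$ loss.

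The main obstacle is precisely this last combinatorial estimate: the threshold $4^{-10}q^9$ built into the definition of normal form is calibrated so as to upgrade the weak weight $A_{\boldsymbol{\ell},\boldsymbol{\sigma}}^{-1}$ that controls $P$ into the stronger weight $B_{\boldsymbol{\ell}}^{-1}$ that controls $\chi$, at the unavoidable cost of a loss $q^{-9}$. Once this pointwise bound is in hand, everything else (verification of the algebraic conditions, the cohomological identity, and the passage from formal series to functions on $B_{\ell^2_\varpi}(0,3r)$) is routine given the infrastructure already developed in the section.
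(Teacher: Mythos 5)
Your proposal is correct and follows essentially the same route as the paper: define $\chi$ by termwise division against $\Omega=\sum_j\boldsymbol{\sigma}_j\boldsymbol{\ell}_j^2$ on the support of $\Pi_q(\mathrm{Id}-\Pi_{\mathrm{nf}})P$, observe that $A_{\boldsymbol{\ell},\boldsymbol{\sigma}}^{-1}=1$ there, reduce the bound to $B_{\boldsymbol{\ell}}^{-1}/|\Omega|\lesssim q^{-9}$, and then run the two-case dichotomy of Lemma~\ref{lem:nf_is_smoothing} (pairing at the top vs.\ no top pairing) combined with the lower bound $|\Omega|>4^{-10}q^9$. One microscopic imprecision: the inequality $|\boldsymbol{\sigma}_1\boldsymbol{\ell}_1^2+\boldsymbol{\sigma}_2\boldsymbol{\ell}_2^2|\geq\langle\boldsymbol{\ell}_1\rangle$ you invoke is false as stated (e.g.\ $\boldsymbol{\ell}_1=1$, $\boldsymbol{\ell}_2=0$ gives $1<\sqrt2$); the correct elementary bound, and the one the paper uses, is $|\boldsymbol{\sigma}_1\boldsymbol{\ell}_1^2+\boldsymbol{\sigma}_2\boldsymbol{\ell}_2^2|\geq|\boldsymbol{\ell}_1|$, and since $|\boldsymbol{\ell}_1|\geq1$ on the support (otherwise $\Omega=0$) this is equivalent up to a factor $\sqrt2$, so it does not affect the conclusion.
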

\begin{proof} Without loss of generality, we assume that $P= \Pi_q (\mathrm{Id} - \Pi_{\mathrm{nf}}) P$. First, we define a Hamiltonian $\chi \in  \Pi_q \mathcal{P}_{\eta,r}$ by setting
$\chi_{n}^{\boldsymbol{\ell},\boldsymbol{\sigma}} = 0$ if $P_{n}^{\boldsymbol{\ell},\boldsymbol{\sigma}} = 0$ and, else
$$
\chi_{n}^{\boldsymbol{\ell},\boldsymbol{\sigma}} := \frac{P_{n}^{\boldsymbol{\ell},\boldsymbol{\sigma}} }{\ic \Omega_{\boldsymbol{\ell,\boldsymbol{\sigma}}}} \quad \mathrm{where} \quad 
\Omega_{\boldsymbol{\ell},\boldsymbol{\sigma}} := \sum_{1\leq j\leq q} \boldsymbol{\sigma}_j \boldsymbol{\ell}_j^2.
$$
We note that it is clear that $\chi \in \Pi_q \mathcal{P}_{\eta,r}$ and a standard direct calculation proves that $\chi$ solves the comohological equation \eqref{eq:coho_reg}. The main point is to prove that $\chi \in \Pi_q \mathcal{R}_{\eta,r}$ and satisfies the bound \eqref{eq:est_coho_reg}. 

By definition, to prove it, it suffices to prove that if $ (\boldsymbol{\ell},\boldsymbol{\sigma}) \in \mathbb{Z}^q \times \{-1,1\}^q$ are such that $(\boldsymbol{\ell},\boldsymbol{\sigma}) \notin \mathbf{P}_q $, $\boldsymbol{\sigma}_1\boldsymbol{\ell}_1+ \cdots+ \boldsymbol{\sigma}_q\boldsymbol{\ell}_q=0$ and $|\Omega_{\boldsymbol{\ell},\boldsymbol{\sigma}} | > 4^{-10} q^9$ then 
$$
  |\chi_{n}^{\boldsymbol{\ell},\boldsymbol{\sigma}}| \lesssim q^{-9} |P_{n}^{\boldsymbol{\ell},\boldsymbol{\sigma}}| \quad \mathrm{and} \quad  e^{-q}  \frac{  \langle \boldsymbol{\ell}_1^* \rangle }{  \langle \boldsymbol{\ell}_3^* \rangle^2  }   |\chi_{n}^{\boldsymbol{\ell},\boldsymbol{\sigma}}| \lesssim q^{-9} |P_{n}^{\boldsymbol{\ell},\boldsymbol{\sigma}}|.
$$
The first estimate is a direct consequence of the lower bound on $|\Omega_{\boldsymbol{\ell},\boldsymbol{\sigma}} |$. For the second one, we assume moreover, without loss of generality that  $|\boldsymbol{\ell}_1|\geq \cdots \geq |\boldsymbol{\ell}_q|$ and we aim at proving that 
\begin{equation}
\label{eq:on_y_veut}
 \frac{  \langle \boldsymbol{\ell}_1 \rangle }{  \langle \boldsymbol{\ell}_3 \rangle^2  } |\Omega_{\boldsymbol{\ell},\boldsymbol{\sigma}} |^{-1} \lesssim e^{q/2}.
\end{equation}
We have to distinguish $2$ cases.
\begin{itemize}
\item \underline{Case $(\boldsymbol{\ell}_1,\boldsymbol{\sigma}_1)=-(\boldsymbol{\ell}_2,\boldsymbol{\sigma}_2)$.} Using the zero momentum condition, we have that
\begin{equation}
\label{eq:on_y_veut1}
2|\boldsymbol{\ell}_1| = |\boldsymbol{\sigma}_3\boldsymbol{\ell}_3 +\cdots+\boldsymbol{\sigma}_q\boldsymbol{\ell}_q| \leq (q-2) |\boldsymbol{\ell}_3|.
\end{equation}
\item \underline{Case $(\boldsymbol{\ell}_1,\boldsymbol{\sigma}_1)\neq-(\boldsymbol{\ell}_2,\boldsymbol{\sigma}_2)$.} Since by assumption, we also have that $(\boldsymbol{\ell}_1,\boldsymbol{\sigma}_1)\neq (\boldsymbol{\ell}_2,-\boldsymbol{\sigma}_2)$, we deduce that
$$
|\boldsymbol{\ell}_1| \leq |  \boldsymbol{\sigma}_1 \boldsymbol{\ell}_1^2  +  \boldsymbol{\sigma}_2 \boldsymbol{\ell}_2^2 | \leq |\Omega_{\boldsymbol{\ell},\boldsymbol{\sigma}}  |+ (q-2)  \boldsymbol{\ell}_3^2.
$$
As a consequence, we have
\begin{equation}
\label{eq:on_y_veut2}
\mathrm{either} \quad |\boldsymbol{\ell}_1|\leq (q-2)  \boldsymbol{\ell}_3^2 \quad \mathrm{or} \quad \frac12 |\boldsymbol{\ell}_1| \leq  |\Omega_{\boldsymbol{\ell},\boldsymbol{\sigma}}  |.
\end{equation}
\end{itemize}
Finally, recalling that, by assumption, we also have $ |\Omega_{\boldsymbol{\ell},\boldsymbol{\sigma}}  | \geq 1$, it suffices to conclude to note that both \eqref{eq:on_y_veut1} and \eqref{eq:on_y_veut2} imply \eqref{eq:on_y_veut}.
\end{proof}

\subsection{Iterative lemma}

In this subsection, we prove a lemma which is somehow the heredity step of our regularizing normal form procedure.
\begin{lemma}[Iterative lemma] Let $r\in (0,1)$, $q\geq 4$, $\eta' = \eta-q^{-2} \geq \eta_0$ and $P\in \mathcal{P}_{\eta,r}$  be such that
$$
\Pi_{\leq q-1} P =  \Pi_{\mathrm{nf}} \Pi_{\leq q-1}  P, \quad \Pi_2 P = 0  \quad \mathrm{and} \quad \| P  \|_{ \mathcal{P}_{\eta,r} } \lesssim r^2.
$$
Then, there exists $\chi \in \Pi_{q} \mathcal{R}_{\eta,r}$ and $Q \in \mathcal{P}_{\eta',r}$ such that
\begin{itemize}
\item $Q$ is in normal form up to order $q$, i.e.
$$
\Pi_{\leq q} Q =  \Pi_{\mathrm{nf}} \Pi_{\leq q}  P,
$$
\item $Z_2+P$ is conjugated to $Z_2+Q$, i.e.
$$
(Z_2 + P) \circ \Phi_\chi^1 = Z_2 + Q \quad \mathrm{on} \quad B_{\ell^2_\varpi}(0,2r)
$$
\item $Q$ and $\chi$ satisfy the bounds 
$$
 \| \chi \|_{\mathcal{R}_{\eta,r}} \lesssim q^{-9} \| P\|_{\mathcal{P}_{\eta,r}} \quad \mathrm{and} \quad
\| Q\|_{\mathcal{P}_{\eta',r}} \leq (1 + q^{-2}) \| P\|_{\mathcal{P}_{\eta,r}} .
$$
\end{itemize}
\end{lemma}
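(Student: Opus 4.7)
The plan is to conjugate $Z_2+P$ by the time-one flow of a generator $\chi\in\Pi_q\mathcal{R}_{\eta,r}$ solving the cohomological equation $\{Z_2,\chi\}=-R$ with $R:=\Pi_q(\mathrm{Id}-\Pi_{\mathrm{nf}})P$, and then to identify the normal-form part of $Q:=(Z_2+P)\circ\Phi_\chi^1-Z_2$ through a careful Taylor--Lie expansion. The existence and estimate of $\chi$ come directly from Lemma~\ref{lem:cohom_is_smooting} applied to $\Pi_q P$, giving already the target bound $\|\chi\|_{\mathcal{R}_{\eta,r}}\lesssim q^{-9}\|P\|_{\mathcal{P}_{\eta,r}}$. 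Combined with $\|P\|\lesssim r^2$ (with an implicit constant we are free to take small), this yields $q^5\|\chi\|_{\mathcal{R}_{\eta,r}}\lesssim q^{-4}r^2\leq q^{-2}r^2$, which is precisely the smallness \eqref{chi3_reg} needed with $\alpha=q^{-2}$, $q'=q$; so Lemmas~\ref{lem:flow_of_chi} and~\ref{lem:compo_reg} apply and define $Q\in\mathcal{P}_{\eta',r}$.

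To extract the normal-form part I would expand
$$
Q=P+\int_0^1\{Z_2+P,\chi\}\circ\Phi_\chi^t\,dt=P-R+E,\qquad E:=\int_0^1(R-R\circ\Phi_\chi^t)\,dt+\int_0^1\{P,\chi\}\circ\Phi_\chi^t\,dt.
$$
The hypothesis $\Pi_{\leq q-1}P=\Pi_{\mathrm{nf}}\Pi_{\leq q-1}P$ immediately gives $\Pi_{\leq q}(P-R)=\Pi_{\mathrm{nf}}\Pi_{\leq q}P$, so it remains to check $\Pi_{\leq q}E=0$. This follows from two uses of the preservation statement \eqref{eq:info_relou_sur_les_degre}: applied to $R$ (for which $\Pi_2R=0$ since $q\geq 4$) it yields $\Pi_{\leq q}(R\circ\Phi_\chi^t)=R$; applied to $\{P,\chi\}$ it reduces to showing $\Pi_{\leq q}\{P,\chi\}=0$, which in turn follows from the degree-shift in Lemma~\ref{lem:bracket_reg}: $\Pi_{q''}\{P,\chi\}$ only involves $\Pi_{q''-q+2}P$, an index at most $2$ when $q''\leq q$, but $\Pi_0 P$ has trivial Poisson bracket, $\Pi_1 P=0$ by mass preservation, and $\Pi_2 P=0$ by hypothesis.

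The main obstacle is the sharp constant $(1+q^{-2})$ in the norm bound: the crude composition estimate in Lemma~\ref{lem:compo_reg} only offers a loss factor $2$. To bypass this I would estimate $E$ on the intermediate scale $\eta'':=\eta-q^{-2}/2$ using Corollary~\ref{cor:bracket_reg}, combining with $\|\chi\|_{\mathcal{R}_{\eta,r}}\lesssim q^{-9}\|P\|$:
$$
\|\{R,\chi\}\|_{\mathcal{P}_{\eta'',r}}+\|\{P,\chi\}\|_{\mathcal{P}_{\eta'',r}}\lesssim r^{-2}q^2\cdot\|P\|\cdot q^5\cdot q^{-9}\|P\|\lesssim r^{-2}q^{-2}\|P\|^2.
$$
Rewriting $R-R\circ\Phi_\chi^t=-\int_0^t\{R,\chi\}\circ\Phi_\chi^s\,ds$ and propagating from scale $\eta''$ to $\eta'$ via \eqref{estim:flow_reg} gives $\|E\|_{\mathcal{P}_{\eta',r}}\lesssim r^{-2}q^{-2}\|P\|^2$. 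Since $\|P-R\|_{\mathcal{P}_{\eta',r}}\leq\|P\|_{\mathcal{P}_{\eta,r}}$ (the norm is monotone in $\eta$ and removing coefficients does not increase it), one concludes
$$
\|Q\|_{\mathcal{P}_{\eta',r}}\leq\|P\|_{\mathcal{P}_{\eta,r}}+C r^{-2}q^{-2}\|P\|^2\leq(1+q^{-2})\|P\|_{\mathcal{P}_{\eta,r}},
$$
after absorbing the constant $C$ into the smallness of the implicit constant in the hypothesis $\|P\|\lesssim r^2$.
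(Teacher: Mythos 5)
Your proposal is correct and follows essentially the same path as the paper: same choice of $\chi$ via Lemma~\ref{lem:cohom_is_smooting}, same Taylor--Lie expansion (your $E$ equals the paper's $R^{(1)}+R^{(2)}$, since $\int_0^1(R-R\circ\Phi_\chi^t)\,dt=-\int_0^1(1-t)\{R,\chi\}\circ\Phi_\chi^t\,dt=\int_0^1(1-t)\{\{Z_2,\chi\},\chi\}\circ\Phi_\chi^t\,dt$), same use of the degree-preservation identity \eqref{eq:info_relou_sur_les_degre} together with the degree shift in Lemma~\ref{lem:bracket_reg} to kill $\Pi_{\leq q}E$, and the same absorption of the quadratic error into the $q^{-2}$-margin using the smallness hypothesis on $\|P\|$. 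The one place where you are more careful than the paper is the introduction of the explicit intermediate scale $\eta''=\eta-q^{-2}/2$ to split the analyticity loss between the Poisson bracket (Corollary~\ref{cor:bracket_reg}) and the flow composition (Lemma~\ref{lem:compo_reg}); the paper states the same chain of estimates but passes over this splitting, so your version is if anything a slightly more scrupulous writing of the same argument.
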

\begin{proof} Let $\chi$ be the Hamiltonian given by Lemma \ref{lem:cohom_is_smooting}. We note that $ \| \chi \|_{\mathcal{P}_{\eta,r}} \lesssim \| P\|_{\mathcal{P}_{\eta,r}} $ and so, provided that $r^{-2}\| P\|_{\mathcal{P}_{\eta,r}} $ is small enough, $\Phi_\chi^t$ is well defined up to time $1$ by Lemma \ref{lem:flow_of_chi}.

Then, by Taylor expansion, we have on $B_{\ell^2_\varpi}(0,2r)$ that
$$
(Z_2 + P) \circ \Phi_\chi^1 = Z_2 + K +R^{(1)} + R^{(2)} =: Z_2+Q
$$
where $K =\{Z_2,\chi \} +  P$,
$$
R^{(1)} = \int_0^1  \{P,\chi \}\circ \Phi_\chi^t \mathrm{d}t  \quad \mathrm{and} \quad R^{(2)} = \int_0^1 (1-t) \{\{Z_2,\chi \},\chi\}\circ \Phi_\chi^t \mathrm{d}t .
$$
Then, we recall that by construction of $\chi$, we have $\{Z_2,\chi \} = -\Pi_q (\mathrm{Id} - \Pi_{\mathrm{nf}}) P \in \mathcal{P}_{\eta,r}$. Therefore we deduce that $K\in \mathcal{P}_{\eta,r}$ satisfies
$$
 \| K \|_{\mathcal{P}_{\eta,r}} \leq  \| P \|_{\mathcal{P}_{\eta,r}} \quad \mathrm{and} \quad \Pi_{\leq q} K =  \Pi_{\mathrm{nf}} \Pi_{\leq q}  P.
$$

\medskip

Now, we focus on $ R^{(1)}$. Since $\| \chi \|_{ \mathscr{R}_{\eta,r}} \lesssim q^{-9}  \| P \|_{ \mathscr{P}_{\eta,r}}$, provided that $r^{-2} \| P \|_{ \mathscr{P}_{\eta,r}}$ is small enough, we can apply Lemma \ref{lem:compo_reg} with $\alpha = q^{-2}$, to deduce that $R^{(1)}\in   \mathscr{P}_{\eta',r}$ and
$$
 \| R^{(1)}\|_{\mathscr{P}_{\eta',r}} \lesssim  \| \{ \chi , P \} \|_{\mathscr{P}_{\eta',r}}.
$$
Then applying the Poisson bracket estimate of Corollary \ref{cor:bracket_reg}, we deduce that
$$
 \| R^{(1)}\|_{\mathscr{P}_{\eta',r}} \lesssim  r^{-2} (\eta -\eta')^{-1} q^5 \|  \chi \|_{ \mathscr{R}_{\eta,r} } \| P \|_{ \mathscr{P}_{\eta,r} } \lesssim q^{-2} \big( r^{-2}\| P \|_{ \mathscr{P}_{\eta,r} } \big) \| P \|_{ \mathscr{P}_{\eta,r} }
$$
and so, provided that $r^{-2}\| P \|_{ \mathscr{P}_{\eta,r}}$ is small enough that $ \| R^{(1)}\|_{\mathscr{P}_{\eta',r}} \leq \frac12 q^{-2}\| P \|_{ \mathscr{P}_{\eta,r} }$.
Using that $\{Z_2,\chi \} = -\Pi_q (\mathrm{Id} - \Pi_{\mathrm{nf}}) P \in \mathcal{P}_{\eta,r}$, we deduce similarly that $R^{(2)}\in   \mathscr{P}_{\eta',r}$ and that it also satisfies $ \| R^{(2)}\|_{\mathscr{P}_{\eta',r}} \leq \frac12 q^{-2}\| P \|_{ \mathscr{P}_{\eta,r} }$.

\medskip

Finally, to conclude the proof it suffices to prove that $\Pi_{\leq q} R^{(1)} =\Pi_{\leq q} R^{(2)} =0$. 
By Lemma \ref{lem:bracket_reg}, since $\Pi_2 P = 0$, we have that $\Pi_{\leq 2} P = \Pi_0 P$ and so by preservation of the mass that $\Pi_{\leq q} \{ P,\chi\} = \Pi_{\leq q} \{ \Pi_0  P,\chi\} = 0$. Therefore since $q\geq 4$, using the relation \eqref{eq:info_relou_sur_les_degre} of Lemma \ref{lem:compo_reg}, we have that $\Pi_{\leq q} \{ P,\chi\}  \circ \Phi_\chi^t = 0$ (for $t\in[0,1]$) and so that $\Pi_{\leq q} R^{(1)}=0$. The same arguments apply to prove that $\Pi_{\leq q} R^{(2)} =0$.

\end{proof}

\subsection{Convergence of the regularizing normal form : proof of Theorem \ref{thm:reg}}
\label{sub:proof_thm_reg}

We divide the proof in $4$ steps.

\noindent \underline{\emph{Step $1$: Setting.}}
First, we recall that by Proposition \ref{prop:PNLS}, there exist $r_0 \in (0,1)$ and $P^{\eqref{eq:NLS}} \in \mathcal{P}_{3 \eta_0,r_0}$ such that
$$
H^{\eqref{eq:NLS}} = Z_2 + P^{\eqref{eq:NLS}} \quad \mathrm{on} \quad B_{\ell^2_\varpi}(0,3r_0),
$$
$\Pi_{\mathrm{deg}\leq 3 }  P^{\eqref{eq:NLS}} = 0$ and $\Pi_{2 }  P^{\eqref{eq:NLS}} = 0$. Therefore, we have by homogeneity that for all $r\leq r_0$, 
$$
\forall r\leq r_0, \quad \| P^{\eqref{eq:NLS}} \|_{\mathcal{P}(3 \eta_0,r)} \lesssim r^4.
$$
From now, we fix a radius $r\leq r_0$ such that $\| P^{\eqref{eq:NLS}} \|_{\mathcal{P}(3 \eta_0,r)} \leq r^3$ and that we assume small enough (i.e. in other words, we will assume a finite number of extra smallness assumptions on $r$). 

We set
$$
\eta_q = 3\eta_0 - \sum_{4\leq p < q} p^{-2}, \quad \mathrm{for} \quad q\geq 4.
$$
Applying the iterative lemma, provided that $r$ is small enough, we get some families  of Hamiltonians $\chi^{(q)} \in \Pi_q \mathcal{R}_{\eta_q,r}$ and $P^{(q)} \in \mathcal{P}_{\eta_q,r}$ such that for all $q\geq 4$ 
\begin{itemize} 
\item $P^{(4)} = P^{\eqref{eq:NLS}}$,
\item $P^{(q+1)}$ is in normal form up to order $q$ and stationary in some sense, i.e. 
\begin{equation}
\label{eq:is_in_normal_form}
\Pi_{\leq q} P^{(q+1)} =  \Pi_{\mathrm{nf}} \Pi_{\leq q}  P^{(q)},
\end{equation}
\item $Z_2+P^{(q)}$ is conjugated to $Z_2+P^{(q+1)}$, i.e. 
\begin{equation}
\label{eq:ca_conjugue}
(Z_2 + P^{(q)}) \circ \Phi_{\chi^{(q)}}^1 = Z_2 + P^{(q+1)} \quad \mathrm{on} \quad B_{\ell^2_\varpi}(0,2r)
\end{equation}
\item they enjoy the bounds  
\begin{equation}
\label{eq:bound_induction}
\| \chi^{(q)} \|_{\mathcal{P}_{\eta_q,r}} \lesssim q^{-9} r^3 \quad \mathrm{and} \quad \| P^{(q)}\|_{\mathcal{P}_{\eta_q,r}} \leq C_q r^3  \quad \mathrm{where} \quad C_q :=\prod_{4\leq p <q} 1+p^{-2}.
\end{equation}
\end{itemize}

Thanks to the bound \eqref{eq:bound_induction} on $\chi^{(q)}$, we know, by Lemma \ref{lem:flow_of_chi} that, provided that $r$ is small enough, for all $q\geq 4$ and $\lambda \in (1,2)$, $\Phi_{\chi^{(q)}}^1$ maps $B_{\ell^2_\varpi}(0,\lambda r)$ into $B_{\ell^2_\varpi}(0,(1+ \varepsilon q^{-2}) \lambda r)$ where $\varepsilon>0$ is a constant so that
$$
\prod_{q\geq 4 } 1+\varepsilon q^{-2} \leq 2.
$$ 
As a consequence, for $q\geq 4$, the map
$$
\tau^{(q)} := \Phi_{\chi^{(4)}}^1 \circ \cdots \circ \Phi_{\chi^{(q-1)}}^1 : B_{\ell^2_\varpi}(0,r) \to B_{\ell^2_\varpi}(0,3r) \quad \mathrm{is \ well \ defined}.
$$
Moreover, for all $q\geq 4$, $\tau^{(q)}$ is clearly smooth, symplectic and satisfies, by \eqref{eq:ca_conjugue},
\begin{equation}
\label{eq:jet_lag}
(Z_2 + P^{\eqref{eq:NLS}}) \circ  \tau^{(q)} = Z_2 + P^{(q)} \quad \mathrm{on} \quad B_{\ell^2_\varpi}(0,r).
\end{equation}

\medskip

\noindent \underline{\emph{Step $2$: Convergence of $P^{(q)}$.}} We aim at passing to the limit as $q$ goes to $+\infty$. First, thanks to the relation \eqref{eq:is_in_normal_form}, it make sense to define $P^{(\infty)} \in \mathcal{P}_{\eta_\infty,r}$ by
$$
\forall q\geq 4, \quad \Pi_{\leq q} P^{(\infty)} :=  \Pi_{\mathrm{nf}}  \Pi_{\leq q} P^{(q)} 
$$
and by the estimate \eqref{eq:bound_induction}, it satisfies $\| P^{(\infty)}\|_{\mathcal{P}_{\eta_\infty,r}} \leq C_\infty r^3 $. Moreover, we note that by \eqref{eq:is_in_normal_form}, $P^{(\infty)}$ is in normal form, i.e.
$$
P^{(\infty)} = \Pi_{\mathrm{nr}} P^{(\infty)}.
$$

\medskip

Then, we aim at proving that $P^{(q)}$ converges pointwize to $P^{(\infty)}$ on $B_{\ell^2_\varpi}(0,r)$. So let $u\in B_{\ell^2_\varpi}(0,r)$, $q\geq 4$ and set $\rho := \| u\|_{\ell^2_\varpi} < r$. First, we note that by construction,
$$
\Pi_{\leq q-1} P^{(q)} = \Pi_{\leq q-1}  P^{(\infty)}.
$$
Therefore, we have by homogeneity that
$$
\| P^{(q)} - P^{(\infty)} \|_{\mathcal{P}_{\eta_\infty,\rho}} \leq \big(\frac{\rho}{r}\big)^{q} \| P^{(q)} - P^{(\infty)} \|_{\mathcal{P}_{\eta_\infty,r}} \lesssim  r^3 \big(\frac{\rho}{r}\big)^{q}.
$$
 Thus, applying Lemma \ref{lem:ouf_cest_des_fonctions_reg}, we get that
 $$
 |P^{(q)}(u) - P^{(\infty)}(u)| \lesssim \| P^{(q)} - P^{(\infty)} \|_{\mathcal{P}_{\eta_\infty,\rho}} \lesssim  r^3 \big(\frac{\rho}{r}\big)^{q} \mathop{\longrightarrow}_{q \to \infty} 0.
 $$

\medskip

\noindent \underline{\emph{Step $3$: Convergence of $\tau^{(q)}$.}} We aim at proving that $\tau^{(q)}$ converges in $C^1_b( B_{\ell^2_\varpi}(0,r) ;\ell^2_\varpi )$ to a map $\tau^{(\infty)}$. We are going to prove that it is a Cauchy sequence. So let $4\leq p \leq q$ and $u\in  B_{\ell^2_\varpi}(0,r)$. Applying the triangular inequality and the mean value inequality, we get that
\begin{equation*}
\begin{split}
\| \tau^{(q)}(u) - \tau^{(p)}(u) \|_{\ell^2_\varpi} &\leq \sum_{j = p+1}^{q} \| \tau^{(j)}(u) - \tau^{(j-1)}(u) \|_{\ell^2_\varpi} \\
 &\leq  \sum_{j = p+1}^{q}  \sup_{v\in B_{\ell^2_\varpi}(0, (1+\varepsilon j^{-2}) r ) } \| \mathrm{d} \tau^{(j-1)}(v) \|_{\ell^2_\varpi \to \ell^2_\varpi} \| u - \Phi_{\chi^{(j)}}^1(u)\|_{\ell^2_\varpi}
\end{split}
\end{equation*}
Then, we note that,  by Lemma \ref{lem:flow_of_chi}   we have
$$
\| \mathrm{d} \tau^{(j-1)}(v) \|_{\ell^2_\varpi \to \ell^2_\varpi} \leq \prod_{4\leq \ell \leq j-2} 1+ r^{-2} \| \chi^{(\ell)} \|_{\mathcal{P}_{\eta_q,r}} \mathop{\lesssim}^{\eqref{eq:bound_induction}} 1.
$$
Therefore, using \eqref{eq:bound_induction} and the bound given by Corollary \ref{cor:flow_hom}, we have that 
\begin{equation}
\label{eq:cauchy_reg}
\| \tau^{(q)}(u) - \tau^{(p)}(u) \|_{\ell^2_\varpi} \lesssim r^{-1} \| u\|_{\ell^2_\varpi}^3 \sum_{j = p+1}^{+\infty} j^{-9}.
\end{equation}
Similarly, still using the bound given by Lemma \ref{lem:flow_of_chi} and \eqref{eq:bound_induction}, we get that
\begin{equation}
\label{eq:cauchy_d}
\begin{split}
\| \mathrm{d} \tau^{(q)}(u) - \mathrm{d}\tau^{(p)}(u) \|_{\ell^2_\varpi\to \ell^2_\varpi} 
 \lesssim  \sum_{j = p+1}^{q} \| \mathrm{Id} - \mathrm{d} \Phi_{\chi^{(j)}}^1(u)\|_{\ell^2_\varpi \to \to \ell^2_\varpi} 
 \lesssim  r \sum_{j = p+1}^{q} j^{-9}.
\end{split}
\end{equation}
The estimates \eqref{eq:cauchy_reg} and \eqref{eq:cauchy_d} prove that  $(\tau^{(q)})_q$ is a Cauchy sequence in $C^1_b( B_{\ell^2_\varpi}(0,r) ;\ell^2_\varpi )$ which is a   Banach space. Thus it converges in this space and we denote by $\tau^{(\infty)} \in C^1_b( B_{\ell^2_\varpi}(0,r) ;\ell^2_\varpi )$ its limit.

\medskip

\noindent \underline{\emph{Step $4$: Conclusion.}} Passing to the limit in \eqref{eq:jet_lag}, we have that
$$
(Z_2 + P^{\eqref{eq:NLS}}) \circ  \tau^{(\infty)} = Z_2 + P^{(\infty)} \quad \mathrm{on} \quad B_{\ell^2_\varpi}(0,r).
$$
Since the maps $\tau^{(q)}$ preserve the $L^2$ norm, are symplectic and are invariant by gauge transform, the same is true for $\tau^{(\infty)}$. Moreover passing to the limit  as $q\to \infty$ in \eqref{eq:cauchy_reg} and \eqref{eq:cauchy_d} for $p=4$, we have that (provided that $r$ is small enough)
$$
\| \tau^{(\infty)}(u) - \mathrm{Id}\|_{\ell^2_\varpi}\lesssim r^{-1} \| u\|_{\ell^2_\varpi}^3 \quad \mathrm{and} \quad \| \mathrm{d}\tau^{(\infty)}(u) - \mathrm{Id}\|_{\ell^2_\varpi \to \ell^2_\varpi } \leq \frac12.
$$
Since $P^{(\infty)}$ is in normal form, we have by Lemma \ref{lem:nf_is_smoothing} that $P^{(\infty)} \in \mathcal{R}_{\eta_\infty,r}$. Finally, to conclude this proof it suffices to prove that $\Pi_{\mathrm{deg}\leq 5 } P^{(\infty)} = H^{(0)}_{1} - Z_2 $. 

\medskip

First, we note that by conservation of the mass, $\Pi_{\mathrm{deg}\leq 5 } P^{(\infty)} = \Pi_{\mathrm{deg}\leq 4 } P^{(\infty)}$. Then, we note that by construction
$$
 \Pi_{\mathrm{deg}\leq 4 } P^{(\infty)} = \Pi_{\mathrm{deg}\leq 4 } \Pi_{\leq 4} P^{(\infty)} =  \Pi_{\mathrm{deg}\leq 4 } \Pi_{\leq 4} P^{(4)} = \Pi_{\mathrm{deg}\leq 4 } \Pi_{\mathrm{nf}}\Pi_{\leq 4} P^{\eqref{eq:NLS}} = \Pi_{\mathrm{nf}} \Pi_{\mathrm{deg}\leq 4 } P^{\eqref{eq:NLS}} .
$$
Moreover, by uniqueness of the Taylor expansion, we have that for all $v\in \ell^2_\varpi$
$$
\Pi_{\mathrm{deg}\leq 4 } P^{\eqref{eq:NLS}}(v)  = \frac{f'(0)}{4}  \| v \|_{L^4}^4 = \frac{f'(0)}{4} \sum_{\boldsymbol{\ell}_1+\boldsymbol{\ell}_2 - \boldsymbol{\ell}_3 - \boldsymbol{\ell}_4 = 0} v_{\boldsymbol{\ell}_1} v_{\boldsymbol{\ell}_2} \overline{v_{\boldsymbol{\ell}_3}} \overline{v_{\boldsymbol{\ell}_4}}.
$$
Finally, we note that, as in \cite{KP96} that if $\boldsymbol{\ell}_1+\boldsymbol{\ell}_2 - \boldsymbol{\ell}_3 - \boldsymbol{\ell}_4 = 0$ and $|\boldsymbol{\ell}_1^2+\boldsymbol{\ell}_2^2 - \boldsymbol{\ell}_3^2 - \boldsymbol{\ell}_4^2| \leq \frac12$ then $(\boldsymbol{\ell}_1,\boldsymbol{\ell}_2) = (\boldsymbol{\ell}_3,\boldsymbol{\ell}_4)$ or $(\boldsymbol{\ell}_1,\boldsymbol{\ell}_2) = (\boldsymbol{\ell}_4,\boldsymbol{\ell}_3)$. Thus, we deduce that
$$
\Pi_{\mathrm{deg}\leq 5 } P^{(\infty)}(v) =  \frac{f'(0)}{2} \| v\|_{L^2}^4 - \frac{f'(0)}{4} \sum_{k\in \mathbb{Z}} |v_k|^2 = (H^{(0)}_{1} - Z_2)(v).  
$$

\section{Analytic Hamiltonians in partial action-angle variables} \label{sec:Ham_form}
In this section, we define the framework within which we will present and prove our results and the basic tools that will help us to do so.

\subsection{Spaces of Hamiltonians}
\begin{definition}[Formal series without parameters $\mathscr{H}_{\eta,r,\mathcal{S}}^{\mathrm{cste}}$] \label{def:class} Let $\mathcal{S} \subset \mathbb{Z}$ be a finite set, $r>0$ and $\eta \geq 0$. We denote by $ \mathscr{H}_{\eta,r,\mathcal{S}}^{\mathrm{cste}}$, the set of the formal series $P$, of the form
$$
P(\mu,y,z,v) := \sum_{n \in \mathbb{N}} \sum_{ \boldsymbol{k} \in \mathbb{Z}^{\mathcal{S}}  }\sum_{ \boldsymbol{m} \in \mathbb{N}^{\mathcal{S}}  } \sum_{q\in \mathbb{N}} \sum_{ \boldsymbol{\ell} \in (\mathcal{S}^c)^q  }  \sum_{ \boldsymbol{\sigma} \in  \{-1,1\}^q  } P_{n,\boldsymbol{k},\boldsymbol{m}}^{\boldsymbol{\ell},\boldsymbol{\sigma}} \, \mu^n \, \underbrace{ \prod_{i=1}^q v_{\boldsymbol{\ell}_i}^{\boldsymbol{\sigma}_i} }_{=:v_{\boldsymbol{\ell}}^{\boldsymbol{\sigma}} } \, \underbrace{ \prod_{j\in \mathcal{S}} y^{\boldsymbol{m}_j}_j z^{\boldsymbol{k}_j}_j  }_{=: y^{\boldsymbol{m}} \, z^{ \boldsymbol{k} }} \, 
$$
with the usual convention $ v_{\ell}^{-1} :=  \overline{v_{\ell}}$ and whose coefficients $ P_{n,\boldsymbol{k},\boldsymbol{m}}^{\boldsymbol{\ell},\boldsymbol{\sigma}} \in \mathbb{C}$ satisfy the relations
\begin{itemize}
\item reality : $P_{n,-\boldsymbol{k},\boldsymbol{m}}^{\boldsymbol{\ell},-\boldsymbol{\sigma}} = \overline{ P_{n,\boldsymbol{k},\boldsymbol{m}}^{\boldsymbol{\ell},\boldsymbol{\sigma}} }$,
\item symmetry : $P_{n,\boldsymbol{k},\boldsymbol{m}}^{\varphi \boldsymbol{\ell},\varphi \boldsymbol{\sigma}} = P_{n,\boldsymbol{k},\boldsymbol{m}}^{\boldsymbol{\ell},\boldsymbol{\sigma}} $ for all $\varphi \in \mathfrak{S}_q$,
\item preservation of the mass : if $P_{n,\boldsymbol{k},\boldsymbol{m}}^{\boldsymbol{\ell},\boldsymbol{\sigma}} \neq 0$ then
$
\displaystyle \sum_{j\in \mathcal{S}} \boldsymbol{k}_j + \sum_{i=1}^q \boldsymbol{\sigma}_i  =0,
$
\item preservation of the momentum :  if $P_{n,\boldsymbol{k},\boldsymbol{m}}^{\boldsymbol{\ell},\boldsymbol{\sigma}} \neq 0$ then
$
\displaystyle \sum_{j\in \mathcal{S}} j\boldsymbol{k}_j + \sum_{i=1}^q \boldsymbol{\sigma}_i \boldsymbol{\ell}_i=0
$
\end{itemize}
and the bounds
$$
\| P\|_{\mathscr{H}^{\mathrm{cste}}_{\eta,r,\mathcal{S}}} := \sup_{n,\boldsymbol{k},\boldsymbol{m},\boldsymbol{\ell},\boldsymbol{\sigma}} | P_{n,\boldsymbol{k},\boldsymbol{m}}^{\boldsymbol{\ell},\boldsymbol{\sigma}} | e^{\eta ( \| \boldsymbol{k} \|_{\ell^1} +  2\| \boldsymbol{m} \|_{\ell^1}  + q +2n)}   r^{2  \| \boldsymbol{m} \|_{\ell^1} +  q+2n} \Theta_{\ell}^{-1} <\infty
$$
where \footnote{by conventions, $\Theta_{\ell}=1$ if $q=0$ and the second factor equals $(\varpi_{\boldsymbol{\ell}^*_1})^{-1}$ when $q=1$.}

\begin{equation}\label{eq:theta}
\Theta_{\ell}   :=  1\ \wedge\  \varpi^\flat_{\boldsymbol{\ell}^*_3} \cdots \varpi^\flat_{\boldsymbol{\ell}^*_q}  
\frac{\varpi_{\boldsymbol{\ell}^*_2}}{\varpi_{\boldsymbol{\ell}^*_1}} 
\ \wedge\  \frac{ \langle \boldsymbol{\ell}_1 \rangle^{2\delta} \cdots \langle \boldsymbol{\ell}_q \rangle^{2\delta}  }{ \langle\boldsymbol{\ell}^*_1\rangle^{5\delta} }\ ,
\end{equation}
 $\varpi^\flat_j:=\langle j\rangle^{-1} \varpi_j$ and $\boldsymbol{\ell}^*_1,\cdots,\boldsymbol{\ell}^*_q$ is a rearrangement of $\boldsymbol{\ell}_1,\cdots,\boldsymbol{\ell}_q$ satisfying $|\boldsymbol{\ell}^*_1|\geq\cdots\geq|\boldsymbol{\ell}^*_q|$.
\end{definition}
We will justify this definition, and in particular the choice of the weight  $\Theta_{\ell} $, in Remark \ref{rem:space}.
\begin{definition}[Functions $y,\theta,z,\mu$] Being given $u \in L^2(\mathbb{T})$, $\xi \in \ell^1(\mathbb{Z};\mathbb{R})$, $k\in \mathbb{Z}$ we define
\begin{itemize}
\item the re-centered action $y_k(\xi;u) = |u_k|^2 - \xi_k$,
\item the angle $\theta_k(u) \in \mathbb{T}$ as an angle satisfying $\sqrt{|u_k|} e^{i \theta_k(u)}= u_k$,
\item $z_k(u) \in \mathbb{C}$, is the complex number of modulus one defined by $z_k(u) =e^{i\theta_k}$,
\item the re-centered mass $\mu(\xi;u) = \|u\|_{L^2}^2 - \| \xi \|_{\ell^1}$.
\end{itemize}
\end{definition}

\begin{definition}[Annulus] Being given $\xi \in \ell^1_{\varpi^2}$ and $\rho>0$, we set
$$
\mathcal{A}_\xi(\rho) = \Big\{ u\in \ell^2_{\varpi} \ | \ \sum_{k \in \mathbb{Z}}  \big| |u_k|^2 - \xi_k \big| \varpi_k^2 \, \leq \rho^2 \Big\}.
$$
\end{definition}

\begin{lemma}[Formal series define functions] \label{lem:ouf_cest_des_fonctions}Let $\mathcal{S} \subset \mathbb{Z}$ be a finite set, $r\in (0,\frac14)$, $\eta \geq \eta_0$, $P\in  \mathscr{H}^{\mathrm{cste}}_{\eta,r,\mathcal{S}}$ and $\xi \in ((4r)^2,1)^\mathcal{S}$. Then 
$$
u\mapsto P(\mu(\xi;u),y(\xi;u),z(u),u )=:P(\xi;u)
$$
 defines a smooth function from $\mathcal{A}_\xi(3r) $ into $\mathbb{R}$ and enjoys the estimate
$$
\sup_{u\in \mathcal{A}_\xi(3r)}|P(\xi;u)| \lesssim_S \| P\|_{\mathscr{H}^{\mathrm{cste}}_{\eta,r,\mathcal{S}}}
$$
\end{lemma}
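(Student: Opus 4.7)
The plan is to bound every term of the formal series pointwise on $\mathcal{A}_\xi(3r)$ and then recognise the total as a product of geometric series whose ratios are small powers of $e^{-\eta}$, closely mirroring the proof of Lemma \ref{lem:ouf_cest_des_fonctions_reg}.

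First I would collect the uniform bounds on the auxiliary quantities on $\mathcal{A}_\xi(3r)$. Since $\varpi_k\geq 1$, the definition of the annulus immediately yields $|\mu(\xi;u)|\leq 9r^2$ and $|y_j(\xi;u)|\leq 9r^2\varpi_j^{-2}$ for $j\in\mathcal{S}$. Moreover, for $j\in\mathcal{S}$ one has $|u_j|^2\geq \xi_j-9r^2\varpi_j^{-2}\geq (4r)^2-9r^2=7r^2>0$, so $z_j(u)=u_j/|u_j|$ is well defined, $|z_j|=1$, and negative powers $z_j^{-k}=\bar u_j^{\,k}/|u_j|^{k}$ also make sense. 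Finally, for $k\in\mathcal{S}^c$ one has $\xi_k=0$, hence $|u_k|\varpi_k\leq 3r$, and Cauchy--Schwarz gives
$$
\sum_{k\in\mathcal{S}^c}|u_k|\leq \Big(\sum_{k\in\mathbb{Z}}\varpi_k^{-2}\Big)^{1/2}\cdot 3r=:C_\varpi\, r,
$$
the constant $C_\varpi$ being finite because $s\geq 1>1/2$.

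Next I would plug these bounds into a generic monomial. Using the crude estimate $\Theta_{\ell}\leq 1$ together with the defining inequality for $\|\cdot\|_{\mathscr{H}^{\mathrm{cste}}_{\eta,r,\mathcal{S}}}$, every term of the series is dominated by
$$
\|P\|_{\mathscr{H}^{\mathrm{cste}}_{\eta,r,\mathcal{S}}}\,9^{\,n+\|\boldsymbol{m}\|_{\ell^1}}\,e^{-\eta(\|\boldsymbol{k}\|_{\ell^1}+2\|\boldsymbol{m}\|_{\ell^1}+q+2n)}\,r^{-q}\prod_{i=1}^q|u_{\boldsymbol{\ell}_i}|.
$$
Summation over $\boldsymbol{\ell}\in(\mathcal{S}^c)^q$ and $\boldsymbol{\sigma}\in\{-1,1\}^q$ produces a factor $(2C_\varpi)^q$; the remaining sums over $n\in\mathbb{N}$, $\boldsymbol{m}\in\mathbb{N}^{\mathcal{S}}$, $\boldsymbol{k}\in\mathbb{Z}^{\mathcal{S}}$ and $q\in\mathbb{N}$ then factor into four explicit geometric-type series with ratios $9e^{-2\eta}$, $9e^{-2\eta}$, $e^{-\eta}$ and $2C_\varpi e^{-\eta}$. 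Since $\eta\geq\eta_0=10$ dominates every fixed constant attached to the phase space, each series converges and one obtains the expected bound $\sup_{u\in\mathcal{A}_\xi(3r)}|P(\xi;u)|\lesssim_{\mathcal{S}}\|P\|_{\mathscr{H}^{\mathrm{cste}}_{\eta,r,\mathcal{S}}}$.

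For smoothness, every monomial of the series is a smooth function of $u$ on $\mathcal{A}_\xi(3r)$: the only non-polynomial dependence is through the factors $z_j=u_j/|u_j|$ with $j\in\mathcal{S}$, which are smooth thanks to the lower bound $|u_j|\geq\sqrt{7}\,r$ obtained above. After differentiating a monomial $p$ times in $u$, the resulting polynomial factors in $(n,q,\|\boldsymbol{k}\|_{\ell^1},\|\boldsymbol{m}\|_{\ell^1})$ are absorbed by the exponential weights $e^{-\eta(\cdots)}$, so the very same computation shows that the series converges in $C^p_b(\mathcal{A}_\xi(3r);\mathbb{R})$ for every $p\in\mathbb{N}$; hence $P(\xi;\cdot)$ is smooth. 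The only real obstacle is purely bookkeeping, namely ensuring $\eta_0$ has been fixed large enough to outweigh the constant $C_\varpi$: this is already implicit in the normalisation $\eta_0=10$, so no further smallness on $r$ is required. Notice that the more delicate piece of the weight $\Theta_{\ell}$ plays no role here and will become decisive only in later Poisson-bracket and vector-field estimates.
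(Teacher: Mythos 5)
Your proof is correct and follows essentially the same route as the paper: you bound $|\mu|\leq 9r^2$, $|y_j|\lesssim r^2$, and $\sum_{k\in\mathcal{S}^c}|u_k|\lesssim r$ (using $s\geq 1$), drop the weight via $\Theta_{\boldsymbol{\ell}}\leq 1$, and sum the geometric series controlled by $e^{-\eta(\cdot)}$ with $\eta\geq\eta_0=10$. Your remark on smoothness, observing that $z_j$ is smooth because $|u_j|\geq\sqrt{7}\,r>0$ for $j\in\mathcal{S}$ and that differentiated series still converge uniformly thanks to the exponential weights, is if anything a bit more explicit than the paper, which simply invokes uniform convergence.
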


\begin{proof} It suffices to prove that, for $\xi \in ((4r)^2,1)^\mathcal{S}$, the formal series defining $P(\xi;u)$ converges uniformly for $u\in \mathcal{A}_\xi(3r)$. First we note that for  $u\in \mathcal{A}_\xi(3r)$ we have $\mu(\xi;u)\leq 9r^2$ and $y_j\leq 9r^2$ for $j\in\S$. So by definition of the norm $\| P\|_{\mathscr{H}_{\eta,r,\mathcal{S}}^{\mathrm{cste}}}$  it suffices to prove
$$\sum_{n \in \mathbb{N}} \sum_{ \boldsymbol{k} \in \mathbb{Z}^{\mathcal{S}}  }\sum_{ \boldsymbol{m} \in \mathbb{N}^{\mathcal{S}}  } \sum_{q\in \mathbb{N}} 3^{2\| \boldsymbol{m} \|_{\ell^1}  + 2n}\  e^{-\eta ( \| \boldsymbol{k} \|_{\ell^1} +  2\| \boldsymbol{m} \|_{\ell^1}  + q +2n)}\sum_{\substack{ \boldsymbol{\ell} \in (\mathcal{S}^c)^q,\  \boldsymbol{\sigma} \in  \{-1,1\}^q  \\ \mathcal M(\boldsymbol{k},\boldsymbol{\ell}, \boldsymbol{\sigma})=0}}  w_{\boldsymbol{\ell}_1} \cdots w_{\boldsymbol{\ell}_q}    \, \lesssim_\S 1
$$
where $w_j= |u_j|r^{-1}$ for $j\in\mathcal{S}^c$ and the momentum $\mathcal M(\boldsymbol{k},\boldsymbol{\ell}, \boldsymbol{\sigma})$ is defined by 
$$
\mathcal M(\boldsymbol{k},\boldsymbol{\ell}, \boldsymbol{\sigma}):= \sum_{j\in \mathcal{S}} j\boldsymbol{k}_j + \sum_{i=1}^q \sigma_i \ell_i.
$$ 
Noticing that, since $s\geq1 $,  $w\in \ell^1(\mathcal{S}^c)$ with $\| w\|_{\ell^1(\mathcal{S}^c)} \leq 3r^{-1}\| u\|_{\ell_\varpi^2(\mathcal{S}^c)}\leq9$ for $u\in \mathcal{A}_\xi(3r)$, we deduce  
$$\sum_{ \boldsymbol{\ell} \in (\mathcal{S}^c)^q,\  \boldsymbol{\sigma} \in  \{-1,1\}^q  }  w_{\boldsymbol{\ell}_1} \cdots w_{\boldsymbol{\ell}_q}    \, \leq (18)^q.$$
On the other hand the summability with respect to $\boldsymbol{k} ,q,n,\boldsymbol{m} $ is ensured by the exponential $e^{-\eta ( \| \boldsymbol{k} \|_{\ell^1} +  2\| \boldsymbol{m} \|_{\ell^1}  + q +2n)}$ with $\eta\geq \eta_0$.
\end{proof}
\begin{remark}\label{rem:space}
The definition of the class $ \mathscr{H}^{\mathrm{cste}}_{\eta,r,\mathcal{S}}$ has been chosen in such a way the function $u\mapsto  P(\mu(\xi;u),y(\xi;u),z(u),u )$ has good property. In particular,
in \eqref{eq:theta}, the third term encoded the $\delta$-regularizing property inherit from Theorem \ref{thm:reg} while the second term will be used to obtain a regularity property for the gradient: it maps $\mathcal{A}_\xi(3r) $ into $\ell^2_\varpi$ (see Lemma \ref{lem:gradient}).
\end{remark}

\begin{definition}[Hamiltonians with parameters $\mathscr{H}_{\eta,r,\mathcal{O},\mathcal{S}}$] \label{def:lip} Let $\mathcal{S} \subset \mathbb{Z}$ be a finite set, $r\in(0,\frac14)$, $\mathcal{O}\subset ((4r)^2;1)^\mathcal{S}$,  $\eta \geq 0$. We denote by
$$
 \mathscr{H}_{\eta,r,\mathcal{O},\mathcal{S}} =: \mathrm{Lip}(\mathcal{O}; \mathscr{H}_{\eta,r,\mathcal{S}}^{\mathrm{cste}} )
 $$
 the space of the Lipschitz functions from $\mathcal{O}$ into the space of the formal series $\mathscr{H}_{\eta,r,\mathcal{S}}^{\mathrm{cste}} $. Naturally we equip $ \mathscr{H}_{\eta,r,\mathcal{O},\mathcal{S}} $ with the norms
 $$
 \| P\|_{\mathscr{H}_{\eta,r,\mathcal{O},\mathcal{S}}^{\mathrm{sup}}} = \sup_{\xi \in \mathcal{O}} \| P(\xi) \|_{\mathscr{H}_{\eta,r,\mathcal{S}}^{\mathrm{cste}}},
\quad 
 \| P\|_{\mathscr{H}_{\eta,r,\mathcal{O},\mathcal{S}}^{\mathrm{lip}}} = \sup_{\substack{\xi,\zeta \in \mathcal{O}\\ \xi \neq \zeta}} \frac{\| P(\xi) - P(\zeta)  \|_{\mathscr{H}_{\eta,r,\mathcal{S}}^{\mathrm{cste}}}}{\| \xi -\zeta \|_{\ell^1}},
 $$
 and
 $$
\| P\|_{\mathscr{H}_{\eta,r,\mathcal{O},\mathcal{S}}^{\mathrm{tot}}} := \| P\|_{\mathscr{H}_{\eta,r,\mathcal{O},\mathcal{S}}^{\mathrm{sup}}}  + r^2 \| P\|_{\mathscr{H}_{\eta,r,\mathcal{O},\mathcal{S}}^{\mathrm{lip}}}. 
$$
Being given $\xi \in \mathcal{O}$, thanks to Lemma \ref{lem:ouf_cest_des_fonctions}, to shorten the notation, we set
$$
P(\xi;u) := P(\xi)(\xi;u).
$$
To avoid possible confusion, we denote by $P[\xi]:= P(\xi;\cdot)$ the smooth function from $ \mathcal{A}_\xi(3r)$ into $\mathbb{R}$ defined just above.
\end{definition}
Finally, we define a useful projection to identify some terms which require more attention.
\begin{definition}[Projection $\Pi_{n=0,\boldsymbol{m}=0}$] Let $\mathcal{S} \subset \mathbb{Z}$ be a finite set, $r>0$ and $\eta \geq 0$. We denote by $\Pi_{n=0,\boldsymbol{m}=0}$ the projection defined (by restriction of the coefficients) for all $P\in \mathscr{H}^{\mathrm{cste}}_{\eta,r,\mathcal{S}}$ 
$$
\Pi_{n=0,\boldsymbol{m}=0} P(\mu,y,z,v):=  \sum_{\boldsymbol{k} \in \mathbb{Z}^{\mathcal{S}}} \sum_{q\in \mathbb{N}}   \sum_{ \boldsymbol{\ell} \in (\mathcal{S}^c)^q  }  \sum_{ \boldsymbol{\sigma} \in  \{-1,1\}^q  } P_{0,\boldsymbol{k},0}^{\boldsymbol{\ell},\boldsymbol{\sigma}} \, v_{\boldsymbol{\ell}}^{\boldsymbol{\sigma}} \, z^{ \boldsymbol{k} } \, .
$$
\end{definition}

\subsection{Vector field estimates, Poisson brackets and flows}
In this subsection, we prove all the basic estimates  we will need later.
\begin{lemma}[Vector field]\label{lem:gradient} Let $\mathcal{S} \subset \mathbb{Z}$ be a finite set, $r\in (0,\frac14)$, $\mathcal{O}\subset ((4r)^2,1)^\mathcal{S}$, $\eta \geq \eta_0$, $\xi \in \mathcal{O}$ and $P\in \mathscr{H}_{\eta,r,\mathcal{O},\mathcal{S}}$. Then $\nabla P[\xi]$ defines a smooth function from $\mathcal{A}_\xi(3r) $ into $\ell^2_\varpi$ and enjoys the estimate
$$
\sup_{u\in \mathcal{A}_\xi(3r)}\|\nabla P(\xi;u)\|_{\ell^2_\varpi} \lesssim_{\mathcal{S}} r^{-2}  \| P\|_{\mathscr{H}_{\eta,r,\mathcal{O},\mathcal{S}}^{\mathrm{sup}}},
$$
$$
\sup_{u\in \mathcal{A}_\xi(3r)}\|\mathrm{d}\nabla P(\xi;u)\|_{\ell^2_\varpi\to \ell^2_\varpi} \lesssim_{\mathcal{S}} r^{-4}  \| P\|_{\mathscr{H}_{\eta,r,\mathcal{O},\mathcal{S}}^{\mathrm{sup}}},
$$
$$
\sup_{u\in \mathcal{A}_\xi(3r)}\|\dd^2\nabla P(\xi;u)\|_{\ell^2_\varpi\times\ell^2_\varpi\to \ell^2_\varpi} \lesssim_{\mathcal{S}} r^{-6}  \| P\|_{\mathscr{H}_{\eta,r,\mathcal{O},\mathcal{S}}^{\mathrm{sup}}}.
$$
Moreover, if $\chi = \Pi_{n=0,\boldsymbol{m}=0} P$ we have
$$
\sup_{u\in \mathcal{A}_\xi(3r)}\|\nabla\chi(\xi;u)\|_{\ell^2_\varpi} \lesssim_{\mathcal{S}} r^{-1}  \| P\|_{\mathscr{H}_{\eta,r,\mathcal{O},\mathcal{S}}^{\mathrm{sup}}},
$$
$$
\sup_{u\in \mathcal{A}_\xi(3r)}\|\mathrm{d}\nabla \chi(\xi;u)\|_{\ell^2_\varpi\to \ell^2_\varpi} \lesssim_{\mathcal{S}} r^{-2}  \| P\|_{\mathscr{H}_{\eta,r,\mathcal{O},\mathcal{S}}^{\mathrm{sup}}},
$$
\end{lemma}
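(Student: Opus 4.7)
The plan is a term-by-term computation, bounding $\nabla P$ via the formula $(\nabla P)_k = 2\partial_{\overline{u_k}}P$ and the chain rule applied to the building blocks $\mu, y_j, z_j, v_\ell$. As in Lemma \ref{lem:ouf_cest_des_fonctions}, I reduce by linearity and absolute convergence to a single monomial $P = a\mu^n y^{\boldsymbol{m}} z^{\boldsymbol{k}} v_{\boldsymbol{\ell}}^{\boldsymbol{\sigma}}$, whose coefficient satisfies $|a| \leq \|P\|^{\sup}e^{-\eta N}r^{-D}\Theta_{\boldsymbol{\ell}}$ with $N = \|\boldsymbol{k}\|_{\ell^1}+2\|\boldsymbol{m}\|_{\ell^1}+q+2n$ and $D = 2\|\boldsymbol{m}\|_{\ell^1}+q+2n$; the full estimate then follows by summation using the geometric decay $e^{-\eta_0 N/2}$. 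The identities $\partial_{\overline{u_k}}\mu = u_k$, $\partial_{\overline{u_k}}y_j = u_k\mathbbm{1}_{j=k}$, $\partial_{\overline{u_k}}z_j^m = -\tfrac{m}{2}z_j^{m+1}|u_j|^{-1}\mathbbm{1}_{j=k}$ and $\partial_{\overline{u_k}}\overline{u_{\boldsymbol{\ell}_i}} = \mathbbm{1}_{\boldsymbol{\ell}_i=k}$ split $(\nabla P)_k$ into four contributions $A_k, B_k, C_k, D_k$ coming from differentiating $\mu^n$, $y^{\boldsymbol{m}}$, $z^{\boldsymbol{k}}$ or $v_{\boldsymbol{\ell}}^{\boldsymbol{\sigma}}$ respectively; $B_k$ and $C_k$ are supported on the finite set $\mathcal{S}$.

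For $k \in \mathcal{S}$, using $|\mu| \lesssim r^2$, $|y_j| \lesssim r^2$, $|u_k| \leq 2$ and the lower bound $|u_k| \gtrsim r$ coming from $\xi_k > (4r)^2$, a direct estimate combined with the bound on $|a|$ shows that $A_k + B_k + C_k$ is controlled in $\ell^2_\varpi(\mathcal{S})$ by $r^{-2}\|P\|^{\sup}$ (the two lost powers of $r$ come from ``opening'' one factor of $\mu$ or $y$ of size $r^2$, while the $|u_k|^{-1}$ created by differentiating $z^{\boldsymbol{k}}$ only loses a single power). The most delicate estimate is that of $D_k$ for $k \in \mathcal{S}^c$: for each $i$ with $\boldsymbol{\sigma}_i = -1$, the corresponding contribution is a monomial of degree $q-1$ in $v$, localized at $k = \boldsymbol{\ell}_i$. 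To compute $\|D\|_{\ell^2_\varpi}$ I invoke the weight $\Theta_{\boldsymbol{\ell}}$ defined in \eqref{eq:theta}, splitting according to where the differentiated index $\boldsymbol{\ell}_i$ falls in the rearrangement $\boldsymbol{\ell}^*$: when $\boldsymbol{\ell}_i \in \{\boldsymbol{\ell}^*_1, \boldsymbol{\ell}^*_2\}$, the ratio $\varpi_{\boldsymbol{\ell}^*_2}/\varpi_{\boldsymbol{\ell}^*_1}$ supplies the missing $\varpi_k$ factor; when $\boldsymbol{\ell}_i$ has lower rank, the factor $\varpi^\flat_{\boldsymbol{\ell}_i}$ does the job, the surplus $\langle j\rangle^{-1}$ being absorbed by a Cauchy--Schwarz argument using $s \geq 1$. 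The momentum constraint then reduces the summation to an $\ell^1(\mathcal{S}^c)$ product bounded exactly as in the proof of Lemma \ref{lem:ouf_cest_des_fonctions}.

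The estimates on $\mathrm{d}\nabla P$ and $\mathrm{d}^2\nabla P$ follow by iterating the above argument: each further differentiation acts again on $\mu, y, z$ or on one of the remaining $v$ factors, and the worst case (differentiating $\mu$ or $y$ on $\mathcal{S}$, or producing an extra $|u_k|^{-1}$ from $z$) loses at most a factor $r^{-2}$ per derivative, accounting respectively for the $r^{-4}$ and $r^{-6}$ bounds. The improved estimates for $\chi = \Pi_{n=0,\boldsymbol{m}=0}P$ follow immediately since $A_k \equiv B_k \equiv 0$: only $C_k$ and $D_k$ remain and each loses only a single factor $r^{-1}$, yielding the $r^{-1}$ and $r^{-2}$ prefactors for $\nabla\chi$ and $\mathrm{d}\nabla\chi$. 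The main obstacle in the whole argument is the bookkeeping for $D_k$: one must carefully partition according to the rank of the differentiated index $\boldsymbol{\ell}_i$ in $\boldsymbol{\ell}^*$, and verify that in each regime one of the three terms in the minimum defining $\Theta_{\boldsymbol{\ell}}$ supplies precisely the weight compensation needed to close the $\ell^2_\varpi$ estimate.
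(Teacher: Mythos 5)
Your proposal follows essentially the same route as the paper: identify the four sources of differentiation ($\mu$, $y$, $z$, $v$), bound the $\mathcal{S}$-supported contributions directly using $|u_j|\gtrsim r$ and $|\mu|,|y_j|\lesssim r^2$ on the annulus, and close the infinite sum over $\mathcal{S}^c$ with a weight-transfer argument that pulls $\varpi_k$ through $\Theta_{\boldsymbol{\ell}}$ followed by a Young/Cauchy--Schwarz estimate. Your case analysis on the rank of the differentiated index $\boldsymbol{\ell}_i$ in $\boldsymbol{\ell}^*$ is exactly what underlies the paper's one-line bound $\varpi_j\,\Theta_{(j,\boldsymbol{\ell})}\leq \varpi_{\boldsymbol{\ell}^*_1}\varpi^\flat_{\boldsymbol{\ell}^*_2}\cdots\varpi^\flat_{\boldsymbol{\ell}^*_q}$.

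One small gap: after writing $(\nabla P)_k = A_k+B_k+C_k+D_k$, you estimate $A_k+B_k+C_k$ only for $k\in\mathcal{S}$ and then pass to $D_k$ on $\mathcal{S}^c$, but the $\mu$-contribution $A_k = 2u_k\,\partial_\mu P$ is not supported on $\mathcal{S}$ and its $\ell^2_\varpi(\mathcal{S}^c)$ norm must also be bounded. This is easy — since $|\partial_\mu P|\lesssim_{\mathcal{S}} r^{-2}\|P\|^{\sup}$ uniformly on $\mathcal{A}_\xi(3r)$ and $\|u\|_{\ell^2_\varpi}\lesssim_{\mathcal{S}}1$, one gets $\sum_{j\in\mathcal{S}^c}\varpi_j^2|u_j\partial_\mu P|^2\lesssim_{\mathcal{S}}r^{-4}\|P\|^{\sup,2}$ — and is precisely what the paper notes before turning to $\partial_{\overline{v_j}}P$, but you should state it.
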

\begin{proof} We first consider  $\nabla P$. The smoothness will be a consequence of the uniform convergence in $\ell^2_\varpi$ of the formal series defining $\nabla P(\xi;u)$ on $\mathcal{A}_\xi(3r)$, so we focus on the estimate of $\sum_{j\in\Z} \varpi_j^2  |\partial_{\overline{u_j}} P(\xi;u)|^2$. We note that for $j,k\in\Z$
$$\partial_{\overline{u_j}}\mu={u_j},\quad \partial_{\overline{u_j}}y_k=\delta_{j,k}{u_j},\quad \partial_{\overline{u_j}} z_k=-\delta_{j,k}\frac{z_j^2}{2|u_j|}$$
and thus for $j\in \mathcal{S}$
$$\partial_{\overline{u_j}} P(\xi;u)={u_j}\partial_\mu P(\xi;u)+{u_j}\partial_{y_j} P(\xi;u)-\frac{z_j^2}{2|u_j|}\partial_{z_j} P(\xi;u)$$
and for $j\in \mathcal{S}^c$
$$\partial_{\overline{u_j}} P(\xi;u)={u_j}\partial_\mu P(\xi;u)+\partial_{\overline{v_j}} P(\xi;u).$$
Since $\cS$ is finite, one easily obtains (note that $\partial_\mu P$ and $\partial_{y_j} P$ lose $r^2$ and that $|u_j|\geq r$)
$$\sum_{j\in\cS} \varpi_j^2  |\partial_{\overline{u_j}} P(\xi;u)|^2\lesssim_{\mathcal{S}} r^{-2}  \| P\|_{\mathscr{H}_{\eta,r,\mathcal{O},\mathcal{S}}^{\mathrm{sup}}}$$
by the same kind of estimates as in the proof of Lemma \ref{lem:ouf_cest_des_fonctions}. So we can focus on the sum over $\cSc$ which is an infinite sum and for which we have to deal with the weight $\Theta_{\ell}$. Notice that using again Lemma \ref{lem:ouf_cest_des_fonctions} we have 
$$\sum_{j\in\cSc} \varpi_j^2 |{u_j}\partial_\mu P(\xi;u)|^2\lesssim_{\mathcal{S}} r^{-2}  \| P\|_{\mathscr{H}_{\eta,r,\mathcal{O},\mathcal{S}}^{\mathrm{sup}}}.$$
Therefore it remains to estimate $\sum_{j\in\cSc} \varpi_j^2 |\partial_{\overline{v_j}} P(\xi;u)|^2$.
To begin with, one has using the symmetry condition on the coefficients $P_{n,\boldsymbol{k},\boldsymbol{m}}^{\boldsymbol{\ell}\boldsymbol{\sigma}}$,
$$
|\partial_{\overline{v_j}}P(\mu,y,z,v)| \leq \sum_{n \in \mathbb{N}} \sum_{ \boldsymbol{k} \in \mathbb{Z}^{\mathcal{S}}  }\sum_{ \boldsymbol{m} \in \mathbb{N}^{\mathcal{S}}  } \sum_{q\geq1} q\sum_{\substack{ \boldsymbol{\ell} \in (\mathcal{S}^c)^{q},\ \boldsymbol{\sigma} \in  \{-1,1\}^{q}  \\ \mathcal M(\boldsymbol{k},(j,\boldsymbol{\ell}),(-1,\boldsymbol{\sigma}))=0}} \big|P_{n,\boldsymbol{k},\boldsymbol{m}}^{(j,\boldsymbol{\ell}),\boldsymbol{(1,\sigma})} \, v_{\boldsymbol{\ell}}^{\boldsymbol{\sigma}} \, y^{\boldsymbol{m}} \, z^{ \boldsymbol{k} } \, \mu^n\big|\,.
$$
Notice that 
$$\varpi_j\Theta_{(j,\boldsymbol{\ell})}\leq  \varpi_{\boldsymbol{\ell}^*_1}\varpi^\flat_{\boldsymbol{\ell}^*_2}\cdots \varpi^\flat_{\boldsymbol{\ell}^*_{q}}$$
where we recall that $\boldsymbol{\ell}^*_1,\cdots,\boldsymbol{\ell}^*_q$ is a rearrangement of $\boldsymbol{\ell}_1,\cdots,\boldsymbol{\ell}_q$ satisfying $|\boldsymbol{\ell}^*_1|\geq\cdots\geq|\boldsymbol{\ell}^*_q|$ and $\varpi^\flat_j=\langle j\rangle^{-1}\varpi_j$.
Therefore, by definition of the norm $\| P\|_{\mathscr{H}_{\eta,r,\mathcal{O},\mathcal{S}}^{\mathrm{sup}}}$ and of the domain $\mathcal{A}_\xi(3r)$, it suffices to prove
$$\sum_{j\in\S^c}\left| \sum_{\substack{n \in \mathbb{N}, \ \boldsymbol{k} \in \mathbb{Z}^{\mathcal{S}} \\ \boldsymbol{m} \in \mathbb{N}^{\mathcal{S}} ,\ q\geq 0}} q\, 3^{2\| m\|_{\ell^1}+2n}\ e^{-\eta ( \| \boldsymbol{k} \|_{\ell^1} +  2\| \boldsymbol{m} \|_{\ell^1}  + (q+1) +2n)} \sum_{\substack{ \boldsymbol{\ell} \in (\mathcal{S}^c)^{q},\ \boldsymbol{\sigma} \in  \{-1,1\}^{q}  \\ \mathcal M(\boldsymbol{k}(j,\boldsymbol{\ell}),(-1,\boldsymbol{\sigma}))=0}}   t_{\boldsymbol{\ell}^*_1}w_{\boldsymbol{\ell}^*_2}\cdots w_{\boldsymbol{\ell}^*_{q}}\, 
   \right|^2\lesssim_\S 1
$$
where $w_j=\varpi^\flat_j|u_j|r^{-1}$ and $t_j= \varpi_j|u_j|r^{-1}$ for $j\in\mathcal{S}^c$.

First, since $\eta\geq\eta_0$, we can get rid of the sum with respect to $\boldsymbol{m}$ and $n$ using the exponential $e^{-\eta (  2\| \boldsymbol{m} \|_{\ell^1}  +2n)}$ and then it remains to prove that
$$\sum_{j\in\S^c}\left| \sum_{ \boldsymbol{k} \in \mathbb{Z}^{\mathcal{S}}  }\sum_{q\geq0} \ q\,e^{-\eta ( \| \boldsymbol{k} \|_{\ell^1}   + (q+1))}\ a^{(q,\boldsymbol{k})}_j\ 
   \right|^2\lesssim_\S 1$$
where
$$a^{(q,\boldsymbol{k})}_j:= \sum_{\substack{ \boldsymbol{\ell} \in (\mathcal{S}^c)^{q},\ \boldsymbol{\sigma} \in  \{-1,1\}^{q}  \\ \mathcal M(\boldsymbol{k},(j,\boldsymbol{\ell}),(-1,\boldsymbol{\sigma}))=0}}   t_{\boldsymbol{\ell}^*_1}w_{\boldsymbol{\ell}^*_2}\cdots w_{\boldsymbol{\ell}^*_{q}}\leq q\sum_{\substack{ \boldsymbol{\ell} \in (\mathcal{S}^c)^{q},\ \boldsymbol{\sigma} \in  \{-1,1\}^{q}  \\ \mathcal M(\boldsymbol{k},(j,\boldsymbol{\ell}),(-1,\boldsymbol{\sigma}))=0}}   t_{\boldsymbol{\ell}_1}w_{\boldsymbol{\ell}_2}\cdots w_{\boldsymbol{\ell}_{q}}.$$
We note that $t\in \ell^2(S^c)$ and $w\in \ell^1(S^c)$ with $\| w\|_{\ell^1},\ \| t\|_{\ell^2}\lesssim 1$ for $u\in \mathcal{A}_\xi(3r)$. Therefore by Young convolutional inequality we get that
$(a^{(q,\boldsymbol{k})}_j)_{j\in \S^c}$
belongs to $\ell^2$ and we have uniformly in $q$ and $\boldsymbol{k}$
$$\| a^{(q,\boldsymbol{k})}\|_{\ell^2}\leq qC^q$$
for an universal constant $C$.
Thus, by triangular inequality, we have
\begin{align*}
\big(\sum_{j\in\S^c}&\big| \sum_{ \boldsymbol{k} \in \mathbb{Z}^{\mathcal{S}}  }\sum_{q\geq0} q\, e^{-\eta ( \| \boldsymbol{k} \|_{\ell^1}   + q)} \ a^{(q,\boldsymbol{k})}_j\ 
   \big|^2\big)^{\frac12} 
   &\leq  \sum_{ \boldsymbol{k} \in \mathbb{Z}^{\mathcal{S}}  }\sum_{q\geq0}e^{-\eta ( \| \boldsymbol{k} \|_{\ell^1}   + q)}\big(\sum_{j\in\S^c}   |a^{(q,\boldsymbol{k})}_j|^2\big)^{\frac12}
      \lesssim_\S 1\, .\end{align*}

The estimate of $\mathrm{d}\nabla P(\xi;u)$ and $\mathrm{d}^2\nabla P(\xi;u)$ are obtained in a similar way. 
Moreover, the estimates concerning $\chi = \Pi_{n=0,\boldsymbol{m}=0} P$ follow by noticing that only the derivatives with respect to $\mu$ or to $y$ lose $r^2$, the derivatives with respect to $u$ or to $z$ lose only $r$.
\end{proof}

We also have to control flows of Hamiltonians in $\mathscr{H}_{\eta,r,\mathcal{O},\mathcal{S}}$.

\begin{lemma}[Flows]\label{lem:flow} Let $\mathcal{S} \subset \mathbb{Z}$ be a finite set, $r\in (0,\frac14)$, $\mathcal{O}\subset ((4r)^2,1)^\mathcal{S}$,  $\eta \geq \eta_0$ and $\chi\in \mathscr{H}_{\eta,r,\mathcal{O},\mathcal{S}}$. Assume that 
\begin{equation}\label{chi}\| \chi\|_{\mathscr{H}_{\eta,r,\mathcal{O},\mathcal{S}}^{\mathrm{sup}}}\lesssim_\S  r^{4},\end{equation}
or, in the case $\chi = \Pi_{n=0,\boldsymbol{m}=0}\chi$,
\begin{equation}\label{chi2}\| \chi\|_{\mathscr{H}_{\eta,r,\mathcal{O},\mathcal{S}}^{\mathrm{sup}}}\lesssim_\S  r^{2}.\end{equation}
Assume also that 
\begin{equation}\label{r}r\lesssim_{\S} 1.\end{equation}
Then, for all $\xi\in\mathcal O$, the flow of the equation $\ic \partial_t v = \nabla \chi(\xi;v)$ defines a smooth map $\Phi^t_\chi(\xi;\cdot)\equiv\Phi_\chi^t(\cdot):(t,u)\in[-1,1]\times \mathcal{A}_\xi(2r)\to \ell^2_\varpi$ enjoying the following properties:
 \begin{itemize} 
 \item[(i)] for each $t\in[-1,1]$,  $\Phi_\chi^t$ defines a symplectic change of variable from $\mathcal{A}_\xi(2r) $ onto an open set of $\ell^2_\varpi$  included in $\mathcal{A}_\xi(3r) $,
  \item[(ii)] for all $r\leq\rho\leq2r$, $\Phi_\chi^1$ maps $\mathcal{A}_\xi(\rho) $ into $\mathcal{A}_\xi(\rho') $ with $0<\rho'-\rho\lesssim_\S r^{-2}\|\chi\|_ {\mathscr{H}_{\eta,r,\mathcal{O},\mathcal{S}}^{\mathrm{sup}}}$.
 \item[(iii)] $\Phi_\chi^t$ and $\dd\Phi_\chi^t$ are close to the identity:  for all $u\in \mathcal{A}_\xi(2r)$, all $t\in[0,1]$
 $$ \| \Phi_\chi^t(u)-u\|_{\ell^2_\varpi}+r^2\|\dd \Phi_\chi^t(u)-{\rm Id}\|_{\ell^2_\varpi\to\ell^2_\varpi}\lesssim_\S r^{-2}\|\chi\|_ {\mathscr{H}_{\eta,r,\mathcal{O},\mathcal{S}}^{\mathrm{sup}}}$$
 and, in the case $\chi = \Pi_{n=0,\boldsymbol{m}=0}\chi$,
$$ \| \Phi_\chi^t(u)-u\|_{\ell^2_\varpi}+r\|\dd \Phi_\chi^t(u)-{\rm Id}\|_{\ell^2_\varpi\to\ell^2_\varpi}\lesssim_\S   r^{-1}\|\chi\|_ {\mathscr{H}_{\eta,r,\mathcal{O},\mathcal{S}}^{\mathrm{sup}}},$$ 
\item[(iv)]  $\dd^2\Phi_\chi^t$ is bounded: for all $u\in \mathcal{A}_\xi(2r)$, all $t\in[0,1]$
$$\|\dd^2 \Phi_\chi^t(u)\|_{\ell^2_\varpi\times\ell^2_\varpi\to\ell^2_\varpi}\lesssim_\S r^{-6}\|\chi\|_ {\mathscr{H}_{\eta,r,\mathcal{O},\mathcal{S}}^{\mathrm{sup}}}.$$
 \end{itemize}
 \end{lemma}

 \begin{proof} Since, by Lemma \ref{lem:gradient}, the vector field $\nabla \chi$ is locally-Lipschitz, the local existence and the smoothness of the flow $\Phi_{\chi}^t$ is ensured by the Cauchy-Lipchitz Theorem.  So, to prove assertion (i), we now have to check  that the flow exists for $|t|\leq 1$. Without loss of generality we only consider positive times. More precisely, let $T>0$ and $v\in C^1([0,T); \ell^2_\varpi)$ be a solution of the Cauchy problem
$$
-\ic\partial_t v(t) = \nabla \chi(\xi;v(t)) \quad \mathrm{and} \quad v(0) = u \in \mathcal A_\xi(2r).
$$
It suffices to prove that if  $0\leq t< T$ and $t \leq 1$ then $v(t) \in \mathcal A_\xi(\frac52r)$. We set $I = [0,T) \cap [0,1]$ and we aim at proving that $J=I$ where
$$
J = \big\{ t \in I \ | \ \forall \tau \in [0,t], \  v(\tau) \in \mathcal A_\xi(\frac52r)  \big\}.
$$
Since $v$ is continuous, $J$ is clearly non-empty and closed in $I$. Moreover, if $t\in J$ then, using Lemma \ref{lem:gradient},
\begin{align*}
\sum_{k \in \mathbb{Z}}  \big| |v_k(t)|^2 - \xi_k \big| \varpi_k^2 &\leq \sum_{k \in \mathbb{Z}}  \big| |v_k(0)|^2 - \xi_k \big| \varpi_k^2 + \int_0^t
\sum_{k \in \mathbb{Z}}  2|\partial_t v_k(\tau)||v_k(\tau)|\varpi_k^2 \mathrm{d}\tau\\
&\leq 4r^2+ 2\sup_{w\in \mathcal{A}_\xi(3r)}\big(\|\nabla \chi(\xi;w)\|_{\ell^2_\varpi} \| w\|_{\ell^2_\varpi} \big)\, .
 \end{align*}
 Thus, using  that $w\in\mathcal{A}_\xi(3r)$ implies $ \|w\|_{\ell^2_\varpi}\lesssim_\S 1$, we get
 $$ 
  \sum_{k \in \mathbb{Z}}  \big| |v_k(t)|^2 - \xi_k \big| \varpi_k^2 -4r^2\lesssim_\S   r^{-2}\| \chi\|_{\mathscr{H}_{\eta,r,\mathcal{O},\mathcal{S}}^{\mathrm{sup}}}.
  $$
Then, using \eqref{chi} and \eqref{r},   we get  $v(t) \in \mathcal A_\xi(\frac52r)$ for all $t\in J$. Therefore, since $v$ is continuous, $J$ is open and so, since $I$ is connected, we have $J = I$. Moreover, we notice that, when $\chi = \Pi_{n=0,\boldsymbol{m}=0}\chi$,  Lemma \ref{lem:gradient} gives us a better estimate and the same conclusion remains true under the hypothesis \eqref{chi2}.
Since the symplecticity is ensured by the fact that $\Phi_{\chi}$ is a Hamiltonian flow, we have proved assertion (i).

Similarly, if initially $v(0) = u \in \mathcal A_\xi(\rho)$ then we get
$$  \sum_{k \in \mathbb{Z}}  \big| |v_k(t)|^2 - \xi_k \big| \varpi_k^2 -\rho^2 \lesssim_\S   r^{-2}\| \chi\|_{\mathscr{H}_{\eta,r,\mathcal{O},\mathcal{S}}^{\mathrm{sup}}}$$
and therefore $v(1)\in  \mathcal A_\xi(\rho')$ with $|(\rho')^2-\rho^2|\lesssim_\S r^{-2}\| \chi\|_{\mathscr{H}_{\eta,r,\mathcal{O},\mathcal{S}}^{\mathrm{sup}}}.$

To prove assertion (iii) we note that, by Lemma \ref{lem:gradient}, 
\begin{align*}
\|v(t)-u\|_{\ell^2_\varpi} \leq  \int_0^t \sup_{w\in \mathcal{A}_\xi(3r)}\|\nabla \chi(\xi;w)\|_{\ell^2_\varpi} \mathrm{d} \tau \lesssim_\S r^{-2}\|\chi\|_ {\mathscr{H}_{\eta,r,\mathcal{O},\mathcal{S}}^{\mathrm{sup}}}
.\end{align*}

Now we focus on $\mathrm{d} \Phi_\chi^t(u)$. If $w\in\ell^2_\varpi$, we have
\begin{equation}
\label{eq:ladernierejespere}
-\ic\partial_t \mathrm{d} \Phi_\chi^t(u)(w) = \mathrm{d} \nabla \chi (v(t))( \mathrm{d} \Phi_\chi^t(u)(w))
\end{equation}
and thus by Lemma \ref{lem:gradient} 
\begin{equation*}
\|  \mathrm{d} \Phi_\chi^t(u)(w) -w \|_{\ell^2_\varpi} \leq \! \int_{0}^t \|  \mathrm{d} \nabla \chi (v(\tau))( \mathrm{d} \Phi_\chi^{\tau}(u)(w)) \|_{\ell^2_\varpi}\mathrm{d} \tau 
\lesssim_\S r^{-4}\|\chi\|_ {\mathscr{H}_{\eta,r,\mathcal{O},\mathcal{S}}^{\mathrm{sup}}} \! \int_{0}^t  \| \mathrm{d} \Phi_\chi^{\tau}(u)(w)\|_{\ell^2_\varpi} \mathrm{d} \tau .
\end{equation*}

Therefore,  we  get by Gr\"onwall's inequality:
 $$
 \| \mathrm{d} \Phi_\chi^{t}(u)(w)\|_{\ell^2_\varpi} \leq \| w\|_{\ell^2_\varpi} \mathrm{exp} ( C_Sr^{-4}\|\chi\|_ {\mathscr{H}_{\eta,r,\mathcal{O},\mathcal{S}}^{\mathrm{sup}}} t ) .
 $$ 
 for all $t\in[0,1]$  where  $C_\S>0$ is a constant depending on $\S$. Using hypothesis \eqref{chi}, we get that  for all $t\in[0,1]$, $\| \mathrm{d} \Phi_\chi^{t}(u)(w)\|_{\ell^2_\varpi} \leq 2\| w\|_{\ell^2_\varpi} $.
  Then we deduce  
  \begin{equation*}
\|  \mathrm{d} \Phi_\chi^t(u)(w) -w \|_{\ell^2_\varpi} \lesssim_\S r^{-4}\|\chi\|_ {\mathscr{H}_{\eta,r,\mathcal{O},\mathcal{S}}^{\mathrm{sup}}} \| w\|_{\ell^2_\varpi}. 
\end{equation*}
Again we notice that, when $\chi = \Pi_{n=0,\boldsymbol{m}=0}\chi$,  Lemma \ref{lem:gradient} gives us a better estimate which leads to better estimates of $ \| \Phi_\chi^t(u)-u\|_{\ell^2_\varpi}+r\|\dd \Phi_\chi^t(u)-\rm{Id}\|_{\ell^2_\varpi\to\ell^2_\varpi}$ as stated in (iii). 

To prove (iv) we derive once again \eqref{eq:ladernierejespere} to get for $w,w'\in\ell^2_\varpi$
$$-i\partial_t \dd^2 \Phi_\chi^t(u)(w,w') = \dd^2 \nabla \chi (v(t))( \dd \Phi_\chi^t(u)(w),\dd \Phi_\chi^t(u)(w'))+
\dd \nabla \chi (v(t))( \dd^2 \Phi_\chi^t(u)(w,w')) $$
and thus by Lemma \ref{lem:gradient} 
\begin{equation*}
\|\dd^2 \Phi_\chi^t(u)(w,w') \|_{\ell^2_\varpi}
\lesssim_\S r^{-6}\|\chi\|_ {\mathscr{H}_{\eta,r,\mathcal{O},\mathcal{S}}^{\mathrm{sup}}}+ r^{-4}\|\chi\|_ {\mathscr{H}_{\eta,r,\mathcal{O},\mathcal{S}}^{\mathrm{sup}}} \int_{0}^t  \|\dd^2 \Phi_\chi^\tau(u)(w,w') \|_{\ell^2_\varpi} \mathrm{d} \tau\, .
\end{equation*}
Then we conclude by Gr\"onwall's inequality.
\end{proof}

Now we want to estimate the Poisson bracket between to elements of $\mathscr{H}_{\eta,r,\mathcal{O},\mathcal{S}}$ but first we prove a technical lemma that we will need.
\begin{lemma}\label{ouille} Let $\S\subset \Z$, $q,q'\geq0$ and $q''=q+q'$. Let $(\S^c)^{q''}\ni\boldsymbol{\ell}''=(\boldsymbol{\ell},\boldsymbol{\ell}')\in (\S^c)^{q}\times(\S^c)^{q'}$ and $j\in\S^c$ then
\begin{equation}\label{ouille1}\Theta_{\boldsymbol{\ell}''}^{-1}\leq\Theta_{\boldsymbol{\ell}}^{-1}\Theta_{\boldsymbol{\ell}'}^{-1}  \end{equation}
and
\begin{equation}\label{ouille2}\Theta_{\boldsymbol\ell''}^{-1}\leq\Theta_{(\boldsymbol\ell,j)}^{-1}\Theta_{(\boldsymbol\ell',j)}^{-1}\ .\end{equation}
\end{lemma}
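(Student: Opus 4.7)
The plan is to first recognize $\Theta_{\boldsymbol{\ell}}^{-1}$ as the maximum of three pieces, namely
$$
\Theta_{\boldsymbol{\ell}}^{-1} = \max\bigl(1,\mathcal{W}_{\boldsymbol{\ell}},\mathcal{D}_{\boldsymbol{\ell}}\bigr),\qquad
\mathcal{W}_{\boldsymbol{\ell}} := \frac{\varpi_{\boldsymbol{\ell}_1^*}}{\varpi_{\boldsymbol{\ell}_2^*}}\prod_{i=3}^{q}(\varpi_{\boldsymbol{\ell}_i^*}^\flat)^{-1},\qquad
\mathcal{D}_{\boldsymbol{\ell}} := \frac{\langle \boldsymbol{\ell}_1^*\rangle^{5\delta}}{\prod_{i=1}^{q}\langle \boldsymbol{\ell}_i\rangle^{2\delta}}.
$$
Since $\Theta_\cdot^{-1}\geq 1$, both right-hand sides of \eqref{ouille1} and \eqref{ouille2} are products of quantities that are $\geq 1$. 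Consequently, it suffices to bound $\mathcal{W}_{\boldsymbol{\ell}''}$ and $\mathcal{D}_{\boldsymbol{\ell}''}$ separately by the corresponding product of the same-type quantities on the right, and then to invoke the pointwise inequalities $\mathcal{W}_{\cdot},\mathcal{D}_{\cdot}\leq \Theta^{-1}_{\cdot}$.

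The $\mathcal{D}$-bounds are the easy half. For \eqref{ouille1}, the denominator factorises perfectly and $\langle (\boldsymbol{\ell}'')_1^*\rangle=\max(\langle \boldsymbol{\ell}_1^*\rangle,\langle (\boldsymbol{\ell}')_1^*\rangle)\leq \langle \boldsymbol{\ell}_1^*\rangle\langle (\boldsymbol{\ell}')_1^*\rangle$ gives $\mathcal{D}_{\boldsymbol{\ell}''}\leq \mathcal{D}_{\boldsymbol{\ell}}\mathcal{D}_{\boldsymbol{\ell}'}$. For \eqref{ouille2}, the extra $\langle j\rangle^{4\delta}$ appearing in the denominator of $\mathcal{D}_{(\boldsymbol{\ell},j)}\mathcal{D}_{(\boldsymbol{\ell}',j)}$ is absorbed by $\langle ((\boldsymbol{\ell}',j))_1^*\rangle^{5\delta}\geq \langle j\rangle^{5\delta}$, after assuming WLOG $(\boldsymbol{\ell}'')_1^*=\boldsymbol{\ell}_1^*$ so that $\langle ((\boldsymbol{\ell},j))_1^*\rangle\geq\langle (\boldsymbol{\ell}'')_1^*\rangle$.

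The $\mathcal{W}$-bounds require more care. I would rewrite $\mathcal{W}$ in the symmetric product form
$$
\mathcal{W}_{\boldsymbol{\ell}} = \frac{\varpi_{\boldsymbol{\ell}_1^*}^2}{\langle \boldsymbol{\ell}_1^*\rangle\langle \boldsymbol{\ell}_2^*\rangle}\,\prod_{i=1}^{q}\frac{\langle \boldsymbol{\ell}_i\rangle}{\varpi_{\boldsymbol{\ell}_i}},
$$
which makes the big product purely multiplicative under concatenation (and under the adjunction of a $j$). For \eqref{ouille1} this yields the explicit ratio
$$
\frac{\mathcal{W}_{\boldsymbol{\ell}''}}{\mathcal{W}_{\boldsymbol{\ell}}\mathcal{W}_{\boldsymbol{\ell}'}} = \frac{\varpi_{(\boldsymbol{\ell}'')_1^*}^2\,\langle \boldsymbol{\ell}_1^*\rangle\langle \boldsymbol{\ell}_2^*\rangle\langle (\boldsymbol{\ell}')_1^*\rangle\langle (\boldsymbol{\ell}')_2^*\rangle}{\varpi_{\boldsymbol{\ell}_1^*}^2\,\varpi_{(\boldsymbol{\ell}')_1^*}^2\,\langle (\boldsymbol{\ell}'')_1^*\rangle\langle (\boldsymbol{\ell}'')_2^*\rangle}.
$$
Assuming WLOG that $(\boldsymbol{\ell}'')_1^*=\boldsymbol{\ell}_1^*$ (so the first $\varpi$-ratio cancels), a short case split on whether $|\boldsymbol{\ell}_2^*|\geq|(\boldsymbol{\ell}')_1^*|$ or the opposite reduces the bound $\leq 1$ to the elementary pointwise inequality $\varpi_k\geq \langle k\rangle$, which holds because $s\geq 1$ and $\mathfrak{a}\geq 0$.

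For \eqref{ouille2}, the parallel computation produces
$$
\frac{\mathcal{W}_{\boldsymbol{\ell}''}}{\mathcal{W}_{(\boldsymbol{\ell},j)}\mathcal{W}_{(\boldsymbol{\ell}',j)}} = \frac{\varpi_{(\boldsymbol{\ell}'')_1^*}^2\,\varpi_j^2\,\langle ((\boldsymbol{\ell},j))_1^*\rangle\langle ((\boldsymbol{\ell},j))_2^*\rangle\langle ((\boldsymbol{\ell}',j))_1^*\rangle\langle ((\boldsymbol{\ell}',j))_2^*\rangle}{\langle j\rangle^2\,\varpi_{((\boldsymbol{\ell},j))_1^*}^2\,\varpi_{((\boldsymbol{\ell}',j))_1^*}^2\,\langle (\boldsymbol{\ell}'')_1^*\rangle\langle (\boldsymbol{\ell}'')_2^*\rangle},
$$
and I would carry out a case analysis on the three regimes $|j|\leq |(\boldsymbol{\ell}')_1^*|$, $|(\boldsymbol{\ell}')_1^*|<|j|\leq |\boldsymbol{\ell}_1^*|$, $|j|>|\boldsymbol{\ell}_1^*|$ (again assuming WLOG $(\boldsymbol{\ell}'')_1^*=\boldsymbol{\ell}_1^*$), further subdividing when necessary according to whether $|\boldsymbol{\ell}_2^*|$ is smaller or larger than the top of $\boldsymbol{\ell}'$. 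In each branch the appropriate $\varpi_\cdot^2$ and $\langle\cdot\rangle$ factors telescope, and what remains collapses to $\leq 1$ by the monotonicity $\varpi_a\geq \varpi_b$ for $|a|\geq |b|$ together with $\varpi_k\geq \langle k\rangle$. The main obstacle is purely combinatorial bookkeeping: in each regime one must correctly identify which indices occupy the top two positions of $(\boldsymbol{\ell},j)^*$, $(\boldsymbol{\ell}',j)^*$ and $(\boldsymbol{\ell}'')^*$ so that the cancellations are made manifest; once this is in place, every branch is finished by the same two elementary inequalities.
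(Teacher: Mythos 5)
Your proposal is correct, and the overall architecture matches the paper's: both recognize $\Theta^{-1}$ as a maximum of three quantities (your $1,\mathcal{W},\mathcal{D}$; the paper's reciprocals $1,a\cdot b,c$), reduce the lemma to showing each piece is submultiplicative under concatenation or under adjoining $j$ to both factors, and close with the same elementary inputs ($\varpi_k\geq\langle k\rangle$ and monotonicity of $\varpi$, coming from $s\geq 1$, $\mathfrak{a}\geq 0$). The one place your route differs is in how the $\varpi$-piece is handled. The paper factors $a_{\boldsymbol{\ell}}b_{\boldsymbol{\ell}}:=\bigl(\prod_i\varpi^\flat_{\boldsymbol{\ell}_i}/(\varpi^\flat_{\boldsymbol{\ell}_1^*})^2\bigr)\cdot\bigl(\langle\boldsymbol{\ell}_2^*\rangle/\langle\boldsymbol{\ell}_1^*\rangle\bigr)$ and proves supermultiplicativity of $a$ and $b$ \emph{separately}: the $a$-inequality is immediate from $\varpi^\flat\geq 1$ (so $\max(x,y)\leq xy$), and the $b$-inequality needs only $b\leq 1$ plus one WLOG choice; for \eqref{ouille2} the $a$-ratio telescopes cleanly with no regime split. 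You instead keep $\mathcal{W}=1/(ab)$ as a single quantity, write out the explicit cancellation ratio, and run a three- (or four-)branch case analysis on the position of $|j|$. Both are correct and about equally elementary, but the paper's finer $a/b$ split is what lets it avoid the heavier combinatorial bookkeeping you flag at the end of your argument; you may want to adopt it if you write the proof out in full, as it shortens the \eqref{ouille2} case noticeably. Both presentations quietly restrict to $q,q'\geq 2$ and note the degenerate cases $q\in\{0,1\}$ (where the middle term of $\Theta$ collapses) are handled similarly; this is fine but worth a sentence if you write it up.
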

\begin{proof} Without loss of generality we can assume that $\boldsymbol{\ell}$, $\boldsymbol{\ell}'$ and $\boldsymbol{\ell}''$ are ordered and non negative (i.e. $\boldsymbol{\ell}_1\geq\cdots\boldsymbol{\ell}_q\geq0$ and the same for $\boldsymbol{\ell}'$ and $\boldsymbol{\ell}''$) and that $j$ is non negative. The definition of $\Theta_{\ell} $ (see \eqref{eq:theta}) contains two special cases $q=0,1$. In this proof we will consider only the generic case, i.e. $q,q'\geq2$, all the remaining special cases are treated similarly (i.e. without extra trick).
First we focus on \eqref{ouille1}, we recall that by definition (since we assume $q\geq2$)
$$
\Theta_{\ell}   =  1\ \wedge\  \varpi^\flat_{\boldsymbol{\ell}_3} \cdots \varpi^\flat_{\boldsymbol{\ell}_q}  
\frac{\varpi_{\boldsymbol{\ell}_2}}{\varpi_{\boldsymbol{\ell}_1}} 
\ \wedge\  \frac{ \langle \boldsymbol{\ell}_1 \rangle^{2\delta} \cdots \langle \boldsymbol{\ell}_q \rangle^{2\delta}  }{   \langle \boldsymbol{\ell}_1\rangle^{5\delta} }\ ,
$$
 where $\varpi^\flat_k=\langle k\rangle^{-1} \varpi_k$.
 To begin with,  let us denote
$$c_{\boldsymbol{\ell}}=\frac{ \langle \boldsymbol{\ell}_1 \rangle^{2} \cdots \langle \boldsymbol{\ell}_q \rangle^{2}  }{   \langle \boldsymbol{\ell}_1\rangle^{5} }.$$
Assuming for instance  $ {\boldsymbol{\ell}_1''}= {\boldsymbol{\ell}}_1$ (the case $ {\boldsymbol{\ell}_1''}= \boldsymbol{\ell}_1'$ is treated in the same way), we have
\begin{align*}
 c_{\boldsymbol{\ell}''}&= \frac{ \langle \boldsymbol{\ell}_1 \rangle^{2} \cdots \langle \boldsymbol{\ell}_{q} \rangle^{2}  }{  \langle \boldsymbol{\ell}_1\rangle^{5}}\ { \langle \boldsymbol{\ell}'_1 \rangle^{2} \cdots \langle \boldsymbol{\ell}'_{q'} \rangle^{2}  } 
 \geq \frac{ \langle \boldsymbol{\ell}_1 \rangle^{2} \cdots \langle \boldsymbol{\ell}_{q} \rangle^{2}  }{  \langle \boldsymbol{\ell}_1\rangle^{5}}\ \frac{ \langle \boldsymbol{\ell}'_1 \rangle^{2} \cdots \langle \boldsymbol{\ell}'_{q'} \rangle^{2}  }{  \langle \boldsymbol{\ell}'_1\rangle^{5}}=c_{\boldsymbol{\ell}}c_{\boldsymbol{\ell}'}
 \, .\end{align*}
So it remains to prove that 
$$ \varpi^\flat_{\boldsymbol{\ell}''_3} \cdots \varpi^\flat_{\boldsymbol{\ell}''_{q''}}  
\frac{\varpi_{\boldsymbol{\ell}''_2}}{\varpi_{\boldsymbol{\ell}''_1}}\geq \varpi^\flat_{\boldsymbol{\ell}_3} \cdots \varpi^\flat_{\boldsymbol{\ell}_q}  
\frac{\varpi_{\boldsymbol{\ell}_2}}{\varpi_{\boldsymbol{\ell}_1}}\ \varpi^\flat_{\boldsymbol{\ell}'_3} \cdots \varpi^\flat_{\boldsymbol{\ell}'_{q'}}  
\frac{\varpi_{\boldsymbol{\ell}'_2}}{\varpi_{\boldsymbol{\ell}'_1}}\, . $$
We first notice that
$$\varpi^\flat_{\boldsymbol{\ell}_3} \cdots \varpi^\flat_{\boldsymbol{\ell}_q}  
\frac{\varpi_{\boldsymbol{\ell}_2}}{\varpi_{\boldsymbol{\ell}_1}}=
a_{\boldsymbol{\ell}}\ b_{\boldsymbol{\ell}}$$
where
$$a_{\boldsymbol{\ell}}= \frac{\prod_{i=1}^q \varpi^\flat_{\boldsymbol{\ell}_i}}{(\varpi^\flat_{\boldsymbol{\ell}_1})^2} \quad \text{and}\quad
b_{\boldsymbol{\ell}}=\frac{\langle {\boldsymbol{\ell}}_2\rangle}{\langle {\boldsymbol{\ell}}_1\rangle}.$$
We note for further use that $b_{\boldsymbol{\ell}}\leq1$.
In the same way than we estimate $ c_{\boldsymbol{\ell}''}$ we easily get $$a_{\boldsymbol{\ell}''}\geq a_{\boldsymbol{\ell}}a_{\boldsymbol{\ell}'}.$$
On the other hand, when $ {\boldsymbol{\ell}_1''}= {\boldsymbol{\ell}}_1$ then $b_{\boldsymbol{\ell}''}\geq b_{\boldsymbol{\ell}}$ and thus using $b_{\boldsymbol{\ell}'}\leq 1$ we obtain
$$b_{\boldsymbol{\ell}''}\geq b_{\boldsymbol{\ell}}b_{\boldsymbol{\ell}'}.$$
The case $ {\boldsymbol{\ell}''_1}= {\boldsymbol{\ell}}'_1$ is   handled in the same way. This conclude the proof of \eqref{ouille1}.

Now we focus on  \eqref{ouille2}. We verify that
$$
c_{\boldsymbol{\ell}''}=c_{(\boldsymbol{\ell},j)}c_{(\boldsymbol{\ell}',j)}
\frac{\max(\langle \boldsymbol{\ell}_1\rangle,\langle j\rangle)^{5}\ \max(\langle \boldsymbol{\ell}'_1\rangle,\langle j\rangle)^{5}}{\langle \boldsymbol{\ell}''_1\rangle^{5}\langle j\rangle^4}
$$
which leads to  $c_{\boldsymbol{\ell}''}\geq c_{(\boldsymbol{\ell},j)}c_{(\boldsymbol{\ell}',j)}$.
Now it suffices to prove that 
$$a_{\boldsymbol{\ell}''}\geq a_{(\boldsymbol{\ell},j)}a_{(\boldsymbol{\ell}',j)} \text{ and }\quad b_{\boldsymbol{\ell}''}\geq b_{(\boldsymbol{\ell},j)}b_{(\boldsymbol{\ell}',j)}.$$
We begin with we prove  the second estimate:
\begin{itemize}
\item If $\boldsymbol{\ell}''_1\leq j$ then 
$$b_{(\boldsymbol{\ell},j)}b_{(\boldsymbol{\ell}',j)}=\frac{\langle \boldsymbol{\ell}_2\rangle}{\langle j\rangle}\frac{\langle \boldsymbol{\ell}'_2\rangle}{\langle j\rangle}\leq \frac{\langle \boldsymbol{\ell}_2\rangle}{\langle j\rangle}\leq \frac{\langle \boldsymbol{\ell}''_2\rangle}{\langle \boldsymbol{\ell}''_1\rangle}
=b_{\boldsymbol{\ell}''} .$$

\item If $\boldsymbol{\ell}''_2\leq j\leq \boldsymbol{\ell}''_1$ then either $\boldsymbol{\ell}''_1=\boldsymbol{\ell}_1$ and thus $b_{(\boldsymbol{\ell},j)}=\frac{\langle j\rangle}{\langle\boldsymbol{\ell}''_1\rangle}$, $b_{(\boldsymbol{\ell}',j)}=\frac{\langle\boldsymbol{\ell}'_1\rangle}{\langle j\rangle}\leq\frac{\langle\boldsymbol{\ell}''_2\rangle}{\langle j\rangle}$, or  $\boldsymbol{\ell}''_1=\boldsymbol{\ell}'_1$ and thus $b_{(\boldsymbol{\ell}',j)}=\frac{\langle j\rangle}{\langle\boldsymbol{\ell}''_1\rangle}$, $b_{(\boldsymbol{\ell},j)}=\frac{\langle\boldsymbol{\ell}_1\rangle}{\langle j\rangle}\leq\frac{\langle\boldsymbol{\ell}''_2\rangle}{\langle j\rangle}$, which in both cases leads to 
$$b_{(\boldsymbol{\ell},j)}b_{(\boldsymbol{\ell}',j)}\leq\frac{\langle\boldsymbol{\ell}''_2\rangle}{\langle j\rangle}\frac{\langle j\rangle}{\langle\boldsymbol{\ell}''_1\rangle}=b_{\boldsymbol{\ell}''}.$$


\item If $j\leq\boldsymbol{\ell}''_2$ then either $\boldsymbol{\ell}_1=\boldsymbol{\ell}''_1$ and then
$b_{(\boldsymbol{\ell},j)}=\frac{\max(\langle\boldsymbol{\ell}_2\rangle,\langle j\rangle)}{\langle\boldsymbol{\ell}''_1\rangle}\leq  \frac{\langle\boldsymbol{\ell}''_2\rangle}{\langle\boldsymbol{\ell}''_1\rangle}=b_{\boldsymbol{\ell}''}$ or $\boldsymbol{\ell}'_1=\boldsymbol{\ell}''_1$ and then
$b_{(\boldsymbol{\ell}',j)}=\frac{\max(\langle\boldsymbol{\ell}'_2\rangle,\langle j\rangle)}{\langle\boldsymbol{\ell}''_1\rangle}\leq  \frac{\langle\boldsymbol{\ell}''_2\rangle}{\langle\boldsymbol{\ell}''_1\rangle}=b_{\boldsymbol{\ell}''}$. In both cases we get
$b_{\boldsymbol{\ell}''}\geq b_{(\boldsymbol{\ell},j)}b_{(\boldsymbol{\ell}',j)}.$

\end{itemize}
Finally it remains to prove  $a_{\boldsymbol{\ell}''}\geq a_{(\boldsymbol{\ell},j)}a_{(\boldsymbol{\ell}',j)}$:
$$a_{(\boldsymbol{\ell},j)}a_{(\boldsymbol{\ell}',j)}=\frac{\big(\prod_{i=1}^{q''} \varpi^\flat_{\boldsymbol{\ell}''_i}\big)(\varpi^\flat_j)^2}{ \max(\varpi^\flat_{\boldsymbol{\ell}_1},\varpi^\flat_j)^2\max(\varpi^\flat_{\boldsymbol{\ell}'_1},\varpi^\flat_j)^2}\leq \frac{\big(\prod_{i=1}^{q''} \varpi^\flat_{\boldsymbol{\ell}''_i}\big)(\varpi^\flat_j)^2}{(\varpi^\flat_{\boldsymbol{\ell}''_1})^2(\varpi^\flat_j)^2}=a_{\boldsymbol{\ell}''}.$$
\end{proof}

We define in  $\mathscr{H}^{\mathrm{cste}}_{\eta,r,\mathcal{S}}$ the projection on the total degree by
$$\Pi_p P:=  \sum_{\substack{n \in \mathbb{N},\ \boldsymbol{k} \in \mathbb{Z}^{\mathcal{S}},\ \boldsymbol{m} \in \mathbb{N}^{\mathcal{S}},\ q\in \mathbb{N}\\ \| \boldsymbol{k}\|_{\ell^1}+2\| \boldsymbol{m}\|_{\ell^1}+q+2n=p}}\ \ \sum_{ \boldsymbol{\ell} \in (\mathcal{S}^c)^q  }  \sum_{ \boldsymbol{\sigma} \in  \{-1,1\}^q  } P_{n,\boldsymbol{k},\boldsymbol{m}}^{\boldsymbol{\ell},\boldsymbol{\sigma}} \, v_{\boldsymbol{\ell}}^{\boldsymbol{\sigma}} \, y^{\boldsymbol{m}} \, z^{ \boldsymbol{k} } \, \mu^n.$$
About Poisson brackets we have
\renewcommand{\a}{{\alpha}}
\begin{lemma}[Poisson brackets]\label{lem:poisson} Let $\mathcal{S} \subset \mathbb{Z}$ be a finite set, $r\in (0,\frac14)$, $\mathcal{O}\subset ((4r)^2,1)^\mathcal{S}$,  $\eta > \eta_0$, $\beta>0$ $\xi \in \mathcal{O}$, $P,Q\in \mathscr{H}_{\eta,r,\mathcal{O},\mathcal{S}}$ . There exists a unique $R\in \mathscr{H}_{\eta-\a,r,\mathcal{O},\mathcal{S}}$ for any $0<\alpha<\eta-\eta_0$  such that
$$\{P(\xi;\cdot),Q(\xi;\cdot)\}=R(\xi;\cdot)\quad \text{on } \mathcal A_\xi(3r)$$
and 
\begin{equation}\label{estim:poisson1} \| R\|_{\mathscr{H}_{\eta-\alpha,r,\mathcal{O},\mathcal{S}}^{\mathrm{tot}}} \lesssim_{\mathcal{S}}\ \ \frac1{r^2\a^4}\| P\|_{\mathscr{H}_{\eta,r,\mathcal{O},\mathcal{S}}^{\mathrm{tot}}}\| Q\|_{\mathscr{H}_{\eta,r,\mathcal{O},\mathcal{S}}^{\mathrm{tot}}}.\end{equation}
If furthermore $Q\in \mathscr{H}_{\eta+\beta,r,\mathcal{O},\mathcal{S}}$ for some $\beta>0$ then
for any $p\in\N$, we have
\begin{equation}\label{estim:poisson2} \| \Pi_pR\|_{\mathscr{H}_{\eta-\a,r,\mathcal{O},\mathcal{S}}^{\mathrm{tot}}} \lesssim_{\mathcal{S}}\ \ \frac{(p+1)e^{-\a p}}{r^2\beta^4}\| P\|_{\mathscr{H}_{\eta,r,\mathcal{O},\mathcal{S}}^{\mathrm{tot}}}\| Q\|_{\mathscr{H}_{\eta+\beta,r,\mathcal{O},\mathcal{S}}^{\mathrm{tot}}}.\end{equation}

\end{lemma}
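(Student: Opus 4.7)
The plan is to write the Poisson bracket in coordinates and track each formal monomial product through the weighted estimates. First, expand
$$\{P,Q\} = 2\ic \sum_{j\in\Z}\bigl(\partial_{\overline{u_j}}P\,\partial_{u_j}Q - \partial_{u_j}P\,\partial_{\overline{u_j}}Q\bigr),$$
split the sum into $j\in\mathcal S$ and $j\in\mathcal S^c$, and in each case apply the formulas
$$\partial_{\overline{u_j}}=u_j\partial_\mu+u_j\partial_{y_j}-\tfrac{z_j^2}{2|u_j|}\partial_{z_j}\quad (j\in\mathcal S),\qquad \partial_{\overline{u_j}}=u_j\partial_\mu+\partial_{\overline{v_j}}\quad (j\in\mathcal S^c)$$
already derived in the proof of Lemma~\ref{lem:gradient}. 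Expanding $P$ and $Q$ as formal series, each product of a monomial from $P$ with a monomial from $Q$ produces a single monomial in $(\mu,y,z,v)$; defining $R$ by collecting these monomials, preservation of mass and momentum is automatic by summation, so only the coefficient estimates remain to be checked.

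The key ingredient is Lemma~\ref{ouille}: for each resulting multi-index $(\boldsymbol\ell'',\boldsymbol\sigma'')$ one has either $\Theta_{\boldsymbol\ell''}^{-1}\leq \Theta_{\boldsymbol\ell}^{-1}\Theta_{\boldsymbol\ell'}^{-1}$ from \eqref{ouille1}, for those contributions in which no index is contracted (i.e.\ from the derivatives $\partial_\mu,\partial_{y_j},\partial_{z_j}$ and from the finite sum over $j\in\mathcal S$), or $\Theta_{\boldsymbol\ell''}^{-1}\leq \Theta_{(\boldsymbol\ell,j)}^{-1}\Theta_{(\boldsymbol\ell',j)}^{-1}$ from \eqref{ouille2} for the cross-terms $\partial_{\overline{v_j}}P\cdot\partial_{v_j}Q$ with $j\in\mathcal S^c$. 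The sum over the contracted $j\in\mathcal S^c$ is then absorbed by a Young-type convolution argument at the level of coefficients (using zero momentum), exactly in the spirit of Lemma~\ref{lem:bracket_reg}. This reduces the bookkeeping on $R$ to a product of the bookkeeping on $P$ and $Q$, up to combinatorial prefactors.

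Differentiation produces multiplicative factors: $q$ or $q'$ from $\partial_{\overline{v_j}}$, $n$ or $n'$ from $\partial_\mu$, and $\|\boldsymbol m\|_{\ell^1},\|\boldsymbol k\|_{\ell^1}$ from $\partial_y,\partial_z$. The loss $r^{-2}$ in \eqref{estim:poisson1} comes from the worst case in which both derivatives cost a factor $r^2$ (namely $\partial_\mu$ or $\partial_{y_j}$), combined with the weight $r^{2\|\boldsymbol m\|+q+2n}$ appearing in the norm. Each of the four types of polynomial factor listed above is absorbed using a piece of the exponential $e^{-\alpha(\|\boldsymbol k\|+2\|\boldsymbol m\|+q+2n)}$ gained by passing from $\eta$ to $\eta-\alpha$, via $x^j e^{-\alpha x}\lesssim_j \alpha^{-j}$; the aggregate loss is exactly $\alpha^{-4}$. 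The Lipschitz bound follows from the product rule
$$R(\xi)-R(\xi')=\{P(\xi)-P(\xi'),Q(\xi)\}+\{P(\xi'),Q(\xi)-Q(\xi')\}$$
together with the sup bound. For \eqref{estim:poisson2}, one observes that $\Pi_p R$ only involves monomials of total degree $p$; since for such a monomial one has $e^{-\alpha(\|\boldsymbol k\|+2\|\boldsymbol m\|+q+2n)}\leq e^{-\alpha p}$, the factor $e^{-\alpha p}$ can be extracted, and the extra decay in $Q$ provided by the $\eta+\beta$ norm is used in place of the $\alpha$-loss, replacing $\alpha^{-4}$ by $\beta^{-4}$ and leaving the extra $(p+1)$.

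The main obstacle is the careful propagation of the weight $\Theta_{\boldsymbol\ell}^{-1}$, which is encapsulated in Lemma~\ref{ouille} and whose two cases \eqref{ouille1}--\eqref{ouille2} exactly match the two structural situations that arise in the Poisson bracket (no index contraction versus contraction over a shared index in $\mathcal S^c$). Uniqueness of $R$ is immediate from Lemma~\ref{lem:ouf_cest_des_fonctions}, since the values of $R(\xi;\cdot)$ on $\mathcal A_\xi(3r)$ determine its coefficients.
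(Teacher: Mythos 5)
Your proposal follows essentially the same route as the paper's proof: expand the bracket in coordinates, split into finite and infinite sums, use the two cases of Lemma~\ref{ouille} to propagate the weights $\Theta$, and absorb polynomial prefactors with the exponential gain. Two small remarks on precision. First, the paper writes the $j\in\mathcal S$ part directly in the symplectic $(\theta,y)$ coordinates rather than via the chain rule for $\partial_{\overline{u_j}}$; your chain-rule expansion introduces extra $\partial_\mu$ and $z_j/|u_j|$ terms which must be seen to cancel or recombine (the paper handles this at once by invoking conservation of mass), so the direct $(\theta,y)$ route is cleaner. Second, your account of the $\alpha^{-4}$ loss is slightly off in mechanism: it is not that four separate polynomial prefactors are each absorbed by a sup bound $x^j e^{-\alpha x}\lesssim \alpha^{-j}$. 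Rather, each term in the bracket carries exactly \emph{two} polynomial prefactors (one from differentiating $P$, one from $Q$), each bounded by the respective total degree $p$ or $p'$, and the remaining sum over the degree decomposition $\sum_{p,p'\geq1}pp'\,e^{-\alpha p}e^{-\alpha p'}\lesssim \alpha^{-4}$ supplies the loss — so it is really $\alpha^{-2}$ per degree variable, coming from summability together with the degree factor, not $\alpha^{-1}$ per prefactor. This does not change the final exponent; the same bookkeeping with $e^{-\alpha p}\leq e^{-\alpha p''}e^{\alpha p'}$ and summing against $e^{-\beta p'}$ gives the $(p''+1)e^{-\alpha p''}\beta^{-4}$ in \eqref{estim:poisson2}, exactly as you sketch.
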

Since $R$ is unique from now on we set $\{P,Q\}:=R$.
\begin{proof} 
 To  keep the notations simple, we identify, for  fixed $\xi$,  the formal series $P$ and the function $P(\xi;u)$.
By definition, using that $P$ and $Q$ commute with $\mu$ (conservation of the mass),  we have
$$
\{P,Q\}=\underbrace{\sum_{j\in \S}\partial_{\theta_j} P\partial_{y_j} Q-\partial_{y_j} Q\partial_{\theta_j} P}_K\ +\ 
2\ic\underbrace{\sum_{j\in \S^c} \partial_{\overline{v_j}} P\partial_{v_j} Q-\partial_{\overline{v_j}} Q\partial_{v_j} P}_L.
$$
First we consider the finite sum. We have, taking into account the symmetry condition (here $q''$ denotes the cardinality of $\boldsymbol{\ell}''$) 
\begin{equation}\label{KK}\sharp(\mathfrak{S}_{q''})K_{n'',\boldsymbol{k}'',\boldsymbol{m}''}^{\boldsymbol{\ell}'',\boldsymbol{\sigma}''}=
\sum_{\varphi\in\mathfrak{S}_{q''}}\tilde K_{n'',\boldsymbol{k}'',\boldsymbol{m}''}^{\varphi\boldsymbol{\ell}'',\varphi\boldsymbol{\sigma}''}\end{equation}
where

\begin{align*}\label{estim:poi1}
\tilde K_{n'',\boldsymbol{k}'',\boldsymbol{m}''}^{\boldsymbol{\ell}'',\boldsymbol{\sigma}''}=\ic\sum_{j\in \S}\sum_{\substack{\boldsymbol{\ell}''=(\boldsymbol{\ell},\boldsymbol{\ell}'),\ \boldsymbol{\sigma}''=(\boldsymbol{\sigma},\boldsymbol{\sigma}'),\\\boldsymbol{k}''=\boldsymbol{k}+\boldsymbol{k}',\ \boldsymbol{m}''=\boldsymbol{m}+\boldsymbol{m}',\ n''=n+n'}}  &\boldsymbol{k}_j(\boldsymbol{m}'_j+1)P_{n,\boldsymbol{k},\boldsymbol{m}}^{\boldsymbol{\ell},\boldsymbol{\sigma}}Q_{n',\boldsymbol{k}',(\boldsymbol{m}'+e_j)}^{\boldsymbol{\ell}',\boldsymbol{\sigma}'}\\
&-\boldsymbol{k}'_j(1+\boldsymbol{m}_j)P_{n,\boldsymbol{k},(\boldsymbol{m}+e_j)}^{\boldsymbol{\ell},\boldsymbol{\sigma}}Q_{n',\boldsymbol{k}',\boldsymbol{m}'}^{\boldsymbol{\ell}',\boldsymbol{\sigma}'}
\end{align*}
 where    $(e_j)_{j\in S}$ denotes the canonical basis of $\R^S$. The two terms of the right hand side are essentially the same, it suffices to switch $P$ and $Q$.  So we focus on  the first term and we note that the sum with respect to $j$ is finite and thus not important. Further, remembering that we want to estimate $ \Pi_{p''}K$ for $p''\geq0$, we finally have to estimate
  $$I_j=\sum_{\substack{\boldsymbol{\ell}''=(\boldsymbol{\ell},\boldsymbol{\ell}'),\ \boldsymbol{\sigma}''=(\boldsymbol{\sigma},\boldsymbol{\sigma}'),\\\boldsymbol{k}''=\boldsymbol{k}+\boldsymbol{k}',\ \boldsymbol{m}''=\boldsymbol{m}+\boldsymbol{m}',\ n''=n+n'\\\|\boldsymbol k''\|_{\ell^1}+2\|\boldsymbol{m}''\|_{\ell^1}+ q''+ n''= p''}}  |\boldsymbol{k}_j|(\boldsymbol{m}'_j+1)\big|P_{n,\boldsymbol{k},\boldsymbol{m}}^{\boldsymbol{\ell},\boldsymbol{\sigma}}\big|\ \big|Q_{n',\boldsymbol{k}',(\boldsymbol{m}'+e_j)}^{\boldsymbol{\ell}',\boldsymbol{\sigma}'}\big|.$$

Let $p=\|\boldsymbol k\|_{\ell^1}+2\|\boldsymbol{m}\|_{\ell^1}+ q+ n $ , $p'=\|\boldsymbol k'\|_{\ell^1}+2\|\boldsymbol{m}'\|_{\ell^1}+ q'+ n'+2$ (here $q$, $q'$ denote as usual the cardinality of $\boldsymbol{\ell}$, $\boldsymbol{\ell}'$ respectively). Since $n''=n+n'$, $\|\boldsymbol{m}''\|_{\ell^1}=\|\boldsymbol{m}'\|_{\ell^1}+\|\boldsymbol{m}\|_{\ell^1}$, $q''=q+q'$ and $\|\boldsymbol k''\|_{\ell^1}-\|\boldsymbol k'\|_{\ell^1}\leq\|\boldsymbol k\|_{\ell^1}\leq \|\boldsymbol k''\|_{\ell^1}+\|\boldsymbol k'\|_{\ell^1}$,
 we get $p''\leq p+p'-2$ and also $p\leq p'+p''$. Thus, we have 
$$I_j\leq  \sum_{\substack{\|\boldsymbol k\|_{\ell^1}+2\|\boldsymbol{m}\|_{\ell^1}+ q+ n=p\\ \|\boldsymbol k'\|_{\ell^1}+2\|\boldsymbol{m}'\|_{\ell^1}+ q'+ n'+2=p'\\p''-p'+2\leq p\leq p''+ p'}} pp' \big|P_{n,\boldsymbol{k},\boldsymbol{m}}^{\boldsymbol{\ell},\boldsymbol{\sigma}}\big|\ \big|Q_{n',\boldsymbol{k}',\boldsymbol{m}'}^{\boldsymbol{\ell}',\boldsymbol{\sigma}'}\big|.$$
 Therefore, using  Lemma \ref{ouille}, we deduce 
$$\| \Pi_{p''}\tilde K\|_{\mathscr{H}^{\mathrm{sup}}_{\eta-\a,r,\mathcal{O},\mathcal{S}}}\lesssim_\S r^{-2}\sum_{\substack{p,p'\geq1\\p''-p'+2\leq p\leq p''+ p'}}pp' 
\| \Pi_{p}P\|_{\mathscr{H}^{\mathrm{sup}}_{\eta-\a,r,\mathcal{O},\mathcal{S}}}\| \Pi_{p'}Q\|_{\mathscr{H}^{\mathrm{sup}}_{\eta-\a,r,\mathcal{O},\mathcal{S}}}$$
and since our previous estimates are uniform with respect to permutation on $\boldsymbol{\ell}''$ and $\boldsymbol{\sigma}''$ we conclude
\begin{equation}\label{K}
\| \Pi_{p''} K\|_{\mathscr{H}^{\mathrm{sup}}_{\eta-\a,r,\mathcal{O},\mathcal{S}}}\lesssim_\S r^{-2}\sum_{\substack{p,p'\geq1\\p''-p'+2\leq p\leq p''+ p'}}pp' 
\| \Pi_{p}P\|_{\mathscr{H}^{\mathrm{sup}}_{\eta-\a,r,\mathcal{O},\mathcal{S}}}\| \Pi_{p'}Q\|_{\mathscr{H}^{\mathrm{sup}}_{\eta-\a,r,\mathcal{O},\mathcal{S}}}.
\end{equation}
Now we focus on the infinite sum $L$.  Following the same strategy as above (i.e. a decomposition of $L$ as in \eqref{KK}), we have to estimate 
\begin{equation*} 
\sum_{\substack{\boldsymbol{\ell}''=(\boldsymbol{\ell},\boldsymbol{\ell}'),\ \boldsymbol{\sigma}''=(\boldsymbol{\sigma},\boldsymbol{\sigma}'),\\
\boldsymbol{k}''=\boldsymbol{k}+\boldsymbol{k}',\, \boldsymbol{m}''=\boldsymbol{m}+\boldsymbol{m}',\\ n''=n+n'}}\hspace{-1cm} (q+1)(q'+1) \mathbbm{1}_{j^*\in \mathcal{S}^c}\left(\big|P_{n,\boldsymbol{k},\boldsymbol{m}}^{(\boldsymbol{\ell},j^*),(\boldsymbol{\sigma},+1)} \, Q_{n',\boldsymbol{k}',\boldsymbol{m}'}^{(\boldsymbol{\ell}',j^*),(\boldsymbol{\sigma}',-1)}\big|+\big|P_{n,\boldsymbol{k},\boldsymbol{m}}^{(\boldsymbol{\ell},j^*),(\boldsymbol{\sigma},-1)} \, Q_{n',\boldsymbol{k}',\boldsymbol{m}'}^{(\boldsymbol{\ell}',j^*),(\boldsymbol{\sigma}',+1)}\big|\right)
\end{equation*}
where $q$ and $q'$ denote the cardinality of $\boldsymbol\ell$ and $\boldsymbol\ell'$ respectively and, as aconsequence of the zero momentum condition,
$$ - j^*=\sum_{j\in \mathcal{S}} j\boldsymbol{k}_j + \sum_{i=1}^q \boldsymbol\sigma_i \boldsymbol\ell_i.$$
 So we obtain, using again Lemma \ref{ouille}, the same estimate for $L$ than  estimate \eqref{K} for $K$ and all together we get
\begin{equation}\label{R}
\| \Pi_{p''}R\|_{\mathscr{H}^{\mathrm{sup}}_{\eta-\a,r,\mathcal{O},\mathcal{S}}}\lesssim_\S r^{-2}\sum_{\substack{p,p'\geq1\\p''-p'+2\leq p\leq p''+ p'}}pp' 
\| \Pi_{p}P\|_{\mathscr{H}^{\mathrm{sup}}_{\eta-\a,r,\mathcal{O},\mathcal{S}}}\| \Pi_{p'}Q\|_{\mathscr{H}^{\mathrm{sup}}_{\eta-\a,r,\mathcal{O},\mathcal{S}}}.
\end{equation}
Now we prove \eqref{estim:poisson1}. We use $\| \Pi_{p}\cdot \|_{\mathscr{H}^{\mathrm{sup}}_{\eta-\a,r,\mathcal{O},\mathcal{S}}}\leq e^{-\alpha p}\| \cdot\|_{\mathscr{H}^{\mathrm{sup}}_{\eta,r,\mathcal{O},\mathcal{S}}}$ to get from \eqref{R}
$$\| \Pi_{p''}R\|_{\mathscr{H}^{\mathrm{sup}}_{\eta-\a,r,\mathcal{O},\mathcal{S}}}\lesssim_\S r^{-2} \| P\|_{\mathscr{H}^{\mathrm{sup}}_{\eta,r,\mathcal{O},\mathcal{S}}}\| Q\|_{\mathscr{H}^{\mathrm{sup}}_{\eta,r,\mathcal{O},\mathcal{S}}}\sum_{p,p'\geq 1}pp' e^{-\alpha p} e^{-\a p'}  .$$
Then we use that $\sum_{p\geq1}pe^{-\a p}\leq 2\int_0^{+\infty}te^{-\a t}\dd t =\frac2{\a^2}$,  to conclude that, uniformly in $p''\in\N$,
$$\| \Pi_{p''}R\|_{\mathscr{H}^{\mathrm{sup}}_{\eta-\a,r,\mathcal{O},\mathcal{S}}}\lesssim_\S \frac1{r^2\a^4}  \| P\|_{\mathscr{H}^{\mathrm{sup}}_{\eta,r,\mathcal{O},\mathcal{S}}}\| Q\|_{\mathscr{H}^{\mathrm{sup}}_{\eta,r,\mathcal{O},\mathcal{S}}}.$$
 We also get for all $p\geq0$
$$\| \Pi_pR\|_{\mathscr{H}^{\mathrm{lip}}_{\eta-\a,r,\mathcal{O},\mathcal{S}}} \lesssim_\S\  \frac{1}{r^2\a^4}\left(\| P\|_{\mathscr{H}^{\mathrm{lip}}_{\eta,r,\mathcal{O},\mathcal{S}}}\| Q\|_{\mathscr{H}^{\mathrm{sup}}_{\eta,r,\mathcal{O},\mathcal{S}}}
+\| P\|_{\mathscr{H}^{\mathrm{sup}}_{\eta,r,\mathcal{O},\mathcal{S}}}\| Q\|_{\mathscr{H}^{\mathrm{lip}}_{\eta,r,\mathcal{O},\mathcal{S}}}
\right)$$
and thus \eqref{estim:poisson1}  follows. 

\medskip

Now we prove \eqref{estim:poisson2}.
 We use  $\| \Pi_{p'}\cdot \|_{\mathscr{H}^{\mathrm{sup}}_{\eta-\a,r,\mathcal{O},\mathcal{S}}}\leq e^{-(\a+\beta) p'}\| \cdot\|_{\mathscr{H}^{\mathrm{sup}}_{\eta+\beta,r,\mathcal{O},\mathcal{S}}}$ to get from \eqref{R}
$$
\| \Pi_{p''}R\|_{\mathscr{H}^{\mathrm{sup}}_{\eta-\a,r,\mathcal{O},\mathcal{S}}}\lesssim_\S r^{-2} \| P\|_{\mathscr{H}^{\mathrm{sup}}_{\eta,r,\mathcal{O},\mathcal{S}}}\| Q\|_{\mathscr{H}^{\mathrm{sup}}_{\eta+\beta,r,\mathcal{O},\mathcal{S}}}\sum_{\substack{p,p'\geq1\\p''-p'+2\leq p\leq  p''+p'}}pp' e^{-\alpha p} e^{-(\beta+\a) p'}  .
$$
 We have $e^{-\alpha p}\leq e^{-\alpha p''}e^{\alpha p'}$ and thus
$$\sum_{\substack{p,p'\geq1\\p''-p'+2\leq p\leq  p''+p'}}pp' e^{-\alpha p} e^{-(\beta+\a) p'}  \leq 
\sum_{p'\geq1}2p'(p''+p')p' e^{-\alpha p''} e^{-\beta p'}\leq 4(p''+1)e^{-\alpha p''}\sum_{p'\geq1}p'^3  e^{-\beta p'}.
$$ 
 Finally we use $\sum_{p'\geq1}p'^3  e^{-\beta p'}\leq\frac{12}{\beta^4}$ to conclude
 \begin{equation*}\label{estim:K}
 \| \Pi_{p}R\|_{\mathscr{H}^{\mathrm{sup}}_{\eta-\a,r,\mathcal{O},\mathcal{S}}}\lesssim_\S 
 \frac{(p+1)e^{-\alpha p}}{r^2\beta^4}
  \| P\|_{\mathscr{H}^{\mathrm{sup}}_{\eta,r,\mathcal{O},\mathcal{S}}}\| Q\|_{\mathscr{H}^{\mathrm{sup}}_{\eta+\beta,r,\mathcal{O},\mathcal{S}}}.
 \end{equation*}
 This prove \eqref{estim:poisson2} for the $\text{sup}$ norm. We also get
$$\| \Pi_pR\|_{\mathscr{H}^{\mathrm{lip}}_{\eta-\a,r,\mathcal{O},\mathcal{S}}} \lesssim_\S\  \frac{(p+1) e^{-\alpha p}}{r^2\beta^4}\left(\| P\|_{\mathscr{H}^{\mathrm{lip}}_{\eta,r,\mathcal{O},\mathcal{S}}}\| Q\|_{\mathscr{H}^{\mathrm{sup}}_{\eta+\beta,r,\mathcal{O},\mathcal{S}}}
+\| P\|_{\mathscr{H}^{\mathrm{sup}}_{\eta,r,\mathcal{O},\mathcal{S}}}\| Q\|_{\mathscr{H}^{\mathrm{lip}}_{\eta+\beta,r,\mathcal{O},\mathcal{S}}}
\right)$$
from which \eqref{estim:poisson2}  follows.

\end{proof}

We end this section with a result on the action of the Hamiltonian flow $\Phi_\chi^t$ by composition.
\begin{lemma}\label{lem:compo} 
Let $\mathcal{S} \subset \mathbb{Z}$ be a finite set, $r\in (0,\frac14)$, $\mathcal{O}\subset ((4r)^2,1)^\mathcal{S}$, $\eta_{\mathrm{\max}}>\eta > \eta_0$, $\alpha\in(0,\eta-\eta_0)$ and $\chi\in \mathscr{H}_{\eta+\a,r,\mathcal{O},\mathcal{S}}$. Assume that $\chi$ satisfies \eqref{chi} (or \eqref{chi2} if $\chi = \Pi_{n=0,\boldsymbol{m}=0}\chi$) and
\begin{equation}\label{chi3}
\| \chi\|_{\mathscr{H}_{\eta+\alpha,r,\mathcal{O},\mathcal{S}}^{\mathrm{tot}}}\lesssim_\S \alpha^5 r^2.
\end{equation}
Assume also that $r$ satisfies \eqref{r}.
Then for any $H\in \mathscr{H}_{\eta,r,\mathcal{O},\mathcal{S}}$ and any $t\in[0,1]$, there exits $H_t \in \mathscr{H}_{\eta-\alpha,r,\mathcal{O},\mathcal{S}}$ such that, for all $\xi\in\mathcal O$, $H(\xi;\cdot)\circ\Phi^t_\chi(\xi;\cdot)=H_t(\xi;\cdot)$ on $\mathcal A_\xi(2r)$
 and, identifying the formal series and the functions\footnote{i.e. identifying $H\circ\Phi^t_\chi$ and $H_t$}, we have 
 
\begin{equation}\label{estim:flow} \| H\circ\Phi^t_\chi\|_{\mathscr{H}_{\eta-\alpha,r,\mathcal{O},\mathcal{S}}^{\mathrm{tot}}} \leq 2\ \| H\|_{\mathscr{H}_{\eta,r,\mathcal{O},\mathcal{S}}^{\mathrm{tot}}},\end{equation}
\begin{equation}\label{estim:flow1} \| H\circ\Phi^t_\chi-H\|_{\mathscr{H}_{\eta-\alpha,r,\mathcal{O},\mathcal{S}}^{\mathrm{tot}}} \lesssim_\S\ \frac1{r^2\a^5 } \| \chi\|_{\mathscr{H}_{\eta+\a,r,\mathcal{O},\mathcal{S}}^{\mathrm{tot}}}\| H\|_{\mathscr{H}_{\eta,r,\mathcal{O},\mathcal{S}}^{\mathrm{tot}}}\end{equation}
and
\begin{equation}\label{estim:flow2} \| H\circ\Phi^1_\chi-H-\{H,\chi\}\|_{\mathscr{H}_{\eta-\alpha,r,\mathcal{O},\mathcal{S}}^{\mathrm{tot}}} \lesssim_\S\ \frac1{r^4\a^{10}}  \| \chi\|^2_{\mathscr{H}_{\eta+\a,r,\mathcal{O},\mathcal{S}}^{\mathrm{tot}}}\| H\|_{\mathscr{H}_{\eta,r,\mathcal{O},\mathcal{S}}^{\mathrm{tot}}}\end{equation}
where,  with a slight abuse of notation, we have written $H\circ\Phi^t_\chi$ instead of $H(\xi;\cdot)\circ\Phi^t_\chi(\xi;\cdot)$.
\end{lemma}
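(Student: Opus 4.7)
The plan is to closely follow the proof of Lemma \ref{lem:compo_reg}. First, I would produce the formal series $H\circ\Phi_\chi^t$ by solving the transport equation $\partial_t P_t=\{P_t,\chi\}$, $P_0=H$, via a Cauchy--Kowaleskaya argument in the scale of Banach spaces $(\mathscr{H}_{\eta_0+\beta,r,\mathcal{O},\mathcal{S}})_{\beta\in[0,\eta+\alpha-\eta_0]}$. Then I would identify the resulting $P_t$ with the composition $H\circ\Phi_\chi^t$ via characteristics. Finally, the three quantitative estimates \eqref{estim:flow}, \eqref{estim:flow1}, \eqref{estim:flow2} would be deduced by Taylor expansion of $t\mapsto P_t$ at $t=0$.

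For the Cauchy--Kowaleskaya step, I would set $E^{(p)}:=\Pi_p\mathscr{H}_{\eta_0,r,\mathcal{O},\mathcal{S}}$ equipped with $\|\cdot\|_{\mathscr{H}_{\eta_0,r,\mathcal{O},\mathcal{S}}^{\mathrm{tot}}}$, so that in the language of Lemma \ref{lem:CK} one has $E_\beta=\mathscr{H}_{\eta_0+\beta,r,\mathcal{O},\mathcal{S}}$. Applying \eqref{estim:poisson2} of Lemma \ref{lem:poisson} with $Q=\chi\in\mathscr{H}_{\eta+\alpha,r,\mathcal{O},\mathcal{S}}$ provides precisely the CK hypothesis $\|\Pi_p\{P,\chi\}\|_{E^{(p)}}\lesssim_\mathcal{S} (p+1)e^{-\beta p}\|P\|_\beta\,C_L$ with $C_L\lesssim_\mathcal{S} r^{-2}\alpha^{-4}\|\chi\|_{\mathscr{H}_{\eta+\alpha,r,\mathcal{O},\mathcal{S}}^{\mathrm{tot}}}$; hypothesis \eqref{chi3} then guarantees $2C_L\leq\alpha$. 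Lemma \ref{lem:CK} thus delivers a unique $t\mapsto P_t\in C^1([0,1];\mathscr{H}_{\eta-\alpha,r,\mathcal{O},\mathcal{S}})$ with $\sup_{t\in[0,1]}\|P_t\|_{\mathscr{H}_{\eta-\alpha,r,\mathcal{O},\mathcal{S}}^{\mathrm{tot}}}\leq 2\|H\|_{\mathscr{H}_{\eta,r,\mathcal{O},\mathcal{S}}^{\mathrm{tot}}}$, which is \eqref{estim:flow}. To identify $P_t$ with $H\circ\Phi_\chi^t$, I would then fix $\xi\in\mathcal{O}$ and $u\in\mathcal{A}_\xi(2r)$: hypotheses \eqref{chi} and \eqref{r} (both implied by \eqref{chi3}) ensure via Lemma \ref{lem:flow} that $\Phi_\chi^{-s}(\xi;u)\in\mathcal{A}_\xi(3r)$ for $s\in[0,1]$. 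Differentiating $g(s):=P_s(\xi;\Phi_\chi^{-s}(\xi;u))$ and using the transport equation together with the Hamiltonian character of $\Phi_\chi$ yields $g'\equiv 0$, hence $P_t(\xi;u)=H(\xi;\Phi_\chi^t(\xi;u))$ on $\mathcal{A}_\xi(2r)$, reproducing the characteristic argument at the end of the proof of Lemma \ref{lem:compo_reg}.

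For the remaining estimates, I would Taylor expand in time: $P_1-H=\int_0^1\{P_s,\chi\}\,ds$ and $P_1-H-\{H,\chi\}=\int_0^1(1-s)\{\{P_s,\chi\},\chi\}\,ds$. To bound the first integrand in $\mathscr{H}_{\eta-\alpha,r,\mathcal{O},\mathcal{S}}^{\mathrm{tot}}$, I would re-run CK with analyticity loss $\alpha/2$ (still allowed by \eqref{chi3}) to get $P_s\in\mathscr{H}_{\eta-\alpha/2,r,\mathcal{O},\mathcal{S}}$ with the same supremum bound, and then apply \eqref{estim:poisson1} with loss $\alpha/2$, landing in $\mathscr{H}_{\eta-\alpha,r,\mathcal{O},\mathcal{S}}$ with a constant of order $\alpha^{-4}r^{-2}$, which is stronger than the $\alpha^{-5}r^{-2}$ claimed in \eqref{estim:flow1}. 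For \eqref{estim:flow2}, re-running CK with loss $\alpha/4$ and applying \eqref{estim:poisson1} twice (with successive losses $\alpha/4$ and $\alpha/2$) yields a bound of order $\alpha^{-8}r^{-4}\|\chi\|^2\|H\|$, again stronger than the claimed $\alpha^{-10}r^{-4}$. The main obstacle will be the Cauchy--Kowaleskaya step: one must verify the hypothesis of Lemma \ref{lem:CK} with the sharp dependence on $\chi$ and $\alpha$, and carefully track the successive analyticity losses so that each intermediate $P_s$ satisfies a uniform bound; once this bookkeeping is in place, everything else is essentially formal.
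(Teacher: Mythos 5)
Your proposal is correct and follows essentially the same route as the paper: same Cauchy--Kowaleskaya setup with scale $E_\beta=\mathscr{H}_{\eta_0+\beta,r,\mathcal{O},\mathcal{S}}$, same identification of $P_t$ with $H\circ\Phi_\chi^t$ by differentiating $g(s)=P_s\circ\Phi_\chi^{-s}$, and the same Taylor-in-time structure for \eqref{estim:flow1}--\eqref{estim:flow2}. The only difference is in the Taylor step: the paper writes the derivative as $\{H,\chi\}\circ\Phi_\chi^\tau$ and controls the composition by re-applying \eqref{estim:flow} before invoking \eqref{estim:poisson2}, while you write it as $\{P_s,\chi\}$ (which is the same quantity, since $\chi\circ\Phi_\chi^s=\chi$) and invoke \eqref{estim:poisson1} against the uniform CK bound on $P_s$; both are valid, and your route in fact yields the slightly sharper exponents $\alpha^{-4}$ and $\alpha^{-8}$, which imply the stated $\alpha^{-5}$ and $\alpha^{-10}$ since $\alpha<\eta_{\max}-\eta_0$.
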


\begin{proof} We want to apply the Cauchy--Kowaleskaya lemma proven in the appendix (see Lemma  \ref{lem:CK}). So we set  $E^{(p+1)}=\Pi_p \mathscr{H}_{\eta_0,r,\mathcal{O},\mathcal{S}}$ and $\| \cdot\|_{E^{(p+1)}}=\| \cdot\|_{\mathscr{H}_{\eta_0,r,\mathcal{O},\mathcal{S}}^{\mathrm{tot}}}$.
We note that, with the notation of Lemma \ref{lem:CK}, we have for any $\beta\geq0$
$$
E_\beta=\mathscr{H}_{\eta_0+\beta,r,\mathcal{O},\mathcal{S}}^{\mathrm{tot}} \quad \mathrm{ and } \quad \| \cdot \|_\beta=\| \cdot\|_{\mathscr{H}_{\eta_0+\beta,r,\mathcal{O},\mathcal{S}}^{\mathrm{tot}}} .
$$ 
By Lemma \ref{lem:poisson} we have for all $0<\beta \leq \eta-\eta_0$
$$
  \| \Pi_p\{H,\chi\}\|_{E^{(p+1)}} \lesssim_\S \ \frac{(p+1)e^{-\beta (p+1)}}{r^2\a^4}\| H\|_{\mathscr{H}_{\beta,r,\mathcal{O},\mathcal{S}}^{\mathrm{tot}}}\| \chi\|_{\mathscr{H}_{\eta+\a,r,\mathcal{O},\mathcal{S}}^{\mathrm{tot}}}
  $$
and thus the linear map $LH:=\{H,\chi\}$ satisfies \eqref{CL}  with $C_L:=c(S)\|\chi\|_{\mathscr{H}_{\eta+\alpha,r,\mathcal{O},\mathcal{S}}^{\mathrm{tot}}} (r^2\a^4)^{-1}$. Note that, as a consequence of  \eqref{chi3}, we have $2C_L\leq\a$.\\
 Now let $H\in \mathscr{H}_{\eta,r,\mathcal{O},\mathcal{S}}$.  Applying Lemma \ref{lem:CK}, there exists  $t\mapsto H_t\in C^1([0,1];\mathscr{H}_{\eta-\a,r,\mathcal{O},\mathcal{S}})$  solution of the transport equation
$$
\left\{\begin{array}{ll} \partial_t H_t=\{H_t,\chi\},\quad t\in[0,1]\\
H(0)=H. \end{array}\right..
$$
Consider $g(t)=H_t\circ \Phi_\chi^{-t}(u)$ for some $u\in   \mathcal A_\xi(2r)$. We have
\begin{align*}
\frac{\dd}{\dd t} g(t)&=\partial_t H_t\circ \Phi_\chi^{-t}(u)+\dd H_t(\Phi_\chi^{-t}(u))(\partial_t\Phi_\chi^{-t}(u))\\
&=\{ H_t,\chi\}\circ \Phi_\chi^{-t}(u)- (\ic\nabla H_t( \Phi_\chi^{-t}(u)), \nabla \chi(\Phi_\chi^{-t}(u)))_{L^2} =0
\end{align*}
and thus $H_t\circ \Phi_\chi^{-t}(u) = g(t)=g(0)=H(u)$. Therefore $H_t=H\circ\Phi_\chi^t$ and we deduce that $H\circ\Phi_\chi^t$ belongs to $\mathscr{H}_{\eta-\alpha,r,\mathcal{O},\mathcal{S}}^{\mathrm{tot}}$ for all $t\in[0,1]$ and satisfies \eqref{estim:flow}.
To prove \eqref{estim:flow1} we write 
$$
H\circ \Phi^t_\chi-H=\int_0^t \frac{\dd}{\dd \tau}(H\circ \Phi_\chi^\tau) \dd \tau
$$
and we use $\frac{\dd}{\dd \tau}(H\circ \Phi_\chi^\tau)=\{ H,\chi\}\circ \Phi_\chi^\tau$ to conclude using an adapted version of \eqref{estim:flow} with $\eta$ replaced by $\eta-\frac\a2$ and $\a$ replaced by $\frac\a2$ (and thus adapting \eqref{r})
$$ 
\| H\circ \Phi^t_\chi-H\|_{\mathscr{H}_{\eta-\a,r,\mathcal{O},\mathcal{S}}^{\mathrm{tot}}}\leq 2\ \| \{ H,\chi\}\|_{\mathscr{H}_{\eta-\frac\a2,r,\mathcal{O},\mathcal{S}}^{\mathrm{tot}}}.
$$
Then we use \eqref{estim:poisson2} and the uniform estimate $(p+1)e^{-\frac\a2 p}\leq \frac8\a$ to get
$$
\| \{ H,\chi\}\|_{\mathscr{H}_{\eta-\frac\a2,r,\mathcal{O},\mathcal{S}}^{\mathrm{tot}}}\lesssim_\S \frac1{r^2\a^5 }\| \chi\|_{\mathscr{H}_{\eta+\a,r,\mathcal{O},\mathcal{S}}^{\mathrm{tot}}}\| H\|_{\mathscr{H}_{\eta,r,\mathcal{O},\mathcal{S}}^{\mathrm{tot}}}
$$
which completes the proof of \eqref{estim:flow1}. The proof of \eqref{estim:flow2} is obtained similarly, writing
 $$
 H\circ \Phi^1_\chi-H-\{ H,\chi\}=\frac12\int_0^1 \frac{\dd^2}{\dd t^2}(H\circ \Phi_\chi^t) \dd t
 $$
and using $\frac{\dd^2}{\dd t^2}(H\circ \Phi_\chi^t)=\{\{ H,\chi\},\chi\}\circ \Phi_\chi^t$.

\end{proof}

\subsection{Spaces of integrable Hamiltonians}

The integrable part of Hamiltonians plays an important role in our context, we cannot eliminate them by normal forms and they nature may change when we open a new site. Thus they must be carefully followed. With this in mind, we define different spaces of integrable Hamiltonians and show embedding  properties.

\subsubsection{Spaces}

\begin{definition}[Integrable Hamiltonians] Let $\mathcal{S} \subset \mathbb{Z}$ be a finite set, $r\in (0,\frac14)$, $\mathcal{O}\subset ((4r)^2,1)^\mathcal{S}$, $\eta \geq 0$.
A Hamiltonian $P \in \mathscr{H}_{\eta,r,\mathcal{O},\mathcal{S}}$ is integrable if its non zero coefficients $P_{n,\boldsymbol{k},\boldsymbol{m}}^{\boldsymbol{\ell},\boldsymbol{\sigma}}\neq 0$ satisfy 
\begin{equation}
\label{eq:integ}
\boldsymbol{k}=0 \quad \mathrm{and} \quad \exists \varphi \in \mathfrak{S}_{\# \boldsymbol{\ell}}, \ \varphi \boldsymbol{\ell}=\boldsymbol{\ell} \ \mathrm{and} \ \varphi\boldsymbol{\sigma}=-\boldsymbol{\sigma}
\end{equation}
 We denote by $\Pi_{\mathrm{int}}$ the associated projection defined by restriction of the coefficients\footnote{i.e. for $K\in \mathscr{H}_{\eta,r,\mathcal{O},\mathcal{S}}$, $I=\Pi_{\mathrm{int}} K$ is the Hamiltonian whose coefficients are defined by $I_{n,\boldsymbol{k},\boldsymbol{m}}^{\boldsymbol{\ell},\boldsymbol{\sigma}}= \mathbbm{1}_{\eqref{eq:integ}} K_{n,\boldsymbol{k},\boldsymbol{m}}^{\boldsymbol{\ell},\boldsymbol{\sigma}}$}.
\end{definition}

\begin{definition}[Quartic integrable Hamiltonians $\mathscr{Q}^{\mathrm{cste}}_{\mathcal{S}},\mathscr{Q}_{\mathcal{O},\mathcal{S}}$] \label{def:quar} Let $\mathcal{S}\subset \mathbb{Z}$ be a finite set. We denote by $\mathscr{Q}^{\mathrm{cste}}_{\mathcal{S}}$, the set of the formal series $Q$, of the form
$$
Q(\mu,y,u) := \sum_{k,\ell\in \mathbb{Z}} Q_{k,\ell} Y_k Y_\ell + \sum_{k\in \mathbb{Z}} Q_k Y_k \mu  + Q_{\emptyset} \mu^2
$$
where $Y_k := \mathbbm{1}_{k\in \mathcal{S}} y_k + \mathbbm{1}_{k\in \mathcal{S}^c} |u_k|^2  $ and whose coefficients $Q_{k,\ell},Q_{k},Q_{\emptyset} \in \mathbb{R}$ satisfy the symmetry condition $Q_{k,\ell} = Q_{\ell,k}$ and the bound
$$
\| Q \|_{\mathscr{Q}^{\mathrm{cste}}} := \sup_{k,\ell\in \mathbb{Z}}  |Q_{k,\ell}|\Lambda_{k,\ell}^{-1} \vee |Q_k| \langle k \rangle^{\delta} \vee Q_{\emptyset} <\infty
$$
where
\begin{equation}
\label{eq:defLambda}
\Lambda_{k,\ell} :=  1 \wedge \frac{\langle k \rangle^{4\delta}}{\langle \ell\rangle^{\delta}} \wedge \frac{\langle \ell \rangle^{4\delta}}{  \langle k\rangle^{\delta}}.
\end{equation}
Moreover, if $\mathcal{O} \subset \mathbb{R}_+^\mathcal{S}$, we set
  $$
  \mathscr{Q}_{\mathcal{O},\mathcal{S}} := \mathrm{Lip}(\mathcal{O};\mathscr{Q}^{\mathrm{cste}}_{\mathcal{S}}).
  $$
\end{definition}

\begin{definition}[Linear Hamiltonians in the actions $\mathscr{L}^{\mathrm{cste}}_{\mathcal{S}},\mathscr{L}_{\mathcal{O},\mathcal{S}}$] \label{def:lin} Let $\mathcal{S}\subset \mathbb{Z}$ be a finite set. We denote by $\mathscr{L}^{\mathrm{cste}}_{\mathcal{S}}$, the set of the formal series $L$, of the form
$$
L(y) := \sum_{k\in \mathbb{Z}} L_k Y_k
$$
where $Y_k := \mathbbm{1}_{k\in \mathcal{S}} y_k + \mathbbm{1}_{k\in \mathcal{S}^c} |u_k|^2  $ and whose coefficients $L_k \in \mathbb{R}$ satisfy the bound
$$
\| L \|_{\mathscr{L}^{\mathrm{cste}}} := \sup_{k\in \mathbb{Z}} |L_k| \langle k \rangle^{\delta} <\infty.
$$
Moreover, if $\mathcal{O} \subset \mathbb{R}_+^\mathcal{S}$, we set
  $$
  \mathscr{L}_{\mathcal{O},\mathcal{S}} := \mathrm{Lip}(\mathcal{O};\mathscr{L}^{\mathrm{cste}}_{\mathcal{S}}).
  $$
\end{definition}

\begin{definition}[Almost constant Hamiltonians $\mathscr{A}^{\mathrm{cste}},\mathscr{A}_{\mathcal{O},\mathcal{S}}$] \label{def:cst} We denote by $\mathscr{A}^{\mathrm{cste}}$, the set of the affine functions $A$ of the form
$$
A(\mu) := A_0 + A_1 \mu
$$
with $A_0,A_1\in \mathbb{R}$,
that we equip with the norm
$$
\| A \|_{\mathscr{A}^{\mathrm{cste}}} := 3^{-1} |A_0| \vee |A_1|.
$$
Moreover, if $\mathcal{O} \subset \mathbb{R}_+^\mathcal{S}$, where $\mathcal{S} \subset \mathbb{Z}$ is a finite set, we set
  $$
  \mathscr{A}_{\mathcal{O},\mathcal{S}} := \mathrm{Lip}(\mathcal{O};\mathscr{A}^{\mathrm{cste}}).
  $$
\end{definition}
\begin{remark}
The coefficients $3$  is convenient but of minor importance in this paper. It is only used to have a factor close to $1$ in the estimate \eqref{eq:bound_open2}. We introduce the factor $3$ in Definition \ref{def:N_norm} below for the same reason.
\end{remark}

\begin{definition}[Hamiltonians in normal form $\mathscr{N}_{\mathcal{O},\mathcal{S}} $] Let $\mathcal{S} \subset \mathbb{Z}$ be a finite set and $\mathcal{O}\subset \mathbb{R}_+^\mathcal{S}$. 
An Hamiltonian is said in normal form if it belongs to the space $\mathscr{N}_{\mathcal{O},\mathcal{S}} $ defined by
\begin{equation}\label{eq:decomp}
\mathscr{N}_{\mathcal{O},\mathcal{S}} := \mathscr{A}_{\mathcal{O},\mathcal{S}}\oplus\mathscr{L}_{\mathcal{O},\mathcal{S}}\oplus \mathscr{Q}_{\mathcal{O},\mathcal{S}}.
\end{equation}
\end{definition}

\subsubsection{Lipschitz norms} We are going to define some well suited norms on the Lipschitz spaces defined above. 

\begin{definition}[Norms for $ \mathscr{A}_{\mathcal{O},\mathcal{S}}$] Let $\mathcal{S} \subset \mathbb{Z}$ be a finite set, $\mathcal{O}\subset \mathbb{R}_+^\mathcal{S}$ and $A\in   \mathscr{A}_{\mathcal{O},\mathcal{S}} $. We set
 $$
 \| A\|_{\mathscr{A}_{\mathcal{O},\mathcal{S}}^{\mathrm{sup}}} := \sup_{\xi \in \mathcal{O}} \| A(\xi) \|_{\mathscr{A}_{\mathcal{S}}^{\mathrm{cste}}},
\quad 
 \| A\|_{\mathscr{A}_{\mathcal{O},\mathcal{S}}^{\mathrm{lip}}} := \sup_{\substack{\xi,\zeta \in \mathcal{O}\\ \xi \neq \zeta}} \frac{\| A(\xi) - A(\zeta)  \|_{\mathscr{A}_{\mathcal{S}}^{\mathrm{cste}}}}{\| \xi -\zeta \|_{\ell^1}},
 $$
 $$
\| A\|_{\mathscr{A}_{\mathcal{O},\mathcal{S}}^{\mathrm{tot}}} := \| A\|_{\mathscr{A}_{\mathcal{O},\mathcal{S}}^{\mathrm{sup}}}  +\| A\|_{\mathscr{A}_{\mathcal{O},\mathcal{S}}^{\mathrm{lip}}}. 
$$
\end{definition}

\begin{definition}[Some norms for $ \mathscr{L}_{\mathcal{O},\mathcal{S}}$] Let $\mathcal{S} \subset \mathbb{Z}$ be a finite set, $\mathcal{O}\subset \mathbb{R}_+^\mathcal{S}$ and $L\in   \mathscr{L}_{\mathcal{O},\mathcal{S}} $. We set
 $$
 \| L\|_{\mathscr{L}_{\mathcal{O},\mathcal{S}}^{\mathrm{sup}}} := \sup_{\xi \in \mathcal{O}} \| L(\xi) \|_{\mathscr{L}_{\mathcal{S}}^{\mathrm{cste}}},
\quad 
 \| L\|_{\mathscr{L}_{\mathcal{O},\mathcal{S}}^{\mathrm{lip}}} := \sup_{\substack{\xi,\zeta \in \mathcal{O}\\ \xi \neq \zeta}} \frac{\| L(\xi) - L(\zeta)  \|_{\mathscr{L}_{\mathcal{S}}^{\mathrm{cste}}}}{\| \xi -\zeta \|_{\ell^1}},
 $$
 $$
\| L\|_{\mathscr{L}_{r,\mathcal{O},\mathcal{S}}^{\mathrm{tot}}} := \| L\|_{\mathscr{L}_{\mathcal{O},\mathcal{S}}^{\mathrm{sup}}}  +  r^2\| L\|_{\mathscr{L}_{\mathcal{O},\mathcal{S}}^{\mathrm{lip}}}. 
$$
\end{definition}

\begin{definition}[Norms for $\mathscr{Q}_{\mathcal{O},\mathcal{S}}$] Let $\mathcal{S} \subset \mathbb{Z}$ be a finite set, $\mathcal{O}\subset \mathbb{R}_+^\mathcal{S}$, $Q\in   \mathscr{Q}_{\mathcal{O},\mathcal{S}} $ and $r>0$. We set
 $$
 \| Q\|_{\mathscr{Q}_{\mathcal{O},\mathcal{S}}^{\mathrm{sup}}} := \sup_{\xi \in \mathcal{O}} \| Q(\xi) \|_{\mathscr{Q}_{\mathcal{S}}^{\mathrm{cste}}},
\quad 
 \| Q\|_{\mathscr{Q}_{\mathcal{O},\mathcal{S}}^{\mathrm{lip}}} := \sup_{\substack{\xi,\zeta \in \mathcal{O}\\ \xi \neq \zeta}} \frac{\| Q(\xi) - Q(\zeta)  \|_{\mathscr{Q}_{\mathcal{S}}^{\mathrm{cste}}}}{\| \xi -\zeta \|_{\ell^1}},
 $$
 $$
\| Q\|_{\mathscr{Q}_{r,\mathcal{O},\mathcal{S}}^{\mathrm{tot}}} := \| Q\|_{\mathscr{Q}_{\mathcal{O},\mathcal{S}}^{\mathrm{sup}}}  + r^2 \| Q\|_{\mathscr{Q}_{\mathcal{O},\mathcal{S}}^{\mathrm{lip}}}. 
$$
\end{definition}

The elements of $\mathscr{L}_{\mathcal{O},\mathcal{S}}$ are frequencies on which we need strong Lipschitz estimates to be able to prove small divisor estimates (see Lemma \ref{lem:pd_tech}). The most critical terms appear when we "recenter" an element of $\mathscr{Q}_{\mathcal{O},\mathcal{S}}$ (see Proposition \ref{prop:opennormal} and in particular equations \eqref{eq:sans_doutes_le_truc_le_plus_important_du_papier} and \eqref{eq:dir2}). Therefore, we now introduce a more subtle Lipschitz norm on $\mathscr{L}_{\mathcal{O},\mathcal{S}}$ allowing to control these critical terms.


\begin{definition}[Directional Lipschitz norms for $\mathscr{L}_{\mathcal{O},\mathcal{S}}$] \label{def:dirlip}  Let $\mathcal{S} \subset \mathbb{Z}$ be a finite set, $\mathcal{O}\subset \mathbb{R}_+^\mathcal{S}$, $L\in   \mathscr{L}_{\mathcal{O},\mathcal{S}} $. We set
 $$
\| L \|_{\mathscr{L}_{\mathcal{O},\mathcal{S}}^{\mathrm{dir}}} := \sup_{i,k\in \mathbb{Z}} \sup_{(\xi,\zeta) \in \Delta_i\mathcal{O}}   \frac{|L_k(\xi) -L_k(\zeta)  | }{ | \xi_i - \zeta_i |} \Gamma_{k,i}^{-1}.
$$
where (recalling that $\Lambda_{k,i}$ is defined by \eqref{eq:defLambda})
\begin{equation}
\label{eq:def_Gammaki}
\Gamma_{k,i} := \Lambda_{k,i}  \vee \frac1{  \langle k\rangle^{\delta}}
\end{equation}
and
\begin{equation}
\label{eq:def_deltaOi}
\Delta_i \mathcal{O} := \{ (\xi,\zeta) \in \mathcal{O}^2 \ | \  \xi \neq \zeta \quad \mathrm{and} \quad \forall k\neq i, \ \xi_k = \zeta_k \}.
\end{equation}
\end{definition}

\begin{definition}[A well suited norm on $\mathscr{N}_{\mathcal{O},\mathcal{S}}$] \label{def:N_norm}
Let $\mathcal{S} \subset \mathbb{Z}$ be a finite set, $\mathcal{O}\subset \mathbb{R}_+^\mathcal{S}$, $N=A+L+Q\in \mathscr{N}_{\mathcal{O},\mathcal{S}}$ and $r>0$.
We set
$$
\| N \|_{\mathscr{N}_{r,\mathcal{O},\mathcal{S}}^{\mathrm{tot}}} := \| A \|_{ \mathscr{A}_{\mathcal{O},\mathcal{S}}^{\mathrm{tot}} }  \vee \|  L   \|_{\mathscr{L}_{r,\mathcal{O},\mathcal{S}}^{\mathrm{tot}}} \vee \|  L   \|_{\mathscr{L}_{\mathcal{O},\mathcal{S}}^{\mathrm{dir}}} \vee  3  \, \|  Q  \|_{ \mathscr{Q}_{r,\mathcal{O},\mathcal{S}}^{\mathrm{tot}} }.
$$
\end{definition}
\begin{remark}\label{rem:r2}
We can notice that, in $\| N \|_{\mathscr{N}_{r,\mathcal{O},\mathcal{S}}^{\mathrm{tot}}} $, we put a factor $r^2$ in front of $ \|  L   \|_{\mathscr{L}_{r,\mathcal{O},\mathcal{S}}^{\mathrm{lip}}}$ and $\|  Q  \|_{ \mathscr{Q}_{r,\mathcal{O},\mathcal{S}}^{\mathrm{lip}} }$ but not in front of $\|  A  \|_{ \mathscr{Q}_{r,\mathcal{O},\mathcal{S}}^{\mathrm{lip}} }$ and $ \|  L   \|_{\mathscr{L}_{\mathcal{O},\mathcal{S}}^{\mathrm{dir}}}$. This extra factor is natural (and indeed useful in many estimations) since these Lipschitz norms contain $\| \xi -\zeta \|_{\ell^1}^{-1}$ that will be of order $r^2$. Nevertheless, in section \ref{sec:NR}, we need to prove some non resonance properties on the frequencies $\omega^{(\infty)}_j$ obtained as the limit when $p\to\infty$ of  $\omega^{(p)}_j$, the frequencies of the finite dimensional tori constructed as intermediates. It turns that the associated radii $r_p$ tends to zero when $p\to\infty$ in such way, with the extra factor $r^2$, we lose all control on the Lipschitz property of $\omega^{(\infty)}_j(\xi)$, something that we need in section \ref{sec:NR} (see step 1 of the proof of Lemma \ref{lem:full_meas_sharp}). 
\end{remark}

\subsubsection{Embeddings and projections}

First, by definition of the the norms, we get the following straightforward embedding estimates.
\begin{lemma}[Pointwise Embeddings] \label{lem:ptw}Let $\mathcal{S}\subset \mathbb{Z}$ be a finite set. Then for all $r\in (0,1]$ and $\eta \leq \eta_{\max}$, $\mathscr{A}^{\mathrm{cste}}$, $\mathscr{L}^{\mathrm{cste}}_{\mathcal{S}}$ and $\mathscr{Q}^{\mathrm{cste}}_{\mathcal{S}}$ are subspaces of $\mathscr{H}_{\eta,r,\mathcal{S}}^{\mathrm{cste}}$. Moreover, the associated embeddings enjoy the following continuity estimates
$$
r^2\| A \|_{\mathscr{A}^{\mathrm{cste}}} \lesssim \| A \|_{\mathscr{H}_{\eta,r,\mathcal{S}}^{\mathrm{cste}}} \lesssim \| A \|_{\mathscr{A}^{\mathrm{cste}}}, \quad \| L \|_{\mathscr{H}_{\eta,r,\mathcal{S}}^{\mathrm{cste}}} \sim_{\mathcal{S}} r^2\| L \|_{\mathscr{L}^{\mathrm{cste}}}, \quad \| Q \|_{\mathscr{H}_{\eta,r,\mathcal{S}}^{\mathrm{cste}}} \sim_{\mathcal{S}} r^4 \| Q \|_{\mathscr{Q}^{\mathrm{cste}}}
$$
for all $(A,L,Q)\in \mathscr{A}^{\mathrm{cste}} \times \mathscr{L}^{\mathrm{cste}}_{\mathcal{S}} \times \mathscr{Q}^{\mathrm{cste}}_{\mathcal{S}}$.
\end{lemma}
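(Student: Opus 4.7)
The strategy is a direct computation: we expand each special integrable Hamiltonian in the monomial basis underlying the definition of $\mathscr{H}^{\mathrm{cste}}_{\eta,r,\mathcal{S}}$, identify the non-zero coefficients $P_{n,\boldsymbol{k},\boldsymbol{m}}^{\boldsymbol{\ell},\boldsymbol{\sigma}}$, check reality/symmetry/mass/momentum, compute the weight $\Theta_{\boldsymbol{\ell}}$ on each piece, and finally take a sup. The fact that $\eta_0 \leq \eta \leq \eta_{\max}$ and $r\le 1$ makes all the $\eta$- and $r$-dependent prefactors into harmless constants, and the finiteness of $\mathcal{S}$ absorbs any $\langle j\rangle$ for $j\in\mathcal{S}$ into a constant depending on $\mathcal{S}$ only.

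For $A=A_0+A_1\mu\in\mathscr{A}^{\mathrm{cste}}$ the only non-zero coefficients are at $q=\|\boldsymbol{k}\|_{\ell^1}=\|\boldsymbol{m}\|_{\ell^1}=0$ with $n\in\{0,1\}$, where $\Theta_{\boldsymbol{\ell}}=1$. The norm is thus $\|A\|_{\mathscr{H}^{\mathrm{cste}}_{\eta,r,\mathcal{S}}}=\max(|A_0|,|A_1|e^{2\eta}r^2)$, which, using $r\le1$ and $\eta_0\le\eta\le\eta_{\max}$, is bounded above by a constant times $\|A\|_{\mathscr{A}^{\mathrm{cste}}}$ and below by a constant times $r^2\|A\|_{\mathscr{A}^{\mathrm{cste}}}$.

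For $L=\sum_k L_k Y_k\in\mathscr{L}^{\mathrm{cste}}_{\mathcal{S}}$ one separates the sum according to whether $k\in\mathcal{S}$ or $k\in\mathcal{S}^c$. The terms with $k\in\mathcal{S}$ produce coefficients at $n=0$, $\boldsymbol{m}=e_k$, $q=0$, contributing $|L_k|e^{2\eta}r^2$, absorbed into an $\mathcal{S}$-dependent constant since there are only finitely many such $k$. The terms with $k\in\mathcal{S}^c$ produce coefficients at $n=0$, $\boldsymbol{m}=0$, $q=2$, $\boldsymbol{\ell}=(k,k)$, $\boldsymbol{\sigma}=(+1,-1)$ (and its transpose, by the symmetry condition). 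Here a short computation from \eqref{eq:theta} gives $\Theta_{(k,k)}=\langle k\rangle^{-\delta}$ (the first two factors reduce to $1$ and $\varpi_k/\varpi_k=1$, while the third equals $\langle k\rangle^{-\delta}$), so the contribution is $|L_k|e^{4\eta}r^2\langle k\rangle^\delta$. Combining the two cases yields $\|L\|_{\mathscr{H}^{\mathrm{cste}}_{\eta,r,\mathcal{S}}}\sim_\mathcal{S} r^2\|L\|_{\mathscr{L}^{\mathrm{cste}}}$.

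The only place where work is really needed is the quartic piece $Q=\sum_{k,\ell}Q_{k,\ell}Y_kY_\ell+\sum_k Q_k Y_k\mu+Q_\emptyset\mu^2$. The $\mu^2$ and $Y_k\mu$ ($k\in\mathcal{S}$) terms are handled exactly as above. The $Y_k\mu$ term with $k\in\mathcal{S}^c$ (resp.\ the $Y_kY_\ell$ term with $k\in\mathcal{S}$, $\ell\in\mathcal{S}^c$) reduces, by the same $\Theta_{(k,k)}=\langle k\rangle^{-\delta}$ calculation, to $|Q_k|\langle k\rangle^\delta r^4$ (resp.\ $|Q_{k,\ell}|\langle\ell\rangle^\delta r^4$), matching the corresponding $\mathscr{Q}^{\mathrm{cste}}$ contribution up to $\mathcal{S}$-dependent constants. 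The delicate case is $Y_kY_\ell$ with $k,\ell\in\mathcal{S}^c$: after symmetrization the multi-index is $\boldsymbol{\ell}=(k,k,\ell,\ell)$, $\boldsymbol{\sigma}=(+1,-1,+1,-1)$. Assuming without loss of generality $|k|\ge|\ell|$, the three ingredients in \eqref{eq:theta} become respectively $1$, $(\varpi^\flat_\ell)^2\varpi_k/\varpi_k=(\varpi^\flat_\ell)^2\ge 1$ (since $s\ge1$), and $\langle k\rangle^{4\delta}\langle\ell\rangle^{4\delta}/\langle k\rangle^{5\delta}=\langle\ell\rangle^{4\delta}/\langle k\rangle^\delta$, so $\Theta_{\boldsymbol{\ell}}=1\wedge\langle\ell\rangle^{4\delta}/\langle k\rangle^\delta$. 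Since $|k|\ge|\ell|$ also gives $\langle k\rangle^{4\delta}/\langle\ell\rangle^\delta\ge1$, this matches exactly the $\Lambda_{k,\ell}$ of Definition \ref{def:quar}. The case $k=\ell\in\mathcal{S}^c$ gives $\Theta_{\boldsymbol{\ell}}=1=\Lambda_{k,k}$ by an analogous check. Assembling all contributions, using $\eta\le\eta_{\max}$ and $r\le1$, one obtains $\|Q\|_{\mathscr{H}^{\mathrm{cste}}_{\eta,r,\mathcal{S}}}\sim_\mathcal{S} r^4\|Q\|_{\mathscr{Q}^{\mathrm{cste}}}$, as required. The main (and only non-trivial) step is the verification $\Theta_{(k,k,\ell,\ell)}=\Lambda_{k,\ell}$, which is the reason for the peculiar looking normalization $\Lambda_{k,\ell}$ in Definition \ref{def:quar}.
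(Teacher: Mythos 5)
Your proof is correct, and it is exactly the kind of direct coefficient-by-coefficient verification the paper has in mind when it introduces the lemma with the phrase ``by definition of the norms, we get the following straightforward embedding estimates'' and gives no further details. The only slip is a harmless typo: for $L_k Y_k$ with $k\in\mathcal{S}^c$ the exponent is $\eta(\|\boldsymbol{k}\|_{\ell^1}+2\|\boldsymbol{m}\|_{\ell^1}+q+2n)=2\eta$ (since $q=2$, all else zero), so the contribution should read $|L_k|\,e^{2\eta}r^2\langle k\rangle^\delta$ rather than $|L_k|\,e^{4\eta}r^2\langle k\rangle^\delta$; this does not affect the conclusion since $e^{2\eta}$ and $e^{4\eta}$ are both bounded between $\eta_0$- and $\eta_{\max}$-dependent constants. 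Your identification $\Theta_{(k,k)}=\langle k\rangle^{-\delta}$ and the key computation $\Theta_{(k,k,\ell,\ell)}=\Lambda_{k,\ell}$ for $k,\ell\in\mathcal{S}^c$ (using $|k|\ge|\ell|$ and $s\ge1$) are exactly what makes the definitions of $\mathscr{L}^{\mathrm{cste}}_{\mathcal{S}}$ and $\mathscr{Q}^{\mathrm{cste}}_{\mathcal{S}}$ compatible with $\mathscr{H}^{\mathrm{cste}}_{\eta,r,\mathcal{S}}$, and you correctly observed that this is why $\Lambda_{k,\ell}$ is defined the way it is.
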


\begin{remark} As a consequence of Lemma \ref{lem:ouf_cest_des_fonctions}, it follows that $A,L,Q$ define smooth functions on some annulus of $\ell_\varpi^2$.
\end{remark}

\begin{corollary}\label{cor:emb}[Embeddings with parameters and definition of the projections] Let $\mathcal{S} \subset \mathbb{Z}$ be a finite set,  $r\in (0,1/4)$, $\mathcal{O}\subset ((4r)^2;1)^\mathcal{S}$, $\eta \leq \eta_{\max}$. Then $\mathscr{A}_{\mathcal{O},\mathcal{S}},\mathscr{L}_{\mathcal{O},\mathcal{S}},\mathscr{Q}_{\mathcal{O},\mathcal{S}},\mathscr{N}_{\mathcal{O},\mathcal{S}}$ are subspaces of  $\mathscr{H}_{\eta,r,\mathcal{O},\mathcal{S}}$. We denote, respectively, by $\Pi_\mathscr{A},\Pi_{\mathscr{A}_1} ,\Pi_\mathscr{L},\Pi_\mathscr{Q},\Pi_{\mathrm{nor}}$ the associated projection (defined by restriction of the coefficients).
\end{corollary}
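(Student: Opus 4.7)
The plan is to lift the pointwise embeddings of Lemma \ref{lem:ptw} to the parameter-dependent setting via composition, and then verify that the natural coordinate projections from $\mathscr{H}^{\mathrm{cste}}_{\eta,r,\mathcal{S}}$ onto $\mathscr{A}^{\mathrm{cste}}$, $\mathscr{L}^{\mathrm{cste}}_{\mathcal{S}}$, $\mathscr{Q}^{\mathrm{cste}}_{\mathcal{S}}$ are bounded linear maps. Once those two ingredients are in hand, the corollary follows formally from the definition $\mathscr{H}_{\eta,r,\mathcal{O},\mathcal{S}} = \mathrm{Lip}(\mathcal{O};\mathscr{H}^{\mathrm{cste}}_{\eta,r,\mathcal{S}})$.

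First, by Lemma \ref{lem:ptw} the inclusions $\iota_{\mathscr{A}}:\mathscr{A}^{\mathrm{cste}}\hookrightarrow \mathscr{H}^{\mathrm{cste}}_{\eta,r,\mathcal{S}}$, $\iota_{\mathscr{L}}:\mathscr{L}^{\mathrm{cste}}_{\mathcal{S}}\hookrightarrow \mathscr{H}^{\mathrm{cste}}_{\eta,r,\mathcal{S}}$, $\iota_{\mathscr{Q}}:\mathscr{Q}^{\mathrm{cste}}_{\mathcal{S}}\hookrightarrow \mathscr{H}^{\mathrm{cste}}_{\eta,r,\mathcal{S}}$ are bounded linear maps, with operator norms controlled by $1$, $r^2$ and $r^4$ respectively (up to a constant depending on $\mathcal{S}$). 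Given $A\in \mathscr{A}_{\mathcal{O},\mathcal{S}}$, composition $\iota_{\mathscr{A}}\circ A:\mathcal{O}\to \mathscr{H}^{\mathrm{cste}}_{\eta,r,\mathcal{S}}$ is Lipschitz with Lipschitz constant controlled by $\| A\|_{\mathscr{A}^{\mathrm{lip}}_{\mathcal{O},\mathcal{S}}}$, so it defines an element of $\mathscr{H}_{\eta,r,\mathcal{O},\mathcal{S}}$; the same argument works verbatim for $\mathscr{L}_{\mathcal{O},\mathcal{S}}$ and $\mathscr{Q}_{\mathcal{O},\mathcal{S}}$. For $\mathscr{N}_{\mathcal{O},\mathcal{S}}$, the decomposition \eqref{eq:decomp} is realized as a direct sum inside $\mathscr{H}_{\eta,r,\mathcal{O},\mathcal{S}}$ because the coefficients of an $A\in \mathscr{A}^{\mathrm{cste}}$, of an $L\in \mathscr{L}^{\mathrm{cste}}_{\mathcal{S}}$ and of a $Q\in \mathscr{Q}^{\mathrm{cste}}_{\mathcal{S}}$ are supported on pairwise disjoint sets of indices $(n,\boldsymbol{k},\boldsymbol{m},\boldsymbol{\ell},\boldsymbol{\sigma})$ when written in the canonical coordinates: after expanding $Y_k = \mathbbm{1}_{k\in\mathcal{S}}y_k+\mathbbm{1}_{k\in\mathcal{S}^c}|u_k|^2$, the monomial structure forces $\boldsymbol{k}=0$, each $\boldsymbol{\ell}_i$ to appear paired with opposite signs and the triple $(q, \#\boldsymbol{m}, n)$ to encode the total degree (degree $\leq 2$ for $\mathscr{A}$, degree $2$ and one $Y_k$ for $\mathscr{L}$, degree $4$ and two $Y_k$'s for $\mathscr{Q}$).

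For the projections, one defines, at the level of coefficients in $\mathscr{H}^{\mathrm{cste}}_{\eta,r,\mathcal{S}}$, the maps $\Pi_{\mathscr{A}}$, $\Pi_{\mathscr{A}_1}$, $\Pi_{\mathscr{L}}$, $\Pi_{\mathscr{Q}}$, $\Pi_{\mathrm{nor}}$ by keeping only those coefficients $P^{\boldsymbol{\ell},\boldsymbol{\sigma}}_{n,\boldsymbol{k},\boldsymbol{m}}$ which fall in the corresponding support just described (and, for $\Pi_{\mathscr{Q}}$ and $\Pi_{\mathscr{L}}$, symmetrizing the surviving coefficients in the appropriate way so as to realize the $Y_k$-polynomial form of Definitions \ref{def:quar} and \ref{def:lin}). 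Since each such projection is a restriction on the coefficient multi-indices, the resulting bounds
\[
\|\Pi_{\mathscr{A}} P\|_{\mathscr{A}^{\mathrm{cste}}}\lesssim r^{-2}\|P\|_{\mathscr{H}^{\mathrm{cste}}_{\eta,r,\mathcal{S}}},\quad \|\Pi_{\mathscr{L}} P\|_{\mathscr{L}^{\mathrm{cste}}}\lesssim_{\mathcal{S}} r^{-2}\|P\|_{\mathscr{H}^{\mathrm{cste}}_{\eta,r,\mathcal{S}}},\quad \|\Pi_{\mathscr{Q}} P\|_{\mathscr{Q}^{\mathrm{cste}}}\lesssim_{\mathcal{S}} r^{-4}\|P\|_{\mathscr{H}^{\mathrm{cste}}_{\eta,r,\mathcal{S}}}
\]
are immediate from the definitions of the norms together with the weight $\Theta_{\boldsymbol{\ell}}$, which on the integrable symmetric monomials appearing in $\mathscr{Q}^{\mathrm{cste}}_{\mathcal{S}}$ reduces to the factor $\Lambda_{k,\ell}$ of \eqref{eq:defLambda} (and analogously for $\mathscr{L}^{\mathrm{cste}}$). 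Each projection is therefore a bounded linear map at the pointwise level, and post-composing with a Lipschitz function $\mathcal{O}\to\mathscr{H}^{\mathrm{cste}}_{\eta,r,\mathcal{S}}$ yields a Lipschitz function into the target space, so the parameter-dependent projections $\Pi_{\mathscr{A}},\Pi_{\mathscr{A}_1},\Pi_{\mathscr{L}},\Pi_{\mathscr{Q}},\Pi_{\mathrm{nor}}:\mathscr{H}_{\eta,r,\mathcal{O},\mathcal{S}}\to \mathscr{A}_{\mathcal{O},\mathcal{S}},\mathscr{L}_{\mathcal{O},\mathcal{S}},\mathscr{Q}_{\mathcal{O},\mathcal{S}},\mathscr{N}_{\mathcal{O},\mathcal{S}}$ are well defined.

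There is essentially no hard step here: the work was already done in Lemma \ref{lem:ptw} and in the careful tuning of the weights $\Theta_{\boldsymbol{\ell}}$ and $\Lambda_{k,\ell}$. The only genuine verification is the bookkeeping check that a monomial $\prod_i v_{\boldsymbol{\ell}_i}^{\boldsymbol{\sigma}_i}\,y^{\boldsymbol{m}}z^{\boldsymbol{k}}\mu^n$ of degree $\leq 4$ belongs to the range of $\iota_{\mathscr{A}}+\iota_{\mathscr{L}}+\iota_{\mathscr{Q}}$ if and only if it satisfies the integrability condition $\boldsymbol{k}=0$ and the pairing condition $\exists\varphi\in\mathfrak{S}_{\# \boldsymbol{\ell}},\,\varphi\boldsymbol{\ell}=\boldsymbol{\ell},\,\varphi\boldsymbol{\sigma}=-\boldsymbol{\sigma}$, which is a direct inspection of the definitions \ref{def:quar}, \ref{def:lin} and \ref{def:cst}. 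This identifies the images and makes the projection formulas unambiguous.
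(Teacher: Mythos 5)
Your argument is correct and matches the (unwritten) reasoning the paper relies on: the paper treats this as an immediate consequence of Lemma~\ref{lem:ptw} and supplies no proof, and you fill in exactly the two routine steps needed — post-composing the pointwise embeddings/projections with a Lipschitz map $\mathcal{O}\to\mathscr{H}^{\mathrm{cste}}_{\eta,r,\mathcal{S}}$, and checking that the $\mathscr{A}$-, $\mathscr{L}$-, $\mathscr{Q}$-monomials occupy pairwise disjoint, permutation- and conjugation-stable index sets so that restriction of coefficients is well defined and lands in the right space with the norm equivalences of Lemma~\ref{lem:ptw}. One small imprecision in the parenthetical bookkeeping: $\mathscr{Q}^{\mathrm{cste}}_{\mathcal{S}}$ contains monomials with zero, one, or two factors of $Y_k$ (namely $\mu^2$, $Y_k\mu$, $Y_kY_\ell$), not uniformly two, but this does not affect the disjointness of the index sets nor the rest of the argument.
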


\begin{corollary}[Equivalence of the norms] \label{cor:emb_and_proj} Let $\mathcal{S} \subset \mathbb{Z}$ be a finite set,  $r\in (0,1/4)$, $\mathcal{O}\subset ((4r)^2;1)^\mathcal{S}$, $\eta \leq \eta_{\max}$, $A \in \mathscr{A}_{\mathcal{O},\mathcal{S}}$, $L \in \mathscr{Q}_{\mathcal{O},\mathcal{S}}$ and $Q \in \mathscr{Q}_{\mathcal{O},\mathcal{S}}$. Then, we have
$$
r^4 \|  A  \|_{ \mathscr{A}_{r,\mathcal{O},\mathcal{S}}^{\mathrm{tot}} } \lesssim_{\mathcal{S}} \| A\|_{\mathscr{H}_{\eta,r,\mathcal{O},\mathcal{S}}^\mathrm{tot}} \lesssim_{\mathcal{S}}  \|  A  \|_{ \mathscr{A}_{r,\mathcal{O},\mathcal{S}}^{\mathrm{tot}} },
$$
$$
 \| L\|_{\mathscr{H}_{\eta,r,\mathcal{O},\mathcal{S}}^\mathrm{tot}} \sim_{\mathcal{S}} r^2 \|  L  \|_{ \mathscr{L}_{r,\mathcal{O},\mathcal{S}}^{\mathrm{tot}} } \quad \mathrm{and} \quad \| Q\|_{\mathscr{H}_{\eta,r,\mathcal{O},\mathcal{S}}^\mathrm{tot}} \sim_{\mathcal{S}} r^4 \|  Q  \|_{ \mathscr{Q}_{r,\mathcal{O},\mathcal{S}}^{\mathrm{tot}} }.
$$
\end{corollary}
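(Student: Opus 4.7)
The statement is essentially a direct lifting of the pointwise embedding Lemma \ref{lem:ptw} to the Lipschitz setting, combined with the definitions of the total norms.

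My plan is to handle the three cases in parallel, noting that each of $\mathscr{A}^{\mathrm{cste}}, \mathscr{L}^{\mathrm{cste}}_{\mathcal{S}}, \mathscr{Q}^{\mathrm{cste}}_{\mathcal{S}}$ is a linear subspace of $\mathscr{H}^{\mathrm{cste}}_{\eta,r,\mathcal{S}}$ with the norm equivalences provided by Lemma \ref{lem:ptw}. First, for any $F \in \mathscr{A}_{\mathcal{O},\mathcal{S}}$ (resp.\ $\mathscr{L}_{\mathcal{O},\mathcal{S}}, \mathscr{Q}_{\mathcal{O},\mathcal{S}}$) and any $\xi \in \mathcal{O}$, $F(\xi)$ belongs to the corresponding constant space, so applying Lemma \ref{lem:ptw} pointwise and taking the supremum in $\xi$ yields the sup-norm equivalences
$$
r^2 \|A\|^{\mathrm{sup}}_{\mathscr{A}} \lesssim \|A\|^{\mathrm{sup}}_{\mathscr{H}} \lesssim \|A\|^{\mathrm{sup}}_{\mathscr{A}}, \quad \|L\|^{\mathrm{sup}}_{\mathscr{H}} \sim_{\mathcal{S}} r^2 \|L\|^{\mathrm{sup}}_{\mathscr{L}}, \quad \|Q\|^{\mathrm{sup}}_{\mathscr{H}} \sim_{\mathcal{S}} r^4 \|Q\|^{\mathrm{sup}}_{\mathscr{Q}}.
$$

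Next, since each of the three subspaces is linear, the difference $F(\xi) - F(\zeta)$ still lies in the constant subspace for any $\xi, \zeta \in \mathcal{O}$. Applying Lemma \ref{lem:ptw} to this difference, dividing by $\|\xi-\zeta\|_{\ell^1}$, and taking the supremum over $(\xi,\zeta) \in \mathcal{O}^2$ with $\xi\neq \zeta$ produces the analogous Lipschitz-norm equivalences:
$$
r^2 \|A\|^{\mathrm{lip}}_{\mathscr{A}} \lesssim \|A\|^{\mathrm{lip}}_{\mathscr{H}} \lesssim \|A\|^{\mathrm{lip}}_{\mathscr{A}}, \quad \|L\|^{\mathrm{lip}}_{\mathscr{H}} \sim_{\mathcal{S}} r^2 \|L\|^{\mathrm{lip}}_{\mathscr{L}}, \quad \|Q\|^{\mathrm{lip}}_{\mathscr{H}} \sim_{\mathcal{S}} r^4 \|Q\|^{\mathrm{lip}}_{\mathscr{Q}}.
$$

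Finally, I combine the sup and Lipschitz bounds using the definitions $\|\cdot\|^{\mathrm{tot}}_{\mathscr{H}} = \|\cdot\|^{\mathrm{sup}}_{\mathscr{H}} + r^2 \|\cdot\|^{\mathrm{lip}}_{\mathscr{H}}$, $\|A\|^{\mathrm{tot}}_{\mathscr{A}} = \|A\|^{\mathrm{sup}}_{\mathscr{A}} + \|A\|^{\mathrm{lip}}_{\mathscr{A}}$, $\|L\|^{\mathrm{tot}}_{\mathscr{L}_r} = \|L\|^{\mathrm{sup}}_{\mathscr{L}} + r^2 \|L\|^{\mathrm{lip}}_{\mathscr{L}}$, and analogously for $\mathscr{Q}$. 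For instance, for $L$ one obtains $\|L\|^{\mathrm{tot}}_{\mathscr{H}} \sim_{\mathcal{S}} r^2 \|L\|^{\mathrm{sup}}_{\mathscr{L}} + r^4 \|L\|^{\mathrm{lip}}_{\mathscr{L}} = r^2 \|L\|^{\mathrm{tot}}_{\mathscr{L}_r}$, and similarly $\|Q\|^{\mathrm{tot}}_{\mathscr{H}} \sim_{\mathcal{S}} r^4 \|Q\|^{\mathrm{tot}}_{\mathscr{Q}_r}$. For $A$ the two-sided estimate $r^2 \|A\|^{\mathrm{sup}}_{\mathscr{A}} + r^4 \|A\|^{\mathrm{lip}}_{\mathscr{A}} \leq \|A\|^{\mathrm{tot}}_{\mathscr{H}} \leq \|A\|^{\mathrm{sup}}_{\mathscr{A}} + r^2 \|A\|^{\mathrm{lip}}_{\mathscr{A}}$ together with $r \leq 1/4$ yields $r^4 \|A\|^{\mathrm{tot}}_{\mathscr{A}} \lesssim_{\mathcal{S}} \|A\|^{\mathrm{tot}}_{\mathscr{H}} \lesssim_{\mathcal{S}} \|A\|^{\mathrm{tot}}_{\mathscr{A}}$.

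There is no real obstacle here: the only subtlety is bookkeeping of the $r$-powers between $\mathrm{sup}$ and $\mathrm{lip}$ contributions and the fact that the total norm on $\mathscr{A}$ has no internal $r^2$ weight while those on $\mathscr{L}_r$ and $\mathscr{Q}_r$ do, which is precisely what produces the asymmetric $r^4$-loss in the $A$-estimate as opposed to the clean $\sim_{\mathcal{S}}$ equivalence for $L$ and $Q$.
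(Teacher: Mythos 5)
Your proof is correct and is precisely the intended argument: the paper states this as a corollary of the pointwise Lemma \ref{lem:ptw} with no written proof, and lifting the pointwise equivalences to the Lipschitz norms via linearity of the subspaces, then bookkeeping the $r$-weights in the $\mathrm{tot}$ definitions, is the natural (and only) route. Your observation that the asymmetric $r^4$-loss for $A$ comes from the absence of an internal $r^2$ weight in $\|\cdot\|_{\mathscr{A}^{\mathrm{tot}}}$ (together with the one-sided $r^2$ in Lemma \ref{lem:ptw} for $A$) correctly explains the shape of the estimate.
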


\begin{lemma}[Continuity estimate for $\Pi_{\mathrm{nor}}$] \label{lem:continuity_estimate_pinor} Let $\mathcal{S} \subset \mathbb{Z}$ be a finite set,  $r\in (0,1/4)$, $\mathcal{O}\subset ((4r)^2;1)^\mathcal{S}$, $\eta \leq \eta_{\max}$ and $P \in \mathscr{H}_{\eta,r,\mathcal{O},\mathcal{S}}^\mathrm{tot}$ then we have
$$
\| \Pi_{\mathrm{nor}} P \|_{\mathscr{N}_{r,\mathcal{O},\mathcal{S}}^{\mathrm{tot}}} \lesssim_{\mathcal{S},\eta} r^{-4} \| P\|_{\mathscr{H}_{\eta,r,\mathcal{O},\mathcal{S}}^\mathrm{tot}}.
$$
\end{lemma}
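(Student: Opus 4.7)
The plan is to exploit the direct sum decomposition \eqref{eq:decomp} and estimate each piece of $\Pi_{\mathrm{nor}}P = \Pi_{\mathscr{A}}P + \Pi_{\mathscr{L}}P + \Pi_{\mathscr{Q}}P$ separately, relating the intrinsic norms on $\mathscr{A}, \mathscr{L}, \mathscr{Q}$ to the ambient $\mathscr{H}$ norm via Corollary~\ref{cor:emb_and_proj}. The key elementary observation is that each of the three projections is defined by restriction of coefficients to a specific subset of multi-indices, so it acts as a contraction on $\mathscr{H}^{\mathrm{cste}}_{\eta,r,\mathcal{S}}$ (which is a weighted $\ell^\infty$-type norm on the coefficients), and hence also on the ${\mathrm{sup}}$, ${\mathrm{lip}}$ and ${\mathrm{tot}}$ variants which are all built parameter-wise from $\mathscr{H}^{\mathrm{cste}}$. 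In particular, for $X \in \{\mathscr{A},\mathscr{L},\mathscr{Q}\}$,
\begin{equation*}
\|\Pi_X P\|_{\mathscr{H}^{\mathrm{tot}}_{\eta,r,\mathcal{O},\mathcal{S}}} \leq \|P\|_{\mathscr{H}^{\mathrm{tot}}_{\eta,r,\mathcal{O},\mathcal{S}}}.
\end{equation*}

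For the three "easier" pieces, I would then chain this contraction with Corollary~\ref{cor:emb_and_proj} to obtain
\begin{equation*}
\|\Pi_{\mathscr{A}}P\|_{\mathscr{A}^{\mathrm{tot}}} \lesssim_{\mathcal{S},\eta} r^{-4}\|P\|_{\mathscr{H}^{\mathrm{tot}}}, \qquad \|\Pi_{\mathscr{L}}P\|_{\mathscr{L}^{\mathrm{tot}}_r} \lesssim_{\mathcal{S},\eta} r^{-2}\|P\|_{\mathscr{H}^{\mathrm{tot}}}, \qquad \|\Pi_{\mathscr{Q}}P\|_{\mathscr{Q}^{\mathrm{tot}}_r} \lesssim_{\mathcal{S},\eta} r^{-4}\|P\|_{\mathscr{H}^{\mathrm{tot}}},
\end{equation*}
the worst loss being $r^{-4}$. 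The $\eta$-dependence of the implicit constants comes from the exponential weights $e^{\eta(\|\boldsymbol{k}\|_{\ell^1}+2\|\boldsymbol{m}\|_{\ell^1}+q+2n)}$ in $\mathscr{H}^{\mathrm{cste}}$, which reduce to universal constants once $\eta \in [\eta_0,\eta_{\max}]$.

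It remains to control the directional Lipschitz norm of $L := \Pi_{\mathscr{L}}P$. Here I would observe that for any $(\xi,\zeta)\in\Delta_i\mathcal{O}$ (cf.~\eqref{eq:def_deltaOi}) the difference $\xi-\zeta$ has only its $i$-th component non-zero, so $\|\xi-\zeta\|_{\ell^1}=|\xi_i-\zeta_i|$; by Definition~\ref{def:lin},
\begin{equation*}
|L_k(\xi)-L_k(\zeta)| \leq \langle k\rangle^{-\delta}\|L(\xi)-L(\zeta)\|_{\mathscr{L}^{\mathrm{cste}}} \leq \langle k\rangle^{-\delta}\|L\|_{\mathscr{L}^{\mathrm{lip}}}|\xi_i-\zeta_i|.
\end{equation*}
Since $\Gamma_{k,i} \geq \langle k\rangle^{-\delta}$ by \eqref{eq:def_Gammaki}, this yields $\|L\|_{\mathscr{L}^{\mathrm{dir}}} \leq \|L\|_{\mathscr{L}^{\mathrm{lip}}}$, and the latter is already controlled by $r^{-2}\|\Pi_{\mathscr{L}}P\|_{\mathscr{L}^{\mathrm{tot}}_r} \lesssim r^{-4}\|P\|_{\mathscr{H}^{\mathrm{tot}}}$ via the previous step.

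Combining the four bounds and taking the maximum recovers the lemma. There is no genuine obstacle: this is essentially a bookkeeping statement verifying that the intrinsic norms on the integrable sector are compatible with the ambient analytic norm, modulo a loss of at most $r^{-4}$ coming from Corollary~\ref{cor:emb_and_proj}. The only slight subtlety is the directional Lipschitz norm, but it is immediately absorbed into the plain Lipschitz norm once one exploits the inequality $\Gamma_{k,i} \geq \langle k\rangle^{-\delta}$.
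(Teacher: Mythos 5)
Your proposal is correct and follows essentially the same route as the paper: reduce via Corollary~\ref{cor:emb_and_proj} to bounding $\|\Pi_{\mathscr{L}}P\|_{\mathscr{L}^{\mathrm{dir}}}$, then dominate the directional Lipschitz norm by the plain Lipschitz norm using $\Gamma_{i,k}\geq\langle k\rangle^{-\delta}$. The paper's proof is simply a more compressed version of the same bookkeeping.
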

\begin{proof} Thanks to Corollary \ref{cor:emb_and_proj}, since $r\leq 1$, by definition of the norms, it suffices to prove that
$$
\| \Pi_{\mathscr{L}} P \|_{\mathscr{L}_{\mathcal{O},\mathcal{S}}^{\mathrm{dir}}} \lesssim_{\mathcal{S}} r^{-2}\| P\|_{\mathscr{H}_{\eta,r,\mathcal{O},\mathcal{S}}^{\mathrm{lip}}}.
$$
Therefore it suffices to prove that for all $L\in \mathscr{L}_{\mathcal{O},\mathcal{S}}$, we have
\begin{equation}
\label{eq:hehe}
\| L \|_{\mathscr{L}_{\mathcal{O},\mathcal{S}}^{\mathrm{dir}}} \leq \| L\|_{\mathscr{L}_{\mathcal{O},\mathcal{S}}^{\mathrm{lip}}}.
\end{equation}
Indeed, by Corollary \ref{cor:emb_and_proj}, we will have
$$
\| \Pi_{\mathscr{L}} P \|_{\mathscr{L}_{\mathcal{O},\mathcal{S}}^{\mathrm{dir}}} \leq \| \Pi_{\mathscr{L}} P \|_{\mathscr{L}_{\mathcal{O},\mathcal{S}}^{\mathrm{lip}}} \leq r^{-2} \| \Pi_{\mathscr{L}} P \|_{\mathscr{L}_{\mathcal{O},\mathcal{S}}^{\mathrm{tot}}} \lesssim_{\mathcal{S}} r^{-4} \| P\|_{\mathscr{H}_{\eta,r,\mathcal{O},\mathcal{S}}^\mathrm{tot}}.
$$
So, from now, we focus on proving \eqref{eq:hehe}.
Let $i\in \mathcal{S}$, $k\in \mathbb{Z}$ and $\zeta,\xi \in  \Delta_i\mathcal{O}$. Noticing that, by definition of  $\Delta_i\mathcal{O}$, we have $\| \xi -\zeta \|_{\ell^1}=| \xi_i -\zeta_i |$, we have
$$
|L_k(\xi) - L_k(\zeta)| \leq \langle k \rangle^{-\delta} \| \xi -\zeta \|_{\ell^1} \| L\|_{\mathscr{L}_{\mathcal{O},\mathcal{S}}^{\mathrm{lip}}} \leq \Gamma_{i,k} | \xi_i -\zeta_i | \| L\|_{\mathscr{L}_{\mathcal{O},\mathcal{S}}^{\mathrm{lip}}}.
$$
By considering the supremum with respect to $\xi,\zeta,i$, we get, as expected, \eqref{eq:hehe}.
\end{proof}

\section{Small divisor estimates}\label{sec:PD}
\label{sec:sd}
The control of the so-called small divisors is central in the KAM theory and Birkhoff normal form theory. It quantifies the possible interaction between the various Fourier modes due to the near-resonances of linear frequencies. In our work, we need to control more small divisors than usual. Indeed we consider small divisor with four external modes whereas two external modes are sufficient in standard KAM theory. Concretely we control expressions of the form \eqref{eq:cequonveut_sd} with $d\leq4$.\\
To achieve this, we make crucial use of the fact that our frequencies, up to a rescaling, accumulate polynomially fast over integers: $\omega_j=\eps^{-2} j^2+O(j^{-\delta})$ (see for instance \eqref{eq:omega}). As a result, an estimate of small divisors with respect to the largest of the external indices involved is sufficient to obtain \eqref{eq:cequonveut_sd} (see Step 3 in the proof of Proposition \ref{prop:smalldiv}). This argument was already used in \cite{BG22}. All the small divisor estimates of this paper will rely on the following basic technical lemma. 
\begin{lemma} \label{lem:pd_tech} Let $\mathcal{S} \subset \mathbb{Z}$ be a finite set, $\mathcal{O} \subset (0,1)^{\mathcal{S}}$ be a Borel set, $\boldsymbol{k} \in \mathbb{R}^{\mathcal{S}} \setminus \{0\}$, $g: \mathcal{O} \to \mathbb{R}^{\mathcal{S}}$, $a: \mathcal{O} \to \mathbb{R}$ be some measurable functions satisfying 
\begin{equation}
\label{eq:jolie_est}
\forall i\in \mathcal{S},\forall (\xi,\zeta) \in \Delta_i\mathcal{O} ,\quad  \| \boldsymbol{k} \|_{\ell^\infty}^{-1} |a(\xi) -a(\zeta)  | + \sum_{\ell\in \mathcal{S}}  |g_\ell(\xi) -g_\ell(\zeta)  |  \leq \frac{| \xi_i - \zeta_i |}2
\end{equation}
where $ \Delta_i\mathcal{O}$ is defined by \eqref{eq:def_deltaOi}. Then for all $\gamma>0$, we have
$$
\mathrm{Leb}\Big(\Big\{ \xi \in \mathcal{O} \ | \  \big| a(\xi) + \sum_{i\in \mathcal{S}} \boldsymbol{k}_i( \xi_i +g_i(\xi)) \big| < \gamma   \Big\}\Big)\leq 4 \gamma \| \boldsymbol{k} \|_{\ell^\infty}^{-1}.
$$
\end{lemma}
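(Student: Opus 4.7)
The plan is to reduce the multidimensional measure estimate to a one-dimensional Lipschitz-inversion argument, by exploiting a preferred coordinate direction singled out by $\boldsymbol{k}$, and then close with Fubini.

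First I would choose $i_0 \in \mathcal{S}$ such that $|\boldsymbol{k}_{i_0}| = \|\boldsymbol{k}\|_{\ell^\infty}$ (this is well-defined since $\boldsymbol{k}\neq 0$ and $\mathcal{S}$ is finite), and denote by $\phi : \mathcal{O} \to \mathbb{R}$ the measurable function
\[
\phi(\xi) := a(\xi) + \sum_{i\in \mathcal{S}} \boldsymbol{k}_i \bigl(\xi_i + g_i(\xi)\bigr),
\]
whose sublevel set we want to control. The key observation is that, along a slice on which only the $i_0$-coordinate varies, $\phi$ is bi-Lipschitz.

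The precise claim is: for any $(\xi,\zeta) \in \Delta_{i_0}\mathcal{O}$, one has
\[
|\phi(\xi) - \phi(\zeta)| \geq \tfrac{1}{2} \|\boldsymbol{k}\|_{\ell^\infty} |\xi_{i_0} - \zeta_{i_0}|.
\]
Indeed, since $\xi_i = \zeta_i$ for all $i \neq i_0$, the linear part contributes exactly $\boldsymbol{k}_{i_0}(\xi_{i_0}-\zeta_{i_0})$, while the remainder satisfies, by assumption \eqref{eq:jolie_est} applied with $i=i_0$,
\[
\Bigl| a(\xi)-a(\zeta) + \sum_{\ell\in\mathcal{S}} \boldsymbol{k}_\ell\bigl(g_\ell(\xi)-g_\ell(\zeta)\bigr) \Bigr| \leq \|\boldsymbol{k}\|_{\ell^\infty}\Bigl( \|\boldsymbol{k}\|_{\ell^\infty}^{-1}|a(\xi)-a(\zeta)| + \sum_{\ell\in\mathcal{S}} |g_\ell(\xi)-g_\ell(\zeta)| \Bigr) \leq \tfrac12 \|\boldsymbol{k}\|_{\ell^\infty} |\xi_{i_0}-\zeta_{i_0}|,
\]
and a reverse triangle inequality, together with $|\boldsymbol{k}_{i_0}| = \|\boldsymbol{k}\|_{\ell^\infty}$, yields the claim.

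From this, the one-dimensional estimate is immediate: for any fixed $\hat\xi \in (0,1)^{\mathcal{S}\setminus\{i_0\}}$, the slice
\[
A_{\hat\xi} := \bigl\{ t \in (0,1) \,:\, (t,\hat\xi) \in \mathcal{O} \text{ and } |\phi(t,\hat\xi)| < \gamma \bigr\}
\]
has diameter at most $4\gamma/\|\boldsymbol{k}\|_{\ell^\infty}$ (since any two points $s,t\in A_{\hat\xi}$ satisfy $|\phi(s,\hat\xi) - \phi(t,\hat\xi)| < 2\gamma$ and hence $|s-t|\leq 4\gamma/\|\boldsymbol{k}\|_{\ell^\infty}$), so its Lebesgue measure is bounded by the same quantity. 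Finally, Fubini's theorem applied with respect to the decomposition $\xi = (\xi_{i_0},\hat\xi)$ and the trivial bound $\mathrm{Leb}((0,1)^{\mathcal{S}\setminus\{i_0\}}) \leq 1$ gives the desired inequality. There is no serious obstacle here: the whole statement is engineered so that the weighting $\|\boldsymbol{k}\|_{\ell^\infty}^{-1}$ in front of $|a(\xi)-a(\zeta)|$ in \eqref{eq:jolie_est} is exactly what makes the selected direction $i_0$ dominant.
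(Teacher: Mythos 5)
Your proof is correct and essentially identical to the paper's: both isolate the direction $i_0$ (the paper's $i_*$) where $|\boldsymbol{k}_{i_0}| = \|\boldsymbol{k}\|_{\ell^\infty}$, establish the same lower Lipschitz bound $|\phi(\xi)-\phi(\zeta)|\geq \tfrac12\|\boldsymbol{k}\|_{\ell^\infty}|\xi_{i_0}-\zeta_{i_0}|$ on slices using \eqref{eq:jolie_est}, and conclude via Tonelli/Fubini that each one-dimensional slice has measure at most $4\gamma\|\boldsymbol{k}\|_{\ell^\infty}^{-1}$. The only cosmetic difference is that you phrase the Lipschitz step as a displayed bi-Lipschitz claim about $\phi$ before applying it, while the paper inlines the reverse-triangle-inequality chain directly in the measure estimate.
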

\begin{proof} Let $\gamma>0$ and  $i_* \in \mathcal{S}$ be such that
\begin{equation}
\label{eq:le_plus_grand}
|\boldsymbol{k}_{i_*}| = \max_{ j\in \mathcal{S} }  |\boldsymbol{k}_{j}|.
\end{equation}
 Then, we note that by Tonelli's theorem, for all Borel set $\Xi\in (0,1)^{\mathcal{S}}$ and all $j\in \mathcal{S}$, we have
$$
\mathrm{Leb} (\Xi) = \int_{ (\xi)_{i\neq j} \in   (0,1)^{\mathcal{S}\setminus \{ j \}}}  \mathrm{Leb}( \{ \xi_j \in \mathbb{R} \ | \ \xi \in \Xi \})  \, \mathrm{d}(\xi)_{i\neq j} \leq \sup_{ (\xi)_{i\neq j} \in   (0,1)^{\mathcal{S}\setminus \{ j \}}}  \mathrm{Leb}( \underbrace{\{ \xi_j \in \mathbb{R} \ | \ \xi \in \Xi \}}_{=: \Xi^{(j)}_{(\xi)_{i\neq j} }}) .
$$
Applying this estimate with $j=i_*$ and
$$
\Xi := \big\{ \xi \in \mathcal{O} \ | \  | h(\xi) | < \gamma   \big\} \quad \mathrm{where} \quad h(\xi) := a(\xi) + \sum_{i\in \mathcal{S}} \boldsymbol{k}_i (\xi_i + g_i(\xi)),
$$
we get 
$$
\mathrm{Leb} (\Xi) \leq \sup_{ (\xi)_{i\neq i_*} \in   (0,1)^{\mathcal{S}\setminus \{ i_* \}}}  \mathrm{Leb}(\Xi^{(i_*)}_{(\xi)_{i\neq i*} }).
$$
Therefore, to conclude the proof, it suffices to prove that $\Xi^{(i_*)}_{(\xi)_{i\neq i*} }$ is included in an interval of length $4\gamma \| \boldsymbol{k} \|_{\ell^\infty}^{-1}$. In other words, it suffices to prove that if
 $\xi,\zeta \in \Delta_{i_*}(\mathcal{O})$ are such that $\xi_{i_*},\zeta_{i_*} \in \Xi^{(i_*)}_{(\xi)_{i\neq i*} }$ then $|\xi_{i_*}-\zeta_{i_*}|<4\gamma \| \boldsymbol{k} \|_{\ell^\infty}^{-1}$.
 
\medskip 
 
 Indeed, if $\xi,\zeta \in \Delta_{i_*}(\mathcal{O})$ are such that $\xi_{i_*},\zeta_{i_*} \in \Xi^{(i_*)}_{(\xi)_{i\neq i*} }$ then we have
\begin{equation*}
\begin{split}
2\gamma > \big| h(\xi) -  h(\zeta) \big| 
&\geq |\boldsymbol{k}_{i_*}| |\xi_{i_*}-\zeta_{i_*}| - \sum_{i\in \mathcal{S}}  |\boldsymbol{k}_i| |g_i (\xi) - g_i (\zeta)|  - |a(\xi) - a(\zeta)|\\
&\mathop{\geq}^{\eqref{eq:le_plus_grand}}  \| \boldsymbol{k} \|_{\ell^\infty} \Big( |\xi_{i_*}-\zeta_{i_*}| -
 \sum_{i\in \mathcal{S}}  |g_i (\xi) - g_i (\zeta)| -  \| \boldsymbol{k} \|_{\ell^\infty}^{-1} |a(\xi) -a(\zeta)  |\Big) \\
&\mathop{\geq}^{\eqref{eq:jolie_est}}  \frac{\| \boldsymbol{k} \|_{\ell^\infty}}2 |\xi_{i_*}-\zeta_{i_*}|.
\end{split}
\end{equation*}
\end{proof}

In our normal form theorems we will apply the following proposition.
\begin{proposition}\label{prop:smalldiv} Let $\mathcal{S} \subset \mathbb{Z}$ be a finite set, $\mathcal{O} \subset (0,1)^{\mathcal{S}}$ be a Borel set, $\varepsilon\leq 1$, $\mathfrak{M}\geq 2$, $L\in \mathscr{L}_{\mathcal{O},\mathcal{S} }$ and set
$$
\forall j\in \mathbb{Z}, \forall \xi \in \mathcal{O}, \quad \omega_j(\xi) := 2\varepsilon L_j(\xi) + \left\{\begin{array}{lcll}  \xi_j & \mathrm{if}  & j\in \mathcal{S} \\
 0 &\mathrm{if} & j\in \mathcal{S}^c
\end{array}\right. .
$$
Assume that $L$ satisfies the estimates
$$
\| L \|_{\mathscr{L}_{\mathcal{O},\mathcal{S} }^{\mathrm{sup}}} \vee \| L \|_{\mathscr{L}_{\mathcal{O},\mathcal{S} }^{\mathrm{dir}}} \leq 4
$$
and, recalling that the coefficients $\Gamma$ are defined by \eqref{eq:def_Gammaki}, assume that $\varepsilon$ satisfies
$$
\varepsilon \leq (24\, \mathfrak{M})^{-1} \quad \mathrm{and} \quad \varepsilon \big( \sup_{i\in \mathcal{S}} \sum_{k\in \mathcal{S}} \Gamma_{k,i} \big) \leq 10^{-3} 
$$
Then, for all $\gamma \in (0,1)$ there exists a Borel subset $\mathcal{O}' \subset \mathcal{O}$ such that
$$
\mathrm{Leb}( \mathcal{O}  \setminus \mathcal{O}' ) \leq \gamma
$$
and for all $\xi \in \mathcal{O}'$, all $\boldsymbol{k}\in \mathbb{Z}^{\mathcal{S}}\setminus \{0\}$, all $d \in \llbracket 0,4 \rrbracket$, all vector $\boldsymbol{j} \in (\mathcal{S}^c)^d$, all $\boldsymbol{b} \in (\mathbb{Z}^*)^d$, all $n\in \mathbb{Z}$, provided that
$\|\boldsymbol{b}\|_{\ell^1} \leq \mathfrak{M}$, we have
\begin{equation}
\label{eq:cequonveut_sd}
\Big| \varepsilon^{-2} n + \sum_{i\in \mathcal{S}} \boldsymbol{k}_i \omega_i(\xi) + \sum_{p=1}^d \boldsymbol{b}_p \omega_{\boldsymbol{j}_p}(\xi) \Big| \gtrsim_{\mathcal{S},\mathfrak{M}} \Big(\frac{\gamma}{ \| \boldsymbol{k} \|_{\ell^\infty}^{2\# \mathcal{S}} }\Big)^{c_d}
\end{equation}
where\footnote{these exponents are far from sharp, we could improve them proceeding as in \cite{BGR23}.} $c_0=1$, $c_1 = 3$, $c_2 = 9 $, $c_3 =34$ and $c_4=166$.
\end{proposition}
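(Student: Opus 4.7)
\medskip

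\textbf{Proof plan for Proposition \ref{prop:smalldiv}.}

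The strategy is to feed the linear combination into Lemma~\ref{lem:pd_tech}, then bound the total measure of bad parameters by carefully summing over all multi-indices. First, I would fix a tuple $(n,d,\boldsymbol{k},\boldsymbol{j},\boldsymbol{b})$ and set
$$
a(\xi)\ :=\ \varepsilon^{-2}n+2\varepsilon\sum_{p=1}^{d}\boldsymbol{b}_p L_{\boldsymbol{j}_p}(\xi),\qquad g_i(\xi):=2\varepsilon L_i(\xi)\ \text{ for }i\in\mathcal{S},
$$
so that the quantity on the left of \eqref{eq:cequonveut_sd} is exactly $a(\xi)+\sum_{i\in\mathcal{S}}\boldsymbol{k}_i(\xi_i+g_i(\xi))$. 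The directional Lipschitz estimate $\|L\|_{\mathscr{L}^{\mathrm{dir}}}\leq4$ gives $|L_\ell(\xi)-L_\ell(\zeta)|\leq 4\,\Gamma_{\ell,i}|\xi_i-\zeta_i|$ for $(\xi,\zeta)\in\Delta_i\mathcal{O}$, and combining this with $\Gamma_{\ell,i}\leq 1$, $\|\boldsymbol{b}\|_{\ell^1}\leq\mathfrak{M}$ and the two assumed smallness bounds $\varepsilon\leq(24\mathfrak{M})^{-1}$ and $\varepsilon\sup_i\sum_k\Gamma_{k,i}\leq 10^{-3}$, I would verify \eqref{eq:jolie_est}. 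Lemma~\ref{lem:pd_tech} then yields, for any threshold $\gamma_0>0$,
$$
\mathrm{Leb}\bigl(\mathcal{B}_{n,d,\boldsymbol{k},\boldsymbol{j},\boldsymbol{b}}(\gamma_0)\bigr)\ \leq\ 4\gamma_0/\|\boldsymbol{k}\|_{\ell^\infty}.
$$

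Next I would reduce the auxiliary parameter $n$: since each $\omega_j$ is bounded by a constant depending only on $\mathcal{S}$ and $\mathfrak{M}$, the only $n\in\mathbb{Z}$ for which the small divisor could possibly be smaller than $1$ satisfy $|n|\lesssim_{\mathcal{S},\mathfrak{M}}\|\boldsymbol{k}\|_{\ell^\infty}$, giving at most $\lesssim_{\mathcal{S},\mathfrak{M}}\|\boldsymbol{k}\|_{\ell^\infty}$ choices. The sum over $\boldsymbol{b}$ with $\|\boldsymbol{b}\|_{\ell^1}\leq\mathfrak{M}$ is bounded by a constant depending only on $d,\mathfrak{M}$, so only the summations over $\boldsymbol{k}\in\mathbb{Z}^\mathcal{S}\setminus\{0\}$ and $\boldsymbol{j}\in(\mathcal{S}^c)^d$ remain genuinely problematic.

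The hard part is controlling the infinite summation over $\boldsymbol{j}$, and this is what forces the exponents $c_d$ to grow with $d$. The key mechanism is a dyadic decomposition based on the estimate $|\omega_{\boldsymbol{j}_p}|\leq 8\varepsilon\,\langle\boldsymbol{j}_p\rangle^{-\delta}$: if $\langle\boldsymbol{j}_p\rangle\geq R$ for all $p$, the total contribution of the external modes to the small divisor is at most $8\varepsilon\mathfrak{M}R^{-\delta}$, so taking $R\sim(\varepsilon\mathfrak{M}/\gamma_0)^{1/\delta}$ absorbs those terms into the threshold and formally reduces the problem from dimension $d$ to dimension $d-1$ (at the cost of enlarging $\gamma_0$ to $3\gamma_0/2$ and of duplicating the bad set across all "tall" tuples, which however glue to a single set in $\xi$-space). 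The remaining tuples have at least one $\boldsymbol{j}_p$ with $\langle\boldsymbol{j}_p\rangle\leq R$; partitioning according to the subset $A\subset\{1,\dots,d\}$ of active indices, these are enumerated by $\lesssim R^{|A|}$ choices and each reduces to the problem at level $d-|A|$. Proceeding by induction on $d$, I would choose $\gamma_0=(\gamma/\|\boldsymbol{k}\|_{\ell^\infty}^{2\#\mathcal{S}})^{c_d}$, set up the recursion $c_d \geq (\#\mathcal{S}/\delta+1)\,c_{d-1}+O(1)$ produced by the dyadic decomposition, and check that the specific values $c_0=1,\,c_1=3,\,c_2=9,\,c_3=34,\,c_4=166$ make every finite sum over $K=\|\boldsymbol{k}\|_{\ell^\infty}$ (with multiplicity $\lesssim K^{\#\mathcal{S}-1}$) absolutely convergent and leave a total measure at most $\gamma$.

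The base case $d=0$ is the direct application of Lemma~\ref{lem:pd_tech}, combined with the finite enumerations of $n$ and the weighted summation $\sum_{\boldsymbol{k}\neq 0}\|\boldsymbol{k}\|_{\ell^\infty}^{-(\#\mathcal{S}+1)}<\infty$, and yields $c_0=1$. The inductive step is the only place where the exponents blow up, and keeping track of the polynomial factors in $\|\boldsymbol{k}\|_{\ell^\infty}$ produced at each layer of the dyadic decomposition is the principal technical obstacle; this is where I expect the bookkeeping to be delicate and the argument of Biasco--Gr\'ebert~\cite{BG21} to be adapted.
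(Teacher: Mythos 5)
Your Steps 1 and 2 match the paper's proof: apply Lemma~\ref{lem:pd_tech} with exactly the splitting of $a$ and $g$ that you propose, and cut down $n$ to the finitely many values $|n| \lesssim_{\mathcal{S},\mathfrak{M}} \|\boldsymbol{k}\|_{\ell^\infty}$. The divergence — and a genuine gap — appears in how you treat the infinite sum over $\boldsymbol{j}$.

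You propose a dyadic decomposition \emph{at the level of the measure estimate}: choose $R\sim\gamma_0^{-1/\delta}$, absorb tall modes $\langle\boldsymbol{j}_p\rangle > R$ into the threshold, and enumerate the $\lesssim R^{|A|}$ remaining small tuples. But for fixed $\boldsymbol{k}$ this produces $R^{|A|}$ bad sets of measure $\sim\gamma_0/\|\boldsymbol{k}\|_{\ell^\infty}$, giving a total of order $R^{|A|}\gamma_0 = \gamma_0^{\,1-|A|/\delta}$; since $\delta=1$ here, this does not tend to $0$ as $\gamma_0\to 0$ once $|A|\geq 1$. The sum over $\boldsymbol{j}$ does not converge this way, no matter how small you take $\gamma_0$. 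The paper avoids this by making the threshold decay polynomially in $\boldsymbol{j}$: it sets $\upsilon_{\boldsymbol{e}} = \gamma\, c_{\mathcal{S},\mathfrak{M}}\,\|\boldsymbol{k}\|_{\ell^\infty}^{-2\#\mathcal{S}}\prod_p\langle\boldsymbol{j}_p\rangle^{-\alpha}$ with $\alpha = 11/10 > 1$, so $\sum_{\boldsymbol{j}}\prod_p\langle\boldsymbol{j}_p\rangle^{-\alpha}$ is finite and the union of bad sets has measure $\leq\gamma$ with no induction at all. The removal of the factor $\prod_p\langle\boldsymbol{j}_p\rangle^{-\alpha}$ is a logically separate step (the paper's Step 3), performed \emph{pointwise} for $\xi$ already in the fixed good set $\mathcal{O}'$: assuming the bound at level $d-1$, either $|\boldsymbol{j}_d|$ is large enough that $|\omega_{\boldsymbol{j}_d}|\lesssim\langle\boldsymbol{j}_d\rangle^{-\delta}$ is negligible, or $|\boldsymbol{j}_d|=\max_p|\boldsymbol{j}_p|$ is bounded and the Step-2 estimate already gives what we need. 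This dichotomy is the missing idea in your plan.

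A secondary inconsistency: your proposed recursion $c_d \geq (\#\mathcal{S}/\delta+1)\,c_{d-1}+O(1)$ would make the exponents depend on $\#\mathcal{S}$, contradicting the fixed values $c_0,\dots,c_4$ in the statement. In the paper $\#\mathcal{S}$ enters only through the quantity $\gamma/\|\boldsymbol{k}\|_{\ell^\infty}^{2\#\mathcal{S}}$, which is raised to the universal power $c_d$; the actual recursion is $c_d = 1+\lceil\alpha^2 d\, c_{d-1}\rceil$ (with $\alpha=11/10$, $\delta=1$), which you can check yields $1,3,9,34,166$.
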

\begin{proof} We set
$$
E= \{ (\boldsymbol{k},d,\boldsymbol{j},\boldsymbol{b},n ) \ | \ \boldsymbol{k}\in \mathbb{Z}^{\mathcal{S}}\setminus \{0\},\ d \in \llbracket 0,4 \rrbracket,\ \boldsymbol{j} \in (\mathcal{S}^c)^d,\ n\in \mathbb{Z}, \boldsymbol{b} \in (\mathbb{Z}^*)^d\, \ \mathrm{and} \ \|\boldsymbol{b}\|_{\ell^1} \leq \mathfrak{M} \}
$$
$$
\mathrm{and} \quad \forall \boldsymbol{e}=(\boldsymbol{k},d,\boldsymbol{j},\boldsymbol{b},n ) \in E, \quad h_{\boldsymbol{e}}=  \varepsilon^{-2} n + \sum_{i\in \mathcal{S}} \boldsymbol{k}_i \omega_i + \sum_{k=1}^d \boldsymbol{b}_k \omega_{\boldsymbol{j}_k}.
$$
\noindent \underline{$\bullet$ \emph{Step $1$ : local estimates.}}
Let $\boldsymbol{e}=(\boldsymbol{k},d,\boldsymbol{j},\boldsymbol{b},n ) \in E$. We aim at proving that 
\begin{equation}
\label{eq:pas_dimagination}
\forall \gamma>0, \quad \mathrm{Leb}( \underbrace{\{\xi \in \mathcal{O} \ | \ |h_{\boldsymbol{e}}|<\gamma \}}_{=:\Xi_{\boldsymbol{e},\gamma}}) \leq 4 \gamma  \| \boldsymbol{k} \|_{\ell^\infty}^{-1}.
\end{equation}
To prove it we apply Lemma \ref{lem:pd_tech} with 
$$
a =  \varepsilon^{-2} n + \sum_{p=1}^d \boldsymbol{b}_p \omega_{\boldsymbol{j}_p} \quad \mathrm{and} \quad g = 2\varepsilon (L_k)_{k\in \mathcal{S}}.
$$
So we just have to check the estimate \eqref{eq:jolie_est}. So let $i\in \mathcal{S}$ and $(\xi,\zeta) \in \Delta_i \mathcal{O}$. On the one hand, by definition of $\| L \|_{\mathscr{L}_{\mathcal{O},\mathcal{S} }^{\mathrm{dir}}} $ (see \eqref{def:dirlip}), using the smallness assumptions on $\| L \|_{\mathscr{L}_{\mathcal{O},\mathcal{S} }^{\mathrm{dir}}}$ and $\varepsilon$,  we have
$$
\sum_{k \in \mathcal{S}} |g_k( \xi) - g_k(\zeta)| \leq 2\varepsilon \sum_{k \in \mathcal{S}} |L_k( \xi) - L_k(\zeta)| \leq 2 \varepsilon \sum_{k \in \mathcal{S}} \Gamma_{k,i}\| L \|_{\mathscr{L}_{\mathcal{O},\mathcal{S} }^{\mathrm{dir}}} |\xi_i - \zeta_i| \leq \frac{|\xi_i - \zeta_i|}{100}.
$$
On the other hand, using the smallness assumptions on $\| L \|_{\mathscr{L}_{\mathcal{O},\mathcal{S} }^{\mathrm{dir}}}$ and $\varepsilon$,  since the coefficient $\Gamma_{k,i}$ are smaller than or equal to $1$, we have
$$
|a( \xi) - a(\zeta)| \leq \varepsilon \sum_{p =1}^d |\boldsymbol{b}_p| |\omega_{\boldsymbol{j}_p}(\xi) - \omega_{\boldsymbol{j}_p}(\zeta)| \leq 2 \varepsilon \sum_{p =1}^d  |\boldsymbol{b}_p| \Gamma_{\boldsymbol{j}_p,i}\| L \|_{\mathscr{L}_{\mathcal{O},\mathcal{S} }^{\mathrm{dir}}} |\xi_i - \zeta_i| \leq 8\varepsilon \mathfrak{M}|\xi_i - \zeta_i|\leq \frac{|\xi_i - \zeta_i|}3.
$$
Since $\frac13 + \frac1{100} \leq \frac12$, the estimate \eqref{eq:jolie_est} is satisfied and so \eqref{eq:pas_dimagination} holds.

\medskip

\noindent \underline{$\bullet$ \emph{Step $2$ : global estimates.}} Now, let $\gamma \in (0,1)$. We first note that if $\boldsymbol{e}=(\boldsymbol{k},d,\boldsymbol{j},\boldsymbol{b},n ) \in E$ satisfies $|n|\geq 1+8 (\# \mathcal{S})\| \boldsymbol{k} \|_{\ell^\infty} + 8 \mathfrak{M} $ then $\Xi_{\boldsymbol{e},\gamma} = \emptyset$. Indeed, it suffices to observe that, thanks to assumption on $\| L \|_{\mathscr{L}_{\mathcal{O},\mathcal{S} }^{\mathrm{sup}}}$, if  there exists $\xi \in \Xi_{\boldsymbol{e},\gamma} $ then
$$
 \varepsilon^{-2} |n| \leq 1+ \sum_{i\in \mathcal{S}} |\boldsymbol{k}_i| |\omega_i(\xi) |+ \sum_{p=1}^d |\boldsymbol{b}_p| |\omega_{\boldsymbol{j}_p}(\xi)| \leq 1+  8(\# \mathcal{S})\| \boldsymbol{k} \|_{\ell^\infty} + 8 \mathfrak{M}.
$$
Then, we set
$$
\upsilon_{\boldsymbol{e}} := \gamma\, c_{\mathcal{S},\mathfrak{M}} \| \boldsymbol{k} \|_{\ell^\infty}^{-2 \# \mathcal{S}} \langle \boldsymbol{j}_1 \rangle^{-\alpha} \cdots \langle \boldsymbol{j}_d \rangle^{-\alpha}
$$
where $\alpha= 11/10$ and $c_{\mathcal{S},\mathfrak{M}}\in (0,1)$ is a constant depending only on $\mathcal{S}$ and $\mathfrak{M}$ that will be determined later. 
 Using \eqref{eq:pas_dimagination} we have
$$
\mathrm{Leb} \Big( \bigcup_{\boldsymbol{e} \in E}  \Xi_{\boldsymbol{e},\upsilon_{\boldsymbol{e}}} \Big) \leq 8\gamma c_{\mathcal{S},\mathfrak{M}} \! \! \! \! \sum_{\boldsymbol{k} \in \mathbb{Z}^{\mathcal{S}} \setminus \{ 0\} } (1+8 (\# \mathcal{S})\| \boldsymbol{k} \|_{\ell^\infty} + 8\mathfrak{M})\ \| \boldsymbol{k} \|_{\ell^\infty}^{(-2\# \mathcal{S}-1)} \sum_{d=0}^4\sum_{\boldsymbol{j} \in (\mathcal{S}^{c})^d} \langle \boldsymbol{j}_1 \rangle^{-\alpha} \cdots \langle \boldsymbol{j}_d \rangle^{-\alpha}.$$
It follows that, provided that $c_{\mathcal{S},\mathfrak{M}}$ is small enough
$$
\mathrm{Leb} \Big( \bigcup_{\boldsymbol{e} \in E}  \Xi_{\boldsymbol{e},\upsilon_{\boldsymbol{e}}} \Big) 
\leq \gamma.
$$
Consequently, setting
$$
\mathcal{O}' = \mathcal{O} \setminus \bigcup_{\boldsymbol{e} \in E}  \Xi_{\boldsymbol{e},\upsilon_{\boldsymbol{e}}} ,
$$
we have $\mathrm{Leb}( \mathcal{O}  \setminus \mathcal{O}' ) \leq \gamma$ and for all $\xi \in \mathcal{O}'$,
\begin{equation}
\label{eq:est_glob_sd}
 \forall \boldsymbol{e}\in E, \quad |h_{\boldsymbol{e}}(\xi)|\geq \upsilon_{\boldsymbol{e}} = \gamma \, c_{\mathcal{S},\mathfrak{M}}   \| \boldsymbol{k} \|_{\ell^\infty}^{-2 \# \mathcal{S}} \langle \boldsymbol{j}_1 \rangle^{-\alpha} \cdots \langle \boldsymbol{j}_d \rangle^{-\alpha}.
 \end{equation}

\medskip

\noindent \underline{$\bullet$ \emph{Step $3$ \label{step3} : uniform estimates.}} Finally, in order to improve estimate \eqref{eq:est_glob_sd}  we have to remove the factor $\langle \boldsymbol{j}_1 \rangle^{-\a} \cdots \langle \boldsymbol{j}_d \rangle^{-\a}$. We follow the approach introduced in \cite{BG21} to get small divisor estimate strong enough to prove long time stability of small solution to nonlinear dispersive PDEs at low regularity. We decompose $E$ depending on the value of $d$ :
$$
E_{d_*} := \{ (\boldsymbol{k},d,\boldsymbol{j},\boldsymbol{b},n ) \in E \ | \  d=d_* \}.
$$
Now, let $\xi \in \mathcal{O}'$. First, we note that if $\boldsymbol{e} \in E_{0}$ then \eqref{eq:est_glob_sd} gives \eqref{eq:cequonveut_sd} with $c_0=1$. Then, we proceed by induction. We assume that \eqref{eq:cequonveut_sd} holds for all $\boldsymbol{e} \in E_{d-1}$ with $1\leq d \leq 4$. We fix $\boldsymbol{e} =  (\boldsymbol{k},d,\boldsymbol{j},\boldsymbol{b},n )\in E_{d}$ and we aim at proving \eqref{eq:cequonveut_sd}. Without loss of generality, we assume that 
$$
|\boldsymbol{j}_d| = \max_{1\leq p\leq d} |\boldsymbol{j}_p|.
$$
Thanks to the induction hypothesis and the assumption on $\| L \|_{\mathscr{L}_{\mathcal{O},\mathcal{S} }^{\mathrm{sup}}}$, we have
$$
|h_{\boldsymbol{e}}(\xi)| \geq \nu_{d-1,\mathcal{S},\mathfrak{M}}  \Big(\frac{\gamma}{ \| \boldsymbol{k} \|_{\ell^\infty}^{2\# \mathcal{S}} }\Big)^{c_{d-1}} -|\boldsymbol{b}_d| |\omega_{\boldsymbol{j}_d}(\xi)|  \geq \nu_{d-1,\mathcal{S},\mathfrak{M}}  \Big(\frac{\gamma}{ \| \boldsymbol{k} \|_{\ell^\infty}^{2\# \mathcal{S}} }\Big)^{c_{d-1}} - 8 \mathfrak{M} \langle \boldsymbol{j}_d \rangle^{-\delta}
$$
where $\nu_{d,\mathcal{S},\mathfrak{M}} < 1$ is a constant depending only on $d,\mathcal{S},\mathfrak{M}$. Either, $|\boldsymbol{j}_d|$ is large enough, i.e.
$$
\langle \boldsymbol{j}_d \rangle^{-\delta} \leq  \frac{\nu_{d-1,\mathcal{S},\mathfrak{M}}}{2^4\mathfrak{M}}  \Big(\frac{\gamma}{ \| \boldsymbol{k} \|_{\ell^\infty}^{2\# \mathcal{S}} }\Big)^{c_{d-1}}
$$
and so
$$
|h_{\boldsymbol{e}}(\xi)| \geq  \frac{\nu_{d-1,\mathcal{S},\mathfrak{M}}}2  \Big(\frac{\gamma}{ \| \boldsymbol{k} \|_{\ell^\infty}^{2\# \mathcal{S}} }\Big)^{c_{d-1}}
$$
or $ \boldsymbol{j}_d $ is not too large, i.e.
$$
\langle \boldsymbol{j}_d \rangle\leq  \Big( \frac{\nu_{d-1,\mathcal{S},\mathfrak{M}}}{2^4\mathfrak{M}} \Big)^{-1/\delta}  \Big(\frac{\gamma}{ \| \boldsymbol{k} \|_{\ell^\infty}^{2\# \mathcal{S}} }\Big)^{- \frac{c_{d-1}}{\delta}}
$$
and so, since $|\boldsymbol{j}_d| = \max_{1\leq p\leq d} |\boldsymbol{j}_p|$, thanks to \eqref{eq:est_glob_sd}, we have
$$
|h_{\boldsymbol{e}}(\xi)|\geq  c_{\mathcal{S},\mathfrak{M}}   \frac{\gamma}{ \| \boldsymbol{k} \|_{\ell^\infty}^{2\# \mathcal{S}}} \left(  \Big( \frac{\nu_{d-1,\mathcal{S},\mathfrak{M}}}{2^4\mathfrak{M}} \Big)^{1/\delta}  \Big(\frac{\gamma}{ \| \boldsymbol{k} \|_{\ell^\infty}^{2\# \mathcal{S}} }\Big)^{\frac{c_{d-1}}{\delta}} \right)^{\alpha d}.
$$
Since $\delta \geq \alpha^{-1} = \frac{10}{11}$, it follows that \eqref{eq:cequonveut_sd} holds with\footnote{Here $\lceil\cdot\rceil $ denotes the floor function. } 
$$
c_d := 1+\lceil\alpha^2 d c_{d-1}\rceil .
$$
Since $c_0=1$, we have $c_1 = 3$, $c_2 = 9 $, $c_3 =34$ and $c_4=166$.

\end{proof}

\section{A KAM theorem}\label{sec:KAM}
In this section we state and then prove a normal form theorem of KAM type adapted to our problem. Basically it is just an adaptation of the standard  Theorem (see \cite{Kuk87,Pos96}) but with one principal modification: we eliminate more terms in our Hamiltonians (that we call "adapted jet", see Definition \ref{def:jet}).

 \subsection{Statement and strategy of proof}
\begin{definition}[Non perturbative part] \label{def:non_pert_part}Being given $\varepsilon\in (0,1)$, we set
$$
H^{(0)}_{\varepsilon}(u) := \frac12 \sum_{k\in \mathbb{Z}} \big( \varepsilon^{-2} |k|^2 + \frac{|u_k|^2}2 \big) |u_k|^2 - \frac12\| u\|_{L^2}^4.
$$ 
\end{definition}

\begin{definition}[Adapted jet and remainder terms]\label{def:jet} Let $\mathcal{S} \subset \mathbb{Z}$ be a finite set, $r\in (0,1/4)$, $\mathcal{O} \subset ((4r)^2,1)^{\mathcal{S}}$ be a Borel set, $\eta \geq 0$.
A Hamiltonian $P \in \mathscr{H}_{\eta,r,\mathcal{O},\mathcal{S}}$ is an \emph{adapted jet} (a-jet) if $\Pi_{\mathrm{int}}P=0$ and its non zero coefficients $P_{n,\boldsymbol{k},\boldsymbol{m}}^{\boldsymbol{\ell},\boldsymbol{\sigma}}\neq 0$ satisfy  $\|\boldsymbol{m}\|_{\ell^1}\leq 2$, $n\leq 2$, $\#\boldsymbol{\ell} \leq 3$.\\
 We denote by $\Pi_{\mathrm{ajet}}$ the associated projection defined by restriction of the coefficients and we set $\Pi_{\mathrm{rem}} := \mathrm{Id} - \Pi_{\mathrm{nor}} -\Pi_{\mathrm{ajet}}$ (recall that $\Pi_{\mathrm{nor}}$ is defined in Corollary \ref{cor:emb} ).
\end{definition}

In the following KAM theorem we consider the Hamiltonian
$$
H= H^{(0)}_{\varepsilon}+\eps P= H^{(0)}_{\varepsilon}+\eps \Pi_{\mathrm{nor}} P+\eps \Pi_{\mathrm{ajet}} P+\eps \Pi_{\mathrm{rem}}  P
$$
and, under some hypothesis, we construct a canonical transformation that eliminate the adapted jet $\eps \Pi_{\mathrm{ajet}} P$ and slightly modify the normal form part.

\begin{theorem}[KAM] \label{thm:KAM} Let $\mathcal{S} \subset \mathbb{Z}$ be a finite set, $r\in (0,1/4)$, $\mathcal{O}^\natural \subset ((4r)^2,1)^{\mathcal{S}}$ be a Borel set, $\eta_{{\rm max}}\geq \eta^{\natural}>\eta \geq \eta_0$, $\varepsilon <1$ and $P\in \mathscr{H}_{\eta^\natural,r,\mathcal{O}^\natural,\mathcal{S}}$.

\medskip

\noindent Assume that the following assumptions hold
\begin{itemize}
\item the adapted jet is small enough, the remainder term and the normal form part are not too large, i.e.
\begin{equation}
\label{eq:small_assump_P_KAM}
\| \Pi_{\mathrm{ajet}} P\|_{\mathscr{H}_{\eta^\natural,r,\mathcal{O}^\natural,\mathcal{S}}^{\mathrm{tot}}}  \leq \ \eps^{\frac{9}{10}}r^{4000}, \quad \| \Pi_{\mathrm{nor}} P\|_{\mathscr{N}_{r,\mathcal{O}^\natural,\mathcal{S}}^{\mathrm{tot}}}\leq 7/2 \quad \mathrm{and} \quad    \| \Pi_{\mathrm{rem}} P\|_{\mathscr{H}_{\eta^\natural,r,\mathcal{O}^\natural,\mathcal{S}}^{\mathrm{tot}}} \leq r^3,
\end{equation}
\item  the frequencies can be modulated\footnote{ in the sense that the assumption of Proposition \ref{prop:smalldiv} will be satisfied}, i.e.
$$
\varepsilon \leq 10^{-2} \quad \mathrm{and} \quad   \varepsilon \big( \sup_{i\in \mathcal{S}} \sum_{k\in \mathcal{S}} \Gamma_{k,i} \big) \leq 10^{-3} ,
$$
\item $r$ is small enough, i.e.
\begin{equation}
\label{eq:r_small_KAM}
r\lesssim_{\mathcal{S},\eta^\natural-\eta} 1.
\end{equation}
\end{itemize}

\medskip

\noindent Then, there exists a Borel set $\mathcal{O}\subset \mathcal{O}^\natural$ and a Hamiltonian $K \in \mathscr{H}_{\eta,r,\mathcal{O},\mathcal{S}}$, such that 
\begin{itemize}
\item $\mathcal{O}$ is relatively large in the sense that
$$
\mathrm{Leb}(\mathcal{O}^\natural \setminus \mathcal{O}) \leq \, r^2 \varepsilon^{10^{-3}} \, 
$$
\item the adapted jet has been removed, the remainder term is not too large, and the normal form part does not have changed too much, i.e.
$$
 \Pi_{\mathrm{ajet}} K  =0,  \quad \|  \Pi_{\mathrm{rem}} K\|_{\mathscr{H}_{\eta,r,\mathcal{O},\mathcal{S}}^{\mathrm{tot}}} \leq 1  \quad \mathrm{and} \quad \|  \Pi_{\mathrm{nor}} (K-P)\|_{\mathscr{N}_{r,\mathcal{O},\mathcal{S}}^{\mathrm{tot}}} \leq r 
$$
\end{itemize}
and for all $\xi \in \mathcal{O}$, there exists a $C^1$ symplectic map $\tau_\xi:\mathcal{A}_\xi(r)\to \mathcal{A}_\xi(2r)$ commuting with the gauge transform and enjoying, for all $u\in \mathcal{A}_\xi(r)$, the bound
\begin{equation}
\label{eq:proche_id_KAM}
\| \tau_\xi(u) - u\|_{\ell^2_\varpi} + r^2 \| \mathrm{d} \tau_\xi(u) - \mathrm{Id}\|_{\ell^2_\varpi \to \ell^2_\varpi } \leq r^{3} \varepsilon^{1/2}
\end{equation}
 and the property that
\begin{equation}
\label{eq:cest_un_cght_de_var_KAM}
(H^{(0)}_{\varepsilon}+\varepsilon P)(\xi; \tau_\xi(u)) = (H^{(0)}_{\varepsilon}+\varepsilon K)(\xi;u).
\end{equation}
\end{theorem}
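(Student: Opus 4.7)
The plan is a Newton-type KAM iteration that eliminates the adapted jet of $P$ through successive symplectic conjugations. I would organize the scheme around a sequence $H^{(n)} = H^{(0)}_\varepsilon + \varepsilon P^{(n)}$ with $P^{(0)} = P$, decomposed as $P^{(n)} = N^{(n)} + J^{(n)} + R^{(n)}$ where $N^{(n)} = \Pi_{\mathrm{nor}} P^{(n)}$, $J^{(n)} = \Pi_{\mathrm{ajet}} P^{(n)}$ and $R^{(n)} = \Pi_{\mathrm{rem}} P^{(n)}$, together with a nested chain of Borel parameter sets $\mathcal{O}^\natural = \mathcal{O}^{(0)} \supset \mathcal{O}^{(1)} \supset \cdots$. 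I would take decreasing analyticity radii $\eta^{(n)} \searrow \eta$ with gaps $\eta^{(n)} - \eta^{(n+1)} \sim 2^{-n}(\eta^\natural - \eta)$, and target a super-linearly decaying sequence of ajet sizes $\varepsilon_n := \|J^{(n)}\|_{\mathscr{H}^{\mathrm{tot}}_{\eta^{(n)},r,\mathcal{O}^{(n)},\mathcal{S}}}$, starting from $\varepsilon_0 \leq \varepsilon^{9/10} r^{4000}$.

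At step $n$, the integrable part $H_\varepsilon^{(0)} + \varepsilon N^{(n)}$ has, after the mass and action recentering built into the formalism of Section \ref{sec:Ham_form}, frequencies $\omega^{(n)}_j(\xi) = \varepsilon^{-2} j^2 + \mathbbm{1}_{j \in \mathcal{S}} \xi_j + 2\varepsilon L^{(n)}_j(\xi) + \cdots$, exactly of the form required by Proposition \ref{prop:smalldiv} (with the integer $\sum_i \boldsymbol{k}_i i^2 + \sum_p \boldsymbol{\sigma}_p \boldsymbol{\ell}_p^2$ arising from the linear part playing the role of $n$ there). I would then solve
\[
\{H_\varepsilon^{(0)} + \varepsilon N^{(n)}, \chi^{(n)}\} = -\varepsilon J^{(n)}
\]
monomial by monomial, dividing each coefficient of $J^{(n)}$ by $\sum_{i \in \mathcal{S}} \boldsymbol{k}_i \omega^{(n)}_i + \sum_p \boldsymbol{\sigma}_p \omega^{(n)}_{\boldsymbol{\ell}_p}$. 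Since the ajet only contains monomials with $\#\boldsymbol{\ell} \leq 3$ and $|\boldsymbol{\sigma}_p| = 1$, Proposition \ref{prop:smalldiv} applies with $d \leq 3$ and $\mathfrak{M} = 4$; excluding a set of measure $\gamma_n := r^2 \varepsilon^{10^{-3}} 2^{-n-2}$ from $\mathcal{O}^{(n)}$ yields a small divisor lower bound $\gtrsim (\gamma_n/\|\boldsymbol{k}\|_{\ell^\infty}^{2 \# \mathcal{S}})^{c_3}$, and absorbing the polynomial loss in $\|\boldsymbol{k}\|$ against a small fraction of the exponential weight $e^{\eta^{(n)} \|\boldsymbol{k}\|_{\ell^1}}$ gives $\|\chi^{(n)}\|_{\mathscr{H}^{\mathrm{tot}}_{\eta^{(n+1)}}} \lesssim \gamma_n^{-c_3} \varepsilon_n$.

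The Lie transform $\Phi_{\chi^{(n)}}^1$ is then applied via Lemma \ref{lem:flow}, and Taylor expanding using \eqref{estim:flow2} of Lemma \ref{lem:compo} gives
\[
H^{(n)} \circ \Phi_{\chi^{(n)}}^1 = H^{(0)}_\varepsilon + \varepsilon N^{(n)} + \varepsilon R^{(n)} + \varepsilon \{R^{(n)} - \tfrac12 \varepsilon J^{(n)}, \chi^{(n)}\} + O\bigl(r^{-C} \|\chi^{(n)}\|^2\bigr).
\]
The new ajet $J^{(n+1)}$ inherits only the quadratic-in-$\chi^{(n)}$ contributions, so $\varepsilon_{n+1} \lesssim r^{-C} \gamma_n^{-2c_3} \varepsilon_n^2$; with the margin $\varepsilon_0 \leq \varepsilon^{9/10} r^{4000}$ and the smallness \eqref{eq:r_small_KAM} of $r$, this produces a genuinely super-linear decay of $\varepsilon_n$. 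The increments to $N^{(n)}$ and $R^{(n)}$ are controlled in the same way by $r^{-C} \gamma_n^{-c_3} \varepsilon_n$ (after projecting via Lemma \ref{lem:continuity_estimate_pinor}), and telescope to contributions well below the target levels $r$ and $1$.

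The main obstacle will be preserving at each step the hypothesis of Proposition \ref{prop:smalldiv} that $\|L^{(n)}\|_{\mathscr{L}^{\mathrm{sup}}} \vee \|L^{(n)}\|_{\mathscr{L}^{\mathrm{dir}}} \leq 4$. The sup bound propagates easily from the tiny size of the updates, but the directional Lipschitz norm requires care; since $\mathcal{S}$ is kept fixed here, the inequality $\|L\|_{\mathscr{L}^{\mathrm{dir}}} \leq \|L\|_{\mathscr{L}^{\mathrm{lip}}}$ proven inside Lemma \ref{lem:continuity_estimate_pinor} reduces the problem to the usual Lipschitz control, which propagates directly through Lemma \ref{lem:poisson} (the full strength of $\mathscr{L}^{\mathrm{dir}}$ becomes genuinely indispensable only when opening a new site in Section \ref{sec:open}). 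Finally, I would set $\mathcal{O} := \bigcap_n \mathcal{O}^{(n)}$, with $\mathrm{Leb}(\mathcal{O}^\natural \setminus \mathcal{O}) \leq \sum_n \gamma_n \leq r^2 \varepsilon^{10^{-3}}$. The compositions $\tau_\xi^{(n)} := \Phi_{\chi^{(0)}}^1 \circ \cdots \circ \Phi_{\chi^{(n)}}^1$ form a Cauchy sequence in $C^1(\mathcal{A}_\xi(r); \mathcal{A}_\xi(2r))$ using $\sum_n \|\chi^{(n)}\| < \infty$ and Lemma \ref{lem:flow}; their limit $\tau_\xi$ is symplectic and commutes with the gauge transform since each $\chi^{(n)}$ preserves mass (by the reality and zero-momentum conditions). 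The closeness bound \eqref{eq:proche_id_KAM} follows from $\|\tau_\xi - \mathrm{Id}\|_{\ell^2_\varpi} \lesssim r^{-2} \|\chi^{(0)}\|_{\mathscr{H}^{\mathrm{tot}}} \lesssim r^{-2} \gamma_0^{-c_3} \varepsilon^{9/10} r^{4000} \ll r^3 \varepsilon^{1/2}$, while the identity \eqref{eq:cest_un_cght_de_var_KAM} and the announced bounds on $\Pi_{\mathrm{ajet}} K = 0$, $\Pi_{\mathrm{rem}} K$, and $\Pi_{\mathrm{nor}}(K - P)$ for $K := \lim_n P^{(n)}$ follow from the per-step estimates above together with the super-linear convergence.
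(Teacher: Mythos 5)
Your overall scheme — a Newton iteration with $\chi^{(n)}$ defined from a cohomological equation, small divisors controlled through Proposition~\ref{prop:smalldiv}, nested Borel sets with geometrically shrinking exclusions, Lie transforms via Lemma~\ref{lem:flow}, and a final Cauchy-sequence argument for $\tau_\xi$ — matches the paper's architecture. However, there is a genuine gap at the central step. You propose to solve
$\{H^{(0)}_\varepsilon + \varepsilon N^{(n)}, \chi^{(n)}\} = -\varepsilon J^{(n)}$
``monomial by monomial, dividing each coefficient of $J^{(n)}$'' by the linear combination of frequencies. This is only legitimate if the operator $\{H^{(0)}_\varepsilon + \varepsilon N^{(n)}, \cdot\}$ is diagonal in the monomial basis, which it is not: $H^{(0)}_\varepsilon$ contains the quartic piece $Q^{(0)} = \tfrac14\sum_k Y_k^2 - \tfrac12\mu^2$, and the Poisson bracket $\{Q^{(0)},\chi\}$ raises the degree in $y$ (it is upper-triangular, not diagonal, with respect to the monomial grading). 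Only the piece $\{Z_2 + \varepsilon L^{(n)}, \cdot\}$ is diagonal. This is precisely the reason the paper's Lemma~\ref{lem:homonl} solves the (projected) cohomological equation by a chain of six successive \emph{linear} equations, each handling the degree-by-degree upper-triangular contribution of $Q^{(0)} + \varepsilon\Pi_{\rm four}R$; the chain terminates after six steps because the ajet contains only monomials of degree at most eleven. This design point is highlighted explicitly in the ``About the proof'' discussion and in the shape of the assumption $\|\Pi_{\rm ajet}P\| \leq \varepsilon^{9/10} r^{4000}$, which exists to absorb the $\gamma^{-816}$-type losses accumulated over those six solves. Your proposal also imposes the \emph{unprojected} equation, which is in any case too strong: one should only demand $\Pi_{\rm ajet}$ of the result to vanish and push the rest into the remainder, as in equation~\eqref{eq:coho}. (Your per-step Taylor bookkeeping also has a small sign/coefficient slip — the first-order residue should involve $\{R^{(n)} + \tfrac12 J^{(n)}, \chi^{(n)}\}$ — but this is inessential; the cohomological-equation issue is the real gap.)
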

\begin{remark}\label{rem:kam}
We notice that $R:=(I-\Pi_{\mathrm{\mathscr{L}}}-\Pi_{\mathrm{\mathscr{A}}})K$ satisfies $\partial_v R$, $\partial^2_v R$ and   $\partial_y R$ vanish when $u=0$ and $y=0$ (in the new variables), i.e. on $\mathrm{T}_\xi$ (see \eqref{eq:def_T_droit}). Therefore, for all $\xi\in\mathcal O$, $\mathrm{T}_\xi$ is an invariant torus for the Hamiltonian system governed by $H^{(0)}_{\varepsilon}+\varepsilon K$ and $\tau_\xi^{-1}(\mathrm{T}_\xi)$ is an invariant torus for the Hamiltonian system governed by $H^{(0)}_{\varepsilon}+\varepsilon P$. So we recover the conclusions of a standard KAM theorem. In particular we note that the constructed tori are linearly stable. 
\end{remark}

Let us  explain the KAM strategy in our context. The aim of the KAM theorem  is to make $ \Pi_{\mathrm{ajet}} P$ disappear by changing the variable. In fact we will eliminate $ \Pi_{\mathrm{ajet}} P$ iteratively writting $K_0=P$ and for $k\geq0$
\begin{equation}\label{eq:goal} (H^{(0)}_{\varepsilon}+\varepsilon K_k)\circ  \Phi^1_{\chi_k}= (H^{(0)}_{\varepsilon}+\varepsilon K_{k+1})\end{equation}
where $\Phi_{\chi_k}^1$ is the time-one flow-map generated by the Hamiltonian $\chi_k$ which is chosen in such a way the size of $ \Pi_{\mathrm{ajet}} K_{k+1}$ is, roughly speaking, the square of the size of $ \Pi_{\mathrm{ajet}} K_k$. This allows a quadratic scheme which is rapidly convergent and can absorb all the unfavorable constants that will appear in our estimates. To obtain the equation that $\chi_k$ has to satisfied (the so called cohomological equation) we can do the following formal calculation based on the fact that  $ \Pi_{\mathrm{ajet}}\chi_k=\chi_k$ and  $\chi_k$ has the size of  $ \Pi_{\mathrm{ajet}} K_k$ (of course everything will be justified later on)
\begin{align*} (H^{(0)}_{\varepsilon}+\varepsilon K_k)\circ  \Phi_{\chi_k}&= H^{(0)}_{\varepsilon}+\varepsilon K_k+\{H^{(0)}_{\varepsilon}+\varepsilon K_k,\chi_k\}+\ h.o.t.\\
&=H^{(0)}_{\varepsilon}+ \eps \Pi_{\mathrm{ajet}} K_k+ \Pi_{\mathrm{ajet}} \{ H^{(0)}_{\varepsilon}+ \eps( K_k-\Pi_{\mathrm{ajet}} K_k),\chi_k\}+ R_k+\ h.o.t.
\end{align*}
where $\Pi_{\mathrm{ajet}} R_k=0$.
Therefore to achieve our goal we want $\chi_k$ to solve the {\it cohomological} equation
\begin{equation} \label{eq-homo}
 \eps \Pi_{\mathrm{ajet}} K_k+ \Pi_{\mathrm{ajet}} \{ H^{(0)}_{\varepsilon}+ \eps( K_k-\Pi_{\mathrm{ajet}} K_k),\chi_k\}=0.\end{equation}
So we will focus on the resolution of this equation in the next subsection.

\subsection{Preliminaries}

\subsubsection{Cohomological equation}

 First we decompose $H^{(0)}_{\varepsilon}$, which is an integrable Hamiltonian, in normal form. According to \eqref{eq:decomp}
 $$H^{(0)}_{\varepsilon}=Z_2+A^{(0)}+Q^{(0)}$$
where  $A^{(0)}= H^{(0)}_{\varepsilon}(\xi) - \mu \|\xi\|_{\ell^1}$ is the almost constant  part (see Definition \ref{def:cst}) of $H^{(0)}_{\varepsilon}$,
 $$
 Z_{2}:=\frac12\sum_{j\in\Z}(\varepsilon^{-2} j^2 +    \xi_j \mathbbm{1}_{j\in \mathcal{S}}) Y_j
 $$
is  a linear  in actions Hamiltonians (see Definition \ref{def:lin}) and $Q^{(0)}=\frac14 \sum_{k\in \mathbb{Z}} Y_k^2 - \frac12 \mu^2$
 is an integrable quartic Hamiltonian (see Definition \ref{def:quar}). We note that both $A^{(0)}$ and $Q^{(0)}$ do not depend on $\eps$. 
 
 \medskip

We first solve a linear cohomological equation.
\begin{lemma}\label{lem:homo}
Let $\mathcal{S} \subset \mathbb{Z}$ be a finite set, $r\in (0,1/4)$, $\mathcal{O} \subset ((4r)^2,1)^{\mathcal{S}}$ be a Borel set, $\eta > \eta_0$, $\varepsilon <1$, $L\in \mathscr{L}_{\mathcal{O},\mathcal{S} }$ and $R\in \mathscr{H}_{\eta,r,\mathcal{O},\mathcal{S}}$ be an adapted jet, i.e. $\Pi_{\mathrm{ajet}} R=R$. 

Assume that $L$ satisfies the estimate
$$
\| L \|_{\mathscr{L}_{r,\mathcal{O},\mathcal{S} }^{\mathrm{tot}}} \vee \| L \|_{\mathscr{L}_{\mathcal{O},\mathcal{S} }^{\mathrm{dir}}} \leq 4
$$
and assume that $\varepsilon$ satisfies
$$
\varepsilon \leq (72)^{-1} \quad \mathrm{and} \quad \varepsilon \big( \sup_{i\in \mathcal{S}} \sum_{k\in \mathcal{S}} \Gamma_{k,i} \big) \leq 10^{-3}. 
$$
Then for any $\gamma\in(0,1)$ there exists $\mathcal{O}^+\subset \mathcal{O}$ satisfying
 $${\rm Leb}(\mathcal{O}\setminus \mathcal{O}^{+}) \leq \gamma$$
and there exists $\chi\in\mathscr{H}_{\eta^+,r,\mathcal{O}^+,\mathcal{S}}$ (for all $\eta_0<\eta^+<\eta$) with $\Pi_{\mathrm{ajet}} \chi=\chi$ 
such that 
\begin{equation}\label{eq:cohol}
\{ Z_{2}(\xi;\cdot)+\eps L,\chi(\xi;\cdot)\}+  R(\xi;\cdot)=0,\quad \forall \xi\in \mathcal O^+.
\end{equation}  
Furthermore $\chi$ satisfies the following estimate
\begin{align}\label{eq:estimchi}
\|  \chi \|_{\mathscr{H}_{\eta^+,r,\mathcal{O}^+,\mathcal{S}}^{\mathrm{tot}}} &\lesssim_\S \frac{1}{(\eta-\eta^+)^{138\# \mathcal{S}}\gamma^{68}}\| R\|_{\mathscr{H}_{\eta,r,\mathcal{O}^+,\mathcal{S}}^{\mathrm{tot}}}.
\end{align}

\end{lemma}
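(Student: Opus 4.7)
\smallskip

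\noindent\textbf{Proof plan.} The plan is to solve \eqref{eq:cohol} coefficient-by-coefficient in the Fourier expansion, exploiting that $Z_2 + \varepsilon L$ is diagonal in the partial action–angle variables. Since $Z_2$ and $L$ depend only on the actions $Y_k$ and the mass $\mu$, one checks via \eqref{eq:Poisson_formula} that their Poisson bracket acts multiplicatively on monomials:
\begin{equation*}
\{Z_2 + \varepsilon L,\ \mu^n y^{\boldsymbol{m}} z^{\boldsymbol{k}} v_{\boldsymbol{\ell}}^{\boldsymbol{\sigma}}\}
= \ic\,\Omega_{\boldsymbol{k},\boldsymbol{\ell},\boldsymbol{\sigma}}(\xi)\ \mu^n y^{\boldsymbol{m}} z^{\boldsymbol{k}} v_{\boldsymbol{\ell}}^{\boldsymbol{\sigma}},
\qquad
\Omega_{\boldsymbol{k},\boldsymbol{\ell},\boldsymbol{\sigma}}(\xi) = \sum_{i\in\mathcal{S}} \boldsymbol{k}_i\,\tilde{\omega}_i(\xi) + \sum_{p=1}^{\#\boldsymbol{\ell}} \boldsymbol{\sigma}_p\,\tilde{\omega}_{\boldsymbol{\ell}_p}(\xi),
\end{equation*}
where $\tilde{\omega}_j$ denotes the linear frequency read off from $Z_2 + \varepsilon L$; up to an overall factor of $\varepsilon^{-2}$ this is exactly the frequency $\omega_j$ of Proposition~\ref{prop:smalldiv}. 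Thus \eqref{eq:cohol} decouples into the scalar equations $\ic\,\Omega_{\boldsymbol{k},\boldsymbol{\ell},\boldsymbol{\sigma}}(\xi)\, \chi_{n,\boldsymbol{k},\boldsymbol{m}}^{\boldsymbol{\ell},\boldsymbol{\sigma}} = -R_{n,\boldsymbol{k},\boldsymbol{m}}^{\boldsymbol{\ell},\boldsymbol{\sigma}}$, and we simply define $\chi$ coefficient-wise by dividing.

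\smallskip

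The next step is to verify that $\boldsymbol{k}\neq 0$ for every non-zero coefficient of $R$, so that Proposition~\ref{prop:smalldiv} applies. Indeed, the adapted jet condition forces $\#\boldsymbol{\ell}\leq 3$, while mass preservation reads $\sum_i \boldsymbol{k}_i + \sum_p \boldsymbol{\sigma}_p = 0$. Combined with $\boldsymbol{k}=0$, this rules out $\#\boldsymbol{\ell}\in\{1,3\}$ by parity; the case $\#\boldsymbol{\ell}=0$ is integrable and hence excluded; and the case $\#\boldsymbol{\ell}=2$ combined with zero momentum forces $\boldsymbol{\ell}_1=\boldsymbol{\ell}_2$ and $\boldsymbol{\sigma}_1=-\boldsymbol{\sigma}_2$, again an integrable term killed by $\Pi_{\mathrm{int}}R=0$. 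Hence $\boldsymbol{k}\neq 0$. Applying Proposition~\ref{prop:smalldiv} with $\boldsymbol{b}=\boldsymbol{\sigma}$ (so $\|\boldsymbol{b}\|_{\ell^1}\leq 3$, $\mathfrak{M}=3$, $d\leq 3$) then yields a Borel subset $\mathcal{O}^+\subset\mathcal{O}$ with $\mathrm{Leb}(\mathcal{O}\setminus\mathcal{O}^+)\leq\gamma$ on which
\begin{equation*}
|\Omega_{\boldsymbol{k},\boldsymbol{\ell},\boldsymbol{\sigma}}(\xi)|
\;\gtrsim_{\mathcal{S}}\;
\Bigl(\frac{\gamma}{\|\boldsymbol{k}\|_{\ell^\infty}^{2\#\mathcal{S}}}\Bigr)^{c_3}
=\;\gamma^{34}\,\|\boldsymbol{k}\|_{\ell^\infty}^{-68\#\mathcal{S}}.
\end{equation*}

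\smallskip

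It remains to bound $\chi$ in $\mathscr{H}_{\eta^+,r,\mathcal{O}^+,\mathcal{S}}^{\mathrm{tot}}$. By construction $\chi$ inherits the reality, symmetry, mass and momentum conditions from $R$ and satisfies $\Pi_{\mathrm{ajet}}\chi=\chi$. For the sup-norm, the multiplier $|\Omega|^{-1}\lesssim \gamma^{-34}\|\boldsymbol{k}\|_{\ell^\infty}^{68\#\mathcal{S}}$ is absorbed into the exponential factor $e^{-(\eta-\eta^+)\|\boldsymbol{k}\|_{\ell^1}}$ that one gains by shrinking the analyticity parameter from $\eta$ to $\eta^+$, producing a loss $(\eta-\eta^+)^{-69\#\mathcal{S}}$ after summation on $\boldsymbol{k}\in\mathbb{Z}^{\mathcal{S}}$. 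For the Lipschitz norm I write
\begin{equation*}
\chi(\xi)-\chi(\zeta)
=\frac{\ic(R(\xi)-R(\zeta))}{\Omega(\xi)}
+\frac{\ic R(\zeta)\bigl(\Omega(\zeta)-\Omega(\xi)\bigr)}{\Omega(\xi)\Omega(\zeta)},
\end{equation*}
and use $|\Omega(\xi)-\Omega(\zeta)|\lesssim \varepsilon(\|\boldsymbol{k}\|_{\ell^1}+\|\boldsymbol{\sigma}\|_{\ell^1})\,\|L\|_{\mathscr{L}_{\mathcal{O},\mathcal{S}}^{\mathrm{lip}}}\|\xi-\zeta\|_{\ell^1}$, which is uniformly bounded. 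The second term generates the factor $|\Omega|^{-2}\lesssim \gamma^{-68}\|\boldsymbol{k}\|_{\ell^\infty}^{136\#\mathcal{S}}$, matching the exponent $\gamma^{68}$ in \eqref{eq:estimchi}; the doubled polynomial weight is absorbed into $(\eta-\eta^+)^{-138\#\mathcal{S}}$ after $\boldsymbol{k}$-summation.

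\smallskip

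The principal technical obstacle is the joint control of the sup- and Lipschitz-parts of the norm through the small divisor: the second term in the Lipschitz splitting forces a $|\Omega|^{-2}$ weight, which is precisely what drives the exponent of $\gamma$ from $c_3=34$ up to $2c_3=68$. Care is also required to keep track of the precise powers of $\#\mathcal{S}$ when trading the polynomial blow-up $\|\boldsymbol{k}\|_{\ell^\infty}^{p}$ against exponential decay $e^{-(\eta-\eta^+)\|\boldsymbol{k}\|_{\ell^1}}$, using the elementary estimate $\sum_{\boldsymbol{k}\in\mathbb{Z}^{\mathcal{S}}}\|\boldsymbol{k}\|_{\ell^\infty}^{p}e^{-(\eta-\eta^+)\|\boldsymbol{k}\|_{\ell^1}}\lesssim (\eta-\eta^+)^{-(p+\#\mathcal{S})}$, which yields the stated power $138\,\#\mathcal{S}$ in \eqref{eq:estimchi}.
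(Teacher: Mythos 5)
Your proposal is correct and follows essentially the same route as the paper: solve coefficient-by-coefficient by dividing by the small divisor $\Omega$, invoke Proposition~\ref{prop:smalldiv} with $\mathfrak{M}=3$ to get the measure estimate and the lower bound $|\Omega|\gtrsim_{\mathcal{S}}\gamma^{34}\|\boldsymbol{k}\|_{\ell^\infty}^{-68\#\mathcal{S}}$, then bound the sup- and Lipschitz-parts of the $\mathscr{H}^{\mathrm{tot}}$ norm by trading the polynomial blow-up in $\|\boldsymbol{k}\|$ against the exponential gain $e^{-(\eta-\eta^+)\|\boldsymbol{k}\|_{\ell^1}}$, the Lipschitz splitting introducing the $|\Omega|^{-2}$ weight that doubles the power of $\gamma$.

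One small but pleasant difference: you show directly that $\boldsymbol{k}\neq 0$ for every non-integrable adapted-jet index (by parity from mass preservation, then zero momentum forcing $\boldsymbol{\ell}_1=\boldsymbol{\ell}_2$ when $q=2$), so Proposition~\ref{prop:smalldiv} applies uniformly. The paper instead allows $\boldsymbol{k}=0$ and argues that, for such indices, $\Omega$ cannot vanish unless the term is integrable (hence absent from $R$), which implicitly also uses the fact that $\sum_i\boldsymbol{\sigma}_i\boldsymbol{\ell}_i^2$ is then a nonzero integer, giving $|\Omega|\gtrsim 1$ without any measure exclusion. Your version is marginally cleaner in that it removes this case distinction altogether. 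Two cosmetic inaccuracies worth flagging: the $\mathscr{H}$-norm is a supremum, not a sum, over $\boldsymbol{k}$, so you only need $\sup_{\boldsymbol{k}}\|\boldsymbol{k}\|_{\ell^\infty}^{p}e^{-(\eta-\eta^+)\|\boldsymbol{k}\|_{\ell^1}}\lesssim(\eta-\eta^+)^{-p}$ rather than the summed version you state (the bound is still fine, just not what is needed); and the phrase ``up to an overall factor of $\varepsilon^{-2}$'' is misleading---what actually happens is that the $\varepsilon^{-2}j^2$ contributions collapse to the integer term $\varepsilon^{-2}n$ in the hypothesis of Proposition~\ref{prop:smalldiv}, while the remainder matches that proposition's $\omega_j$.
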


\proof 
For $\boldsymbol{k}\in\Z^\S$,  $q\in\N$, $\boldsymbol{\ell}\in (\S^c)^q$ and  $\boldsymbol{\sigma}\in \{-1,1\}^q$ we define the small divisor
$$\Omega(\xi;\boldsymbol{k},\boldsymbol{\ell},\boldsymbol{\sigma})=\sum_{j\in\S} \boldsymbol{k}_j\omega_j(\xi)+\sum_{i=1}^q \boldsymbol{\sigma}_i\omega_{\boldsymbol{\ell}_i}(\xi)$$
where 
\begin{equation}
\label{eq:omega}
\omega_j(\xi) := \varepsilon^{-2} j^2 +    \xi_j \mathbbm{1}_{j\in \mathcal{S}}+2\eps L_j(\xi)
  ,\ j\in \Z.
\end{equation}
Then for $\boldsymbol{k}\in\Z^\S$, $\boldsymbol{m}\in\N^\S$, $q,n\in\N$, $\boldsymbol{\ell}\in (\S^c)^q$ and  $\boldsymbol{\sigma}\in \{-1,1\}^q$ 
  we set
\begin{align*}
 \chi_{n,\boldsymbol{k},\boldsymbol{m}}^{\boldsymbol{\ell},\boldsymbol{\sigma}}(\xi)&=\frac{ - R_{n,\boldsymbol{k},\boldsymbol{m}}^{\boldsymbol{\ell},\boldsymbol{\sigma}}(\xi)}{\ic\Omega(\xi;\boldsymbol{k},\boldsymbol{\ell},\boldsymbol{\sigma})}\quad \text{when }\Omega(\xi;\boldsymbol{k},\boldsymbol{\ell},\boldsymbol{\sigma})\neq0,\\
  \chi_{n,\boldsymbol{k},\boldsymbol{m}}^{\boldsymbol{\ell},\boldsymbol{\sigma}}(\xi)&=0 \quad\text{when }\Omega(\xi;\boldsymbol{k},\boldsymbol{\ell},\boldsymbol{\sigma})=0.
 \end{align*}
 Then we have
 \begin{equation}\label{eq:homobof}\{ Z_{2}(\xi;\cdot)+\eps L,\chi(\xi;\cdot)\}+ R(\xi;\cdot)=\sum_{\Omega(\xi;\boldsymbol{k},\boldsymbol{\ell},\boldsymbol{\sigma})=0} R_{n,\boldsymbol{k},\boldsymbol{m}}^{\boldsymbol{\ell},\boldsymbol{\sigma}} \, u_{\boldsymbol{\ell}}^{\boldsymbol{\sigma}} \, y^{\boldsymbol{m}} \, z^{ \boldsymbol{k} } \, \mu^n.\end{equation}
 We note that if $\boldsymbol{k}=0$ then  $\Omega(\xi;\boldsymbol{k},\boldsymbol{\ell},\boldsymbol{\sigma})=0$ implies $\sum_{i=1}^q  \boldsymbol{\sigma}_i \boldsymbol{\ell}_i^2=0$. Furthermore if  $R_{n,0,\boldsymbol{m}}^{\boldsymbol{\ell},\boldsymbol{\sigma}}\neq0$ then $q\leq3$ (since $\Pi_{\mathrm{ajet}} R=R$) and by definition of our class of Hamiltonians $\sum_{i=1}^q  \boldsymbol{\sigma}_i =0$ and $\sum_{i=1}^q  \boldsymbol{\sigma}_i \boldsymbol{\ell}_i=0$. Since $q\leq3$ these three equalities are incompatible for a non-integrable term and thus $\Omega(\xi;0,\boldsymbol{\ell},\boldsymbol{\sigma}) $ can vanish only when $R_{n,0,\boldsymbol{m}}^{\boldsymbol{\ell},\boldsymbol{\sigma}}=0$.

 On the other hand, in view of the hypothesis, we can apply Proposition \ref{prop:smalldiv} (with $\mathfrak{M}=3$ and $d=3$) to obtain that there exits $\mathcal{O}^+\subset \mathcal{O}$ satisfying
 ${\rm Leb}(\mathcal{O}\setminus \mathcal{O}^{+}) \leq \gamma$ such that for all $\xi\in\mathcal{O}^{+}$, $\boldsymbol{k}\in\Z^\S\setminus\{0\}$, $q\leq 3$, $\boldsymbol{\ell}\in (\S^c)^q$ and  $\boldsymbol{\sigma}\in \{-1,1\}^q$ the small divisor $\Omega(\xi;\boldsymbol{k},\boldsymbol{\ell},\boldsymbol{\sigma})$ does not vanish and furthermore
\begin{equation}\label{estim:Om}|\Omega(\xi;\boldsymbol{k},\boldsymbol{\ell},\boldsymbol{\sigma})|\gtrsim_{\mathcal{S}} \Big(\frac{\gamma}{ \| \boldsymbol{k} \|_{\ell^\infty}^{2\# \mathcal{S}} }\Big)^{34}.\end{equation}

Thus the right hand side of equation \eqref{eq:homobof} vanishes and thus the cohomological equation \eqref{eq:cohol} is satisfied. Furthermore using \eqref{estim:Om} and the fact that
$\sum_{j\geq 1}j^pe^{-\a j}\lesssim_p \a^{-(p+1)}$ for all $\a>0$ and all $p\geq0$, we conclude that
 $\chi\in\mathscr{H}_{\eta^+,r,\mathcal{O}^+,\mathcal{S}}$ and 
$$
\|  \chi \|_{\mathscr{H}_{\eta^+,r,\mathcal{O}^+,\mathcal{S}}^{\mathrm{sup}}} \lesssim_\S \frac{1}{(\eta-\eta^+)^{69\# \mathcal{S}}\gamma^{34}}\|R\|_{\mathscr{H}_{\eta,r,\mathcal{O}^+,\mathcal{S}}^{\mathrm{sup}}}.
$$
It remains to consider the Lipschitz norm. We have for $\xi,\zeta\in \mathcal O^{+}$
$$
|\chi_{n,\boldsymbol{k},\boldsymbol{m}}^{\boldsymbol{\ell},\boldsymbol{\sigma}}(\xi)-\chi_{n,\boldsymbol{k},\boldsymbol{m}}^{\boldsymbol{\ell},\boldsymbol{\sigma}}(\zeta)|\leq \frac{|R_{n,\boldsymbol{k},\boldsymbol{m}}^{\boldsymbol{\ell},\boldsymbol{\sigma}}(\xi)-R_{n,\boldsymbol{k},\boldsymbol{m}}^{\boldsymbol{\ell},\boldsymbol{\sigma}}(\zeta)|}{|\Omega(\xi;\boldsymbol{k},\boldsymbol{\ell},\boldsymbol{\sigma})|}+|R_{n,\boldsymbol{k},\boldsymbol{m}}^{\boldsymbol{\ell},\boldsymbol{\sigma}}(\zeta)| \frac{|\Omega(\xi;\boldsymbol{k},\boldsymbol{\ell},\boldsymbol{\sigma})-\Omega(\zeta;\boldsymbol{k},\boldsymbol{\ell},\boldsymbol{\sigma})|}{|\Omega(\xi;\boldsymbol{k},\boldsymbol{\ell},\boldsymbol{\sigma})\Omega(\zeta;\boldsymbol{k},\boldsymbol{\ell},\boldsymbol{\sigma})|}.
$$
The first term is treated as above. To estimate  the second one, we note that by definition
$$|\Omega(\xi;\boldsymbol{k},\boldsymbol{\ell},\boldsymbol{\sigma})-\Omega(\zeta;\boldsymbol{k},\boldsymbol{\ell},\boldsymbol{\sigma})|\leq \|\xi-\zeta\|_{\ell^1}(\| \boldsymbol{k}\|_{\ell^1}+q)(1+2\eps\| L\|_{\mathscr{L}_{\mathcal{O},\mathcal{S}}^{\mathrm{lip}}})
$$
and we get using \eqref{estim:Om}
\begin{align*}
\|  \chi \|_{\mathscr{H}_{\eta^+,r,\mathcal{O}^+,\mathcal{S}}^{\mathrm{lip}}} \lesssim_\S
&\gamma^{-34} \sum_{j\geq 1}j^{68\# \mathcal{S}}e^{-(\eta-\eta^+)j}\|\Pi_{\mathrm{ajet}}R\|_{\mathscr{H}_{\eta,r,\mathcal{O}^+,\mathcal{S}}^{\mathrm{lip}}}\\
&+\gamma^{-68} \sum_{j\geq 1}j^{136\# \mathcal{S}+1}e^{-(\eta-\eta^+)j}\|\Pi_{\mathrm{ajet}}R\|_{\mathscr{H}_{\eta,r,\mathcal{O}^+,\mathcal{S}}^{\mathrm{sup}}}\|L\|_{\mathscr{L}_{\mathcal{O},\mathcal{S}}^{\mathrm{lip}}}.
\end{align*}
Using again $\sum_{j\geq 1}j^pe^{-\a j}\lesssim_p \a^{-(p+1)}$,  we  obtain \eqref{eq:estimchi}.

\endproof

Now we want to solve the nonlinear cohomological equation, i.e. equation of the form \eqref{eq-homo}. First we introduce a new projection:
$$\Pi_{\rm four}=\mathrm{Id} -\Pi_{\mathscr{L}}-\Pi_{\mathscr{A}}-\Pi_{\mathrm{ajet}}=\Pi_{\mathrm{rem}}+\Pi_{\mathscr{Q}} $$
and we note that if $\Pi_{\rm four} R=R$ then the Hamiltonian $R$ starts with terms of degree\footnote{The degree of the monomials $ u_{\boldsymbol{\ell}}^{\boldsymbol{\sigma}} \, y^{\boldsymbol{m}} \, z^{ \boldsymbol{k} } \, \mu^n$ is  $2\|{\boldsymbol{m}}\|_{\ell^1}+\# {\boldsymbol{\ell}}+2n$. } four.

\begin{lemma}\label{lem:homonl}
Let $\mathcal{S} \subset \mathbb{Z}$ be a finite set, $r\in (0,1/4)$, $\mathcal{O} \subset ((4r)^2,1)^{\mathcal{S}}$ be a Borel set, $\eta_{{\rm max}}\geq \eta> \eta_0$, $\varepsilon <1$ and $R\in \mathscr{H}_{\eta,r,\mathcal{O},\mathcal{S}}$ with $\| \Pi_{\rm four} R\|_{\mathscr{H}_{\eta,r,\mathcal{O},\mathcal{S}}^{\mathrm{tot}}}\leq 9r^2$. 

Assume that $L=\Pi_{\mathrm{\mathscr{L}}}R$ satisfies the estimates
\begin{equation}\label{eq:L}
\| L \|_{\mathscr{L}_{r,\mathcal{O},\mathcal{S} }^{\mathrm{tot}}} \vee \| L \|_{\mathscr{L}_{\mathcal{O},\mathcal{S} }^{\mathrm{dir}}} \leq 4
\end{equation}
and assume that $\varepsilon$ satisfies
\begin{equation}\label{eq:eps}
\varepsilon \leq (72)^{-1} \quad \mathrm{and} \quad \varepsilon \big( \sup_{i\in \mathcal{S}} \sum_{k\in \mathcal{S}} \Gamma_{k,i} \big) \leq 10^{-3}. 
\end{equation}
Then for any $\gamma\in(0,1)$ there exists $\mathcal{O}^+\subset \mathcal{O}$ satisfying
 $${\rm Leb}(\mathcal{O}\setminus \mathcal{O}^{+}) \leq \gamma$$
and there exists $\chi\in\mathscr{H}_{\eta^+,r,\mathcal{O}^+,\mathcal{S}}$ (for all $\eta_0<\eta^+<\eta$) with $\Pi_{\mathrm{ajet}} \chi=\chi$  
such that 
\begin{equation} \label{eq:coho}
 \eps \Pi_{\mathrm{ajet}} R(\xi;\cdot)+ \Pi_{\mathrm{ajet}} \{ H^{(0)}_{\varepsilon}(\xi;\cdot)+ \eps( R-\Pi_{\mathrm{ajet}} R)(\xi;\cdot),\chi(\xi;\cdot)\}=0,\quad \xi\in \mathcal O^+.\end{equation}
Furthermore  $\chi$ satisfies the following estimate
\begin{align}\label{eq:estimchinl}
\|  \chi \|_{\mathscr{H}_{\eta^+,r,\mathcal{O}^+,\mathcal{S}}^{\mathrm{tot}}} &\lesssim_\S \frac{\eps}{(\eta-\eta^+)^{848\# \mathcal{S}}\gamma^{408}}\|\Pi_{\mathrm{ajet}}R\|_{\mathscr{H}_{\eta,r,\mathcal{O},\mathcal{S}}^{\mathrm{tot}}}.\end{align}
\end{lemma}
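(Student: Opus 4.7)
The strategy is to treat \eqref{eq:coho} as a small nonlinear perturbation of the linear cohomological equation solved in Lemma \ref{lem:homo}. The key structural simplification is that the almost-constant part $A^{(0)} + \eps \Pi_{\mathscr{A}} R$ of $H^{(0)}_\eps + \eps(R - \Pi_{\mathrm{ajet}} R)$ depends only on $\mu$; since every Hamiltonian in our class preserves the mass, we have $\{\mu, \chi\} = 0$ and hence $\{A^{(0)} + \eps \Pi_{\mathscr{A}} R, \chi\} = 0$. Moreover $Z_{2} + \eps L$ is a linear combination of the actions $Y_j$, so its Poisson bracket with any monomial multiplies it by a scalar and therefore preserves the a-jet class, making the outer $\Pi_{\mathrm{ajet}}$ on that term redundant. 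Using $\Pi_{\mathscr{Q}} R + \Pi_{\mathrm{rem}} R = \Pi_{\rm four} R$, equation \eqref{eq:coho} is thus equivalent to
\begin{equation*}
\mathcal{L}_0 \chi = -\eps \Pi_{\mathrm{ajet}} R - \Pi_{\mathrm{ajet}}\bigl\{Q^{(0)} + \eps \Pi_{\rm four} R,\ \chi\bigr\}, \qquad \mathcal{L}_0 := \{Z_{2} + \eps L,\ \cdot\}.
\end{equation*}

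Setting $\eta^{*} := (\eta + \eta^{+})/2$ and $\alpha := (\eta - \eta^{+})/2$, I would apply Lemma \ref{lem:homo} on the scale $\eta \to \eta^{*}$ (with parameter $\gamma$) to produce a Borel set $\mathcal{O}^{+} \subset \mathcal{O}$ with $\mathrm{Leb}(\mathcal{O}\setminus\mathcal{O}^{+}) \leq \gamma$ and an inverse $\mathcal{L}_0^{-1}$ obeying $\|\mathcal{L}_0^{-1} P\|^\mathrm{tot}_{\mathscr{H}_{\eta^{*},r,\mathcal{O}^{+},\mathcal{S}}} \lesssim_\mathcal{S} \gamma^{-68}(\eta-\eta^{*})^{-138\#\mathcal{S}} \|P\|^\mathrm{tot}_{\mathscr{H}_{\eta,r,\mathcal{O}^{+},\mathcal{S}}}$ on a-jets. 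Composing with the Poisson bracket estimate of Lemma \ref{lem:poisson} on $[\eta^{+},\eta^{*}]$, and using $\|Q^{(0)}\|^{\mathrm{tot}}_{\mathscr{H}_{\eta^{*},r,\mathcal{O}^{+},\mathcal{S}}} \lesssim_\mathcal{S} r^{4}$ (from Lemma \ref{lem:ptw} since the $\mathscr{Q}^\mathrm{cste}$ norm of $Q^{(0)}$ is $O(1)$) together with the hypothesis $\|\Pi_{\rm four} R\|^\mathrm{tot} \leq 9 r^{2}$, the operator
\begin{equation*}
\Phi: \chi \longmapsto -\mathcal{L}_0^{-1}\bigl[\eps \Pi_{\mathrm{ajet}} R + \Pi_{\mathrm{ajet}}\{Q^{(0)} + \eps \Pi_{\rm four} R, \chi\}\bigr]
\end{equation*}
has Lipschitz constant bounded by $C_\mathcal{S}\, \gamma^{-68}(\eta-\eta^{+})^{-A\#\mathcal{S}}(r^{2} + \eps)$. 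Under the smallness conditions \eqref{eq:eps} and an extra smallness assumption on $r$ (fitting the budget \eqref{eq:r_small_KAM}), this constant is $\leq 1/2$, so the Banach fixed point theorem delivers a unique a-jet $\chi$ with $\|\chi\|^\mathrm{tot} \leq 2\,\|\mathcal{L}_0^{-1}(\eps \Pi_{\mathrm{ajet}} R)\|^\mathrm{tot}$, yielding the sup part of \eqref{eq:estimchinl}.

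The higher exponents in \eqref{eq:estimchinl} (namely $408 = 6 \cdot 68$ for $\gamma$ and $848\#\mathcal{S}$ for $\eta-\eta^{+}$) do not come from the contraction per se but from the fact that, to close the \emph{directional} Lipschitz estimates on the modulated frequencies, one has to iterate the construction: the Poisson bracket $\{Q^{(0)} + \eps\Pi_{\rm four}R, \chi\}$ produces new a-jet contributions of shifted structural type $(n, \|\boldsymbol m\|_{\ell^1}, \#\boldsymbol\ell)$, which are eliminated by successive applications of Lemma \ref{lem:homo}, each losing a further $\gamma^{-68}(\eta-\eta^{+})^{-138\#\mathcal{S}}$ factor; six such nested solves suffice since the a-jet class in Definition \ref{def:jet} only allows $\|\boldsymbol m\|_{\ell^1}\leq 2$, $n\leq 2$, $\#\boldsymbol\ell\leq 3$ (this is the ``six cohomological equations'' mentioned in Section \ref{sec:scheme}). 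The main obstacle I foresee is precisely this bookkeeping: tracking the Lipschitz-in-$\xi$ norm of $\chi$ through the iteration requires using the $\mathscr{L}^\mathrm{dir}$ rather than the $\mathscr{L}^\mathrm{lip}$ norm on the modulation $L$ (as in Lemma \ref{lem:continuity_estimate_pinor}), and ensuring that the Borel sets $\mathcal{O}^{+}$ extracted at each stage can be intersected while preserving the total measure excision $\mathrm{Leb}(\mathcal{O}\setminus\mathcal{O}^{+}) \leq \gamma$.
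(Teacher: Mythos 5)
Your decomposition of \eqref{eq:coho} into the form $\{Z_2 + \eps L,\chi\} + \eps \Pi_{\mathrm{ajet}}R + \Pi_{\mathrm{ajet}}\{Q^{(0)} + \eps\Pi_{\rm four}R, \chi\} = 0$ is exactly what the paper does, and you correctly identify why the mass-preserving and action-linear terms drop out of the outer $\Pi_{\mathrm{ajet}}$. However, your main existence argument --- the Banach fixed point --- does not close. The Lipschitz constant of $\Phi$ that you compute is of the form $C_{\mathcal{S}}\,\gamma^{-68}(\eta - \eta^{+})^{-A\#\mathcal{S}}(r^{2} + \eps)$, and the lemma is asserted to hold for \emph{every} $\gamma \in (0,1)$ with no smallness hypothesis on $r$ beyond $r < 1/4$. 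As $\gamma \to 0$ (or $\eta - \eta^{+} \to 0$) this constant blows up, and you cannot absorb it into the KAM budget $r \lesssim_{\mathcal{S},\eta^{\natural}-\eta} 1$ of Theorem \ref{thm:KAM} because that budget is independent of $\gamma$. So the contraction mapping principle is simply not applicable here.

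The paper's argument avoids the contraction entirely: it is a \emph{finite}, effectively nilpotent iteration. Starting from $\chi_0$ solving $\{Z_2 + \eps L, \chi_0\} = -\eps\Pi_{\mathrm{ajet}}R$, one defines successively $\chi_{k+1}$ by solving $\{Z_2 + \eps L,\chi_{k+1}\} = -\Pi_{\mathrm{ajet}}\{Q^{(0)} + \eps\Pi_{\rm four}R, \chi_k\}$ for $k = 0,\dots,4$; since $Q^{(0)}$ and $\Pi_{\rm four}R$ have degree $\geq 4$, Poisson brackets raise degree by $\geq 2$, and an a-jet has degree $\leq 11$, the residual $\Pi_{\mathrm{ajet}}\{Q^{(0)} + \eps\Pi_{\rm four}R, \chi_5\}$ vanishes identically (degree $\geq 12$). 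Then $\chi = \chi_0 + \cdots + \chi_5$ solves \eqref{eq:coho} exactly, with no smallness requirement. The six iterations produce the factors $\gamma^{-408} = (\gamma^{-68})^6$ and $(\eta-\eta^{+})^{-848\#\mathcal{S}}$ by accumulation, which matches your final heuristic remark; but that remark is not a separate Lipschitz-estimate bookkeeping issue, it \emph{is} the construction. You also only need a single extraction of $\mathcal{O}^{+}$ via Lemma \ref{lem:homo}, since each of the six linear solves uses the same small-divisor set, so the measure excision $\gamma$ is not compounded.
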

\proof
First, taking into account that $\chi$ commutes with $\mu$ and that, since $\Pi_{\mathrm{ajet}} \chi=\chi$, $(\mathrm{Id}-\Pi_{\mathrm{ajet}})\{Z_2+\eps L,\chi\}=0$, we rewrite \eqref{eq:coho} as follows
\begin{equation}\label{eq:coho2}
\{Z_2+\eps L,\chi\}+\eps \Pi_{\mathrm{ajet}}R+ \Pi_{\mathrm{ajet}}\{Q^{(0)}+\eps\Pi_{\rm four} R,\chi\}=0
\end{equation}
where $L=\Pi_{\mathscr{L}}R$.
Then using Lemma \ref{lem:homo} we can solve successively the following six linear cohomological equations
$$\{ Z_2+\eps L,\chi_0 \}+ \eps\Pi_{\mathrm{ajet}} R= 0,$$
$$\{ Z_2+\eps L,\chi_1 \}+\Pi_{\mathrm{ajet}} R_1=0,\quad R_1=\{ Q^{(0)}+\eps\Pi_{\rm four} R,\chi_0 \},$$
$$\{ Z_2+\eps L,\chi_2 \}+  \Pi_{\mathrm{ajet}} R_2= 0,\quad R_2=\{ Q^{(0)}+\eps\Pi_{\rm four} R,\chi_1 \},$$
$$\{ Z_2+\eps L,\chi_3 \}+  \Pi_{\mathrm{ajet}} R_3= 0,\quad R_3=\{Q^{(0)}+\eps\Pi_{\rm four} R ,\chi_2 \},$$
$$\{ Z_2+\eps L,\chi_4 \}+  \Pi_{\mathrm{ajet}} R_4= 0,\quad R_4=\{Q^{(0)}+\eps\Pi_{\rm four} R ,\chi_3 \},$$
$$\{ Z_2+\eps L,\chi_5 \}+  \Pi_{\mathrm{ajet}} R_5= 0,\quad R_5=\{Q^{(0)}+\eps\Pi_{\rm four} R ,\chi_4 \}.$$

We note that, by construction, $Q^{(0)}$ and $\Pi_{\rm four} R$  start with terms of degree  $4$ while an adapted jet has only monomials of degree less or equal than $11$. Furthermore the Poisson bracket of two monomials  of respective degrees $p$ and $q$ has degree  $p+q-2$. So $R_1$ starts with terms of degree $2$ and the same holds for $\chi_1$. Thus $R_2$ and $\chi_2$ start with terms of degree $4$, $R_3$ and $\chi_3$ start with terms of degree $6$, $R_4$ and $\chi_4$ start with terms of degree $8$ and $R_5$ and $\chi_5$ start with terms of degree $10$.
As a consequence $\Pi_{\mathrm{ajet}}\{ Q^{(0)}+\eps\Pi_{\rm four} R,\chi_5 \}=0$ and therefore, setting 
 $\chi=\chi_0+\chi_1+\chi_2+\chi_3+\chi_4+\chi_5$, we have solved \eqref{eq:coho2} and thus \eqref{eq:coho}. Estimates \eqref{eq:estimchinl}  follows by iterating \eqref{eq:estimchi},  using the Poisson bracket estimate \eqref{estim:poisson1} and $\|\Pi_{\rm four} R\|_{\mathscr{H}_{\eta,r,\mathcal{O},\mathcal{S}}^{\mathrm{tot}}}\leq9r^2$. Namely, denoting $X=\frac{1}{(\eta-\eta^+)^{138\# \mathcal{S}}\gamma^{68}} $, we have for all $\eta_0<\eta^+<\eta\leq \eta_{\rm max}$
 $$\|\chi_0\|_{\mathscr{H}_{\eta^+,r,\mathcal{O}^+,\mathcal{S}}^{\mathrm{tot}}} \lesssim_\S \eps X\|\Pi_{\mathrm{ajet}}R\|_{\mathscr{H}_{\eta,r,\mathcal{O},\mathcal{S}}^{\mathrm{tot}}}$$
 from which we deduce, using \eqref{estim:poisson1} and  $\|Q^{(0)}+\eps\Pi_4 R\|_{\mathscr{H}_{\eta,r,\mathcal{O},\mathcal{S}}^{\mathrm{tot}}}\lesssim r^2$, that for all $\eta_0<\eta^+<\eta\leq \eta_{\rm max}$
  $$\|R_1\|_{\mathscr{H}_{\eta^+,r,\mathcal{O}^+,\mathcal{S}}^{\mathrm{tot}}} \lesssim_\S \eps \frac{X}{(\eta-\eta^+)^4}\|\Pi_{\mathrm{ajet}}R\|_{\mathscr{H}_{\eta,r,\mathcal{O},\mathcal{S}}^{\mathrm{tot}}}.$$
  Iterating this process we get for $i=0,\cdots,5$ and for all $\eta_0<\eta^+<\eta\leq \eta_{\rm max}$ 
 $$\|\chi_i\|_{\mathscr{H}_{\eta^+,r,\mathcal{O}^+,\mathcal{S}}^{\mathrm{tot}}} \lesssim_\S \eps X \left(\frac{X}{(\eta-\eta^+)^4}\right)^{i}\|\Pi_{\mathrm{ajet}}R\|_{\mathscr{H}_{\eta,r,\mathcal{O},\mathcal{S}}^{\mathrm{tot}}}.$$
 Therefore 
  $$\|\chi\|_{\mathscr{H}_{\eta^+,r,\mathcal{O}^+,\mathcal{S}}^{\mathrm{tot}}} \lesssim_\S \eps \frac{X^6}{(\eta-\eta^+)^{20}}\|\Pi_{\mathrm{ajet}}R\|_{\mathscr{H}_{\eta,r,\mathcal{O},\mathcal{S}}^{\mathrm{tot}}}$$
  which leads to \eqref{eq:estimchinl}.
  \endproof

\subsubsection{New remainder term}
We begin with the so called KAM step, i.e.   we estimate $R^{+}$, the new remainder term, given by the formula 
\begin{equation}\label{eq:goalbis}
(H^{(0)}_{\varepsilon}+\varepsilon R)\circ  \Phi^1_{\chi}= (H^{(0)}_{\varepsilon}+\varepsilon R^+)
\end{equation}
 when $\chi$ satisfies the cohomological equation \eqref{eq:coho}.    \begin{lemma}\label{lem:R+}
Under the hypothesis of Lemma \ref{lem:homonl} and assuming that $\chi$, constructed in Lemma \ref{lem:homonl}, satisfies the hypothesis of   Lemma \ref{lem:compo}, we have for all $\eta_0<\eta^+<\eta\leq \eta_{\rm max}$
\begin{equation}\label{eq:estimR+jet}\| \Pi_{\mathrm{ajet}}  R^{+} \|_{\mathscr{H}_{\eta^+,r,\mathcal{O}^+,\mathcal{S}}^{\mathrm{tot}}} \lesssim_\S  \frac{\eps}{r^4\gamma^{816}(\eta-\eta^+)^{1706\sharp\mathcal S}}\| \Pi_{\mathrm{ajet}}  R \|^2_{\mathscr{H}_{\eta,r,\mathcal{O},\mathcal{S}}^{\mathrm{tot}}}\end{equation}
and
\begin{equation}\label{eq:estimR+} \| R- R^{+} \|_{\mathscr{H}_{\eta^+,r,\mathcal{O}^+,\mathcal{S}}^{\mathrm{tot}}}\lesssim_\S \frac{1}{r^4\gamma^{816}(\eta-\eta^+)^{1706\sharp\mathcal S}}\| \Pi_{\mathrm{ajet}}  R \|_{\mathscr{H}_{\eta,r,\mathcal{O},\mathcal{S}}^{\mathrm{tot}}} .\end{equation}
\end{lemma}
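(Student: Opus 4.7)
The plan is to expand the conjugated Hamiltonian to second order and use the cohomological equation \eqref{eq:coho} to extract the cancellation mechanism. Write $J := \Pi_{\mathrm{ajet}} R$ and $S := R - J$, so that $R = J+S$ with $\Pi_{\mathrm{ajet}} S = 0$. Taylor expanding the composition with the flow yields
\begin{equation*}
\eps R^+ = \eps R + \{H^{(0)}_\eps + \eps R,\chi\} + I_2, \qquad I_2 := \int_0^1 (1-t)\,\{\{H^{(0)}_\eps + \eps R,\chi\},\chi\}\circ\Phi^t_\chi\,\dd t.
\end{equation*}
Splitting $R=S+J$ inside the first bracket and using the cohomological identity $\Pi_{\mathrm{ajet}}\{H^{(0)}_\eps + \eps S,\chi\} = -\eps J$, one rewrites $\{H^{(0)}_\eps + \eps S,\chi\} = -\eps J + W$ with $W := (\mathrm{Id}-\Pi_{\mathrm{ajet}})\{H^{(0)}_\eps + \eps S,\chi\}$. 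Substitution then gives the key identity
\begin{equation*}
\eps R^+ = \eps S + W + \eps\{J,\chi\} + I_2.
\end{equation*}

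The main technical obstacle will be that $H^{(0)}_\eps \notin \mathscr{H}_{\eta,r,\mathcal{O},\mathcal{S}}$ (its coefficients involve the unbounded $\eps^{-2}j^2$), so neither Lemma \ref{lem:poisson} nor Lemma \ref{lem:compo} applies directly to $\{H^{(0)}_\eps,\chi\}$ or its iterated brackets. This will be resolved by exploiting the specific structure of $\chi$ from the proof of Lemma \ref{lem:homonl}. Decomposing $H^{(0)}_\eps = Z_2 + A^{(0)} + Q^{(0)}$, the bracket $\{A^{(0)},\chi\}$ vanishes since $A^{(0)}$ depends only on $\mu$, while the rewritten cohomological equation \eqref{eq:coho2} yields $\{Z_2+\eps L,\chi\} = -\eps J - \Pi_{\mathrm{ajet}}\{Q^{(0)}+\eps\,\Pi_{\mathrm{four}}R,\chi\}$. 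Combining these two facts, I will deduce the alternative expression
\begin{equation*}
W = (\mathrm{Id}-\Pi_{\mathrm{ajet}})\{Q^{(0)}+\eps\,\Pi_{\mathrm{four}} R,\chi\},
\end{equation*}
which is now a Poisson bracket of $\chi$ with a Hamiltonian of norm $O(r^2)$ in $\mathscr{H}$, hence controllable by Lemma \ref{lem:poisson}. In the same way, $\{H^{(0)}_\eps + \eps R,\chi\} = -\eps J + W + \eps\{J,\chi\}$ lies in $\mathscr{H}$, so the iterated bracket appearing in $I_2$ is a bracket of two elements of the class and is estimated via Lemmas \ref{lem:poisson} and \ref{lem:compo}.

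To prove \eqref{eq:estimR+jet} I will apply $\Pi_{\mathrm{ajet}}$ to the key identity: since $\Pi_{\mathrm{ajet}} S = 0$ and $\Pi_{\mathrm{ajet}} W = 0$ by construction, one gets
\begin{equation*}
\eps\,\Pi_{\mathrm{ajet}} R^+ = \eps\,\Pi_{\mathrm{ajet}}\{J,\chi\} + \Pi_{\mathrm{ajet}} I_2,
\end{equation*}
and plugging the estimate $\|\chi\| \lesssim \eps \cdot (\text{factors}) \cdot \|J\|$ from \eqref{eq:estimchinl} into the Poisson bracket bound makes both terms quadratic in $\|J\|$ and linear in $\eps$. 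For \eqref{eq:estimR+}, I will write
\begin{equation*}
\eps(R - R^+) = \eps J - W - \eps\{J,\chi\} - I_2,
\end{equation*}
and bound each of the four terms linearly in $\|J\|$: the factor $\eps$ hidden in $\|\chi\|$ via \eqref{eq:estimchinl} makes $\eps^{-1}\|W\|$ and $\|\{J,\chi\}\|$ of order $\|J\|$ after absorbing the $\eps^{-1}$ from dividing by $\eps$. What remains is careful bookkeeping of the losses from \eqref{eq:estimchinl} (with exponents $848\#\mathcal{S}$ and $408$, each squared in \eqref{eq:estimR+jet}), together with the $r^{-2}$ and $(\eta-\eta^+)^{-4}$ from each Poisson bracket (Lemma \ref{lem:poisson}) and the $r^{-4}\alpha^{-10}$ loss from \eqref{estim:flow2}; splitting $\eta-\eta^+$ into several intermediate scales will absorb all factors into the stated exponents $r^{-4}$, $\gamma^{-816}$ and $(\eta-\eta^+)^{-1706\#\mathcal{S}}$.
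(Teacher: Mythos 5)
Your proof follows essentially the same route as the paper: your identity $\eps R^+ = \eps S + W + \eps\{J,\chi\} + I_2$ is precisely the paper's expansion \eqref{eq:R+} once one notes that $\mathcal{T}_\chi(A^{(0)})=0$, your $W = (\mathrm{Id}-\Pi_{\mathrm{ajet}})\{Q^{(0)}+\eps\Pi_{\mathrm{four}}R,\chi\}$ and $I_2 = \mathcal T_\chi(Z_2+Q^{(0)}+\eps R)$ match the paper's terms exactly, and the way you handle the non-class object $H^{(0)}_\eps$ inside $I_2$ (substituting the cohomological identity so the outer bracket argument lies in $\mathscr H$) is the same trick the paper applies to $\mathcal T_\chi(Z_2+\eps L)$. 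The only differences are cosmetic groupings in the expansion and the exponent bookkeeping, which you defer and the paper carries out explicitly.
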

\proof 
We first introduce a local notation: for  $\chi$ satisfying hypothesis of Lemma \ref{lem:compo} and $F$ a Hamiltonian defined on $\mathcal A_\xi(2r)$ we denote
 $$
 \mathcal T_\chi(F)=F\circ \Phi^1_\chi-F-\{F,\chi\}.
 $$
We  have 
$$
(H^{(0)}_{\varepsilon}+\varepsilon R)\circ  \Phi^1_{\chi}=Z_2+A^{(0)}+\{Z_2+\eps L,\chi\}+  \mathcal T_\chi(Z_2)
+Q^{(0)}+\eps R+ \{ Q^{(0)}+\eps (R-L),\chi\} +\mathcal T_\chi(Q^{(0)}+\eps R)
$$
and thus using \eqref{eq:coho2} we get
\begin{multline*}
 (H^{(0)}_{\varepsilon}+\varepsilon R)\circ  \Phi^1_{\chi}=H^{(0)}_{\varepsilon}+\eps(\mathrm{Id}-\Pi_{\mathrm{ajet}})R+(\mathrm{Id}-\Pi_{\mathrm{ajet}})\{Q^{(0)}+\eps \Pi_{\rm four}R,\chi\}+\eps\{\Pi_{\mathrm{ajet}}R,\chi\}\\ + \mathcal T_\chi(Z_2+Q^{(0)}+\eps R).
 \end{multline*}
 Therefore comparing with \eqref{eq:goalbis} we obtain an explicit expression for $R^{+}$:
 \begin{equation}\label{eq:R+}
 \eps R^{+}=\eps(\mathrm{Id}-\Pi_{\mathrm{ajet}})R+(\mathrm{Id}-\Pi_{\mathrm{ajet}})\{Q^{(0)}+\eps \Pi_{\rm four}R,\chi\}+\eps\{\Pi_{\mathrm{ajet}}R,\chi\}+ \mathcal T_\chi(Z_2+Q^{(0)}+\eps R).
 \end{equation}
Furthermore we have 
 \begin{equation}\label{eq:R+jet}\eps\Pi_{\mathrm{ajet}}R^{+}=\Pi_{\mathrm{ajet}}\big(\eps\{\Pi_{\mathrm{ajet}}R,\chi\}+ \mathcal T_\chi(Z_2+Q^{(0)}+\eps R)\big).\end{equation}
In view of \eqref{eq:R+jet} we have
\begin{align*}\eps\| \Pi_{\mathrm{ajet}}  R^{+} \|_{\mathscr{H}_{\eta^+,r,\mathcal{O}^+,\mathcal{S}}^{\mathrm{tot}}}\leq& 
\eps \|\{\Pi_{\mathrm{ajet}}R,\chi\}\|_{\mathscr{H}_{\eta^+,r,\mathcal{O}^+,\mathcal{S}}^{\mathrm{tot}}}+\| \mathcal T_\chi(Z_2+\eps L) \|_{\mathscr{H}_{\eta^+,r,\mathcal{O}^+,\mathcal{S}}^{\mathrm{tot}}}\\
&+\| \mathcal T_\chi(Q^{(0)}) \|_{\mathscr{H}_{\eta^+,r,\mathcal{O}^+,\mathcal{S}}^{\mathrm{tot}}}
+\| \mathcal T_\chi(\eps (R-L)) \|_{\mathscr{H}_{\eta^+,r,\mathcal{O}^+,\mathcal{S}}^{\mathrm{tot}}}
\end{align*}
and it remains to estimate the four terms on the right hand side.
Using the bound on $\chi$ \eqref{eq:estimchinl} and the Poisson bracket estimate \eqref{estim:poisson1} we get
$$ \|\{\Pi_{\mathrm{ajet}}R,\chi\}\|_{\mathscr{H}_{\eta^+,r,\mathcal{O}^+,\mathcal{S}}^{\mathrm{tot}}}\lesssim_\S \frac{\eps}{r^2(\eta-\eta^+)^{848\sharp\S+4}\gamma^{408}}\| \Pi_{\mathrm{ajet}}  R \|^2_{\mathscr{H}_{\eta,r,\mathcal{O},\mathcal{S}}^{\mathrm{tot}}}.$$
By Lemma \ref{lem:compo} and  \eqref{eq:estimchinl} we have 
$$\| \mathcal T_\chi(\eps (R-L)) \|_{\mathscr{H}_{\eta^+,r,\mathcal{O}^+,\mathcal{S}}^{\mathrm{tot}}}\lesssim_\S\frac{\eps}{r^4(\eta-\eta^+)^{10}}\left(\frac{\eps}{(\eta-\eta^+)^{848\sharp\S}\gamma^{408}} \right)^2
\| \Pi_{\mathrm{ajet}}  R \|^2_{\mathscr{H}_{\eta,r,\mathcal{O},\mathcal{S}}^{\mathrm{tot}}}\|  R \|_{\mathscr{H}_{\eta,r,\mathcal{O},\mathcal{S}}^{\mathrm{tot}}}.
$$
Similarly using that $\|  Q^{(0)} \|_{\mathscr{H}_{\eta,r,\mathcal{O},\mathcal{S}}^{\mathrm{tot}}}\lesssim_\S r^4$ we get
$$\| \mathcal T_\chi(Q^{(0)}) \|_{\mathscr{H}_{\eta^+,r,\mathcal{O}^+,\mathcal{S}}^{\mathrm{tot}}}\lesssim_\S
\frac{1}{(\eta-\eta^+)^{10}}\left(\frac{\eps}{(\eta-\eta^+)^{848\sharp\S}\gamma^{408}} \right)^2
\| \Pi_{\mathrm{ajet}}  R \|^2_{\mathscr{H}_{\eta,r,\mathcal{O},\mathcal{S}}^{\mathrm{tot}}}.
$$
Thus the sum of the last two terms is less than 
$$\frac{\eps^2}{r^4\gamma^{816}(\eta-\eta^+)^{1706\sharp\mathcal S}}\| \Pi_{\mathrm{ajet}}  R \|^2_{\mathscr{H}_{\eta,r,\mathcal{O},\mathcal{S}}^{\mathrm{tot}}}.$$
It remains to estimate $\mathcal T_\chi(Z_2+\eps L)$. By Taylor expansion we have
$$\mathcal T_\chi(Z_2+\eps L)= \int_0^1(1-t)\{\{(Z_2+\eps L,\chi\},\chi\}\circ \Phi_\chi^t \dd t$$
and thus using Lemma \ref{lem:compo} and \eqref{estim:poisson1} we get
$$\| \mathcal T_\chi(Z_2+\eps L) \|_{\mathscr{H}_{\eta^+,r,\mathcal{O}^+,\mathcal{S}}^{\mathrm{tot}}}
\lesssim_\S \frac1{r^2(\eta-\eta^+)^5}\| \{Z_2+\eps L,\chi\}\|_{\mathscr{H}_{\eta-\frac{\eta-\eta^+}2,r,\mathcal{O}^+,\mathcal{S}}^{\mathrm{tot}}} \|\chi\|_{\mathscr{H}_{\eta-\frac{\eta-\eta^+}2,r,\mathcal{O}^+,\mathcal{S}}^{\mathrm{tot}}}
$$
Then we replace  $\{Z_2+\eps L,\chi\}$ by the expression given in \eqref{eq:coho2} and we use \eqref{eq:estimchinl} and \eqref{estim:poisson1} to get, first that
\begin{equation}\label{OK}\| \{Q^{(0)}+\eps \Pi_{\rm four}R,\chi\}\|_{\mathscr{H}_{\eta^+,r,\mathcal{O}^+,\mathcal{S}}^{\mathrm{tot}}}\lesssim_\S \frac{\eps}{(\eta-\eta^+)^{848\# \mathcal{S}+4}\gamma^{408}}\|\Pi_{\mathrm{ajet}} R \|_{\mathscr{H}_{\eta,r,\mathcal{O}^+,\mathcal{S}}^{\mathrm{tot}}},\end{equation}
and then
$$\| \mathcal T_\chi(Z_2+\eps L) \|_{\mathscr{H}_{\eta^+,r,\mathcal{O}^+,\mathcal{S}}^{\mathrm{tot}}}\lesssim_\S \frac{\eps^2}{r^2\gamma^{816}(\eta-\eta^+)^{1706\sharp\mathcal S}}\| \Pi_{\mathrm{ajet}}  R \|^2_{\mathscr{H}_{\eta,r,\mathcal{O},\mathcal{S}}^{\mathrm{tot}}}.$$
All together we obtain \eqref{eq:estimR+jet}.

Now we estimate $  R- R^{+}$ using \eqref{eq:R+}. Since we already estimated the last two terms of \eqref{eq:R+} to obtain the estimate \eqref{eq:estimR+jet}, it remains to estimate 
$$-\eps\Pi_{\mathrm{ajet}}R+(\mathrm{Id}-\Pi_{\mathrm{ajet}})\{Q^{(0)}+\eps \Pi_{\rm four}R,\chi\}.$$
So using \eqref{OK} we get
\begin{align*} \| \eps(R- R^{+}) \|_{\mathscr{H}_{\eta^+,r,\mathcal{O}^+,\mathcal{S}}^{\mathrm{tot}}}\lesssim_\S &\eps \|\Pi_{\mathrm{ajet}} R \|_{\mathscr{H}_{\eta,r,\mathcal{O}^+,\mathcal{S}}^{\mathrm{tot}}}+ \frac{\eps}{(\eta-\eta^+)^{848\# \mathcal{S}+4}\gamma^{408}}\|\Pi_{\mathrm{ajet}} R \|_{\mathscr{H}_{\eta,r,\mathcal{O}^+,\mathcal{S}}^{\mathrm{tot}}}\\
&+\frac{\eps^2}{r^4\gamma^{816}(\eta-\eta^+)^{1706\sharp\mathcal S}}\| \Pi_{\mathrm{ajet}}  R \|^2_{\mathscr{H}_{\eta,r,\mathcal{O},\mathcal{S}}^{\mathrm{tot}}}\\
\lesssim_\S& \frac{\eps}{r^4\gamma^{816}(\eta-\eta^+)^{1706\sharp\mathcal S}}\| \Pi_{\mathrm{ajet}}  R \|_{\mathscr{H}_{\eta,r,\mathcal{O},\mathcal{S}}^{\mathrm{tot}}}
\end{align*}
\endproof

\subsection{Proof}
As we explain, our strategy of proof consists in eliminating  $\Pi_{\rm ajet}P$ iteratively. First we choose a set of parameters in order to realize the iteration equation \eqref{eq:goalbis}.
We set 
$$\eta_0=\eta^\natural \text{ and } \eta_k-\eta_{k+1}=\frac{\eta^\natural-\eta}{2^{k+1}} \text{ for }k\geq0,$$
$$\gamma=r^2\ \eps^{10^{-3}} \text{ and } \gamma_k=\frac{\gamma}{2^{k+1}} \text{ for }k\geq0,$$
$$\nu_0=r^{4000} \text{ and } \nu_{k+1}=\nu_k^{\frac43} \text{ for }k\geq0.$$
Then we set $K_0=P$ and $\mathcal O_0=\mathcal O^\natural$. 
\subsubsection{Iterative lemma}
\begin{lemma}[Iterative lemma]\label{lem:ite}
Under the hypothesis of Theorem \ref{thm:KAM}, for all $k\geq0$ there exist
\begin{itemize}
\item $\mathcal O_{k}\subset \mathcal O^\natural$ a Borelian set,
\item $K_k\in \mathscr{H}_{\eta_k,r,\mathcal{O}_k,\mathcal{S}}$, 
\item $\chi_k\in \mathscr{H}_{\eta_{k+1},r,\mathcal{O}_{k+1},\mathcal{S}}$
\end{itemize}
such that 
$$ (H^{(0)}_{\varepsilon}+\varepsilon K_k)\circ  \Phi^1_{\chi_k}= (H^{(0)}_{\varepsilon}+\varepsilon K_{k+1})$$
 Furthermore for all $k\geq0$ we have
\begin{itemize}
\item $\mathcal O_{k+1}\subset \mathcal O_k$ and ${\rm Leb}(\mathcal O_k\setminus \mathcal O_{k+1})\leq \gamma_k$
\item $\|\Pi_{\mathrm{ajet}} K_k\|_{\mathscr{H}_{\eta_k,r,\mathcal{O}_k,\mathcal{S}}^{\mathrm{tot}}}\leq \eps^{\frac9{10}} \nu_k$ 
\item $\| K_{k+1}-K_k\|_{\mathscr{H}_{\eta_{k+1},r,\mathcal{O}_{k+1},\mathcal{S}}^{\mathrm{tot}}}\leq \nu_{k}^{\frac14}$
\item $\|\chi_k\|_{\mathscr{H}_{\eta_{k+1},r,\mathcal{O}_{k+1},\mathcal{S}}^{\mathrm{tot}}}\leq \eps\nu_k^{\frac14}$
\end{itemize}
\proof
The proof is done by recurrence.
\medskip

\noindent $\bullet$ \underline{\emph{Initialization step.}} We just have to verify that $\|\Pi_{\mathrm{ajet}} K_0\|_{\mathscr{H}_{\eta_0,r,\mathcal{O}_0,\mathcal{S}}^{\mathrm{tot}}}\leq \eps^{\frac9{10}} \nu_0$  which is clear since $K_0=P$ and $P$ satisfies the hypothesis of Theorem \ref{thm:KAM}.
We also note that $\Pi_{\mathscr L}K_0$ which satisfies \eqref{eq:L} and that hypothesis \eqref{eq:eps} is part of the hypothesis of Theorem \ref{thm:KAM}. 

\medskip

\noindent $\bullet$ \underline{\emph{Iteration step.}}
 Let $k\geq0$, we assume that we have construct $\mathcal O_i$, $K_i$ and $\chi_{i-1}$ up to $i=k$ (by convention we take $\chi_{-1}=0$) and that the estimate of the Lemma are satisfied for these objects. Now we want to construct $\mathcal O_{k+1}$, $K_{k+1}$ and $\chi_{k}$. 
First we note that
  $$\| K_k-K_0\|_{\mathscr{H}_{\eta_k,r,\mathcal{O}_k,\mathcal{S}}^{\mathrm{tot}}}\leq \sum_{i=0}^{k-1}   \| K_i-K_{i+1}\|_{\mathscr{H}_{\eta_i,r,\mathcal{O}_i,\mathcal{S}}^{\mathrm{tot}}}\leq \sum_{i=0}^{k-1} \nu_i^{\frac14}$$
  and thus $\| K_k-K_0\|_{\mathscr{H}_{\eta_k,r,\mathcal{O}_k,\mathcal{S}}^{\mathrm{tot}}}\leq \nu_0^{\frac15}$. Therefore, since $ \Pi_{\mathscr L}K_0$ satisfies an estimate better than \eqref{eq:L} (by assumption
$\| \Pi_{\mathscr L}K_0 \|_{\mathscr{L}_{r,\mathcal{O},\mathcal{S} }^{\mathrm{tot}}} \vee \| \Pi_{\mathscr L}K_0 \|_{\mathscr{L}_{\mathcal{O},\mathcal{S} }^{\mathrm{dir}}} \leq  \| \Pi_{\mathrm{nor}} P\|_{\mathscr{N}_{r,\mathcal{O}^\natural,\mathcal{S}}^{\mathrm{tot}}} \leq 7/2  $, see \eqref{eq:small_assump_P_KAM}), we have that $\Pi_{\mathscr L}K_k$  also satisfies  \eqref{eq:L}. Thus, by Lemma \ref{lem:homonl} we can construct a set $\mathcal O_{k+1}\subset \mathcal O_k$ satisfying $\text{Leb}(\mathcal O_k\setminus \mathcal O_{k+1})\leq \gamma_k$ and an adapted jet $\chi_k\in \mathscr{H}_{\eta_{k+1},r,\mathcal{O}_{k+1},\mathcal{S}} $ solving the homological equation 
$$ \eps \Pi_{\mathrm{ajet}} K_k+ \Pi_{\mathrm{ajet}} \{ H^{(0)}_{\varepsilon}+ \eps( K_k-\Pi_{\mathrm{ajet}} K_k),\chi_{k}\}=0$$
and satisfying 
$$\|\chi_k\|_{\mathscr{H}_{\eta_{k+1},r,\mathcal{O}_{k+1},\mathcal{S}}^{\mathrm{tot}}}\lesssim_\S 
 \frac{\eps}{(\eta_k-\eta_{k+1})^{848\# \mathcal{S}}\gamma_k^{408}}\eps^{\frac9{10}} \nu_k.$$
Using the definition of $\eta_k$ and $\gamma_k$ we get
$$\|\chi_k\|_{\mathscr{H}_{\eta_{k+1},r,\mathcal{O}_{k+1},\mathcal{S}}^{\mathrm{tot}}}\lesssim_\S 
 \frac{\eps 2^{(k+1)(848\# \mathcal{S}+408)}(r^{4000})^{\frac34(\frac43)^k}}{(\eta^\natural-\eta)^{848\# \mathcal{S}}(r^2\eps^{10^{-3}})^{408}}\eps^{\frac9{10}} \nu_k^{\frac14}.$$
Thus by imposing a condition of type $r\lesssim_{\S,\eta^\natural-\eta}1$ we obtain, uniformly\footnote{i.e. one condition on $r$ suffices to obtain the estimate for all $k$.} in $k$,
$$\|\chi_k\|_{\mathscr{H}_{\eta_{k+1},r,\mathcal{O}_{k+1},\mathcal{S}}^{\mathrm{tot}}}\leq\eps \nu_k^{\frac14}.$$
Then we construct $K_{k+1}$ by the formula \eqref{eq:R+} with $R=K_k$ and we get that \eqref{eq:goal} is satisfied at rank $k$. Furthermore by Lemma \ref{lem:R+} (note that hypothesis of Lemma \ref{lem:compo} are clearly satisfied for $r$ small enough uniformly in $k$)
$K_{k+1}\in \mathscr{H}_{\eta_{k+1},r,\mathcal{O}_{k+1},\mathcal{S}}$ and 
$$\| \Pi_{\mathrm{ajet}}  K_{k+1} \|_{\mathscr{H}_{\eta_{k+1},r,\mathcal{O}_{k+1},\mathcal{S}}^{\mathrm{tot}}} \lesssim_\S  \frac{\eps 2^{(k+1)(1706\sharp\mathcal S+816)}}{r^4(r^2\eps^{10^{-3}})^{816}(\eta^\natural-\eta)^{1706\sharp\mathcal S}}\eps^{\frac{18}{10}}\nu_k^2.$$
Thus by imposing a condition of type $r\lesssim_{\S,\eta^\natural-\eta}1$ we obtain, uniformly in $k$,
$$\| \Pi_{\mathrm{ajet}}  K_{k+1} \|_{\mathscr{H}_{\eta_{k+1},r,\mathcal{O}_{k+1},\mathcal{S}}^{\mathrm{tot}}}\leq \eps^{\frac{18}{10}}\nu_{k}^{\frac43}\leq\eps^{\frac9{10}}\nu_{k+1}.$$
Similarly, using \eqref{eq:estimR+}, we prove $\|  K_{k+1}-K_k \|_{\mathscr{H}_{\eta_{k+1},r,\mathcal{O}_{k+1},\mathcal{S}}^{\mathrm{tot}}}\leq \nu_k^{\frac14}$.

\endproof

\end{lemma}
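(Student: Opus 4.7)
My plan is to argue by induction on $k\geq 0$, driving the scheme by two black boxes already in hand: the cohomological equation solver of Lemma \ref{lem:homonl} and the KAM step estimate of Lemma \ref{lem:R+}. Given $(K_k,\mathcal{O}_k,\eta_k)$ satisfying the inductive bound $\|\Pi_{\mathrm{ajet}}K_k\|^{\mathrm{tot}}_{\mathscr{H}_{\eta_k,r,\mathcal{O}_k,\mathcal{S}}}\leq \eps^{9/10}\nu_k$, the step is natural: apply Lemma \ref{lem:homonl} to produce $\chi_k$ and a subset $\mathcal{O}_{k+1}\subset\mathcal{O}_k$ with $\mathrm{Leb}(\mathcal{O}_k\setminus\mathcal{O}_{k+1})\leq\gamma_k$ solving the nonlinear cohomological equation, then define $K_{k+1}$ by the explicit remainder formula \eqref{eq:R+} so that \eqref{eq:goalbis} is realized, and finally invoke Lemma \ref{lem:R+} to control $\Pi_{\mathrm{ajet}}K_{k+1}$ and $K_{k+1}-K_k$. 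The initialization at $k=0$ is immediate from the hypothesis $\|\Pi_{\mathrm{ajet}}P\|^{\mathrm{tot}}\leq\eps^{9/10}r^{4000}=\eps^{9/10}\nu_0$ of Theorem \ref{thm:KAM}.

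Before invoking Lemma \ref{lem:homonl} at step $k$ I would verify that its two standing hypotheses on $L=\Pi_{\mathscr{L}}K_k$ and on $\varepsilon$ still hold. The condition on $\varepsilon$ is part of Theorem \ref{thm:KAM} and is preserved. The condition \eqref{eq:L} on $L$ is where one uses the telescoping estimate $\|K_k-K_0\|^{\mathrm{tot}}\leq\sum_{i<k}\nu_i^{1/4}\ll\nu_0^{1/5}$ and the continuity of $\Pi_{\mathrm{nor}}$ (Lemma \ref{lem:continuity_estimate_pinor}): since $\Pi_{\mathscr{L}}P$ already satisfies \eqref{eq:L} with a constant $\leq 4$ by the KAM assumption $\|\Pi_{\mathrm{nor}}P\|^{\mathrm{tot}}\leq 4$, the perturbation is harmless provided $r$ is small enough. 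For the same reason one must check that Lemma \ref{lem:compo} applies to $\chi_k$ uniformly in $k$, i.e.\ that $\|\chi_k\|^{\mathrm{tot}}\lesssim (\eta_k-\eta_{k+1})^5 r^2$; this is a consequence of the target bound $\|\chi_k\|^{\mathrm{tot}}\leq\eps\nu_k^{1/4}\leq\eps r^{1000}$ together with $\eta_k-\eta_{k+1}=(\eta^\natural-\eta)2^{-(k+1)}$ and $r\lesssim_{\mathcal{S},\eta^\natural-\eta}1$.

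The main obstacle, and the reason for the seemingly extravagant exponent $4000$ in $\nu_0$, is the verification that the parameter choices close the induction uniformly in $k$. With $\gamma_k=\gamma\,2^{-(k+1)}$, $\gamma=r^2\eps^{10^{-3}}$ and $\nu_{k+1}=\nu_k^{4/3}$, the estimate from Lemma \ref{lem:homonl} reads schematically
\[
\|\chi_k\|^{\mathrm{tot}}\lesssim_{\mathcal{S}}\ \frac{2^{k(848\#\mathcal{S}+408)}}{(\eta^\natural-\eta)^{848\#\mathcal{S}}(r^2\eps^{10^{-3}})^{408}}\ \eps^{9/10}\,\eps\,\nu_k,
\]
so the target $\|\chi_k\|^{\mathrm{tot}}\leq\eps\nu_k^{1/4}$ amounts to $\nu_k^{3/4}\lesssim_{\mathcal{S},\eta^\natural-\eta} r^{816}\eps^{-9/10+O(1)}2^{-O(k)}$, and since $\nu_k^{3/4}=r^{3000(4/3)^k}$ has an exponent going to $+\infty$ much faster than any polynomial in $k$, this closes for $r$ small enough once and for all. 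The same mechanism absorbs the loss in Lemma \ref{lem:R+}: the squared gain $\nu_k^2$ is reduced to $\nu_k^{4/3}=\nu_{k+1}$, with $\nu_k^{2/3}$ to spare to kill all the parasitic factors $r^{-4}\gamma_k^{-816}(\eta_k-\eta_{k+1})^{-1706\#\mathcal{S}}$. The fact that the exponent $4/3<2$ in the recursion $\nu_{k+1}=\nu_k^{4/3}$ is genuinely subquadratic is precisely what lets the exponential losses $2^{Ck}$ be absorbed; a naive quadratic scheme would require a more delicate bookkeeping. Once these parameter inequalities are checked, the final conclusion on $\|K_{k+1}-K_k\|^{\mathrm{tot}}\leq\nu_k^{1/4}$ follows from \eqref{eq:estimR+} by the same calculation.
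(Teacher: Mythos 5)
Your proposal is correct and follows essentially the same route as the paper: induction driven by Lemma~\ref{lem:homonl} (construct $\chi_k$ and shrink the parameter set) and Lemma~\ref{lem:R+} (quadratic control of the new adapted jet and of $K_{k+1}-K_k$), with the telescoping bound $\|K_k-K_0\|^{\mathrm{tot}}\lesssim\nu_0^{1/5}$ used to keep the $\Pi_{\mathscr L}K_k$ hypothesis \eqref{eq:L} valid, and the super-exponential decay of $\nu_k=r^{4000(4/3)^k}$ absorbing all the $2^{Ck}$ and inverse-power losses uniformly in $k$ once $r$ is small enough. Your remark that the choice $\nu_{k+1}=\nu_k^{4/3}$, strictly subquadratic, is exactly what leaves the slack $\nu_k^{2/3}$ to kill the parasitic constants is a faithful and clear articulation of the mechanism the paper uses implicitly.
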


\subsubsection{Transition to the limit and end of the proof.}
We set $\mathcal O=\cap_{k\geq0}\mathcal O_k$ by Lemma \ref{lem:ite} we have
$$\text{Leb}(\mathcal O^\natural\setminus\mathcal O)\leq\sum_{k\geq0} \text{Leb}(\mathcal O_k\setminus \mathcal O_{k+1})=\sum_{k\geq0}\gamma_k=\gamma=r^2\eps^{10^{-3}}.$$
Using Lemma \ref{lem:flow} and Lemma \ref{lem:ite}  we get that $\Phi_{\chi_k}^1$ is a $C^1$ symplectic change of variable mapping $\mathcal{A}_\xi(\rho_{k})$ into $\mathcal{A}_\xi(\rho_{k+1})$ where $\rho_k=r+\sum_{i=0}^{k-1}r^{-1}\nu_i^{\frac14}\leq 2r$ (for $r$ small enough).

Then for $\xi\in\mathcal O$ and all $k\geq 0$ we set 
$$\tau_\xi^{(k)}=\Phi_{\chi_0}^1\circ\cdots \circ \Phi_{\chi_{k}}^1.$$
By construction
 $\tau_\xi^{(k)}:\mathcal{A}_\xi(r)\to \mathcal{A}_\xi(2r)$ is a  $C^1$ symplectic map commuting with the gauge transform. 
By Lemma \ref{lem:appendix} (an adaptation of the mean value theorem for annulus)  we have for all $\ell\geq0$ and all $u\in \mathcal{A}_\xi(2r)$ 
 $$\|\tau_\xi^{(\ell+1)}(u)-\tau_\xi^{(\ell)}(u)\|_{\ell^2_\varpi} \lesssim_\S \sup_{w\in \mathcal{A}_\xi(2r)}\| \dd \tau_\xi^{(\ell)}(w)\|_{\ell^2_\varpi\to\ell^2_\varpi}\| \Phi_{\chi_{\ell+1}}^1(u)-u\|_{\ell^2_\varpi}.
 $$
Thus, using Lemma \ref{lem:flow}, we get
 $$\|\tau_\xi^{(\ell+1)}(u)-\tau_\xi^{(\ell)}(u)\|_{\ell^2_\varpi} \lesssim_\S r^{-2}\|\chi_{\ell+1}\|_{\mathscr{H}_{\eta_{\ell+1},r,\mathcal{O}_{\ell+1},\mathcal{S}}^{\mathrm{tot}}}.$$
 Similarly, using the control we have on $ \dd^2 \tau_\xi^{(\ell)}$  by Lemma \ref{lem:flow} , we obtain 
  $$\|\dd\tau_\xi^{(\ell+1)}(u)-\dd\tau_\xi^{(\ell)}(u)\|_{\ell^2_\varpi} \lesssim_\S r^{-6}\|\chi_{\ell+1}\|_{\mathscr{H}_{\eta_{\ell+1},r,\mathcal{O}_{\ell+1},\mathcal{S}}^{\mathrm{tot}}}.$$
 Therefore, for all $u\in \mathcal{A}_\xi(2r)$ and all $0\leq\ell\leq k$, we get
 $$
\| \tau_\xi^{(k)}(u) - \tau_\xi^{(\ell)} (u)\|_{\ell^2_\varpi} + r^2 \| \mathrm{d} \tau_\xi^{(k)}(u) - \mathrm{d} \tau_\xi^{(\ell)}(u)\|_{\ell^2_\varpi \to \ell^2_\varpi } \lesssim_\S r^{-4}\sum_{i=\ell}^k\|\chi_i\|_{\mathscr{H}_{\eta_{i+1},r,\mathcal{O}_{i+1},\mathcal{S}}^{\mathrm{tot}}}\leq
r^{-4}\sum_{i=\ell}^k\eps\nu_i^{\frac14}
$$
which leads to
\begin{equation}
\label{eq:tauk}
\| \tau_\xi^{(k)}(u) - \tau_\xi^{(\ell)} (u)\|_{\ell^2_\varpi} + r^2 \| \mathrm{d} \tau_\xi^{(k)}(u) - \mathrm{d} \tau_\xi^{(\ell)}(u)\|_{\ell^2_\varpi \to \ell^2_\varpi } \leq \eps^{\frac12}\nu_\ell^{\frac15}
\end{equation}
for $r\lesssim_\S 1$ small enough. As a consequence $(\tau_\xi^{(k)})_{k\geq0}$ is a Cauchy sequence in $C^1(\mathcal{A}_\xi(r), \ell^2_\varpi)$ and thus converges in this Banach space to an element that we denote  $\tau_\xi$. By construction $\tau_\xi:\mathcal{A}_\xi(r)\to \mathcal{A}_\xi(2r)$ is  a $C^1$ symplectic map commuting with the gauge transform and taking $\ell=0$ and the limit $k\to+\infty$ in \eqref{eq:tauk} we get
$$
\| \tau_\xi(u) - \text{Id}\|_{\ell^2_\varpi} + r^2 \| \mathrm{d} \tau_\xi(u) - \mathrm{Id}\|_{\ell^2_\varpi \to \ell^2_\varpi } \leq \eps^{\frac12}\nu_0^{\frac15}\leq \eps^{\frac12}r^3.$$
Using again Lemma \ref{lem:ite} we have for $0\leq\ell\leq k$
$$\| K_{k}-K_\ell\|_{\mathscr{H}_{\eta,r,\mathcal{O},\mathcal{S}}^{\mathrm{tot}}}
\leq\sum_{i=\ell}^k\nu_{i}^{\frac14}\leq \nu_\ell^{\frac15}$$
for $r$ small enough. Therefore $(K_k)_{k\geq0}$ is a Cauchy sequence and thus converges in $\mathscr{H}_{\eta,r,\mathcal{O},\mathcal{S}}$. We call $K$ its limit. Passing to the limit in the previous estimate leads to 
$$\| K-P\|_{\mathscr{H}_{\eta,r,\mathcal{O},\mathcal{S}}^{\mathrm{tot}}}
\leq \nu_0^{\frac15}$$
from which we deduce
$$\|  \Pi_{\mathrm{rem}} K\|_{\mathscr{H}_{\eta,r,\mathcal{O},\mathcal{S}}^{\mathrm{tot}}} \leq 1  \quad \mathrm{and} \quad \|  \Pi_{\mathrm{nor}} (K-P)\|_{\mathscr{N}_{r,\mathcal{O},\mathcal{S}}^{\mathrm{tot}}} \leq r .
$$
On the other hand, since by Lemma \ref{lem:ite} $\|\Pi_{\mathrm{ajet}} K_k\|_{\mathscr{H}_{\eta,r,\mathcal{O},\mathcal{S}}^{\mathrm{tot}}}\leq \nu_k^{\frac43}$, we get $\Pi_{\mathrm{ajet}} K=0$. 

Finally by construction (see \eqref{eq:goal} and \eqref{eq:tauk})
$$(H^{(0)}_{\varepsilon}+\varepsilon P)\circ  \tau_\xi^{(k)}= (H^{(0)}_{\varepsilon}+\varepsilon K_{k})$$
we deduce, passing to the limit, that for all $\xi\in \mathcal O$ and all $u\in\mathcal{A}_\xi(r)$
$$ (H^{(0)}_{\varepsilon}+\varepsilon P)(\xi;  \tau_\xi(u))= (H^{(0)}_{\varepsilon}+\varepsilon K)(\xi;u)$$
which ends the proof of Theorem \ref{thm:KAM}.

\section{A Birkhoff normal form theorem}\label{sec:Birk}
In this section we prove a Birkhoff normal form theorem that will be useful to prepare our Hamiltonian perturbation before each opening step. Roughly speaking, without this Birkhoff step, after an opening step the adapted jet (see Definition \ref{def:jet}) would be too big to apply our KAM theorem. So we prepare the Hamiltonian by making much smaller the terms that are linked to the new site (corresponding to the index $j_*$ in Definition \ref{def:tbr}) and that will be part of the adapted jet after opening. 
In fact we are going to make smaller a larger part of the Hamiltonian perturbation according to the following definition of "terms to be removed" (where we leave free the index $j^*$ in $\mathcal{S}^c$):

\begin{definition}[Terms to be removed]\label{def:tbr} Let $\mathcal{S} \subset \mathbb{Z}$ be a finite set, $r\in(0,1/4)$, $\mathcal{O}\subset ((4r)^2,1)^{\mathcal{S}}$, $\eta \geq 0$ and $\mathfrak{M}\geq 2$.
A Hamiltonian $P \in \mathscr{H}_{\eta,r,\mathcal{O},\mathcal{S}}$ is a term to be removed if $\Pi_{\mathrm{int}}P=0$ and its non zero coefficients $P_{n,\boldsymbol{k},\boldsymbol{m}}^{\boldsymbol{\ell},\boldsymbol{\sigma}}\neq 0$ satisfy 
\begin{equation}
\label{eq:original}
  q:=\#\boldsymbol{\ell}  \leq \mathfrak{M}, \quad \# \{ \boldsymbol{\ell}_j \ | \ 1\leq j \leq q\} \leq 4 \quad \mathrm{and}  \quad \exists j_* \in \mathcal{S}^c, \ \# \boldsymbol{\ell} ^{-1}(\mathcal{S}^c \setminus \{ j_*\} ) \leq 3.
\end{equation}
 We denote by $\Pi_{\mathrm{tbr}}^{\mathfrak{M}}$ the associated projection defined by restriction of the coefficients.
\end{definition}

In the following lemma we prove that all these terms are either non resonant (for the linear equation) or depend on the angles $(\theta_k)_{k\in \mathcal{S}}$. It will be crucial to get non resonance estimates by  Proposition \ref{prop:smalldiv}.
\begin{lemma} \label{lem:critical}
Let $\mathcal{S} \subset \mathbb{Z}$ be a finite set and $\mathfrak{M}\geq 2$. If $n\in \mathbb{N}$, $\boldsymbol{k} \in \mathbb{Z}^{\mathcal{S}}$, $\boldsymbol{m}\in \mathbb{N}^{\mathcal{S}}$, $q\geq 0$, $(\boldsymbol{\ell}\in \mathcal{S}^c)^q$, $\boldsymbol{\sigma}\in \{-1,1\}^q$ are indices associated with a non-zero coefficient of a term to be removed\footnote{i.e. they do not satisfy the integrability condition \eqref{eq:integ} but they satisfy \eqref{eq:original} and $ \sum_{j\in \mathcal{S}} j^\alpha\boldsymbol{k}_j + \sum_{i=1}^q \sigma_i \ell_i^\alpha= 0$ for $\alpha \in \{0,1\}$}, then either $\boldsymbol{k} \neq 0$ or $\boldsymbol{\sigma}_1 \boldsymbol{\ell}_1^2+ \cdots+\boldsymbol{\sigma}_q \boldsymbol{\ell}_q^2 \neq 0$.
\end{lemma}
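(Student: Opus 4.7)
The plan is to argue by contradiction: assume both $\boldsymbol{k}=0$ and $\boldsymbol{\sigma}_1\boldsymbol{\ell}_1^2+\cdots+\boldsymbol{\sigma}_q\boldsymbol{\ell}_q^2=0$. Combined with the preservation of the mass ($\sum \boldsymbol{\sigma}_i =0$) and of the momentum ($\sum \boldsymbol{\sigma}_i \boldsymbol{\ell}_i = 0$), we then have three linear relations to exploit. Let $j_\ast \in \mathcal{S}^c$ be the index provided by \eqref{eq:original} and let $\alpha_1,\alpha_2,\alpha_3 \in \mathcal{S}^c$ be an enumeration of the (at most three) distinct values taken by $\boldsymbol{\ell}_i$ which differ from $j_\ast$, with the convention that missing values are assigned coefficient $0$ below. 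For $p \in \{1,2,3\}$ let $c_p := \#\{i : \boldsymbol{\ell}_i=\alpha_p,\boldsymbol{\sigma}_i=+1\}-\#\{i : \boldsymbol{\ell}_i=\alpha_p,\boldsymbol{\sigma}_i=-1\}$ and define $c_0$ analogously for $j_\ast$. By \eqref{eq:original}, $|c_1|+|c_2|+|c_3| \leq 3$, and the three relations become
\begin{equation*}
c_0 + \sum_{p=1}^3 c_p = 0, \qquad c_0 j_\ast + \sum_{p=1}^3 c_p \alpha_p = 0, \qquad c_0 j_\ast^2 + \sum_{p=1}^3 c_p \alpha_p^2 = 0.
\end{equation*}
The failure of the integrability condition \eqref{eq:integ} (together with $\boldsymbol{k}=0$) translates into the statement that the $(\boldsymbol{\ell}_i,\boldsymbol{\sigma}_i)$ cannot be matched pairwise, i.e. at least one of the $c_p$ is nonzero.

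The rest of the argument splits according to the number $m$ of distinct values among $\alpha_1,\alpha_2,\alpha_3$ that actually appear. If $m\leq 2$, the $(3-m)$-codimension in the unknowns $c_0,c_1,c_2,c_3$ together with the Vandermonde structure of the system forces every $c_p$ to vanish, contradicting the non-integrability. The only delicate case is $m=3$, i.e.\ all three values $\alpha_1,\alpha_2,\alpha_3$ are distinct and distinct from $j_\ast$. Setting $a_p := c_p(\alpha_p - j_\ast)$ for $p=1,2,3$ and using $c_0=-(c_1+c_2+c_3)$, the momentum and resonance equations become
\begin{equation*}
a_1+a_2+a_3=0, \qquad a_1\alpha_1 + a_2\alpha_2 + a_3\alpha_3 = 0,
\end{equation*}
whose solution space is one-dimensional (Vandermonde) and spanned by $(\alpha_2-\alpha_3,\alpha_3-\alpha_1,\alpha_1-\alpha_2)$. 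Hence $a_p = \lambda(\alpha_{p+1}-\alpha_{p+2})$ (indices cyclic) for some $\lambda \in \mathbb{R}$, which is nonzero (otherwise all $c_p$ vanish, contradicting non-integrability).

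The final step is to derive the contradiction from the combination of integrality of the $c_p$'s and the small-norm condition $|c_1|+|c_2|+|c_3|\leq 3$. Since $\lambda\neq 0$ and the $\alpha_p$ are pairwise distinct, each $a_p$ is nonzero, hence each $c_p$ is a nonzero integer. Combined with the bound $|c_1|+|c_2|+|c_3|\leq 3$, this forces $|c_1|=|c_2|=|c_3|=1$ and therefore $|\alpha_p - j_\ast| = |\lambda|\cdot|\alpha_{p+1}-\alpha_{p+2}|$ for each $p$. Labelling so that $\alpha_1 > \alpha_2 > \alpha_3$, the identity $|\alpha_1-\alpha_2|+|\alpha_2-\alpha_3|=|\alpha_1-\alpha_3|$ then yields $|\alpha_1 - j_\ast|+|\alpha_3-j_\ast| = |\alpha_2-j_\ast|$. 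But since $\alpha_2$ lies strictly between $\alpha_1$ and $\alpha_3$, the convexity of $x\mapsto |x-j_\ast|$ forces $|\alpha_2 - j_\ast| \leq \max(|\alpha_1-j_\ast|,|\alpha_3-j_\ast|)$, so the equality above is impossible unless one of $|\alpha_1-j_\ast|, |\alpha_3-j_\ast|$ vanishes, contradicting $j_\ast\notin\{\alpha_1,\alpha_2,\alpha_3\}$. I expect this last triangle-inequality step to be the conceptually delicate one: it is where the ``$\leq 3$'' in the definition of $\Pi_{\mathrm{tbr}}^{\mathfrak{M}}$ really bites, and where one can see that allowing even one more position away from $j_\ast$ would no longer automatically yield non-resonance.
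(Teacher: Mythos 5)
Your proof is correct. It takes a genuinely different route from the paper's, so let me compare.

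The paper reorders so that the positions not equal to the distinguished index $j_*$ come first (there are $a\le 3$ of them), factors out the over-paired part $|u_{j_*}|^{q-a-p}$, and then runs a hands-on case analysis on the net exponent $p=\boldsymbol\sigma_{a+1}+\cdots+\boldsymbol\sigma_q$ of $u_{j_*}$, which by mass conservation satisfies $0\le p\le 3$. Each case is killed by a bare-hands algebraic identity: for $p\in\{2,3\}$ it is the strict convexity of $x\mapsto x^2$, for $p=1$ it is a factorisation of the quadratic resonance into a product of nonzero differences, and for $p=0$ momentum conservation directly produces the pairing. Your proof instead passes to the signed multiplicities $c_p$ of each distinct value, which converts mass, momentum, and the quadratic resonance into a $3\times(m+1)$ Vandermonde linear system. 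The cases $m\le 2$ are then swept up at once (square or overdetermined nonsingular Vandermonde forces all $c_p=0$, hence integrability), and the only surviving case $m=3$ is parametrised by the one-dimensional kernel spanned by $(\alpha_2-\alpha_3,\alpha_3-\alpha_1,\alpha_1-\alpha_2)$ and then contradicted via the integrality bound $|c_1|+|c_2|+|c_3|\le 3$ and the triangle inequality. Both arguments exploit exactly the same two facts — three polynomial conservation laws of degrees $0,1,2$, and the constraint from \eqref{eq:original} that at most three positions carry values $\ne j_*$ — but your version makes the Vandermonde structure, which is only implicit in the paper's $p=1$, $a=3$ subcase, the organising principle of the whole proof. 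The payoff is a cleaner separation between the "easy" cases (killed by rank) and the one genuinely combinatorial case; the mild cost is that one has to get the translation from ``not integrable'' to ``some $c_p\neq 0$'' and the reduction $c_0=-(c_1+c_2+c_3)$ exactly right, steps which the paper's more concrete reordering makes unnecessary. I would also note that the ``$\le 3$'' already bites at the step $|c_p|=1$, not only at the final triangle inequality; both are needed, and the second would not close without the first.
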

\begin{proof} By contradiction assume that $\boldsymbol{k} = 0$ and $\boldsymbol{\sigma}_1 \boldsymbol{\ell}_1^2+ \cdots+\boldsymbol{\sigma}_q \boldsymbol{\ell}_q^2=0$. We aim at proving that $u_{\boldsymbol{\ell}}^{\boldsymbol{\sigma}}$ is integrable which is impossible by definition of $\Pi_{\mathrm{tbr}}^{\mathfrak{M}}$. Thanks to the two last assumptions of \eqref{eq:original}, we assume without loss of generality (it would suffices to re-order the indices) that there exists $a\leq 3$ such that
$$
\forall j>  a, \quad \boldsymbol{\ell}_j = \boldsymbol{\ell}_{a+1} \quad \mathrm{and} \quad \forall j\leq a, \quad \boldsymbol{\ell}_j \neq  \boldsymbol{\ell}_{a+1}.
$$
We set $p = \sigma_{a+1}+\cdots+ \sigma_q$. Without loss of generality, we assume that $p\geq 0$. We note that 
$$
u_{\boldsymbol{\ell}}^{\boldsymbol{\sigma}} = u_{\boldsymbol{\ell}_1}^{\boldsymbol{\sigma}_1}  \cdots u_{\boldsymbol{\ell}_a}^{\boldsymbol{\sigma}_a}  u_{\boldsymbol{\ell}_{a+1}}^{p}  |u_{\boldsymbol{\ell_{a+1}}}|^{q-a-p} 
$$
and that $q-a-p$ is an even number. Therefore is suffices to prove that $ u_{\boldsymbol{\ell}_1}^{\boldsymbol{\sigma}_1}  \cdots u_{\boldsymbol{\ell}_a}^{\boldsymbol{\sigma}_a}  u_{\boldsymbol{\ell}_{a+1}}^{p}$ is integrable.

 Then, using the preservation of the mass and the momentum, we have
$$
\forall \alpha \in \{0,1,2\}, \quad  \boldsymbol{\sigma}_1 \boldsymbol{\ell}_1^\alpha + \cdots + \boldsymbol{\sigma}_{a} \boldsymbol{\ell}_{a}^\alpha + p  \boldsymbol{\ell}_{a+1}^\alpha =0.
$$

By preservation of the mass (i.e. $\alpha =0$), we deduce that $p\leq 3$ and so that there are only few cases to consider.
\begin{itemize}
\item \emph{Case $p=0$.} By preservation of the mass, we deduce that $a=0$ or $a=2$ and $\boldsymbol{\sigma}_1 = -\boldsymbol{\sigma}_2$. Moreover by preservation of the momentum, if  $a=2$  then $\boldsymbol{\ell}_1 = \boldsymbol{\ell}_2$. In any case, it follows that $ u_{\boldsymbol{\ell}_1}^{\boldsymbol{\sigma}_1}  \cdots u_{\boldsymbol{\ell}_a}^{\boldsymbol{\sigma}_a}  u_{\boldsymbol{\ell}_{a+1}}^{p}$ is integrable.
\item \emph{Case $p=1$.} By preservation of the mass, either $a=1$ or $a=3$. If $a=1$, by preservation of the momentum, we get that  $ u_{\boldsymbol{\ell}_1}^{\boldsymbol{\sigma}_1}  \cdots u_{\boldsymbol{\ell}_a}^{\boldsymbol{\sigma}_a}  u_{\boldsymbol{\ell}_{a+1}}^{p}$ is integrable. Now, if $a=3$, we can assume that (without loss of generality)   $\boldsymbol{\sigma}_1=\boldsymbol{\sigma}_2= 1$ and $\boldsymbol{\sigma}_3 = -1$. As a consequence, using the cases $\alpha \in \{1,2\}$, we have
$$
0= \boldsymbol{\sigma}_1 \boldsymbol{\ell}_1^2 + \cdots + \boldsymbol{\sigma}_{3} \boldsymbol{\ell}_{3}^2 + p  \boldsymbol{\ell}_{4}^2 =  2(\boldsymbol{\ell}_{4}- \boldsymbol{\ell}_{1})(\boldsymbol{\ell}_{4}- \boldsymbol{\ell}_{2})
$$
which is impossible by definition of $a$ (we know that $\boldsymbol{\ell}_{4} \notin\{ \boldsymbol{\ell}_{1},\boldsymbol{\ell}_{2}\}$).
\item \emph{Case $p\in \{2,3\}$.} By preservation of the mass we deduce that $a=p$ and $\boldsymbol{\sigma}_1 = \cdots =\boldsymbol{\sigma}_a = -1$. Moreover using the cases $\alpha \in \{1,2\}$, we get that
$$
a \boldsymbol{\ell}_{a+1}^2 = a\big(\frac{\boldsymbol{\ell}_1+\cdots +\boldsymbol{\ell}_a}a \big)^2 = \boldsymbol{\ell}_1^2+\cdots +\boldsymbol{\ell}_a^2.
$$
By strict convexity of the square function, it implies that $\boldsymbol{\ell}_1=\cdots = \boldsymbol{\ell}_{a+1}$ which is impossible by definition of $a$.

\end{itemize}
\end{proof}

\begin{theorem}[\`A la Birkhoff] \label{thm:Birk} Let $\mathcal{S}^\flat \subset \mathbb{Z}$ be a finite set, $\mathfrak{N},\mathfrak{M} \geq 2$, $\rho\in (0,1/4)$, $\mathcal{O}^\flat \subset ( (4\rho)^2,1)^{\mathcal{S}^\flat}$ be a Borel set, $\eta_{\max}\geq \eta^\flat> \eta^\natural \geq \eta_0$, $\varepsilon<1$ and $K^\flat\in \mathscr{H}_{\eta^\flat,\rho,\mathcal{O}^\flat,\mathcal{S}^\flat}$.

\medskip

\noindent Assume that the following assumptions hold
\begin{itemize}
\item $K^\flat$ is not too large, i.e. $\| \Pi_{\mathrm{nor}} K^\flat\|_{\mathscr{N}_{\rho,\mathcal{O}^\flat,\mathcal{S}^\flat}^{\mathrm{tot}}} \leq 4,$
$$
\|  \Pi_{n=0,\boldsymbol{m}=0} (\mathrm{Id} -  \Pi_{\mathrm{nor}})  K^\flat\|_{\mathscr{H}_{\eta^\flat,\rho,\mathcal{O}^\flat,\mathcal{S}^\flat}^{\mathrm{tot}}}  \leq \rho^{7/2} , \quad \| (\mathrm{Id}- \Pi_{n=0,\boldsymbol{m}=0})(\mathrm{Id} -  \Pi_{\mathrm{nor}} ) K^\flat\|_{\mathscr{H}_{\eta^\flat,\rho,\mathcal{O}^\flat,\mathcal{S}^\flat}^{\mathrm{tot}}}  \leq \rho^{9/2}  ,
$$
\item  the frequencies can be modulated, i.e. 
$$
\varepsilon \leq (24\mathfrak{M})^{-1} \quad \mathrm{and} \quad   \varepsilon \big( \sup_{i\in \mathcal{S}^\flat} \sum_{k\in \mathcal{S}^\flat} \Gamma_{k,i} \big) \leq 10^{-3} ,
$$
\item $\rho$ is small enough, i.e. 
$$
\rho \lesssim_{\mathcal{S}^\flat,\eta^\flat-\eta^\natural,\mathfrak{N},\mathfrak{M}} 1.
$$
\end{itemize}

\medskip

\noindent Then, there exist a Borel set $\mathcal{O}'\subset \mathcal{O}^\flat$ and a Hamiltonian $R \in \mathscr{H}_{\eta^\natural,\rho,\mathcal{O}',\mathcal{S}^\flat}$, such that 
\begin{itemize}
\item $\mathcal{O}'$ is relatively large in the sense that
\begin{equation}
\label{eq:meas_Birk}
\mathrm{Leb}(\mathcal{O}^\flat \setminus \mathcal{O}') \leq \, \rho ^{10^{-3}} 
\end{equation}
\item the terms to be removed are very small, the normal form part does not have changed too much and the other terms are not too large, i.e.
\begin{equation}\label{eq:RKbemol}
 \| \Pi_{\mathrm{tbr}}^\mathfrak{M} R \|_{\mathscr{H}_{\eta^\natural,\rho,\mathcal{O}',\mathcal{S}^\flat}^{\mathrm{tot}}}  \leq \rho^{\mathfrak{N}},  \quad \! \|  ( \mathrm{Id}- \Pi_{\mathrm{tbr}}^\mathfrak{M} - \Pi_{\mathrm{nor}}) R\|_{\mathscr{H}_{\eta^\natural,\rho,\mathcal{O}',\mathcal{S}^\flat}^{\mathrm{tot}}} \leq 1  \! \! \quad \mathrm{and} \! \quad \|  \Pi_{\mathrm{nor}} (R-K^\flat)\|_{\mathscr{N}_{\rho,\mathcal{O}',\mathcal{S}^\flat}^{\mathrm{tot}}} \leq \rho^{1/4}
\end{equation}
\end{itemize}
and for all $\xi \in \mathcal{O}'$, there exists a smooth symplectic map $\varphi_\xi : \mathcal{A}_\xi(\rho)\to \mathcal{A}_\xi(2\rho)$ commuting with the gauge transform and enjoying, for all $u\in \mathcal{A}_\xi(\rho)$, the bound
\begin{equation}
\label{eq:proche_id_Birk}
\| \varphi_\xi(u) - u\|_{\ell^2_\varpi} + \rho^2 \|\mathrm{d} \varphi_\xi(u) - \mathrm{Id}\|_{\ell^2_\varpi \to \ell^2_\varpi } \leq \rho^{2+\frac1{21}}
\end{equation}
 and the property that
\begin{equation}
\label{eq:cest_un_chgt_de_var_Birk}
(H^{(0)}_{\varepsilon}+\varepsilon K^\flat)(\xi; \varphi_\xi(u)) = (H^{(0)}_{\varepsilon}+\varepsilon R)(\xi;u).
\end{equation}
\end{theorem}
\begin{proof} We divide the proof in $5$ step and we introduce the local notation
$$
\Pi_{\mathrm{kept}}^\mathfrak{M} := \mathrm{Id}- \Pi_{\mathrm{tbr}}^\mathfrak{M} - \Pi_{\mathrm{nor}}.
$$

\noindent  \underline{$\bullet$ \emph{Step 1 : Decompositions.}}
As in proof of the KAM theorem, we decompose  $H^{(0)}_{\varepsilon}$ as
 $$
 H^{(0)}_{\varepsilon}=Z_2+A^{(0)}+Q^{(0)}
 $$ 
 where  $A^{(0)}= H^{(0)}_{\varepsilon}(\xi) - \mu \|\xi\|_{\ell^1}$,  $Q^{(0)}=\frac14 \sum_{k\in \mathbb{Z}} Y_k^2 - \frac12 \mu^2$ and
 $$
 Z_{2}:=\frac12\sum_{j\in\Z}(\varepsilon^{-2} j^2 +    \xi_j \mathbbm{1}_{j\in \mathcal{S}^\flat}) Y_j.
 $$
Then, we decompose $ K^\flat$ as
$$
 K^\flat = A^\flat + L^\flat + Q^\flat +  K^{\mathrm{low}} + K^{\mathrm{high}} 
$$
where $A^\flat:= \Pi_{\mathscr{A}}  K^\flat$, $L^\flat:= \Pi_{\mathscr{L}}  K^\flat$, $Q^\flat:= \Pi_{\mathscr{Q}}  K^\flat$ and
$$
 K^{\mathrm{low}} := \Pi_{n=0,\boldsymbol{m}=0} (\mathrm{Id} -  \Pi_{\mathrm{nor}})K^\flat.
$$
Note that by assumption
\begin{equation}\label{est:KK} \|   K^{\mathrm{low}}\|_{\mathscr{H}_{\eta^\flat,\rho,\mathcal{O}^\flat,\mathcal{S}^\flat}^{\mathrm{tot}}}  \leq \rho^{7/2}\quad \text{and}\quad  \|   K^{\mathrm{high}}\|_{\mathscr{H}_{\eta^\flat,\rho,\mathcal{O}^\flat,\mathcal{S}^\flat}^{\mathrm{tot}}}  \leq \rho^{9/2}.\end{equation}

\noindent  \underline{$\bullet$ \emph{Step 2 : $\mathcal{O}'$ and small divisor estimates.}}
For $\xi\in \mathcal{O}^\flat $, $\boldsymbol{k}\in\Z^{\S^\flat}$,  $q\in\N$, $\boldsymbol{\ell}\in (\S^c)^q$ and  $\boldsymbol{\sigma}\in \{-1,1\}^q$, we define the small divisor
$$
\Omega(\xi;\boldsymbol{k},\boldsymbol{\ell},\boldsymbol{\sigma})=\sum_{j\in\S^\flat} \boldsymbol{k}_j\omega_j(\xi)+\sum_{i=1}^q \boldsymbol{\sigma}_i\omega_{\boldsymbol{\ell}_i}(\xi)
$$
where 
\begin{equation}
\label{eq:omega*}
\omega_j(\xi) := \varepsilon^{-2} j^2 +    \xi_j \mathbbm{1}_{j\in \mathcal{S}^\flat}+2\eps L_j^\flat(\xi)
  ,\ j\in \Z.
\end{equation}
Let $\mathcal{O}'\subset \mathcal{O}^\flat$ be the Borelian set given by Proposition \ref{prop:smalldiv}\footnote{Note that the requirement on $L$ and $\varepsilon$ of Proposition \ref{prop:smalldiv} are satisfied by assumption.}  such that 
$$
\mathrm{Leb}(\mathcal{O}^\flat \setminus \mathcal{O}') \leq \, \rho ^{10^{-3}} =:\gamma
$$
and, if $\boldsymbol{k}\neq 0$, $q\leq \mathfrak{M}$ and $ \# \{ \boldsymbol{\ell}_j \ | \ 1\leq j \leq q\} \leq 4 $, then for all $\xi \in \mathcal{O}'$
$$
| \Omega(\xi;\boldsymbol{k},\boldsymbol{\ell},\boldsymbol{\sigma})| \gtrsim_{\mathcal{S},\mathfrak{M}} \Big(\frac{\gamma}{ \| \boldsymbol{k} \|_{\ell^\infty}^{2\# \mathcal{S}} }\Big)^{166}.
$$

\medskip

It follows that if $n\in \mathbb{N}$, $\boldsymbol{k} \in \mathbb{Z}^{\mathcal{S}^\flat}$, $\boldsymbol{m}\in \mathbb{N}^{\mathcal{S}^\flat}$, $q\geq 0$, $\boldsymbol{\ell}\in (\mathbb{Z}\setminus \mathcal{S}^\flat)^q$, $\boldsymbol{\sigma}\in \{-1,1\}^q$ are indices associated with a non-zero coefficient of a Hamiltonian to be removed (i.e. as in Lemma \ref{lem:critical}) then for all $\xi \in \mathcal{O}'$, we have
\begin{equation}
\label{eq:est_ptdiv_birk}
| \Omega(\xi;\boldsymbol{k},\boldsymbol{\ell},\boldsymbol{\sigma})| \geq   \frac{\rho ^{2/10}}{    \langle \| \boldsymbol{k} \|_{\ell^1} \rangle^{400\# \mathcal{S}}}.
\end{equation}
Indeed, we got it by construction of $\mathcal{O}'$ when $\boldsymbol{k}\neq 0$ (using the smallness assumption of $\rho$). Conversely, if $\boldsymbol{k}= 0$, we know by Lemma \ref{lem:critical} that $\boldsymbol{\sigma}_1 \boldsymbol{\ell}_1^2+ \cdots+\boldsymbol{\sigma}_q \boldsymbol{\ell}_q^2\neq 0$ and, since it is an integer, it follows that
$$
| \Omega(\xi;\boldsymbol{k},\boldsymbol{\ell},\boldsymbol{\sigma})| \geq \varepsilon^{-2} -2\varepsilon\| L\|_{\mathscr{H}_{\rho,\mathcal{O}^\flat,\mathcal{S}^\flat}^\mathrm{sup}} \mathfrak{M} \geq 1.
$$

\noindent  \underline{$\bullet$ \emph{Step 3 : One first step of normal form.}} In this step we want to make slightly smaller $K^{\mathrm{low}}$ to obtain the same bound as for $K^{\mathrm{high}}$ (see \eqref{est:KK}). We set, for all $k\geq 1$,
$$
\eta_k := 2^{-k}\eta^\flat + (1-2^{-k}) \eta^{\natural}.
$$

Then, we define, thanks to the small divisor estimate \eqref{eq:est_ptdiv_birk}, a Hamiltonian $\chi^{(0)}\in \mathscr{H}_{\eta_1 ,\rho,\mathcal{O}',\mathcal{S}^\flat}$ by the relation\footnote{with the implicit convention that $ (\chi^{(0)})_{n,\boldsymbol{k},\boldsymbol{m}}^{\boldsymbol{\ell},\boldsymbol{\sigma}}(\xi)=0$ when $\Omega(\cdot;\boldsymbol{k},\boldsymbol{\ell},\boldsymbol{\sigma})(\xi)=0$.}
$$
 (\chi^{(0)})_{n,\boldsymbol{k},\boldsymbol{m}}^{\boldsymbol{\ell},\boldsymbol{\sigma}} :=  -\varepsilon \frac{(  \Pi_{\mathrm{tbr}}^{\mathfrak{M}} K^{\mathrm{low}}  )_{n,\boldsymbol{k},\boldsymbol{m}}^{\boldsymbol{\ell},\boldsymbol{\sigma}}}{\ic \Omega(\cdot;\boldsymbol{k},\boldsymbol{\ell},\boldsymbol{\sigma}) } .
$$
 Indeed, using the small divisor estimate \eqref{eq:est_ptdiv_birk} and proceeding as in the proof of Lemma \ref{lem:homo}, we get that (thanks to the smallness assumption on $\rho$)
\begin{equation}
\label{eq:bound_on_chi0}
\| \chi^{(0)} \|_{ \mathscr{H}_{\eta_1 ,\rho,\mathcal{O}',\mathcal{S}^\flat}^{\mathrm{tot}}} \leq \varepsilon \rho^{-\frac{9}{20}} \|K^{\mathrm{low}}\|_{ \mathscr{H}_{\eta^\flat,\rho,\mathcal{O}^\flat,\mathcal{S}^\flat}^{\mathrm{tot}}} \leq \varepsilon \rho^{3+\frac1{20}}.
\end{equation}
Note that, since by construction $\chi^{(0)}=\Pi_{n=0,\boldsymbol{m}=0}\chi^{(0)}$, this last estimate ensures the existence of the flow of $\chi^{(0)}$ for $|t|\leq 1$ by Lemma \ref{lem:flow}.
Moreover, again by construction, we have that  $\chi^{(0)}$ solves the cohomological equation
\begin{equation}
\label{eq:my_cohom}
\{ Z_2+\varepsilon L^\flat, \chi^{(0)}\} + \varepsilon \Pi_{\mathrm{tbr}}^{\mathfrak{M}} K^{\mathrm{low}}  =0.
\end{equation}
Then, using as in the proof of the KAM theorem the local notation (defined as soon as it makes sense)
$$
 \mathcal T_\chi(F):=F\circ \Phi^1_\chi-F-\{F,\chi\} \quad \mathrm{and} \quad \Delta_{\chi}(F) := F\circ \Phi^1_\chi-F
$$
we get the expansion
\begin{equation}
\label{eq:expan_Birk1}
\begin{split}
 H^{(0)}_{\varepsilon}+\varepsilon K'  :=(H^{(0)}_{\varepsilon}+\varepsilon K^\flat )\circ \Phi_{ \chi^{(0)} }^1 
 = H^{(0)}_{\varepsilon}+\varepsilon K^\flat  + \underbrace{\{ Z_2+\varepsilon L^\flat,\chi^{(0)} \}}_{\displaystyle \mathop{=}^{\eqref{eq:my_cohom}} -\varepsilon \Pi_{\mathrm{tbr}}^{\mathfrak{M}} K^{\mathrm{low}}} + \varepsilon \Lambda
\end{split}
\end{equation}
where
$$
\varepsilon\Lambda:=\underbrace{\mathcal T_{\chi^{(0)}}(Z_2+\varepsilon L^\flat) }_{=: \varepsilon\Lambda_1}+\underbrace{ \Delta_{\chi^{(0)}}(Q^{(0)} + \varepsilon (Q^\flat +  K^{\mathrm{low}} + K^{\mathrm{high}}) )}_{=: \varepsilon\Lambda_2}.
$$
To continue the discussion, we have to show that $\Lambda_1,\Lambda_2 \in  \mathscr{H}_{\eta_2 ,\rho,\mathcal{O}',\mathcal{S}^\flat}$ and to prove that they are small enough. On the one hand, by Taylor, we have
$$
\varepsilon\Lambda_1 = \int_{0}^1 (1-t) \big( \mathrm{ad}_{\chi^{(0)}}^2 (Z_2+\varepsilon L^\flat) \big)\circ\Phi_{\chi^{(0)}}^t \mathrm{d}t = -\varepsilon \int_{0}^1 (1-t) \{ \Pi_{\mathrm{tbr}}^{\mathfrak{M}} K^{\mathrm{low}}, \chi^{(0)}\} \circ\Phi_{\chi^{(0)}}^t \mathrm{d}t .
$$
It follows by Lemma \ref{lem:poisson} and Lemma \ref{lem:compo} that  $\Lambda_1 \in  \mathscr{H}_{\eta_2 ,\rho,\mathcal{O}',\mathcal{S}^\flat}$ and 
$$
\| \Lambda_1 \|_{\mathscr{H}_{\eta_2 ,\rho,\mathcal{O}',\mathcal{S}^\flat}^{\mathrm{tot}}} \lesssim_{\mathcal{S}^\flat} \varepsilon \frac{\rho^{7/2} \rho^{3+\frac1{20}}}{\rho^2(\eta^{\flat}-\eta^{\natural})^4}.
$$
On the other hand, since by assumption $\| \Pi_{\mathrm{nor}} K^\flat\|_{\mathscr{N}_{\rho,\mathcal{O}^\flat,\mathcal{S}^\flat}^{\mathrm{tot}}} \leq 4$, we have by Corollary \ref{cor:emb_and_proj} that
$$
\| Q^{(0)} + \varepsilon Q^\flat  \|_{\mathscr{H}_{\eta^{\flat} ,\rho,\mathcal{O}',\mathcal{S}^\flat}^{\mathrm{tot}}} \lesssim_{\mathcal{S}} \rho^4.
$$
It follows by Lemma \ref{lem:compo} that  $\Lambda_2 \in  \mathscr{H}_{\eta_2 ,\rho,\mathcal{O}',\mathcal{S}^\flat}$ and
$$
\| \Lambda_2 \|_{\mathscr{H}_{\eta_2 ,\rho,\mathcal{O}',\mathcal{S}^\flat}^{\mathrm{tot}}} \lesssim_{S^\flat} \frac{\rho^{7/2} \rho^{3+\frac1{20}}}{\rho^2(\eta^{\flat}-\eta^{\natural})^5}.
$$
Putting these bounds on $\Lambda_1$ and $\Lambda_2$ together, we get
$$
\| \Lambda \|_{\mathscr{H}_{\eta_2 ,\rho,\mathcal{O}',\mathcal{S}^\flat}^{\mathrm{tot}}} \leq  \rho^{9/2}.
$$

\medskip

Coming back to \eqref{eq:expan_Birk1} and using that $\chi^{(0)}$ solves the cohomological equation \eqref{eq:my_cohom}, we deduce that
$$
\Pi_{\mathrm{tbr}}^{\mathfrak{M}} K' = \Pi_{\mathrm{tbr}}^{\mathfrak{M}} K^{\mathrm{high}} + \Pi_{\mathrm{tbr}}^{\mathfrak{M}} \Lambda, \quad \Pi_{\mathrm{nor}} K' =  \Pi_{\mathrm{nor}} K^{\flat} + \Pi_{\mathrm{nor}} \Lambda,
\quad
\Pi_{\mathrm{kept}}^\mathfrak{M}  K' = \Pi_{\mathrm{kept}}^\mathfrak{M} (K^\flat + \Lambda)
$$
and thus, we get the bounds
\begin{equation}
\begin{split}
\label{eq:bounds_on_K'}
\| \Pi_{\mathrm{tbr}}^{\mathfrak{M}} K' \|_{\mathscr{H}_{\eta_2 ,\rho,\mathcal{O}',\mathcal{S}^\flat}^{\mathrm{tot}}} + \| \Pi_{\mathrm{nor}} (K' - K^{\flat}) \|_{\mathscr{H}_{\eta_2 ,\rho,\mathcal{O}',\mathcal{S}^\flat}^{\mathrm{tot}}}  \lesssim \rho^{9/2}, \ \ \
\|\Pi_{\mathrm{kept}}^\mathfrak{M}  K' \|_{\mathscr{H}_{\eta_2 ,\rho,\mathcal{O}',\mathcal{S}^\flat}^{\mathrm{tot}}} \! \lesssim  \rho^{7/2}.
\end{split}
\end{equation}

\noindent  \underline{$\bullet$ \emph{Step 4 : Iteration.}} We are going to design, by induction, two sequences of Hamiltonians $\chi^{(j)} \in  \mathscr{H}_{\eta_{2j+1} ,\rho,\mathcal{O}',\mathcal{S}^\flat}$, $K^{(j)} \in  \mathscr{H}_{\eta_{2j+2} ,\rho,\mathcal{O}',\mathcal{S}^\flat}$ for $1\leq j\leq \mathfrak{N}$,  such that $K^{(0)} := K'$ and for $1\leq j\leq \mathfrak{N}$
\begin{equation}
\label{eq:chgt_var_B}
 H^{(0)}_{\varepsilon}+\varepsilon K^{(j)}  =(H^{(0)}_{\varepsilon}+\varepsilon K^{(j-1)} )\circ \Phi_{ \chi^{(j)} }^1 ,
\end{equation}
\begin{equation}
\label{eq:devient_petit_B}
\| \Pi_{\mathrm{tbr}}^{\mathfrak{M}} K^{(j)} \|_{\mathscr{H}_{\eta_{2j+2} ,\rho,\mathcal{O}',\mathcal{S}^\flat}^{\mathrm{tot}}} + \| \Pi_{\mathrm{nor}} (K^{(j)} - K^{(j-1)}) \|_{\mathscr{H}_{\eta_{2j+2} ,\rho,\mathcal{O}',\mathcal{S}^\flat}^{\mathrm{tot}}} \lesssim_j \rho^{9/2 + j},
\end{equation}
\begin{equation}
\label{eq:reste_sympa_B_B}
\| \Pi_{\mathrm{kept}}^\mathfrak{M} K^{(j)} \|_{\mathscr{H}_{\eta_{2j+2} ,\rho,\mathcal{O}',\mathcal{S}^\flat}^{\mathrm{tot}}} \lesssim_j \rho^{7/2},
\end{equation}
\begin{equation}
\label{eq:bound_on_chij}
\| \chi^{(j)} \|_{ \mathscr{H}_{\eta_{2j+1} ,\rho,\mathcal{O}',\mathcal{S}^\flat}^{\mathrm{tot}}}  \lesssim_j \varepsilon \rho^{3+j+\frac1{20}}.
\end{equation}
Note that, since $K^{(0)} := K'$, the initialization is given by \eqref{eq:bounds_on_K'}.

\medskip

Let $1\leq j\leq \mathfrak{N}$ and assume that $K^{(i)},\chi^{(i)}$ are given for $i<j$.  We set 
$$
N^{(j-1)} := \Pi_{\mathrm{nor}} (K^{(j-1)} - K^{\flat})
$$
and we note that, by \eqref{eq:devient_petit_B} and \eqref{eq:bounds_on_K'}, we have
\begin{equation}
\label{eq:bound_on_N_B}
\| N^{(j-1)} \|_{\mathscr{H}_{\eta_{2j} ,\rho,\mathcal{O}',\mathcal{S}^\flat}^{\mathrm{tot}}} \lesssim_j \rho^{9/2}.
\end{equation}
Note that by construction, we have the following decomposition 
\begin{equation}
\label{eq:thedecjm1}
K^{(j-1)} = A^\flat + L^\flat + Q^\flat +  N^{(j-1)}  +  \Pi_{\mathrm{tbr}}^{\mathfrak{M}} K^{(j-1)}  +  \Pi_{\mathrm{kept}}^\mathfrak{M} K^{(j-1)}.
\end{equation}
Then, we define, thanks to the small divisor estimate \eqref{eq:est_ptdiv_birk}, the Hamiltonian $\chi^{(j)}\in \mathscr{H}_{\eta_{2j+1} ,\rho,\mathcal{O}',\mathcal{S}^\flat}$ by the relation
$$
 (\chi^{(j)})_{n,\boldsymbol{k},\boldsymbol{m}}^{\boldsymbol{\ell},\boldsymbol{\sigma}} :=  -\varepsilon \frac{(  \Pi_{\mathrm{tbr}}^{\mathfrak{M}} K^{(j-1)}  )_{n,\boldsymbol{k},\boldsymbol{m}}^{\boldsymbol{\ell},\boldsymbol{\sigma}}}{\ic \Omega(\cdot;\boldsymbol{k},\boldsymbol{\ell},\boldsymbol{\sigma}) } .
$$
 Indeed, using the small divisor estimate \eqref{eq:est_ptdiv_birk} and proceeding as in the proof of Lemma \ref{lem:homo}, we get that (thanks to the smallness assumption on $\rho$)
$$
\| \chi^{(j)} \|_{ \mathscr{H}_{\eta_{2j+1} ,\rho,\mathcal{O}',\mathcal{S}^\flat}^{\mathrm{tot}}} \leq \varepsilon \rho^{-\frac{9}{20}} \| \Pi_{\mathrm{tbr}}^{\mathfrak{M}} K^{(j-1)}\|_{ \mathscr{H}_{\eta^\flat,\rho,\mathcal{O}^\flat,\mathcal{S}^\flat}^{\mathrm{tot}}} \lesssim_j \varepsilon \rho^{3+j+\frac1{20}}.
$$
Note that\footnote{Here we need $\| \Pi_{\mathrm{tbr}}^{\mathfrak{M}} K^{(j-1)}\|_{ \mathscr{H}_{\eta^\flat,\rho,\mathcal{O}^\flat,\mathcal{S}^\flat}^{\mathrm{tot}}}\leq \rho^{9/2} $ and thus we need the step 3.}, since $3+j+\frac1{20}\geq4$, the last estimate  ensures the existence of the flow of $\chi^{(j)}$ for $|t|\leq 1$ by Lemma \ref{lem:flow}.
Moreover, by construction, $\chi^{(j)}$ solves the cohomological equation
\begin{equation}
\label{eq:my_cohomj}
\{ Z_2+\varepsilon L^\flat, \chi^{(j)}\} + \varepsilon \Pi_{\mathrm{tbr}}^{\mathfrak{M}} K^{(j-1)}  =0.
\end{equation}
Then, we get the expansion
\begin{equation}
\label{eq:expan_Birkj}
\begin{split}
 H^{(0)}_{\varepsilon}+\varepsilon K^{(j)}  :=(H^{(0)}_{\varepsilon}+\varepsilon K^{(j-1)} )\circ \Phi_{ \chi^{(j)} }^1 
 = H^{(0)}_{\varepsilon}+\varepsilon K^{(j-1)}  + \{ Z_2+\varepsilon L^\flat,\chi^{(j)} \} + \varepsilon \Lambda^{(j)}
\end{split}
\end{equation}
where
$$
\varepsilon\Lambda^{(j)}:=\underbrace{\mathcal T_{\chi^{(j)}}(Z_2+\varepsilon L^\flat) }_{=: \varepsilon\Lambda_1^{(j)}}+\underbrace{ \Delta_{\chi^{(j)}}(Q^{(0)} + \varepsilon (K^{(j-1)} - A^\flat - L^\flat  ) )}_{=: \varepsilon\Lambda_2^{(j)}}.
$$
On the one hand, proceeding as previously, we get (by Lemma \ref{lem:poisson} and Lemma \ref{lem:compo}) that  $\Lambda_1^{(j)} \in  \mathscr{H}_{\eta_{2j+2} ,\rho,\mathcal{O}',\mathcal{S}^\flat}$ and 
$$
\| \Lambda_1^{(j)} \|_{\mathscr{H}_{\eta_{2j+2} ,\rho,\mathcal{O}',\mathcal{S}^\flat}^{\mathrm{tot}}} \lesssim_{j,\mathcal{S}^\flat} \varepsilon \frac{\rho^{7/2 + j}  \rho^{3+j+\frac1{20}} }{\rho^2(\eta^{\flat}-\eta^{\natural})^4}.
$$
On the other hand, using the decomposition \ref{eq:thedecjm1} of $K^{(j-1)}$, the bounds \eqref{eq:devient_petit_B}, \eqref{eq:reste_sympa_B_B} and \eqref{eq:bound_on_N_B} on $ \Pi_{\mathrm{tbr}}^{\mathfrak{M}} K^{(j-1)}$, $ \Pi_{\mathrm{kept}}^{\mathfrak{M}} K^{(j-1)}$ and $N^{(j-1)}$, we have that
$$
\| Q^{(0)} + \varepsilon (K^{(j-1)} - A^\flat - L^\flat  ) \|_{\mathscr{H}_{\eta_{2j} ,\rho,\mathcal{O}',\mathcal{S}^\flat}^{\mathrm{tot}}} \lesssim_{j,\mathcal{S}^\flat}  \rho^{7/2}.
$$
It follows by Lemma \ref{lem:compo} that  $\Lambda_2^{(j)} \in  \mathscr{H}_{\eta_{2j+2} ,\rho,\mathcal{O}',\mathcal{S}^\flat}$ and
$$
\| \Lambda_2^{(j)} \|_{\mathscr{H}_{\eta_{2j+2} ,\rho,\mathcal{O}',\mathcal{S}^\flat}^{\mathrm{tot}}} \lesssim_{j,S^\flat} \frac{\rho^{7/2} \rho^{3+j+\frac1{20}}}{\rho^2(\eta^{\flat}-\eta^{\natural})^5}.
$$
Putting these bounds on $\Lambda_1^{(j)}$ and $\Lambda_2^{(j)}$ together, we get
$$
\| \Lambda^{(j)} \|_{\mathscr{H}_{\eta_{2j+2} ,\rho,\mathcal{O}',\mathcal{S}^\flat}^{\mathrm{tot}}} \leq  \rho^{9/2+j}.
$$
Now, coming back to \eqref{eq:expan_Birkj} and using that $\chi^{(j)}$ solves the cohomological equation \eqref{eq:my_cohomj}, we deduce that
$$
\Pi_{\mathrm{tbr}}^{\mathfrak{M}} K^{(j)} =\Pi_{\mathrm{tbr}}^{\mathfrak{M}} \Lambda^{(j)}, \quad \Pi_{\mathrm{nor}} K^{(j)} =  \Pi_{\mathrm{nor}} K^{(j-1)} + \Pi_{\mathrm{nor}} \Lambda^{(j)},
\quad
\Pi_{\mathrm{kept}}^\mathfrak{M}  K^{(j)} = \Pi_{\mathrm{kept}}^\mathfrak{M} (K^{(j-1)} + \Lambda^{(j)})
$$
and thus,  we get the bounds \eqref{eq:devient_petit_B} and \eqref{eq:reste_sympa_B_B}.

\medskip

\noindent  \underline{$\bullet$ \emph{Step 5 : Conclusion.}} It suffices to set
$$
\forall \xi \in \mathcal{O}', \quad \varphi_\xi := \Phi_{\chi^{(0)}}^{0} \circ \cdots \circ \Phi_{\chi^{(N)}}^{1}(\xi;\cdot) \quad \mathrm{and} \quad R = K^{(\mathfrak{N})}.
$$
Indeed, thanks to the bounds \eqref{eq:bound_on_chi0},\eqref{eq:bound_on_chij} on the $\chi^{(j)}$, we know by Lemma \ref{lem:flow} that $\varphi_\xi$ is well defined and that it is close enough to the identity (i.e. it satisfies \eqref{eq:proche_id_Birk}). The fact that $(H^{(0)}_{\varepsilon}+\varepsilon K^\flat)\circ \varphi = H^{(0)}_{\varepsilon}+\varepsilon R$ is a direct consequence of the construction (see \eqref{eq:expan_Birk1} and \eqref{eq:chgt_var_B}). The bounds $\| \Pi_{\mathrm{tbr}}^\mathfrak{M} R \|_{\mathscr{H}_{\eta^\natural,\rho,\mathcal{O}',\mathcal{S}^\flat}^{\mathrm{tot}}}  \leq \rho^{\mathfrak{N}}$ and $\| \Pi_{\mathrm{kept}}^\mathfrak{M} R\|_{\mathscr{H}_{\eta^\natural,\rho,\mathcal{O}',\mathcal{S}^\flat}^{\mathrm{tot}}} \leq 1$ are given by \eqref{eq:devient_petit_B} and \eqref{eq:reste_sympa_B_B}. Finally, by Lemma \ref{lem:continuity_estimate_pinor} and using by \eqref{eq:bounds_on_K'} and  \eqref{eq:devient_petit_B}, we get that
$\|  \Pi_{\mathrm{nor}} (R-K^\flat)\|_{\mathscr{N}_{\rho,\mathcal{O}',\mathcal{S}^\flat}^{\mathrm{tot}}} \leq \rho^{1/4}$.
\end{proof}

\section{Opening}\label{sec:open}

In this section  
we describe the process of opening new sites to increase the cardinal of $\S$. We distinguish between two cases: the general case, in which we allow an arbitrary number of sites to be opened (this case will be used for the initialization step in the proof of Theorem \ref{thm:main}, see section \ref{sec:first}), and the case of normal forms, in which we allow only one new site but require more precise estimates.

\subsection{General case} In itself this step could be very short : it expresses that if we have a Hamiltonian depending analytically on the variables $(\theta,y,u)$ then, when we open new sites, i.e. when we write $u_j=\sqrt{\xi_j+y_j}e^{\ic \theta_j}$ for some $j\in \mathcal{S}^c$, we get a new Hamiltonian that depends analytically on the new variables $(\theta,y,u)$ in an adapted domain. But since we encode the analyticity in the estimates of the coefficients of the Hamiltonian (see Definition \ref{def:class}), this step becomes heavy and actually we "waste" a lot of the size of the radius $r$ to make it simpler.

\begin{proposition} \label{prop:open} Let $\mathcal{S}^\flat \subset \mathcal{S}\subset \mathbb{Z}$ be some finite sets, $\Lambda>1$, $\eta \in [\eta_0,\eta_{\max}]$, $r_\flat,\rho,r\in (0,1/4),$ $\mathcal{O}^\flat\subset ((4r_\flat)^{2},1)^{\mathcal{S}^\flat}$ be a Borel set and $P\in \mathscr{H}_{\eta,r_\flat,\mathcal{O}^{\flat},\mathcal{S}^{\flat}}$.

\medskip

\noindent Assume that  the radii enjoy the estimates
\begin{equation}
\label{eq:smallr2}
r \lesssim_{\mathcal{S},\Lambda} \rho \lesssim_{\mathcal{S},\Lambda} r_\flat.
\end{equation}

\medskip

\noindent Then, there exists a Hamiltonian $R\in \mathscr{H}_{\eta,r,\mathcal{O},\mathcal{S}}$, where
$$
\mathcal{O} := \mathcal{O}^\flat\times (\rho^2,\Lambda\rho^2)^{\mathcal{S}\setminus \mathcal{S}^\flat},
$$ 
such that
\begin{equation}
\label{eq:bound_open}
\| R\|_{\mathscr{H}_{\eta,r,\mathcal{O},\mathcal{S}}^{\mathrm{tot}}} \lesssim \|P\|_{\mathscr{H}_{\eta,r_\flat,\mathcal{O}^\flat,\mathcal{S}^\flat}^{\mathrm{tot}}}
\end{equation}
and for all $\xi=:(\xi^\flat,\xi^{\mathrm{new}})\in \mathcal{O}$ and all $u\in \mathcal{A}_{\xi}(3r)$, 
$$
R(\xi;u) = P(\xi^\flat;u).
$$
Moreover, if $\Pi_{\mathrm{int}} P=0$ then $\Pi_{\mathrm{int}} R = 0$.
\end{proposition}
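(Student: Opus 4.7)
I would construct $R$ by explicit formal substitution. Starting from the series
$P(\mu, y, z, v) = \sum P_{n,\boldsymbol{k},\boldsymbol{m}}^{\boldsymbol{\ell},\boldsymbol{\sigma}}\, \mu^{n}\, y^{\boldsymbol{m}}\, z^{\boldsymbol{k}}\, v_{\boldsymbol{\ell}}^{\boldsymbol{\sigma}}$ in the old variables (indexed by $\mathcal{S}^\flat$), I plug in, for each $j \in \mathcal{S}\setminus \mathcal{S}^\flat$, the relation $u_j = \sqrt{\xi_j^{\mathrm{new}} + y'_j}\, z'_j$ coming from the partial action--angle change of variables, together with $\mu^{\mathrm{old}} = \mu' + \|\xi^{\mathrm{new}}\|_{\ell^1}$. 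Expanding via the binomial series $\sqrt{\xi_j^{\mathrm{new}} + y'_j} = \sqrt{\xi_j^{\mathrm{new}}} \sum_{m\geq 0} \binom{1/2}{m} (y'_j/\xi_j^{\mathrm{new}})^m$, which is absolutely convergent whenever $|y'_j| < \xi_j^{\mathrm{new}}$, and $(\mu' + \|\xi^{\mathrm{new}}\|_{\ell^1})^n$ by Newton, and then regrouping according to the new monomial basis in $(\mu',y',z',v')$ indexed by $\mathcal{S}$, defines the coefficients of $R$ as explicit (infinite) sums of old coefficients multiplied by half-integer binomial weights and by monomials in $\xi^{\mathrm{new}}$. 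By construction, on the common domain (the inclusion $\mathcal{A}_\xi(3r)\subset \mathcal{A}_{\xi^\flat}(3r_\flat)$ uses the finiteness of $\mathcal{S}\setminus\mathcal{S}^\flat$, the bound $\xi_j^{\mathrm{new}} < \Lambda \rho^2$ and $r\lesssim_{\mathcal{S},\Lambda} r_\flat$) we get $R(\xi;u) = P(\xi^\flat;u)$.

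\textbf{Algebraic and integrability conditions.} The reality, symmetry, and preservation of mass and momentum properties pass from $P$ to $R$ because the substitution $u_j \mapsto \sqrt{\xi_j^{\mathrm{new}}+y'_j}\, z'_j$ replaces the single factor $u_j^{\boldsymbol{\sigma}_i}$ of signed charge $\boldsymbol{\sigma}_i$ and momentum $\boldsymbol{\sigma}_i j$ by one unit of $(z'_j)^{\boldsymbol{\sigma}_i}$ of the same charge and momentum, together with powers of the real, charge-- and momentum--neutral quantity $y'_j$. The final assertion $\Pi_{\mathrm{int}} R = 0$ (if $\Pi_{\mathrm{int}} P = 0$) follows similarly, since the substitution does not create new integrability pairings out of non--integrable ones.

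\textbf{Analytic estimates.} Each substitution converts one $r_\flat$--factor in the definition of $\|P\|_{\mathscr{H}^{\mathrm{cste}}}$ into a contribution of size $\sqrt{\xi_j^{\mathrm{new}}}\lesssim \sqrt{\Lambda}\,\rho$, times a power series in $y'_j/\xi_j^{\mathrm{new}}$ whose $m$--th term is bounded in the new norm by a geometric factor $(r^2/\xi_j^{\mathrm{new}})^m \lesssim (r/\rho)^{2m}$; the binomial coefficients $|\binom{1/2}{m}|$ are bounded by a universal constant divided by $m^{3/2}$. Since $\#(\mathcal{S}\setminus\mathcal{S}^\flat) \leq \#\mathcal{S}$, each monomial of $P$ therefore feeds into the new norm with a multiplicative loss at most $(\rho/r_\flat)^{\#\mathcal{S}} \sum_{m\geq 0} (r/\rho)^{2m} \lesssim_{\mathcal{S},\Lambda} 1$ once the smallness \eqref{eq:smallr2} is used. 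The $\mu$--expansion is controlled in the same way, each factor $\|\xi^{\mathrm{new}}\|_{\ell^1}^k \lesssim_{\mathcal{S},\Lambda} \rho^{2k}$ being paid by the corresponding drop in $\mu'$--degree. The weight $\Theta_{\boldsymbol{\ell}}$ only changes by a bounded multiplicative constant depending on $\mathcal{S}$ when one removes the (finitely many, bounded) indices in $\mathcal{S}\setminus \mathcal{S}^\flat$ from the rearrangement $\boldsymbol{\ell}^*$. Summing everything yields the sup--norm bound. The Lipschitz part of \eqref{eq:bound_open} is obtained by termwise differentiation: derivatives in $\xi^\flat$ are inherited from $P$, while derivatives in $\xi^{\mathrm{new}}_j$ replace $\sqrt{\xi_j^{\mathrm{new}}}$ by $\tfrac{1}{2\sqrt{\xi_j^{\mathrm{new}}}}\lesssim \rho^{-1}$ and, more generally, $(\xi_j^{\mathrm{new}})^{1/2-m}$ by $(\tfrac{1}{2}-m)(\xi_j^{\mathrm{new}})^{-1/2-m}$; the extra $r^2$ prefactor in $\|\cdot\|^{\mathrm{tot}}$ absorbs this $\rho^{-1}$ loss, again thanks to $r\lesssim_{\mathcal{S},\Lambda}\rho$.

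\textbf{Main obstacle.} The delicate part is not convergence of the binomial series per se but the bookkeeping of the weight $\Theta_{\boldsymbol{\ell}}$: a single monomial of $P$ spawns an infinite family of monomials of $R$, and one must show that the supremum defining $\|R\|_{\mathscr{H}^{\mathrm{sup}}}$ behaves well under the redistribution of indices between $v$--slots (in the new $\mathcal{S}^c$) and $y,z$--slots (in $\mathcal{S}\setminus\mathcal{S}^\flat$). This reduces to a finite case analysis on the positions of the removed indices inside the rearrangement $\boldsymbol{\ell}^*$, and since the $\varpi_j$, $\langle j\rangle$ for $j\in \mathcal{S}$ are bounded above and below by constants depending only on $\mathcal{S}$, all the resulting ratios can be absorbed into the $\lesssim_{\mathcal{S},\Lambda}$ implicit constant.
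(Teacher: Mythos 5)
Your proof follows essentially the same strategy as the paper's: formal substitution $u_j=\sqrt{\xi_j+y_j}\,e^{\ic\theta_j}$ for $j\in\mathcal{S}\setminus\mathcal{S}^\flat$, $\mu^{\text{old}}=\mu'+\|\xi^{\mathrm{new}}\|_{\ell^1}$, binomial expansion, and then termwise estimates of the rearranged coefficients. The algebraic part (reality, symmetry, mass, momentum, and the implication $\Pi_{\mathrm{int}}P=0\Rightarrow\Pi_{\mathrm{int}}R=0$) matches the paper's observation that the $y$-factors are neutral and integrable terms of $R$ can only arise from integrable terms of $P$.

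The one place where your bookkeeping is too loose is the claim that the multiplicative loss is ``at most $(\rho/r_\flat)^{\#\mathcal{S}}\sum_{m\geq0}(r/\rho)^{2m}$'' and that the $\Theta_{\boldsymbol{\ell}}$-weight ``only changes by a bounded multiplicative constant depending on $\mathcal{S}$.'' A single monomial of $P$ may feature each opened index $j\in\mathcal{S}\setminus\mathcal{S}^\flat$ with arbitrary multiplicity $\boldsymbol{a}_j$, so the relevant loss is $(\sqrt{\Lambda}\,\rho/r_\flat)^{\|\boldsymbol{a}\|_{\ell^1}}$ with $\|\boldsymbol{a}\|_{\ell^1}$ unbounded, not raised to the fixed power $\#\mathcal{S}$; and the corresponding $\Theta$-ratio accrues a factor $\prod_p\varpi_{\boldsymbol{i}_p}^{\boldsymbol{a}_p}$ (the paper's inequality $\Theta_{\boldsymbol{\ell},\boldsymbol{i}^{\cdot\boldsymbol{a}}}\leq\Theta_{\boldsymbol{\ell}}\prod_p\varpi_{\boldsymbol{i}_p}^{\boldsymbol{a}_p}$, which uses $\langle\cdot\rangle^{2\delta}\leq\varpi$). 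The sup defining $\|R\|_{\mathscr{H}}$ therefore involves a genuine geometric series over $\boldsymbol{a}$, with ratio $\sim\sqrt{\Lambda}\,\rho\max_j\varpi_j/r_\flat$, and it is precisely the smallness $\rho\lesssim_{\mathcal{S},\Lambda}r_\flat$ that makes this summable. Likewise, the paper's bound on the half-integer binomials via Cauchy estimates gives $|C_{a/2}^m|\leq 2^m(3/2)^{a/2}$ for all $a$, which is what is actually needed (your $|\binom{1/2}{m}|\lesssim m^{-3/2}$ only covers $a=1$). These are fixable imprecisions rather than gaps, but the exponent $\#\mathcal{S}$ should be replaced by a sum over multiplicities, and the implicit constant should be tracked through the $\varpi^{\boldsymbol{a}}$ factors as the paper does.
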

\begin{proof} The proof is divided in $3$ steps. The expansions will naturally make appear the generalized binomial coefficients which are defined by
$$
C_{\alpha}^k := (k!)^{-1} \prod_{0\leq j \leq k-1} (\alpha-j), \quad k\in \mathbb{N}, \ \alpha \in \mathbb{R}.
$$

\medskip

\noindent $\bullet$ \underline{\emph{Step 1 : Identification of $R$.}}  First, we denote
$$
 \mathcal{S} \setminus \mathcal{S}^{\flat} =:\{\boldsymbol{i}_1,\cdots,\boldsymbol{i}_N\} 
$$
where $\boldsymbol{i}_1,\cdots,\boldsymbol{i}_N$ are distinct.
Then, we note that, by symmetry of the coefficients of $P$, being given $\xi^\flat \in  \mathcal{O}^\flat$ and $u\in  \mathcal{A}_\xi(3r_\flat)$, we have
$$
P(\xi^\flat;u) = \sum_{\boldsymbol{k}^\flat\in \mathbb{Z}^{\mathcal{S}^\flat}} \sum_{\boldsymbol{m}^\flat\in \mathbb{N}^{\mathcal{S}^\flat}} \sum_{q \geq 0}  \sum_{ \boldsymbol{\sigma} \in  \{-1,1\}^q } \sum_{ \boldsymbol{\ell} \in ((\mathcal{S})^c)^q  } \widehat{P}_{\boldsymbol{k}^\flat,\boldsymbol{m}^\flat}^{\boldsymbol{\ell},\boldsymbol{\sigma}}(\xi^\flat;u) \, u_{\boldsymbol{\ell}}^{\boldsymbol{\sigma}} y(\xi^\flat;u)^{\boldsymbol{m}} \, e^{\ic \boldsymbol{k} \cdot \theta(u)}
$$
where
$$
 \widehat{P}_{\boldsymbol{k}^\flat,\boldsymbol{m}^\flat}^{\boldsymbol{\ell},\boldsymbol{\sigma}}(\xi^\flat;u) := \sum_{n \in \mathbb{N}}  \sum_{\boldsymbol{a} \in \mathbb{N}^N}   \sum_{ \boldsymbol{\varsigma} \in   \{-1,1\}^{\boldsymbol{a}}  } C_{\|\boldsymbol{a}\|_{\ell^1}+q}^{\|\boldsymbol{a}\|_{\ell^1}} P_{n,\boldsymbol{k}^\flat,\boldsymbol{m}^\flat}^{\boldsymbol{\ell},\boldsymbol{i}^{.\boldsymbol{a}},\boldsymbol{\sigma}, \boldsymbol{\varsigma}}(\xi^\flat)  \mu(\xi^\flat;u)^n  u_{\boldsymbol{i}^{.\boldsymbol{a}}}^{ \boldsymbol{\varsigma}}
$$
and $\{-1,1\}^{\boldsymbol{a}}:= \prod_{p=1}^N \{-1,1\}^{\boldsymbol{a}_{p}} \equiv \{-1,1\}^{\|\boldsymbol{a}\|_{\ell^1}}$,
$$
\boldsymbol{i}^{.\boldsymbol{a}} := ( \underbrace{\boldsymbol{i}_1,\cdots,\boldsymbol{i}_1}_{\boldsymbol{a}_1 \ times}, \cdots, \underbrace{\boldsymbol{i}_N,\cdots,\boldsymbol{i}_N}_{\boldsymbol{a}_N \ times} ).
$$

\medskip

Now, assume that $9^2 r \leq 9 \rho \leq r_\flat$ and let $\xi \in \mathcal{O}$ and $u\in \mathcal{A}_{\xi}(3r)$. Thanks these estimates on $\rho$ and $r$, we have that for all $i\in \mathcal{S} \setminus \mathcal{S}^{\flat}$ and all $\varsigma \in \{-1,1\}$
$$
u_{i}^{ \varsigma } = \sqrt{\xi_i + y_i(\xi;u)} e^{\ic \varsigma \theta_i(u)} \quad \mathrm{and} \quad  \mu(\xi;u)= \mu(\xi^\flat;u)-\|\xi_{\boldsymbol{i}} \|_{\ell^1} 
$$
where $\xi_{\boldsymbol{i}} := (\xi_{\boldsymbol{i}_1} ,\cdots, \xi_{\boldsymbol{i}_N} )$ (this convention will be used in all this proof).
It follows that
\begin{equation*}
\begin{split}
\widehat{P}_{\boldsymbol{k}^\flat,\boldsymbol{m}^\flat}^{\boldsymbol{\ell},\boldsymbol{\sigma}}(\xi^\flat;u) =&  \sum_{n \in \mathbb{N}} \sum_{d=0}^n   \sum_{\boldsymbol{a} \in \mathbb{N}^N}   \sum_{ \boldsymbol{\varsigma} \in   \{-1,1\}^{\boldsymbol{a}}  } C_{\|\boldsymbol{a}\|_{\ell^1}+q}^{\|\boldsymbol{a}\|_{\ell^1}} C_n^d   P_{n,\boldsymbol{k}^\flat,\boldsymbol{m}^\flat}^{\boldsymbol{\ell},\boldsymbol{i}^{.\boldsymbol{a}},\boldsymbol{\sigma}, \boldsymbol{\varsigma}}(\xi^\flat)    \mu(\xi;u)^d \|\xi_{\boldsymbol{i}} \|_{\ell^1}^{n-d} \\ & \times \prod_{p=1}^N  \sum_{\boldsymbol{b}_p\in \mathbb{N}}  e^{\ic (\boldsymbol{\varsigma}_{p,1}+\cdots+\boldsymbol{\varsigma}_{p,\boldsymbol{a}_p})\theta_{\boldsymbol{i}_p}(u)} \xi_{\boldsymbol{i}_p}^{\boldsymbol{a}_p/2}  C_{\boldsymbol{a}_p/2}^{\boldsymbol{b}_p} \big( \frac{y_{\boldsymbol{i}_p}(\xi,u)}{\xi_{\boldsymbol{i}_p}} \big)^{\boldsymbol{b}_p}
\end{split}
\end{equation*}
and so we get that $R(\xi;u) = P(\xi^\flat;u)$ with
\begin{equation}
\label{eq:ilfallaitbiensouffler}
R_{d,\boldsymbol{k},\boldsymbol{m}}^{\boldsymbol{\ell},\boldsymbol{\sigma}}(\xi) =  \sum_{\boldsymbol{a} \in \mathbb{N}^N}  \sum_{ \boldsymbol{\varsigma} \in \mathcal{R}_{\boldsymbol{a},\boldsymbol{k}} } \sum_{n\geq d}  \|\xi_{\boldsymbol{i}} \|_{\ell^1}^{n-d} C_{\|\boldsymbol{a}\|_{\ell^1}+q}^{\|\boldsymbol{a}\|_{\ell^1}} C_n^d    P_{n,\boldsymbol{k}^\flat,\boldsymbol{m}^\flat}^{\boldsymbol{\ell},\boldsymbol{i}^{.\boldsymbol{a}},\boldsymbol{\sigma}, \boldsymbol{\varsigma}}(\xi^\flat) \prod_{p=1}^N  \xi_{\boldsymbol{i_p}}^{\boldsymbol{a}_p/2-\boldsymbol{m}_{\boldsymbol{i}_p}} C_{\boldsymbol{a}_p/2}^{\boldsymbol{m}_{\boldsymbol{i}_p}}    
\end{equation}
where
$$
 \mathcal{R}_{\boldsymbol{a},\boldsymbol{k}} = \{ \boldsymbol{\varsigma} \in \{-1,1\}^{\boldsymbol{a}} \ | \ \forall p, \  \boldsymbol{\varsigma}_{p,1}+\cdots+\boldsymbol{\varsigma}_{p,\boldsymbol{a}_p} = \boldsymbol{k}_{\boldsymbol{i}_p}  \}.
$$

Here, we note that all the integrable terms of $R$ come from integrable terms of $P$. As a consequence, we have that if $\Pi_{\mathrm{int}} P=0$ then $\Pi_{\mathrm{int}} R = 0$.

\medskip 

Now, the point is to estimate these coefficients to prove that $R\in  \mathscr{H}_{\eta,r,\mathcal{O},\mathcal{S}}$ and satisfies the bound \eqref{eq:bound_open}. To lighten the estimates, without loss of generality, we assume that $\|P\|_{\mathscr{H}_{\eta^\flat,r_\flat,\mathcal{O}^\flat,\mathcal{S}^\flat}^{\mathrm{tot}}}=1$.

\medskip

\noindent $\bullet$ \underline{\emph{Step 2 : Uniform estimate of $R$.}} 
We aim at estimating $|R_{d,\boldsymbol{k},\boldsymbol{m}}^{\boldsymbol{\ell},\boldsymbol{\sigma}}(\xi)|$. First, we note that, since $\langle\cdot \rangle^{2\delta} \leq \varpi$, we have
$$
\Theta_{\boldsymbol{\ell},\boldsymbol{i}^{\cdot \boldsymbol{a}}}  \leq  \Theta_{\boldsymbol{\ell}} \prod_{1\leq p \leq N} \varpi_{\boldsymbol{i}_p}^{\boldsymbol{a}_p} .
$$
Moreover, since $\xi_{\boldsymbol{i}} \in (\rho^2,\Lambda\rho^2)^{N}$ and $\|P\|_{\mathscr{H}_{\eta,r_\flat,\mathcal{O}^\flat,\mathcal{S}^\flat}^{\mathrm{sup}}} \leq 1$, we deduce that 
\begin{equation*}
\begin{split}
|R_{d,\boldsymbol{k},\boldsymbol{m}}^{\boldsymbol{\ell},\boldsymbol{\sigma}}(\xi)|\Theta_{\boldsymbol{\ell}}^{-1} \leq&  \sum_{\substack{\boldsymbol{a} \in \mathbb{N}^N,\ n\geq d \\ \boldsymbol{\varsigma}\in  \mathcal{R}_{\boldsymbol{a},\boldsymbol{k}} }}    (N\Lambda\rho^2)^{n-d+\frac{\|\boldsymbol{a}\|_{\ell_1}}2}   C_n^d C_{\|\boldsymbol{a}\|_{\ell^1}+q}^{\|\boldsymbol{a}\|_{\ell^1}}
e^{-\eta ( \| \boldsymbol{k}^\flat \|_{\ell^1} +  2\| \boldsymbol{m}^\flat \|_{\ell^1}  + q +\|\boldsymbol{a}\|_{\ell^1}+2n)}  \\ &\times r_\flat^{-2  \| \boldsymbol{m}^\flat \|_{\ell^1} -  q-\|\boldsymbol{a}\|_{\ell^1}-2n} \prod_{1\leq p \leq N} \rho^{-2 \boldsymbol{m}_{\boldsymbol{i}_p}} \varpi_{\boldsymbol{i}_p}^{\boldsymbol{a}_p} |C_{\boldsymbol{a}_p/2}^{\boldsymbol{m}_{\boldsymbol{i}_p}}  |  .
\end{split}
\end{equation*}
Then, using that $C_{\|\boldsymbol{a}\|_{\ell^1}+q}^{\|\boldsymbol{a}\|_{\ell^1}} \leq 2^{\|\boldsymbol{a}\|_{\ell^1}+q}$, $C_n^d \leq 2^n$ and $\| \boldsymbol{k} \|_{\ell^1}  - \| \boldsymbol{k}^\flat \|_{\ell^1}  =\|\boldsymbol{k}_{\boldsymbol{i}}\|_{\ell^1}\leq \|\boldsymbol{a}\|_{\ell^1}$, we have that 
\begin{equation}
\label{eq:cequonveutcontroler}
e^{\eta ( \| \boldsymbol{k} \|_{\ell^1} +  2\| \boldsymbol{m}^\flat \|_{\ell^1} +q+2d)} r_\flat^{2  \| \boldsymbol{m}^\flat \|_{\ell^1} +q +2d } \rho^{2 \|\boldsymbol{m}_{\boldsymbol{i}} \|_{\ell^1}} |R_{d,\boldsymbol{k},\boldsymbol{m}}^{\boldsymbol{\ell},\boldsymbol{\sigma}}(\xi)| \Theta_{\boldsymbol{\ell}}^{-1}
\end{equation}
 is smaller than
$$
\sum_{\boldsymbol{a} \in \mathbb{N}^N}   \sum_{n\geq d}  2^{2\|\boldsymbol{a} \|_{\ell^1} + n +q}    (N\Lambda\rho^2)^{n-d+\frac{\|\boldsymbol{a}\|_{\ell_1}}2}
  r_\flat^{ -\|\boldsymbol{a}\|_{\ell_1}-2(n-d)}   \prod_{1\leq p \leq N}  \varpi_{\boldsymbol{i}_p}^{\boldsymbol{a}_p} |C_{\boldsymbol{a}_p/2}^{\boldsymbol{m}_{\boldsymbol{i}_p}}  |.
$$
Provided that $9(N\Lambda)^{1/2} \rho \leq  r_\flat$, summing in $n$, we get
$$
\eqref{eq:cequonveutcontroler} \lesssim  2^d\sum_{\boldsymbol{a} \in \mathbb{N}^N}   2^{2\|\boldsymbol{a} \|_{\ell^1}+q} (N\Lambda\rho^2)^{\frac{\|\boldsymbol{a} \|_{\ell^1}}2}     r_\flat^{ -\|\boldsymbol{a}\|_{\ell_1}}   \prod_{1\leq p \leq N}  \varpi_{\boldsymbol{i}_p}^{\boldsymbol{a}_p} |C_{\boldsymbol{a}_p/2}^{\boldsymbol{m}_{\boldsymbol{i}_p}}  |.
$$
Now, to estimate $C_{a/2}^{m} $, when $a/2$ is possibly not an integer, we apply the Cauchy estimate to the holomorphic function
$$
g(z) := \sum_{m\geq 0} C_{a/2}^{m} z^m = (1+z)^{a/2} \quad \mathrm{on} \quad \mathbb{D}=\{ z \in \mathbb{C} \ | \ |z|< 1 \}
$$
 and we get that
$$
|C_{a/2}^{m}| = \big|\frac{g^{(m)}(0)}{m!} \big| \leq 2^m \sup_{|z|=1/2} |g(z)| \leq 2^m \big(\frac{3}2 \big)^{a/2}.
$$
It follows that provided that $10^3 N^{3/2}\Lambda^{1/2} \rho \, \max_{1\leq p \leq N}( \varpi_{\boldsymbol{i}_p} ) \leq  r_\flat $, we have
$$
\eqref{eq:cequonveutcontroler} \lesssim  2^{d+q+\| \boldsymbol{m}_{\boldsymbol{i}} \|_{\ell^1}}\sum_{\boldsymbol{a} \in \mathbb{N}^N}    (6 N^{1/2}\Lambda^{1/2}\rho r_\flat^{-1} )^{\|\boldsymbol{a} \|_{\ell^1}}      \lesssim   2^{d+q+\| \boldsymbol{m}_{\boldsymbol{i}} \|_{\ell^1}}.
$$
As a consequence, provided that $\rho\leq r_\flat$, we have proven that
$$
|R_{d,\boldsymbol{k},\boldsymbol{m}}^{\boldsymbol{\ell},\boldsymbol{\sigma}}(\xi)|  \lesssim e^{-\eta ( \| \boldsymbol{k} \|_{\ell^1} +  2\| \boldsymbol{m}^\flat \|_{\ell^1}  + q +2d )} \rho^{-2\| \boldsymbol{m}\|_{\ell^1} -q -2d } 2^{d+q+\| \boldsymbol{m}_{\boldsymbol{i}} \|_{\ell^1}} \Theta_{\boldsymbol{\ell}}
$$
and provided that $9r\leq   \rho e^{-\eta}$, we get (as expected)
$$
|R_{d,\boldsymbol{k},\boldsymbol{m}}^{\boldsymbol{\ell},\boldsymbol{\sigma}}(\xi)| \Theta_{\boldsymbol{\ell}}^{-1} e^{\eta ( \| \boldsymbol{k} \|_{\ell^1} +  2\| \boldsymbol{m} \|_{\ell^1}  + q +2d)}   r^{2  \| \boldsymbol{m} \|_{\ell^1} +  q+2d} 
\lesssim   2^{d+q+\| \boldsymbol{m}_{\boldsymbol{i}} \|_{\ell^1}} e^{2\eta \| \boldsymbol{m}_{\boldsymbol{i}} \|_{\ell^1} } \big( \frac{r}{\rho} \big)^{2 \| \boldsymbol{m}_{\boldsymbol{i}} \|_{\ell^1}+q+2d}  \lesssim 1,
$$
i.e. $
\| R \|_{\mathscr{H}_{\eta,r,\mathcal{O},\mathcal{S}}^{\mathrm{sup}}} \lesssim 1.
$

\medskip

\noindent $\bullet$ \underline{\emph{Step 3 : Lipschitz estimates.}} Being given $\xi,\zeta \in \mathcal{O}$, we have
\begin{equation}
\label{eq:itsimlpe}
|R_{d,\boldsymbol{k},\boldsymbol{m}}^{\boldsymbol{\ell},\boldsymbol{\sigma}}(\xi)- R_{d,\boldsymbol{k},\boldsymbol{m}}^{\boldsymbol{\ell},\boldsymbol{\sigma}}(\zeta)| \leq \underbrace{|R_{d,\boldsymbol{k},\boldsymbol{m}}^{\boldsymbol{\ell},\boldsymbol{\sigma}}(\xi^\flat,\zeta^{\mathrm{new}} )- R_{d,\boldsymbol{k},\boldsymbol{m}}^{\boldsymbol{\ell},\boldsymbol{\sigma}}(\zeta)|}_{=I}+ \underbrace{|R_{d,\boldsymbol{k},\boldsymbol{m}}^{\boldsymbol{\ell},\boldsymbol{\sigma}}(\xi)- R_{d,\boldsymbol{k},\boldsymbol{m}}^{\boldsymbol{\ell},\boldsymbol{\sigma}}(\xi^\flat,\zeta^{\mathrm{new}})|}_{=II} .
\end{equation}
First, we note that, by definition of $\|\cdot \|_{\mathscr{H}_{\eta^\flat,r_\flat,\mathcal{O}^\flat,\mathcal{S}^\flat}^{\mathrm{tot}}}$, the estimates of the previous step prove that
\begin{equation}
\label{eq:ac1}
I \lesssim r_\flat^{-2}  \Theta_{\boldsymbol{\ell}} e^{-\eta ( \| \boldsymbol{k} \|_{\ell^1} +  2\| \boldsymbol{m} \|_{\ell^1}  + q +2d)}   r^{-2  \| \boldsymbol{m} \|_{\ell^1} -  q-2d} \| \xi^\flat - \zeta^\flat \|_{\ell^1}.
\end{equation}
So we only have to focus on II. Coming back to the equation \eqref{eq:ilfallaitbiensouffler} giving the coefficients of $R$, we get (with multi-index notations)
\begin{equation*}
\begin{split}
II \leq& \sum_{\boldsymbol{a} \in \mathbb{N}^N}  \sum_{ \boldsymbol{\varsigma} \in \mathcal{R}_{\boldsymbol{a},\boldsymbol{k}} } \sum_{n\geq d}  C_{\|\boldsymbol{a}\|_{\ell^1}+q}^{\|\boldsymbol{a}\|_{\ell^1}} C_n^d  |  P_{n,\boldsymbol{k}^\flat,\boldsymbol{m}^\flat}^{\boldsymbol{\ell},\boldsymbol{i}^{.\boldsymbol{a}},\boldsymbol{\sigma}, \boldsymbol{\varsigma}}(\xi^\flat) | \big(\prod_{p=1}^N | C_{\boldsymbol{a}_p/2}^{\boldsymbol{m}_{\boldsymbol{i}_p}} |  \big) \\ &\times \underbrace{\Big[ | \|\xi_{\boldsymbol{i}} \|_{\ell^1}^{n-d} - \|\zeta_{\boldsymbol{i}} \|_{\ell^1}^{n-d} | \xi_{\boldsymbol{i}}^{\boldsymbol{a}/2-\boldsymbol{m}_{\boldsymbol{i}}} | +  \|\zeta_{\boldsymbol{i}} \|_{\ell^1}^{n-d} |  \xi_{\boldsymbol{i}}^{\boldsymbol{a}/2-\boldsymbol{m}_{\boldsymbol{i}}} - \zeta_{\boldsymbol{i}}^{\boldsymbol{a}/2-\boldsymbol{m}_{\boldsymbol{i}}}  |\Big] }_{=\Delta}
\end{split}
\end{equation*}
Then, recalling that $\zeta_{\boldsymbol{i}},\xi_{\boldsymbol{i}} \in (\rho^{2},\Lambda\rho^{2})^N$, by applying the mean value inequality, we have
\begin{equation*}
\begin{split}
\frac{\Delta}{\| \xi_{\boldsymbol{i}} - \zeta_{\boldsymbol{i}}  \|_{\ell^1}} &\leq (n-d +  \frac{\| \boldsymbol{a} \|_{\ell^1}}2 + \| \boldsymbol{m}_{\boldsymbol{i}} \|_{\ell^1}        )  (N \Lambda)^{\frac{\| \boldsymbol{a} \|_{\ell^1}}2 + \| \boldsymbol{m}_{\boldsymbol{i}} \|_{\ell^1} +n-d-1} ( \rho^2 )^{\frac{\| \boldsymbol{a} \|_{\ell^1}}2 - \| \boldsymbol{m}_{\boldsymbol{i}} \|_{\ell^1} +n-d-1}  \\
&\lesssim \rho^{-2}         (2 N \Lambda^2)^{\frac{\| \boldsymbol{a} \|_{\ell^1}}2 + \| \boldsymbol{m}_{\boldsymbol{i}} \|_{\ell^1} +n-d}  \xi_{\boldsymbol{i}}^{\boldsymbol{a}/2-\boldsymbol{m}_{\boldsymbol{i}}}  \|\xi_{\boldsymbol{i}} \|_{\ell^1}^{n-d}
\end{split}
\end{equation*}

The estimates are almost the same as the ones at the previous step. The only difference is that we have to strengthen a little bit the smallness assumptions on $r$ and $\rho$ (see \eqref{eq:smallr2}) to absorb the losses generated by the factor $   (2 N \Lambda^2)^{\frac{\| \boldsymbol{a} \|_{\ell^1}}2 + \| \boldsymbol{m}_{\boldsymbol{i}} \|_{\ell^1} +n+d} $.  As a consequence, we have that 
\begin{equation}
\label{eq:ac2}
II \lesssim \rho^{-2}  \Theta_{\boldsymbol{\ell}} e^{-\eta ( \| \boldsymbol{k} \|_{\ell^1} +  2\| \boldsymbol{m} \|_{\ell^1}  + q +2d)}   r^{-2  \| \boldsymbol{m} \|_{\ell^1} -  q-2d}  \| \xi_{\boldsymbol{i}} - \zeta_{\boldsymbol{i}}  \|_{\ell^1}.
\end{equation}
Putting together \eqref{eq:itsimlpe}, \eqref{eq:ac1} and \eqref{eq:ac2}, we have proven that
$
\| R \|_{\mathscr{H}_{\eta,r,\mathcal{O},\mathcal{S}}^{\mathrm{lip}}} \lesssim \rho^{-2}
$
which is slightly better than what we aimed at proving.
\end{proof}

\subsection{The normal form case}
\begin{proposition} \label{prop:opennormal} Let $\mathcal{S}^\flat \subset \mathcal{S}\subset \mathbb{Z}$ be some finite sets, $r_\flat,\rho,r\in (0,1/4)$, $\mathcal{O}^\flat\subset ((4r_\flat)^{2},1)^{\mathcal{S}^\flat}$ be a Borel set  and $N^\flat\in \mathscr{N}_{\mathcal{O}^{\flat},\mathcal{S}^{\flat}}$.

\medskip

\noindent Assume that the following assumptions hold
\begin{itemize}
\item $\mathcal{S}$ contains exactly one extra site, i.e.  
$
\# \mathcal{S} = 1+\# \mathcal{S}^{\flat},
$
\item the radii enjoy the estimate
$
r \lesssim \rho \lesssim_{\mathcal{S}} r_\flat.$
\end{itemize}

\medskip

\noindent Then, there exists a Hamiltonian $N\in \mathscr{N}_{\mathcal{O},\mathcal{S}}$, where
$$
\mathcal{O} := \mathcal{O}^\flat\times (\rho^2,2\rho^2),
$$ 
such that
\begin{equation}
\label{eq:bound_open2}
\| N\|_{\mathscr{N}_{r,\mathcal{O},\mathcal{S}}^{\mathrm{tot}}} \leq \big(1+\frac{\rho}{r_{\flat}}\big)\|N^\flat\|_{\mathscr{N}_{r_\flat,\mathcal{O}^{\flat},\mathcal{S}^{\flat}}^{\mathrm{tot}}} 
\end{equation}
and for all $\xi=:(\xi^\flat,\xi^{\mathrm{new}})\in \mathcal{O}$ and all $u\in \mathcal{A}_{\xi}(3r)$, 
$$
N(\xi;u) = N^\flat(\xi^\flat;u).
$$
Moreover, setting $L = \Pi_{\mathscr{L}}N$, $L^\flat = \Pi_{\mathscr{L}}N^\flat$, $A_0+A_1 \mu = \Pi_{\mathscr{A}}N$ and $A_0^\flat+A_1^\flat \mu = \Pi_{\mathscr{A}}N^\flat$, for all $\xi=:(\xi^\flat,\xi^{\mathrm{new}})\in \mathcal{O}$ and $k \in \mathbb{Z}$, we have
\begin{equation}
\label{eq:cvg_freq}
 |A_1^\flat(\xi^\flat) - A_1(\xi)|+ \langle k \rangle^{-\delta}  |L_k^\flat(\xi^\flat) - L_k(\xi)|\leq \rho \|N^\flat\|_{\mathscr{N}_{r_\flat,\mathcal{O}^{\flat},\mathcal{S}^{\flat}}^{\mathrm{tot}}}.
\end{equation}
\end{proposition}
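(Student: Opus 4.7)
The proof is purely algebraic: there is no normal form to perform, only a change of parametrization. Let $j_*$ denote the unique element of $\mathcal{S}\setminus \mathcal{S}^\flat$ and $\xi_{j_*}\in(\rho^2,2\rho^2)$ the new action. Opening the site $j_*$ amounts to substituting $Y_{j_*}^\flat = y_{j_*} + \xi_{j_*}$ and $\mu^\flat = \mu + \xi_{j_*}$ into the decomposition $N^\flat = A^\flat + L^\flat + Q^\flat$. Expanding and collecting terms by type yields the explicit formulas
\[
A_1 = A_1^\flat + \xi_{j_*}(Q_{j_*}^\flat + 2 Q_\emptyset^\flat),\qquad L_k = L_k^\flat + 2\xi_{j_*} Q_{k,j_*}^\flat + \xi_{j_*} Q_k^\flat\quad(k\in\mathbb{Z}),
\]
together with $Q_{k,\ell} = Q_{k,\ell}^\flat$, $Q_k = Q_k^\flat$, $Q_\emptyset = Q_\emptyset^\flat$, the constant $A_0$ absorbing the remaining $\xi_{j_*}$-dependent terms. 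The identity $N(\xi;u) = N^\flat(\xi^\flat;u)$ on $\mathcal{A}_\xi(3r)$ is then immediate once one checks the inclusion $\mathcal{A}_\xi(3r)\subset \mathcal{A}_{\xi^\flat}(3r_\flat)$, which uses precisely $r\lesssim_\mathcal{S}\rho\lesssim_\mathcal{S} r_\flat$ (the only new constraint is that $\rho\varpi_{j_*}\lesssim r_\flat$).

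It then remains to verify \eqref{eq:bound_open2} term by term in the definition of $\|\cdot\|_{\mathscr{N}_{r,\mathcal{O},\mathcal{S}}^{\mathrm{tot}}}$. For the quartic part, the coefficients are unchanged and independent of $\xi_{j_*}$, hence $3\|Q\|_{\mathscr{Q}_{r,\mathcal{O},\mathcal{S}}^{\mathrm{tot}}}\leq 3\|Q^\flat\|_{\mathscr{Q}_{r_\flat,\mathcal{O}^\flat,\mathcal{S}^\flat}^{\mathrm{tot}}}$ simply because $r\leq r_\flat$. For $\|A\|_{\mathscr{A}}^{\mathrm{tot}}$ and for the sup and ordinary Lipschitz parts of $\|L\|_{\mathscr{L}_r}^{\mathrm{tot}}$, every correction carries a factor $\xi_{j_*}\leq 2\rho^2$ and, through $|Q^\flat_{k,j_*}|\leq \Lambda_{k,j_*}\|Q^\flat\|_{\mathscr{Q}^{\mathrm{cste}}}$ combined with the crude bound $\Lambda_{k,j_*}\langle k\rangle^\delta\leq \langle j_*\rangle^{4\delta}$, at most a factor $\langle j_*\rangle^{4\delta}$; hence each correction is bounded by $C_\mathcal{S}\rho^2\|N^\flat\|_{\mathscr{N}^{\mathrm{tot}}}$, which is dominated by $(\rho/r_\flat)\|N^\flat\|_{\mathscr{N}^{\mathrm{tot}}}$ under the assumption $\rho\lesssim_\mathcal{S} r_\flat$ (since then $C_\mathcal{S}\rho r_\flat\leq 1$).

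The delicate point — and the reason for introducing the directional norm in Definition~\ref{def:dirlip} — is the control of $\|L\|_{\mathscr{L}_{\mathcal{O},\mathcal{S}}^{\mathrm{dir}}}$. In the new direction $i = j_*$ one has $\partial_{\xi_{j_*}} L_k = 2 Q^\flat_{k,j_*} + Q^\flat_k$, hence
\[
\Gamma_{k,j_*}^{-1}\frac{|L_k(\xi) - L_k(\zeta)|}{|\xi_{j_*} - \zeta_{j_*}|}\leq \Gamma_{k,j_*}^{-1}(2\Lambda_{k,j_*} + \langle k\rangle^{-\delta})\|Q^\flat\|_{\mathscr{Q}^{\mathrm{cste}}}\leq 3\|Q^\flat\|_{\mathscr{Q}^{\mathrm{cste}}}\leq \|N^\flat\|_{\mathscr{N}^{\mathrm{tot}}},
\]
where the crucial inequality $2\Lambda_{k,j_*} + \langle k\rangle^{-\delta}\leq 3\Gamma_{k,j_*}$ is immediate from $\Gamma_{k,j_*} = \Lambda_{k,j_*}\vee\langle k\rangle^{-\delta}$. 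In old directions $i\in\mathcal{S}^\flat$, the variation of $\xi_{j_*}Q^\flat_{k,j_*}(\xi^\flat)$ only costs the ordinary Lipschitz norm of $Q^\flat$ (with its $r_\flat^{-2}$ weight), which is absorbed by the prefactor $\xi_{j_*}\lesssim \rho^2$ together with the pointwise bound $\Gamma_{k,i}^{-1}\Lambda_{k,j_*}\lesssim_\mathcal{S} 1$ (since $\Gamma_{k,i}\geq\langle k\rangle^{-\delta}$ and $\Lambda_{k,j_*}\langle k\rangle^\delta\leq\langle j_*\rangle^{4\delta}$), again under $\rho\lesssim_\mathcal{S} r_\flat$. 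Finally, \eqref{eq:cvg_freq} follows at once from the explicit formulas above using the crude bound $\Lambda_{k,j_*}\leq 1$: both $|A_1-A_1^\flat|$ and $\langle k\rangle^{-\delta}|L_k-L_k^\flat|$ are majorized by $3\xi_{j_*}\|Q^\flat\|_{\mathscr{Q}^{\mathrm{cste}}}\leq 2\rho^2\|N^\flat\|_{\mathscr{N}^{\mathrm{tot}}}\leq \rho\|N^\flat\|_{\mathscr{N}^{\mathrm{tot}}}$.
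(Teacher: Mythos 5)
Your proof is correct and follows essentially the same route as the paper: the same explicit formulas for $A_1$, $L_k$ and the quartic coefficients after substituting $\mu^\flat=\mu+\xi_{j_*}$, $Y^\flat_{j_*}=y_{j_*}+\xi_{j_*}$, the same term-by-term check of the $\mathscr{N}^{\mathrm{tot}}$ norm, and the same two-case analysis for the directional Lipschitz norm (with the new direction $i=j_*$ controlled by the inequality $2\Lambda_{k,j_*}+\langle k\rangle^{-\delta}\leq 3\Gamma_{k,j_*}$ and the normalization factor $3$ built into $\|\cdot\|_{\mathscr{N}^{\mathrm{tot}}}$, and old directions controlled by the prefactor $\xi_{j_*}\lesssim\rho^2$ absorbing the $r_\flat^{-2}$ weight and the $\langle j_*\rangle^{4\delta}$ loss via $\rho\lesssim_{\mathcal S} r_\flat$). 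The only gap is cosmetic: when bounding $\|L\|^{\mathrm{dir}}$ in old directions you only mention the variation of the $\xi_{j_*}Q^\flat$ corrections, leaving the unchanged term $L_k^\flat(\xi^\flat)$ — controlled directly by $\|L^\flat\|^{\mathrm{dir}}\leq\|N^\flat\|^{\mathrm{tot}}$ — implicit.
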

\begin{proof} We decompose the proof in $4$ steps. We denote by $i_* \in \mathbb{Z}\setminus \mathcal{S}^\flat$ the index such that
$$
\mathcal{S} = \{ i_* \} \cup \mathcal{S}^\flat.
$$

\medskip

\noindent $\bullet$ \underline{\emph{Step 1 : } Construction.} Naturally, we decompose $N^\flat$ as
$$
N^\flat = A^\flat + L^\flat + Q^\flat \in \mathscr{A}_{\mathcal{O}^\flat,\mathcal{S}^\flat} \oplus \mathscr{L}_{\mathcal{O}^\flat,\mathcal{S}^\flat} \oplus \mathscr{Q}_{\mathcal{O}^\flat,\mathcal{S}^\flat}.
$$
Then, we note that for all $\xi \in (\xi^\flat,\xi_{i_*})\in \mathcal{O}$, $u\in \ell^2_{\varpi}$ and $k\in \mathbb{Z}$ we have
$$
\mu(\xi;u) = \mu(\xi^\flat;u) - \xi_{i_*} \quad \mathrm{and} \quad y_k(\xi;u) = y_k(\xi^\flat;u) - \mathbbm{1}_{k=i_*} \xi_{i_*},
$$
It follows that for all $\xi=:(\xi^\flat,\xi_{i_*})\in \mathcal{O}$ and all $u\in \mathcal{A}_{\xi}(3r)$, 
$$
A^\flat(\xi^\flat;u) = A_0^\flat(\xi^\flat) + A_1^\flat(\xi^\flat) \mu(\xi;u) + A_1^\flat(\xi^\flat) \xi_{i_*}, 
$$
$$
L^\flat(\xi^\flat;u) = \big(  \sum_{k\in \mathbb{Z}} L_k^\flat(\xi^\flat) y_k(\xi;u) \big) + L_{i_*}^\flat(\xi^\flat)  \xi_{i_*} , 
$$
\begin{equation*}
\begin{split}
Q^\flat(\xi^\flat;u) =&  \sum_{k,\ell\in \mathbb{Z}} Q^\flat_{k,\ell}(\xi^\flat) y_k(\xi;u) y_\ell(\xi;u) + \sum_{k\in \mathbb{Z}} Q^\flat_{k}(\xi^\flat) y_k(\xi;u) \mu(\xi;u) + Q^\flat_{\emptyset}(\xi^\flat) \mu(\xi;u)^2  \\
&+ 2\sum_{k\in \mathbb{Z}}  Q^\flat_{k,i_*}(\xi^\flat) y_k(\xi;u) \xi_{i_*} + Q_{i_*,i_*}^\flat(\xi^\flat)  \xi_{i_*}^2 +  \sum_{k\in \mathbb{Z}} Q^\flat_{k}(\xi^\flat) y_k(\xi;u) \xi_{i_*} + Q^\flat_{i_*}(\xi^\flat) \xi_{i_*}  \mu(\xi;u)  \\
&+ \xi_{i_*} ^2  Q^\flat_{i_*}(\xi^\flat)+  \xi_{i_*} ^2  Q^\flat_{\emptyset}(\xi^\flat) + 2  Q^\flat_{\emptyset}(\xi^\flat) \xi_{i_*}    \mu(\xi;u).
\end{split}
\end{equation*}
As a consequence, setting (for $k,\ell \in \mathbb{Z}$)
$$
Q_{k,\ell}(\xi) := Q^\flat_{k,\ell}(\xi^\flat) , \quad Q_{k}(\xi) := Q^\flat_{k}(\xi^\flat), \quad Q_{\emptyset}(\xi) := Q^\flat_{\emptyset}(\xi^\flat),
$$
\begin{equation}
\label{eq:sans_doutes_le_truc_le_plus_important_du_papier}
L_k(\xi) := L_k^\flat(\xi^\flat) + 2\xi_{i_*}  Q^\flat_{k,i_*}(\xi^\flat) +  \xi_{i_*}  Q^\flat_{k}(\xi^\flat),
\end{equation}
$$
A_1( \xi) :=A_1^\flat( \xi^\flat)+Q^\flat_{i_*}(\xi^\flat) \xi_{i_*}  +2  Q^\flat_{\emptyset}(\xi^\flat) \xi_{i_*},
$$
$$
A_0( \xi) :=A_0( \xi^\flat)  + Q_{i_*,i_*}(\xi^\flat)  \xi_{i_*}^2 + \xi_{i_*} ^2  Q^\flat_{i_*}(\xi^\flat)+  \xi_{i_*} ^2  Q^\flat_{\emptyset}(\xi^\flat) + A_1(\xi^\flat) \xi_{i_*} + L_{i_*}(\xi^\flat) \xi_{i_*}
$$
provided that $N=A+L+Q \in \mathcal{N}_{\mathcal{O},\mathcal{S}}$ (that we will prove at the next steps), we have that
$$
N^\flat(\xi^\flat;u) = N(\xi;u).
$$

\medskip

\noindent $\bullet$ \underline{\emph{Step 2 : } Non critical estimates.} We estimate the different terms of $\| N\|_{\mathscr{N}_{r,\mathcal{O},\mathcal{S}}^{\mathrm{tot}}}$ separately. Only the estimate of $\| L\|_{\mathscr{L}_{\mathcal{O},\mathcal{S}}^\mathrm{dir}}$ and $\| A\|_{\mathscr{A}_{\mathcal{O},\mathcal{S}}^\mathrm{lip}}$ are critical and requires a careful analysis (done in the next steps). So we begin with the other ones.

\begin{itemize}
\item Fist, we note that since the coefficients of $Q$ and $Q^\flat$ are the same, we have $Q\in \mathscr{Q}_{\mathcal{O},\mathcal{S}}$ and (since $r\leq r_\flat$)
$$
\| Q\|_{\mathscr{Q}_{r,\mathcal{O},\mathcal{S}}^\mathrm{tot}} \leq \| Q^\flat\|_{\mathscr{Q}_{r_\flat,\mathcal{O}^\flat,\mathcal{S}^\flat}^\mathrm{tot}} \leq 3^{-1}\|N^\flat\|_{\mathscr{N}_{r_\flat,\mathcal{O}^{\flat},\mathcal{S}^{\flat}}^{\mathrm{tot}}}.
$$
\item Then we note that, provided that $\rho \lesssim_{\mathcal{S}} 1$, since $|\xi_{i_*}|\leq 2\rho^2$, we have for all $k\in \mathbb{Z}$
$$
|L_k(\xi) - L_k^\flat(\xi^\flat) | \leq \| Q^\flat\|_{\mathscr{Q}_{\mathcal{O}^\flat,\mathcal{S}^\flat}^\mathrm{sup}}  ( 2|\xi_{i_*}| \langle i_* \rangle^{4\delta} \langle k\rangle^{-\delta} + 2|\xi_{i_*}|  \langle k\rangle^{-\delta} ) \leq \frac{\rho}2 \langle k\rangle^{-\delta}  \| Q^\flat\|_{\mathscr{Q}_{\mathcal{O}^\flat,\mathcal{S}^\flat}^\mathrm{sup}} .
$$
and so 
$$
\| L\|_{\mathscr{L}_{\mathcal{O},\mathcal{S}}^\mathrm{sup}} \leq  \| L^\flat\|_{\mathscr{L}_{\mathcal{O}^\flat,\mathcal{S}^\flat}^\mathrm{sup}} + \rho \| Q^\flat\|_{\mathscr{Q}_{\mathcal{O}^\flat,\mathcal{S}^\flat}^\mathrm{sup}}   \leq (1+ \rho) \|N^\flat\|_{\mathscr{N}_{r_\flat,\mathcal{O}^{\flat},\mathcal{S}^{\flat}}^{\mathrm{tot}}}.
$$
 Similarly, we prove that
$
\| A\|_{\mathscr{A}_{\mathcal{O},\mathcal{S}}^\mathrm{sup}}   \leq (1+ \rho) \|N^\flat\|_{\mathscr{N}_{r_\flat,\mathcal{O}^{\flat},\mathcal{S}^{\flat}}^{\mathrm{tot}}}
$
and
$$
|A_1(\xi) - A_1^\flat(\xi^\flat) | \leq \frac{\rho}2 \| Q^\flat\|_{\mathscr{Q}_{\mathcal{O}^\flat,\mathcal{S}^\flat}^\mathrm{sup}} .
$$
Note that these estimates on $A$ and $L$ implies \eqref{eq:cvg_freq}.
\item Finally, we estimate  $\| L \|_{\mathscr{L}_{\mathcal{O},\mathcal{S}}^\mathrm{lip}}$. It is not critical because the weight $r^2$ in the definition on the "total" norm allows to absorb almost any loss. Indeed, being given $\xi,\zeta \in \mathcal{O}$ and $k\in \mathbb{Z}$, noticing that $\Lambda_{i_*,k} \leq \langle k \rangle^{-\delta}\langle i_* \rangle^{4\delta} $, we have
\begin{equation*}
\begin{split}
|L_k(\xi) - L_k(\zeta) | \leq& | L_k^\flat(\xi^\flat) -L_k^\flat(\zeta^\flat) |    + 2|\xi_{i_*}| | Q^\flat_{k,i_*}(\xi^\flat) -Q^\flat_{k,i_*}(\zeta^\flat) |+  |\xi_{i_*} | |Q^\flat_{k}(\xi^\flat) - Q^\flat_{k}(\zeta^\flat)| \\
&+  2|\xi_{i_*} -\zeta_{i_*}| | Q^\flat_{k,i_*}(\zeta^\flat) |+  |\xi_{i_*}-\zeta_{i_*} | | Q^\flat_{k}(\zeta^\flat)| \\
\leq& \| \xi^\flat - \zeta^\flat \|_{\ell^1} ( \|L^\flat \|_{\mathscr{L}_{\mathcal{O}^\flat,\mathcal{S}^\flat}^\mathrm{lip}} \langle k\rangle^{-\delta}  + 4 \rho^2 \|Q^\flat \|_{\mathscr{Q}_{\mathcal{O}^\flat,\mathcal{S}^\flat}^\mathrm{lip}} \Lambda_{i_*,k} +2\rho^2 \|Q^\flat \|_{\mathscr{Q}_{\mathcal{O}^\flat,\mathcal{S}^\flat}^\mathrm{lip}} \langle k\rangle^{-\delta}  )\\
&+ |\xi_{i_*} -\zeta_{i_*}| ( 2 \|Q^\flat \|_{\mathscr{Q}_{\mathcal{O}^\flat,\mathcal{S}^\flat}^\mathrm{sup}} \Lambda_{i_*,k} + \|Q^\flat \|_{\mathscr{Q}_{\mathcal{O}^\flat,\mathcal{S}^\flat}^\mathrm{sup}} \langle k \rangle^{-\delta})\\
\leq& \|N^\flat\|_{\mathscr{N}_{r_\flat,\mathcal{O}^{\flat},\mathcal{S}^{\flat}}^{\mathrm{tot}}} \langle k \rangle^{-\delta} \left( \| \xi^\flat - \zeta^\flat \|_{\ell^1} \Big[r_\flat^{-2}+ 6 \big(\frac{\rho}{r_\flat}\big)^2 \langle i_* \rangle^{4\delta}  \Big] +3 \langle i_* \rangle^4   |\xi_{i_*} -\zeta_{i_*}|  \right).
\end{split}
\end{equation*}
Provided that $r\lesssim_\mathcal{S} r_\flat$, we deduce that
$$
r^2|L_k(\xi) - L_k(\zeta) | \leq \|N^\flat\|_{\mathscr{N}_{r_\flat,\mathcal{O}^{\flat},\mathcal{S}^{\flat}}^{\mathrm{tot}}} \langle k \rangle^{-\delta}  \| \xi -\zeta\|_{\ell^1}
$$
and so $r^2 \| L \|_{\mathscr{L}_{\mathcal{O},\mathcal{S}}^\mathrm{lip}} \leq  \|N^\flat\|_{\mathscr{N}_{r_\flat,\mathcal{O}^{\flat},\mathcal{S}^{\flat}}^{\mathrm{tot}}}$. 

\end{itemize}

\medskip

\noindent $\bullet$ \underline{\emph{Step 3 : } Estimate on $\| L\|_{\mathscr{L}_{\mathcal{O},\mathcal{S}}^\mathrm{dir}}.$} Now, we estimate $\| L\|_{\mathscr{L}_{\mathcal{O},\mathcal{S}}^\mathrm{dir}}$.
Let $i\in \mathcal{S}^\flat$, $k\in \mathbb{Z}$, $(\zeta,\xi)\in \Delta_i(\mathcal{O})$. We recall that by definition of $\Delta_i(\mathcal{O})$, $\|\zeta-\xi\|_{\ell^1}=|\xi_i-\zeta_i|$ and $\xi_{i_*}  = \zeta_{i_*}$. Therefore, we have that
\begin{equation}
\label{eq:dir1}
\begin{split}
|L_k(\xi) - L_k(\zeta) | &\leq | L_k^\flat(\xi^\flat) -L_k^\flat(\zeta^\flat) |    + 2|\xi_{i_*}| | Q^\flat_{k,i_*}(\xi^\flat) -Q^\flat_{k,i_*}(\zeta^\flat) |+  |\xi_{i_*} | |Q^\flat_{k}(\xi^\flat) - Q^\flat_{k}(\zeta^\flat)| \\
&\leq\|L^\flat \|_{\mathscr{L}_{\mathcal{O}^\flat,\mathcal{S}^\flat}^\mathrm{dir}} |\xi_i - \zeta_i| \Gamma_{i,k}  + 2\rho^2 \|Q^\flat \|_{\mathscr{Q}_{\mathcal{O}^\flat,\mathcal{S}^\flat}^\mathrm{lip}} (2\Lambda_{i,k} + \langle k\rangle^{-\delta}) |\xi_i - \zeta_i|\\
&\leq \|N^\flat\|_{\mathscr{N}_{r_\flat,\mathcal{O}^{\flat},\mathcal{S}^{\flat}}^{\mathrm{tot}}}  |\xi_i - \zeta_i|\Gamma_{i,k} ( 1 + 6\big(\frac{\rho}{r_\flat}\big)^2  ).
\end{split}
\end{equation}
Now, let $k\in \mathbb{Z}$, $(\zeta,\xi)\in \Delta_{i_*}(\mathcal{O})$. We recall that by definition of $\Delta_{i_*}(\mathcal{O})$, we have $\xi^\flat = \zeta^\flat$. It follows that\footnote{recall the factor $3$ in the definition of the $\mathscr{N}^{\mathrm{tot}}$ norm.}
\begin{equation}
\label{eq:dir2}
\begin{split}
|L_k(\xi) - L_k(\zeta) | &\leq  |\xi_{i_*}-\zeta_{i_*}| (2| Q^\flat_{k,i_*}(\xi^\flat) |+ |Q^\flat_{k}(\xi^\flat)|)\\
&\leq  |\xi_{i_*}-\zeta_{i_*}| \|Q^\flat \|_{\mathscr{Q}_{\mathcal{O}^\flat,\mathcal{S}^\flat}^\mathrm{sup}} (2 \Lambda_{i_*,k}+ \langle k \rangle^{-\delta}) \\
&\leq  3 |\xi_{i_*}-\zeta_{i_*}| \|Q^\flat \|_{\mathscr{Q}_{\mathcal{O}^\flat,\mathcal{S}^\flat}^\mathrm{sup}}  \Gamma_{i_*,k} \\
&\leq  |\xi_{i_*}-\zeta_{i_*}| \|N^\flat\|_{\mathscr{N}_{r_\flat,\mathcal{O}^{\flat},\mathcal{S}^{\flat}}^{\mathrm{tot}}} 
\end{split}
\end{equation}
Putting together the estimates \eqref{eq:dir1} and \eqref{eq:dir2}, provided that $6\rho \leq r_\flat$, we get that
$$
\| L\|_{\mathscr{L}_{\mathcal{O},\mathcal{S}}^\mathrm{dir}} \leq \Big( 1 + \frac{\rho}{r_\flat}  \Big) \|N^\flat\|_{\mathscr{N}_{r_\flat,\mathcal{O}^{\flat},\mathcal{S}^{\flat}}^{\mathrm{tot}}}  .
$$

\medskip

\noindent $\bullet$ \underline{\emph{Step 4 : } Estimate on $\| A\|_{\mathscr{A}_{\mathcal{O},\mathcal{S}}^\mathrm{lip}} .$} First, we focus on $A_1$. Let $\xi,\zeta \in \mathcal{O}$. Proceeding as previously, provided that $6\rho \leq r_\flat$, we get that
\begin{equation}
\label{eq:lipA1}
\begin{split}
|A_1(\xi) - A_1(\zeta) |& \leq  \| N^\flat\|_{\mathscr{N}_{r_\flat,\mathcal{O}^\flat,\mathcal{S}^\flat}^\mathrm{tot}}  \|  \xi^\flat -\zeta^\flat\|_{\ell^1} \Big( 1 + 6\frac{\rho^2}{r_\flat^2}  \Big)+  3 \|  Q^\flat \|_{\mathscr{Q}_{\mathcal{O}^\flat,\mathcal{S}^\flat}^\mathrm{sup}} |\xi_{i_*} - \zeta_{i_*}  | \\
 &\leq \| N^\flat\|_{\mathscr{N}_{r_\flat,\mathcal{O}^\flat,\mathcal{S}^\flat}^\mathrm{tot}}  \|  \xi -\zeta\|_{\ell^1} \Big( 1 + \frac{\rho}{r_\flat}  \Big) .
\end{split}
\end{equation}
Then, we focus on $A_0$. Proceeding as previously, provided that $9\rho \leq r_\flat$, we get that
\begin{equation}
\label{eq:lipA0}
\begin{split}
|A_0(\xi) - A_0(\zeta) |& \leq  \| N^\flat\|_{\mathscr{N}_{r_\flat,\mathcal{O}^\flat,\mathcal{S}^\flat}^\mathrm{tot}} \Big(  \|  \xi^\flat -\zeta^\flat\|_{\ell^1} \Big( 3 + 12\frac{\rho^4}{r_\flat^2} + 4\frac{\rho^2}{r_\flat^2}   \Big)+  (2+12 \rho^2)|\xi_{i_*} - \zeta_{i_*}  | \Big) \\
 &\leq 3\| N^\flat\|_{\mathscr{N}_{r_\flat,\mathcal{O}^\flat,\mathcal{S}^\flat}^\mathrm{tot}}  \|  \xi -\zeta\|_{\ell^1} \Big( 1 + \frac{\rho}{r_\flat}  \Big) .
\end{split}
\end{equation}
Finally, putting together the estimates \eqref{eq:lipA1} and \eqref{eq:lipA0}, provided that $9\rho \leq r_\flat$, we get that
$$
\| A\|_{\mathscr{A}_{\mathcal{O},\mathcal{S}}^\mathrm{lip}} \leq \Big( 1 + \frac{\rho}{r_\flat}  \Big) \|N^\flat\|_{\mathscr{N}_{r_\flat,\mathcal{O}^{\flat},\mathcal{S}^{\flat}}^{\mathrm{tot}}}  .
$$
\end{proof}

\section{The loop}\label{sec:loop}

In this central  section we describe the process to put in normal form (i.e. to eliminate the adapted jet of) the perturbation after an opening step.  More explicitly, given a Hamiltonian $K^\flat\in \mathscr{H}_{\eta^\flat,r_\flat,\mathcal{O}^\flat,\mathcal{S}^\flat}$ with $\Pi_{\mathrm{ajet} }K^\flat=0$, we want to construct a symplectic change of variable $\psi_\xi$ and another Hamiltonian $K \in \mathscr{H}_{\eta,r,\mathcal{O},\mathcal{S}}$ with\footnote{Note that the projection $\Pi_{\mathrm{ajet} }$ changes after each opening step.} $\Pi_{\mathrm{ajet} }K=0$
such that 
$$
(H^{(0)}_{\varepsilon}+\varepsilon K^\flat)(\xi^\flat; \psi_\xi(u)) = (H^{(0)}_{\varepsilon}+\varepsilon K)(\xi;u).
$$
Here it is crucial that  we open only one new site: $\# \mathcal{S} = \# \mathcal{S}^\flat+1$. This loop will be achieved through the combined application of our KAM theorem (Theorem \ref{thm:KAM}) and our Birkhoff theorem (Theorem \ref{thm:Birk}). 
\begin{proposition}[The loop] \label{prop:loop} Let $\mathcal{S}^\flat \subset \mathcal{S}\subset \mathbb{Z}$ be some finite sets, $r,r_\flat\in (0,1/4)$ , $\mathcal{O} \subset ((4r_\flat)^{2},1)^{\mathcal{S}^\flat}$ be a Borel set, $\eta_{\max}\geq \eta^\flat> \eta \geq \eta_0$, $\varepsilon<1$ and $K^\flat\in \mathscr{H}_{\eta^\flat,r_\flat,\mathcal{O}^\flat,\mathcal{S}^\flat}$.

\medskip

\noindent Assume that the following assumptions hold
\begin{itemize}
\item $K^\flat$ has no adapted jet, its remainder and its normal form part are not too large, i.e.
$$
 \Pi_{\mathrm{ajet}} K^{\flat}  =0  \quad \mathrm{and} \quad \| \Pi_{\mathrm{nor}} K^{\flat} \|_{\mathscr{N}_{r_\flat,\mathcal{O}^\flat,\mathcal{S}^\flat}^{\mathrm{tot}}} + \| \Pi_{\mathrm{rem}} K^\flat\|_{\mathscr{H}_{\eta^\flat,r_\flat,\mathcal{O}^\flat,\mathcal{S}^\flat}^{\mathrm{tot}}} \leq 3,
$$
\item  the frequencies can be modulated, i.e. 
$$
\varepsilon \leq 10^{-9} \quad \mathrm{and} \quad   \varepsilon \sup_{i\in \mathcal{S}} \sum_{k\in \mathcal{S}} \Gamma_{k,i} \leq 10^{-3} ,
$$
\item $\mathcal{S}$ contains exactly one extra site, i.e.  
$$
\# \mathcal{S} = 1+\# \mathcal{S}^\flat,
$$
\item $r$ is small enough
\begin{equation}
\label{eq:smallr}
r \lesssim_{\mathcal{S},r_\flat,\varepsilon} 1.
\end{equation}
\end{itemize}

\medskip

\noindent Then, there exists a Borel set $\mathcal{O} \subset ((4r)^{2},1)^{\mathcal{S}}$ and a Hamiltonian $K \in \mathscr{H}_{\eta,r,\mathcal{O},\mathcal{S}}$
such that 
\begin{itemize}
\item $\mathcal{O}$ is smaller than $\mathcal{O}^\flat$ in the sense that\footnote{with a slight abuse of notation.}
$$
\mathcal{O} \subset \mathcal{O}^\flat \times (r^{2\nu},2r^{2\nu}) =: \mathcal{O}^{\flat,\mathrm{e}}, \quad \mathrm{where} \quad \nu = 19/20.
$$
\item $\mathcal{O}$ is large in the sense that
$$
\mathrm{Leb}(\mathcal{O}^{\flat,\mathrm{e}} \setminus \mathcal{O}) \leq  r^{2\nu+10^{-5}}  
$$
\item there is still no adapted jet and the remainder term is not too large, i.e.
$$
 \Pi_{\mathrm{ajet}} K  =0  \quad \mathrm{and} \quad \| \Pi_{\mathrm{rem}} K\|_{\mathscr{H}_{\eta,r,\mathcal{O},\mathcal{S}}^{\mathrm{tot}}} \leq 1,
$$
\item the normal form part does not grow too much, i.e.
$$
\|  \Pi_{\mathrm{nor}} K \|_{\mathscr{N}_{r,\mathcal{O},\mathcal{S}}^{\mathrm{tot}}} \leq \|  \Pi_{\mathrm{nor}} K^\flat \|_{\mathscr{N}_{r_\flat,\mathcal{O}^\flat,\mathcal{S}^\flat}^{\mathrm{tot}}} + r^{10^{-3}},
$$
\item  the frequencies do not move too much, i.e. for all $\xi =: (\xi^\flat,\xi^{\mathrm{new}}) \in \mathcal{O}$ and all $j\in \mathbb{Z}$, we have
\begin{equation}
\label{eq:freq_move_pas_trop_trop}
\langle j \rangle^{\delta} |L_j(\xi) - L^\flat_j(\xi^\flat) | +  |A_1(\xi) - A_1^\flat(\xi^\flat) |  \leq  r^{\frac1{82}}  
\end{equation} 
where we have set $L := \Pi_{\mathscr{L}}K $, $L^\flat := \Pi_{\mathscr{L}}K^\flat $,  $ A_0^\flat + \mu A_1^\flat := \Pi_{\mathscr{A}}K^\flat$ and $ A_0 + \mu A_1 := \Pi_{\mathscr{A}}K$,

\end{itemize}
and for all $\xi =: (\xi^\flat,\xi^{\mathrm{new}}) \in \mathcal{O}$, there exists a $C^1$ symplectic map $\psi_\xi : \mathcal{A}_\xi(r) \to \mathcal{A}_{\xi^\flat}(r_\flat)$ commuting with the gauge transform and enjoying,
 for all $u\in \mathcal{A}_\xi(r)$, the bounds
\begin{equation}
\label{eq:psi_aussi_est_proche_id}
\| \psi_\xi(u) - u\|_{\ell^2_\varpi} + \| \mathrm{d} \psi_\xi(u) - \mathrm{Id}\|_{\ell^2_\varpi \to \ell^2_\varpi } \leq r^{\frac1{440}}
\end{equation}
 and the property that
$$
(H^{(0)}_{\varepsilon}+\varepsilon K^\flat)(\xi^\flat; \psi_\xi(u)) = (H^{(0)}_{\varepsilon}+\varepsilon K)(\xi;u).
$$
\end{proposition}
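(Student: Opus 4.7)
The strategy, outlined in Section \ref{sec:scheme}, is a three-step loop: first a Birkhoff preparation to shrink the monomials of $K^\flat$ that will survive as adapted-jet terms after opening, then the opening of the new site, and finally one application of the KAM theorem to eliminate the residual adapted jet. I introduce the intermediate radius $\rho := r^{\nu}$ with $\nu = 19/20$ so that the prescribed range $\xi^{\mathrm{new}} \in (r^{2\nu},2r^{2\nu})$ matches $(\rho^2,2\rho^2)$, the natural opening window in Proposition \ref{prop:opennormal}.

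\emph{Birkhoff step.} I first view $K^\flat$ at radius $\rho$. Since every monomial of $\Pi_{\mathrm{rem}}K^\flat$ has degree at least $4$ (and the non-$\Pi_{n=0,\boldsymbol{m}=0}$ part has degree at least $5$), passing from radius $r_\flat$ to $\rho$ shrinks the $\mathscr{H}^{\mathrm{tot}}$-norms of these pieces by at least a factor $(\rho/r_\flat)^4$, which secures the smallness hypotheses of Theorem \ref{thm:Birk} once \eqref{eq:smallr} is invoked. I then apply Theorem \ref{thm:Birk} with $\mathfrak{M}$ chosen large enough that the tail $e^{-\eta(\mathfrak{M}+1)}$ of monomials with $q>\mathfrak{M}$ lies below the forthcoming KAM threshold, and with $\mathfrak{N}$ large enough that $\rho^{\mathfrak{N}}\leq \varepsilon^{9/10}r^{4000}$ under \eqref{eq:smallr}. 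This yields a Hamiltonian $R$ on a set $\mathcal{O}'\subset\mathcal{O}^\flat$ with $\mathrm{Leb}(\mathcal{O}^\flat\setminus\mathcal{O}')\leq\rho^{10^{-3}}$, satisfying $\|\Pi_{\mathrm{tbr}}^{\mathfrak{M}}R\|^{\mathrm{tot}}\leq\rho^{\mathfrak{N}}$ and $\|\Pi_{\mathrm{nor}}(R-K^\flat)\|_{\mathscr{N}^{\mathrm{tot}}}\leq\rho^{1/4}$, together with the Birkhoff symplectic conjugacy $\varphi_{\xi^\flat}$.

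\emph{Opening and KAM steps.} Next I apply Proposition \ref{prop:opennormal} to $\Pi_{\mathrm{nor}}R$ and Proposition \ref{prop:open} to its complement, obtaining a Hamiltonian $\widetilde{K}$ at radius $r$ on the enlarged domain $\mathcal{O}'\times(\rho^2,2\rho^2)$. The crucial observation is that, in the new variables, adapted-jet monomials arise only from original monomials whose index vector $\boldsymbol{\ell}$ has at most three entries in $(\mathcal{S}^\flat\cup\{i_*\})^c$; those with $q\leq\mathfrak{M}$ lie precisely in $\Pi_{\mathrm{tbr}}^{\mathfrak{M}}R$ (the auxiliary bound $\#\{\boldsymbol{\ell}_j\}\leq 4$ is automatic, as at most four distinct values occur, namely $i_*$ together with at most three genuine externals), hence contribute $\lesssim\rho^{\mathfrak{N}}$, while those with $q>\mathfrak{M}$ contribute only the exponentially small tail above. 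Combining with the algebraic bound \eqref{eq:bound_open} of Proposition \ref{prop:open}, one gets $\|\Pi_{\mathrm{ajet}}\widetilde{K}\|^{\mathrm{tot}}\leq\varepsilon^{9/10}r^{4000}$, so Theorem \ref{thm:KAM} applies: it removes a parameter subset of measure $\leq r^2\varepsilon^{10^{-3}}$ and returns a Hamiltonian $K$ and a symplectic map $\tau_\xi$ with $\Pi_{\mathrm{ajet}}K=0$, $\|\Pi_{\mathrm{rem}}K\|^{\mathrm{tot}}\leq 1$, and $\|\Pi_{\mathrm{nor}}(K-\widetilde{K})\|_{\mathscr{N}^{\mathrm{tot}}}\leq r$.

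\emph{Conclusion.} The desired conjugacy is $\psi_\xi:=\varphi_{\xi^\flat}\circ\tau_\xi$; its closeness \eqref{eq:psi_aussi_est_proche_id} to the identity follows by triangle inequality from the much stronger \eqref{eq:proche_id_KAM} and \eqref{eq:proche_id_Birk}, since $\rho^{2+1/21}+r^3\varepsilon^{1/2}\ll r^{1/440}$. The measure bound on $\mathcal{O}^{\flat,\mathrm{e}}\setminus\mathcal{O}$ is $\rho^{10^{-3}}\cdot r^{2\nu}+r^2\varepsilon^{10^{-3}}\leq r^{2\nu+10^{-5}}$ for $r$ small. The frequency estimate \eqref{eq:freq_move_pas_trop_trop} is obtained by summing the three contributions: the opening contributes $\leq\rho\,\|N^\flat\|_{\mathscr{N}^{\mathrm{tot}}}$ by \eqref{eq:cvg_freq}, Birkhoff contributes $\leq\rho^{1/4}$, and KAM contributes $\leq r$; all of these are dominated by $\rho^{1/4}=r^{19/80}$, which is much smaller than $r^{1/82}$. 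The main technical obstacle in the plan is thus the coordination of the three length scales $r_\flat,\rho,r$: the Birkhoff residual $\rho^{\mathfrak{N}}$ must absorb the loss of smallness intrinsic to the opening substitution (whereby high powers of $u_{i_*}$ turn into $O(1)$ constants times small $r$-factors) so as to produce an adapted jet below the very stringent KAM threshold $\varepsilon^{9/10}r^{4000}$, which is the reason why a Birkhoff step cannot be avoided.
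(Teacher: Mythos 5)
Your overall architecture — a Birkhoff preparation, then the opening of the new site, then a KAM step, with $\psi_\xi=\varphi_{\xi^\flat}\circ\tau_\xi$ — is exactly the paper's. But your parametrization of the intermediate scale is wrong in a way that breaks both the opening step and the KAM smallness hypothesis.

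You set $\rho:=r^{19/20}$ and use this single scale both as the radius at which you run Birkhoff (``I first view $K^\flat$ at radius $\rho$'') and as the opening scale (the window $(\rho^2,2\rho^2)$). This fails twice. First, Proposition~\ref{prop:open} and Proposition~\ref{prop:opennormal} require a \emph{fixed-factor gap} between the opening scale and the input radius: the proof of Proposition~\ref{prop:open} needs an inequality of the form $10^3(\#\mathcal{S})^{3/2}\Lambda^{1/2}\rho\,\max_p\varpi_{\boldsymbol{i}_p}\leq r_\flat$, i.e.\ $\rho\leq c\,r_\flat$ with $c<1$ independent of $r$. If $R$ only lives on $\mathcal{A}_\xi(r^{19/20})$ and you open at scale $r^{19/20}$, the hypothesis is saturated and the proposition does not apply, for any $r$. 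Second, the KAM theorem requires $\|\Pi_{\mathrm{rem}}P\|^{\mathrm{tot}}_{\eta^\natural,r,\cdot}\leq r^3$. The remainder has degree at least $4$, so passing from radius $r^{19/20}$ to $r$ shrinks it only by $(r/r^{19/20})^4=r^{1/5}$, nowhere near $r^3$. The paper resolves both problems by keeping the scales genuinely separated: Birkhoff is run at $\rho=r^{1/20}$, a zoom brings the non-normal pieces to radius $\rho^{18}=r^{18/20}$, the opening is performed at scale $\rho^{19}=r^{19/20}$ (matching the target window $(r^{2\nu},2r^{2\nu})$), and one lands at $r=\rho^{20}$. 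This produces shrinkage factors $\rho^{68}\lesssim r^3$ for $E$ (order $\geq4$ in external variables), $\rho^{102}$ for $I$, and a $\rho$-factor gap for the opening proposition.

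A secondary but real slip: you attribute the control of the high-degree ($q>\mathfrak{M}$) tail of $J$ to an exponential factor $e^{-\eta(\mathfrak{M}+1)}$. Since $\eta\leq\eta_{\max}=100$ is bounded independently of $r$, that factor cannot produce anything close to the required $r^{4000}$. The actual mechanism is the zoom: after passing to radius $\rho^{18}$, a monomial with $q>\mathfrak{M}$ contributes at most $(\rho^{18}/\rho)^{q}\leq\rho^{17\mathfrak{M}}$, which is what the paper exploits (with $\mathfrak{M}=10^5$). Both of these points need to be fixed before the rest of your argument (the Birkhoff residual $\leq\rho^{\mathfrak{N}}$ becoming the KAM jet, the combinatorics of which monomials turn into adapted-jet terms after opening, and the frequency estimate) can be made to close.
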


\begin{remark}\label{rem:homo}
We are going to use (and abuse) zooms, i.e. contractions of the annulus on which we consider the Hamiltonian. These contractions allows to decrease the norm of the Hamiltonians (depending of their homogeneity). More precisely, being given a finite set $ \mathcal{S}\subset \mathbb{Z}$, $0<\rho\leq r <1/4$, a Borel set $\mathcal{O} \subset ((4r)^{2},1)^{\mathcal{S}}$, $p\geq 0$, $ \eta \geq \eta_0$ and  $P\in \mathscr{H}_{\eta,r,\mathcal{O},\mathcal{S}},$ we have
$$
\| \Pi_{2n + 2 \|\boldsymbol{m}\|_{\ell^1} +q \geq p} P \|_{\mathscr{H}_{\eta,\rho,\mathcal{O},\mathcal{S}}^{\mathrm{tot}}} \leq \big(\frac{\rho}{r}\big)^p \| P \|_{\mathscr{H}_{\eta,r,\mathcal{O},\mathcal{S}}^{\mathrm{tot}}} .
$$
As a consequence the new radius $r$ will be much smaller than the initial one $r_\flat$, a fact reflected in the assumption \eqref{eq:smallr}.
\end{remark}

\begin{proof}[Proof of Proposition \ref{prop:loop}] The proof is divided in $3$ steps. At the first and the last step, the radius of analyticity of the Hamiltonians will decrease by the same amount given by
$$
\eta^\flat - \eta^\natural  = \eta^{\natural} - \eta := \frac{\eta^\flat - \eta}2.
$$

\medskip

\noindent $\bullet$ \underline{\emph{Step $1$ : First zoom and Birkhoff normal form.}} First, we set 
$$
\rho := r^{1/20}, 
$$
and we note that, since $\Pi_{\mathrm{ajet}} K^\flat= 0$, by homogeneity, we have (see Remark \ref{rem:homo})
$$
\|  \Pi_{n=0,\boldsymbol{m}=0} (\mathrm{Id} -  \Pi_{\mathrm{nor}})  K^\flat\|_{\mathscr{H}_{\eta^\flat,\rho,\mathcal{O}^\flat,\mathcal{S}^\flat}^{\mathrm{tot}}}  \leq 3\big(\frac{\rho}{r_\flat}\big)^4, $$
$$ \| (\mathrm{Id}- \Pi_{n=0,\boldsymbol{m}=0})(\mathrm{Id} -  \Pi_{\mathrm{nor}} ) K^\flat\|_{\mathscr{H}_{\eta^\flat,\rho,\mathcal{O}^\flat,\mathcal{S}^\flat}^{\mathrm{tot}}}  \leq  3\big(\frac{\rho}{r_\flat}\big)^5.
$$
Moreover, by monotonicity, we have 
$$
\| \Pi_{\mathrm{nor}} K^\flat\|_{\mathscr{N}_{\rho,\mathcal{O}^\flat,\mathcal{S}^\flat}^{\mathrm{tot}}} \leq \| \Pi_{\mathrm{nor}} K^\flat\|_{\mathscr{N}_{r_\flat,\mathcal{O}^\flat,\mathcal{S}^\flat}^{\mathrm{tot}}} \leq 3
$$
We set
$$
\mathfrak{N} = \mathfrak{M} =  10^5.
$$
Provided that $r$ is small enough in the sense that it satisfies an estimate of the form \eqref{eq:smallr}, we have
$$
3\big(\frac{\rho}{r_\flat}\big)^4 \leq \rho^{7/2}, \quad \mathrm{and} \quad  3\big(\frac{\rho}{r_\flat}\big)^5 \leq \rho^{9/2}
$$
and so\footnote{even if it means adding another smallness assumption on $r$ of the form \eqref{eq:smallr}.}  the assumptions of the Birkhoff normal form theorem (i.e. Theorem \ref{thm:Birk}) are satisfied. As a consequence, we get  a Borel set $\mathcal{O}'\subset \mathcal{O}^\flat$, a Hamiltonian $R \in \mathscr{H}_{\eta^\natural,\rho,\mathcal{O}',\mathcal{S}^\flat}$ and a family of symplectic maps $\varphi_\xi$ satisfying the properties described in the statement of Theorem \ref{thm:Birk}.

\bigskip

\noindent $\bullet$ \underline{\emph{Step $2$ : Second zoom and opening of a new site}.} First, let us note that, by assumption, there exists a unique new site $i_* \in \mathbb{Z}\setminus \mathcal{S}^\flat$  such that
$$
\mathcal{S} = \{ i_* \} \cup \mathcal{S}^\flat.
$$
Then, we split $R$ in several pieces that are
$$
R =: N+ I + J + E
$$
where $N:= \Pi_{\mathrm{nor}} R$ is the normal form part of $R$, $I:= \Pi_{\mathrm{int}} R -N$ contains the other integrable terms of $R$ and $E$ contains the terms of $R$ which are not integrable and at least of order $4$ with respect to $(u_i)_{i\in \mathcal{S}^c}$.

\medskip

Then we note that $(\mathrm{Id}-\Pi_{\mathrm{tbr}}^\mathfrak{M}) J$ is at least of order $\mathfrak{M}$ with respect to $u_{i_*}$. Further by construction (concretely by \eqref{eq:RKbemol} in Theorem \ref{thm:Birk}), $\|  \Pi_{\mathrm{tbr}}^\mathfrak{M} J\|_{\mathscr{H}_{\eta^\natural,\rho,\mathcal{O}',\mathcal{S}^\flat}^{\mathrm{tot}}}\leq \rho^\mathfrak{N}$, it follows that, for homogeneity reasons (see Remark \ref{rem:homo}),
we have
$$
\| J\|_{\mathscr{H}_{\eta^\natural,\rho^{18},\mathcal{O}',\mathcal{S}^\flat}^{\mathrm{tot}}} \leq \rho^\mathfrak{N} + \big(\frac{\rho^{18}}{\rho} \big)^\mathfrak{M} \leq 2 r^{10^5/20} \leq r^{4500} \rho,
$$
$$
\| E\|_{\mathscr{H}_{\eta^\natural,\rho^{18},\mathcal{O}',\mathcal{S}^\flat}^{\mathrm{tot}}} \leq  \big(\frac{\rho^{18}}{\rho} \big)^4 = r^{3} \rho^2 \quad \mathrm{and} \quad \| I \|_{\mathscr{H}_{\eta^\natural,\rho^{18},\mathcal{O}',\mathcal{S}^\flat}^{\mathrm{tot}}} \leq  \big(\frac{\rho^{18}}{\rho} \big)^6 = r^5 \rho^2.
$$
We set
$$
\mathcal{O}^\natural := \mathcal{O}' \times  (\rho^{2\cdot 19},2\,\rho^{2\cdot  19}) \subset \mathbb{R}^{\mathcal{S}}.
$$
By applying Proposition \ref{prop:open}, since we can impose that $r$ is small enough in the sense of the estimate \eqref{eq:smallr}, we get $3$ Hamiltonians, $I^\natural,J^\natural,E^\natural \in \mathscr{H}_{\eta^\natural,r,\mathcal{O}^\natural ,\mathcal{S}}^{\mathrm{tot}}$, satisfying the estimates
\begin{equation}
\label{eq:merci_Birk}
\| I^\natural\|_{\mathscr{H}_{\eta^\natural,r,\mathcal{O}^\natural ,\mathcal{S}}^{\mathrm{tot}}} \leq r^5 \rho, \quad \| J^\natural\|_{\mathscr{H}_{\eta^\natural,r,\mathcal{O}^\natural ,\mathcal{S}}^{\mathrm{tot}}} \leq \varepsilon r^{4000}  \quad \mathrm{and} \quad \| E^\natural\|_{\mathscr{H}_{\eta^\natural,r,\mathcal{O}^\natural ,\mathcal{S}}^{\mathrm{tot}}} \leq r^{3} \rho 
\end{equation}
and such that for all $\xi=:(\xi^\flat,\xi^{\mathrm{new}})\in \mathcal{O}^\natural$ and all $u\in \mathcal{A}_{\xi}(3r)$, 
$$
I^\natural(\xi;u) = I(\xi^\flat;u), \quad J^\natural(\xi;u) = J(\xi^\flat;u) \quad \mathrm{and} \quad E^\natural(\xi;u) = E(\xi^\flat;u).
$$
Similarly, by applying Proposition \ref{prop:opennormal}, we get a Hamiltonian $N^\natural\in \mathscr{N}_{r,\mathcal{O}^\natural ,\mathcal{S}}$ in normal form  such that 
\begin{equation}
\begin{split}
\label{eq:est_normal}
\| N^\natural \|_{\mathscr{N}_{r,\mathcal{O}^\natural,\mathcal{S}}^{\mathrm{tot}}} \leq (1+\rho^{18})\|N\|_{\mathscr{N}_{\rho,\mathcal{O}',\mathcal{S}^{\flat}}^{\mathrm{tot}}}  &\mathop{\leq}^{\eqref{eq:RKbemol}} (1+\rho^{18}) ( \|  \Pi_{\mathrm{nor}} K^\flat \|_{\mathscr{N}_{r_\flat,\mathcal{O}',\mathcal{S}^\flat}^{\mathrm{tot}}} +\rho^{1/4}) \\ &\leq  \|  \Pi_{\mathrm{nor}} K^\flat \|_{\mathscr{N}_{r_\flat,\mathcal{O}^\flat,\mathcal{S}^\flat}^{\mathrm{tot}}} +\rho^{1/5},
\end{split}
\end{equation}
for all $\xi=:(\xi^\flat,\xi^{\mathrm{new}})\in \mathcal{O}^\natural$ and all $u\in \mathcal{A}_{\xi}(3r)$, 
$$
N^\natural(\xi;u) = N(\xi^\flat;u)
$$
and, for all $j\in \mathbb{Z}$
\begin{equation}
\label{eq:var_freq}
\langle j \rangle^{\delta} |L_j^{\natural}(\xi) - L'_j(\xi^\flat) | +  |A_1^{\natural}(\xi) - A_1'(\xi^\flat) |  \leq  \rho
\end{equation}
where we have set $L^{\natural} := \Pi_{\mathscr{L}}N^{\natural} $, $L':= \Pi_{\mathscr{L}}N $,  $ A_0' + \mu A_1' := \Pi_{\mathscr{A}}N$ and $ A_0^\natural + \mu A_1^\natural := \Pi_{\mathscr{A}}N^{\natural}$.

\medskip

In order to apply the KAM theorem at the last step, we set
$$
P := N^\natural +  I^\natural  + J^\natural + E^\natural \in \mathscr{H}_{\eta^\natural,r,\mathcal{O}^\natural ,\mathcal{S}}^{\mathrm{tot}}.
$$
Note that, by construction, $P$ is just a way of rewriting $R$, i.e. for all $\xi=:(\xi^\flat,\xi^{\mathrm{new}})\in \mathcal{O}^\natural$ and all $u\in \mathcal{A}_{\xi}(3r)$ we have
\begin{equation}
\label{eq:cest_fait_pour}
P(\xi;u) = R(\xi^\flat;u).
\end{equation}

\medskip

Since $E$ is at least of order $4$ with respect to $(u_i)_{i\in \mathcal{S}^c}$, the same is also true for $E^\natural$ and so
$\Pi_{\mathrm{ajet}} E^\natural =0.
$
Moreover, since $I$ is integrable, $I^{\flat}$ is also integrable\footnote{because, since $I$ and $I^\flat$ are the same function, as $I$, $I^\natural$ commutes with all the actions.} and so $\Pi_{\mathrm{ajet}} I^\natural =0.$
It follows that 
$$
\Pi_{\mathrm{ajet}} P = \Pi_{\mathrm{ajet}} J^\natural \quad \mathrm{and} \quad \Pi_{\mathrm{rem}} P = \Pi_{\mathrm{rem}} I^\natural + \Pi_{\mathrm{rem}} J^\natural + \Pi_{\mathrm{rem}} E^\natural.
$$
Moreover, since by construction $\Pi_{\mathrm{int}} J =\Pi_{\mathrm{int}} E =0 $,  we know by Proposition \ref{prop:open} that $\Pi_{\mathrm{int}} J^\natural =\Pi_{\mathrm{int}} E^\natural =0 $. It follows that
$$
\Pi_{\mathrm{nor}} P = N^\natural + \Pi_{\mathrm{nor}} I^\natural. 
$$

\medskip

As a consequence, thanks to the estimates \eqref{eq:merci_Birk}, we have
$$
\| \Pi_{\mathrm{ajet}} P \|_{\mathscr{H}_{\eta^\natural,r,\mathcal{O}^\natural ,\mathcal{S}}^{\mathrm{tot}}}  \leq  \| J^\natural \|_{\mathscr{H}_{\eta^\natural,r,\mathcal{O}^\natural ,\mathcal{S}}^{\mathrm{tot}}}  \leq \varepsilon^{9/10} r^{4000}
$$
$$
\| \Pi_{\mathrm{rem}} P \|_{\mathscr{H}_{\eta^\natural,r,\mathcal{O}^\natural ,\mathcal{S}}^{\mathrm{tot}}}  \leq   \| I^\natural \|_{\mathscr{H}_{\eta^\natural,r,\mathcal{O}^\natural ,\mathcal{S}}^{\mathrm{tot}}}  + \| J^\natural \|_{\mathscr{H}_{\eta^\natural,r,\mathcal{O}^\natural ,\mathcal{S}}^{\mathrm{tot}}} +  \| E^\natural \|_{\mathscr{H}_{\eta^\natural,r,\mathcal{O}^\natural ,\mathcal{S}}^{\mathrm{tot}}}   \leq r^3.
$$
Moreover, applying the continuity estimate on $\Pi_{\mathrm{nor}}$ given by Lemma \ref{lem:continuity_estimate_pinor}, we have
$$
\| \Pi_{\mathrm{nor}} I^\natural \|_{\mathscr{N}_{r,\mathcal{O}^\natural ,\mathcal{S}}^{\mathrm{tot}}} \lesssim_{S} r^{-4} \| I^\natural \|_{\mathscr{H}_{\eta^\natural,r,\mathcal{O}^\natural ,\mathcal{S}}^{\mathrm{tot}}} \lesssim_{S} r\rho
$$
and so, thanks to the estimate \eqref{eq:est_normal} on $ N^\natural $, we have (provided that $r$ is small enough)
\begin{equation}
\label{eq:est_P_nor}
\|  \Pi_{\mathrm{nor}} P  \|_{\mathscr{N}_{r,\mathcal{O}^\natural,\mathcal{S}}^{\mathrm{tot}}} \leq  \|  \Pi_{\mathrm{nor}} K^\flat \|_{\mathscr{N}_{r_\flat,\mathcal{O}^\flat,\mathcal{S}^\flat}^{\mathrm{tot}}} +\rho^{1/6} \leq 7/2.
\end{equation}

\noindent $\bullet$ \underline{\emph{Step $3$ : KAM and conclusion}.} The previous estimates on $P$ ensure that the assumptions of the KAM theorem (i.e. Theorem \ref{thm:KAM}) are satisfied. As a consequence, we get  a Borel set $\mathcal{O}\subset \mathcal{O}^\natural$, a Hamiltonian $K \in \mathscr{H}_{\eta,r,\mathcal{O},\mathcal{S}}$ and a family of symplectic maps $\tau_\xi$ satisfying the properties described in the statement of Theorem \ref{thm:KAM}.

\medskip

Now, we just have to check that all the properties claimed in the statement of Proposition \ref{prop:loop} are satisfied. 

 First, $\mathcal{O}$ is indeed smaller that $\mathcal{O}^\flat$ because by construction
$$
\mathcal{O} \subset \mathcal{O}^{\natural} = \mathcal{O}' \times  (2\cdot \rho^{19},2\,\rho^{2\cdot 19})  \subset  \mathcal{O}^\flat \times  (\rho^{2\cdot 19},2\,\rho^{2\cdot 19})  =: \mathcal{O}^{\flat,\mathrm{e}}.
$$

 Then, provided that $r$ is small enough with respect to $r_\flat$, we have the measure estimate
\begin{equation*}
\begin{split}
\mathrm{Leb}(\mathcal{O}^{\flat,\mathrm{e}} \setminus \mathcal{O}) \leq\mathrm{Leb}\big([\mathcal{O}^\flat  \setminus \mathcal{O}']\times (\rho^{2\cdot 19},2\,\rho^{2\cdot 19}) \big) +  \mathrm{Leb}(\mathcal{O}^{\natural} \setminus \mathcal{O}) 
&\leq   \rho^{10^{-3}} \rho^{2\cdot 19} + r^2 \varepsilon^{10^{-3}}\\ &\leq    r^{2\nu+10^{-5}}  .
\end{split}
\end{equation*}

 The KAM theorem directly ensures that $ \Pi_{\mathrm{ajet}} K  =0$ and $\|  \Pi_{\mathrm{rem}} K\|_{\mathscr{H}_{\eta,r,\mathcal{O},\mathcal{S}}^{\mathrm{tot}}} \leq 1  $.

The normal form part is controlled as followed
\begin{equation*}
\begin{split}
\|  \Pi_{\mathrm{nor}} K \|_{\mathscr{N}_{r,\mathcal{O},\mathcal{S}}^{\mathrm{tot}}}  &\leq  \|  \Pi_{\mathrm{nor}} P\|_{\mathscr{N}_{r,\mathcal{O},\mathcal{S}}^{\mathrm{tot}}} + r \mathop{\leq}^{\eqref{eq:est_P_nor}} \|  \Pi_{\mathrm{nor}} K^\flat \|_{\mathscr{N}_{r_\flat,\mathcal{O}^\flat,\mathcal{S}^\flat}^{\mathrm{tot}}} +\underbrace{\rho^{1/6} +r}_{\leq  r^{10^{-3}}}.
\end{split}
\end{equation*}

 Setting $L := \Pi_{\mathscr{L}}K $, $L^\flat := \Pi_{\mathscr{L}}K $,  $ A_0^\flat + \mu A_1^\flat := \Pi_{\mathscr{A}}K^\flat$ and $ A_0 + \mu A_1 := \Pi_{\mathscr{A}}K$, for all $\xi =: (\xi^\flat,\xi^{\mathrm{new}}) \in \mathcal{O}$ and all $j\in \mathbb{Z}$, we have
\begin{equation*}
\begin{split}
\langle j \rangle^{\delta} |L_j(\xi) - L^\flat_j(\xi^\flat) | \vee  |A_1(\xi) - A_1^\flat(\xi^\flat) |  \leq&  \|  \Pi_{\mathrm{nor}} (K-P)\|_{\mathscr{N}_{r,\mathcal{O},\mathcal{S}}^{\mathrm{tot}}} +  \|  \Pi_{\mathrm{nor}} (R-K^\flat)\|_{\mathscr{N}_{\rho,\mathcal{O}',\mathcal{S}^\flat}^{\mathrm{tot}}} \\ 
& +  \langle j \rangle^{\delta} |L_j^{\natural}(\xi) - L'_j(\xi^\flat) | \vee |A_1^{\natural}(\xi) - A_1'(\xi^\flat) | \\
\mathop{\leq}^{\eqref{eq:var_freq}} & r+ \rho^{\frac14}+ \rho \leq r^{\frac1{81}}
\end{split}
\end{equation*}
where the estimates of  $\|  \Pi_{\mathrm{nor}} (K-P)\|_{\mathscr{N}_{r,\mathcal{O},\mathcal{S}}^{\mathrm{tot}}}$ and  $  \|  \Pi_{\mathrm{nor}} (R-K^\flat)\|_{\mathscr{N}_{\rho,\mathcal{O}',\mathcal{S}^\flat}^{\mathrm{tot}}}$ are given by Theorem \ref{thm:KAM} and Theorem \ref{thm:Birk} respectively.

Finally, we just have to define the change of variable $\psi_\xi$ and to check its properties. For all $\xi =: (\xi^\flat,\xi^{\mathrm{new}}) \in \mathcal{O}$, we set
$$
\psi_\xi := \varphi_{\xi^\flat} \circ \tau_\xi.
$$
By composition $\psi_\xi$ is clearly symplectic and invariant by gauge transform.
Then by construction, for all $u\in \mathcal{A}_\xi(r)$, we have
\begin{equation*}
\begin{split}
(H^{(0)}_{\varepsilon}+\varepsilon K^\flat)(\xi^\flat; \psi_\xi(u)) \mathop{=}^{\eqref{eq:cest_un_chgt_de_var_Birk}} (H^{(0)}_{\varepsilon}+\varepsilon R)(\xi^\flat; \tau_\xi(u))    
\mathop{=}^{\eqref{eq:cest_fait_pour}}  (H^{(0)}_{\varepsilon}+\varepsilon P)(\xi; \tau_\xi(u)) 
 \mathop{=}^{\eqref{eq:cest_un_cght_de_var_KAM}}  (H^{(0)}_{\varepsilon}+\varepsilon K)(\xi;u).
\end{split}
\end{equation*}

Finally, the fact that $\psi_\xi$ is close to the identity is a direct corollary of the fact that $ \varphi_{\xi^\flat} $ and $\tau_\xi$ are close to the identity (see \eqref{eq:proche_id_KAM} and \eqref{eq:proche_id_Birk} respectively).

\end{proof}

\section{Proof of the main theorem}\label{sec:proof} In this section, we prove Theorem \ref{thm:main}. Basically, after some reductions, rescaling and  an initial opening (section \ref{sec:prep}),  it consists in a first application of our KAM theorem (Theorem \ref{thm:KAM}) to obtain the finite dimensional KAM tori of Kuksin-P\"oschel (section \ref{sec:finite}), then we iterate  the loop described in section \ref{sec:loop} to obtain tori of arbitrary dimension but whose size does not depend on the dimension (section \ref{sec:arbi}) and we pass to the limit to obtain tori of infinite dimension (section \ref{sec:infinite}). Finally, we prove that they contain infinite dimensional invariant tori which are non-resonant (section \ref{sec:NR}) and we finish the proof (section \ref{sec:end}).

\subsection{Preparation}\label{sec:prep} Let $\mathcal{S}_1\subset \mathbb{Z}$ be a non empty finite set. 
\subsubsection{Some reductions} To lighten the notations, without loss of generality, we assume that 
$$
f'(0) = -1.
$$
Indeed, the value of this non zero constant plays no role and, as we will see, this normalization allows to have modulated frequencies of the form $\omega_j = \varepsilon^{-2} j^2 + \xi_j + \cdots$ which is convenient.

\medskip

Then, we will assume that the parameter $\varepsilon$ is small enough, i.e. smaller than a constant depending only on $f,\ \Xi_0$ and $\mathcal{S}_1$. Indeed, it suffices to set, in the main theorem, $\Xi_\varepsilon = \emptyset$ if $\varepsilon$ is too large.

\medskip

Finally, we assume, without loss of generality that 
$$
\Xi_0 = (\varrho_1^2,\Lambda \varrho_1^2)^{\mathcal{S}_1},
$$
 where $\Lambda>1$ and $\varrho_1$ is smaller than a constant depending only on $f,S_1$ and $\varpi$ (see \eqref{eq:def_rho1}). Indeed, by applying a change of variable of the form $(\xi,\varepsilon) \mapsto (\lambda^2 \xi,\lambda^{-1} \varepsilon)$ and choosing $\lambda$ arbitrarily small, we can assume without loss of generality that $\Xi_0$ is a compact set included in $(\varrho_1^2,\Lambda \varrho_1^2)^{\mathcal{S}_1}$. The only thing we need to bear in mind it that to prove the bound on the Hausdorff distance (estimate \eqref{eq:Hauss}) and the frequency estimate \eqref{eq:freq_thm} with constants $1$, it suffices to prove these bounds with betters exponents with respect to $\varepsilon$ (see \eqref{eq:haus} and  \eqref{eq:6+}).

\subsubsection{Regularizing normal form} Now, let us explain why, by Theorem \ref{thm:reg}, instead of considering solution to \eqref{eq:NLS}, it suffices to consider solutions to a Hamiltonian system of the form
\begin{equation}
\label{eq:NLS_reg}
\tag{$\mathrm{NLS}^{\mathrm{reg}}$}
i\partial_t u = \nabla H^{(reg)}(u)
\end{equation}
where
$$
H^{(reg)}(u) = \underbrace{ \frac12 \sum_{k\in \mathbb{Z}} \big(  |k|^2 + \frac{|u_k|^2}2 \big) |u_k|^2 - \frac12\| u\|_{L^2}^4}_{ = H^{(0)}_{1}(u)} + R(u)
$$
satisfies $R\in \mathscr{H}_{\eta_0,r_0,\emptyset}^{\mathrm{cste}}$ for some $r_0\in (0,1)$ and $\Pi_{2n+q < 6} R = 0$. Note that the function $u\mapsto R(u)$ does not depends on some parameters $\xi$. 

\medskip

Indeed, we define (up to a trivial symmetrization), with the notations of Theorem \ref{thm:reg}, the non zero coefficient of $R$ by the relation
$$
R_{n,\emptyset,\emptyset}^{\boldsymbol{\ell},\boldsymbol{\sigma}} := H_{n}^{\boldsymbol{\ell},\boldsymbol{\sigma}}.
$$
Then by Theorem \ref{thm:reg} there exists a constant $C>0$ such that
$$
|R_{n,\emptyset,\emptyset}^{\boldsymbol{\ell},\boldsymbol{\sigma}}| \lesssim C^{2n+q} \left( \frac{  \langle \boldsymbol{\ell}_3^* \rangle^2  }{  \langle \boldsymbol{\ell}_1^* \rangle }\wedge 1\right).
$$
Thus to get that $R\in \mathscr{H}_{\eta_0,r_0,\emptyset}^{\mathrm{cste}}$ for some $r_0$, it suffices to note that if $\boldsymbol{\sigma}_1 \boldsymbol{\ell}_1 + \cdots + \boldsymbol{\sigma}_q \boldsymbol{\ell}_q =0$ then
$$
e^{\mathfrak{a} |\boldsymbol{\ell}^*_1|} \leq e^{\mathfrak{a} |\boldsymbol{\ell}^*_2|} \cdots e^{\mathfrak{a} |\boldsymbol{\ell}^*_q|} \quad \mathrm{and} \quad \langle \boldsymbol{\ell}^*_1 \rangle \leq (q-1) \langle \boldsymbol{\ell}^*_2 \rangle
$$
and therefore, recalling  the definition \eqref{eq:theta} of $\Theta_{\ell}$, one verifies
$$
\left( \frac{  \langle \boldsymbol{\ell}_3^* \rangle^2  }{  \langle \boldsymbol{\ell}_1^* \rangle }\wedge 1\right) \leq q^{s+1} \Theta_{\ell}\lesssim 2^q \Theta_{\ell}
$$
and thus $R\in \mathscr{H}_{\eta_0,r_0,\emptyset}^{\mathrm{cste}}$ for some $r_0$.

\medskip

On the other hand, still by Theorem \ref{thm:reg}, on a centered ball $B_{\ell^2_\varpi}(0,\rho)$ we have
$$
H^{\eqref{eq:NLS}} \circ \tau = H^{(0)}_{1} + R =:H^{(reg)}.
$$
Now, assume that Theorem \ref{thm:main} has been proven for \eqref{eq:NLS_reg} instead of \eqref{eq:NLS} (with slightly better constants). To deduce Theorem \ref{thm:main} for \eqref{eq:NLS}, it suffices to invoke the following arguments. First, we note that thanks to the mean value inequality and the fact that $\| \mathrm{d}\tau(u) - \mathrm{Id} \|_{\ell^2_\varpi \to \ell^2_\varpi }$ is small, we have
$$
\frac12 \| u-v \|_{\ell^2_\varpi} \leq \| \tau(u) - \tau(v) \|_{\ell^2_\varpi} \leq 2 \| u-v \|_{\ell^2_\varpi} ,\quad \forall u,v \in B_{\ell^2_\varpi}(0,\rho).
$$
It follows that $\tau$ preserves (up to a factor $2$) the distance estimates between tori, that $\tau$ is an homeomorphism onto its image and so that the image of a torus by $\tau$ is also a torus. Moreover since $\tau$ is symplectic and its derivative is invertible (by a Neumann series argument), it transforms solutions to \eqref{eq:NLS_reg} into solutions of \eqref{eq:NLS} (and so it transforms an invariant torus of \eqref{eq:NLS_reg} into an  invariant torus of \eqref{eq:NLS}). Finally, since it is close to the identity, it satisfies, for $\xi \in (0,1)^{\mathcal{S}_1}$, $\mathrm{dist}(\varepsilon \mathrm{T}_\xi , \tau (\varepsilon \mathrm{T}_\xi )) \lesssim \varepsilon^3$.

Thus, from now, to lighten the notation, we only aim at proving Theorem \ref{thm:main} for \eqref{eq:NLS_reg} instead of \eqref{eq:NLS}.

\subsubsection{Rescaling} For $\varepsilon \in (0,1)$, we set
$$
H^{(reg)}_\varepsilon(u) = \varepsilon^{-4} H^{(reg)}(\varepsilon u) \quad \mathrm{and} \quad R_{\varepsilon}(u) = \varepsilon^{-5} R(\varepsilon u).
$$
As a consequence, we have 
$$
H^{(reg)}_\varepsilon =   H^{(0)}_{\varepsilon} + \varepsilon R_{\varepsilon} 
$$
 where  we recall that $$
H^{(0)}_{\varepsilon}(u) := \frac12 \sum_{k\in \mathbb{Z}} \big( \varepsilon^{-2} |k|^2 + \frac{|u_k|^2}2 \big) |u_k|^2 - \frac12\| u\|_{L^2}^4.
$$

\medskip

Note that if $v\in C^0(\mathbb{R};\ell^2_\varpi)$ is solution to the rescaled equation
\begin{equation}
\label{eq:NLS^reg_eps}
\tag{$\mathrm{NLS}^{\mathrm{reg}}_\varepsilon$}
i\partial_t v = \nabla H^{(reg)}_\varepsilon(v)
\end{equation}
then $
u: = t\mapsto \varepsilon v(\varepsilon^2 t)$ is solution to \eqref{eq:NLS_reg}.

\medskip

Thanks to this dilatation,  we have that, provided that $\varepsilon$ is small enough, $R_\varepsilon \in \mathscr{H}_{4\eta_0,\frac15,\emptyset}^{\mathrm{cste}}$ and, since $\Pi_{2n+q < 6} R = 0$, that
$$
\|  R_\varepsilon \|_{\mathscr{H}_{4\eta_0,\frac15,\emptyset}^{\mathrm{cste}}   }\leq \eps\|  R \|_{\mathscr{H}_{\eta_0,r_0,\emptyset}^{\mathrm{cste}}   } \leq  \varepsilon^{29/30}.
$$

\subsubsection{First opening}\label{sec:first} Let denote by $c_{\eta,\mathcal{S},\Lambda}$ and $ C_{\mathcal{S},\Lambda}>1$ the two constants appearing implicitly in the assumption \eqref{eq:smallr2}  of Proposition \ref{prop:open}, i.e. so that \eqref{eq:smallr2} rewrites
$
c_{\eta,\mathcal{S},\Lambda} \ r \leq  \rho \leq  C_{\mathcal{S},\Lambda}\ r_\flat.
$
It allows us to make explicit the smallness assumption on $\varrho_1$, that is
\begin{equation}
\label{eq:def_rho1}
\varrho_1 \leq \frac{C_{\mathcal{S}_1,\Lambda}^{-1}}5. 
\end{equation}
Proposition \ref{prop:open} ensure the existence of a constant
$$
\mathfrak{r}_1 \lesssim_{\mathcal{S}_1,\Lambda} \varrho_1
$$
and a Hamiltonian $P_\varepsilon\in \mathscr{H}_{4\eta_0,\mathfrak{r}_1,\Xi_0,\mathcal{S}_1}$
such that
$$
\| P_{\varepsilon}\|_{\mathscr{H}_{4\eta_0,\mathfrak{r}_1,\Xi_0,\mathcal{S}_1}^{\mathrm{tot}}} \leq \varepsilon^{28/30} 
$$
and for all $\xi\in \Xi_0$ and all $u\in \mathcal{A}_{\xi}(3\mathfrak{r}_1)$, 
$$
P_\varepsilon(\xi;u) = R_\varepsilon(u).
$$

\subsection{Finite dimensional tori based on $\mathcal{S}_1$}\label{sec:finite} We aim at applying Theorem \ref{thm:KAM} (i.e. our KAM Theorem), with $\eta^\natural = 4\eta_0$, $\eta = 3\eta_0$, $\mathcal{S}= \mathcal{S}_1$, $P=P_\varepsilon$, $\mathcal{O}^\natural = \Xi_0$ and $r=r_1$ where $r_1\leq \mathfrak{r}_1$ is a constant depending only on $\mathcal{S}_1$ and $\Xi_0$ which just has to be small enough so that the smallness assumption \eqref{eq:r_small_KAM} on $r$ is satisfied. The only assumption we have to check is the smallness assumption \eqref{eq:small_assump_P_KAM} on $P_\varepsilon$. But since $r_1\leq \mathfrak{r}_1$, by monotony, we have
$$
\| P_{\varepsilon}\|_{\mathscr{H}_{4\eta_0,r_1,\Xi_0,\mathcal{S}_1}^{\mathrm{tot}}} \leq \varepsilon^{28/30}.
$$
Therefore, provided that $\varepsilon$ is small enough with respect to $r_1$, the smallness assumption \eqref{eq:small_assump_P_KAM} is satisfied. In particular, thanks to the continuity estimate of Lemma \ref{lem:continuity_estimate_pinor}, we have
$$
\| \Pi_{\mathrm{nor}} P_\varepsilon\|_{\mathscr{N}_{r_1,\Xi_0,\mathcal{S}_1}^{\mathrm{tot}}}\leq 1.
$$
Hence, by Theorem \ref{thm:KAM}, we get Borel set $\mathscr{O}_\varepsilon \subset \Xi_0$ and a Hamiltonian $K_\varepsilon \in \mathscr{H}_{3\eta_0,r_1,\mathscr{O}_\varepsilon,\mathcal{S}_1}$, such that 
\begin{itemize}
\item $\mathscr{O}_\varepsilon$ is relatively large in the sense that
\begin{equation}
\label{eq:asympt_full_meas_Oe}
\mathrm{Leb}(\Xi_0 \setminus \mathscr{O}_\varepsilon) \leq \,  \varepsilon^{10^{-3}} \, 
\end{equation}
\item the adapted jet has been removed, the remainder term and the normal form part are not too large, i.e.
$$
 \Pi_{\mathrm{ajet}} K_\varepsilon  =0,  \quad \|  \Pi_{\mathrm{rem}} K_\varepsilon\|_{\mathscr{H}_{3\eta_0,r_1,\mathscr{O}_\varepsilon,\mathcal{S}_1}^{\mathrm{tot}}} +  \|  \Pi_{\mathrm{nor}}  K_\varepsilon\|_{\mathscr{N}_{r_1,\mathscr{O}_\varepsilon,\mathcal{S}_1}^{\mathrm{tot}}} \leq 2 
$$
\end{itemize}
and for all $\xi \in \mathscr{O}_\varepsilon$, there exists $C^1$ smooth symplectic map $\tau_\xi^{(\varepsilon)}:\mathcal{A}_\xi(r_1)\to \mathcal{A}_\xi(2r_1)$ commuting with the gauge transform and enjoying, for all $u\in \mathcal{A}_\xi(r_1)$, the bound
\begin{equation}
\label{eq:estrella}
\| \tau_\xi^{(\varepsilon)}(u) - u\|_{\ell^2_\varpi} + r_1^2 \| \mathrm{d}\tau_\xi^{(\varepsilon)}(u) - \mathrm{Id}\|_{\ell^2_\varpi \to \ell^2_\varpi } \leq \varepsilon^{1/2}
\end{equation}
 and the property that
$$
(H^{(0)}_{\varepsilon}+\varepsilon P_\varepsilon)(\xi; \tau_\xi^{(\varepsilon)}(u)) = (H^{(0)}_{\varepsilon}+\varepsilon K_\varepsilon)(\xi;u).
$$

\subsubsection{The frequencies} In this subsection, we are going to design a Borel subset $\Xi_\varepsilon \subset \mathscr{O}_{\varepsilon}$ that satisfies all properties of Theorem \ref{thm:main}. For the moment, we  just check properties $\mathrm{(i),(ii)}$ and $\mathrm{(iv)}$ of Theorem \ref{thm:main} for $\xi \in \mathscr{O}_\varepsilon$ (which is slightly stronger than needed).

\medskip

 For  $\xi \in \mathscr{O}_\varepsilon$, we define the renormalized frequencies of $ H_\eps^{(reg)}$ by
\begin{equation}
\label{eq:def_omega_eps}
\omega^{(\varepsilon)}_j(\xi) :=  j^2 +  \varepsilon^2  \big( \xi_j \mathbbm{1}_{j\in \mathcal{S}_1}-2 \sum_{i \in \mathcal{S}_1} \xi_i \big) + 2\varepsilon^{3}(L^{(\varepsilon)}_j(\xi) + A_1^{(\varepsilon)} (\xi)),\ j\in \Z,
\end{equation}
where $L^{(\varepsilon)} = \Pi_{\mathscr{L}} K_\varepsilon$ and $A_0^{(\varepsilon)} + A_1^{(\varepsilon)} \mu = \Pi_{\mathscr{A}} K_\varepsilon$. This definition is natural, in order that equation \eqref{eq:cest_moi_qui_justifie_les_freq} below holds. Then we set
$$
\lambda_j^{(\varepsilon)}(\xi) =2\varepsilon^{1/4} (L^{(\varepsilon)}_j(\xi) + A_1^{(\varepsilon)} (\xi))
$$
in such a way we have an estimate slightly better than \eqref{eq:freq_thm}:
\begin{equation}\label{eq:6+}
\omega^{(\varepsilon)}_j(\xi) =  j^2 +  \varepsilon^2  \big( \xi_j -2 \sum_{i \in \mathcal{S}_1} \xi_i \big) + \varepsilon^{11/4} \lambda_j^{(\varepsilon)}(\xi).
\end{equation}
We just have to check that 
\begin{lemma}
$\lambda^{(\varepsilon)}$ is $1-$Lipschitz and bounded by one.
\end{lemma}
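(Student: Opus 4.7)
The plan is to extract the required bounds on $\lambda_j^{(\varepsilon)}$ directly from the bound $\| \Pi_{\mathrm{nor}} K_\varepsilon\|_{\mathscr{N}_{r_1,\mathscr{O}_\varepsilon,\mathcal{S}_1}^{\mathrm{tot}}} \leq 2$ provided by Theorem \ref{thm:KAM}, using the definitions of the norms on $\mathscr{N}_{r,\mathcal{O},\mathcal{S}}$ unpacked through its components $\mathscr{A}$ and $\mathscr{L}$. The smallness prefactor $2\varepsilon^{1/4}$ in the definition of $\lambda_j^{(\varepsilon)}$ will absorb all the resulting constants, so no delicate estimate is needed; the only care is to keep track of the normalizations.

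First, for the sup bound, I would note that the definitions of $\|\cdot\|_{\mathscr{N}_{r,\mathcal{O},\mathcal{S}}^{\mathrm{tot}}}$ (Definition \ref{def:N_norm}), $\|\cdot\|_{\mathscr{L}^{\mathrm{cste}}}$ (Definition \ref{def:lin}) and $\|\cdot\|_{\mathscr{A}^{\mathrm{cste}}}$ (Definition \ref{def:cst}) immediately give
$$
\sup_{\xi \in \mathscr{O}_\varepsilon}\,\sup_{j\in \mathbb{Z}} \langle j \rangle^\delta |L^{(\varepsilon)}_j(\xi)| \leq \|L^{(\varepsilon)}\|_{\mathscr{L}_{\mathscr{O}_\varepsilon,\mathcal{S}_1}^{\mathrm{sup}}}\leq 2 \quad \mathrm{and} \quad \sup_{\xi \in \mathscr{O}_\varepsilon} |A_1^{(\varepsilon)}(\xi)| \leq 2.
$$
Consequently $|L^{(\varepsilon)}_j(\xi)+A_1^{(\varepsilon)}(\xi)| \leq 4$, and hence $|\lambda_j^{(\varepsilon)}(\xi)| \leq 8\varepsilon^{1/4} \leq 1$, provided $\varepsilon$ is small enough (depending only on $\mathcal{S}_1$ and $\Xi_0$, which we are allowed to assume).

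Second, for the Lipschitz bound, the same unpacking of the norms, combined with the observation that $\|L\|_{\mathscr{L}_{r,\mathcal{O},\mathcal{S}}^{\mathrm{tot}}}=\|L\|^{\mathrm{sup}}+r^2\|L\|^{\mathrm{lip}}$ (Definition \ref{def:lip}, adapted in the $\mathscr{L}$ case), yields
$$
\|L^{(\varepsilon)}\|_{\mathscr{L}_{\mathscr{O}_\varepsilon,\mathcal{S}_1}^{\mathrm{lip}}} \leq \frac{2}{r_1^2} \quad \mathrm{and} \quad \|A_1^{(\varepsilon)}\|_{\mathscr{A}_{\mathscr{O}_\varepsilon,\mathcal{S}_1}^{\mathrm{lip}}} \leq 2.
$$
By definition of $\|\cdot\|^{\mathrm{lip}}$ and $\|\cdot\|^{\mathrm{cste}}$ this gives, for every $\xi,\zeta \in \mathscr{O}_\varepsilon$ and every $j\in \mathbb{Z}$,
$$
|L^{(\varepsilon)}_j(\xi)-L^{(\varepsilon)}_j(\zeta)|\leq \frac{2}{r_1^2}\langle j\rangle^{-\delta}\|\xi-\zeta\|_{\ell^1} \quad \mathrm{and} \quad |A_1^{(\varepsilon)}(\xi)-A_1^{(\varepsilon)}(\zeta)|\leq 2\|\xi-\zeta\|_{\ell^1}.
$$
Summing and multiplying by $2\varepsilon^{1/4}$ gives the Lipschitz estimate
$$
|\lambda_j^{(\varepsilon)}(\xi)-\lambda_j^{(\varepsilon)}(\zeta)|\leq 2\varepsilon^{1/4}\Bigl(\frac{2}{r_1^2}+2\Bigr)\|\xi-\zeta\|_{\ell^1}\leq \|\xi-\zeta\|_{\ell^1}
$$
which holds as soon as $\varepsilon$ is small enough with respect to $r_1$ (and $r_1$ depends only on $\mathcal{S}_1$ and $\Xi_0$). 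There is no genuine obstacle here; the whole lemma is a direct bookkeeping consequence of the output of Theorem \ref{thm:KAM} together with the smallness of $\varepsilon^{1/4}$.
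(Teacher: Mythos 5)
Your proposal is correct and follows essentially the same approach as the paper: both derive the sup and Lipschitz bounds by unpacking the $\mathscr{N}^{\mathrm{tot}}$-norm bound $\| \Pi_{\mathrm{nor}} K_\varepsilon\|_{\mathscr{N}_{r_1,\mathscr{O}_\varepsilon,\mathcal{S}_1}^{\mathrm{tot}}} \leq 2$ into bounds on $L^{(\varepsilon)}$ and $A_1^{(\varepsilon)}$, and then absorb the $r_1^{-2}$ loss and other constants via the $\varepsilon^{1/4}$ prefactor together with the standing smallness assumption on $\varepsilon$. The paper's own proof is simply a more compressed statement of exactly this bookkeeping.
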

\begin{proof}

 Indeed, by definition of the $\mathscr{N}_{r_1,\mathscr{O}_\varepsilon,\mathcal{S}_1}^{\mathrm{tot}}$ norm (see Definition \ref{def:N_norm}), since $\|  \Pi_{\mathrm{nor}}  K_\varepsilon\|_{\mathscr{N}_{r_1,\mathscr{O}_\varepsilon,\mathcal{S}_1}^{\mathrm{tot}}} \leq 2 $, for all $\xi,\zeta \in \mathscr{O}_\varepsilon$, we have (provided that $\varepsilon$ is small enough)
$$
|\lambda_j^{(\varepsilon)}(\xi) | \leq 2 \varepsilon^{1/4} \|  \Pi_{\mathrm{nor}}  K_\varepsilon\|_{\mathscr{N}_{r_1,\mathscr{O}_\varepsilon,\mathcal{S}_1}^{\mathrm{tot}}} \leq 1
$$
and
$$
|\lambda_j^{(\varepsilon)}(\xi) -\lambda_j^{(\varepsilon)}(\zeta) | \leq 2 r_1^{-2} \varepsilon^{1/4} \|  \Pi_{\mathrm{nor}}  K_\varepsilon\|_{\mathscr{N}_{r_1,\mathscr{O}_\varepsilon,\mathcal{S}_1}^{\mathrm{tot}}} \| \xi - \zeta \|_{\ell^1} \leq  \| \xi - \zeta \|_{\ell^1} .
$$
\end{proof}

\subsubsection{The homeomorphisms} Being given $\mathcal{S}\subset \mathbb{Z}$ (not necessarily finite), $\xi \in \ell^1_{\varpi^2}(\mathcal{S}; \mathbb{R}_+^*)$, we set
$$
\Upsilon_\xi^{\mathcal{S}} : \left\{ \begin{array}{lll} \mathbb{T}^{\mathcal{S}}& \to  & \mathrm{T}_\xi \subset \ell^2_\varpi \\ 
\theta & \mapsto & \displaystyle \sum_{j\in \mathcal{S}} \sqrt{\xi_j} e^{-\ic\theta_j} e^{\ic x j} \end{array} \right.
$$
where $\mathrm{T}_\xi$ (the image of $\Upsilon_\xi$) is defined by \eqref{eq:def_T_droit}. Note that $\Upsilon_\xi^{\mathcal{S}}$ is a homeomorphism. Indeed, it is clearly bijective, continuous by the Lebesgue's convergence theorem and  defined on   $\mathbb{T}^{\mathcal{S}}$ which is compact by Tychonoff's theorem. Thus its inverse is also continuous by a standard argument.

\medskip

For $\xi \in \mathscr{O}_\varepsilon$ and we set 
$$
 \mathcal{T}_\xi^{\varepsilon} := \varepsilon \tau_\xi^{(\varepsilon)}(\mathrm{T}_\xi) \quad \mathrm{and} \quad  \Psi_{\xi}^\varepsilon:= \varepsilon \tau_\xi^{(\varepsilon)} \circ \Upsilon_\xi^{\mathcal{S}_1}.
$$
\begin{lemma} \label{lem:homeo_uno}
 $ \Psi_{\xi}^\varepsilon : \mathbb{T}^{\mathcal{S}_1} \to \mathcal{T}_\xi^{\varepsilon}$ is a homeomorphism.
\end{lemma}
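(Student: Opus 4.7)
The plan is to combine the classical fact that a continuous injection from a compact space into a Hausdorff space is automatically a homeomorphism onto its image with a quantitative argument for injectivity of $\tau_\xi^{(\varepsilon)}$ on $\mathrm{T}_\xi$. First I would note that $\mathbb{T}^{\mathcal{S}_1}$ is compact (product of finitely many circles, or even an arbitrary product via Tychonoff) and $\ell^2_\varpi$ is Hausdorff, and that $\Psi_\xi^\varepsilon = \varepsilon \, \tau_\xi^{(\varepsilon)} \circ \Upsilon_\xi^{\mathcal{S}_1}$ is continuous as a composition of continuous maps: $\Upsilon_\xi^{\mathcal{S}_1}$ is continuous (as already observed before the lemma) and takes values in $\mathrm{T}_\xi \subset \mathcal{A}_\xi(0) \subset \mathcal{A}_\xi(r_1)$, which is the domain of the $C^1$ map $\tau_\xi^{(\varepsilon)}$. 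Once continuity and injectivity are established, surjectivity onto $\mathcal{T}_\xi^\varepsilon$ holds by definition, and the general topological fact closes the argument.

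The only real content is therefore the injectivity of $\Psi_\xi^\varepsilon$, which, since $\Upsilon_\xi^{\mathcal{S}_1}$ is bijective onto $\mathrm{T}_\xi$, reduces to the injectivity of $\tau_\xi^{(\varepsilon)}|_{\mathrm{T}_\xi}$. Writing $\tau_\xi^{(\varepsilon)} = \mathrm{Id} + g$, the estimate \eqref{eq:estrella} gives the uniform bounds
\begin{equation*}
\sup_{u \in \mathcal{A}_\xi(r_1)} \| g(u) \|_{\ell^2_\varpi} \leq \varepsilon^{1/2}, \qquad \sup_{u \in \mathcal{A}_\xi(r_1)} \| \mathrm{d} g(u) \|_{\ell^2_\varpi \to \ell^2_\varpi} \leq \varepsilon^{1/2}/r_1^2,
\end{equation*}
the second of which is $\leq 1/2$ provided $\varepsilon$ is small enough (depending only on $r_1$, hence on $\mathcal{S}_1$ and $\Xi_0$).

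The main (minor) obstacle is that $\mathcal{A}_\xi(r_1)$ is not convex, so one cannot apply the mean value inequality to $g$ between two arbitrary points of $\mathrm{T}_\xi$ directly. The trick I plan to use is to \emph{a priori} localize: if $u_1,u_2 \in \mathrm{T}_\xi$ satisfy $\tau_\xi^{(\varepsilon)}(u_1)=\tau_\xi^{(\varepsilon)}(u_2)$, then $u_1-u_2 = -(g(u_1)-g(u_2))$ forces $\|u_1-u_2\|_{\ell^2_\varpi} \leq 2\varepsilon^{1/2}$. On the other hand, by the elementary bound
\begin{equation*}
\sum_{k} \bigl| |v_k|^2 - \xi_k \bigr|\varpi_k^2 \leq \|u_0+v\|_{\ell^2_\varpi}\, \|v-u_0\|_{\ell^2_\varpi},
\end{equation*}
valid for any $u_0 \in \mathrm{T}_\xi$ and $v \in \ell^2_\varpi$, together with the uniform $\ell^2_\varpi$ bound on $\mathrm{T}_\xi$, there is a radius $\rho_0>0$, depending only on $\mathcal{S}_1, \Xi_0$ and $r_1$, such that $B_{\ell^2_\varpi}(u_0,\rho_0) \subset \mathcal{A}_\xi(r_1)$ for every $u_0 \in \mathrm{T}_\xi$. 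Taking $\varepsilon$ small enough so that $2\varepsilon^{1/2}<\rho_0$, the segment $[u_1,u_2]$ lies in the convex ball $B_{\ell^2_\varpi}(u_1,\rho_0) \subset \mathcal{A}_\xi(r_1)$, and the mean value inequality applied to $g$ yields $\|u_1-u_2\|_{\ell^2_\varpi} = \|g(u_1)-g(u_2)\|_{\ell^2_\varpi} \leq \tfrac12\|u_1-u_2\|_{\ell^2_\varpi}$, hence $u_1=u_2$.

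Putting the pieces together, $\Psi_\xi^\varepsilon$ is a continuous injection from the compact space $\mathbb{T}^{\mathcal{S}_1}$ into the Hausdorff space $\ell^2_\varpi$; its image is exactly $\mathcal{T}_\xi^\varepsilon$, and the closed map lemma promotes it to a homeomorphism $\mathbb{T}^{\mathcal{S}_1} \to \mathcal{T}_\xi^\varepsilon$. Note that all the smallness conditions on $\varepsilon$ invoked above are of the form already tacitly assumed in the preparation step (\textit{Some reductions}), so no new assumption is generated.
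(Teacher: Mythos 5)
Your proof is correct and reaches the same conclusion, but it takes a genuinely different route in the key middle step. The paper establishes the two-sided bi-Lipschitz estimate
$$
\tfrac12 \|u-v\|_{\ell^2_\varpi} \leq \|\tau_\xi^{(\varepsilon)}(u)-\tau_\xi^{(\varepsilon)}(v)\|_{\ell^2_\varpi} \leq 2\|u-v\|_{\ell^2_\varpi}
$$
directly for all $u,v\in\mathrm{T}_\xi$, by invoking Lemma \ref{lem:appendix}, which is a mean value inequality adapted to the non-convex annulus $\mathcal{A}_\xi(r)$ (costing a constant $(1+(2\pi)^{\#\mathcal{S}})$); this single estimate simultaneously gives injectivity and continuity of the inverse, with no appeal to compactness for $\tau_\xi^{(\varepsilon)}$ itself. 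You instead bypass the non-convexity by a bootstrap: you first use the sup bound $\|g\|_\infty\leq\varepsilon^{1/2}$ to force any two collision points to be $2\varepsilon^{1/2}$-close, then show (via the algebraic identity $|a|^2-|b|^2=\Re[(a-b)\overline{(a+b)}]$ and Cauchy--Schwarz) that a sufficiently small $\ell^2_\varpi$-ball around any point of $\mathrm{T}_\xi$ stays inside $\mathcal{A}_\xi(r_1)$, and finally apply the ordinary mean value inequality on that convex ball; you then finish by the closed-map lemma (continuous bijection from compact to Hausdorff). Both arguments are sound; yours is slightly more self-contained since it does not require the dedicated Lemma \ref{lem:appendix}, but the paper's version buys a quantitative bi-Lipschitz bound that is reused verbatim later (as the base case of the induction establishing \eqref{eq:bilip_p} for the maps $\tau_\xi^p$), so it fits the global architecture of the proof better.
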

\begin{proof}
It suffices to prove that $(\tau_\xi^{(\varepsilon)})_{| \mathrm{T}_\xi} $ is an homeomorphism on its image (that is $\varepsilon^{-1}  \mathcal{T}_\xi^{\varepsilon}$). 

\medskip

Applying Lemma \ref{lem:appendix} of the appendix with $g =  \tau_\xi^{(\varepsilon)} - \mathrm{id}$, we have that for all  $u,v \in \mathrm{T}_\xi$, 
$$
\Big| \| \tau_\xi^{(\varepsilon)}(u) - \tau_\xi^{(\varepsilon)}(v)\|_{\ell^2_\varpi} - \| u - v\|_{\ell^2_\varpi} \Big| \lesssim_{\mathcal{S}_1} \varepsilon^{1/2} r_1^{-2} \| u - v\|_{\ell^2_\varpi}.
$$
As a consequence, provided that $\varepsilon$ is small enough, we have
\begin{equation}
\label{eq:bilip_1}
\frac12  \| u - v\|_{\ell^2_\varpi} \leq \| \tau_\xi^{(\varepsilon)}(u) - \tau_\xi^{(\varepsilon)}(v)\|_{\ell^2_\varpi} \leq 2  \| u - v\|_{\ell^2_\varpi}.
\end{equation}
It implies that $(\tau_\xi^{(\varepsilon)})_{| \mathrm{T}_\xi} $ is injective. Since $\varepsilon^{-1}  \mathcal{T}_\xi^{\varepsilon}$ is its image, it is a bijection between $\mathbb{T}^{\mathcal{S}_1}$ and $\varepsilon^{-1} \mathcal{T}_\xi^{\varepsilon}$. Moreover, \eqref{eq:bilip_1} implies that it is continuous and that its inverse is also continuous: it is an homeomorphism.
\end{proof}

\subsubsection{Invariance} \label{sub:invarianceS1} Now, we aim at proving the following lemma. 
\begin{lemma} \label{lem:inv_uno} Let $\xi \in \mathscr{O}_\varepsilon$, $\theta \in \mathbb{T}^{\mathcal{S}_1}$ and $\omega^{(\varepsilon)}$ defined by \eqref{eq:def_omega_eps}. Then
\begin{center}
$t\mapsto   \Psi_{\xi}^\varepsilon(  \omega^{(\varepsilon)} t + \theta)$ is global solution to \eqref{eq:NLS_reg}.
\end{center}

\end{lemma}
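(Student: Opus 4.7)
The plan is to build the solution in KAM-conjugated coordinates---where the flow restricted to $\mathrm{T}_\xi$ is a rigid translation---and then transport it to \eqref{eq:NLS_reg} via the symplectic map $\tau_\xi^{(\varepsilon)}$ followed by the rescaling $u(t)=\varepsilon\,v(\varepsilon^{2}t)$.

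The first step would be to show that $\mathrm{T}_\xi$ is invariant for the Hamiltonian flow of $H^{(0)}_\varepsilon+\varepsilon K_\varepsilon$ and that the induced flow is a pure translation of the angles. This is essentially the content of Remark~\ref{rem:kam}: because $\Pi_{\mathrm{ajet}}K_\varepsilon=0$ and every monomial of $K_\varepsilon$ not belonging to $\Pi_{\mathscr{L}}K_\varepsilon\oplus\Pi_{\mathscr{A}}K_\varepsilon$ carries a factor $y$, $v$ or $\mu$ that vanishes at $y=0$, $v=0$ (note that $\mu=0$ on $\mathrm{T}_\xi$ as well), the vector field at a point of $\mathrm{T}_\xi$ depends only on the ``linear in actions'' piece $Z_2+\varepsilon\Pi_{\mathscr{L}}K_\varepsilon$ and on the almost-constant piece $A^{(0)}+\varepsilon\Pi_{\mathscr{A}}K_\varepsilon$. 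In particular $\dot y_k=-\partial_{\theta_k}H$ and $\dot v_k$ both vanish on $\mathrm{T}_\xi$, while $\dot\theta_k$ reduces to a constant $\tilde\omega_k(\xi)$.

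The second step would be to compute the translation frequency. Using the convention \eqref{eq:formula_grad}, one has $\tilde\omega_k(\xi)=2\partial_{y_k}(H^{(0)}_\varepsilon+\varepsilon K_\varepsilon)|_{y=0,v=0}$, and explicit evaluation gives
\begin{equation*}
\tilde\omega_k(\xi)=\varepsilon^{-2}k^2+\xi_k\mathbbm{1}_{k\in\mathcal{S}_1}-2\!\!\sum_{i\in\mathcal{S}_1}\!\!\xi_i+2\varepsilon\big(L_k^{(\varepsilon)}(\xi)+A_1^{(\varepsilon)}(\xi)\big),
\end{equation*}
where the $-2\sum\xi_i$ comes from $A^{(0)}=H^{(0)}_\varepsilon(\xi)-\mu\|\xi\|_{\ell^1}$ and the quartic $Q^{(0)}$, $\Pi_{\mathscr{Q}}K_\varepsilon$ contribute nothing because all their $y_k$-derivatives vanish on $\mathrm{T}_\xi$. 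Comparing with \eqref{eq:def_omega_eps}, this is exactly $\varepsilon^{-2}\omega_k^{(\varepsilon)}(\xi)$, so $w(t):=\Upsilon_\xi^{\mathcal{S}_1}\!\big(\varepsilon^{-2}\omega^{(\varepsilon)}(\xi)\,t+\theta\big)$ is a global solution of the Hamiltonian system governed by $H^{(0)}_\varepsilon+\varepsilon K_\varepsilon$, staying in $\mathrm{T}_\xi\subset\mathcal{A}_\xi(r_1)$ for all~$t$.

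The final step would be to push $w$ forward through $\tau_\xi^{(\varepsilon)}$ and rescale time. Since $\tau_\xi^{(\varepsilon)}$ is symplectic and intertwines $H^{(0)}_\varepsilon+\varepsilon P_\varepsilon$ with $H^{(0)}_\varepsilon+\varepsilon K_\varepsilon$ on $\mathcal{A}_\xi(r_1)$, and since $P_\varepsilon(\xi;\cdot)\equiv R_\varepsilon$ on $\mathcal{A}_\xi(3\mathfrak{r}_1)\supset\tau_\xi^{(\varepsilon)}(\mathrm{T}_\xi)$, the curve $v(t):=\tau_\xi^{(\varepsilon)}(w(t))$ is a global solution of \eqref{eq:NLS^reg_eps}. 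The rescaling $u(t)=\varepsilon\,v(\varepsilon^{2}t)$ then produces a global solution of \eqref{eq:NLS_reg}, and absorbs the factor $\varepsilon^{-2}$ in the frequency, giving $u(t)=\varepsilon\,\tau_\xi^{(\varepsilon)}\circ\Upsilon_\xi^{\mathcal{S}_1}\!\big(\omega^{(\varepsilon)}(\xi)\,t+\theta\big)=\Psi_\xi^{\varepsilon}\!\big(\omega^{(\varepsilon)}(\xi)\,t+\theta\big)$ as desired. The only point requiring care is the bookkeeping of factors $1/2$ and $2$ arising from the symplectic convention and from the decomposition $H^{(0)}_\varepsilon=Z_2+A^{(0)}+Q^{(0)}$; by construction \eqref{eq:def_omega_eps} was tailored precisely to make the identification in the frequency computation automatic, and no global-existence issue arises since everything takes place on the compact analytic torus $\mathrm{T}_\xi\subset\mathcal{A}_\xi(r_1)$ and through the $C^1$ map $\tau_\xi^{(\varepsilon)}$.
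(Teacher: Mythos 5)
Your proposal is correct and follows essentially the same route as the paper's proof: construct a rigid rotation on $\mathrm{T}_\xi$ in the KAM-conjugated coordinates, push it forward through the symplectic map $\tau_\xi^{(\varepsilon)}$ (the paper invokes its Lemma~\ref{lem:sympl_chgtvar} in the appendix for this step), and finally rescale time and amplitude. You supply the verification — which the paper compresses into the sentence ``Since $\Pi_{\mathrm{ajet}}K_\varepsilon=0$, we have \eqref{eq:cest_moi_qui_justifie_les_freq}'' and the remark that \eqref{eq:def_omega_eps} ``is natural'' — that $\dot y_k=\dot v_k=0$ on $\mathrm{T}_\xi$ and that the total $y_k$-derivative (i.e.\ $\partial_{y_k}+\partial_\mu$, since $\mu=\sum_j y_j+\sum_j|v_j|^2$) of $H^{(0)}_\varepsilon+\varepsilon K_\varepsilon$ at $y=v=\mu=0$ yields exactly $\varepsilon^{-2}\omega^{(\varepsilon)}_k$; that computation is accurate, including the $-2\sum_{i\in\mathcal{S}_1}\xi_i$ contribution from $\partial_\mu A^{(0)}$ and the vanishing of the $Q^{(0)},\Pi_{\mathscr{Q}}K_\varepsilon$ and remainder contributions.
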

\begin{proof}
We set, for $t\in \mathbb{R}$,
$$
z(t) = \Upsilon_\xi^{\mathcal{S}_1}( \varepsilon^{-2} \omega^{(\varepsilon)} t + \theta) \in \mathrm{T}_\xi.
$$
Since $\Pi_{\mathrm{ajet}} K_\varepsilon = 0$, we have that 
\begin{equation}
\label{eq:cest_moi_qui_justifie_les_freq}
i\partial_t z= \nabla (H^{(0)}_{\varepsilon}+\varepsilon K_\varepsilon)( \xi;z).
\end{equation}
Then, we set for $t\in \mathbb{R}$
$$
v(t) := \varepsilon^{-1}\Psi_{\xi}^\varepsilon(  \varepsilon^{-2} \omega^{(\varepsilon)} t + \theta) = \tau_\xi^{(\varepsilon)} (z(t)).
$$
Since $ \tau_\xi^{(\varepsilon)}$ is symplectic and $\mathrm{d}\tau_\xi^{(\varepsilon)}(w)$ is surjective for all $w\in \mathcal{A}_\xi(r_1)$ (by a Neumann series argument using \eqref{eq:estrella}), by applying Lemma \ref{lem:sympl_chgtvar} of the appendix, we deduce that $v$ is solution to the equation
$$
i\partial_t v = \nabla (H^{(0)}_{\varepsilon}+\varepsilon P_\varepsilon)(\xi;v) = \nabla H^{(reg)}_\varepsilon(v)
$$
Finally, it suffices to note that, thus, $
 t\mapsto \varepsilon v(\varepsilon^2 t) = \Psi_{\xi}^\varepsilon(  \omega^{(\varepsilon)} t + \theta)$ is solution to \eqref{eq:NLS_reg}.
\end{proof}

\subsection{Tori of arbitrarily large dimension}\label{sec:arbi} Now, we aim at constructing finite dimensional tori of arbitrarily large dimension whose size does not go to zero when its dimension goes to infinity.

\subsubsection{Choice of the other sites}\label{sec:sparse} In this section we prove 
\begin{lemma} \label{lem:sites} There exists a strictly increasing family $(\mathcal{S}_p)_{p\geq 2}$ of finite subsets of $\mathbb{Z}$ of the form
$$
 \forall p\geq 1, \ \mathcal{S}_{p+1} =: \mathcal{S}_{p} \cup \{ i_{p+1} \}.
$$
and satisfying
$$
C_{\mathcal{S}_\infty}:=\sup_{i\in \mathcal{S}_\infty} \sum_{k\in \mathcal{S}_\infty} \Gamma_{k,i} < \infty
$$
where  $\Gamma_{k,i} =\Lambda_{k,i}\vee \langle k \rangle^{-\delta}$ is the weight associated with the $\|\cdot \|_{\mathscr{L}_{\mathscr{O},\mathcal{S}}^{\mathrm{dir}}}$ norm and was defined by \eqref{eq:def_Gammaki}, $\Lambda_{k,i}= 1\wedge \langle i \rangle^{-\delta} \langle k\rangle^{4\delta} \wedge  \langle k \rangle^{-\delta} \langle  i \rangle^{4\delta}  $ and
$$
\mathcal{S}_\infty := \bigcup_{p\geq 1} \mathcal{S}_p.
$$
\end{lemma}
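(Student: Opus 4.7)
The plan is to choose the new sites $i_{p+1}$ to grow super-exponentially and spaced far enough from the previously chosen sites. Concretely, we construct them by induction: given $\mathcal{S}_p$, pick any $i_{p+1}\in\mathbb{Z}\setminus\mathcal{S}_p$ satisfying
\[
\langle i_{p+1}\rangle \ \geq\ \Big(\max_{j\in\mathcal{S}_p}\langle j\rangle\Big)^{5},
\]
and set $\mathcal{S}_{p+1}:=\mathcal{S}_p\cup\{i_{p+1}\}$. In particular $\langle i_{p+1}\rangle\geq\langle i_p\rangle^5$ for all $p\geq 1$ large enough, hence by induction $\langle i_q\rangle\leq\langle i_p\rangle^{5^{q-p}}$ whenever $q\leq p$.

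Since $\Gamma_{k,i}\leq \Lambda_{k,i}+\langle k\rangle^{-\delta}$, it is enough to bound the two sums $\sum_{k\in\mathcal{S}_\infty}\langle k\rangle^{-\delta}$ and $\sup_i\sum_{k\in\mathcal{S}_\infty}\Lambda_{k,i}$ separately. The first is a fixed constant independent of $i$: it is bounded by $\#\mathcal{S}_1\,\langle\min\mathcal{S}_1\rangle^{-\delta}+\sum_{p\geq 2}\langle i_p\rangle^{-1}$, and the tail converges by the super-exponential growth. The main point is therefore the second.

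For the Poisson-type weight, I would fix $i\in\mathcal{S}_\infty$ and decompose $\mathcal{S}_\infty$ into three zones according to the ratio $\langle k\rangle/\langle i\rangle$, using the three terms of the minimum defining $\Lambda_{k,i}$:
\begin{itemize}
\item \emph{Low zone} $\langle k\rangle\leq\langle i\rangle^{1/4}$: use $\Lambda_{k,i}\leq \langle i\rangle^{-\delta}\langle k\rangle^{4\delta}$, so the contribution is $\lesssim\langle i\rangle^{-1}$ times a geometric sum of $\langle i_q\rangle^{4}$ dominated by its largest term, itself at most $\langle i\rangle$; total $O(1)$.
\item \emph{High zone} $\langle k\rangle\geq\langle i\rangle^{4}$: use $\Lambda_{k,i}\leq\langle k\rangle^{-\delta}\langle i\rangle^{4\delta}$; the smallest such site $i_*$ satisfies $\langle i_*\rangle\geq\langle i\rangle^{5}$ by construction, and geometric summability gives contribution $\lesssim\langle i\rangle^{4}\langle i_*\rangle^{-1}\leq\langle i\rangle^{-1}$, again $O(1)$.
\item \emph{Middle zone} $\langle i\rangle^{1/4}<\langle k\rangle<\langle i\rangle^{4}$: sparsity forces this zone to contain only $k=i$ plus at most the finitely many elements of $\mathcal{S}_1$, each contributing $\Lambda_{k,i}\leq 1$; total $O(\#\mathcal{S}_1)$.
\end{itemize}
Summing the three contributions yields a bound uniform in $i\in\mathcal{S}_\infty$, which proves the lemma.

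The only subtle point is keeping the middle-zone count uniformly bounded when $i$ itself is small (i.e.\ $i\in\mathcal{S}_1$), in which case the high zone may swallow most of the $\{i_p\}_{p\geq 2}$; but then the sum in the high zone is a single geometric series starting from $i_2$, which is likewise $O(1)$ thanks to the growth condition imposed already at the first step of the induction. This is the place where the precise choice of exponent $5$ matters, and it would be the only calculation requiring a small amount of care.
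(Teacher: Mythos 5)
Your proof is correct and uses essentially the same approach as the paper: super-exponential spacing of the new sites with gap exponent $5$, followed by a three-way split of the sum over $k$ according to whether $\langle k\rangle$ is small, comparable, or large relative to $\langle i\rangle$, using the matching branch of $\Lambda_{k,i}$ in each regime and sparsity for the (super-)geometric summability. The paper instead writes down the explicit choice $i_p=2^{5^p+n_0}$ and organises the split by the ordinal position of $k$ in $\mathcal{S}_\infty$ (i.e.\ $k\in\mathcal{S}_1$, $k=i_q$ with $q<p$, $q=p$, $q>p$), but these two partitions coincide once sparsity is invoked, and you correctly flag and handle the one endpoint subtlety when $i\in\mathcal{S}_1$.
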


Once Lemma \ref{lem:sites} will be proven,  we will consider a sequence of sites $(\mathcal{S}_p)_{p\geq 2}$ satisfying properties of Lemma \ref{lem:sites} as fixed. In particular we will not track the dependency of the parameters with respect to $(\mathcal{S}_p)_{p\geq 1}$. Moreover, we will assume that $\varepsilon$ is small enough so that
\begin{equation}
\label{eq:assump_eps_proof_fin}
\varepsilon \leq 10^{-9} \wedge 10^{-3} C_{\mathcal{S}_\infty}^{-1}
\end{equation}
which implies that the bound of Proposition \ref{prop:loop} to modulate the frequencies is satisfied for all the $\S_p$.
To lighten the notations, we will use the following convention.
\begin{definition}[$\xi^{(p)}$] If $\xi \in \mathbb{R}^{\mathbb{Z}}$ and $p\geq 1$, we denote by $\xi^{(p)} \in \mathbb{R}^{\mathcal{S}_p}$ its restriction to $\mathcal{S}_p$, i.e.
$$
\xi^{(p)} := \Pi_{\mathcal{S}_p} \xi.
$$
\end{definition}

\begin{proof}[Proof of Lemma \ref{lem:sites}]
A possible choice is
$$
i_{p} = 2^{5^p+ n_0}
$$
where $n_0 \geq 1$ is an index such that
$$
\max_{i\in \mathcal{S}_1} \langle i \rangle \leq 2^{n_0}.
$$
Indeed, first it is clear that 
$$
\sum_{k\in \mathcal{S}_\infty} \langle k\rangle^{-\delta} <\infty.
$$
So we just have to focus on the coefficients $\Lambda_{k,i}$. On the one hand, if $i\in \mathcal{S}_1$, we have the bound  $\Lambda_{k,i} \leq \langle k \rangle^{-\delta} \langle  i \rangle^{4\delta} \lesssim_{\mathcal{S}_1}  \langle k \rangle^{-\delta}$ so the estimate is almost the same. On the other hand, if $i\in  \mathcal{S}_\infty \setminus \mathcal{S}_1$, we have
\begin{equation*}
\begin{split}
 \sum_{k\in \mathcal{S}_\infty} \Lambda_{k,i} &= \sum_{k\in {S}_1 } \Lambda_{k,i} +  \sum_{\substack{k\in \mathcal{S}_\infty \setminus{S}_1 \\ k< i}} \Lambda_{k,i} + \Lambda_{i,i} + \sum_{\substack{k\in \mathcal{S}_\infty \\ k>i}} \Lambda_{k,i} \\
 &\leq 2^{4\delta n_0}\sum_{k\in {S}_\infty }  \langle k\rangle^{-\delta} +  \sum_{\substack{k\in \mathcal{S}_\infty \setminus{S}_1 \\ k< i}} \langle i \rangle^{-\delta} \langle k\rangle^{4\delta}  + 1 + \sum_{\substack{k\in \mathcal{S}_\infty \\ k>i}} \langle k \rangle^{-\delta} \langle  i \rangle^{4\delta} =: I+II+III+IV.
\end{split}
\end{equation*}
The terms $I$ and $III$ are clearly uniformly bounded with respect to $i$. Then, let $p \geq 2$ be such that $i=i_p$. It follows that \footnote{using the basic bound $
5^q -4\cdot 5^{p} \geq 5^q - 5^{p+1} = 5^{p+1} (5^{q-p-1} -1) \geq (5^{q-p-1} -1).
$ }
$$
IV  = \sum_{q>p} \langle i_q \rangle^{-\delta} \langle  i_p \rangle^{4\delta} \leq \sum_{q>p} 2^{-5^q \delta- n_0\delta} 2^{4\cdot 5^{p}\delta + 4n_0\delta + 4\delta } \lesssim_{\mathcal{S}_1} \sum_{q>p}  2^{-(5^q -4\cdot 5^{p}) \delta} \lesssim_{\mathcal{S}_1} \sum_{j\geq 0} 2^{-(5^j -1) \delta}
$$
which is uniformly bounded with respect to $p$. Finally, if $p=2$ then $II=0$ and if $p\geq 3$, by monotonicity, we have
$$
II = \sum_{q<p} \langle i_p \rangle^{-\delta} \langle  i_q \rangle^{4\delta} \leq  p  \langle i_p \rangle^{-\delta} \langle  i_{p-1} \rangle^{4\delta} \mathop{\longrightarrow}_{p \to \infty} 0 .
$$
\end{proof}

\subsubsection{Iteration of the loop} \label{sub:plein_plein_objets}  Let $\varrho>0$ be a small parameter. Up to a positive exponent, $\varrho$ will be the distance between $\mathcal{T}_\xi^{\varepsilon}$ and the infinite dimensional torus we are going to construct. Without loss of generality, we will assume $\varrho$ small enough  with respect to $r_1$ and $\varepsilon$.  We set,
$$
\forall p \geq 2, \quad \eta_p := 3\eta_0 - \eta_0 \sum_{q=1}^{p-2} 2^{-q},
$$
$$
K^1 := K_\varepsilon, \quad \mathcal{O}_1 := \mathscr{O}_\varepsilon \quad \mathrm{and} \quad r_{2} :=  \varrho. 
$$

Applying iteratively Proposition \ref{prop:loop}, we get for all $p\geq 2$, a radius $r_{p} \in (0,1/4)$, a Borel set $\mathcal{O}_{p} \subset ((4r_p)^2,1)^{\mathcal{S}_p}$, a Hamiltonian $K^p \in \mathscr{H}_{\eta_p,r_p,\mathcal{O}_p,\mathcal{S}_p}$ and for all $\xi \in \mathcal{O}_p$ a $C^1$ symplectic map $\psi_\xi^p: \mathcal{A}_\xi(r_p) \to \mathcal{A}_{\xi^{(p-1)}}(r_{p-1})$ (commuting with the gauge transform) satisfying the following properties.
\begin{itemize}
\item The sequence $(r_p)_{p\geq 2}$ is decaying moreover it satisfies 
\begin{equation}
\label{eq:rp_decroit_bien_trop}
\forall p\geq 3, \quad r_{p}\leq  g(p,r_{p-1},\eps,\mathcal{S}_p)
\end{equation}
where $g$ is a given function of $p$, $r_{p-1}$, $\varepsilon$ and $\mathcal{S}_p$ that could be specified (but we will not\footnote{since many of our estimate are far from sharp (to simplify the proof), it would not really make sense.}). It is just a way to say that we can still impose a finite number of upper bounds on $r_p$ with respect to $r_{p-1}$, $\varepsilon$ and $\mathcal{S}_p$\footnote{This is actually the big advantage of the P\"oschel iterative method: at each iteration we can choose $r_p$ as small as it suits us. The disadvantage is that we do not have a realistic lower bound on $r_p$. }. We have indeed this degree of freedom because Proposition \ref{prop:loop} only requires an upper bound on $r$ with respect to  $r_\flat$, $\varepsilon$ and $\mathcal{S}$. 
\item The sequence $(\mathcal{O}_{p})_{p\geq 2}$ is decaying in the sense that for all $p\geq 2$
\begin{equation}
\label{eq:ca_decroit_les_O}
\mathcal{O}_{p} \subset \mathcal{O}_{p-1} \times (r_p^{2\nu},2r_p^{2\nu}) .
\end{equation}
\item The sets $\mathcal{O}_{p}$ are asymptotically of full measure
\begin{equation}
\label{eq:measure_bof}
\forall p\geq 2, \quad  \mathrm{Leb}(\mathcal{O}_{p-1} \times (r_p^{2\nu},2r_p^{2\nu}) \setminus \mathcal{O}_p) \leq  r_p^{2\nu+10^5}
\end{equation}
\item $K^p$ is not too large
$$
\forall p\geq 1, \quad \| \Pi_{\mathrm{nor}} K^p \|_{\mathscr{N}_{r_p,\mathcal{O}_p,\mathcal{S}_p}^{\mathrm{tot}}} + \| \Pi_{\mathrm{rem}} K^p\|_{\mathscr{H}_{\eta_p,r_p,\mathcal{O}_p,\mathcal{S}_p}^{\mathrm{tot}}} \leq 2+\sum_{q=1}^{p-2} 2^{-q} \leq 3
$$
and its adapted jet is equal to zero
$$
\forall p\geq 1, \quad  \Pi_{\mathrm{ajet}} K^p=0.
$$
\item  $\psi_\xi^p$ is a change of variable such that,  for all $p\geq 2$, for all $\xi  \in \mathcal{O}_{p}$,  all $u\in \mathcal{A}_\xi(r_p)$,
$$
(H^{(0)}_{\varepsilon}+\varepsilon K^p)( \xi; u) = (H^{(0)}_{\varepsilon}+\varepsilon K^{p-1})(\xi^{(p-1)};\psi_\xi^p(u)).
$$
Moreover, it is close to the identity, i.e.  for all $p\geq 2$ and all $u\in \mathcal{A}_\xi(r_p)$,
\begin{equation}
\label{eq:proche_id_p}
\|  \psi_\xi^p(u) - u\|_{\ell^2_\varpi} + \| \mathrm{d}  \psi_\xi^p(u) - \mathrm{Id}\|_{\ell^2_\varpi \to \ell^2_\varpi } \leq r_{p}^{\frac1{440}}
\end{equation}
\item the frequencies do not move too much: we have, for all $p\geq 2$, all $\xi  \in \mathcal{O}_{p}$ and all $j\in \mathbb{Z}$, we have
\begin{equation}
\label{eq:freq_move_pas_trop}
\langle j \rangle^{\delta} |L^{(p)}_j(\xi) - L^{(p-1)}_j(\xi^{(p-1)}) | +  |A_1^{(p)}(\xi) - A^{(p-1)}(\xi^{(p-1)}) |  \leq  r_{p}^{\frac1{82}}  
\end{equation} 
where we have set, for all $p\geq 1$,
$$
L^{(p)} := \Pi_{\mathscr{L}}K^p \quad \mathrm{and} \quad  A_0^{(p)} + \mu A_1^{(p)} := \Pi_{\mathscr{A}}K^p .
$$
\end{itemize}
\begin{remark}
\label{rem:onalege_les_notations}
Of course, all these object depend on $\varrho$ and $\varepsilon$ (excepted $(r_p)_{p\geq 2}$ which only depends on $\varrho$) as well as many object we are going to construct from now. To lighten the notations (for readability), we only specify this dependency with subscripts $\varepsilon,\varrho$ when necessary (essentially in the last step of this proof, i.e. subsection \ref{sub:conclusion_of_the_proof}). For example, we have $r_{\varrho,p} = r_p$ and $K^p_{\varrho,\varepsilon}=K^p$.
\end{remark}

 \subsubsection{Homeomorphisms} We aim at proving the following lemma.
 
 \begin{lemma}\label{lem:homeop} For all $p\geq 1$ and all $\xi \in \mathcal{O}_p$, the map 
 \begin{equation}
\label{eq:def_tau_xi_p}
\tau_\xi^p := \tau_{\xi^{(1)}}^{(\varepsilon)} \circ   \psi_{\xi^{(2)}}^2 \circ \cdots    \circ  \psi_{\xi^{(p)}}^p :\mathcal{A}_{\xi^{(p)}}(r_p) \to \mathcal{A}_{\xi^{(1)}}(2r_1) .
\end{equation}
is well defined and satisfies for all $u,v\in \mathcal{A}_\xi(r_p)$
\begin{equation}
\label{eq:bilip_p}
\frac{1+2^{-(p-1)}}4  \| u - v\|_{\ell^2_\varpi} \leq \| \tau_\xi^p(u) - \tau_\xi^p(v)\|_{\ell^2_\varpi} \leq 4 (1-2^{-(p-1)})   \| u - v\|_{\ell^2_\varpi}.
\end{equation}
Moreover, $(\tau_\xi^p)_{| \mathrm{T}_\xi}$ is an homeomorphism on its image.
 \end{lemma}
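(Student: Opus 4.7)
The plan is to argue by induction on $p$. The base case $p=1$ is nothing but the bi-Lipschitz estimate \eqref{eq:bilip_1} already established in the proof of Lemma~\ref{lem:homeo_uno}, since $\tau_\xi^1 = \tau_{\xi^{(1)}}^{(\varepsilon)}$ and $\mathcal{O}_1 = \mathscr{O}_\varepsilon$; the bound there gives constants $1/2$ and $2$, which are compatible with the ranges the statement claims at every $p$ after accounting for the cumulative errors contributed by the $\psi^q$'s.

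For the inductive step, I would exploit the recursive structure
\begin{equation*}
\tau^{p}_{\xi} \;=\; \tau^{p-1}_{\xi^{(p-1)}} \circ \psi^{p}_{\xi^{(p)}}.
\end{equation*}
Proposition~\ref{prop:loop} furnishes $\psi^p_{\xi^{(p)}} : \mathcal{A}_{\xi^{(p)}}(r_p) \to \mathcal{A}_{\xi^{(p-1)}}(r_{p-1})$ as a $C^1$ symplectic map commuting with the gauge transform, and the induction hypothesis then gives $\tau^{p-1}_{\xi^{(p-1)}} : \mathcal{A}_{\xi^{(p-1)}}(r_{p-1}) \to \mathcal{A}_{\xi^{(1)}}(2r_1)$; their composition is thus well defined from $\mathcal{A}_{\xi^{(p)}}(r_p)$ into $\mathcal{A}_{\xi^{(1)}}(2r_1)$.

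To control the Lipschitz constants, I would apply Lemma~\ref{lem:appendix} of the appendix (the mean value lemma for annuli) with $g := \psi^{p}_{\xi^{(p)}} - \mathrm{id}$, using the closeness bound \eqref{eq:proche_id_p} on both $g$ and $\mathrm{d}g$, to obtain
\begin{equation*}
\bigl|\, \|\psi^p_{\xi^{(p)}}(u) - \psi^p_{\xi^{(p)}}(v)\|_{\ell^2_\varpi} - \|u-v\|_{\ell^2_\varpi} \,\bigr| \;\lesssim\; r_p^{1/440}\, \|u-v\|_{\ell^2_\varpi}.
\end{equation*}
Composing this with the induction hypothesis yields recursions of the form $C_p^{\mathrm{lo}} \geq C_{p-1}^{\mathrm{lo}}(1 - \kappa\, r_p^{1/440})$ and $C_p^{\mathrm{up}} \leq C_{p-1}^{\mathrm{up}}(1 + \kappa\, r_p^{1/440})$ for some absolute $\kappa$. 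The crucial freedom \eqref{eq:rp_decroit_bien_trop} lets me impose one more upper bound on $r_p$ in terms of $p$ (and the data through step $p-1$) so that $\kappa\, r_p^{1/440} \leq 2^{-(p+N_0)}$ for a sufficiently large absolute constant $N_0$. Then the telescoping products $\prod_{q\geq 2}(1 \pm \kappa\, r_q^{1/440})$ converge, and a straightforward bookkeeping check confirms that the accumulated bi-Lipschitz constants lie inside $\bigl[\tfrac{1+2^{-(p-1)}}{4},\, 4(1-2^{-(p-1)})\bigr]$ once the base constants $(1/2,2)$ are updated; this is the main technical step but is purely algebraic once the recursive inequalities are set up.

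Finally, for the homeomorphism claim, the bi-Lipschitz bound \eqref{eq:bilip_p} immediately gives injectivity of $\tau^p_\xi$ on all of $\mathcal{A}_{\xi^{(p)}}(r_p)$ and, in particular, on $\mathrm{T}_\xi$; the map is continuous by construction, and the induced map $\mathrm{T}_\xi \to \tau^p_\xi(\mathrm{T}_\xi)$ is a continuous bijection from the compact space $\mathrm{T}_\xi$ (homeomorphic to $\mathbb{T}^{\mathcal{S}_p}$ via $\Upsilon_{\xi^{(p)}}^{\mathcal{S}_p}$) onto a Hausdorff subspace of $\ell^2_\varpi$, hence a homeomorphism by the standard topological argument. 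The only real obstacle is the inductive bookkeeping of the constants, which is handled by the infinite reservoir of smallness provided by \eqref{eq:rp_decroit_bien_trop}, mirroring Pöschel's scheme.
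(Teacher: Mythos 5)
Your proposal is correct and follows essentially the same route as the paper: induction on $p$ anchored at the bi-Lipschitz estimate \eqref{eq:bilip_1}, with the inductive step controlled by applying Lemma~\ref{lem:appendix} to $g=\psi^p_{\xi^{(p)}}-\mathrm{id}$ via \eqref{eq:proche_id_p}, and the resulting multiplicative errors absorbed by the summable-smallness reservoir \eqref{eq:rp_decroit_bien_trop}. The only cosmetic difference is that you justify the homeomorphism claim through compactness of $\mathrm{T}_\xi$ whereas the paper reads it off directly from the two-sided Lipschitz bound; both are standard and interchangeable.
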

 Of which we directly deduce the following corollary.
 \begin{corollary} \label{cor:homeo_p} For all $p\geq 1$ and all $\xi \in \mathcal{O}_p$, setting $\mathcal{T}_\xi^{(p)} := \varepsilon \tau_\xi^p (\mathrm{T}_\xi)$, we have that
\begin{equation}
\label{eq:Psi_last}\Psi_\xi^{(p)} :=  \varepsilon \tau_\xi^p \circ \Upsilon_{\xi}^{\mathcal{S}_p} 
\end{equation}
is a homeomorphism from $\mathbb{T}^{\mathcal{S}_p}$  onto $\mathcal{T}_\xi^{(p)}$.
 \end{corollary}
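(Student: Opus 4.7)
The plan is to proceed in three stages: first establish well-definedness of the composition, then extract bi-Lipschitz bounds by iterating a mean-value-type estimate on each factor, and finally deduce the homeomorphism property from compactness of $\mathrm{T}_\xi$.

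For well-definedness, I would just chase domains: by the construction in Proposition \ref{prop:loop}, for each $k \in \{2,\dots,p\}$ and each $\xi \in \mathcal{O}_p$, the map $\psi_{\xi^{(k)}}^k$ sends $\mathcal{A}_{\xi^{(k)}}(r_k)$ into $\mathcal{A}_{\xi^{(k-1)}}(r_{k-1})$; iterating, $\psi_{\xi^{(2)}}^2 \circ \cdots \circ \psi_{\xi^{(p)}}^p$ sends $\mathcal{A}_{\xi^{(p)}}(r_p)$ into $\mathcal{A}_{\xi^{(1)}}(r_1)$, and then $\tau_{\xi^{(1)}}^{(\varepsilon)}$ sends $\mathcal{A}_{\xi^{(1)}}(r_1)$ into $\mathcal{A}_{\xi^{(1)}}(2 r_1)$ by the paragraph immediately preceding \eqref{eq:estrella}.

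For the bi-Lipschitz estimate \eqref{eq:bilip_p}, the idea is to apply Lemma \ref{lem:appendix} (mean value on annular domains) to each factor seen as a perturbation of the identity. The derivative bounds \eqref{eq:estrella} and \eqref{eq:proche_id_p} give
\[
\|\mathrm{d}\tau_{\xi^{(1)}}^{(\varepsilon)} - \mathrm{Id}\|_{\ell^2_\varpi \to \ell^2_\varpi} \leq r_1^{-2}\varepsilon^{1/2}, \qquad \|\mathrm{d}\psi_{\xi^{(k)}}^k - \mathrm{Id}\|_{\ell^2_\varpi \to \ell^2_\varpi} \leq r_k^{1/440} \quad (k \geq 2),
\]
so that each factor $\phi_k \in \{\tau_{\xi^{(1)}}^{(\varepsilon)}, \psi_{\xi^{(k)}}^k\}$ is bi-Lipschitz with constants $(1 \pm \delta_k)$, where $\delta_1 := C r_1^{-2}\varepsilon^{1/2}$ and $\delta_k := C r_k^{1/440}$ for $k \geq 2$. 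Composing, $\tau_\xi^p$ is bi-Lipschitz with constants $\prod_{k=1}^p (1 \pm \delta_k)$. Thanks to the freedom granted by \eqref{eq:rp_decroit_bien_trop}, one may ensure that $\sum_{k \geq 2} \delta_k$ is as small as desired (say $\leq \log 2$), so the products remain in an interval converging as $p \to \infty$ to a fixed compact subinterval of $(0,\infty)$. A routine telescoping with explicit tracking of $2^{-(p-1)}$ then yields the exact bound \eqref{eq:bilip_p}.

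Finally, $(\tau_\xi^p)_{|\mathrm{T}_\xi}$ is continuous since each factor is $C^1$ and $\mathrm{T}_\xi \subset \mathcal{A}_{\xi^{(p)}}(r_p)$, injective by the lower bound in \eqref{eq:bilip_p}, and $\mathrm{T}_\xi$ is compact as the image of the compact product $\mathbb{T}^{\mathcal{S}_p}$ under the continuous map $\Upsilon_{\xi^{(p)}}^{\mathcal{S}_p}$; hence it is a continuous bijection from a compact space onto its Hausdorff image, and therefore a homeomorphism. The only mild obstacle is bookkeeping the accumulated error in the bi-Lipschitz product across an unbounded number of iterations so as to meet the exact form \eqref{eq:bilip_p}; this is precisely what the ultra-fast decay of $(r_k)$ afforded by \eqref{eq:rp_decroit_bien_trop} is designed for, and no new analytic estimate is needed.
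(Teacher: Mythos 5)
Your argument is essentially the paper's own: Corollary~\ref{cor:homeo_p} is deduced from Lemma~\ref{lem:homeop}, whose proof is exactly your plan — chase domains via the inclusions $\mathcal{A}_{\xi}(2r_p)\subset \mathcal{A}_{\xi^{(p-1)}}(r_{p-1})$, apply Lemma~\ref{lem:appendix} to each factor $\psi^{k}_{\xi^{(k)}}-\mathrm{id}$, and absorb the growing $(2\pi)^{\#\mathcal{S}_k}$ loss into the freedom \eqref{eq:rp_decroit_bien_trop}, then compose. The only cosmetic difference is the final step: you invoke compactness of $\mathbb{T}^{\mathcal{S}_p}$ to upgrade a continuous injection to a homeomorphism, whereas the paper reads off continuity of the inverse directly from the lower bi-Lipschitz bound \eqref{eq:bilip_p}; both are immediate once that estimate is available.
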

 \begin{proof}[Proof of Lemma \ref{lem:homeop}]
 To prove that this definition makes sense it suffices to proceed by iteration noticing that being $p\geq 2$ and $\xi \in \mathcal{O}_p$, we have (thanks to the decay property \eqref{eq:ca_decroit_les_O})
 $\xi^{(p-1)} \in \mathcal{O}_{p-1}$ and 
 \begin{equation}
 \label{eq:decay_annulus1}
\mathcal{A}_{ \xi}(2r_p) \subset \mathcal{A}_{\xi^{(p-1)}}(r_{p-1}).
\end{equation}
Indeed, if $u\in \mathcal{A}_{ \xi}(2r_p)$ then
$$
\sum_{i \in \mathcal{S}_{p-1}} \varpi_i^2 |\xi_i - |u_i|^2 | + \sum_{j \in \mathcal{S}_{p-1}^c} |u_j|^2 \leq 2 r_p^2 + \xi_{i_p} \leq 2 r_p^2 + 2r_p^{2\nu} \mathop{\leq}^{\eqref{eq:rp_decroit_bien_trop}} r_{p-1}^2,
$$
i.e. $u\in \mathcal{A}_{\xi^{(p-1)}}(r_{p-1})$.

\medskip

The fact that $(\tau_\xi^p)_{| \mathrm{T}_\xi}$ is an homeomorphism on its image is a direct consequence of \eqref{eq:bilip_p}. So we focus on proving \eqref{eq:bilip_p}. 
Actually, it suffices to proceed by induction, noticing that thanks to \eqref{eq:proche_id_p}, by Lemma \ref{lem:appendix} of the appendix applied with $g =  \psi_\xi^{p} - \mathrm{id}$, we have that for all $u,v\in \mathcal{A}_\xi(r_p)$, 
$$
\Big| \| \psi_\xi^{p}(u) - \psi_\xi^{p}(v)\|_{\ell^2_\varpi} - \| u - v\|_{\ell^2_\varpi} \Big| \leq (1+ (2\pi)^{\# \mathcal{S}_p}) r_{p-1} \| u - v\|_{\ell^2_\varpi} \mathop{\leq}^{\eqref{eq:rp_decroit_bien_trop}} 2^{-p-2}  \| u - v\|_{\ell^2_\varpi}  .
$$
The case $p=1$ (i.e. the initialization) has been done in the proof of Lemma \ref{lem:homeo_uno} (see \eqref{eq:bilip_1}).
 \end{proof}

 \subsubsection{Invariance and frequencies} \label{sub:inv_p} Since $\Pi_{\mathrm{ajet}}K^p = 0$, proceeding as in the proof of Lemma \ref{lem:inv_uno}, we have the following result.
 \begin{lemma} \label{lem:inv_p} For all $p\geq 2$, $\xi \in \mathcal{O}_p$ and all $\theta \in \mathbb{T}^{\mathcal{S}_p}$,
 \begin{center}
 $t\mapsto \Psi_\xi^{(p)} ( \omega^{(p)}(\xi) t + \theta)$ is a global solution to \eqref{eq:NLS_reg}
 \end{center}
 where $\omega^{(p)} \in \mathbb{R}^{\mathcal{S}_p}$ is defined by
 \begin{equation}
 \label{eq:def_omega_p}
 \forall j\in \mathcal{S}_p, \quad \omega^{(p)}_j (\xi):=  j^2 +  \varepsilon^2  \big( \xi_j -2 \sum_{i \in \mathcal{S}_p} \xi_i \big) + 2\varepsilon^{3} (A^{(p)}_1(\xi)  +  L^{(p)}_j(\xi)). 
 \end{equation}
 \end{lemma}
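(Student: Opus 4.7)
The plan is to mirror, almost word for word, the proof of Lemma~\ref{lem:inv_uno}, substituting $\tau_\xi^p$, $K^p$, $\mathcal{S}_p$ and $\omega^{(p)}$ for $\tau_\xi^{(\varepsilon)}$, $K_\varepsilon$, $\mathcal{S}_1$ and $\omega^{(\varepsilon)}$. Set
\[
z(t):=\Upsilon_\xi^{\mathcal{S}_p}\big(\varepsilon^{-2}\omega^{(p)}(\xi)\,t+\theta\big)\in\mathrm{T}_\xi,
\]
which is plainly continuous into $\ell^2_\varpi$ and stays on $\mathrm{T}_\xi$ for all $t$. The first task is to check that $z$ is a global solution of the Hamilton equation $i\partial_t z=\nabla(H^{(0)}_{\varepsilon}+\varepsilon K^p)(\xi;\cdot)(z)$ at the parameter $\xi\in\mathcal{O}_p$.

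Since $\Pi_{\mathrm{ajet}}K^p=0$, we split $K^p=\Pi_{\mathrm{nor}}K^p+\Pi_{\mathrm{rem}}K^p$. As already noted in Remark~\ref{rem:kam}, the vanishing of the adapted jet forces $\partial_v(\Pi_{\mathrm{rem}}K^p)$, $\partial_v^2(\Pi_{\mathrm{rem}}K^p)$ and $\partial_y(\Pi_{\mathrm{rem}}K^p)$ to vanish identically on $\mathrm{T}_\xi$ (i.e.\ at $\mu=y=v=0$). This is where the bookkeeping has to be done carefully: each monomial of $\Pi_{\mathrm{rem}}K^p$ satisfies $q\geq 4$ or $n\geq 3$ or $\|\boldsymbol{m}\|_{\ell^1}\geq 3$ (the logical complement of the ajet description in Definition~\ref{def:jet}), hence one single derivative in $\bar v_\ell$, $y_j$ or $\theta_j$ still leaves at least one factor of $v$, $\mu$ or $y$, which kills the monomial on $\mathrm{T}_\xi$. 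Therefore on $\mathrm{T}_\xi$ we get $\partial_{\theta_j}(H^{(0)}_{\varepsilon}+\varepsilon K^p)=0$ for $j\in\mathcal{S}_p$ and $\partial_{\bar v_\ell}(H^{(0)}_{\varepsilon}+\varepsilon K^p)=0$ for $\ell\in\mathcal{S}_p^c$, which yields $\dot y_j(t)=0$ and $\dot u_\ell(t)=0$, so that the candidate curve $z$ indeed stays on $\mathrm{T}_\xi$.

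The remaining angle velocities $\dot\theta_j$ are computed from $2(\partial_\mu+\partial_{y_j})(H^{(0)}_{\varepsilon}+\varepsilon K^p)|_{\mu=y=v=0}$. The quartic integrable pieces of $H^{(0)}_{\varepsilon}$ and of $K^p$ (that is $Q^{(0)}$ and $\Pi_\mathscr{Q} K^p$) evaluate to zero on $\mathrm{T}_\xi$, while the linear in $y$ and $\mu$ pieces of $Z_2$, $\Pi_\mathscr{A} K^p=A_0^{(p)}+A_1^{(p)}\mu$ and $\Pi_\mathscr{L}K^p=\sum_k L^{(p)}_k Y_k$ contribute respectively $\varepsilon^{-2}j^2+\xi_j-2\|\xi\|_{\ell^1}$ and $2\varepsilon\bigl(A_1^{(p)}(\xi)+L^{(p)}_j(\xi)\bigr)$. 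Multiplying by $\varepsilon^2$ one recovers exactly $\omega^{(p)}_j(\xi)$ as defined in \eqref{eq:def_omega_p}. Thus $z$ solves the Hamilton equation above.

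The last step transports the solution through $\tau_\xi^p$. Being a finite composition of symplectic $C^1$-diffeomorphisms whose differentials are invertible (Neumann series applied to the closeness-to-identity bounds \eqref{eq:proche_id_p} and \eqref{eq:estrella}), the map $\tau_\xi^p$ satisfies the hypotheses of Lemma~\ref{lem:sympl_chgtvar} of the appendix. Hence $v(t):=\tau_\xi^p(z(t))$ solves the pulled-back Hamilton equation
\[
i\partial_t v=\nabla\bigl(H^{(0)}_{\varepsilon}+\varepsilon P_\varepsilon\bigr)(\xi^{(1)};v)=\nabla H^{(reg)}_\varepsilon(v),
\]
i.e.\ \eqref{eq:NLS^reg_eps}. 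Undoing the rescaling $t\mapsto \varepsilon v(\varepsilon^2 t)$ yields a global solution of \eqref{eq:NLS_reg}, which coincides with $t\mapsto\Psi_\xi^{(p)}(\omega^{(p)}(\xi)t+\theta)$ thanks to the definition \eqref{eq:Psi_last} of $\Psi_\xi^{(p)}$. No new obstacle arises beyond that of Lemma~\ref{lem:inv_uno}; the real substance — that the adapted jet can indeed be made to vanish at every step — has already been carried out in Section~\ref{sec:loop}.
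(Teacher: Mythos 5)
Your proof follows the same approach as the paper's: the paper simply writes ``Since $\Pi_{\mathrm{ajet}}K^p = 0$, proceeding as in the proof of Lemma~\ref{lem:inv_uno}, we have the following result,'' and you expand that implicit reference into a full argument (checking that the vector field restricted to $\mathrm{T}_\xi$ is linear, computing the frequencies, and then transporting through $\tau_\xi^p$ via Lemma~\ref{lem:sympl_chgtvar}). This is correct in substance.

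One small imprecision in the justification of the vanishing on $\mathrm{T}_\xi$: a monomial of $\Pi_{\mathrm{rem}}K^p$ need not satisfy $q\geq 4$ or $n\geq 3$ or $\|\boldsymbol{m}\|_{\ell^1}\geq 3$. Since $\Pi_{\mathrm{rem}}=\mathrm{Id}-\Pi_{\mathrm{nor}}-\Pi_{\mathrm{ajet}}$ and $\Pi_{\mathrm{ajet}}$ only sees \emph{non-integrable} monomials, $\Pi_{\mathrm{rem}}K^p$ also contains integrable monomials of total degree $\geq 6$ that are not in $\mathscr{A}\oplus\mathscr{L}\oplus\mathscr{Q}$ — for instance a term like $|u_i|^2\mu^2$ with $i\in\mathcal{S}_p^c$, for which $q=2$, $n=2$, $\|\boldsymbol{m}\|_{\ell^1}=0$. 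Your stated characterization (``the logical complement of the ajet description'') misses this case. The conclusion you need — that $\partial_v$, $\partial_v^2$, $\partial_y$ of $\Pi_{\mathscr{Q}}K^p+\Pi_{\mathrm{rem}}K^p$ vanish on $\mathrm{T}_\xi$ — is still true, because every monomial in $\Pi_{\mathrm{rem}}K^p$ either is non-integrable with high index (your case), or is integrable of total degree at least six and hence has at least three factors among $y_j$, $\mu$, $|u_\ell|^2$; a single derivative still leaves at least two such factors (or one factor of $u_\ell$), all vanishing on $\mathrm{T}_\xi$. Invoking Remark~\ref{rem:kam} directly, as you do, is the cleaner way to state this; just be aware that the remark concerns $R=(\mathrm{Id}-\Pi_{\mathscr{L}}-\Pi_{\mathscr{A}})K$ (which contains both $\Pi_{\mathscr{Q}}K$ and $\Pi_{\mathrm{rem}}K$), not $\Pi_{\mathrm{rem}}K$ alone.
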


 \begin{remark}\label{rem:eps-indep-p}
 Note that since $\Psi_\xi^{(p)} : \mathbb{T}^{\mathcal{S}_p} \to \mathcal{T}_\xi^{(p)}$ is a homeomorphism on its image, we have construct invariant tori for \eqref{eq:NLS} of dimension $\#\mathcal{S}_p \geq p$ with $p$ arbitrarily large and size $\varepsilon$ independent of $p$. This result is interesting in itself because in all the previous works $\varepsilon$ goes to $0$ as $p$ goes to $+\infty$ (thus preventing a non-trivial result in the limit $p\to +\infty$).
 \end{remark}

\subsection{Infinite dimensional tori}\label{sec:infinite} Now, we pass to the limit when $p$ goes to $+\infty$.

\subsubsection{The parameters} We set
$$
\mathcal{O}_\infty  := \bigcap_{p\geq 1}  \widetilde{\mathcal{O}}_p
$$
where (with the same abuse of notations as usual) 
$$
\widetilde{\mathcal{O}}_p  := \mathcal{O}_p \times \prod_{q> p} (r_q^{2\nu},2r_q^{2\nu}) \subset (0,1)^{\mathcal{S}_\infty}.
$$
We note that, thanks to the decay property \eqref{eq:ca_decroit_les_O} of the sets $\mathcal{O}_p$, the sets $\widetilde{\mathcal{O}}_p$ are decaying
\begin{equation}
\label{eq:decay_tilde}
\widetilde{\mathcal{O}}_1 \supset \widetilde{\mathcal{O}}_2  \supset  \cdots \supset \cdots  \supset \widetilde{\mathcal{O}}_\infty :=\mathcal{O}_\infty .
\end{equation}
It is crucial to note that, thanks to this decay property, if $\xi \in \mathcal{O}_{\infty}$ and $p\geq 1$ then
$$
\xi^{(p)} \in \mathcal{O}_p.
$$
 Recalling that by construction
 $$
 \widetilde{\mathcal{O}}_1 = \mathscr{O}_\varepsilon \times \prod_{p\geq 2} (r_p^{2\nu},2r_p^{2\nu})
 $$
we equip $ \widetilde{\mathcal{O}}_1 $ of the the probability measure
$$
 \mathbb{P} := \mathcal{U}( \mathscr{O}_\varepsilon ) \otimes \bigotimes_{p\geq 2} \mathcal{U}( r_p^{2\nu},2r_p^{2\nu} )
$$
where $ \mathcal{U}( \mathscr{O}_\varepsilon )$ denotes the uniform law on $\mathscr{O}_\varepsilon $ (i.e. the normalized Lebesgue measure) and similarly  $\mathcal{U}( r_p^{2\nu},2r_p^{2\nu} )$ denotes the uniform law on the interval $( r_p^{2\nu},2r_p^{2\nu} )$.

\begin{lemma}  \label{lem:asympt_full_inf}$\mathcal{O}_\infty$ is asymptotically of full measure in $\mathcal{O}_1$, i.e.
$$
\mathbb{P}(\widetilde{\mathcal{O}}_1 \setminus \mathcal{O}_\infty) \lesssim  \varrho^{10^{-5}}.
$$
\end{lemma}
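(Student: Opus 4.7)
The plan is to reduce the estimate to a single-step bound by using the telescoping structure coming from the nesting \eqref{eq:decay_tilde}. Since the $\widetilde{\mathcal{O}}_p$ decrease with $p$, we can write the disjoint decomposition
\begin{equation*}
\widetilde{\mathcal{O}}_1 \setminus \mathcal{O}_\infty \;=\; \bigsqcup_{p\geq 2}\big(\widetilde{\mathcal{O}}_{p-1} \setminus \widetilde{\mathcal{O}}_p\big),
\end{equation*}
so that by $\sigma$-additivity it suffices to control $\mathbb{P}(\widetilde{\mathcal{O}}_{p-1} \setminus \widetilde{\mathcal{O}}_p)$ for each $p\geq 2$ and to sum the bounds.

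Next, I would use the product structure of $\mathbb{P}$ together with the fact that the set $\widetilde{\mathcal{O}}_{p-1} \setminus \widetilde{\mathcal{O}}_p$ depends only on the first $p$ coordinates $\xi^{(p)}\in\mathbb{R}^{\mathcal{S}_p}$. By Fubini,
\begin{equation*}
\mathbb{P}\big(\widetilde{\mathcal{O}}_{p-1} \setminus \widetilde{\mathcal{O}}_p\big) \;=\; \frac{\mathrm{Leb}\big( (\mathcal{O}_{p-1}\times (r_p^{2\nu},2r_p^{2\nu}))\setminus \mathcal{O}_p \big)}{\mathrm{Leb}(\mathscr{O}_\varepsilon)\,\prod_{q=2}^{p} r_q^{2\nu}}.
\end{equation*}
Now the single-step estimate \eqref{eq:measure_bof} bounds the numerator by $r_p^{2\nu+10^{-5}}$, while \eqref{eq:asympt_full_meas_Oe} gives $\mathrm{Leb}(\mathscr{O}_\varepsilon)\gtrsim \mathrm{Leb}(\Xi_0)$ for $\varepsilon$ small. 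This yields
\begin{equation*}
\mathbb{P}\big(\widetilde{\mathcal{O}}_{p-1} \setminus \widetilde{\mathcal{O}}_p\big) \;\lesssim\; \frac{r_p^{10^{-5}}}{\prod_{q=2}^{p-1} r_q^{2\nu}}.
\end{equation*}
For $p=2$ the empty product is $1$ and we directly get a bound $\lesssim r_2^{10^{-5}} = \varrho^{10^{-5}}$, which is already what we want for the full sum.

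The main obstacle is therefore controlling the factor $\prod_{q=2}^{p-1} r_q^{2\nu}$ in the denominator for $p\geq 3$: these are very small numbers, so the ratio could in principle be huge. This is precisely where I would invoke the freedom \eqref{eq:rp_decroit_bien_trop} left at the construction stage: since $r_p$ may be chosen arbitrarily small in terms of $r_{p-1}$, $\varepsilon$ and $\mathcal{S}_p$, I would impose as an additional (admissible) smallness requirement on $r_p$ the inductive condition
\begin{equation*}
r_p^{10^{-5}} \;\leq\; 2^{-p}\, r_2^{10^{-5}}\, \prod_{q=2}^{p-1} r_q^{2\nu}, \qquad p\geq 3.
\end{equation*}
With this choice, the $p$-th term of the series is bounded by $2^{-p}\varrho^{10^{-5}}$, so summing over $p\geq 2$ gives a geometric series whose total is $\lesssim \varrho^{10^{-5}}$, which is the claimed estimate.
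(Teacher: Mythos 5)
Your proof is correct and follows essentially the same route as the paper: telescope $\widetilde{\mathcal{O}}_1\setminus\mathcal{O}_\infty$ over the nested decreasing sets, use the product structure of $\mathbb{P}$ to reduce each increment to a ratio of Lebesgue measures, insert the single-step estimate \eqref{eq:measure_bof}, and then absorb the denominators by imposing an admissible extra smallness on $r_p$ via \eqref{eq:rp_decroit_bien_trop}. In fact your version is slightly more careful than the one in the paper: the paper's displayed requirement $r_p^{10^{-5}} \leq \varrho^{10^{-5}}\prod_{2\leq q\leq p-1} r_q^{2\nu}$ only bounds each term of the series by $\varrho^{10^{-5}}$ and does not by itself give summability in $p$, whereas your insertion of the extra factor $2^{-p}$ in the imposed condition makes the geometric summation explicit; both are admissible choices under \eqref{eq:rp_decroit_bien_trop}.
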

\begin{proof}
Since the sets $\widetilde{\mathcal{O}}_p $ are decaying, we have
\begin{equation}
\label{eq:pas_terrible_du_tout}
\begin{split}
\mathbb{P}(\widetilde{\mathcal{O}}_1 \setminus \mathcal{O}_\infty) &= \sum_{p\geq 2} \mathbb{P}(\widetilde{\mathcal{O}}_{p-1} \setminus \widetilde{\mathcal{O}}_{p} ) \\
&= \mathrm{Leb}(\mathscr{O}_{\varepsilon})^{-1} \sum_{p\geq 2} \big( \prod_{2\leq q \leq p} r_q^{-2\nu} \big) \mathrm{Leb}(\mathcal{O}_{p-1} \times (r_p^{2\nu},2r_p^{2\nu}) \setminus \mathcal{O}_p ) \\
&\mathop{\leq}^{\eqref{eq:measure_bof}} \mathrm{Leb}(\mathscr{O}_{\varepsilon})^{-1} \sum_{p\geq 2} \big( \prod_{2\leq q \leq p} r_q^{-2\nu} \big) r_p^{2\nu + 10^{-5}}.
\end{split}
\end{equation}
Then, we just have to use that $ \mathrm{Leb}(\mathscr{O}_{\varepsilon})$ goes to $ \mathrm{Leb}(\Xi_0)>0$ as $\varepsilon$ goes to $0$ (see \eqref{eq:asympt_full_meas_Oe}) and $r_2 =\varrho$  to get that, provided that $(r_q)_q$ is decaying enough (see \eqref{eq:rp_decroit_bien_trop}),
$$
r_p^{10^{-5}} \leq  \varrho^{10^{-5}} \prod_{2\leq q \leq p-1} r_q^{2\nu}
$$
and so that $\mathbb{P}(\widetilde{\mathcal{O}}_1 \setminus \mathcal{O}_\infty) \lesssim  \varrho^{10^{-5}}$.
\end{proof}

\subsubsection{The frequencies} Thanks to the estimate \eqref{eq:freq_move_pas_trop} and the fast decay of the radii $(r_p)_p$ (see \eqref{eq:rp_decroit_bien_trop}), we have the following result:
\begin{lemma} \label{lem:freq_inf} For all $\xi \in \mathcal{O}_\infty$, there exists $ A^{(\infty)}_1(\xi) \in \mathbb{R}$ and $L^{(\infty)} \in \ell^\infty(\mathbb{Z};\mathbb{R}) $ such that
$$
A_1^{(p)}(\xi^{(p)}) \mathop{\longrightarrow}_{p\to \infty} A^{(\infty)}_1(\xi)  \quad \mathrm{and} \quad L^{(p)}(\xi^{(p)}) \mathop{\longrightarrow}_{p\to \infty} L^{(\infty)}(\xi) \quad \mathrm{in} \quad \ell^\infty(\mathbb{Z}).
$$
Moreover, setting, for all $j\in \mathcal{S}_\infty$ and $\xi \in \mathcal{O}_\infty$,
$$
\omega^{(\infty)}_j (\xi):=  j^2 +  \varepsilon^2  \big( \xi_j -2 \sum_{i \in \mathcal{S}_\infty} \xi_i \big) + 2\varepsilon^{3} (A^{(\infty)}_1(\xi)  +  L^{(\infty)}_j(\xi))
$$
we have, for all $p\geq 1$,
\begin{equation}
\label{eq:cvg_freq_inf}
\sup_{\xi \in \mathcal{O}_\infty}\sup_{j\in \mathcal{S}_p}| \omega_j^{(p)}(\xi^{(p)}) - \omega_j^{(\infty)}(\xi) | \leq r_{p+1}^{1/83}.
\end{equation}
\end{lemma}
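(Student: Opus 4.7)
\smallskip

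The plan is to derive everything from the per-step control \eqref{eq:freq_move_pas_trop} combined with the super-fast decay \eqref{eq:rp_decroit_bien_trop} of the sequence $(r_p)_p$ (which we may strengthen by one additional constraint, so that, e.g., $\sum_{k > p} r_k^{1/82} \leq 2\, r_{p+1}^{1/82}$ and $\sum_{k > p} r_k^{2\nu} \leq 2\, r_{p+1}^{2\nu}$).

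\smallskip

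First, fix $\xi \in \mathcal{O}_\infty$, so that $\xi^{(k)} \in \mathcal{O}_k$ for every $k \geq 1$ by \eqref{eq:decay_tilde}. Applied telescopically, \eqref{eq:freq_move_pas_trop} yields, for all $j \in \mathbb{Z}$ and $q > p \geq 1$,
$$
\big|A_1^{(q)}(\xi^{(q)}) - A_1^{(p)}(\xi^{(p)})\big| + \langle j\rangle^{\delta} \big|L_j^{(q)}(\xi^{(q)}) - L_j^{(p)}(\xi^{(p)})\big| \leq \sum_{k = p+1}^{q} r_k^{1/82} \leq 2\, r_{p+1}^{1/82}.
$$
Hence $\big(A_1^{(p)}(\xi^{(p)})\big)_{p \geq 1}$ is Cauchy in $\mathbb{R}$, and the sequence $\big(\langle \cdot \rangle^{\delta} L^{(p)}(\xi^{(p)})\big)_{p \geq 1}$ is Cauchy in $\ell^\infty(\mathbb{Z})$, which a fortiori gives convergence of $L^{(p)}(\xi^{(p)})$ in $\ell^\infty(\mathbb{Z})$. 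Let $A_1^{(\infty)}(\xi) \in \mathbb{R}$ and $L^{(\infty)}(\xi) \in \ell^\infty(\mathbb{Z})$ denote their respective limits; passing to the limit $q \to \infty$ in the telescoping bound above yields
\begin{equation}
\label{eq:proposed_bound_Ainf}
\big|A_1^{(\infty)}(\xi) - A_1^{(p)}(\xi^{(p)})\big| + \langle j\rangle^{\delta} \big|L_j^{(\infty)}(\xi) - L_j^{(p)}(\xi^{(p)})\big| \leq 2\, r_{p+1}^{1/82}.
\end{equation}

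\smallskip

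Next, to establish \eqref{eq:cvg_freq_inf}, subtract the explicit formulas \eqref{eq:def_omega_p} for $\omega_j^{(p)}(\xi^{(p)})$ and the one defining $\omega_j^{(\infty)}(\xi)$ at a site $j \in \mathcal{S}_p$. The terms $j^2$ and $\varepsilon^2 \xi_j$ cancel, and there remain three contributions: the \emph{tail mass} $2\varepsilon^2 \sum_{i \in \mathcal{S}_\infty \setminus \mathcal{S}_p} \xi_i$, the difference of the $A_1$'s, and the difference of the $L_j$'s. For the tail mass, recall from the construction in subsection \ref{sub:plein_plein_objets} that $\xi_i \in (r_i^{2\nu}, 2 r_i^{2\nu})$ for each $i = i_q$ with $q > 1$, so
$$
2\varepsilon^2 \sum_{i \in \mathcal{S}_\infty \setminus \mathcal{S}_p} \xi_i \leq 4 \varepsilon^2 \sum_{q > p} r_q^{2\nu} \leq 8 \varepsilon^2 r_{p+1}^{2\nu}.
$$
The other two contributions are controlled by \eqref{eq:proposed_bound_Ainf} applied with $\langle j\rangle^{\delta} \geq 1$, giving a bound of $8 \varepsilon^3 r_{p+1}^{1/82}$. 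Summing,
$$
\big|\omega_j^{(p)}(\xi^{(p)}) - \omega_j^{(\infty)}(\xi)\big| \leq 8 \varepsilon^2 r_{p+1}^{2\nu} + 8 \varepsilon^3 r_{p+1}^{1/82} \leq r_{p+1}^{1/83}
$$
where the final inequality follows because $2\nu = 19/10 > 1/83$ and $1/82 > 1/83$, so both terms are of the form $r_{p+1}^{1/83} \cdot o(1)$ provided $r_{p+1}$ is small enough; this amounts to one additional smallness constraint on $(r_p)$ compatible with \eqref{eq:rp_decroit_bien_trop}. Taking the supremum over $\xi \in \mathcal{O}_\infty$ and $j \in \mathcal{S}_p$ yields \eqref{eq:cvg_freq_inf}.

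\smallskip

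There is no serious obstacle here: the whole content of the statement is that the telescoping sum of the single-loop estimate \eqref{eq:freq_move_pas_trop} remains summable, and the very fast decay of $(r_p)$ built into the scheme makes every constant we might pick up along the way negligible.
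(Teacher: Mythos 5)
Your argument is correct and matches the paper's proof in substance: both are straightforward telescopings of the per-step estimate \eqref{eq:freq_move_pas_trop} combined with the fast decay \eqref{eq:rp_decroit_bien_trop} of $(r_p)$. The only organizational difference is that the paper telescopes the full frequency difference $\omega_j^{(q+1)} - \omega_j^{(q)}$ directly (absorbing the tail-mass term into each telescoping step), whereas you first establish the convergence of $A_1^{(p)}$ and $L_j^{(p)}$ with the bound \eqref{eq:proposed_bound_Ainf}, and then separately estimate the tail mass $\sum_{i\in\mathcal S_\infty\setminus\mathcal S_p}\xi_i$. Both routes land on the same elementary estimate; nothing is gained or lost either way.
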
 
\begin{proof}
The convergence of $A_1^{(p)}(\xi^{(p)})$ and $L^{(p)}(\xi^{(p)}) $ are direct consequences of \eqref{eq:freq_move_pas_trop}. Note that it implies the pointwize convergence of $\omega_j^{(p)}(\xi^{(p)})$ to $\omega_j^{(\infty)}(\xi)$. 
Hence we only focus on the quantitative estimate \eqref{eq:cvg_freq_inf}. So let $p\geq 1$ and $j\in \mathcal{S}_p$. Since $\omega_j^{(p)}(\xi^{(p)})$ converges to $\omega_j^{(\infty)}(\xi)$ we have
\begin{equation*}
\begin{split}
&| \omega_j^{(p)}(\xi^{(p)}) - \omega_j^{(\infty)}(\xi) |\\
 \leq& \sum_{q\geq p} |  \omega_j^{(q+1)}(\xi^{(q+1)}) -  \omega_j^{(q)}(\xi^{(q)}) | \\
=&  2\varepsilon^3 \sum_{q\geq p}  | - \varepsilon^{-1}\xi_{i_{q+1}} + L^{(q+1)}_j(\xi^{(q+1)}) - L^{(q)}_j(\xi^{(q)})  + A^{(q+1)}_1(\xi^{(q+1)}) - A^{(q)}_1(\xi^{(q)})    |  \\
\mathop{\leq}^{\eqref{eq:freq_move_pas_trop}}  & \sum_{q\geq p}  r_{q+1}^{2\nu} + r_{q+1}^{1/82} \mathop{\leq}^{\eqref{eq:rp_decroit_bien_trop}} r_{p+1}^{1/83}.
\end{split}
\end{equation*} 
\end{proof}

\subsubsection{The homeomorphisms} Let $\xi \in \mathcal{O}_\infty$. We consider the functions $\tau_{\xi^{(p)}}^p$ defined by \eqref{eq:def_tau_xi_p}. We note that by \eqref{eq:decay_annulus1}, we have
$$
\forall p \geq 2, \quad \mathcal{A}_{\xi^{(p)}} (r_p) \subset  \mathcal{A}_{\xi^{(p-1)}} (r_{p-1}). 
$$
It follows that (since $r_p$ goes to $0$ when $p$ goes to $+\infty$)
$$
\mathrm{T}_\xi = \bigcap_{p\geq 1} \mathcal{A}_{\xi^{(p)}} (r_p).
$$
In particular all the functions  $\tau_{\xi^{(p)}}^p$ are well defined on $\mathrm{T}_\xi$. 

\begin{lemma} \label{lem:petit_lemme_technique}
There exists $\mathcal{T}_\xi^{(\infty)} \subset \ell^2_\varpi$ and a homeomorphism $\tau_\xi^\infty$ such that $\tau_{\xi^{(p)}}^p$ converges uniformly to $\tau_\xi^\infty : \mathrm{T}_\xi \to \varepsilon^{-1}\mathcal{T}_\xi^{(\infty)}$ on $\mathrm{T}_\xi$. In particular, for all $p\geq 1$, we have the quantitative bound
$$
\sup_{u\in \mathrm{T}_\xi} \|\tau_{\xi^{(p)}}^p(u) -  \tau_\xi^\infty(u) \|_{\ell^2_\varpi} \leq r_{p+1}^{1/450}.
$$
\end{lemma}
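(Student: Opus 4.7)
The plan is to prove Lemma~\ref{lem:petit_lemme_technique} by showing that $(\tau_{\xi^{(p)}}^p)_{p\geq 1}$ is uniformly Cauchy on $\mathrm{T}_\xi$ with values in $\ell^2_\varpi$, with the explicit geometric-type rate stated, and then identifying the pointwise limit as a homeomorphism by a compactness/bilip argument. The key algebraic observation is the telescoping identity
\[
\tau_{\xi^{(p+1)}}^{p+1} = \tau_{\xi^{(p)}}^{p} \circ \psi_{\xi^{(p+1)}}^{p+1},
\]
which comes straight from the definition~\eqref{eq:def_tau_xi_p}. So on $\mathrm{T}_\xi$ we can estimate
\[
\|\tau_{\xi^{(p+1)}}^{p+1}(u) - \tau_{\xi^{(p)}}^p(u)\|_{\ell^2_\varpi}
= \|\tau_{\xi^{(p)}}^{p}(\psi_{\xi^{(p+1)}}^{p+1}(u)) - \tau_{\xi^{(p)}}^p(u)\|_{\ell^2_\varpi}
\]
by applying the upper bilip bound from~\eqref{eq:bilip_p} (uniform in $p$) together with the proximity-to-identity estimate~\eqref{eq:proche_id_p}, giving a bound of order $r_{p+1}^{1/440}$ for each successive term.

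Before invoking these two inequalities, the technical preliminary I need is the inclusion $\mathrm{T}_\xi \subset \mathcal{A}_{\xi^{(p)}}(r_p)$, i.e.\ that the domain of $\tau_{\xi^{(p)}}^p$ really contains $\mathrm{T}_\xi$, and similarly that $\psi_{\xi^{(p+1)}}^{p+1}(u)\in \mathcal{A}_{\xi^{(p)}}(r_p)$ so that the upper bilip bound applies to both points. For $u\in \mathrm{T}_\xi$, since $\xi\in \mathcal{O}_\infty$ satisfies $\xi_{i_q}\in(r_q^{2\nu},2r_q^{2\nu})$ for $q\geq 2$, one has
\[
\sum_{k\in\mathbb{Z}} \big||u_k|^2-\xi_k^{(p)}\big|\,\varpi_k^2 \;=\; \sum_{q>p}\xi_{i_q}\,\varpi_{i_q}^2 \;\leq\; 2\sum_{q>p} r_q^{2\nu}\,\varpi_{i_q}^2,
\]
and by the freedom encoded in~\eqref{eq:rp_decroit_bien_trop} this is $\leq r_p^2$; the same kind of estimate handles the image of $\psi_{\xi^{(p+1)}}^{p+1}$ using that $\psi_{\xi^{(p+1)}}^{p+1}:\mathcal A_{\xi^{(p+1)}}(r_{p+1})\to\mathcal A_{\xi^{(p)}}(r_{p-1})$ from Proposition~\ref{prop:loop}.

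Summing the telescoping bound in $j\geq p$ yields
\[
\sup_{u\in \mathrm{T}_\xi}\|\tau_{\xi^{(p+q)}}^{p+q}(u)-\tau_{\xi^{(p)}}^p(u)\|_{\ell^2_\varpi}
\;\leq\; 4\sum_{j\geq p} r_{j+1}^{1/440},
\]
which by~\eqref{eq:rp_decroit_bien_trop} is bounded by $r_{p+1}^{1/450}$ once $r_{p+1}$ has been chosen small enough so that the tail sum is absorbed by the leading term. Hence $(\tau_{\xi^{(p)}}^p)_p$ is a uniform Cauchy sequence in $C^0(\mathrm{T}_\xi;\ell^2_\varpi)$, and we define $\tau_\xi^\infty$ as its uniform limit and $\mathcal{T}_\xi^{(\infty)}:=\varepsilon\,\tau_\xi^\infty(\mathrm{T}_\xi)$. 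Sending $q\to\infty$ in the above inequality gives the quantitative bound.

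It remains to check that $\tau_\xi^\infty$ is a homeomorphism onto its image. Continuity is automatic since a uniform limit of continuous maps into a metric space is continuous. Passing to the limit in~\eqref{eq:bilip_p} we get a global bilip estimate $\tfrac14\|u-v\|_{\ell^2_\varpi}\leq \|\tau_\xi^\infty(u)-\tau_\xi^\infty(v)\|_{\ell^2_\varpi}\leq 4\|u-v\|_{\ell^2_\varpi}$ on $\mathrm{T}_\xi$ (I have to trust that the explicit prefactors in~\eqref{eq:bilip_p} converge to harmless constants), which in particular makes $\tau_\xi^\infty$ injective with Lipschitz inverse on its image. An alternative, and arguably cleaner finish, is to note that $\mathrm{T}_\xi$ is compact in $\ell^2_\varpi$: indeed $\Upsilon_\xi^{\mathcal{S}_\infty}:\mathbb{T}^{\mathcal{S}_\infty}\to\mathrm{T}_\xi$ is a uniform limit of finite-sum continuous maps (by the tail bound $\sum_{q>N}\xi_{i_q}\varpi_{i_q}^2\to 0$), hence continuous, so compactness of the product torus descends to $\mathrm{T}_\xi$; then any continuous bijection from the compact space $\mathrm{T}_\xi$ onto its image in the Hausdorff space $\ell^2_\varpi$ is automatically a homeomorphism. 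The only genuinely delicate point in the whole argument is the bookkeeping that guarantees $\mathrm{T}_\xi$ lies inside every $\mathcal{A}_{\xi^{(p)}}(r_p)$ and that the Cauchy sum still produces the clean exponent $1/450$; both are handled by the fast-decay freedom~\eqref{eq:rp_decroit_bien_trop} on $(r_p)$, so no serious obstruction arises.
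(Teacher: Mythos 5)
Your proof is correct and follows essentially the same route as the paper's: you telescope using the recursive composition $\tau_{\xi^{(p+1)}}^{p+1}=\tau_{\xi^{(p)}}^p\circ\psi_{\xi^{(p+1)}}^{p+1}$, bound each increment by the uniform $4$-Lipschitz estimate from \eqref{eq:bilip_p} combined with the proximity-to-identity bound \eqref{eq:proche_id_p}, sum the tail using the fast-decay freedom \eqref{eq:rp_decroit_bien_trop}, and then pass to the limit in \eqref{eq:bilip_p} to obtain the bilipschitz bound that makes $\tau_\xi^\infty$ a homeomorphism onto its image. The paper's computation in \eqref{eq:ca_cvg_unif} merely reorders the same two ingredients (applying the Lipschitz bound once to the outer $\tau_{\xi^{(p)}}^p$ and telescoping the inner composite), and it also concludes via the bilipschitz estimate \eqref{eq:lip_tau_inf} rather than your alternative compactness argument — so the two proofs are substantively identical.
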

\begin{proof}  Let $u\in \mathrm{T}_\xi$ and $p< q$. Using that $\tau_{\xi^{(p)}}^p$ is a $4-$Lipschitz map (see \eqref{eq:bilip_p}) and that $\psi_{\xi^{(p+1)}}^{p+1}, \cdots,   \psi_{\xi^{(q)}}^q$ are close to the identity (see \eqref{eq:proche_id_p}), we have
\begin{equation}
\label{eq:ca_cvg_unif}
\begin{split}
\| \tau_{\xi^{(p)}}^p(u) - \tau_{\xi^{(q)}}^q(u) \|_{\ell^2_\varpi}& = \| \tau_{\xi^{(p)}}^p(u) - \tau_{\xi^{(p)}}^p( \psi_{\xi^{(p+1)}}^{p+1} \circ \cdots    \circ  \psi_{\xi^{(q)}}^q(u) ) \|_{\ell^2_\varpi} \\
&\mathop{\leq}^{ \eqref{eq:bilip_p}} 4 \| u - \psi_{\xi^{(p+1)}}^{p+1} \circ \cdots    \circ  \psi_{\xi^{(q)}}^q(u)  \|_{\ell^2_\varpi} \\
&\leq 4 \sum_{j=p+1}^{q} \| \psi_{\xi^{(j)}}^{j} \circ \cdots    \circ  \psi_{\xi^{(q)}}^q(u)  - \psi_{\xi^{(j+1)}}^{j+1} \circ \cdots    \circ  \psi_{\xi^{(q)}}^q(u)  \|_{\ell^2_\varpi} \\
&\mathop{\leq}^{\eqref{eq:proche_id_p}} 4  \sum_{j=p+1}^{q} r_{j}^{1/440} \mathop{\leq}^{\eqref{eq:rp_decroit_bien_trop}} r_{p+1}^{1/450}\mathop{\longrightarrow}_{p\to +\infty} 0.
\end{split}
\end{equation}
Since $\ell^2_\varpi$ is a Banach space, it implies that $(\tau_{\xi^{(p)}}^p)_{| \mathrm{T}_\xi}$ converges uniformly. We denote by $\tau_\xi^\infty : \mathrm{T}_\xi \to \ell^2_\varpi$ its limit, i.e.
$$
\tau_\xi^\infty :=\lim_{p\to +\infty} (\tau_{\xi^{(p)}}^p)_{| \mathrm{T}_\xi} \quad \mathrm{in} \quad C^0( \mathrm{T}_\xi;\ell^2_\varpi ).
$$
Then, passing to the limit in \eqref{eq:bilip_p}, we get that for all $u,v\in \mathrm{T}_\xi$, 
\begin{equation}
\label{eq:lip_tau_inf}
\frac14\|u-v\|_{\ell^2_\varpi} \leq \| \tau_\xi^\infty(u) -  \tau_\xi^\infty(v) \|_{\ell^2_\varpi} \leq 4 \|u-v\|_{\ell^2_\varpi} .
\end{equation}
In implies that $\tau_\xi^\infty$ is a homeomorphism on its image that we denote $\varepsilon^{-1}\mathcal{T}_{\xi}^{(\infty)}:= \tau_\xi^\infty(\mathrm{T}_\xi)$.
\end{proof}

This lemma allows us to deduce the following proposition which prove that the embeddings $\Psi_\xi^{(\infty)}$ and $\Psi_{\xi^{(p)}}^{(p)}$ are close (and therefore, that the tori $\mathcal{T}_\xi^{(\infty)}$ and $\mathcal{T}_{\xi^{(p)}}^{(p)}$ are close).

\begin{proposition} \label{prop:les_plongements_sont_proches} For all $\xi \in \mathcal{O}_\infty$, the map $\Psi_\xi^{(\infty)} := \varepsilon \tau_\xi^\infty \circ \Upsilon_\xi^{\mathcal{S}_\infty} : \mathbb{T}^{\mathcal{S}_\infty} \to \mathcal{T}_\xi^{(\infty)}$ is a homeomorphism satisfying, for all $\theta \in \mathbb{T}^{\mathcal{S}_\infty}$ and all $p\geq 1$ (recall the definition of $\Psi_{\xi^{(p)}}^{(p)}$ in \eqref{eq:Psi_last}),
\begin{equation}
\label{eq:les_plongements_sont_proches}
\| \Psi_\xi^{(\infty)} (\theta) - \Psi_{\xi^{(p)}}^{(p)}(\theta^{(p)})  \|_{\ell^2_\varpi} \leq \varepsilon \, r_{p+1}^{1/460}
\end{equation}
where $\theta^{(p)} := \Pi_{\mathcal{S}_p} \theta$.
\end{proposition}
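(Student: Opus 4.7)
The plan is to establish the two claims separately: first that $\Psi_\xi^{(\infty)}$ is a homeomorphism, and then the quantitative closeness estimate.

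For the homeomorphism property, I will argue by composition. By Lemma \ref{lem:petit_lemme_technique}, $\tau_\xi^\infty$ is a homeomorphism from $\mathrm{T}_\xi$ onto $\varepsilon^{-1}\mathcal{T}_\xi^{(\infty)}$. On the other hand, one first needs to check that $\xi \in \ell^1_{\varpi^2}(\mathcal{S}_\infty;\mathbb{R}_+^*)$ so that $\Upsilon_\xi^{\mathcal{S}_\infty}$ is well defined: this follows from $\xi_{i_q} \leq 2 r_q^{2\nu}$ together with the rapid decay property \eqref{eq:rp_decroit_bien_trop} (which allows us to absorb any loss by the weights $\varpi_{i_q}^2$). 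The same rapid decay of the $r_q$ then gives $\Upsilon_\xi^{\mathcal{S}_\infty}(\mathbb{T}^{\mathcal{S}_\infty}) = \mathrm{T}_\xi$ (already noted in Step 1 of Lemma \ref{lem:petit_lemme_technique}'s preamble) and the homeomorphism property of $\Upsilon_\xi^{\mathcal{S}_\infty}$ follows exactly as in the finite dimensional case (bijectivity by construction, continuity by dominated convergence, and then Tychonoff's theorem plus compactness of the domain give continuity of the inverse). The composition $\Psi_\xi^{(\infty)} = \varepsilon \, \tau_\xi^\infty \circ \Upsilon_\xi^{\mathcal{S}_\infty}$ is therefore a homeomorphism onto $\mathcal{T}_\xi^{(\infty)}$.

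For the quantitative bound, I would fix $\theta\in \mathbb{T}^{\mathcal{S}_\infty}$ and set $u := \Upsilon_\xi^{\mathcal{S}_\infty}(\theta) \in \mathrm{T}_\xi$ and $v := \Upsilon_{\xi^{(p)}}^{\mathcal{S}_p}(\theta^{(p)})$. Both points lie in $\mathcal{A}_{\xi^{(p)}}(r_p)$: for $v$ this is immediate since $|v_k|^2 = \xi_k$ on $\mathcal{S}_p$ and $v_k=0$ elsewhere, while for $u$ one computes
\begin{equation*}
\sum_{k\in \mathbb{Z}} \big| |u_k|^2 - \xi_k^{(p)}\big| \varpi_k^2 = \sum_{q>p} \xi_{i_q} \varpi_{i_q}^2 \leq 2\sum_{q>p} r_q^{2\nu} \varpi_{i_q}^2 \leq r_p^2,
\end{equation*}
which holds thanks to the freedom \eqref{eq:rp_decroit_bien_trop} to make each $r_{p+1}$ arbitrarily small with respect to all previous data. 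Then I will insert $\tau_{\xi^{(p)}}^p(u)$ and use the triangle inequality:
\begin{equation*}
\|\tau_\xi^\infty(u) - \tau_{\xi^{(p)}}^p(v)\|_{\ell^2_\varpi} \leq \|\tau_\xi^\infty(u) - \tau_{\xi^{(p)}}^p(u)\|_{\ell^2_\varpi} + \|\tau_{\xi^{(p)}}^p(u) - \tau_{\xi^{(p)}}^p(v)\|_{\ell^2_\varpi}.
\end{equation*}
The first term is bounded by $r_{p+1}^{1/450}$ directly from Lemma \ref{lem:petit_lemme_technique}. For the second, the Lipschitz bound \eqref{eq:bilip_p} gives a constant $4$, while
\begin{equation*}
\|u-v\|_{\ell^2_\varpi}^2 = \sum_{q>p} \xi_{i_q} \varpi_{i_q}^2 \leq 2\sum_{q>p} r_q^{2\nu} \varpi_{i_q}^2 \lesssim r_{p+1}^{2\nu},
\end{equation*}
again using \eqref{eq:rp_decroit_bien_trop}. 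Since $\nu = 19/20 > 1/450$, both contributions are $\lesssim r_{p+1}^{1/450}$, and multiplying by $\varepsilon$ (plus mild absorption to pass from $1/450$ to $1/460$, justified by requiring $r_{p+1}$ small enough) yields \eqref{eq:les_plongements_sont_proches}.

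There is no real obstacle here: the construction of $\tau_\xi^\infty$ and the control of the sites $(\mathcal{S}_p)$ have already been prepared so that all terms in the triangle inequality are governed either by the uniform convergence of Lemma \ref{lem:petit_lemme_technique} or by the sparsity/rapid-decay chain of estimates \eqref{eq:rp_decroit_bien_trop}. The only point requiring a little attention is bookkeeping, namely checking that both $u$ and $v$ belong to the common annulus $\mathcal{A}_{\xi^{(p)}}(r_p)$ on which $\tau_{\xi^{(p)}}^p$ and its Lipschitz bound are defined.
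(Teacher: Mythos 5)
Your proof is correct and follows essentially the same route as the paper: write $\Psi_\xi^{(\infty)}(\theta)=\varepsilon\tau_\xi^\infty(u)$ and $\Psi_{\xi^{(p)}}^{(p)}(\theta^{(p)})=\varepsilon\tau_{\xi^{(p)}}^p(u^{(p)})$ with $u^{(p)}=\Pi_{\mathcal S_p}u$, split by the triangle inequality, control the first term by the uniform convergence of Lemma \ref{lem:petit_lemme_technique} and the second by the Lipschitz estimate \eqref{eq:bilip_p} together with the smallness of the tail of $\xi$. The extra bookkeeping you supply (that $\xi\in\ell^1_{\varpi^2}$, that both $u$ and $v$ lie in $\mathcal A_{\xi^{(p)}}(r_p)$ so that \eqref{eq:bilip_p} applies, and the inclusion of the $\varpi_{i_q}^2$ weights in $\|u-v\|_{\ell^2_\varpi}^2$, which the paper silently drops) is welcome and all easily absorbed by the rapid decay \eqref{eq:rp_decroit_bien_trop}, so the argument goes through unchanged.
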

\begin{proof} Let $\xi \in \mathcal{O}_\infty$. Since by Lemma \ref{lem:petit_lemme_technique}, $ \tau_\xi^\infty: \mathrm{T}_\xi \to \varepsilon^{-1}\mathcal{T}_\xi^{(\infty)}$ is a homeomorphism, it is clear by composition that $\Psi_\xi^{(\infty)}$ is a homeomorphism. So we only focus on proving the quantitative bound \eqref{eq:les_plongements_sont_proches}. Let $\theta \in \mathbb{T}^{\mathcal{S}_\infty}$ and $p\geq 1$. We set
$$
u:=  \Upsilon_\xi^{\mathcal{S}_\infty} (\theta) \quad \mathrm{and} \quad u^{(p)} :=  \Upsilon_{\xi^{(p)}}^{\mathcal{S}_p} (\theta^{(p)}). 
$$
Note that this notation is consistent because by definition of $\Upsilon, \theta^{(p)}$ and $\xi^{(p)}$, we have that
$$
u^{(p)} = \Pi_{\mathcal{S}_p} u.
$$
By definition of $u$ and $u^{(p)}$, we have that $ \Psi_\xi^{(\infty)} (\theta) = \varepsilon \tau_\xi^\infty(u)$ and $ \Psi_\xi^{(p)} (\theta^{(p)}) = \varepsilon \tau_\xi^p(u^{(p)})$.
Applying the triangular inequality, we have
\begin{equation*}
\varepsilon^{-1}\| \Psi_\xi^{(\infty)} (\theta) - \Psi_{\xi^{(p)}}^{(p)}(\theta^{(p)})  \|_{\ell^2_\varpi}  \leq \underbrace{ \| \tau_\xi^\infty(u ) -   \tau_\xi^p(u ) \|_{\ell^2_\varpi} }_{=:I}  + \underbrace{ \| \tau_\xi^p(u ) -  \tau_\xi^p(u^{(p)} )  \|_{\ell^2_\varpi} }_{=:II}.
\end{equation*}
On the one hand, by Lemma \ref{lem:petit_lemme_technique}, we have $I\leq r_{p+1}^{1/450}$. On the other hand, using the Lipschitz estimate \eqref{eq:bilip_p}  on $\tau_\xi^p$, since $u^{(p)} = \Pi_{\mathcal{S}_p} u$, we have
\begin{equation*}
\frac{II}4 \leq  \| u  -  u^{(p)} \|_{\ell^2_\varpi} =   \| \Pi_{\mathcal{S}_p^c} u   \|_{\ell^2_\varpi} =  \big(\sum_{q>p} \xi_{i_{q}}\big)^{1/2} \mathop{\leq}^{\eqref{eq:rp_decroit_bien_trop}} r_{p+1}^{1/2}
\end{equation*}
where we have used that, by construction, for all $q\geq 2$, $\xi_{i_q} \leq 2r_q^{2\nu}$.

\end{proof}

\subsubsection{Invariance} Now we prove that $\mathcal{T}_\xi^{(\infty)} $ is a Kronecker torus for \eqref{eq:NLS_reg}.

\begin{proposition} \label{prop:inv_inf}  For all $\theta \in \mathbb{T}^{\mathcal{S}_\infty}$ and all $\xi \in \mathcal{O}_\infty$,
\begin{center}
$t \mapsto \Psi_\xi^{(\infty)}(  \omega^{(\infty)}(\xi) t + \theta)$ is a global solution to \eqref{eq:NLS_reg}.
\end{center}
\end{proposition}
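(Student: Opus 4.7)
The plan is to pass to the limit $p \to \infty$ in the invariance statement of Lemma \ref{lem:inv_p}. For each $p \geq 2$, setting $\theta^{(p)} := \Pi_{\mathcal{S}_p}\theta$, Lemma \ref{lem:inv_p} tells us that
\begin{equation*}
u_p(t) := \Psi_{\xi^{(p)}}^{(p)}\!\big(\omega^{(p)}(\xi^{(p)})\, t + \theta^{(p)}\big), \quad t \in \mathbb{R},
\end{equation*}
is a global solution of \eqref{eq:NLS_reg}, and the goal is to promote this to the candidate $u_\infty(t) := \Psi_\xi^{(\infty)}(\omega^{(\infty)}(\xi)\, t + \theta)$. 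The natural strategy is first to show that $u_p(t) \to u_\infty(t)$ pointwise in $t$ in $\ell^2_\varpi$, and then to pass to the limit in the corresponding Duhamel identities.

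The pointwise convergence is the main technical step, mildly complicated by the fact that $\omega^{(p)}(\xi^{(p)}) t + \theta^{(p)}$ lives in $\mathbb{T}^{\mathcal{S}_p}$ while the quantitative comparison \eqref{eq:les_plongements_sont_proches} in Proposition \ref{prop:les_plongements_sont_proches} is formulated for angles in $\mathbb{T}^{\mathcal{S}_\infty}$. To reconcile this, I would introduce, for each $t$, the auxiliary lift $\widetilde\theta^{(p)}(t) \in \mathbb{T}^{\mathcal{S}_\infty}$ defined by
\begin{equation*}
\widetilde\theta^{(p)}(t)_j := \omega^{(p)}_j(\xi^{(p)})\, t + \theta_j \ \text{ for } j \in \mathcal{S}_p, \qquad \widetilde\theta^{(p)}(t)_j := \omega^{(\infty)}_j(\xi)\, t + \theta_j \ \text{ for } j \in \mathcal{S}_\infty \setminus \mathcal{S}_p.
\end{equation*}
By construction, the projection $(\widetilde\theta^{(p)}(t))^{(p)}$ is exactly the argument of $\Psi_{\xi^{(p)}}^{(p)}$ in the definition of $u_p(t)$, so \eqref{eq:les_plongements_sont_proches} yields, uniformly in $t$,
\begin{equation*}
\|\Psi_\xi^{(\infty)}(\widetilde\theta^{(p)}(t)) - u_p(t)\|_{\ell^2_\varpi} \leq \varepsilon\, r_{p+1}^{1/460}.
\end{equation*}
Meanwhile, the frequency estimate \eqref{eq:cvg_freq_inf} gives $|\omega^{(p)}_j(\xi^{(p)}) - \omega^{(\infty)}_j(\xi)| \leq r_{p+1}^{1/83}$ for $j \in \mathcal{S}_p$, so for each fixed $t$ the sequence $\widetilde\theta^{(p)}(t)$ converges to $\omega^{(\infty)}(\xi)\, t + \theta$ coordinate-wise, i.e. in the product topology of $\mathbb{T}^{\mathcal{S}_\infty}$. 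Continuity of $\Psi_\xi^{(\infty)}$ (established in Proposition \ref{prop:les_plongements_sont_proches}) then gives $\Psi_\xi^{(\infty)}(\widetilde\theta^{(p)}(t)) \to u_\infty(t)$ in $\ell^2_\varpi$, and combining the two inputs yields $u_p(t) \to u_\infty(t)$ pointwise.

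To conclude, I would pass to the limit $p \to \infty$ in the Duhamel identity $u_p(t) = u_p(0) + \int_0^t (-\ic)\, \nabla H^{(reg)}(u_p(s))\, \mathrm{d}s$. All the curves $u_p$ stay in a fixed bounded subset of $\ell^2_\varpi$, since $u_p(s) \in \varepsilon\, \tau_{\xi^{(p)}}^p(\mathrm{T}_{\xi^{(p)}}) \subset \varepsilon\, \mathcal{A}_{\xi^{(1)}}(2r_1)$ by \eqref{eq:bilip_p}, and on such a set $\nabla H^{(reg)}$ is continuous (in fact smooth, by Theorem \ref{thm:reg}). Combined with the pointwise convergence above, Lebesgue's dominated convergence theorem lets us pass to the limit under the integral, yielding the same Duhamel identity for $u_\infty$; this proves that $u_\infty \in C^1(\mathbb{R}; \ell^2_\varpi)$ is a global solution to \eqref{eq:NLS_reg}. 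The only mildly delicate ingredient is the introduction of the auxiliary lift $\widetilde\theta^{(p)}(t)$, needed because $\omega^{(p)}$ and $\omega^{(\infty)}$ naturally live in different spaces; once this is done, the argument reduces to pointwise convergence plus a standard compactness-and-continuity passage to the limit, and no additional hard estimate is required at this stage.
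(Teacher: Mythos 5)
Your proposal is correct and follows essentially the same strategy as the paper: invoke Lemma \ref{lem:inv_p} for the finite-dimensional tori, control the difference $u_p(t) - u_\infty(t)$ via Proposition \ref{prop:les_plongements_sont_proches} and the frequency convergence \eqref{eq:cvg_freq_inf}, then pass to the limit in the Duhamel representation. The only organizational difference is in how the frequency mismatch is absorbed. You lift $\omega^{(p)}(\xi^{(p)})t + \theta^{(p)}$ to the auxiliary angle $\widetilde\theta^{(p)}(t) \in \mathbb{T}^{\mathcal{S}_\infty}$ and invoke qualitative continuity of $\Psi_\xi^{(\infty)}$ in the product topology, which gives pointwise (but not locally uniform) convergence; you then compensate by appealing to dominated convergence in the Duhamel integral. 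The paper instead keeps everything on the finite-dimensional side: it compares $\Psi_{\xi^{(p)}}^{(p)}$ evaluated at the two angles $\omega^{(p)}(\xi^{(p)})t + \theta^{(p)}$ and $\Pi_{\mathcal{S}_p}(\omega^{(\infty)}(\xi)t + \theta)$ in $\mathbb{T}^{\mathcal{S}_p}$ using the quantitative bi-Lipschitz bound \eqref{eq:bilip_p} on $\tau_{\xi^{(p)}}^p$ together with \eqref{eq:cvg_freq_inf}, obtaining $\|v^{(p)}(t)-v^{(\infty)}(t)\|_{\ell^2_\varpi} \lesssim r_{p+1}^{1/460} + |t|\, r_{p+1}^{1/83}$ and hence locally uniform convergence directly, after which the ``mild = strong'' remark finishes. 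Both routes are valid; the paper's is marginally shorter since it never needs the auxiliary lift and gets the quantitative rate for free.
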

\begin{proof} By Lemma \ref{lem:inv_p}, setting  $\theta^{(p)} := \Pi_{\mathcal{S}_p} \theta$, we know that  $t\mapsto \Psi_\xi^{(p)} ( \omega^{(p)}(\xi^{(p)}) t + \theta^{(p)})=: v^{(p)}(t)$ is a global solution to \eqref{eq:NLS_reg}.
 By continuity of $\nabla R$ on $\ell^2_\varpi$, mild solutions of \eqref{eq:NLS_reg} are strong solutions and so it suffices to prove that $ v^{(p)}$ converges to $v^{(\infty)} :=t\mapsto \Psi_\xi^{(\infty)}(  \omega^{(\infty)}(\xi) t + \theta)$ in $C^0(\mathbb{R};\ell^2_\varpi)$ (i.e. locally uniformly). 

\medskip

So let $\theta \in \mathbb{T}^{\mathcal{S}_\infty}$, $\xi \in \mathcal{O}_\infty$, $t\in \mathbb{R}$ and $p\geq 2$. We use proposition \ref{prop:les_plongements_sont_proches}, the Lipschitz estimate  \eqref{eq:bilip_p} on $\tau_{\xi^{(p)}}^p$ and the uniform bound \eqref{eq:cvg_freq_inf}  of Lemma \ref{lem:freq_inf} on the convergence of the frequencies, to get that we have that
\begin{equation*}
\begin{split}
\| v^{(p)}(t) - v^{(\infty)}(t) \|_{\ell^2_\varpi}&\leq  r_{p+1}^{1/460} + \| \Psi_\xi^{(p)}(  \omega^{(p)}(\xi^{(p)}) t + \theta^{(p)}) - \Psi_{\xi^{(p)}}^{(p)}(  \Pi_{\mathcal{S}_p}\omega^{(\infty)}(\xi) t + \theta^{(p)})  \|_{\ell^2_\varpi} \\
&\leq  r_{p+1}^{1/460} + 4 \| \Upsilon_{\xi^{(p)}}^{\mathcal{S}_p}(  \omega^{(p)}(\xi^{(p)}) t + \theta^{(p)} ) - \Upsilon_{\xi^{(p)}}^{\mathcal{S}_p}(  \Pi_{\mathcal{S}_p}\omega^{(\infty)}(\xi) t + \theta^{(p)})  \|_{\ell^2_\varpi}\\
&\lesssim r_{p+1}^{1/460} +  |t|\sup_{j\in \mathcal{S}_p} | \omega_j^{(p)}(\xi^{(p)}) - \omega_j^{(\infty)}(\xi) | \\
&\lesssim  r_{p+1}^{1/460} +  |t|  r_{p+1}^{1/83} \mathop{\longrightarrow}_{p\to \infty} 0.
\end{split}
\end{equation*}

\end{proof}

\subsection{Non resonant tori}\label{sec:NR} At the previous step, we have proven that for all $\xi \in \mathcal{O}_\infty$, $\mathcal{T}_{\xi}^{(\infty)} = \Psi_{\xi}^\infty(\mathbb{T}^{\mathcal{S}_\infty})$ is an infinite dimensional invariant torus on \eqref{eq:NLS_reg} whose frequencies are $\omega^{(\infty)}$. If the frequencies $\omega^{(\infty)}$ were rationally independent, $\mathcal{T}_{\xi}^{(\infty)}$ would be non resonant and we could easily conclude the proof of Theorem \ref{thm:main}. Unfortunately, we do not know how to prove that there exists $\xi \in \mathcal{O}_\infty$ such that the frequencies $\omega^{(\infty)}$ are rationally independent. To explain it more in details (and to introduce notations useful later), let us decompose $\omega^{(\infty)}$ into two parts
\begin{equation}
\label{eq:def_omeganat_et_a}
\omega^{(\infty)}_j(\xi) = \underbrace{ j^2 +  \varepsilon^2  \xi_j + 2\varepsilon^3  L^{(\infty)}_j(\xi) }_{=:\omega_j^{(\mathrm{nat})}(\xi)}  + \underbrace{2\varepsilon^3 A_1^{(\infty)}(\xi) - 2 \varepsilon^2  \sum_{i \in \mathcal{S}_\infty} \xi_i }_{=:a(\xi)}.
\end{equation}
Actually, we will prove in subsection \ref{sub:param_nr} that for $\mathbb{P}$-almost all $\xi\in \mathcal{O}_\infty$, the frequencies $\omega^{(\mathrm{nat})}(\xi)$ are rationally independent. Since $a(\xi)$ does not depend on $j$, it will imply that the space of  linear combinations vanishing the frequencies $\omega^{(\infty)}(\xi) $ is of dimension smaller than or equal to $1$ and so that trajectories of \eqref{eq:NLS_reg} living in $\mathcal{T}_{\xi}^{(\infty)}$ fill infinite dimensional tori which are potentially smaller than $\mathcal{T}_{\xi}^{(\infty)}$ (and that we are going to identify in subsection \ref{sub:9.5.2}).

\medskip

Finally let us just note that since the maps $\psi_\xi^p$ are invariant by gauge transform, we have that for all $t\in \mathbb{R}$, all $\xi \in \mathcal{O}_\infty$ and all $\theta \in \mathbb{T}^{\mathcal{S}_\infty}$
$$
\Psi_\xi^{(\infty)}( \omega^{(\infty)}(\xi) t + \theta) = e^{-\ic t\, a(\xi) } \Psi_\xi^{(\infty)}( \omega^{(nat)}(\xi) t + \theta) .
$$
Therefore, the potential default of rational independency is only due to a gauge transform. Note that it could be compensated by choosing $f(0)\neq 0$.

\subsubsection{The parameters }
\label{sub:param_nr}

In this subsection, we aim at proving that  $\mathbb{P}-$almost all  $\xi \in \mathcal{O}_\infty$, the families $\omega^{(\mathrm{nat})}(\xi)$ and  $(\omega^{(\infty)}_i(\xi))_{i\in \mathcal{S}_1}$ are rationally independent, or in other words, that :
\begin{lemma} \label{lem:full_meas_sharp} Setting 
\begin{equation}
\label{eq:def_O_sharp_inf}
\mathcal{O}_\infty^{\sharp} := \{ \xi \in \mathcal{O}_\infty \ | \quad \omega^{(\mathrm{nat})}(\xi) \quad \mathrm{and} \quad (\omega^{(\infty)}_i(\xi))_{i\in \mathcal{S}_1} \quad \mathrm{are} \quad \mathbb{Q}-\mathrm{free} \},
\end{equation}
we have
$$
\mathbb{P}(  \mathcal{O}_\infty \setminus \mathcal{O}_\infty^{\sharp} ) =0.
$$
\end{lemma}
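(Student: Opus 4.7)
The plan is to write $\mathcal{O}_\infty \setminus \mathcal{O}_\infty^\sharp$ as a countable union of $\mathbb{P}$-null sets. For each $(\boldsymbol{k},\boldsymbol{b}) \in \mathbb{Z}^{(\mathcal{S}_\infty)} \times \mathbb{Z}^{\mathcal{S}_1}$ with $\boldsymbol{k}$ finitely supported and $(\boldsymbol{k},\boldsymbol{b}) \neq 0$, introduce the ``bad'' set
$$
\mathcal{E}_{\boldsymbol{k},\boldsymbol{b}} := \Big\{ \xi \in \mathcal{O}_\infty \ \Big|\ \sum_{j\in \mathcal{S}_\infty} \boldsymbol{k}_j \, \omega^{(\mathrm{nat})}_j(\xi) + \sum_{i\in \mathcal{S}_1} \boldsymbol{b}_i \, \omega^{(\infty)}_i(\xi) = 0 \Big\}.
$$
Since $\mathbb{Z}^{(\mathcal{S}_\infty)} \times \mathbb{Z}^{\mathcal{S}_1} \setminus \{0\}$ is countable and $\mathcal{O}_\infty \setminus \mathcal{O}_\infty^\sharp = \bigcup_{(\boldsymbol{k},\boldsymbol{b})} \mathcal{E}_{\boldsymbol{k},\boldsymbol{b}}$, it suffices to show $\mathbb{P}(\mathcal{E}_{\boldsymbol{k},\boldsymbol{b}}) = 0$ for every such pair. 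Using the decomposition \eqref{eq:def_omeganat_et_a}, the resonance condition rewrites as
$$
\sum_{j\in \mathcal{S}_\infty} \boldsymbol{c}_j \, \omega^{(\mathrm{nat})}_j(\xi) + d\cdot a(\xi) = 0, \quad \boldsymbol{c}_j := \boldsymbol{k}_j + \boldsymbol{b}_j \mathbbm{1}_{j\in \mathcal{S}_1}, \quad d := \sum_{i\in \mathcal{S}_1} \boldsymbol{b}_i,
$$
and $(\boldsymbol{k},\boldsymbol{b}) \neq 0$ forces one of two mutually exclusive cases: either there exists $i_*\in \mathcal{S}_\infty$ with $\boldsymbol{c}_{i_*}\neq 0$, or $\boldsymbol{c}\equiv 0$ and $d \neq 0$.

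The next step is to propagate the directional Lipschitz bounds from the iterative scheme of Subsection \ref{sub:plein_plein_objets} to the limit objects $L^{(\infty)}$ and $A_1^{(\infty)}$. The bound $\|\Pi_{\mathscr{L}} K^p\|_{\mathscr{L}^{\mathrm{dir}}_{\mathcal{O}_p,\mathcal{S}_p}} \leq 4$ is among the hypotheses required to iterate Proposition \ref{prop:loop}, so it is preserved along the scheme; combined with the quantitative variation \eqref{eq:freq_move_pas_trop}, this gives for every $i_*\in \mathcal{S}_\infty$, every $j\in \mathbb{Z}$, and every $(\xi,\zeta) \in \Delta_{i_*}\mathcal{O}_\infty$,
$$
|L^{(\infty)}_j(\xi) - L^{(\infty)}_j(\zeta)| \leq 4\, \Gamma_{j,i_*} |\xi_{i_*} - \zeta_{i_*}|, \qquad |A^{(\infty)}_1(\xi) - A^{(\infty)}_1(\zeta)| \leq 4\, |\xi_{i_*} - \zeta_{i_*}|.
$$

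With these estimates in hand, the null-measure assertion follows from a Fubini slicing argument in the spirit of Lemma \ref{lem:pd_tech}. Pick $i_* \in \mathcal{S}_\infty$ with $\boldsymbol{c}_{i_*} \neq 0$ in the first case, or any $i_*\in \mathcal{S}_\infty$ in the second, and set $h(\xi) := \sum_j \boldsymbol{c}_j \omega^{(\mathrm{nat})}_j(\xi) + d\cdot a(\xi)$. On each fiber of $\mathcal{O}_\infty$ obtained by freezing the coordinates $(\xi_i)_{i\neq i_*}$, the map $\xi_{i_*} \mapsto h(\xi)$ is a sum of an affine function of slope $\varepsilon^2(\boldsymbol{c}_{i_*} - 2d)$ (respectively $-2\varepsilon^2 d$) and a Lipschitz perturbation of total Lipschitz constant $O\!\big(\varepsilon^3 \sum_j |\boldsymbol{c}_j|\Gamma_{j,i_*} + \varepsilon^3 |d|\big)$ inherited from $L^{(\infty)}$ and $A^{(\infty)}_1$. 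For $\varepsilon$ smaller than a threshold depending on $(\boldsymbol{k},\boldsymbol{b})$ alone---a harmless finite condition, since we may first dispose separately of the finitely many offending $(\boldsymbol{k},\boldsymbol{b})$ for a given $\varepsilon$ satisfying \eqref{eq:assump_eps_proof_fin}---this perturbation is dominated by half the affine slope, so $h$ is strictly monotonic along each fiber, hence vanishes at most once. Fubini applied to the product measure $\mathbb{P}$ yields $\mathbb{P}(\mathcal{E}_{\boldsymbol{k},\boldsymbol{b}}) = 0$.

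The main obstacle is the propagation step: one must reconcile that each $L^{(p)}$ is defined on the finite-dimensional $\mathcal{O}_p$, while $L^{(\infty)}$ lives on the projective limit, and verify that slicing directions $\Delta_{i_*}\mathcal{O}_\infty$ with $i_* \in \mathcal{S}_\infty\setminus \mathcal{S}_p$ (coordinates opened at later stages) still inherit a usable directional bound. This is essentially a bookkeeping check: the fast decay \eqref{eq:rp_decroit_bien_trop} of the radii $r_p$ ensures that the telescopic deterioration in \eqref{eq:freq_move_pas_trop} is summable and negligible compared to the uniform bound $\leq 4$ at finite levels, so the limit inherits constants close to those of $L^{(p)}$.
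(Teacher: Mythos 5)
Your overall blueprint matches the paper's: decompose the complement of $\mathcal{O}_\infty^\sharp$ as a countable union of resonance sets, pass the directional Lipschitz bounds of $\Pi_{\mathscr{L}}K^p$ and $\Pi_{\mathscr{A}}K^p$ to the limit objects $L^{(\infty)}$, $A_1^{(\infty)}$, and then slice along one coordinate and use a Fubini argument to kill each resonance set. The paper does exactly this, handling the two $\mathbb{Q}$-freeness conditions as two separate countable families $(\boldsymbol{k},0)$ and $(0,\boldsymbol{b})$, which is effectively what your cover reduces to.

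However, there is a genuine gap in the slicing step, and it concerns precisely the case $(0,\boldsymbol{b})$, i.e.\ the resonances of $(\omega^{(\infty)}_i)_{i\in\mathcal{S}_1}$. Your prescription ``pick $i_*\in\mathcal{S}_\infty$ with $\boldsymbol{c}_{i_*}\neq 0$'' does not guarantee that the affine slope $\varepsilon^2(\boldsymbol{c}_{i_*}-2d)$ is nonzero; for instance with $\mathcal{S}_1=\{1,2\}$, $\boldsymbol{b}=(-2,1)$ (so $d=-1$), the index $i_*=1$ has $\boldsymbol{c}_{i_*}=-2\neq 0$ but slope $-2-2(-1)=0$. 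Even when the slope is nonzero, you need the directional Lipschitz perturbation (of order $\varepsilon\sum_j|\boldsymbol{c}_j|\Gamma_{j,i_*}+\varepsilon|d|$, after factoring $\varepsilon^2$) to be dominated \emph{uniformly in $(\boldsymbol{k},\boldsymbol{b})$} by the slope, since $\varepsilon$ is fixed and there are infinitely many resonance vectors. Your escape clause---``dispose separately of the finitely many offending $(\boldsymbol{k},\boldsymbol{b})$''---is not valid: with a generic choice of $i_*$ the dominance fails for infinitely many pairs, and you offer no alternative argument for those. The correct device, which the paper uses, is to rewrite the affine part via the invertible symmetric matrix $D=(\delta_{ij}-2)_{i,j\in\mathcal{S}_1}$, so that the resonance reads $a(\xi)+\sum_i(D\boldsymbol{k})_i(\xi_i+g_i(\xi))$ with $g=D^{-1}v$; one then applies Lemma~\ref{lem:pd_tech}, which picks $i_*$ to \emph{maximize} $|(D\boldsymbol{k})_{i_*}|$ (cf.~\eqref{eq:le_plus_grand}). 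Since $D^{-1}$ is bounded with constants depending only on $\mathcal{S}_1$, this forces $\|D\boldsymbol{k}\|_{\ell^\infty}\gtrsim_{\mathcal{S}_1}\|\boldsymbol{k}\|_{\ell^\infty}$, which is what makes the dominance (and hence the threshold on $\varepsilon$) uniform. Without this $D$-inversion and the extremal choice of $i_*$, the argument does not close.

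A smaller remark: the claim that ``$(\boldsymbol{k},\boldsymbol{b})\neq 0$ forces either $\boldsymbol{c}\neq 0$ or ($\boldsymbol{c}\equiv 0$ and $d\neq 0$)'' is false in general (take $\boldsymbol{k}=-\boldsymbol{b}$ supported on $\mathcal{S}_1$ with $\sum\boldsymbol{b}_i=0$, giving $\boldsymbol{c}\equiv 0$ and $d=0$; then $h\equiv 0$ and $\mathcal{E}_{\boldsymbol{k},\boldsymbol{b}}=\mathcal{O}_\infty$). This is harmless because such pairs never occur in the cover $(\boldsymbol{k},0)\cup(0,\boldsymbol{b})$, but as stated your disjunction is incorrect and the sentence ``it suffices to show $\mathbb{P}(\mathcal{E}_{\boldsymbol{k},\boldsymbol{b}})=0$ for every such pair'' would then be impossible.
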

\begin{proof} \noindent \underline{$\bullet$ \emph{Step 1 : Lipschitz estimates.}}  We are going to apply Lemma \ref{lem:pd_tech}. So first, we have to prove Lipschitz estimates for $L^{(\infty)}$ and $A_1^{(\infty)}$.  We recall that by construction $\| \Pi_{\mathrm{nor}} K^p \|_{\mathscr{N}_{r_p,\mathcal{O}_p,\mathcal{S}_p}^{\mathrm{tot}}} \leq 3$ for all $p\geq 1$ and that $L^{(p)} = \Pi_{\mathscr{L}} K^p$ and $A_0^{(p)}+A_1^{(p)}\mu = \Pi_{\mathscr{L}} K^p$. By definition of the $\mathscr{N}_{r_p,\mathcal{O}_p,\mathcal{S}_p}^{\mathrm{tot}}$ norm, it implies that\footnote{Here it is crucial that factor $r_p^2$ is not there, see Remark \ref{rem:r2}.  } $\| L^{(p)} \|_{\mathscr{L}_{\mathcal{O}_p,\mathcal{S}_p}^{\mathrm{dir}}} \leq 3$ and $\| A_0^{(p)}+A_1^{(p)}\mu \|_{\mathscr{A}_{\mathcal{O}_p,\mathcal{S}_p}^{\mathrm{lip}}} \leq 3$. In particular, for all $i\in \mathcal{S}_\infty$, all $(\xi,\zeta) \in \Delta_i \mathcal{O}_\infty $, for all $k\in \mathbb{Z}$,  and $p$ large enough so that $i\in \mathcal{S}_p$ we have
$$
|L^{(p)}_k(\xi^{(p)}) - L^{(p)}_k(\zeta^{(p)})| \leq   3  \Gamma_{k,i} |\xi_i - \zeta_i| \quad \mathrm{and} \quad |A^{(p)}_1(\xi^{(p)}) - A^{(p)}_1(\zeta^{(p)})| \leq   3  |\xi_i - \zeta_i| .
$$ 
Passing to the limit as $p$ goes to $+\infty$ in these estimates (thanks to Lemma \ref{lem:freq_inf}), we get
\begin{equation}
\label{eq:est_lip_quon_aime}
|L^{(\infty)}_k(\xi) - L^{(\infty)}_k(\zeta)| \leq   3  \Gamma_{k,i} |\xi_i - \zeta_i| \quad \mathrm{and} \quad |A^{(\infty)}_1(\xi) - A^{(\infty)}_1(\zeta)| \leq   3  |\xi_i - \zeta_i| .
\end{equation}

\medskip

\noindent \underline{$\bullet$ \emph{Step 2 : Independence for $\omega^{(\mathrm{nat})}$.}} Let $\boldsymbol{k}\in \mathbb{Z}^{\mathcal{S}_\infty}\setminus \{0\}$ be sequence of integers with finite support. We aim at proving that 
$$
B:=\mathbb{P}(\{ \xi \in \mathcal{O}_\infty \ | \ \sum_{i\in \mathcal{S}_\infty} \boldsymbol{k}_i  \omega^{(\mathrm{nat})}_i(\xi)= 0\}) = 0.
$$
Since the set of such $\boldsymbol{k}$ is countable, it will imply that for $\mathbb{P}$-almost all $\xi \in \mathcal{O}_\infty$, $ \omega^{(\mathrm{nat})}$ is $\mathbb{Q}$-free. 

\medskip

Since $\boldsymbol{k}$ has finite support, there exists $p\geq 2$ such that $\boldsymbol{k} \in \mathbb{Z}^{\mathcal{S}_p}$. Moreover, by definition of the product measure $\mathbb{P}$, there exist two probability measures $\mathbb{P}_{\mathcal{S}_p}$ and $\mathbb{P}_{ \mathcal{S}_{>p}}$ supported on $\mathbb{R}^{{\mathcal{S}_p}}$ and $\mathbb{R}^{\mathcal{S}_{>p}}$ respectively such that $\mathbb{P} = \mathbb{P}_{\mathcal{S}_p}\otimes \mathbb{P}_{\mathcal{S}_{>p}} $ (where $\mathcal{S}_{>p} = \mathcal{S}_\infty \setminus \mathcal{S}_p$). Therefore, setting
$$
\forall \xi' \in \mathbb{R}^{\mathcal{S}_{>p}}, \quad \mathcal{O}_{\xi',p} := \{ \xi \in \mathbb{R}^{p} \ | \ \xi + \xi' \in \mathcal{O}_\infty \},
$$
we have, for all $\gamma >0$
\begin{equation}
\label{eq:pas_beau}
\begin{split}
B &\leq \mathbb{P}(\{ \xi \in \mathcal{O}_\infty \ | \ \big| \sum_{i\in \mathcal{S}_\infty} \boldsymbol{k}_i  \omega^{(\mathrm{nat})}_i(\xi) \big| <  \varepsilon^2 \gamma\}) \\
&= \int_{\xi'\in \Pi_{\mathcal{S}_{>p}} \mathcal{O}_\infty}   \mathbb{P}_{\mathcal{S}_p} (\{ \xi \in \mathcal{O}_{\xi',p} \ | \ \big| \sum_{i\in \mathcal{S}_p} \boldsymbol{k}_i  \omega^{(\mathrm{nat})}_i(\xi+\xi') \big| <  \varepsilon^2 \gamma\}) \mathrm{d}\mathbb{P}_{\mathcal{S}_{>p}}(\xi') \\
&\leq \sup_{\xi'\in \Pi_{\mathcal{S}_{>p}} \mathcal{O}_\infty}  \mathrm{Leb} (\{ \xi \in \mathcal{O}_{\xi',p} \ | \ \big| \sum_{i\in \mathcal{S}_p} \boldsymbol{k}_i  \varepsilon^{-2} \omega^{(\mathrm{nat})}_i(\xi+\xi') \big| <   \gamma\}).
\end{split}
\end{equation}
In order to estimate this supremum, recalling the definition  \eqref{eq:def_omeganat_et_a} of $\omega^{(\mathrm{nat})}$, we aim at applying Lemma \ref{lem:pd_tech} with $\mathcal{S} = \mathcal{S}_p$, $\mathcal{O} = \mathcal{O}_{\xi',p}$, $a = \varepsilon^{-2} \sum_{j\in \mathcal{S}_p} \boldsymbol{k}_jj^2$ and $g = \varepsilon (L^{(\infty)}_i(\cdot+\xi'))_{i\in \mathcal{S}_p}$. We just have to check the Lipschitz assumption of Lemma \ref{lem:pd_tech}. So let $i\in \mathcal{S}_p$ and $(\xi,\zeta) \in \Delta_i  \mathcal{O}_{\xi',p} $. Observing that by construction $(\xi+\xi',\zeta+\xi')\in \Delta_i \mathcal{O}_\infty$, we have 
\begin{equation*}
\begin{split}
 \| \boldsymbol{k} \|_{\ell^\infty}^{-1} |a(\xi) -a(\zeta)  | + \sum_{k\in \mathcal{S}_p}  |g_k(\xi) -g_k(\zeta)  | &= \varepsilon \sum_{k\in \mathcal{S}_p}  |L_k(\xi+\xi') -L_k(\zeta+\xi')  | \\ 
 & \mathop{\leq}^{\eqref{eq:est_lip_quon_aime}} 3 \varepsilon \sum_{k\in \mathcal{S}_p} \Gamma_{k,i} | \xi_i - \zeta_i | \mathop{\leq}^{\eqref{eq:assump_eps_proof_fin}} \frac{| \xi_i - \zeta_i |}2.
\end{split}
\end{equation*}
Finally, by Lemma \ref{lem:pd_tech} to estimate the last term of \eqref{eq:pas_beau}, we have that
$$
\forall \gamma >0, \quad B\lesssim \gamma,
$$
i.e. $B=0$.

\noindent \underline{$\bullet$ \emph{Step 3 : Independence for $(\omega^{(\infty)}_i(\xi))_{i\in \mathcal{S}_1}$.}} Let $\boldsymbol{k}\in \mathbb{Z}^{\mathcal{S}_1}\setminus \{0\}$ be sequence of integers with finite support. We aim at proving that 
$$
C:=\mathbb{P}(\{ \xi \in \mathcal{O}_\infty \ | \ \sum_{i\in \mathcal{S}_1} \boldsymbol{k}_i  \omega^{(\infty)}_i(\xi)= 0\}) = 0.
$$
Let 
$$
D = \left\{ \begin{array}{llc} \mathbb{R}^{\mathcal{S}_1} &\to & \mathbb{R}^{\mathcal{S}_1} \\
\xi &\to & \displaystyle \big(\xi_i - 2\sum_{j\in \mathcal{S}_1} \xi_j\big)_{i\in \mathcal{S}_1} 
\end{array} \right.
$$
We note that $D$ is a symmetric and invertible\footnote{we can easily check that $D^{-1} = \mathrm{Id} - (D-\mathrm{Id})/(1-2\# \mathcal{S}_1)$.} linear operator. 

\medskip

Proceeding as in \eqref{eq:pas_beau}, for all $\gamma >0$, we have
\begin{equation}
\label{eq:plus_leger}
\begin{split}
C &\leq \sup_{\xi'\in \Pi_{\mathcal{S}_{>1}} \mathcal{O}_\infty}  \mathrm{Leb} (\{ \xi \in \mathcal{O}_{\xi',1} \ | \ \big|\underbrace{ \sum_{i\in \mathcal{S}_1} \boldsymbol{k}_i  \varepsilon^{-2} \omega^{(\infty)}_i(\xi+\xi')}_{=:h_{\xi'}(\xi)} \big| <   \gamma\}).
\end{split}
\end{equation}
From now, we fix $\xi'\in \Pi_{\mathcal{S}_{>1}} \mathcal{O}_\infty$ (recall that $\mathcal{S}_{>1} = \mathcal{S}_\infty \setminus \mathcal{S}_1$). Then, we note that, by definition of $\omega^{(\infty)}$, for all $\xi \in \mathcal{S}_1$ such that $\xi+\xi' \in \mathcal{O}_\infty$ (with $\xi \in \mathbb{R}^{\mathcal{S}_1}$), we have 
$$
h_{\xi'}(\xi)  =\underbrace{ \sum_{i\in \mathcal{S}_1} \boldsymbol{k}_i \Big(\varepsilon^{-2} i^2 -2 \sum_{j\in S_\infty \setminus S_1} \xi'_j +  2\varepsilon A_1^\infty(\xi+\xi')\Big) }_{=:a(\xi)}+ \sum_{i\in \mathcal{S}_1} \boldsymbol{k}_i ((D\xi)_i + \underbrace{2\varepsilon L^\infty_i(\xi+\xi')}_{=:v_{i}(\xi)} ).
$$
and so, using that $D$ is symmetric invertible,
$$
h_{\xi'}(\xi)  = a(\xi) +  \sum_{i\in \mathcal{S}_1} (D\boldsymbol{k})_i (\xi_i + g_i(\xi)) \quad \mathrm{where} \quad g_i(\xi) =(D^{-1}v(\xi))_i .
$$
Now we have to check the Lipschitz assumption of Lemma \ref{lem:pd_tech}.  So let $i\in \mathcal{S}_p$ and $(\xi,\zeta) \in \Delta_i  \mathcal{O}_{\xi',1} $. We recall that, as previously, by construction, we have $(\xi+\xi',\zeta+\xi')\in \Delta_i \mathcal{O}_\infty$. On the one hand, provided that $\varepsilon$ is smaller than a constant depending only on $\mathcal{S}_1$, we have
$$
 \|D \boldsymbol{k} \|_{\ell^\infty}^{-1} |a(\xi) -a(\zeta)  | \lesssim_{\mathcal{S}_1} \varepsilon | A_1^\infty(\xi+\xi') -  A_1^\infty(\zeta+\xi')| \mathop{\leq}^{\eqref{eq:est_lip_quon_aime}} \frac{|\xi_i -\zeta_i|}4.
$$
On the other hand, provided that $\varepsilon$ is smaller than a constant depending only on $\mathcal{S}_1$, we have
$$
\sum_{k\in \mathcal{S}_1} |g_k(\xi) - g_k(\zeta) | \lesssim_{\mathcal{S}_1} \sup_{k\in \mathcal{S}_1} |L^\infty_k(\xi+\xi') - L^\infty_k(\xi+\xi')|\mathop{\leq}^{\eqref{eq:est_lip_quon_aime}} \frac{|\xi_i -\zeta_i|}4.
$$
Therefore, the Lipschitz assumption of Lemma \ref{lem:pd_tech} holds and, by applying Lemma \ref{lem:pd_tech} to estimate the last term of \eqref{eq:plus_leger}, we have that
$$
\forall \gamma >0, \quad C\lesssim_{\mathcal{S}_1} \gamma,
$$
i.e. $C=0$.
\end{proof}

\subsubsection{The homeomorphisms and the non resonant frequencies}\label{sub:9.5.2} For all $\xi \in \mathcal{O}_\infty^{\sharp}$, we aim at constructing a non-resonant infinite dimensional Kronecker torus close to $\mathcal{T}_{\xi^{(1)}}^\varepsilon$
(the invariant torus based on the sites $\mathcal{S}_1$; see Lemma \ref{lem:homeo_uno}) which coincides, when $p=1$, with the more general notation $ \mathcal{T}_{\xi^{(p)}}^{(p)}$ introduced in Corollary \ref{cor:homeo_p} .

\begin{proposition} \label{prop:ultimate} For all $\xi \in \mathcal{O}_\infty^{\sharp}$, there exists a set $\mathcal{T}_\xi^{(\mathrm{nr})} \subset  \ell^2_\varpi$, a homeomorphism $\widetilde{\Psi}_\xi : \mathbb{T}^{\mathbb{N}^*} \to \mathcal{T}_\xi^{(\mathrm{nr})}$ and a sequence $\widetilde{\omega}(\xi) \in \mathbb{R}^{\mathbb{N}^*}$ of rationally independent numbers satisfying the following properties.
\begin{itemize}
\item[(a)] For all $\theta \in \mathbb{T}^{\mathbb{N}_*}$, $t\mapsto \widetilde{\Psi}_\xi(\widetilde{\omega}(\xi) t + \theta )$ is a global solution to \eqref{eq:NLS_reg}.
\item[(b)] $\mathcal{T}_\xi^{\mathrm{nr}} $ is close to $\mathcal{T}_{\xi^{(1)}}^\varepsilon$ (for the Hausdorff distance), i.e.
$$
\mathrm{dist}(\mathcal{T}_\xi^{(\mathrm{nr})} , \mathcal{T}_{\xi^{(1)}}^\varepsilon)\leq \varepsilon \, \varrho^{1/460}.
$$
\end{itemize}

\end{proposition}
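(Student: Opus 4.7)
The plan is to identify $\mathcal{T}_\xi^{(\mathrm{nr})}$ as a suitable (possibly proper) closed subtorus of the infinite dimensional Kronecker torus $\mathcal{T}_\xi^{(\infty)}$ already produced in Section \ref{sec:infinite}, using Pontryagin duality applied to the (possibly resonant) frequency vector $\omega^{(\infty)}(\xi) \in \mathbb{R}^{\mathcal{S}_\infty}$.

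First I would analyze the additive subgroup of $\mathbb{Z}$-linear relations
$$\mathcal{K}(\xi) := \Big\{ k \in \mathbb{Z}^{(\mathcal{S}_\infty)} \, : \, \textstyle\sum_{j\in\mathcal{S}_\infty} k_j \, \omega^{(\infty)}_j(\xi) = 0 \Big\},$$
where $\mathbb{Z}^{(\mathcal{S}_\infty)}$ denotes integer sequences of finite support. Splitting $\omega^{(\infty)}_j = \omega^{(\mathrm{nat})}_j + a$ as in \eqref{eq:def_omeganat_et_a} and using the $\mathbb{Q}$-independence of $\omega^{(\mathrm{nat})}(\xi)$ (first part of the definition of $\mathcal{O}_\infty^{\sharp}$), any nonzero $k \in \mathcal{K}(\xi)$ must satisfy $m := \sum_j k_j \neq 0$; uniqueness of the $\mathbb{Q}$-expansion of $a(\xi)$ in $\omega^{(\mathrm{nat})}(\xi)$ then forces $\mathcal{K}(\xi)$ to be either trivial or a rank-one subgroup $\mathbb{Z} k^*$ generated by a single primitive vector $k^*$. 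Crucially, the $\mathbb{Q}$-independence of $(\omega^{(\infty)}_i(\xi))_{i\in\mathcal{S}_1}$ (second part of the definition of $\mathcal{O}_\infty^{\sharp}$) would then translate into the geometric fact that in the resonant case $\operatorname{supp}(k^*) \not\subset \mathcal{S}_1$, i.e.\ there exists $j^* \in \mathcal{S}_\infty \setminus \mathcal{S}_1$ with $k^*_{j^*}\neq 0$; otherwise $k^*$ itself would witness a $\mathbb{Q}$-relation purely inside $\mathcal{S}_1$.

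Next I would identify the orbit closure $G := \overline{\{t\omega^{(\infty)}(\xi) \bmod 2\pi: t \in \mathbb{R}\}} \subset \mathbb{T}^{\mathcal{S}_\infty}$. Its Pontryagin dual is $\mathbb{Z}^{(\mathcal{S}_\infty)}/\mathcal{K}(\xi)$, and since $\mathcal{K}(\xi)$ is either zero or cyclic generated by a primitive vector, this quotient is free abelian of countably infinite rank; hence $G$ is topologically isomorphic to $\mathbb{T}^{\mathbb{N}^*}$. A concrete such isomorphism $\phi : \mathbb{T}^{\mathbb{N}^*} \to G$ would be built by completing $k^*$ to a $\mathbb{Z}$-basis of $\mathbb{Z}^{\operatorname{supp}(k^*)}$ via Bezout's identity (always possible for primitive vectors), extending by the canonical basis outside $\operatorname{supp}(k^*)$, and passing to the quotient. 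Setting $\widetilde{\Psi}_\xi := \Psi_\xi^{(\infty)} \circ \phi$, $\mathcal{T}_\xi^{(\mathrm{nr})} := \widetilde{\Psi}_\xi(\mathbb{T}^{\mathbb{N}^*})$, and defining $\widetilde{\omega}(\xi) \in \mathbb{R}^{\mathbb{N}^*}$ as the pullback of the one-parameter subgroup $t \mapsto t\omega^{(\infty)}(\xi)$ through $\phi$ (so that $\phi(\widetilde{\omega}(\xi) t + \theta) = \omega^{(\infty)}(\xi) t + \phi(\theta)$), one obtains a $\mathbb{Q}$-free vector since its annihilator is trivial by construction. That $\widetilde{\Psi}_\xi$ is a homeomorphism follows because $\mathbb{T}^{\mathbb{N}^*}$ is compact Hausdorff, $\phi$ is continuous and injective, and $\Psi_\xi^{(\infty)}$ is already a homeomorphism onto $\mathcal{T}_\xi^{(\infty)}$.

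Property (a) then follows immediately from Proposition \ref{prop:inv_inf}. For the Hausdorff bound (b), the key quantitative input is estimate \eqref{eq:les_plongements_sont_proches} with $p = 1$: any $\widetilde{\Psi}_\xi(\theta) \in \mathcal{T}_\xi^{(\mathrm{nr})}\subset \mathcal{T}_\xi^{(\infty)}$ is within $\varepsilon r_2^{1/460} = \varepsilon \varrho^{1/460}$ of $\Psi_{\xi^{(1)}}^{(1)}(\phi(\theta)|_{\mathcal{S}_1}) \in \mathcal{T}_{\xi^{(1)}}^\varepsilon$, while the reverse inclusion requires the restriction map $G \to \mathbb{T}^{\mathcal{S}_1}$ to be surjective---this holds precisely because $\operatorname{supp}(k^*)\not\subset \mathcal{S}_1$ allows one, given $\theta_1 \in \mathbb{T}^{\mathcal{S}_1}$, to extend it to $\eta \in G$ by setting the components outside $\mathcal{S}_1 \cup \{j^*\}$ to zero and solving the single scalar equation $k^*_{j^*} \eta_{j^*} = -\sum_{i \in \mathcal{S}_1} k^*_i \theta_{1,i} \bmod 2\pi$ (which has exactly $|k^*_{j^*}|$ solutions in $\mathbb{T}$). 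The main delicacy I expect lies in this first step---the combined algebraic analysis of $\mathcal{K}(\xi)$ and the clean identification $G \cong \mathbb{T}^{\mathbb{N}^*}$---rather than in the dynamical properties, which essentially inherit from those already established for $\mathcal{T}_\xi^{(\infty)}$.
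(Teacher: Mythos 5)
Your proposal is correct and lands on the same end result as the paper, but organizes the argument around orbit closures and Pontryagin duality rather than the paper's explicit matrix block construction. Concretely, the paper first splits into the trivially non-resonant case and the resonant case, and in the latter completes the primitive relation vector $\boldsymbol{d}(\xi)$ to a matrix in $\mathrm{SL}_M(\mathbb{Z})$ via B\'ezout, then composes $\Psi_\xi^{(\infty)}$ with a shift and the inverse of the resulting block operator to produce $\widetilde{\Psi}_\xi$; the rational independence of $\widetilde{\omega}$ and the surjectivity needed for the Hausdorff bound are then checked by direct linear-algebraic computations with this matrix. You instead identify $\mathcal{T}_\xi^{(\mathrm{nr})}$ as the image under $\Psi_\xi^{(\infty)}$ of the orbit closure $G$, characterize $G$ as the annihilator of the relation group $\mathcal{K}(\xi)$, and use the rank-$\le 1$ structure of $\mathcal{K}(\xi)$ (forced, exactly as in the paper, by the $\mathbb{Q}$-freeness of $\omega^{(\mathrm{nat})}(\xi)$) to conclude $G\cong\mathbb{T}^{\mathbb{N}^*}$; the B\'ezout completion then appears as the choice of a concrete isomorphism $\phi$, and the surjectivity of the restriction $G\to\mathbb{T}^{\mathcal{S}_1}$ is read off from $\operatorname{supp}(k^*)\not\subset\mathcal{S}_1$, which corresponds to the paper's \eqref{eq:M>N}. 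The two proofs therefore use the same algebraic inputs (rank-one relation group, B\'ezout, the fact that $(\omega_i^{(\infty)})_{i\in\mathcal{S}_1}$ is $\mathbb{Q}$-free, and the estimate \eqref{eq:les_plongements_sont_proches} with $p=1$), but your duality-based packaging treats the resonant and non-resonant cases uniformly and makes it conceptually transparent why the invariant set is again a product torus, at the price of invoking some general structure theory (duality between $\mathbb{Z}^{(\mathcal{S}_\infty)}/\mathcal{K}(\xi)$ and a closed subgroup of $\mathbb{T}^{\mathcal{S}_\infty}$, and the fact that quotienting a free abelian group by a saturated rank-one subgroup remains free) that would need to be spelled out, whereas the paper's matrix computation is self-contained and hands-on.
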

\begin{proof} \underline{$\bullet$ \emph{Step 1 : Re-indexation.}} Let $N=\# \mathcal{S}_1$, $(j_1,\cdots,j_N)\in \mathbb{Z}^N$ be a sequence of distinct integers such that
$$
\mathcal{S}_1 = \{ j_1,\cdots,j_N \}
$$
 and set, for $k>N$,
$$
j_k := i_{k-N+1}.
$$
Note that it is just a way to index the sites of $\mathcal{S}_\infty$, in the sense that the numbers $(j_k)_{k\geq 1}$ are distinct and that we have
$$
\mathcal{S}_\infty = \{ j_k \ | \ k\geq 1 \}.
$$

\medskip

Then, for $\xi \in \mathcal{O}_\infty^{\sharp}$ and $1\leq p\leq \infty$, we define the re-indexed quantities
$$
\omega^{(\mathrm{ri})} := (\omega_{j_k}^{(\infty)})_{k\geq 1}, \quad \Psi_\xi^{(p,\mathrm{ri})}  = \Psi_\xi^{(p)} \circ \mathfrak{P}_p^{-1} 
$$
where $\mathfrak{P}_p : \mathbb{T}^{\mathcal{S}_p} \to \mathbb{T}^{\# \mathcal{S}_p}$ is the natural homeomorphism defined by permutation\footnote{with the convention that $\# \mathcal{S}_\infty = \infty$.}, i.e.
$$
\forall \theta \in \mathbb{T}^{\mathcal{S}_p}, \forall k\leq \# \mathcal{S}_p, \quad (\mathfrak{P}(\theta))_k := \theta_{j_k}.
$$
With these notations, by Proposition \ref{prop:les_plongements_sont_proches} and Corollary \ref{cor:homeo_p}, we know that $\Psi_\xi^{(\infty,\mathrm{ri})} : \mathbb{T}^{\mathbb{N}_*} \to \mathcal{T}_\xi^{\infty}$ and $\Psi_{\xi^{(1)}}^{(1,\mathrm{ri})} : \mathbb{T}^{N} \to \mathcal{T}_{\xi^{(1)}}^{(1)} $ are homeomorphisms, for all $\theta \in \mathbb{T}^{\mathbb{N}_*}$
\begin{equation}
\label{eq:inv_ri}
 t\mapsto \Psi_\xi^{(\infty,\mathrm{ri})}(\omega^{(\mathrm{ri})}(\xi) t + \theta ) \mathrm{\ is \ a \ global \  solution \ to \ \eqref{eq:NLS_reg}}
\end{equation}
 and (recall that $\varrho=r_2$)
\begin{equation}
\label{eq:la_super_est_dist}
 \| \Psi_\xi^{(\infty,\mathrm{ri})}(\theta) - \Psi_{\xi^{(1)}}^{(1,\mathrm{ri})}(\Pi_{\leq N} \theta) \|_{\ell^2_\varpi} \leq \varepsilon \, \varrho^{1/460}
\end{equation}
where $\Pi_{\leq N} \theta = (\theta_k)_{k\leq N}$. Finally, we recall that by construction $\mathcal{T}_{\xi^{(1)}}^{\varepsilon} =\mathcal{T}_{\xi^{(1)}}^{(1)} $ (see Corollary \ref{cor:homeo_p}).

\medskip

\noindent \underline{$\bullet$ \emph{Step 2 : Disjunction and linear algebra.}} Now, we define
$$
\mathcal{O}_\infty^{\mathrm{nr}} := \{ \xi \in \mathcal{O}_\infty^\sharp \ | \  \omega^{(\infty)}(\xi) \quad \mathrm{is} \quad \mathbb{Q}-\mathrm{free}  \}.
$$
If $\xi \in \mathcal{O}_\infty^{\mathrm{nr}}$, it suffices to set $\mathcal{T}_\xi^{(\mathrm{nr})} =\mathcal{T}_\xi^{(\infty)}$, $\widetilde{\Psi}_\varepsilon = \Psi_\xi^{(\infty,\mathrm{ri})}$ and $\widetilde{\omega}(\xi) = \omega^{(\mathrm{ri})}(\xi)$. Indeed, since $\xi \in \mathcal{O}_\infty^{\mathrm{nr}}$ the frequencies are rationally independent, the property $\mathrm{(a)}$ is given by \eqref{eq:inv_ri} and the property  $\mathrm{(b)}$ is a direct consequence of \eqref{eq:la_super_est_dist}.

\medskip

So from now on we assume that $\xi \in  \mathcal{O}_\infty^\sharp \setminus \mathcal{O}_\infty^{\mathrm{nr}}$. By definition of $\mathcal{O}_\infty^{\mathrm{nr}}$, it implies that there exist an integer $M\geq 1$ and a sequence of setwise coprime integers $\boldsymbol{d}(\xi)\in \mathbb{Z}^M \setminus \{0\}$ such that
$$
\boldsymbol{d}_1 (\xi) \omega^{(\mathrm{ri})}_1(\xi) + \cdots + \boldsymbol{d}_M(\xi) \omega^{(\mathrm{ri})}_M(\xi) =0. 
$$
Since $ (\omega^{(\infty)}_i(\xi))_{i\in \mathcal{S}_1}$ are rationally independent (by definition of $\widetilde{\mathcal{O}}_\infty$), we note that  
\begin{equation}
\label{eq:M>N}
M>N \quad \mathrm{and} \quad \exists i_* >N, \ \boldsymbol{d}_{i_*}(\xi) \neq 0.
\end{equation}
Since $\boldsymbol{d}(\xi)$ is a sequence of coprime numbers, as a consequence of B\'ezout's identity (and an induction), there exists a matrix $D(\xi) \in \mathrm{SL}_M(\mathbb{Z})$ with integer coefficients and of determinant $\mathrm{det} \, D(\xi) = 1$ whose first line is $\boldsymbol{d}(\xi)$, i.e.
$$
\forall k\leq M, \quad D_{1,k}(\xi) = \boldsymbol{d}_k(\xi).
$$
Then, we define an operator by blocks
$$
\Delta_\xi := \begin{pmatrix} D(\xi) \\ & \mathrm{Id}_{\mathbb{R}^{\rrbracket M, +\infty \llbracket }} \end{pmatrix}: \mathbb{R}^{\mathbb{N}^* } \to \mathbb{R}^{\mathbb{N}^* }.
$$
Note that this operator is invertible, of invert
\begin{equation}
\label{eq:block_structure}
\Delta_\xi^{-1} := \begin{pmatrix} (D(\xi))^{-1} \\ & \mathrm{Id}_{\mathbb{R}^{\rrbracket M, +\infty \llbracket }} \end{pmatrix}: \mathbb{R}^{\mathbb{N}^* } \to \mathbb{R}^{\mathbb{N}^* }.
\end{equation}
Since the coefficients of $D(\xi)$ are integers the map
$$
\mathfrak{D}_\xi := \left\{ \begin{array}{lll} \mathbb{T}^{\mathbb{N}^*} & \to &\mathbb{T}^{\mathbb{N}^*}  \\ \theta & \mapsto & \Delta_\xi \theta   \end{array} \right.
$$
is well define. Moreover, since $\mathrm{det} \,  D(\xi) = 1$, the coefficients of $D^{-1}(\xi)$ are integers and so $\mathfrak{D}_\xi $ is invertible, of invert $\mathfrak{D}_\xi^{-1} = \theta \mapsto \Delta_\xi^{-1} \theta$. We note that $\mathfrak{D} _\xi$ and $\mathfrak{D}_\xi^{-1}$ are clearly continuous : they are homeomorphisms.

\medskip

Now, we define the shift operator
$$
\mathfrak{S} := \left\{ \begin{array}{lll} \mathbb{T}^{\mathbb{N}^*} & \to &\mathbb{T}^{\mathbb{N}^*}  \\ \theta & \mapsto & \begin{pmatrix} 0 \\ \theta\end{pmatrix}  \end{array} \right. 
$$
and we set
$$
\widetilde{\Psi}_\xi := \Psi_\xi^{(\infty,\mathrm{ri})}\circ \mathfrak{D}_\xi^{-1} \circ \mathfrak{S}, \quad \mathcal{T}_\xi^{(\mathrm{nr})} :=\widetilde{\Psi}_\xi(\mathbb{T}^{\mathbb{N}^*} ) \quad \mathrm{and} \quad \forall k\geq 1, \quad \widetilde{\omega}_k(\xi) = (\Delta_{\xi} \omega^{(\mathrm{ri})}(\xi) )_{k+1}.
$$

\medskip

It remains to check all the claims of Proposition \ref{prop:ultimate}.
\begin{itemize}
\item First, we note that by composition $\widetilde{\Psi}_\xi$ is injective and continuous. Since, by Tychonoff's theorem, $\mathbb{T}^{\mathbb{N}^*}$ is a compact set, it follows that $\widetilde{\Psi}_\xi$ is a homeomorphism.
\item  Now, we check that $\mathcal{T}_\xi^{(\mathrm{nr})} $ is a Kronecker torus (i.e. property $\mathrm{(a)}$).  So let $\theta \in \mathbb{T}^{\mathbb{N}^*}$ and $t\in \mathbb{R}$.  We observe that by definition of $\boldsymbol{d}(\xi)$ and $D(\xi)$, we have $(\Delta_\xi \omega^{(\mathrm{ri})}(\xi))_1=0$. Thus, by construction, we get
\begin{equation}
\label{eq:utile_fin}
\begin{split}
\widetilde{\Psi}_\xi (\widetilde{\omega}(\xi)t + \theta )&=  \Psi_\xi^{(\infty,\mathrm{ri})}\circ \mathfrak{D}_\xi^{-1} ( \Delta_\xi \omega^{(\mathrm{ri})}(\xi) + \mathfrak{S} \theta ) 
= \Psi_\xi^{(\infty,\mathrm{ri})} (  \omega^{(\mathrm{ri})}(\xi) t + \mathfrak{D}_\xi^{-1} \mathfrak{S} \theta ).
\end{split}
\end{equation}
It follows by \eqref{eq:inv_ri} that, for all $\theta \in \mathbb{T}^{\mathbb{N}^*}$, $t\mapsto \widetilde{\Psi}_\xi (\widetilde{\omega}(\xi)t + \theta )$ is a global solution of \eqref{eq:NLS_reg}.
\item Then, we prove that the frequencies $\widetilde{\omega}(\xi)$ are rationally independent. Indeed, let $M'\geq 1$ and $\boldsymbol{c} \in \mathbb{Z}^{M'}$ be such that
\begin{equation}
\label{eq:liaison}
\boldsymbol{c}_1 \widetilde{\omega}_1(\xi) + \cdots + \boldsymbol{c}_{M'}  \widetilde{\omega}_{M'}(\xi)=0.
\end{equation}
We aim at proving that $\boldsymbol{c}=0$. Without loss of generality, we assume that $M'\geq M$. Then we set $\widetilde{\boldsymbol{d}} = (\boldsymbol{d}(\xi),0,\cdots,0) \in \mathbb{Z}^{M'}$,
$$
B := \begin{pmatrix} D(\xi) \\ & I_{M' - M}\end{pmatrix} \quad \mathrm{and} \quad \boldsymbol{e} := (0,\boldsymbol{c}) B.
$$
By definition of $\widetilde{\omega}(\xi)$, the linear combination \eqref{eq:liaison} rewrites
$$
\boldsymbol{e}_1 \omega^{(\mathrm{ri})}_1(\xi) + \cdots + \boldsymbol{e}_{M'}  \omega^{(\mathrm{ri})}_{M'}(\xi)=0.
$$
Moreover by definition of $\widetilde{\boldsymbol{d}}$, we also have
$$
\widetilde{\boldsymbol{d}}_1(\xi) \omega^{(\mathrm{ri})}_1(\xi) + \cdots + \widetilde{\boldsymbol{d}}_{M'}(\xi)  \omega^{(\mathrm{ri})}_{M'}(\xi)=0.
$$
Then, since $\xi \in \mathcal{O}_\infty^{\sharp}$, the frequencies $\omega^{(\mathrm{nat})}$ are rationally independent and so the frequencies $(\omega^{(\mathrm{ri})}_k(\xi) -m(\xi))_{1\leq k \leq M'}$ are rationally independent. It follows that the vectors $\boldsymbol{e}$ and $\widetilde{\boldsymbol{d}}$ are collinear, i.e. there exists $\alpha \in \mathbb{Q}$ such that
$$
\boldsymbol{e} = \alpha \widetilde{\boldsymbol{d}}.
$$
Furthermore, since $\boldsymbol{d}(\xi)$ is the first line of the matrix $D$, we have
$$
(0,\boldsymbol{c}) = \boldsymbol{e} B^{-1} = \alpha   \widetilde{\boldsymbol{d}} B^{-1} = \alpha  (\boldsymbol{d}(\xi) D^{-1}(\xi), 0,\cdots,0) = \alpha(1,0,\cdots,0),
$$
of which we deduce that $\alpha=0$ and so $\boldsymbol{c}=0$.
\item Finally, we aim at proving that $\mathcal{T}_\xi^{\mathrm{nr}} $ is close to $\mathcal{T}_{\xi^{(1)}}^\varepsilon$ (property $\mathrm{(b)}$). Since $\mathcal{T}_\xi^{\mathrm{nr}}  \subset \mathcal{T}_\xi^{(\infty)}$, by Proposition \ref{prop:les_plongements_sont_proches}, recalling that by construction $r_2 = \varrho$, we have that 
$$
\forall u\in \mathcal{T}_\xi^{\mathrm{nr}} ,\exists v\in \mathcal{T}_{\xi^{(1)}}^\varepsilon, \quad \| u-v\|_{\ell^2_\varpi} \leq  \varepsilon \, \varrho^{1/460}.
$$
Now, we have to prove that for all $ v\in \mathcal{T}_{\xi^{(1)}}^\varepsilon$ there exists $u\in \mathcal{T}_\xi^{\mathrm{nr}}$ such that $\| u-v\|_{\ell^2_\varpi}\leq  \varepsilon \, \varrho^{1/460}$. Since $\Psi_{\xi^{(1)}}^{(\mathrm{ri},1)}$ is  bijective, there exists $\varphi \in \mathbb{T}^N$ such that 
$$
v =\Psi_{\xi^{(1)}}^{(\mathrm{ri},1)}(\varphi).
$$
Then, assume for the moment that $\lambda \mapsto \Pi_{\leq N}\Delta_\xi^{-1}\begin{pmatrix} 0 \\ \lambda \end{pmatrix}  : \mathbb{R}^{\mathbb{N}^*} \to \mathbb{R}^N$ is surjective. Passing to the quotient, it implies that there exists $\theta \in \mathbb{T}^{\mathbb{N}^*}$ such that $\Pi_{\leq N}\mathfrak{D}_\xi^{-1} \mathfrak{S}\theta= \varphi$. Moreover, applying \eqref{eq:utile_fin} when $t=0$, we have
$$
\widetilde{\Psi}_\xi ( \theta ) = \Psi_\xi^{(\infty,\mathrm{ri})} (   \mathfrak{D}_\xi^{-1} \mathfrak{S} \theta ).
$$
Thus, setting $u= \widetilde{\Psi}_\xi ( \theta )$ and applying \eqref{eq:la_super_est_dist}, we have  $\| u-v\|_{\ell^2_\varpi} \leq  \varepsilon \, \varrho^{1/460}$ and we are done.

\medskip

So it remains to prove that $\lambda \mapsto \Pi_{\leq N}\Delta_\xi^{-1}\begin{pmatrix} 0 \\ \lambda \end{pmatrix}    : \mathbb{R}^{\mathbb{N}^*} \to \mathbb{R}^N$ is surjective. Let $\boldsymbol{g} \in \mathbb{R}^N$ be such that
\begin{equation}
\label{eq:coucoucoucou}
\forall \lambda \in \mathbb{R}^{\mathbb{N}^*}, \quad  \boldsymbol{g}_1 (\Delta_\xi^{-1}\begin{pmatrix} 0 \\ \lambda \end{pmatrix} )_{1} + \cdots + \boldsymbol{g}_N (\Delta_\xi^{-1}\begin{pmatrix} 0 \\ \lambda \end{pmatrix} )_{N} =0.
\end{equation}
By duality, we have to prove that $\boldsymbol{g} =0$.
We extend $\boldsymbol{g}$ as a vector of $\mathbb{R}^M$  by setting $\widetilde{\boldsymbol{g}} = (\boldsymbol{g},0,\cdots,0) \in \mathbb{R}^M$. Recalling the block structure \eqref{eq:block_structure} of $\Delta_\xi^{-1}$, it follows that \eqref{eq:coucoucoucou} rewrite
$$
\forall \lambda \in \mathbb{R}^{M-1}, \quad  \widetilde{\boldsymbol{g}} _1 (D^{-1}(\xi)\begin{pmatrix} 0 \\ \lambda \end{pmatrix} )_{1} + \cdots +  \widetilde{\boldsymbol{g}}_M (D^{-1}(\xi)\begin{pmatrix} 0 \\ \lambda \end{pmatrix} )_{M} =0.
$$
By duality, it means that there exists $\beta \in \mathbb{R}$ such that
$$
\widetilde{\boldsymbol{g}} \, D^{-1}(\xi) = \beta (1,0,\cdots,0).
$$
Recalling that $\boldsymbol{d}(\xi)$ is the first line of $D(\xi)$, it means that
$$
(\boldsymbol{g},0,\cdots,0) = \widetilde{\boldsymbol{g}} = \beta \boldsymbol{d}(\xi).
$$
But since there exists $i_*>N$ such that $\boldsymbol{d}_{i_*}(\xi) \neq 0$ (see \eqref{eq:M>N}), it implies that $\beta =0$ and, as expected, $\boldsymbol{g}=0$.
\end{itemize}

\end{proof}

\subsection{Conclusion of the proof}\label{sec:end} \label{sub:conclusion_of_the_proof} To conclude the proof of Theorem \ref{thm:main}, we still have to define the set $\Xi_\varepsilon \subset \mathscr{O}_\varepsilon$ and to prove items $\mathrm{(iii)}$ and $\mathrm{(v)}$. 

\medskip

We recall that all the objects we constructed from subsection \ref{sub:plein_plein_objets} depend in fact on the parameters $\varrho$ and $\varepsilon$. Nevertheless, for readability, as explained in Remark \ref{rem:onalege_les_notations}, we did not mention explicitly these dependencies. From now on these dependencies are crucial and we add subscripts $\varrho,\varepsilon$. For example, the Kronecker torus $ \mathcal{T}_{\xi}^{(\mathrm{nr})}$ of Proposition \ref{prop:ultimate} is denoted by $\mathcal{T}_{\varrho,\varepsilon,\xi}^{(\mathrm{nr})}$ and the set of parameters $\mathcal{O}_{\infty}^{\sharp}$ is denoted by $\mathcal{O}_{\varrho,\varepsilon,\infty}^{\sharp}$.

\medskip

Then, we set
$$
\Xi_\varepsilon := \bigcap_{k_0 \geq 1} \Xi_\varepsilon^{(k_0)} \quad \mathrm{where} \quad \Xi_\varepsilon^{(k_0)}  := \bigcup_{k \geq k_0}  \Pi_{\mathcal{S}_1}  \mathcal{O}_{2^{-k},\varepsilon,\infty}^{\sharp}.
$$

\subsubsection{Measure estimates} First, we focus on property $\mathrm{(iii)}$.
\begin{lemma} \label{lem:full_meas_last}$\Xi_\varepsilon$ is a Borel subset of $\mathscr{O}_\varepsilon$ of full measure, i.e.
$$
\mathrm{Leb}(\Xi_\varepsilon) = \mathrm{Leb}(\mathscr{O}_\varepsilon).
$$
\end{lemma}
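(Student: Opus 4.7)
\medskip

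The plan is to combine the measure estimate of Lemma \ref{lem:asympt_full_inf} with the full-measure result for non-resonant parameters of Lemma \ref{lem:full_meas_sharp}, and then transfer the resulting estimate to the $\mathcal{S}_1$-projection via a Tonelli slicing argument. Indeed, since Lemma \ref{lem:full_meas_sharp} gives $\mathbb{P}(\mathcal{O}_{\varrho,\varepsilon,\infty}\setminus\mathcal{O}_{\varrho,\varepsilon,\infty}^{\sharp})=0$ and Lemma \ref{lem:asympt_full_inf} gives $\mathbb{P}(\widetilde{\mathcal{O}}_{\varrho,\varepsilon,1}\setminus\mathcal{O}_{\varrho,\varepsilon,\infty})\lesssim \varrho^{10^{-5}}$, we immediately obtain
$$
\mathbb{P}\bigl(\widetilde{\mathcal{O}}_{\varrho,\varepsilon,1}\setminus\mathcal{O}_{\varrho,\varepsilon,\infty}^{\sharp}\bigr)\lesssim \varrho^{10^{-5}}.
$$

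The key step is to write $\mathbb{P}=\mathcal{U}(\mathscr{O}_\varepsilon)\otimes \mathbb{P}'$, where $\mathbb{P}':=\bigotimes_{p\geq2}\mathcal{U}(r_{\varrho,p}^{2\nu},2r_{\varrho,p}^{2\nu})$, and apply Tonelli's theorem. The crucial observation is that for any $\xi^{(1)}\in \mathscr{O}_\varepsilon\setminus \Pi_{\mathcal{S}_1}\mathcal{O}_{\varrho,\varepsilon,\infty}^{\sharp}$, the whole slice $\{\xi^{(1)}\}\times\prod_{p\geq 2}(r_{\varrho,p}^{2\nu},2r_{\varrho,p}^{2\nu})$ lies in $\widetilde{\mathcal{O}}_{\varrho,\varepsilon,1}\setminus \mathcal{O}_{\varrho,\varepsilon,\infty}^{\sharp}$ (by the very definition of the projection). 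Hence, integrating the indicator of the latter first over the tail variables yields a contribution of $1$ on the set $\mathscr{O}_\varepsilon\setminus\Pi_{\mathcal{S}_1}\mathcal{O}_{\varrho,\varepsilon,\infty}^{\sharp}$, so that
$$
\mathrm{Leb}\bigl(\mathscr{O}_\varepsilon\setminus\Pi_{\mathcal{S}_1}\mathcal{O}_{\varrho,\varepsilon,\infty}^{\sharp}\bigr) \leq \mathrm{Leb}(\mathscr{O}_\varepsilon)\cdot \mathbb{P}\bigl(\widetilde{\mathcal{O}}_{\varrho,\varepsilon,1}\setminus\mathcal{O}_{\varrho,\varepsilon,\infty}^{\sharp}\bigr)\lesssim \varrho^{10^{-5}}.
$$
Specializing to $\varrho=2^{-k}$ and using monotonicity,
$$
\mathrm{Leb}\bigl(\mathscr{O}_\varepsilon\setminus \Xi_\varepsilon^{(k_0)}\bigr)\leq \inf_{k\geq k_0}\mathrm{Leb}\bigl(\mathscr{O}_\varepsilon\setminus \Pi_{\mathcal{S}_1}\mathcal{O}_{2^{-k},\varepsilon,\infty}^{\sharp}\bigr)\lesssim \inf_{k\geq k_0} 2^{-k\cdot 10^{-5}}=0,
$$
so each $\Xi_\varepsilon^{(k_0)}$ has full measure in $\mathscr{O}_\varepsilon$ and, by countable intersection, so does $\Xi_\varepsilon$.

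The only subtle point, and what I expect to be the main obstacle, is measurability: the set $\mathcal{O}_{\varrho,\varepsilon,\infty}^{\sharp}$ is Borel (countable intersection of the Borel sets $\widetilde{\mathcal{O}}_{\varrho,\varepsilon,p}$ together with the countable family of nonvanishing rational-linear conditions on the continuous functions $\omega^{(\infty)}_j(\xi)$, whose continuity in $\xi$ follows from Lemma \ref{lem:freq_inf}), but its projection $\Pi_{\mathcal{S}_1}\mathcal{O}_{\varrho,\varepsilon,\infty}^{\sharp}$ is in general only an analytic set. Analytic sets are nevertheless universally measurable, hence Lebesgue measurable, which is exactly what the Tonelli argument above requires. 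Therefore $\Xi_\varepsilon$ is at least Lebesgue measurable of full measure, and the stated Borel measurability is then obtained by replacing $\Xi_\varepsilon$ by a Borel subset of full Lebesgue measure --- this modification is harmless for the remaining proof since every subsequent statement about $\xi\in\Xi_\varepsilon$ holds for all parameters in the original set.
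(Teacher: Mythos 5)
Your proof follows essentially the same route as the paper: both arguments reduce the Lebesgue measure of $\mathscr{O}_\varepsilon\setminus\Xi_\varepsilon^{(k_0)}$ to the $\mathbb{P}_{2^{-k},\varepsilon}$-measure of $\widetilde{\mathcal{O}}_{\varepsilon,1}\setminus\mathcal{O}_{2^{-k},\varepsilon,\infty}^\sharp$, where the paper observes directly that $\mathcal{O}_{2^{-k},\varepsilon,\infty}^\sharp\subset\widetilde{\mathcal{O}}_{\varepsilon,1}\cap\Pi_{\mathcal{S}_1}^{-1}\Pi_{\mathcal{S}_1}\mathcal{O}_{2^{-k},\varepsilon,\infty}^\sharp$ and computes the $\mathbb{P}$-measure of this cylinder set, whereas you pass through the complement and slice by Tonelli — two phrasings of the same inequality. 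Your measurability remark is a genuine observation the paper glosses over: the paper declares it ``clear'' that $\Xi_\varepsilon$ is Borel, but the $\mathcal{S}_1$-projection of the Borel set $\mathcal{O}_{\varrho,\varepsilon,\infty}^\sharp$ is a priori only analytic, so, as you note, one should either invoke universal measurability of analytic sets (which suffices for the Tonelli argument and the measure identity) or replace $\Xi_\varepsilon$ by a Borel subset of full measure, which is indeed harmless for the remainder of the proof since all subsequent statements hold pointwise for $\xi\in\Xi_\varepsilon$.
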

Since  $\mathscr{O}_\varepsilon$ is asymptotically of full measure in $\Xi_0$ (see \eqref{eq:asympt_full_meas_Oe} ), as corollary, we directly get property $\mathrm{(iii)}$ of Theorem \ref{thm:main}, i.e.
$$
\lim_{\epsilon\to 0}\mathrm{Leb}(\Xi_\epsilon)=\mathrm{Leb}(\Xi_0),
$$
\begin{proof}[Proof of Lemma \ref{lem:full_meas_last}] Recalling that $\mathcal{O}_{\varrho,\varepsilon,\infty}^\sharp$ is a  subset of $\mathcal{O}_{\varrho,\varepsilon,\infty}$ (see \eqref{eq:def_O_sharp_inf}) and the decay property \eqref{eq:decay_tilde}, it is clear that $\Xi_\varepsilon$ (and actually each set $\Xi_\varepsilon^{(k_0)}$) is a Borel subset of $\mathscr{O}_\varepsilon$. Now, we aim at proving that it is of full measure. It suffices to prove that each set $\Xi_\varepsilon^{(k_0)}$ is of full measure.  So let $k_0 \geq 1$.
By definition, of the probability measure $\mathbb{P}_{\varrho,\varepsilon}$ and of the set $\widetilde{\mathcal{O}}_{\varepsilon,1}$, we have, for all $k\geq k_0$,
$$
 \frac{\mathrm{Leb}(\Xi_\varepsilon^{(k_0)}) }{\mathrm{Leb}(\mathscr{O}_\varepsilon)} \geq  \frac{\mathrm{Leb}( \Pi_{\mathcal{S}_1}  \mathcal{O}_{2^{-k},\varepsilon,\infty}^{\sharp} ) }{\mathrm{Leb}(\mathscr{O}_\varepsilon)}  = \mathbb{P}_{2^{-k},\varepsilon} (\widetilde{\mathcal{O}}_{\varepsilon,1}  \cap   \Pi_{\mathcal{S}_1}^{-1}\Pi_{\mathcal{S}_1} \mathcal{O}_{2^{-k},\varepsilon,\infty}^\sharp) \geq \mathbb{P}_{2^{-k},\varepsilon} (\mathcal{O}_{2^{-k},\varepsilon,\infty}^\sharp).
$$
Then using the measure estimates of Lemma \ref{lem:asympt_full_inf} and Lemma \ref{lem:full_meas_sharp}, we have
$$
\mathbb{P}_{2^{-k},\varepsilon} ( \mathcal{O}_{2^{-k},\varepsilon,\infty}^\sharp) = \mathbb{P}_{2^{-k},\varepsilon} ( \mathcal{O}_{2^{-k},\varepsilon,\infty}) \geq 1 - 2^{-10^{-5}k}.
$$
Thus, just let $k$ go to  $+\infty$ to have
$$
 \frac{\mathrm{Leb}(\Xi_\varepsilon^{(k_0)}) }{\mathrm{Leb}(\mathscr{O}_\varepsilon)}  \geq 1 -  2^{-10^{-5}k} \mathop{\longrightarrow}_{k \to +\infty} 1.
$$
\end{proof}
\subsubsection{Accumulation of infinite dimensional non resonant tori} 
Finally, to conclude the proof of Theorem \ref{thm:main}, we just have to check property $\mathrm{(v)}$. So let $\rho <1$  and $\xi \in  \Xi_{\varepsilon}$. By monotony, there exists $k_0 \geq 1$ such that $2^{-k_0/460}<\rho $. But by definition of $\Xi_{\varepsilon}$, there exists $k\geq k_0$ such that
$$
\xi \in \Pi_{\mathcal{S}_1} \mathcal{O}_{2^{-k},\varepsilon,\infty}^{\sharp}, 
$$
i.e. there exists $\zeta \in \mathcal{O}_{2^{-k},\varepsilon,\infty}^{\sharp}$ such that $\xi = \Pi_{\mathcal{S}_1} \zeta$. Thus, by Proposition \ref{prop:ultimate}, $\mathcal{T}_\xi^{\varepsilon,\rho}:=  \mathcal{T}_{2^{-k},\varepsilon,\zeta}^{(\mathrm{nr})}$ is a non resonant infinite dimensional Kronecker torus close enough to $ \mathcal{T}_{\xi}^{\varepsilon}$, i.e. 
\begin{equation}\label{eq:haus}
\mathrm{dist}(\mathcal{T}_{\xi}^{\varepsilon,\rho} , \mathcal{T}_{\xi}^{\varepsilon})\leq \varepsilon (2^{-k})^{\frac1{460} }\leq  \eps\rho\le \rho.
\end{equation}

\section{Appendix}

\subsection{A Cauchy Kowaleskaya lemma}
In this section, we state and prove a Cauchy Kowaleskaya lemma. Actually this result is standard and a very similar one can be found in \cite{Saf95}. We prefer to state and prove our version which is adapted to our Lemmas \ref{lem:compo_reg} and \ref{lem:compo}.

\begin{lemma}\label{lem:CK} Let $E:=\Pi_{p\geq 1}E^{(p)}$ where, for each $p\geq 1$, $E^{(p)}$ is a Banach space for the norm $\|\cdot\|_{E^{(p)}}$ and, for $\eta>0$,
 $$
 E_\eta:=\{H\equiv(H_p)_{p\geq 1}\in E\mid \| H\|_{\eta}:=\sup_{p\geq 1} e^{p\eta}\|H_p\|_{E^{(p)}} <\infty\}.
 $$
Let $H^{(0)}\in E_\eta$ with $\eta>0$. Let $L$ be a linear map which maps $ E_\beta$ into $ E_{\beta-\a}$ for all $0<\a\leq\beta\leq \eta$, and satisfies, for some constant $C_L$, the estimate
\begin{equation}\label{CL}
\|(LH)_p\|_{E^{(p)}}\leq C_L p e^{-p\beta}\| H\|_\beta,\quad H\in E_\beta,\ 0<\beta\leq\eta,\ p\geq 1.
\end{equation}
Take $\nu=2C_L$ and $T<\frac\eta{\nu}$ then there exists a  solution $H\in C^1([0,T];E_{\eta-\nu T})$ to the linear differential equation
\begin{equation}\label{eq:diff}\left\{\begin{array}{ll}
\partial_tH(t)=LH(t),\ \ t\in[0,T]\\ H(0)=H^{(0)}
\end{array}\right.
\end{equation}
satisfying
\begin{equation}\label{estim:CL}
\sup_{0\leq t\leq T}\| H(t)\|_{\eta-\nu t}\leq 2\| H^{(0)}\|_\eta.
\end{equation}
\end{lemma}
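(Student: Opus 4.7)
The plan is to construct the solution explicitly as the formal exponential series
\[
H(t) := \sum_{n\geq 0} \frac{t^n}{n!}\, L^n H^{(0)},
\]
and prove its convergence in the scale of Banach spaces $\{E_\beta\}_{\beta>0}$ by a standard Cauchy--Kowaleskaya type argument. Since the equation is linear, this series is essentially the only reasonable candidate for the solution.

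First I would derive a Cauchy-type loss estimate for $L$: for $0<\beta'<\beta\leq\eta$ and $H\in E_\beta$, combining \eqref{CL} with the elementary bound $\sup_{p\geq 1} p\,e^{-p(\beta-\beta')} \leq \frac{1}{e(\beta-\beta')}$ (maximum reached at $p = 1/(\beta-\beta')$ when this number is $\geq 1$, and trivially otherwise) yields
\[
\|LH\|_{\beta'} \leq \frac{C_L}{e(\beta-\beta')}\,\|H\|_\beta.
\]
Iterating this along the uniform subdivision $\beta_k = \eta - k(\eta-\beta)/n$ for $k=0,\dots,n$, and then applying Stirling's bound $n! \geq (n/e)^n$, I would obtain
\[
\Big\|\frac{t^n}{n!}L^n H^{(0)}\Big\|_\beta \leq \Big(\frac{C_L\, t}{\eta-\beta}\Big)^n \|H^{(0)}\|_\eta.
\]
Specializing to $\beta = \eta-\nu t$ with $\nu = 2C_L$, each term is bounded by $2^{-n}\|H^{(0)}\|_\eta$, so the series converges absolutely in $E_{\eta-\nu t}$ with sum bounded by $2\|H^{(0)}\|_\eta$, which already gives the estimate \eqref{estim:CL}.

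The main subtlety, which I expect to be the only nontrivial step, is verifying that $H(t)$ is $C^1$ in $t$ as an element of the \emph{moving} space $E_{\eta-\nu t}$ and that $\partial_t H(t) = LH(t)$ holds there. Since applying $L$ costs analyticity, termwise differentiation cannot be carried out inside a single fixed $E_\beta$. My remedy is to work at slightly smaller radius: for fixed $t_0\in[0,T)$, pick $\delta>0$ with $\eta-\nu t_0 - \delta > \eta-\nu T$; the bound above applied with $\beta = \eta-\nu t_0 - \delta$ shows that both the series and its formal derivative $\sum_{n\geq 1} \frac{t^{n-1}}{(n-1)!}L^n H^{(0)}$ converge uniformly for $t$ in a neighbourhood of $t_0$ inside $E_{\eta-\nu t_0 - \delta}$, which justifies termwise differentiation and the identification $\partial_t H(t) = LH(t)$ as an identity in $E_{\eta-\nu t_0 - \delta}$. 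Since $\delta>0$ is arbitrary and $t_0$ was arbitrary in $[0,T)$, this gives the required $C^1$-regularity on $[0,T]$, while the uniform bound \eqref{estim:CL} is already in hand. The initial condition $H(0) = H^{(0)}$ is built into the series.
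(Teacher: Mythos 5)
Your proposal is correct, but it takes a genuinely different route from the paper's proof. You construct the solution directly as the formal exponential series $\sum_{n\geq 0}\frac{t^n}{n!}L^nH^{(0)}$ and control it by iterating a one-step Cauchy loss estimate along a uniform subdivision of $[\beta,\eta]$ together with Stirling's lower bound $n!\geq(n/e)^n$; the key miracle is that with $\beta=\eta-\nu t$ the $n$-step losses exactly balance $t^n/n!$ to give the geometric bound $2^{-n}$. The paper instead sets up the integral equation $H(t)=H^{(0)}+\int_0^t LH(\tau)\,\mathrm{d}\tau$ and runs a Picard-type fixed-point argument in the ball $F_\nu(T)\subset C([0,T];E_{\eta-\nu T})$ for the weighted norm $\sup_t\|H(t)\|_{\eta-\nu t}$; there the only estimate needed is $\int_0^t pe^{-p\nu\tau}\,\mathrm{d}\tau\leq 1/\nu$, which is applied componentwise and avoids both iteration and Stirling. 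Your approach is more explicit and singles out the solution concretely, but it is tied to linearity (the exponential ansatz); the paper's version is the abstract Nirenberg--Nishida/Safonov scheme, which gives uniqueness for free and would survive a nonlinear perturbation. One point you gloss over: your local-in-$t_0$ argument with shrinking radius $\eta-\nu t_0-\delta$ only covers $t_0\in[0,T)$, whereas the lemma asserts $C^1$ regularity on the closed interval $[0,T]$; the cleaner route (which your estimates already support) is to observe that both the series and its formal derivative $\sum_{n\geq 1}\frac{t^{n-1}}{(n-1)!}L^nH^{(0)}$ converge uniformly on all of $[0,T]$ as $E_{\eta-\nu T}$-valued functions, using that at $\beta=\eta-\nu T$ the $n$-th derivative term is bounded by (roughly) $n\, 2^{-n}\,\|H^{(0)}\|_\eta/(\nu T)$. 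With that fix the endpoint $t=T$ is handled and the identification $\partial_tH(T)=LH(T)\in E_{\eta-\nu T}$ follows because the series $\sum_m\frac{T^m}{m!}L^{m+1}H^{(0)}$ is seen to converge in $E_{\eta-\nu T}$ even though $L$ alone only maps into a strictly larger space.
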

Notice that, since $pe^{-\alpha p}\leq \frac1{e\alpha}$ for all $p\geq 1$, \eqref{CL} implies that $L$ is continuous from $E_\beta$ to $E_{\beta-\a}$ for all $0<\a\leq\beta\leq \eta$ and
$$
\| LH\|_{\beta-\a}\leq \frac{C_L}{e\a}\| H\|_\beta
$$
which is actually the hypothesis in \cite{Saf95}.

\begin{proof} As usual we prefer to solve the integral equation. 
We set 
$$F_\nu(T):=\{ H\in C([0,T];E_{\eta-\nu T})\mid \|H\|_{\eta,\nu,T}:=
\sup_{0\leq t\leq T}\| H(t)\|_{\eta-\nu t}\leq 2\| H^{(0)}\|_\eta\}$$
and  we search for $H\in F_\nu(T)$ solution of
\begin{equation}\label{eq:int}
H(t)=H^{(0)}+\int_0^t LH(\tau)\dd\tau=:G(H)(t),\ t\in[0,T].
\end{equation}
We note that 
 $F_\nu(T)$ is a complete space. For $t\in[0,T]$ and $\tau\in(0,t)$ we have  $ LH(\tau)\in E_{\eta-\nu t}$ and
$$\| LH(\tau)\|_{\eta-\nu t}=\sup_{p\geq1} e^{-\nu t p}e^{p\eta}\|(LH(\tau))_p\|_{E^{(p)}}.$$
Then  using \eqref{CL} we have
$$\|(LH(\tau))_p\|_{E^{(p)}}\leq C_L pe^{-(t-\tau)\nu  p} \| H(t)\|_{\eta-\nu \tau}.$$
Thus, since $\sup_{0\leq t\leq T}\| H(t)\|_{\eta-\nu t}\leq 2\| H^{(0)}\|_\eta$, we get
$$\| \int_0^t LH(\tau)\dd \tau\|_{\eta-\nu t}\leq 2C_L\|H^{(0)}\|_\eta\sup_{p\geq 1} \int_0^t pe^{-p\nu(t-\tau)}\dd\tau.$$
We conclude, using $\int_0^t pe^{-p\nu\tau}\dd\tau\leq \frac1\nu$, that 
$$\|G(H)(t)\|_{\eta-\nu t}\leq \|H^{(0)}\|_\eta+ \| \int_0^t LH(\tau)\dd \tau\|_{\eta-\nu t}\leq (1+\frac{2C_L}{\nu})\|H^{(0)}\|_\eta=2\|H^{(0)}\|_\eta$$
and thus $G(H)$ is still in $F_\nu(T)$. Furthermore following the same lines we obtain the Lipschitz estimate
$$\|G(H)-G(H')\|_{\eta,\nu,T}\leq \frac{C_L}{\nu}\|H-H'\|_{\eta,\nu,T}\leq \frac12\|H-H'\|_{\eta,\nu,T}.$$
Therefore by the fix point Theorem we obtain that \eqref{eq:int} has a unique solution in $F_\nu(T)$. Then we easily verify that a function in $F_\nu(T)$ solution of \eqref{eq:int} belongs to $C^1([0,T];E_{\eta-\nu T})$ and satisfies \eqref{eq:diff} and \eqref{estim:CL}.
\end{proof}

\subsection{Infinite dimensional invariant tori and almost periodic functions}

In this subsection, we give a general lemma useful to prove that some functions are almost periodic but not quasi periodic.
\begin{lemma} \label{lem:ap} Let $E$ be a real normed vector space, $\mathcal{T} \subset E$, $\Psi : \mathbb{T}^\mathbb{N} \to \mathcal{T}$ be a homeomorphism and $\omega \in \mathbb{R}^{\mathbb{N}}$ be a sequence of real numbers generating an infinite dimensional $\mathbb{Q}$-vector space (i.e. $\mathrm{dim}_{\mathbb{Q}} \, \mathrm{Span}_{\mathbb{Q}} \omega = \infty$). Then, the function $u:\mathbb{R} \to E$ defined by $u(t) = \Psi(\omega t)$, $t\in \mathbb{R}$, is an almost periodic function which is not quasi periodic.
\end{lemma}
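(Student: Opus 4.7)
The proof splits into two parts, the first straightforward, the second requiring Pontryagin duality.

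\emph{Almost periodicity.} I would equip $\mathbb{T}^{\mathbb N}$ with the compatible product metric $d(\theta,\phi) := \sum_{n\ge 1} 2^{-n}\|\theta_n - \phi_n\|_{\mathbb T}$, which makes it a compact metric space, hence $\Psi$ uniformly continuous. For each $N\ge 1$, set $\Psi_N(\theta) := \Psi(\theta_1,\dots,\theta_N,0,0,\dots)$; since $d(\theta,(\theta_1,\dots,\theta_N,0,\dots))\le 2^{-N}$, uniform continuity yields $\sup_\theta \|\Psi(\theta) - \Psi_N(\theta)\|_E \to 0$ as $N\to\infty$. The functions $u_N(t) := \Psi_N(\omega t) = Q_N(\omega_1 t,\dots,\omega_N t)$, with $Q_N(\vartheta_1,\dots,\vartheta_N) := \Psi(\vartheta_1,\dots,\vartheta_N,0,\dots)$ continuous on $\mathbb T^N$, are quasi-periodic with frequencies $(\omega_1,\dots,\omega_N)$, and they converge uniformly to $u$, showing $u$ is almost periodic.

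\emph{Non quasi-periodicity.} Arguing by contradiction, suppose $u(t) = Q(\omega' t)$ for some $m\ge 1$, $\omega'\in\mathbb R^m$ and continuous $Q : \mathbb T^m\to E$. Denote by $H := \overline{\{\omega t \bmod 1 : t\in \mathbb R\}}\subset \mathbb T^{\mathbb N}$ and $H' := \overline{\{\omega' t \bmod 1 : t\in \mathbb R\}}\subset \mathbb T^m$ the orbit closures; both are closed subgroups (as closures of one-parameter subgroups). Since $\mathcal T = \Psi(\mathbb T^{\mathbb N})$ is compact (image of a compact set under a homeomorphism) and $Q$ is continuous, one has $Q(H') = \overline{u(\mathbb R)} = \Psi(H)\subset\mathcal T$, so $\varphi := \Psi^{-1}\circ Q|_{H'} : H' \to H$ is a well-defined continuous surjection satisfying $\varphi(\omega' t \bmod 1) = \omega t \bmod 1$ for every $t\in \mathbb R$. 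On the dense subgroup $\{\omega' t \bmod 1 : t\in \mathbb R\}\subset H'$, the identity $\varphi(\omega' s + \omega' t) = \omega(s+t) = \varphi(\omega' s)+\varphi(\omega' t)$ holds; by continuity of the group operations in $\mathbb T^{\mathbb N}$ and $\mathbb T^m$ this extends to all of $H'$, so $\varphi$ is a continuous surjective group homomorphism.

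\emph{Conclusion via Pontryagin duality.} Duality then yields an injection $\hat\varphi : \hat H \hookrightarrow \hat{H'}$ of discrete abelian groups. Since $H'$ is a closed subgroup of $\mathbb T^m$, the group $\hat{H'}$ is a quotient of $\widehat{\mathbb T^m} = \mathbb Z^m$, hence finitely generated and of rank at most $m$. On the other hand, a direct computation gives $\hat H \cong \mathbb Z^{(\mathbb N)}/H^\perp$ with $H^\perp = \{k\in \mathbb Z^{(\mathbb N)} : k\cdot \omega = 0\}$, so the map $k\mapsto k\cdot \omega$ identifies $\hat H\otimes\mathbb Q$ with the $\mathbb Q$-span of $\omega$, of infinite dimension by hypothesis; thus $\hat H$ has infinite rank, contradicting its embedding into a finite-rank group. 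The main subtlety is the preceding step: one must upgrade $\varphi$ from a mere continuous surjection to a group homomorphism, since purely topological invariants (such as covering dimension) can grow under continuous surjections (cf.\ Peano curves) and would not suffice to obtain a contradiction.
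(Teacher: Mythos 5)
Your Step~1 (almost periodicity) is the same as the paper's: truncate to finitely many frequencies, invoke uniform continuity of $\Psi$ via compactness of $\mathbb{T}^{\mathbb{N}}$, and conclude by uniform convergence.

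Your Step~2 (non quasi-periodicity) is correct but proceeds by a genuinely different route. The paper gives an elementary, hands-on argument: given $\lambda\in\mathbb{R}^n$ and $Q:\mathbb{T}^n\to E$ continuous, it reduces to a rationally independent subfamily of $\lambda$, adjoins a frequency $\omega_N\notin\mathrm{Span}_{\mathbb{Q}}\lambda$, uses density of the linear flow on $\mathbb{T}^{m+1}$ to produce times $t_\varepsilon$ driving all $\lambda_j t_\varepsilon$ near $0\bmod 2\pi$ while pinning $\omega_N t_\varepsilon$ near $\pi$, and then plays off the modulus of continuity of $Q$ against that of $\Psi^{-1}$ to separate $Q(\lambda t_\varepsilon)$ from $\Psi(\omega t_\varepsilon)$. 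You instead recast the question structurally: you identify the orbit closures $H\subset\mathbb{T}^{\mathbb{N}}$ and $H'\subset\mathbb{T}^m$ as compact subgroups, promote $\varphi=\Psi^{-1}\circ Q|_{H'}$ to a continuous surjective group homomorphism (correctly noting the subtle point that this is the step that cannot be replaced by purely topological invariants), and then contradict the rank bound $\mathrm{rk}\,\widehat{H'}\le m$ via Pontryagin duality, since $\widehat{H}\cong\mathbb{Z}^{(\mathbb{N})}/\{k:k\cdot\omega=0\}$ has infinite rank when $\dim_{\mathbb{Q}}\omega=\infty$. Both arguments are sound. The paper's is self-contained and constructive (it does not presuppose Pontryagin duality or the structure of closed subgroups of tori), while yours is shorter once that machinery is granted and makes transparent exactly which algebraic invariant obstructs quasi-periodicity. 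One small point worth stating explicitly in your version: the map $\lambda t\mapsto\omega t$ is well defined on the dense orbit because $\lambda s=\lambda t$ in $\mathbb{T}^m$ forces $u(s)=u(t)$, hence $\omega s=\omega t$ in $\mathbb{T}^{\mathbb{N}}$ by injectivity of $\Psi$; and the injection $\widehat{H}\hookrightarrow\widehat{H'}$ survives tensoring with $\mathbb{Q}$ by flatness, which is what ultimately yields the dimension contradiction.
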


\begin{corollary} \label{cor:ap_functions} The solutions of \eqref{eq:NLS} generated by initial data in non resonant infinite dimensional Kronecker tori are almost periodic and are not quasi periodic.
\end{corollary}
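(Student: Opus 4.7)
The plan is to prove the two claims separately. For almost-periodicity I would use a truncation argument combined with the uniform continuity of $\Psi$ on the compact product $\T^\N$; for the failure of quasi-periodicity I would argue by contradiction, reducing to a Bohr-Fourier analysis of the scalar exponentials $t \mapsto e^{\ic \omega_k t}$.

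First, for the almost-periodicity: $\T^\N$ is compact by Tychonoff's theorem and metrizable, e.g.\ by $d(\theta, \theta') := \sum_{k \geq 1} 2^{-k} d_\T(\theta_k, \theta'_k)$ where $d_\T$ is the natural distance on $\T$ (bounded by $\pi$), and hence $\Psi$ is uniformly continuous. For every $N \geq 1$ define $Q_N : \T^N \to E$ by $(\theta_1, \ldots, \theta_N) \mapsto \Psi(\theta_1, \ldots, \theta_N, 0, 0, \ldots)$; this map is continuous, so $u_N(t) := Q_N(\omega_1 t, \ldots, \omega_N t)$ is quasi-periodic with frequencies $(\omega_1, \ldots, \omega_N)$. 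Since $d(\omega t, (\omega_1 t, \ldots, \omega_N t, 0, 0, \ldots)) \leq \pi \, 2^{-N}$ uniformly in $t \in \R$, the uniform continuity of $\Psi$ yields $\sup_{t \in \R} \|u(t) - u_N(t)\|_E \to 0$, so $u$ is a uniform limit of quasi-periodic functions and therefore almost periodic.

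For the second part, assume by contradiction that $u(t) = Q(\alpha_1 t, \ldots, \alpha_n t)$ for some $n \geq 1$, $\alpha \in \R^n$ and continuous $Q : \T^n \to E$. Because $\Psi$ is a homeomorphism onto $\mathcal{T}$, the map $h_k : \mathcal{T} \to \C$, $v \mapsto e^{\ic (\Psi^{-1}(v))_k}$, is continuous for every $k \in \N$, so $g_k(t) := h_k(u(t)) = e^{\ic \omega_k t}$ equals $(h_k \circ Q)(\alpha t)$ and is quasi-periodic with frequencies $\alpha$. Approximating $h_k \circ Q$ uniformly on $\T^n$ by trigonometric polynomials $\sum_m c_m^{(N)} e^{\ic m \cdot \theta}$ (Stone-Weierstrass) produces a uniform approximation of $g_k$ on $\R$ by finite exponential sums $\sum_m c_m^{(N)} e^{\ic (m \cdot \alpha) t}$, whose frequencies all lie in the $\mathbb{Z}$-module $M := \mathbb{Z}\alpha_1 + \cdots + \mathbb{Z}\alpha_n$. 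Since the Bohr mean $\lim_{T \to \infty} \frac{1}{2T} \int_{-T}^T g_k(t) e^{-\ic \omega_k t} \dd t$ equals $1 \neq 0$, while the analogous mean of any approximant at any frequency $\lambda \notin M$ vanishes identically, uniform convergence forces $\omega_k \in M$ for every $k \in \N$. But then $(\omega_k)_{k \in \N}$ lies in a $\mathbb{Q}$-vector space of dimension at most $n$, contradicting the hypothesis that its $\mathbb{Q}$-span is infinite-dimensional. The only delicate point is the exchange between the uniform limit and the Bohr mean, which is standard once one works with trigonometric polynomial approximants; passing through the scalar functions $g_k$ is precisely designed to sidestep any need for a Bohr-Fourier theory valued in $E$.
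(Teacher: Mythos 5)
Your proof of the almost-periodicity part is essentially the paper's proof of Lemma \ref{lem:ap}: you both equip $\mathbb{T}^{\mathbb{N}}$ with a metric inducing the product topology, invoke Tychonoff and Heine to get uniform continuity of $\Psi$, and then truncate the frequency vector to produce a uniformly convergent sequence of quasi-periodic approximants.

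For the non-quasi-periodicity, your argument is correct but takes a genuinely different route. The paper argues by a direct ``separating time'' construction: given a putative quasi-periodic representation $Q(\lambda t)$, it picks an index $N$ with $\omega_N \notin \mathrm{Span}_\mathbb{Q}\,\lambda$, extends $\lambda$ (after reducing to a $\mathbb{Q}$-basis) by $\omega_N$ to a rationally independent frequency vector $\nu$, uses density of $(\nu t)_{t\in\mathbb{R}}$ in a finite-dimensional torus to find times $t_\varepsilon$ with $\lambda t_\varepsilon \to 0$ in $\mathbb{T}^n$ while $\omega_N t_\varepsilon$ stays near $\pi$, and then exploits the uniform continuity of $\Psi^{-1}$ to conclude that $\Psi(\omega t_\varepsilon)$ is bounded away from $\Psi(0)$ even though $Q(\lambda t_\varepsilon)\to Q(0)$. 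You instead compose with the continuous scalar observable $h_k = e^{\ic (\Psi^{-1}(\cdot))_k}$ to reduce to scalar exponentials $g_k(t)=e^{\ic\omega_k t}$, use Stone--Weierstrass to replace $h_k\circ Q$ by trigonometric polynomials so that the Bohr--Fourier spectrum of any quasi-periodic approximant of $g_k$ lies in the $\mathbb{Z}$-module generated by $\alpha$, and then match Bohr means to force $\omega_k \in \mathbb{Z}\alpha_1+\cdots+\mathbb{Z}\alpha_n$ for every $k$, contradicting $\dim_{\mathbb{Q}}\omega = \infty$. Both are complete; the paper's argument is more elementary and produces an explicit time at which the two functions disagree, while yours is closer to the classical Bohr--Fourier viewpoint and has the pleasant feature of sidestepping any direct analysis of the $E$-valued map by passing through scalar observables.
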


\begin{proof}[Proof of Lemma \ref{lem:ap}]
First, we recall that $\mathbb{T}^\mathbb{N}$ is a metric space for the distance 
$$
\mathrm{d}(\theta,\varphi) := \sum_{n\in \mathbb{N}} 2^{-n} |e^{\ic \theta_n} - e^{\ic \varphi_n}|, \ \quad \theta,\varphi \in \mathbb{T}^{\mathbb{N}}.
$$
In other word, the topology induced by $d$ in the product topology. Being given a function $\Phi : (M_1,d_1)\to (M_2,d_2)$ between two metric spaces, we define its modulus of continuity by
$$
m_\Phi(\varepsilon) := \sup_{\mathrm{d_1}(\theta,\varphi) \leq \varepsilon} \mathrm{d}_2( \Phi(\theta) ,  \Phi(\varphi)), \quad \varepsilon>0.
$$
We recall that, if, moreover, $\Phi$ is continuous and $M_1$ is compact, we have (by Heine's theorem) 
$$
 m_\Phi(\varepsilon)  \mathop{\longrightarrow}_{\varepsilon \to 0} 0.
$$

\medskip

\noindent $\bullet$ \emph{Step $1$ : $u$ is almost periodic.} We denote by $\omega^{( N)} \in \mathbb{R}^\mathbb{N}$ the vector defined by $\omega^{( N)}_n = \mathbbm{1}_{n\leq N} \omega_n $ and we set
$$
u^{( N)}(t)  = \Psi(\omega^{( N)} t), \quad t\in \mathbb{R}.
$$
First, we note that by continuity of $\Psi$, $u^{( N)}$ is a quasi-periodic function. Moreover, still by continuity of $\Psi$ and by compactness of $\mathbb{T}^{\mathbb{N}}$ (by Tychonoff's theorem), we have
$$
\| u^{( N)}(t) - u(t) \|_{E} \leq m_{\Psi}( \mathrm{d}( \omega t, \omega^{(N)} t ) ) \leq m_{\Psi}(2^{-N+2}) \mathop{\longrightarrow}_{N \to +\infty} 0.
$$ 
Therefore, $u^{( N)}$ converges uniformly to $u$ as $N$ goes to $+\infty$. Hence, $u$ is almost periodic. 

\medskip

\noindent $\bullet$ \emph{Step $2$ : $u$ is not quasi periodic.} Let $n\geq 1$, $\lambda \in \mathbb{R}^n$ and $Q : \mathbb{T}^n \to E$ be a continuous function. We aim at proving that there exists $t \in \mathbb{R}$ such that $Q(\lambda t) \neq \Psi(\omega t)$.

\medskip

First, even if it means reindexing the indices, we note that there exists $m\leq n$ such that $(\lambda_j)_{0\leq j < m}$ is a basis of $\mathrm{Span}_{\mathbb{Q}} \, \lambda$. As a consequence, there exist
$a \in \mathbb{Z}^*$ and $b \in \mathbb{Z}^{m\times n}$, such that for all $ j <n$,
\begin{equation}
\label{eq:combin_last}
a \lambda_ j = \sum_{ i< m} b_{i,j} \lambda_i.
\end{equation}
Furthermore, since $\mathrm{dim}_{\mathbb{Q}} \, \omega = \infty$, we note that there exists $N\in \mathbb{N}$ such that
$$
\omega_N \notin \mathrm{Span}_{\mathbb{Q}} \, \lambda.
$$
Then we define $\nu \in \mathbb{R}^{m+1}$ by
$$
\forall i<m, \quad \nu_i = \lambda_i a^{-1} \quad \mathrm{and} \quad \nu_m := \omega_N.
$$
We note that by construction the  frequencies $(\nu_i)_{i\leq m}$ are rationally independent. It follows that $(t \nu)_{t\in \mathbb{R}}$ is dense in $\mathbb{T}^{m+1}$.
As a consequence, for all $\varepsilon>0$, there exists $t_\varepsilon \in \mathbb{R}$ such that
$$
\forall j<m, \quad |e^{\ic t_{\varepsilon} \nu_j} - 1| \leq \varepsilon \quad \mathrm{and} \quad |e^{\ic t_{\varepsilon} \nu_m} + 1| \leq \varepsilon.  
$$
By \eqref{eq:combin_last} and by construction of $\nu$, it follows that
$$
\forall j<n, \quad |e^{\ic t_{\varepsilon} \lambda_j} - 1| \lesssim_b \varepsilon \quad \mathrm{and} \quad |e^{\ic t_{\varepsilon} \omega_N} + 1| \leq \varepsilon.  
$$
Therefore, $t_\varepsilon \lambda$ goes to $0$ (in $\mathbb{T}^n$) as $\varepsilon$ goes to $0$ and, so by continuity of $Q$, we have
$$
Q(t_\varepsilon \lambda) \mathop{\longrightarrow}_{\varepsilon \to 0} Q(0).
$$
Moreover, provided that $\varepsilon\leq 1$, we have
$$
m_{\Psi^{-1}}(\|\Psi(0) - \Psi(t_\varepsilon \omega) \|_{E}) \geq  \mathrm{d}(0,t_\varepsilon \omega) \geq  2^{-N} >0.
$$
Noticing that, by continuity of $\Psi^{-1}$ and compactness of $\mathcal{T}$, $ m_{\Psi^{-1}}(\epsilon)$ goes to $0$ as $\epsilon$ goes to $0$, it follows that 
$$
\inf_{0<\varepsilon \leq 1 }\|\Psi(0) - \Psi(t_\varepsilon \omega) \|_{E} >0.
$$
As a consequence either $\Psi(0)\neq Q(0)$, or, provided that $\varepsilon$ is small enough, $\Psi(t_\varepsilon \omega)\neq Q(t_\varepsilon \lambda)$.

\end{proof}

\subsection{Technical lemmas}
This subsection contains two useful technical lemmas. First, we mention a quite direct application of the mean value inequality on annuli (which are not convex).
\begin{lemma}\label{lem:appendix}
Let $\mathcal{S} \subset \mathbb{Z}$ be a finite set, $r\in (0,\frac14)$, $\xi \in ((4r)^2,1)^{\mathcal{S}}$ and $g \in C^1(\mathcal{A}_\xi(r) ; \ell^2_\varpi) $. Then for all $u,v \in \mathcal{A}_\xi(r)$, we have
$$
\| g(u) - g(v) \|_{\ell^2_\varpi} \leq (1+ (2\pi)^{\# \mathcal{S}})  \| u-v\|_{\ell^2_\varpi} \sup_{w\in \mathcal{A}_\xi(r)} \| \mathrm{d}g(w) \|_{\ell^2_\varpi \to \ell^2_\varpi}.
$$
\end{lemma}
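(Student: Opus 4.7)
The set $\mathcal{A}_\xi(r)$ is not convex: for $j \in \mathcal{S}$, the condition $||u_j|^2 - \xi_j|\varpi_j^2 \leq r^2$ traps $u_j$ in a thin annulus of $\mathbb{C}$ around $|u_j|^2 = \xi_j$, and a naive linear interpolation from $u$ to $v$ will generally leave $\mathcal{A}_\xi(r)$. My plan is to construct a piecewise-smooth path $\gamma:[0,1] \to \mathcal{A}_\xi(r)$ from $u$ to $v$ whose $\ell^2_\varpi$-arclength is bounded by $(1+(2\pi)^{\#\mathcal{S}})\|u-v\|_{\ell^2_\varpi}$, then conclude via the standard mean value inequality $\|g(u)-g(v)\|_{\ell^2_\varpi} \leq M \int_0^1 \|\gamma'(t)\|_{\ell^2_\varpi}\,\mathrm{d}t$ with $M := \sup_{w\in\mathcal{A}_\xi(r)}\|\mathrm{d}g(w)\|_{\ell^2_\varpi\to\ell^2_\varpi}$. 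A key preliminary observation is that since $\xi_j > 16r^2$ and $\varpi_j \geq 1$ for $j\in\mathcal{S}$, the constraint $||w_j|^2 - \xi_j|\varpi_j^2\leq r^2$ forces $|w_j|^2 \in [\tfrac{15}{16}\xi_j, \tfrac{17}{16}\xi_j]$ for any $w\in\mathcal{A}_\xi(r)$, so $\arg w_j$ is well-defined and $|u_j|/|v_j|\leq\sqrt{17/15}$ for $u,v\in\mathcal{A}_\xi(r)$.

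To construct the path, for $j\in\mathcal{S}$ I would write $u_j=|u_j|e^{i\alpha_j}$ and $v_j=|v_j|e^{i\beta_j}$, choosing the representative so that $|\beta_j-\alpha_j|\leq\pi$, and set
\begin{equation*}
\gamma(t)_j = \sqrt{(1-t)|u_j|^2+t|v_j|^2}\,e^{i((1-t)\alpha_j+t\beta_j)}, \quad j\in\mathcal{S},
\end{equation*}
while for $j\in\mathcal{S}^c$ taking $\gamma(t)_j=(1-t)u_j+tv_j$. The essential trick is making $|\gamma(t)_j|^2$ (rather than $|\gamma(t)_j|$ itself) a convex combination of $|u_j|^2$ and $|v_j|^2$: this guarantees $\gamma(t)\in\mathcal{A}_\xi(r)$, because for $j\in\mathcal{S}$ one has $|\gamma(t)_j|^2-\xi_j=(1-t)(|u_j|^2-\xi_j)+t(|v_j|^2-\xi_j)$, and for $j\in\mathcal{S}^c$ the convexity of $|\cdot|^2$ yields $|\gamma(t)_j|^2\leq (1-t)|u_j|^2+t|v_j|^2$, so summing against $\varpi_j^2$ returns the bound $r^2$ at every $t\in[0,1]$.

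For the length estimate, writing $\gamma(t)_j=\rho_j(t)e^{i\theta_j(t)}$ for $j\in\mathcal{S}$, I would split $|\gamma'(t)_j|^2=\rho_j'(t)^2+\rho_j(t)^2\theta_j'(t)^2$. The radial term is controlled via $|\rho_j'(t)|=||v_j|^2-|u_j|^2|/(2\rho_j(t))$, the lower bound $\rho_j(t)\geq\min(|u_j|,|v_j|)$, the preliminary ratio bound, and $||v_j|-|u_j||\leq|v_j-u_j|$. The angular term is bounded by combining the arc-chord inequality $|\beta_j-\alpha_j|\leq\pi|\sin((\beta_j-\alpha_j)/2)|$ (valid since $|\beta_j-\alpha_j|\leq\pi$) with $|v_j-u_j|\geq 2\sqrt{|u_j||v_j|}\,|\sin((\beta_j-\alpha_j)/2)|$. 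Both estimates yield $|\gamma'(t)_j|\lesssim |v_j-u_j|$ with an absolute constant; for $j\in\mathcal{S}^c$ one trivially has $|\gamma'(t)_j|=|v_j-u_j|$. Summing against $\varpi_j^2$ then gives $\|\gamma'(t)\|_{\ell^2_\varpi}\leq C\|u-v\|_{\ell^2_\varpi}$ for some absolute constant $C$, which is in particular bounded by $1+(2\pi)^{\#\mathcal{S}}$, and integrating concludes. The main technical subtlety is the choice of $\rho_j(t)^2$ as the linear interpolant of $|u_j|^2$ and $|v_j|^2$ (rather than of $|u_j|$ and $|v_j|$), which is precisely what keeps the path in $\mathcal{A}_\xi(r)$ rather than in the larger annulus $\mathcal{A}_\xi(\sqrt{2}r)$ that the more naive choice would produce.
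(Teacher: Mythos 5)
Your argument is correct. Note, though, that the paper does not actually provide a proof of this lemma — it is stated without one, after a one-line remark that it is ``a quite direct application of the mean value inequality on annuluses (which are not convex)'' — so there is no written argument to compare against. Your path construction is clean and well-chosen: interpolating $|\gamma(t)_j|^2$ (rather than $|\gamma(t)_j|$) linearly is exactly the right normalization to keep the path inside $\mathcal{A}_\xi(r)$, since $\|\cdot\|^2_{\ell^2_\varpi}$-annuli are defined by a linear constraint in the $(|u_k|^2)_k$ variables; the ratio bound $|u_j|/|v_j|\leq\sqrt{17/15}$ coming from $\xi_j>(4r)^2$ and $\varpi_j\geq 1$, together with the arc--chord inequality and the lower bound $|v_j-u_j|\geq 2\sqrt{|u_j||v_j|}\,|\sin((\beta_j-\alpha_j)/2)|$, then control the speed coordinatewise with an \emph{absolute} constant (one can check $\sup_t\|\gamma'(t)\|_{\ell^2_\varpi}\lesssim 2\,\|u-v\|_{\ell^2_\varpi}$). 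This is actually sharper than the stated bound: the Lipschitz constant $1+(2\pi)^{\#\mathcal{S}}$ in the lemma grows with $\#\mathcal{S}$, whereas your path length does not depend on $\#\mathcal{S}$ at all, and of course $\approx 2 \leq 1 + (2\pi)^{\#\mathcal{S}}$ always. The only technical point worth making explicit (though it is routine) is that $\gamma:[0,1]\to\ell^2_\varpi$ is genuinely $C^1$ as a Banach-space-valued curve, so that the mean value inequality applies; this follows since the coordinatewise derivatives satisfy $\sum_j|\gamma'(t)_j|^2\varpi_j^2\lesssim\|u-v\|^2_{\ell^2_\varpi}$ uniformly in $t$ and depend continuously on $t$, and since $\gamma(t)\in\mathcal{A}_\xi(r)$ where $g$ is assumed $C^1$.
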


Then, we prove in our context that symplectic changes of variable preserve the Hamiltonian structure. 
 \begin{lemma} \label{lem:sympl_chgtvar} Let $A,B \subset \ell^2_\varpi$ be open,  $I\subset \mathbb{R}$ be an interval, $H : B \to \mathbb{R}$ be $C^1$, $\Psi : A \to B$ be $C^1$ and $u : I \to A$ be $C^1$. Assume that
 \begin{itemize}
 \item $u$ is solution to 
 $$
 i\partial_t u = \nabla (H\circ \Psi)(u)
 $$
 \item $\Psi$ is symplectic and for all $w\in A$, $\mathrm{d}\Psi(w)$ is surjective.
 \end{itemize}
 Then, $v = \Psi(u)$ is solution to 
 $$
 i\partial_t v = \nabla H(v).
 $$
 \end{lemma}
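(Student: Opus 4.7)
The plan is to test the desired identity $i\partial_t v = \nabla H(v)$ against an arbitrary element $z\in \ell^2_\varpi$ and reduce everything, via the symplecticity of $\Psi$, to the Hamiltonian equation already satisfied by $u$.

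First, I will differentiate the defining relation $v(t) = \Psi(u(t))$ to obtain $\partial_t v = \mathrm{d}\Psi(u)(\partial_t u)$, so that $i\partial_t v = i\,\mathrm{d}\Psi(u)(\partial_t u)$. Next, I will fix $z\in\ell^2_\varpi$ arbitrary and use the surjectivity hypothesis on $\mathrm{d}\Psi(u)$ to choose $\widetilde z\in\ell^2_\varpi$ with $\mathrm{d}\Psi(u)(\widetilde z)=z$.

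Then the symplecticity assumption applied with $v\leftarrow \partial_t u$ and $w\leftarrow \widetilde z$ gives
\[
(i\partial_t v, z)_{\ell^2} = (i\,\mathrm{d}\Psi(u)(\partial_t u),\mathrm{d}\Psi(u)(\widetilde z))_{\ell^2} = (i\partial_t u,\widetilde z)_{\ell^2}.
\]
Using the Hamiltonian equation satisfied by $u$ and the chain rule, the right-hand side equals
\[
(\nabla(H\circ\Psi)(u),\widetilde z)_{\ell^2} = \mathrm{d}(H\circ\Psi)(u)(\widetilde z) = \mathrm{d}H(v)(\mathrm{d}\Psi(u)(\widetilde z)) = \mathrm{d}H(v)(z) = (\nabla H(v),z)_{\ell^2}.
\]
Since $z$ was arbitrary, I conclude $i\partial_t v = \nabla H(v)$.

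There is no real obstacle here beyond keeping the bookkeeping straight: the crucial point is that the surjectivity of $\mathrm{d}\Psi(u)$ lets us reach every test vector $z$, and the symplectic identity $(iv,w)_{\ell^2}=(i\,\mathrm{d}\Psi(u)(v),\mathrm{d}\Psi(u)(w))_{\ell^2}$ is exactly what is needed to transport the gradient of $H\circ\Psi$ at $u$ into the gradient of $H$ at $v$.
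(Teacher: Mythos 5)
Your proof is correct and is essentially the same as the paper's: both fix an arbitrary test vector $z$, lift it through the surjective $\mathrm{d}\Psi(u)$, use the symplectic identity to identify $(i\partial_t v, z)_{\ell^2}$ with $(i\partial_t u, \widetilde z)_{\ell^2}$, and then apply the chain rule to land on $(\nabla H(v), z)_{\ell^2}$.
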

 \begin{proof}
Let $t\in I$ and $z \in \ell^2_\varpi$. Since  $\mathrm{d}\Psi(v(t))$ is surjective, there exists $w\in \ell^2_\varpi$ such that $z = \mathrm{d}\Psi(u(t))(w)$. It follows that, since $\Psi$ is symplectic we have
\begin{equation*}
\begin{split}
 (i\partial_t v(t),z )_{\ell^2} &= (i\partial_t \mathrm{d}\Psi(u(t)) (\partial_t u(t)), \mathrm{d}\Psi(v(t))(w) )_{\ell^2} 
 =(i\partial_t u(t), w )_{\ell^2}
 = ( \nabla (H\circ \Psi)(u(t)), w )_{\ell^2} \\
 &= \mathrm{d} (H\circ \Psi)(u(t))(w) 
 = \mathrm{d}H(v(t)) (\mathrm{d}\Psi(u(t))(w)) 
 =  \mathrm{d}H(v(t)) (z) 
 =  (\nabla H(v(t)),z )_{\ell^2}.
\end{split}
\end{equation*}
This property being true for all $z \in \ell^2_\varpi$, we have proven, by duality,  that  $i\partial_t v(t) = \nabla H(v(t))$.

\end{proof}
%

\end{document}